\documentclass{amsart}

\usepackage{amsmath, amssymb, amsthm, amsfonts}
\usepackage[usenames,dvipsnames]{xcolor}
\newcommand{\Yun}[1]{{\color{red} {#1}}}

\input xy
\xyoption{all}

\usepackage{hyperref}

\swapnumbers
\numberwithin{equation}{section}

\theoremstyle{plain}
\newtheorem{theorem}[subsubsection]{Theorem}
\newtheorem{lemma}[subsubsection]{Lemma}
\newtheorem{prop}[subsubsection]{Proposition}
\newtheorem{cor}[subsubsection]{Corollary}
\newtheorem{conj}[subsubsection]{Conjecture}

\newtheorem*{claim}{Claim}

\theoremstyle{definition}
\newtheorem{defn}[subsubsection]{Definition}

\newtheorem{remark}[subsubsection]{Remark}
\newtheorem{exam}[subsubsection]{Example}

\def\AA{\mathbb{A}}
\def\BB{\mathbb{B}}
\def\CC{\mathbb{C}}
\def\DD{\mathbb{D}}

\def\FF{\mathbb{F}}
\def\GG{\mathbb{G}}

\def\JJ{\mathbb{J}}
\def\KK{\mathbb{K}}

\def\NN{\mathbb{N}}

\def\PP{\mathbb{P}}
\def\QQ{\mathbb{Q}}
\def\RR{\mathbb{R}}

\def\XX{\mathbb{X}}

\def\ZZ{\mathbb{Z}}

\def\calC{\mathcal{C}}

\def\calF{\mathcal{F}}
\def\calG{\mathcal{G}}

\newcommand\cA{\mathcal{A}}
\newcommand\cB{\mathcal{B}}
\newcommand\cC{\mathcal{C}}
\newcommand\cD{\mathcal{D}}
\newcommand\cE{\mathcal{E}}
\newcommand\cF{\mathcal{F}}
\newcommand\cG{\mathcal{G}}
\newcommand\cH{\mathcal{H}}

\newcommand\cK{\mathcal{K}}
\newcommand\cL{\mathcal{L}}
\newcommand\cM{\mathcal{M}}
\newcommand\cN{\mathcal{N}}
\newcommand\cO{\mathcal{O}}
\newcommand\cP{\mathcal{P}}

\newcommand\cS{\mathcal{S}}

\newcommand\cW{\mathcal{W}}
\newcommand\cX{\mathcal{X}}

\newcommand\cZ{\mathcal{Z}}

\newcommand\F{\mathcal{F}}

\def\bA{\mathbf{A}}

\def\bG{\mathbf{G}}

\def\bI{\mathbf{I}}
\def\bJ{\mathbf{J}}
\def\bK{\mathbf{K}}

\def\bP{\mathbf{P}}

\def\bR{\mathbf{R}}

\def\bT{\mathbf{T}}

\newcommand\frG{\mathfrak{G}}

\newcommand\frN{\mathfrak{N}}

\newcommand\frX{\mathfrak{X}}

\newcommand\fra{\mathfrak{a}}
\newcommand\frb{\mathfrak{b}}
\newcommand\frc{\mathfrak{c}}
\newcommand\frd{\mathfrak{d}}

\newcommand\frg{\mathfrak{g}}

\newcommand\frl{\mathfrak{l}}
\newcommand\frakm{\mathfrak{m}}
\newcommand\frn{\mathfrak{n}}
\newcommand\frp{\mathfrak{p}}

\newcommand\frt{\mathfrak{t}}

\newcommand\tilW{\widetilde{W}}

\def\dG{G^{\vee}}

\def\dT{T^{\vee}}

\newcommand\Aff{\textup{Aff}}

\newcommand\Av{\textup{Av}}

\newcommand{\Bun}{\textup{Bun}}
\newcommand{\can}{\textup{can}}
\newcommand{\ch}{\textup{char}}
\newcommand{\codim}{\textup{codim}}
\newcommand{\Coh}{\textup{Coh}}

\newcommand\dil{\textup{dil}}

\newcommand\ev{\textup{ev}}

\newcommand{\Fl}{\textup{Fl}}

\newcommand\Forg{\textup{Forg}}

\newcommand\Frac{\textup{Frac}}

\newcommand{\Gr}{\textup{Gr}}
\newcommand{\gr}{\textup{gr}}

\newcommand\IC{\textup{IC}}
\newcommand\id{\textup{id}}

\newcommand\inv{\textup{inv}}
\newcommand\Irr{\textup{Irr}}

\newcommand\Lie{\textup{Lie}\ }
\newcommand\Loc{\textup{Loc}}

\newcommand{\opp}{\textup{op}}

\newcommand\Perv{\textup{Perv}}
\newcommand{\Pic}{\textup{Pic}}
\newcommand\pr{\textup{pr}}

\newcommand\pt{\textup{pt}}
\newcommand\QCoh{\textup{QCoh}}

\newcommand\Rep{\textup{Rep}}
\newcommand{\Res}{\textup{Res}}

\newcommand\rs{\textup{rs}}

\newcommand\Spec{\textup{Spec}\ }
\newcommand\Spf{\textup{Spf}\ }
\newcommand\St{\textup{St}}

\newcommand\Stab{\textup{Stab}}
\newcommand\Supp{\textup{Supp}}
\newcommand\Sym{\textup{Sym}}

\newcommand\Tot{\textup{Tot}}

\newcommand\triv{\textup{triv}}

\newcommand{\univ}{\textup{univ}}

\newcommand{\Vect}{\textup{Vect}}

\newcommand\Aut{\textup{Aut}}
\newcommand\Hom{\textup{Hom}}
\newcommand\End{\textup{End}}

\newcommand{\RHom}{\bR\Hom}

\newcommand{\Ext}{\textup{Ext}}

\newcommand\GL{\textup{GL}}

\newcommand\SL{\textup{SL}}
\renewcommand\sl{\mathfrak{sl}}

\newcommand\Sp{\textup{Sp}}

\newcommand{\Gm}{\GG_m}
\def\Ga{\GG_a}

\newcommand{\Ad}{\textup{Ad}}

\newcommand\xch{\mathbb{X}^*}
\newcommand\xcoch{\mathbb{X}_*}

\newcommand{\leftexp}[2]{{\vphantom{#2}}^{#1}{#2}}
\newcommand{\pH}{\leftexp{p}{\textup{H}}}

\newcommand{\Ql}{\QQ_{\ell}}
\newcommand{\Qlbar}{\overline{\QQ}_\ell}

\newcommand{\twtimes}[1]{\stackrel{#1}{\times}}

\newcommand{\htimes}{\widehat{\times}}
\renewcommand{\j}[1]{\langle{#1}\rangle}
\newcommand{\wt}[1]{\widetilde{#1}}
\newcommand{\wh}[1]{\widehat{#1}}
\newcommand\quash[1]{}
\newcommand\un{\underline}

\newcommand{\ov}{\overline}
\newcommand{\bs}{\backslash}
\newcommand{\tl}[1]{[\![#1]\!]}
\newcommand{\lr}[1]{(\!(#1)\!)}
\newcommand\sss{\subsubsection}
\newcommand\xr{\xrightarrow}
\newcommand\op{\oplus}
\newcommand\ot{\otimes}

\newcommand{\qq}{\mathbin{/\mkern-6mu/}}
\renewcommand\c\circ
\newcommand\bt{\boxtimes}
\newcommand\vn{\varnothing}

\newcommand{\homog}[2]{\textup{H}_{#1}({#2})}  
\newcommand{\cohog}[2]{\textup{H}^{#1}({#2})}     
\newcommand{\cohoc}[2]{\textup{H}_{c}^{#1}({#2})}     
\newcommand{\hBM}[2]{\textup{H}^{\textup{BM}}_{#1}({#2})}  

\newcommand\upH{\textup{H}}

\newcommand{\incl}{\hookrightarrow}
\newcommand{\isom}{\stackrel{\sim}{\to}}

\newcommand{\surj}{\twoheadrightarrow}
\newcommand{\oll}[1]{\overleftarrow{#1}}
\newcommand{\orr}[1]{\overrightarrow{#1}}

\newcommand{\ari}{\ar@{^{(}->}} 

\renewcommand\a\alpha
\renewcommand\b\beta
\newcommand\g\gamma
\newcommand\G\Gamma
\renewcommand\d\delta
\newcommand\D\Delta
\newcommand{\e}{\epsilon}
\newcommand{\io}{\iota}
\renewcommand{\k}{\kappa}
\renewcommand{\th}{\theta}
\newcommand{\Th}{\Theta}
\newcommand{\ph}{\varphi}
\renewcommand\r\rho
\newcommand{\s}{\sigma}
\newcommand{\Sig}{\Sigma}
\renewcommand{\t}{\tau}

\newcommand{\y}{\eta}
\newcommand{\z}{\zeta}

\renewcommand{\l}{\lambda}
\renewcommand{\L}{\Lambda}
\newcommand{\om}{\omega}
\newcommand{\Om}{\Omega}
\newcommand\Up{\Upsilon}

\newcommand\hs{\heartsuit}
\newcommand\na{\natural}
\newcommand\sh{\sharp}

\DeclareMathOperator{\Kir}{Kir}
\newcommand\frAff{\mathfrak{Aff}}
\newcommand\Sh{\mathrm{Sh}}
\newcommand\muSh{\mu\mathrm{Sh}}
\newcommand\muhom{\mu\mathrm{hom}}

\newcommand{\LG}{G^{\vee}}
\newcommand{\LB}{B^{\vee}}
\newcommand{\Lg}{\frg^{\vee}}

\newcommand{\LT}{T^{\vee}}
\newcommand{\ft}{\frt}

\newcommand\sph{\mathrm{sph}}
\newcommand\Sat{\mathrm{Sat}}
\newcommand\Wak{\mathrm{Wak}}

\newcommand\lmod{\mathrm{-mod}}

\newcommand\gmod{\mathrm{-mod}^{\mathrm{gr}}}

\newcommand\Hk{\mathrm{Hk}}
\newcommand\mt{\mapsto}

\newcommand\J{\mathbf{G}^{1}_{\infty}}
\newcommand\Jker{\mathbf{G}^{\psi}_{\infty}}
\newcommand\mon{\mathrm{mon}}
\newcommand\hinf{\wh\infty}
\newcommand\Hinf{\mathcal{H}_{\infty}}
\newcommand\Him{\mathcal{H}^{\mon}_{\infty}}
\newcommand\hHim{\widehat{\cH}^{\mon}_{\infty}}
\newcommand\Hs{\cH^{\textup{sph}}_{0}}
\newcommand\Hsh{\cH^{\textup{sph},\hs}_{0}}
\newcommand\hi{\widehat{\infty}}
\newcommand\Cone{\mathrm{Cone}}
\newcommand\Fun{\mathrm{Fun}}
\newcommand\chk{\textup{char}(k)}
\newcommand\FT{\mathrm{FT}}
\newcommand\Grot{\GG_{m}^{\textup{rot}}}

\newcommand\lmon{\textup{-mon}}
\newcommand\Wang{\textup{Wang}}
\newcommand\dc{\frd} 
\newcommand\eqHinf{{}^{T}\cH_{\infty}^{T}}
\newcommand\Hinfhs{(\eqHinf)^{\hs}}
\newcommand\DG{\cD_{\psi, \bG}}

\newcommand{\rev}{\textup{rev}}
\renewcommand{\v}{\vee}

\newcommand{\tp}{\textup{top}}

\newcommand{\Jinf}{\bJ_{\infty}}

\newcommand{\Wa}{W_{\textup{aff}}}

\newcommand{\dg}{\mathfrak{g}^{\vee}}
\newcommand{\dt}{\mathfrak{t}^{\vee}}
\newcommand{\dN}{\mathcal{N}^{\vee}}
\newcommand{\dU}{\mathcal{U}^{\vee}}

\newcommand{\oB}{\operatorname{B}}

\newcommand{\oO}{\operatorname{O}}

\newcommand{\oU}{\operatorname{U}}

\newcommand{\oLoc}{\operatorname{Loc}}
\newcommand{\oLGr}{\operatorname{LGr}}

\newcommand{\Picard}{\operatorname{Pic}}
\newcommand{\Bet}{\operatorname{Bet}}

\newcommand{\IndCoh}{\operatorname{IndCoh}}

\newcommand{\cGm}{\check{\mathbb{G}}_m}
\newcommand{\cGa}{\check{\mathbb{G}}_a}

\newcommand{\sot}{\stackrel{!}{\otimes}}
\newcommand{\ssot}{\stackrel{!!}{\otimes}}

\title{Affine Springer fiber and the small quantum group}

\author{Roman Bezrukavnikov}
\email{}
\address{Department of Mathematics, Massachusetts Institute of Technology, 77 Massachusetts Ave, Cambridge, MA 02139}
\author{Pablo Boixeda Alvarez}
\email{p.boixedaalvarez@northeastern.edu}
\address{Department of Mathematics, Northeastern University, 567 Lake Hall, 360 Huntington Ave, Boston, MA 02115}
\author{Michael McBreen}
\email{mcb@math.cuhk.edu.hk}
\address{Department of Mathematics, Room 220, Lady Shaw Building, The Chinese University of Hong Kong, Shatin, N.T., Hong Kong
}
\author{Zhiwei Yun}
\email{zyun@mit.edu}
\address{Department of Mathematics, Massachusetts Institute of Technology, 77 Massachusetts Ave, Cambridge, MA 02139}

\thanks{R.B. is partly supported by the NSF grant DMS-2101507.
Z.Y. is supported by the Simon Foundation. M.M. is supported by the Research Grants Council of the Hong
Kong Special Administrative Region, Project No. CUHK 14307022, 14309323, 14307825.}

\subjclass[2020]{Primary 14D24; Secondary 22E57}

\begin{document}

\begin{abstract}
We find a new geometric incarnation for the principal block in the category of modules
over a quantum group at a root of unity, realizing it as a full subcategory of microsheaves
on a certain affine Springer fiber.  We also prove a related geometric Langlands type equivalence with wild ramification,
identifying the latter category with a category of coherent sheaves on the Springer resolution for the
dual group. This can also be viewed as a version of homological mirror symmetry for the Springer resolution.
\end{abstract}

\maketitle

\tableofcontents

\section{Introduction}
The results of this article have several sources of motivation belonging, respectively,
to representation theory, geometric Langlands duality and homological mirror symmetry.

From the representation theory perspective, the main result (see Theorem \ref{th:intro uq}) is a new geometric realization of 
a category of modules over the quantum group at a root of unity. Earlier theorems of this nature
identified a category of quantum group modules with perverse sheaves on the affine or 
semi-infinite flag manifold, which implies the relation between the appropriate version of the affine Kazhdan-Lusztig polynomials to numerical invariants of representations. 
Theorem \ref{th:intro uq} below relates quantum group modules to a different geometric object: it asserts
its equivalence with a full subcategory $\cP_{\psi}$ of {\em microsheaves} on a certain affine Springer fiber $\Fl_\psi$, where $\psi=t\psi_1\in \frg\lr{t}$ and $\psi_1\in \frg$ is regular semisimple;
or more precisely, microsheaves on a certain symplectic variety $\cM_\psi$ supported on
a conical Lagrangian subset homeomorphic to $\Fl_\psi$.
The known and expected applications (see \cite{FLH} and Theorem \ref{th:intro conseq K} below) are also different.

The category $\cP_{\psi}$ of microsheaves on $\Fl_\psi$ also admits a description in terms of perverse sheaves: it turns out to be equivalent to 
a full subcategory in the category of perverse sheaves on the affine flag variety $\Fl$ of $G$,
Whittaker-equivariant with respect to the first congruence subgroup $G[t^{-1}]^{1}$ in the group of negative loops $G[t^{-1}]$
equipped with the additive character given by $\psi$;
the space $\cM_\psi$ above is the shifted Hamiltonian reduction of $T^*\Fl$ by the action of $G[t^{-1}]^{1}$ (by a shifted Hamiltonian reduction we understand the quotient of the preimage of $d\psi=\psi_1dt$ under the moment map). 
We believe that the microlocal description of the category of (partially) Whittaker sheaves 
is a general phenomenon with strong potential applications in representation theory, see 
\ref{on usual} below. To make sense of Whittaker sheaves in an arbitrary sheaf-theoretic context (without necessarily the Artin-Schreier sheaves or exponential $D$-modules at our disposal), we use the notion of Kirillov category introduced by Gaitsgory \cite{Ga-Wh}.


The quotient of the affine flag variety by the congruence subgroup $G[t^{-1}]^{1}$ is the moduli stack
of $G$-bundles on ${\mathbb P}^1$ with level structures at zero and infinity, so the study of sheaves on this space belongs in geometric Langlands duality. Accordingly, we establish an equivalence (see Theorem \ref{th:intro Dpsi coh}) between the derived category of microsheaves and the derived category of coherent 
sheaves on a stack of Langlands parameters which can be identified with the derived category
of quantum group modules based on \cite{ABG} and \cite{Tanisaki}.

On the other hand, we can also realize the symplectic variety $\cM_\psi$ as the moduli space of Higgs bundles
on ${\mathbb P}^1$ with a regular singularities at zero and an irregular one at infinity (of order $2$), as explained in \cite{BBAMY}. There is a corresponding Betti space $\cM_{\Bet,\psi}$, diffeomorphic to a quotient of $\cM_\psi$ by an action of the lattice $\mathbb{X}_*(T)$ of cocharacters of a maximal torus $T \subset G$. The space $\cM_{\Bet,\psi}$ is (for suitable choices of parameters) a smooth affine variety of finite type, and thus comes equipped with a contractible space of Weinstein structures. Microsheaves on $\cM_{\Bet,\psi}$ may be identified with a version of the Fukaya category, using the general theory developed in \cite{ganatra2024microlocal}. One expects that the category of microsheaves on $\cM_\psi$ is equivalent to a suitable $\mathbb{X}_*(T)$-graded version of the Fukaya category of $\cM_{\Bet,\psi}$. 

Combining the above discussions with the geometric Langlands type equivalence in Theorem \ref{th:intro Dpsi coh}, one would thus establish homological mirror symmetry (HMS) between $\cM_{\Bet,\psi}$ and a space
of Langlands parameters. While this case of HMS was among our sources of motivation,
we do not work with Fukaya categories in this paper, thus 
this statement remains conjectural.

\subsection{Kirillov category and microsheaves on the affine Springer fiber}\label{ss:intro main}

\sss{Affine Springer fiber}
Let $G$ be a connected reductive group over $\CC$ with Lie algebra $\frg$. Let $\CC\lr{t}$ be the field of Laurent series in one variable $t$, and let $G\lr{t}$ be the associated loop group with Lie algebra $\frg\lr{t}=\frg\ot_\CC\CC\lr{t}$. Let $\bI\subset G\lr{t}$ be an Iwahori subgroup. Let $\psi_1\in\frg$ be regular semisimple and $\psi=t\psi_1\in \frg\lr{t}$. We denote by $\Fl_\psi=\{g\in G\lr{t}/\bI|\Ad(g^{-1}\psi\in \Lie \bI)\}$ the affine Springer fiber of $\psi$ in the affine flag variety $\Fl=G\lr{t}/\bI$. Note that $\Fl_\psi$ has irreducible components in bijection with the extended affine Weyl group $\tilW=\xcoch(T)\rtimes W$, and each component has dimension equal to that of the flag variety $\cB$ of $G$ (in fact some components can be identified with $\cB$). It also carries a $\Gm$-action given by loop rotation (scaling $t$).

Just as the usual Springer fibers, the affine Springer fiber $\Fl_\psi$ can be embedded into a symplectic variety $\cM_{\psi}$ as a Lagrangian. Moreover, the $\Gm$-action on $\Fl_\psi$ extends to $\cM_\psi$, and all points of $\cM_\psi$ contracts to points in $\Fl_\psi$ under this action. This is not a general structure for any affine Springer fiber but  crucially relies on the $\Gm$-symmetry on $\Fl_\psi$. A more general construction of this kind is done in our previous paper  \cite{BBAMY}.

\sss{Kirillov category}

In geometric representation theory we often encounter sheaf-theoretic versions of Whittaker models whose definition involves an Artin-Schreier sheaf on the affine line, so a priori only makes sense over a characteristic $p$ base field, or in the context of $D$-modules where we replace Artin-Schreier sheaf by the exponential $D$-module. Gaitsgory \cite{Ga-Wh} gives a way to define such geometric Whittaker categories in any sheaf-theoretic context (in particular in the Betti setting), when the $\Ga$-action on the space involved extends to an action of $\Ga\rtimes\Gm$ (where $\Gm$ acts on $\Ga$ by scaling). We apply his construction \footnote{We will need a variant of his construction where $\Gm$-equivariance is replaced by $\Gm$-monodromicity, which we develop in \S\ref{ss:Kir alt}.} to the moduli stack $\Bun_G(\Jker, \bI_0)$ of $G$
-bundles over $\PP^1$ with Iwahori level structure at $0$ and the level structure given by the group $\Jker$, the kernel of the linear character $\J\to \frg\xr{\psi_1}\Ga$, where $\J=\ker(G\tl{t^{-1}}\to G)$ is the first congruence subgroup at $\infty$. The $\Ga$-action on $\Bun_G(\Jker, \bI_0)$ comes from the residual symmetry of $\J/\Jker\cong \Ga$; the $\Gm$-action is by rotation on $\PP^1$ fixing $0$ and $\infty$. If we do not impose Iwahori level structure at $0$ we get a moduli stack $\Bun_G(\Jker)$. Let
\begin{equation*}
    \DG=\Kir(\Bun_G(\Jker)), \quad
    \wt\cD_\psi=\Kir(\Bun_G(\Jker, \bI_0))
\end{equation*}
be the Kirillov categories of $\Bun_G(\Jker, \bI_0)$; they are localizations of $\Gm$-equviariant sheaves on $\Bun_G(\Jker)$ and $\Bun_G(\Jker, \bI_0)$  by killing sheaves that are also $\Ga$-equivariant. We can work with various sheaf-theoretic contexts (see \S\ref{sss:sheaves}), with characteristic zero coefficient field $E$.

Let $\dG$ be the Langlands dual group of $G$, as a split reductive group over $E$, equipped with a maximal split torus $\dT$. 


Our first result is a geometric Langlands type equivalence beyond tame ramification. It gives a description of $\cD_\psi$ in terms of coherent sheaves on a stack of Langlands parameters.

\begin{theorem}[See Theorem \ref{th1}]\label{th:intro Dpsi coh} There is a canonical equivalence
\begin{equation}\label{Dpsi coh}
    \cD_\psi\cong D^{b}(\Coh^{\dT}_{\cB^{\vee}}(\wt\dN)).
\end{equation}
Here
\begin{itemize}
\item The left side is the full subcategory of $\wt\cD_\psi$ generated by the image of the pullback $\DG\to \wt\cD_\psi$ under the action of the affine Hecke category on $\wt\cD_\psi$ (by modifying the bundles at $0$).
\item On the right side, $\Coh^{\dT}_{\cB^{\vee}}(\wt\dN)$ is the abelian category of $\dT$-equivariant coherent sheaves on the Springer resolution $\wt\dN=T^{*}\cB^{\vee}$ of the nilpotent cone of $\dG$ with set-theoretic support on the zero section $\cB^{\vee}$ (the flag variety of $\dG$).
\end{itemize}
The equivalence is compatible with the following actions:
\begin{enumerate}
    \item The actions of the constructible and coherent affine Hecke categories on both sides, under the monoidal equivalence of the two realizations of the affine Hecke categories \cite{B}.
    \item The actions of $\Rep(\dT)$ on the right side by tensoring, and on the left side via a monoidal functor $\Rep(\dT)\to \hHim$ to a certain depth one Hecke category $\hHim$ that acts on $\cD_\psi$ via modification of bundles at $\infty$.
\end{enumerate}
\end{theorem}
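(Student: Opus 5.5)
The plan is to build the equivalence from a single generator: find a distinguished object $\Delta_\psi\in\cD_\psi$, compute its endomorphism algebra with its Hecke and $\Rep(\dT)$ structures, and then identify $\cD_\psi$ with a module category on the coherent side. On the coherent side I will use the object $\cO_{\cB^\vee}$ (the structure sheaf of the zero section, pushed forward to $\wt\dN$). Its images under the coherent affine Hecke category of \cite{B} together with $\dT$-twists generate $D^b(\Coh^{\dT}_{\cB^\vee}(\wt\dN))$. The reason is that the convolution action of $D^b\Coh^{\dG}(\wt\dN\times_{\dg}\wt\dN)$ on $\cO_{\cB^\vee}$ realizes all $\cO_{\cB^\vee}(\l)\otimes V$, and coherent sheaves set-theoretically supported on $\cB^\vee$ are iterated extensions of pushforwards from $\cB^\vee$. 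On the constructible side, let $\Delta_\psi$ be the image in $\wt\cD_\psi$ of the Kirillov sheaf on the open cell of $\Bun_G(\Jker)$ (the cell where the bundle is trivial), pulled back to Iwahori level. By definition $\cD_\psi$ is generated by the affine Hecke translates of such pullbacks, so by the same argument $\Delta_\psi$ generates under the Hecke action.

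First I compute $\End^\bullet(\Delta_\psi)$ and, more precisely, the functor $\cH\to\cD_\psi$, $\cF\mapsto\cF\star\Delta_\psi$. The key input is that $\DG$ is equivalent to modules over $H^*_{\dT}$-type data. Bundles on $\PP^1$ with $\J$-level at $\infty$ and trivial type are a single $G\tl{t^{-1}}$-orbit modulo $\Jker$, so $\DG$ reduces to the Kirillov category of $G\times\Ga$ twisted by $\psi_1$. Since $\psi_1$ is regular semisimple, this is sheaves on $G/T$-like data, namely $T$-monodromic sheaves on a point. Hence $\Delta_\psi$ pulled to Iwahori level should look like the "antispherical/Whittaker" type object for the finite group twisted by the torus. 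I will then show that the functor of averaging from the spherical part matches $\Rep(\dG)$ acting via central sheaves on $\Delta_\psi$ with $\Rep(\dG)$ acting on $\cO_{\cB^\vee}$ by restriction. The depth one Hecke category $\hHim$ at $\infty$ supplies $\Rep(\dT)$ via Wakimoto-type/monodromic objects indexed by $\xcoch(T)$, and these match the twists $\cO(\l)$.

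Second, I will establish that the Hom spaces agree: $\Hom(\cF\star\Delta_\psi,\cF'\star\Delta_\psi)$ versus $\Hom(\cF\star\cO_{\cB^\vee},\cF'\star\cO_{\cB^\vee})$. By Hecke adjunction this reduces to computing $\Hom(\Delta_\psi,\cF\star\Delta_\psi)$ for $\cF$ in the affine Hecke category and comparing with $R\Gamma^{\dT}$ of the corresponding coherent object. For this I plan to use the geometry of $\Fl_\psi$: the stalks/costalks along the strata indexed by $\tilW$, together with the loop-rotation $\Gm$-symmetry, give purity/parity of these Hom spaces. Under the coherent side, $\Ext^\bullet(\cO_{\cB^\vee},\cO_{\cB^\vee}(\l))$ is likewise pure, being $\Sym$ of the tangent bundle on $\cB^\vee$. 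Both sides thus become formal, and it suffices to match graded dimensions and the algebra structure on a Wakimoto/standard basis. That can be done via the Bernstein presentation: the lattice part acts by $\Rep(\dT)$-twists and the finite part by Demazure-type functors.

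The main obstacle is this Hom computation, specifically showing that $\Hom(\Delta_\psi,\cJ_\l\star\Delta_\psi)$ (with $\cJ_\l$ Wakimoto sheaves) agrees with the $\dT$-weight-$\l$ part of $\Gamma(\wt\dN,\cO)$ restricted to the formal neighborhood of $\cB^\vee$, compatibly with multiplication. I would attack it by computing both sides through localization to $\Gm$-fixed points, i.e.\ the $\tilW$ components of $\Fl_\psi$, and an equivariant-formality argument. The compatibility of the $\hHim$-action at $\infty$ with the Iwahori-Hecke action at $0$ is the delicate point. I would handle it by proving that both actions factor through a common central $\Rep(\dT)$ action, using the Gaitsgory central functor to identify them on $\Delta_\psi$.
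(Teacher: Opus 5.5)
Your proposal has a genuine gap: nothing in it constructs a functor between the two sides. Knowing that $\Delta_\psi$ and $\cO_{\cB^\vee}$ are generators, that the Ext groups are pure, and that graded dimensions and products agree on Wakimoto/standard objects does not give an equivalence of $\cH_0$-module categories. You would need the whole functor $\cK\mapsto\RHom(\Delta_\psi,\Delta_\psi\star_0\cK)$ on $\cH_0$, natural in $\cK$ and compatible with composition. The Bernstein presentation only controls this on Grothendieck groups.

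By adjunction, $\RHom(\pi^*\d_0,\pi^*\d_0\star_0\cK)\cong\RHom_{\DG}(\d_0,\pi_*(\pi^*\d_0\star_0\cK))$. So this functor is governed by the action of the (derived) spherical category on $\DG$, not by fixed-point localization on $\Fl_\psi$.

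The paper obtains the functor from existing theory:
\begin{enumerate}
\item $\cD_\psi$ is generated by $\ph_0(\cF)=L_0\star_{\bG}\cF$ for $\cF\in D(\bG\bs G\lr{t}/\bI)$.
\item $D(\bG\bs G\lr{t}/\bI)$ is already identified with $D^b\Coh^{\dG}(\pt\times^{\bR}_{\dg}\wt\dN)$, compatibly with the Satake and affine Hecke actions (Theorem \ref{old equiv}).
\item The one new input is Theorem \ref{th:rho comm}: the Satake action on $\DG\simeq D^b(\Rep(\dT))$ is $\inv\circ\Res^{\dG}_{\dT}$. Hence $\ph_0$ factors through restriction to $0\in\dg$ and to $\dT$-equivariance.
\item This is made rigorous by Koszul duality for the noncommutative Springer resolution (Proposition \ref{barA}, Corollary \ref{Abar0}), which reduces to additive categories of semisimple complexes and projective modules.
\item Full faithfulness is checked by Hecke adjunction down to $\cF_0=\pi^*\d_0$, and then in $\DG$ via $(\pi^*,\pi_*)$. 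Formality comes from weights.
\end{enumerate}

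Several of the identifications you plan to match are also wrong.

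$\DG$ is not controlled by the trivial-type stratum. Its relevant points are the bundles $\wh\cE_\l$ for all $\l\in\xcoch(T)$, and every stratum is clean. So $\DG\simeq\bigoplus_\l D^b(\Vect)\simeq D^b(\Rep(\dT))$ is free of rank one over $\Rep(\dT)$ acting through $\hHim$ (Proposition \ref{p:DGr clean}, Corollary \ref{c:free over lattice}). The other strata are exactly where the spherical action at $0$ sends $\d_0$.

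More seriously, you conflate the two lattice actions. The $\Rep(\dT)$ coming from $\hHim$ at $\infty$ corresponds to twisting the $\dT$-equivariant structure by characters $E(\l)$, and it commutes with the entire affine Hecke action. The lattice part of the Bernstein presentation, the Wakimoto sheaves $J_\l$ at $0$, corresponds instead to the line bundles $\cO(\l)$ on $\wt\dN$. These agree only on special objects such as the $\dc_\l$ of \S\ref{sec:Wak_action}, which correspond to fibres over $\dT$-fixed points. That coincidence (Corollary \ref{Cor:wakvsHinfty}) is exactly how the paper proves Theorem \ref{th:rho comm}, via Wakimoto filtrations of Gaitsgory's central sheaves. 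It fails on $\cO_{\cB^\vee}$.

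There is also no ``common central $\Rep(\dT)$'' through which the Iwahori--Hecke action factors. The actions at $0$ and $\infty$ commute for free, since they are modifications at different points. What needs proof is that the spherical action on $\DG$ is restriction to $\dT$.

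Finally, $\Ext^\bullet_{\wt\dN}(\cO_{\cB^\vee},\cO_{\cB^\vee}\otimes\cO(\l))$ is computed from $\upH^\bullet(\cB^\vee,\Omega^\bullet_{\cB^\vee}\otimes\cO(\l))$ by the Koszul resolution of the zero section. It is not a symmetric algebra of the tangent bundle, nor a graded piece of functions on the formal neighbourhood.
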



Combining this result with known coherent realization of the category of modules over a quantum group we arrive at the following:

\begin{theorem}[See Corollary \ref{main_cor}(1)]\label{th:intro uq}
Let $\cP_\psi$ is the heart of the perverse $t$-structure on $\cD_{\psi}$ (see Theorem \ref{th:intro Dpsi coh}). Let $u_q^{\wh{0}}\lmod^{gr}$ be the principal block category in the category of $\xch(T)$-graded modules over the small quantum group $u_q$ at a root of unity. Then $\cP_\psi$ is equivalent 
to the category of finitely generated modules with nilpotent central character 
over the Koszul dual to $u_q^{\wh{0}}\lmod^{gr}$.
\end{theorem}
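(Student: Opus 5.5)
The plan is to deduce the statement from Theorem \ref{th:intro Dpsi coh} together with the coherent realization of the quantum group principal block due to Arkhipov--Bezrukavnikov--Ginzburg \cite{ABG}, refined by Tanisaki \cite{Tanisaki} for the graded setting.

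\textbf{Step 1: the coherent side.} \cite{ABG} gives an equivalence between $D^b(u_q^{\wh 0}\lmod^{gr})$ and $D^b(\Coh^{\dG\times\Gm}(\wt\dN))$. More relevant here is the version with $\dT$-equivariance replacing $\dG$-equivariance, which corresponds to $\xch(T)$-graded $u_q$-modules. In this version the $\Gm$-grading is exactly the Koszul grading. Forgetting the $\Gm$-equivariance, and keeping only the set-theoretic support condition on the zero section, passes from graded modules over $u_q^{\wh 0}$ to modules over its Koszul dual with nilpotent central character. Concretely:
\begin{itemize}
\item The dg algebra $\mathrm{Ext}^\bullet$ of the sum of simple objects (equivalently, of a tilting generator) computes the Koszul dual $A^!$ of the graded algebra $A$ governing $u_q^{\wh0}\lmod^{gr}$.
\item By formality and Koszulity (available from \cite{ABG}/\cite{Tanisaki}), $D^b(\Coh^{\dT}_{\cB^\vee}(\wt\dN))$ is equivalent to $D^b$ of finitely generated $A^!$-modules with nilpotent central character. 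Here the center contains $\cO(\dN)$-type functions, and support on $\cB^\vee$ means nilpotent action of the positive-degree part.
\end{itemize}

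\textbf{Step 2: matching t-structures.} By Theorem \ref{th:intro Dpsi coh} we then get $\cD_\psi\cong D^b(A^!\lmod_{\mathrm{nil}})$. It remains to show that this equivalence sends the perverse t-structure to the standard t-structure on $A^!$-modules, so that $\cP_\psi$ corresponds to the abelian category of such modules. Under the Bezrukavnikov equivalence \cite{B}, the perverse t-structure on the constructible side corresponds to the perversely exotic t-structure on the coherent side.

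The approach to this identification is as follows:
\begin{enumerate}
\item Identify the simple objects of $\cP_\psi$, indexed by $\tilW$-orbit data compatible with the components of $\Fl_\psi$.
\item Show that the images of the simples in $\Coh^{\dT}_{\cB^\vee}(\wt\dN)$ form a collection whose Ext algebra is $A^!$, with the Ext groups concentrated appropriately. Equivalently, show that the direct sum of projective covers in $\cP_\psi$ (or of a suitable generator) maps to the object whose endomorphism algebra is $A^!$ in degree $0$.
\end{enumerate}
This uses the compatibility with the affine Hecke action and with $\Rep(\dT)$, items (1) and (2) of Theorem \ref{th:intro Dpsi coh}. These reduce the check to one generating object, namely the image of the pullback from $\DG$, which should be a clean, Whittaker-type perverse sheaf corresponding to $\cO_{\cB^\vee}$ (or its twists).

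\textbf{Main obstacle.} The hard step is the t-exactness in Step 2. The perverse heart on the Kirillov side has no obvious description in coherent terms. I would first try to show that the generators of $\cP_\psi$ are given by convolving the $\DG$-object with Wakimoto/tilting objects. Their coherent images are then $\cO_{\cB^\vee}(\lambda)$ and their exotic-perverse transforms. I would then verify Ext-vanishing in negative degrees via the $\Gm$-weight (purity) argument, which gives a positivity statement in the mixed setting and hence identifies the hearts.
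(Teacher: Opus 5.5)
Your overall reduction is right: the statement is Theorem \ref{th1}(2) combined with Proposition \ref{barA}. Its real content is that the equivalence $\Th$ of Theorem \ref{th:intro Dpsi coh} carries the perverse t-structure on $\cD_\psi$ to the t-structure on $D^b\Coh^{\dT}_{\cB^\vee}(\wt\dN)$ whose heart is $\bar A\lmod^{\dT}_0$.

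Step 1, however, has the mechanism wrong. The passage from $D^b\Coh^{\dT}_{\cB^\vee}(\wt\dN)$ to $\bar A$-modules with nilpotent central character is a derived Morita equivalence given by the exotic tilting bundle $\cE$ of \cite{BeMi}, with $\bar A=\End(\cE|_{\wt\dN})^{op}$ and center $\cO(\dN)$. It is not a consequence of ``formality and Koszulity.'' The t-structure it matches with modules is the \emph{exotic} one, which you never pin down; the perversely exotic t-structure of \cite{B} lives on the Steinberg variety and plays no role here. Koszul duality enters only afterwards, to identify $\bar A^!$ with $A_0=A\otimes_{E[\dg]}E$ up to Morita equivalence. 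The identification $A_0\lmod^{\dT}\cong u_q^{\wh 0}\lmod^{gr}$ then comes from \cite{BeLo}, Lusztig's program and \cite{AG}. ``Forgetting the $\Gm$-equivariance'' does not turn a graded Koszul duality into a statement about ungraded modules.

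The genuine gap is Step 2, and the criteria you propose cannot establish t-exactness. Knowing that the images of some collection have the right Ext algebra, or that their negative Exts vanish by purity, is blind to autoequivalences. Tensoring by a line bundle, or any element of the affine braid group action, preserves Ext algebras but moves the exotic heart. Your own candidate generators show this. $\Th(\cF_0)=\cO_{\cB^\vee}$ is exotic, but $\star_0 J_\mu$ corresponds to $\otimes\,\cO(\mu)$. Already for $G=\SL_2$, one of $\cO_{\PP^1}(\pm 2)$ is only a shift of an exotic object, so the corresponding $\cF_0\star_0 J_\mu$ is not perverse. Projective covers are not available either: $\cP_\psi\cong\bar A\lmod^{\dT}_0$ has no nonzero projectives, because $\cO(\dN)^{\dT}$ is infinite-dimensional and simples therefore admit arbitrarily long self-extensions. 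In the paper, purity is used only for formality of the $\RHom$ algebras, never to match hearts.

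The missing idea is to transport the t-structure from $D_\bI(\Gr)$:
\begin{enumerate}
\item The equivalence $D_\bI(\Gr)\cong D^b(\bar A_0\lmod^{\dG})$ of Corollary \ref{Abar0} is t-exact (perverse versus standard), via \cite{BeLo} and \cite{ABG}.
\item The Whittaker averaging functor $\ph_0=L_0\star_\bG(-)\colon D_\bI(\Gr)\to\cD_\psi$ is t-exact.
\item In the proof of Theorem \ref{th1}(1), $\Th$ is built so that $\Th\circ\ph_0$ agrees with the t-exact functor $\ph_1$ (restriction along $\bar A\to\bar A_0$ and from $\dG$- to $\dT$-equivariance), whose essential image generates.
\end{enumerate}
This is how the paper proves Theorem \ref{th1}(2), and the statement then follows from Proposition \ref{barA}(1),(2).
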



\subsection{Microsheaves and microlocal fiber functor for Satake category}

As mentioned in the beginning, $\cD_{\psi}$ is an incarnation of a category of microsheaves on $\Fl_{\psi}$ in the classical sheaf-theoretic context. We now make this statement more precise. 

Following Nadler and Shende \cite{NS,NS2}, we consider the category $\muSh_{\Fl_\psi}(\cM_\psi)$ of microsheaves on $\cM_\psi$ supported on $\Fl_\psi$. More details on microsheaves will be recalled in \S\ref{ss:micro} and \S\ref{ss:micro Mpsi}. Let $\muSh_{\Fl_{\psi}, fs}(\cM_{\psi})$ be the full subcategory of $\muSh_{\Fl_\psi}(\cM_\psi)$ consisting of microsheaves supported on finitely many irreducible components of $\Fl_\psi$ and with cohomologically bounded and finite-dimensional microstalks.




A general construction for Kirillov categories provides a microlocalization functor
\begin{equation*}
\wt M: \wt\cD_{\psi}\to \muSh(\cM_{\psi}).
\end{equation*}

\begin{theorem}[See Theorem \ref{th:fullfaith} and Proposition \ref{p:finite ss char Dpsi}]\label{th:intro mic}
    The functor $\wt M$ restricts to a full embedding of categories
    \begin{equation}\label{micro Dpsi}
        M: \cD_\psi\incl \muSh_{\Fl_{\psi}, fs}(\cM_{\psi}).
    \end{equation}
    Moreover, $\cD_{\psi}$ is the full subcategory of $\wt\cD_{\psi}$ consisting of sheaves whose image under $\wt M$ lies in $\muSh_{\Fl_{\psi}, fs}(\cM_{\psi})$.
\end{theorem}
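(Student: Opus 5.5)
We plan to prove Theorem \ref{th:intro mic} by testing both sides on generators and matching morphism spaces via the Hecke actions. By Theorem \ref{th:intro Dpsi coh}, $\cD_\psi$ is generated by the pullback of $\DG$ under the affine Hecke category acting at $0$. Moreover $\DG$ should itself be generated by a small explicit collection: the Kirillov-Whittaker objects supported on the open $\J$-orbits, indexed by $\xcoch(T)$, twisted by the $\Rep(\dT)$-action through $\hHim$. So we reduce to a finite set of generators $\Delta_\lambda$, $\lambda\in\xcoch(T)$, and their Hecke translates. On the microlocal side, the shifted Hamiltonian reduction description of $\cM_\psi$ lets us compute $\wt M(\Delta_\lambda)$ explicitly. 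The $\Gm$-monodromic Kirillov condition kills $\Ga$-equivariant sheaves, which are exactly those with no singular support over the level $d\psi=\psi_1dt$ of the moment map. Hence $\wt M$ is well defined, as microlocalization of the singular support to the preimage of $d\psi$ followed by reduction.

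The first step is to check that $\wt M$ is compatible with the affine Hecke action at $0$. The Hecke category acts on $\muSh(\cM_\psi)$ by Lagrangian correspondences coming from $T^*(\Fl\times\Fl)$ restricted to the Steinberg-type locus, and microlocalization commutes with proper pushforward and smooth pullback under non-characteristic conditions. This gives $\wt M(\cF\star K)\cong \wt M(\cF)\star K$. The second step is to show that $\wt M(\Delta_\lambda)$ is the microsheaf supported on a single component of $\Fl_\psi$ isomorphic to $\cB$ (the component indexed by $\lambda$), with rank-one microstalk. This identification follows from the $\Gm$-contraction of $\cM_\psi$ onto $\Fl_\psi$. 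Together these show that $\wt M(\cD_\psi)$ lands in $\muSh_{\Fl_\psi,fs}(\cM_\psi)$, because Hecke convolution with a finite-support kernel moves the support across only finitely many components and preserves finite-dimensional microstalks.

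For full faithfulness, the Hecke compatibility lets us use adjunction to reduce to computing $\Hom(\Delta_\lambda,\Delta_\mu\star K)$ on both sides. The main obstacle is this computation: we must show that the Kirillov category, which is a localization, has the same Hom spaces as microsheaves. Our first attempt will be a $\Gm$-contraction (hyperbolic localization) argument. Since every point of $\cM_\psi$ flows to $\Fl_\psi$, microsheaf Homs can be computed as global sections of $\muhom$ on the Lagrangian. On the Kirillov side, Homs in the localization can be computed via $\Gm$-monodromic vanishing cycles along the $\Ga$-direction, i.e. Fourier transform, which is literally the microstalk at $d\psi$. Matching these two computations naturally gives the isomorphism. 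As a consistency check, on the coherent side both should equal $\Hom$ in $D^b(\Coh^{\dT}_{\cB^\vee}(\wt\dN))$.

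For the final characterization, let $\cF\in\wt\cD_\psi$ with $\wt M(\cF)$ of finite support and finite microstalks. We would show that $\cF$ lies in $\cD_\psi$ by induction on the number of components in its support, filtering by closed unions of $\bI$-orbit closures. At each step, the microstalk at the generic point of a top component is realized by a map from a Hecke translate of some $\Delta_\lambda$, whose cone has strictly smaller support. Objects killed by $\wt M$ vanish in the Kirillov quotient by the first step, so the induction closes.
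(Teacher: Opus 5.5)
Your proposal has genuine gaps in the two steps that carry the content.

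\emph{Full faithfulness.} Identifying the microstalk at $d\psi$ with monodromic vanishing cycles along the $\Ga$-direction does not compare Hom spaces. Up to the $\Gm$-equivariance, $\Kir$ is essentially the value of the presheaf $\Om\mapsto \Sh(N)/\Sh_{T^*N-\Om}(N)$ on the open set $\{\mu_{\Ga}\neq 0\}$. By contrast, $\muSh(\cM_\psi)$ is built from the sheafification of that presheaf. So whether $\Hom_{\wt\cD_\psi}(\cF,\cG)\to \upH^0(\cM_\psi,\muhom(\cF,\cG))$ is an isomorphism is a local-to-global question, and neither the contracting $\Gm$-action nor hyperbolic localization settles it.

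The missing idea is to push your reduction one step further. The Hecke adjunctions for $\star\IC_w$ with $\ell(w)\le 1$ reduce you to $\cF=\cF_\l=\pi^*\d_\l$; their microlocal counterpart is the Kashiwara--Schapira adjunction for $\muhom$ under the smooth proper maps $\pi_s$. Then use $\pi^*\dashv\pi_*$ for the smooth proper projection $\pi\colon\Bun_G(\Jker,\bI_0)\to\Bun_G(\Jker)$, again on both sides, to land in $\DG$ and $\cM_{\psi,\bG}$. There nothing needs to be glued:
\begin{itemize}
\item $\cM_{\psi,\bG}$ is a discrete set indexed by $\xcoch(T)$ (Lemma~\ref{l:Gr ASF discrete}).
\item $\DG\cong\bigoplus_{\l}\Kir(\Ga)$, because the $\d_\l$ are clean and mutually orthogonal (Proposition~\ref{p:DGr clean}). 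This is proved by translating with the invertible $\wh\D_\mu$ at $\infty$ so that one object becomes $\d_0$, the only $\d_\l$ supported on an open stratum.
\item $M_{\bG}(\d_\l)$ is the skyscraper at $(\cE_\l,\psi_1dt)$ with one-dimensional microstalk, so $M_{\bG}$ is an equivalence (Theorem~\ref{th:MG equiv}).
\end{itemize}
That last computation is also what your second step needs, and it does not come from the contraction of $\cM_\psi$ onto $\Fl_\psi$. One restricts to a transversal slice at $t^\nu$, contracts it by the cocharacter $-\nu$ of $T$ onto a line $\fra_\nu$, and uses that $\fra_\nu$ misses the stratum of $\d_\l$ for $\l\neq\nu$.

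\emph{The converse.} Filtering by $\bI$-orbit closures is not adapted to this stack. Objects of $\wt\cD_\psi$ live on $G[t^{-1}]^{\psi}\bs\Fl$, so their supports are $G[t^{-1}]^{\psi}$-stable, and the relevant strata are the $G[t^{-1}]^1$-saturations of the Hessenberg varieties $\cH_\psi(\l)$ (Lemma~\ref{Relv pts Fl}).

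More seriously, nothing guarantees that the microstalk of $\cF$ at a generic point of a top component is realized by a single map from a Hecke translate of some $\cF_\l$ whose cone has fewer components. Maps out of $\cF_\l$ compute $\upH^*((G/B)_\l,\mu_\l(\cF))$, which is global sections over the component, not a generic microstalk. Moreover, Hecke translates of $\cF_\l$ have singular support spread over many components. So a cone can acquire new components, and your induction measure need not drop.

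The paper argues in the perverse heart instead:
\begin{itemize}
\item The functors $\mu_\l\cong\muhom(\pi^*\d_\l,-)$ are t-exact, so one may take $\cF$ perverse and induct on length.
\item If an open part of $\Supp\cF$ lies over $\cH_\psi(\l)$, then $\mu_{\l'}(\cF)\neq0$ for some $\l'\in W\l$.
\item Translating at $\infty$ by some $\wh\D_\nu$ shifts the $\mu_\l$ (Lemma~\ref{micro_action}(2)). A suitable $\nu$ makes every $\l$ in the finite combinatorial support regular. Then the relevant $\Grot$-fixed points are isolated, and $\Supp\cF$ contains the $G[t^{-1}]^1$-orbit of such a point as an open piece.
\item The IC-extension of the irreducible Kirillov object on that orbit lies in $\cD_\psi$. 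It occurs as a direct summand after a chain of $\IC_s$-convolutions applied to some $\pi^*\d_\l$ with $\l$ deep in a Weyl chamber.
\end{itemize}
This produces a composition factor of $\cF$ in $\cD_\psi$ and closes the induction.
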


We expect the full embedding \eqref{micro Dpsi} to be an equivalence. We plan to come back to this question later.

\sss{Microlocal fiber functor for the Satake category}
Along the way we also prove a result about the Satake category $\Hsh=\Perv(G\tl{t}\bs G\lr{t}/G\tl{t})$ that is simple to state and is of independent interest. For $\l\in \xcoch(T)$, we view $(t^\l, \psi)$ as a point in the cotangent bundle of $\Gr=G\lr{t}/G\tl{t}$ at the point  $t^\l$. Let $\mu_\l: \Hsh\to \Vect_E$ be the functor of taking microstalk (or local Morse group) at $(t^\l, \psi)$. This functor was considered by Evans and Mirkovi\'c \cite{EM} who showed that the dimension of $\mu_\l(\IC_\mu)$ (where $\IC_\mu\in \Hsh$ is the intersection complex of the Schubert variety $\Gr_{\le \mu}$) is the multiplicity of $\l$ in the irreducible $\dG$-module $V_\mu$ with highest weight $\mu$. Our next result gives a categorification of their numerical result; this was conjectured
(in a somewhat different form) in \cite{FK}.

\begin{theorem}
    The direct sum of the microstalks functors
    \begin{equation*}
        \mu=\bigoplus_{\l\in \xcoch(T)}\mu_\l: \Hsh\to \Rep(\dT)\cong \Vect_{\xcoch(T)}
    \end{equation*}
    has a canonical monoidal  structure. Under the geometric Satake equivalence $\Hsh\cong \Rep(\dG)$, $\mu$ is canonically isomorphic to the restriction functor $\Rep(\dG)\to \Rep(\dT)$ up to composing with the inversion on $\dT$. 
    
    In particular, composing $\mu$ with the forgetful functor, the resulting functor  $\mu': \Hsh\to\Vect$ has a canonical structure of a fiber  functor for the Tannakian category $\Hsh$.
\end{theorem}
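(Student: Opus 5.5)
We would obtain the monoidal structure on $\mu$ by realizing it through the Kirillov category and the microlocalization functor $\wt M$, rather than by working with microstalks on $\Gr$ directly. The Satake category $\Hsh$ acts on $\DG=\Kir(\Bun_G(\Jker))$ by Hecke modifications at $0$. Pulling back the distinguished object (the Whittaker-type sheaf supported on the open $\J$-orbit, i.e.\ the unit $\d$ of $\DG$) gives a functor $\Hsh\to\DG$, $\cF\mapsto \cF\star\d$. The first step is to show that $\DG$ is equivalent, via the coherent description in Theorem \ref{th:intro Dpsi coh} (its unramified version, obtained by taking the image of the spherical part), to $D^b(\Coh^{\dT}(\pt))\cong D^b(\Rep(\dT))$. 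Under this equivalence the functor $\cF\mapsto\cF\star\d$ should correspond to restriction $\Rep(\dG)\to\Rep(\dT)$, because the coherent side is $\wt\dN$ restricted over the point $\{0\}$ of the base, namely $\dT$-equivariant sheaves at a point. The monoidal structure comes from the action of $\Rep(\dT)$ at $\infty$: by compatibility (2) of Theorem \ref{th:intro Dpsi coh}, $\Rep(\dT)$ acts through $\hHim$, and the spherical Hecke action at $0$ commutes with it. This makes $\cF\mapsto \cF\star\d$ a central functor whose value on $\d$ lies in the $\Rep(\dT)$-orbit of $\d$, which gives the tensor structure.

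Second, we identify this functor with $\bigoplus_\l\mu_\l$. The stack $\Bun_G(\Jker)$ has $\Gm\ltimes\Ga$-stable strata indexed by $\xcoch(T)$, namely the $\J$-orbits through $t^\l$ on $\Gr$. Applying $\wt M$, the Kirillov localization at the stratum $\l$ computes the microstalk of $\cF$ at the covector $(t^\l,\psi)$. This is a vanishing-cycles (Morse-group) computation for the linear function $\langle\psi,\cdot\rangle$ along the $\J$-orbit, and it reduces to a Gaitsgory-style Kirillov computation. The conormal geometry matches because the $\J$-orbit of $t^\l$ meets the conormal direction $\psi$ transversally, which is exactly the setup of Evans--Mirkovi\'c. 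As a result, the $\l$-graded piece of $\cF\star\d$ is $\mu_\l(\cF)$, exactness follows, and the dimension count recovers \cite{EM}. The inversion on $\dT$ enters through the orientation convention $t^\l$ versus $t^{-\l}$ when we pass from the modification at $0$ to the $\Rep(\dT)$-grading at $\infty$.

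Third, we fix the isomorphism with restriction. A monoidal functor $\Rep(\dG)\to\Rep(\dT)$ that agrees on characters with restriction, composed with the forgetful functor, is a fiber functor. It is therefore isomorphic to the standard fiber functor, and it corresponds to a homomorphism $\dT\to\dG$ up to conjugacy. The weight multiplicities of \cite{EM} force this homomorphism to be an embedding of a maximal torus, up to the inversion. Canonicity of the isomorphism requires pinning the torus, and we do this by comparing with the weight functors of Mirkovi\'c--Vilonen, using a $\Gm$-degeneration of $\psi$ to the hyperbolic localization direction. The fiber-functor statement for $\mu'$ then follows formally.

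The main obstacle is the first step: proving that the spherical Hecke action on $\d$ is compatible with the $\Rep(\dT)$-action at $\infty$ as a monoidal (commutativity/associativity) structure, and not merely objectwise. We would first try to construct this compatibility via fusion. We would let the modification point move from $0$ toward $\infty$ on $\PP^1$, and use the fact that, near $\infty$, the spherical Hecke functors on Kirillov sheaves with $\Jker$-level degenerate to the $\Rep(\dT)$-action.
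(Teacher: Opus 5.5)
\textbf{There is a genuine gap: the identification with restriction is never established.}

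Two parts of your plan do match the paper. The monoidal structure on $\r$ is formal, because $\DG$ is free of rank one over $D^b(\Rep(\dT))$ (acting at $\infty$ through $\hHim$) with generator $\d_0$, and this action commutes with the spherical action at $0$ (\S\ref{sss:def rho}). Your second step is also the paper's: the $\l$-component $\RHom(\d_\l,\d_0\star_0\cK)$ is identified with $\phi(\psi_{\l!}\s_\l^*\cK)$ on the slice $\Sig^{\Gr}_\l$, and then with the renormalized microstalk at $(t^\l,\psi)$ (Lemma \ref{l:rho lam}, Proposition \ref{p:rho ms}). For that step you still need the contraction argument of Proposition \ref{p:aff space van cycle}: $\Grot$ contracts $\Sig_{\l,N}$ to $t^\l$, and $\psi_\l$ has the minimal weight. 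This is what turns a global pushforward into a local Morse group, but it is a detail.

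The gap is the central claim: that this functor preserves hearts and is isomorphic to $\inv\c\Res^{\dG}_{\dT}$ as a monoidal functor.

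\begin{itemize}
\item Your first route deduces this from the coherent description, Theorem \ref{th:intro Dpsi coh}, and that is circular. The proof of Theorem \ref{th1} invokes Theorem \ref{th:rho comm}(2) precisely to know that the $\Hsh$-action factors through pullback to $0\in\Lg$ followed by restriction to $\dT$. There is no independently proved ``unramified version'' to fall back on.
\item Your fallback is a heuristic: fusion with the modification point moving to $\infty$, where spherical functors should ``degenerate'' to the $\Rep(\dT)$-action. Making it precise means constructing and computing a central functor $\Hs\to\hHim$ at level $(\J,\psi)$, which is as hard as the theorem itself. The paper mentions a moving-points argument only as a possible alternative.
\item Your third step cannot fill the hole. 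A merely monoidal functor $\Rep(\dG)\to\Vect$ need not respect the commutativity constraints; twists by $2$-cocycles on the weight lattice already give non-symmetric examples. So Tannakian reconstruction does not apply until symmetry is proved, and agreement of characters (the Evans--Mirkovi\'c count) does not determine a tensor functor.
\item ``Exactness follows'' is not justified by what you wrote. In the paper, t-exactness (Theorem \ref{th:rho comm}(1)) comes from the Wakimoto argument below.
\end{itemize}

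\textbf{What the paper does instead.} The missing idea is to pass to Iwahori level at $0$ and use Gaitsgory's central functor $\cZ$, which avoids commutativity constraints altogether.
\begin{itemize}
\item It introduces $\dc_\l={}^T\D^T_{-\l}\star_\infty\dc_0\in{}^T\wt\cD^\hs_\psi$ and the category ${}^T\cC$ of their successive extensions, which carry canonical filtrations (Lemma \ref{l:filC}).
\item It shows $\dc_\l\star_0J_\mu\cong\dc_{\l+\mu}$, compatibly with the action at $\infty$ (Lemma \ref{l:Wak act dom}, Proposition \ref{Prop:wakvsHinfty}). Hence $\Wak$ acts on ${}^T\cC$ compatibly with $\gr$. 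On ${}^T\cC^{ss}$, both $\Wak^{ss}$ and $\Hinfhs$ act freely of rank one, with $J_\mu$ matching ${}^T\D^T_{-\mu}$; this is where the inversion comes from.
\item It shows ${}^T\pi_*:{}^T\cC^{ss}\isom{}^T\DG^\hs$ and ${}^T\pi_*(\cG\star_0\cZ(\cK))\cong({}^T\pi_*\cG)\star_0\cK$ (Lemmas \ref{l:Css equiv DGr}, \ref{l:Wak diagram}).
\end{itemize}
Since $\cZ(\Hsh)\subset\Wak$ and Arkhipov--Bezrukavnikov give a canonical monoidal isomorphism $\gr\c\cZ\cong\Res^{\dG}_{\dT}$, one obtains $\r^\hs\cong\inv\c\Res^{\dG}_{\dT}$ canonically as monoidal functors, with t-exactness included. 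The fiber-functor structure on $\mu'$ is then transported from $\Res^{\dG}_{\dT}$, and no pinning via Mirkovi\'c--Vilonen weight functors is needed.
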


\subsection{Applications}

\sss{Consequences for $K$-groups} Consequences of theorems in  \S\ref{ss:intro main} at the level of Grothendieck groups are
used in the recent work of Feng-Le Hung \cite{FLH} that solves cases of the Breuil-M\'ezard conjecture on the deformation space of Galois representations. The precise results used in {\em loc.cit.} are:

\begin{theorem}[See Theorem \ref{th:conseq K}]\label{th:intro conseq K}
    There is a canonical isomorphism of abelian groups
    \begin{equation*}        K_0(\Coh^{\dT}_{\cB^{\vee}}(\wt\dN))\to \homog{2N}{\Fl_\psi,\ZZ}
    \end{equation*}
    where $N=\dim \Fl_\psi$, 
    that intertwines both the left and right actions of the affine Weyl group $\tilW=\xcoch(T)\rtimes W$ on both sides:
    \begin{enumerate}
        \item the left side is equipped with a left action of $\tilW$ by tensoring with $\Rep(\dT)$ and the action of $N_{\dG}(\dT)$; it is also equipped with a right $\tilW$-action from convolution by the equivariant $K$-group of the Steinberg variety of $\dG$.
        \item The right side is equipped with a left $\tilW$-action by translation of the centralizer of $\psi$ on $\Fl_\psi$ and by the monodromy action by varying $\psi$; it is also equipped with a Springer type right $\tilW$-action constructed by Lusztig (up to tensoring by the sign character). 
    \end{enumerate}
\end{theorem}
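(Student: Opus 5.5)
We derive the isomorphism by passing to Grothendieck groups in the equivalence of Theorem \ref{th:intro Dpsi coh} and then identifying $K_0(\cD_\psi)$ with the top Borel--Moore homology of $\Fl_\psi$ through the microlocalization of Theorem \ref{th:intro mic}.

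\textbf{Step 1: $K$-groups on the coherent side.} Theorem \ref{th:intro Dpsi coh} gives $K_0(\Coh^{\dT}_{\cB^{\vee}}(\wt\dN))\cong K_0(\cD_\psi)$. Since $\cP_\psi$ is the heart of a bounded $t$-structure, the right side equals $K_0(\cP_\psi)$. By Theorem \ref{th:intro uq} and finite length, this is the free abelian group on the simple objects of $\cP_\psi$.

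\textbf{Step 2: characteristic cycles.} For $\cF\in\cD_\psi$, Theorem \ref{th:intro mic} says $M(\cF)$ is a microsheaf supported on finitely many irreducible components $C_{\tilde w}$ of $\Fl_\psi$, with finite-dimensional bounded microstalks. Define
\[
\cF\mapsto\sum_{\tilde w\in\tilW}\chi\big(\text{microstalk of } M(\cF) \text{ at a generic point of } C_{\tilde w}\big)\,[C_{\tilde w}]\in \homog{2N}{\Fl_\psi,\ZZ}.
\]
The microstalk at a smooth point of the Lagrangian is exact on triangles, so this map factors through $K_0$. Here we use that $\homog{2N}{\Fl_\psi,\ZZ}$ is free on the fundamental classes of the equidimensional components; a sum of such classes is the kind of element we mean, even though infinite sums do not occur.

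\textbf{Step 3: bijectivity (main obstacle).} I would show that the characteristic cycle map is unitriangular in a suitable ordering of $\tilW$. The plan is to match simple objects of $\cP_\psi$ with the components $C_{\tilde w}$, using the standard objects produced by the affine Hecke action on the image of $\DG$. First I would check that a standard object indexed by $\tilde w$ has microstalk of rank one on $C_{\tilde w}$ and is supported on components that are smaller in a Bruhat-type order; this is where the $\Gm$-contraction of $\cM_\psi$ onto $\Fl_\psi$ and the Evans--Mirkovi\'c-type microstalk calculations enter. On the coherent side, the corresponding count is that simple objects are indexed by $\tilW$, matching the irreducible components. The worry is that the order need not be well-founded in both directions; if so, I would instead argue degreewise in the $\xcoch(T)$-grading.

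\textbf{Step 4: equivariance.} I would check each $\tilW$-action separately.

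The $\Rep(\dT)$ action corresponds, via the depth-one Hecke category $\hHim$ acting at $\infty$, to translation by the centralizer lattice $\xcoch(T)$ on $\Fl_\psi$. The $N_{\dG}(\dT)$, or $W$, part corresponds to monodromy in $\psi_1$ over $\frg^{\rs}$, since varying $\psi_1$ moves the equivalence continuously.

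The right actions come from the affine Hecke compatibility in Theorem \ref{th:intro Dpsi coh}(1). On $K$-groups, the coherent affine Hecke category gives $K^{\dG}(\St)$, which specializes to the group ring of $\tilW$. On the constructible side, convolution at $0$ induces Lusztig's Springer action on $\homog{}{\Fl_\psi}$. The sign twist appears because the simple reflections act on characteristic cycles through $-$(IC of $\PP^1$), that is, through $T_s$ specialized at $q=1$ up to sign.

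Compatibility of characteristic cycles with convolution is a standard microlocal statement. It should follow from a proper base change statement for $M$.
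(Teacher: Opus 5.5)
Steps 1--2 are fine; the paper likewise defines $CC$ on $K(\cD_\psi)$ using Lemma \ref{l:SS finite} and composes it with $\Th$.

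The genuine gap is Step 3, which as written is a hope rather than an argument. You never fix a family of objects of $\cD_\psi$ indexed by $\tilW$. The natural candidates $\cF_\l\star_0\IC_w$ are indexed by $\xcoch(T)\times\tilW$ and are heavily redundant, since central sheaves relate different $\l$. Beyond $CC(\cF_\l)=[(G/B)_\l]$, you compute no characteristic cycle. Nothing suggests that $CC(\cF_\l\star_0\IC_w)$, which is $[(G/B)_\l]$ acted on by an element of $\ZZ[\tilW]$ through the Springer action, has a single leading component in any order. You also do not show that such a family spans $K_0$. Matching the index sets of simples and components proves nothing, because both groups have countably infinite rank.

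The missing idea is to treat the lattice as scalars. Both $K_0(\Coh^{\dT}_{\cB^{\vee}}(\wt\dN))\cong K_0(\Coh^{\dT}(\cB^\vee))$ and $\homog{\tp}{\Fl_\psi,\ZZ}$ are free $\ZZ[\xcoch(T)]$-modules of rank $|W|$, and $CC\c\Th$ is $\ZZ[\xcoch(T)]$-linear. A surjection between free modules of the same finite rank over a commutative ring is an isomorphism, so surjectivity suffices. Surjectivity is then a generation statement, not a triangularity statement. After reducing to $G$ with connected center, the Springer orbit of $CC(\cF_0)=[(G/B)_0]$ contains the sum of the components lying over each Hessenberg variety $\cH_\psi(\l)$. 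For $w_\l$ in the fundamental box, $\cH_\psi(\l)$ is connected, and these components generate under lattice translations and length-zero elements.

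Lattice-linearity of $CC$ should not be asserted directly, because the lattice does not preserve the Liouville structure (see \S\ref{sss:latticeaction}). Instead it follows because $K(\cD_\psi)$ is spanned by the classes $[\cF_\l]\cdot w$, on which linearity is evident, and $CC$ is right $\tilW$-equivariant.

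The left $W$-equivariance in Step 4 has the same problem. The phrase ``varying $\psi_1$ moves the equivalence continuously'' presupposes a version of $\Th$ in families over $\frt^{*,\rs}$ that intertwines $N_{\dG}(\dT)$ with the $N_G(T)$-monodromy. You do not construct this, and neither does the paper. The paper sidesteps it as follows. With connected center, both sides are cyclic under the left lattice and right $\tilW$-actions, generated by $[\cO_{\cB^{\vee}}]$ and $CC(\cF_0)=CC\c\Th([\cO_{\cB^{\vee}}])$. Both generators are fixed by the left $W$-action. Since that action commutes with the right $\tilW$-action and normalizes the lattice, equivariance on the generator propagates to everything.

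Finally, for the right action, proper base change only gives you the pull--push operator $(d\pi_s)_*(\pi_{s,\na})^*(\pi_{s,\na})_*(d\pi_s)^*$ on top homology. Identifying it with Lusztig's $s-1$ needs the local model through $[\cN_{L_s}/L_s]$, as in Proposition \ref{p:CC Springer}. That model uses the smoothness of $\cM'_{\psi,\bP_s}\to[\cN_{L_s}/L_s]$ and the $\PP^1\times\PP^1$ component of the Steinberg variety of $L_s$.
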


\sss{Cohomology of affine Springer fiber and center of small quantum group}

Another application is to the  study of the center of small quantum group, see \cite{LQ} and \cite{BBSV}. The main result of \cite{BBSV} is an injective homomorphism (conjecturally an isomorphism) from $\upH^*(\Fl_\psi)$ to the center (endomorphisms of the identity functor) of $u_q^{\wh{0}}\lmod^{gr}$.

Theorem \ref{th:intro uq} provides a new way to construct and study this homomorphism. Namely, we have a natural map from $\upH^*(\Fl_\psi)$ to
endomorphisms of the identity functor in $\muSh_{\Fl_{\psi}, fs}(\cM_{\psi})$
preserving the homological degree. Using the purity of $\upH^*(\Fl_\psi)$
one can check that Koszul duality sends this action to an action
by degree zero endomorphisms of the Koszul dual triangulated category resulting
in a homomorphism $\upH^*(\Fl_\psi)\to \End(\textup{Id}_{u_q^{\wh{0}}\lmod^{gr}})$.
We plan to develop this application elsewhere.


\subsection{Further directions}

\sss{Coherent description of all microsheaves on $\Fl_\psi$}\label{sss:intro whole mush} We expect Theorem \ref{th:intro Dpsi coh} to extend to a full embedding from the larger category $\muSh_{\Fl_\psi}(\cM_\psi)$ to the category of coherent sheaves on a Betti moduli space for $\dG$, a special case of the Betti moduli spaces introduced in \cite{Trinh}, see also \cite[Section 4]{BBAMY}. 

Let $B^{\vee,+}$ and $B^{\vee, -}$ be a pair of opposite Borel subgroups of $\dG$ such that $B^{\vee,+}\cap B^{\vee,-}=\dT$. Consider the "big cell" 
\begin{equation*}
    \cW=B^{\vee,-} B^{\vee,+}\subset \dG.
\end{equation*}
Let $\dU\subset \dG$ be the unipotent variety and $\cW_0=\cW\cap \dU$. Let $\wt\cW_0=\wt\dU|_{\cW_0}$ be the restriction of the Springer resolution $\wt\dU$ to $\cW_0$. The maximal torus $\dT$ acts on $\cW_0$ and $\wt\cW_0$ by conjugation.

\begin{conj}\label{conj:mush-IndCoh}
    There is a full embedding
    \begin{equation}\label{conj W}
        \muSh_{\Fl_\psi}(\cM_\psi)\incl \QCoh^{\dT}(\wt\cW_0).
    \end{equation}
    extending the equivalence \eqref{Dpsi coh} in Theorem \ref{th:intro Dpsi coh} in the following sense:
\begin{itemize}
\item The Kirillov side $\cD_{\psi}$ of \eqref{Dpsi coh} is viewed as full subcategory of $\muSh_{\Fl_\psi}(\cM_\psi)$ by Theorem \ref{th:intro mic};
\item The coherent side $D^b(\Coh^{\dT}_{\cB^\vee}(\wt\dN))$ of \eqref{Dpsi coh} is identified with a full subcategory of $\QCoh^{\dT}(\wt\cW_0)$ supported on $\cB^\vee\incl \wt\cW_0$ (fiber over $1\in \cW_0$). 
\end{itemize}
Moreover, the embedding \eqref{conj W} intertwines the natural actions of $\xcoch(T)$ on both sides.
\end{conj}
The lattice $\xcoch(T)$ acts on the symplectic variety $\cM_\psi$ preserving its Lagrangian $\Fl_\psi$ (see Section \ref{ss:lattice action}), and the statement that it induces an action on $\muSh_{\Fl_\psi}(\cM_\psi)$ is part of the conjecture (see Section \ref{sss:latticeaction}). On the coherent side $\xcoch(T)$ acts by tensoring via the equivalence $\xcoch(T)=\Rep(\dT)$. 

\sss{Other homogeneous affine Springer fibers} In \cite[Conjecture 5.0.2]{BBAMY} we formulated a more general conjecture for any homogeneous element $\psi$ in the loop Lie algebra, of which Conjecture \ref{conj W} is a special case. To connect the two, we observe that $[\wt\cW_0/\Ad(\dT)]$ is the Betti moduli space denoted by $\cM_{\textup{Bet}, \psi}^{0,\dG}$ in {\em loc.cit.}, because the braid attached to the slope one element $\psi$ is the full twist $\b=w_0^2$ (see \cite[Example 4.1.3]{BBAMY}).

In a sequel to this work, we will prove generalizations of Theorem \ref{th:intro Dpsi coh} to homogeneous elements $\psi$ with slope of the form $1/m$.

\sss{Microsheaves on usual Springer fibers}\label{on usual} Let $e$ be a nilpotent element in $\frg$, $S_e\subset \cN$ be the (nilpotent part of the) Slodowy slice through $e$ and $\wt S_e$ be the preimage of $S_e$ under the Springer resolution. Then the Springer fiber $\cB_e$ is a conical Lagrangian in $\wt S_e$, and we may consider the category of microsheaves $\muSh_{\cB_e}(\wt S_e)$. Our construction in this paper suggests a sheaf-theoretic model for $\muSh_{\cB_e}(\wt S_e)$, namely a full subcategory in the category $\Kir(V_e\bs\cB)$, where $V_e$ is the kernel of
the character $\psi_e: U_e\to \Ga$
for a Premet subgroup $U_e$  and its character $\psi_e$ whose differential equals the linear functional defined by $e$. For example, if $e$ is even and $\{e,h,f\}$ is an $\sl_{2}$-triple, then $\Lie U_e$ is the sum of negative weight spaces of $\frg$ under $h$. 
We expect this category to be equivalent to the category of finite dimensional modules
over the corresponding $W$-algebra with an integral regular central character. Notice
that this category is closely connected to the $D$-modules (or Harish-Chandra bimodules) realization of the asymptotic Hecke category \cite{LO}  and to character
$D$-modules \cite{BFO}. We expect the above topological realization to yield a generalization
of these results to the constructible setting.

\sss{Characteristic $p$ version} We expect that our results can be extended to the setting when the coefficient field $E$ has positive characteristic. The analogue of Theorem \ref{th:intro uq} is a Koszul duality between the category of microsheaves and a regular block in the category
of $(\dG_1,\dT)$-modules where $\dG_1$ is the kernel of the Frobenius morphism for the reductive
group $\dG$ over $E$. Notice that a key ingredient in the proof of
Theorem \ref{th:intro uq}, the relation of modules over the Lusztig quantum group and perverse
sheaves on the affine Grassmannian \cite{ABG} has a positive characteristic analogue known as the Finkelberg--Mirkovi\' c conjecture, which was established
in \cite{BeRi} with some restrictions on $\ch(E)$. However, the proof of Theorem \ref{th:intro uq} relies also on the formality from weights
argument which is not available in positive characteristic, thus the generalization requires
a different technical implementation of the strategy outlined in Remark \ref{heur}.

\sss{Coherent duality for affine Springer fibers}
We briefly describe a heuristic relation of our results and the more general 
\cite[Conjecture 1.4.1]{BBAMY} to the conjectural equivalence between two coherent categorifications
of the double affine Hecke algebra (DAHA), which is a local counterpart of the classical limit
of the geometric Langlands duality. 

Varagnolo and Vasserot \cite[Theorem 2.5.6]{VV} introduced a categorification of DAHA coming from the derived category of equivariant coherent sheaves on the loop group analogue of the Steinberg variety (a similar construction
for the spherical, degenerate DAHA appeared in \cite{BFM}). One expects that the monoidal categories constructed this way from two Langlands dual group $G$, $G^\vee$ are naturally equivalent, see 
\cite[\S 7.9]{BFM} for the similar conjecture in the spherical case. Next, this equivalence of monoidal
categories should come with compatible equivalence of module categories as follows.

We fix an invariant quadratic form yielding a vector space isomorphism $\ft \cong \ft^\vee$ and 
an isomorphism of Chevalley spaces $\ft/W \cong \ft^\vee/W$. Let $e\in \g((t))$ and $e^\vee\in \g^\vee((t))$ be regular semisimple topologically nilpotent elements whose images in the Chevalley space
match under this isomorphism. Then one expects an equivalence
\[ \label{coh_dual}
DCoh(\Fl_e) \cong DCoh^{Z_{e^\vee}}(\Fl_{e^\vee})
\]
where $Z_{e^\vee}$ is the centralizer of $e^\vee$ and $\Fl_e$, $\Fl _{e^\vee}$ are the affine Springer fibers; \eqref{coh_dual} should be an equivalence of module
categories over the two coherent categorifications of DAHA.

The heuristic of $S^1$-localization proposed by Ben-Zvi and Nadler \cite{BZN} suggests that
\eqref{coh_dual} should imply another equivalence of categories obtained, respectively, from the left
hand side by quantization and from the right hand side by passing to the similar object for the
finite dimensional group $G^\vee$ replacing the corresponding loop group. In the special case when $e=ts$ as above, the two categories can be arguable identified with the two sided of the equivalence 
in Theorem \ref{th:intro Dpsi coh}.

\subsection{Notation and convention}

\sss{Sheaf-theoretic contexts}\label{sss:sheaves}
Let $X$ be an algebraic stack of finite type over an algebraically closed field $k$. Let $E$ be a field of characteristic zero depending on the sheaf-theoretic context. We will use the notation $D(X)$ to denote any of the following $E$-linear sheaf-theoretic categories in different contexts:
\begin{itemize}
    \item \'Etale: take $E=\ov\Ql$ where $\chk\ne \ell$. Let $D^b(X,E)$ be the bounded derived category of $\Qlbar$-complexes of sheaves for the \'etale topology of $X$. We will reserve the notation $D(X)$ for a variant of $D^b(X,E)$ by formally adding a half Tate twist as in \S\ref{sss:half Tate}.
    \item Betti: $k=\CC$ and $E$ is any field of characteristic zero field. We take $D(X)$ to be the bounded derived category of constructible sheaves on $X$ with $E$-coefficients for the classical topology on $X(\CC)$.
    \item De Rham: $\chk=0$ and $E=k$. We take $D(X)$ to be the bounded derived category of algebraic $D$-modules on $X$. 
\end{itemize}
The main body of the paper is written in the \'etale context, although all arguments in the paper work for the Betti and de Rham contexts equally well.

\sss{Half Tate twist}\label{sss:half Tate}
In the \'etale setting, we have Tate twist as a local system on $\Spec k$
\begin{equation*}
\ZZ_\ell(1)=\varprojlim_{n}\mu_{\ell^n}(k), \quad E(1):=E\ot_{\ZZ_\ell}\ZZ_\ell(1).
\end{equation*}
For any $\cF\in D^b(X,E)$ we denote $\cF(1)=\cF\ot_{E}E(1)\in D^b(X,E)$. 

We form the doubled category
\begin{eqnarray*}
    D(X):=D^b(X,E)\op D^b(X,E).
\end{eqnarray*}
Denote $(0,E)\in D(X)$ as $E(1/2)$, and in general $(\cF,\cG)\in D(X)$ as $\cF\op \cG(1/2)$. This way we have formally introduced a half Tate twist to $D^b(X,E)$.
Tensor product in $D(X)$ obey the usual rules $E(n/2)\ot E(m/2)=E((n+m)/2)$.

For $\cF\in D(X)$ we denote
\begin{equation*}
    \cF\j{n}=\cF[n](n/2)\in D(X).
\end{equation*}


\sss{Averaging functors} Suppose an algebraic group $G$ acts on s stack $X$,  we denote $D_G(X)=D(G\bs X)$. We have the forgetful functor $\om_{G}: D_{G}(X)\to D(X)$ to be $p^{*}[\dim G]\cong p^{!}[-\dim G]$, where $p: X
\to [X/G]$ is the quotient map. It has a left adjoint $\Av_{G,!}$ and a right adjoint $\Av_{G,*}$
\begin{equation*}
\xymatrix{ D_{G}(X)\ar[r]^{\om_{G}} & D(X)\ar@<1ex>[l]^{\Av_{G,*}}\ar@<-1ex>[l]_{\Av_{G,!}}
}
\end{equation*}
We have $\Av_{G,!}(-)\cong a_{!}(E\j{\dim G}\bt (-))$, and $\Av_{G,*}(-)\cong a_{*}(E\j{-\dim G}\bt (-))$ where $a:G\times X\to X$ is the action map.
  
The same convention applies to averaging along the $G$-action against a rank one character local system $\cL$ on $G$. Notation: $\Av_{G,\cL,!}$, or $\Av_{\cL,!}$ if $G$ is clear from the context.

\sss{} We use $\Vect$ to denote the abelian category of finite-dimensional $E$-vector spaces. For an algebraic group $H$ over $E$, we denote by $\Rep(H)$ the category of finite-dimensional algebraic representations of $H$ over $E$.

\section*{Acknowledgements}
The authors would like to thank Tony Feng and Ben Gammage for stimulating conversations.

\section{Kirillov category}

We elaborate on Gaitsgory's construction of the Kirillov category, which will serve as a model in the usual sheaf-theoretic context for talking about microsheaves. We also give an alternative construction of the Kirillov category in \S\ref{ss:Kir alt} that is more flexible for applications. 

We will freely use material about monodromic sheaves such as monodromic vanishing cycles and monodromic Fourier transform, which are reviewed and further developed in Appendix \ref{as:mon}.

\subsection{Kirillov category after Gaitsgory \cite{Ga-Wh}}\label{ss:Kir}
Let $\Aff=\Ga\rtimes\Gm$ be the group of affine transformations of $\AA^{1}$. Let $X$ be a stack with an action of $\Aff$. 

\begin{defn} The Kirillov category for the stack $X$ with $\Aff$-action is the quotient
\begin{equation*}
\Kir(X)=D_{\Gm}(X)/D_{\Aff}(X)
\end{equation*}
under the forgetful functor $D_{\Aff}(X)\incl D_{\Gm}(X)$.
\end{defn}
Denote the quotient functor by $\k: D_{\Gm}(X)\to \Kir(X)$.  

Let $\Kir_{*}(X)\subset D_{\Gm}(X)$ be the kernel of the averaging functor
\begin{equation*}
\Av_{\Ga,*}: D_{\Gm}(X)\to D_{\Aff}(X).
\end{equation*}
In other words, $\Kir_{*}(X)$ is the right orthogonal to the full subcategory $D_{\Aff}(X)\subset D_{\Gm}(X)$.  Let $\k_{*}: \Kir_{*}(X)\incl D_{\Gm}(X)$ be the inclusion, then $\k\c\k_{*}:\Kir_{*}(X)\to \Kir(X)$ is an equivalence. The composition $\Kir(X)\cong \Kir_{*}(X)\xr{\k_{*}} D_{\Gm}(X) $ is a right adjoint of $\k$.

Similarly $\k$ has a left adjoint $\k_{!}$ whose image $\Kir_{!}(X)$  is the left orthogonal of $D_{\Aff}(X)$ in $D_{\Gm}(X)$. We have the following recollement diagram
\begin{equation}\label{recollement}
\xymatrix{ D_{\Aff}(X)\ar[r] & D_{\Gm}(X) \ar[r]\ar@<1ex>[l]^{\Av_{\Ga,*}}\ar@<-1ex>[l]_{\Av_{\Ga,!}}& \Kir(X)\ar@<1ex>[l]^{\k_{*}}\ar@<-1ex>[l]_{\k_{!}}
}
\end{equation}

\sss{Functoriality}
Suppose $f:Y\to X$ is an $\Aff$-equivariant map.  The usual adjunctions $(f_{!},f^{!})$ and $(f^{*},f_{*})$ pass to the quotient and induce adjoint pairs between $\Kir(Y)$ and $\Kir(X)$.

\begin{exam} Let $X=\Aff$ with the left translation action. Then Fourier transform along $\Ga$ identifies $D_{\Gm}(X)$ with $D_{\Gm}(Y)$, where $Y\cong \AA^{1}\times \Gm$  with the action of $t\in \Gm$ by $t\cdot (b,s)=(t^{-1}b,ts)$. We may identify $D_{\Gm}(Y)\cong D(\AA^{1})$ by restricting to $\AA^{1}\times \{1\}\subset Y$.  Under these equivalence, $D_{\Aff}(X)\cong D(\{0\})\xr{i_{*}}D(\AA^{1})$, $\Kir(X)\cong D(\AA^{1}-\{0\})$ and the diagram \eqref{recollement} is the usual recollement diagram for the closed subset $\{0\}\subset \AA^{1}$ and its open complement $\AA^{1}-\{0\}$.
\end{exam}

\begin{exam} Let $X=\pt$. Then $D_{\Aff}(X)=D_{\Gm}(X)$ and $\Kir(X)=0$.
\end{exam}

\begin{exam}\label{ex:Kir A1} Let $X=\Ga=\AA^1$ with the tautological action of $\Aff$ by affine transformations. Then monodromic Fourier transform (see \S\ref{ss:FT}) homogeneous Fourier transform  
gives an equivalence $\FT_{\Ga}: D_{\Gm}(\Ga)\cong D_{\Gm}(\cGa)$ where $\cGa\cong \AA^{1}$ with the $\Gm$-action by inverse of dilation. Then $\FT_{\Ga}$ restricts to an equivalence $D_{\Aff}(\Ga)\cong D_{\Gm}(\{0\})\subset D_{\Gm}(\cGa)$. Passing to the quotients we get an equivalence 
\begin{equation}\label{FTGa}
\ov\FT_{\Ga}: \Kir(\Ga)\cong D_{\Gm}(\cGa-\{0\}).
\end{equation}
Restricting along $i_1: 1\incl \cGa-\{0\}$ gives an equivalence
\begin{equation}\label{res 1 shift}
i_1^*[-1]: D_{\Gm}(\cGa-\{0\})\isom D(\Vect).
\end{equation}
The composition of \eqref{FTGa} and \eqref{res 1 shift} gives an equivalence
\begin{equation}\label{FTGa eq}
\Phi_{\Ga}:\Kir(\Ga)\cong D(\Vect).
\end{equation}
Under this equivalence, $E\in \Vect$ corresponds to either $\d=i_{0*}E\in \Kir(\Ga)$ (where $i_{0}: \{0\}\incl \Ga$), or $j_{*}E[1](1)$ (where $j: \Ga-\{0\}\incl \Ga$), or $j_{!}E[1]$. Moreover, $\Phi_{\Ga}$ is monoidal with respect to the additive convolution on $\Kir(\Ga)$ and the tensor product on $D(\Vect)$. The diagram \eqref{recollement} is the usual recollement diagram for the closed subset $\{0\}\subset \cGa$ and its open complement. 

This example has an obvious generalization to the case where $X=Y\times\Ga$, with the tautological action of $\Aff$ just on the $\Ga$-factor. The conclusion is that we have an equivalence
\begin{equation*}
D(Y)\cong \Kir(Y\times \Ga)
\end{equation*}
given by sending $\cF\in D(Y)$ to either $\cF\bt \d$, or $\cF\bt j_{!}E[1]$, or $\cF\bt j_{*}E[1](1)$. This example will be further generalized in \S\ref{sss:Kir product A1} to allow nontrivial $\Gm$-action on the $Y$-factor.
\end{exam}

\sss{A product situation}\label{sss:Kir product A1}
Let $Y$ be a stack with $\Gm$-action. Let $X=Y\times \AA^{1}$ be equipped with the following action of $\Aff$: it acts on the $\AA^{1}$-factor in the usual way, and acts on $Y$ through its quotient $\Gm$ and the given $\Gm$-action on $Y$. If we want to emphasize the diagonal action of $\Gm$ on $Y\times \AA^1$, we write $\D\Gm$.


Note that restriction to $Y\times\{1\}$ induces an equivalence
\begin{equation*}
i_{1}^{*}[-1]: D_{\D\Gm}(Y\times(\AA^{1}-\{0\}))\isom D(Y).
\end{equation*}
Let $j: Y\times(\AA^{1}-\{0\})\incl Y\times \AA^{1}$ be the inclusion.  Define the functor 
\begin{equation*}
J: D(Y)\xr{(i_{1}^{*})^{-1}[1]}D_{\D\Gm}(Y\times(\AA^{1}-\{0\}))\xr{j_{*}}D_{\D\Gm}(Y\times \AA^{1})\to \Kir(X).
\end{equation*}

Let $D^{\mon}(Y)\subset D(Y)$ be the full subcategory given by the image of the forgetful map $D_{\Gm}(Y)\to D(Y)$. Let $D^{\mon}_{\D\Gm}(X)\subset D_{\D\Gm}(X)$ be the essential image of the forgetful map $D_{\Gm\times \Gm}(X)\to D_{\D\Gm}(X)$. Finally, let $\Kir^{\mon}(X)\subset \Kir(X)$ be the image of $D^{\mon}_{\D\Gm}(Y\times \AA^1)$ under the projection to $\Kir(X)$. Then the functor $J$ above restricts to a functor
\begin{equation*}
    J^{\mon}: D^{\mon}(Y)\to \Kir^{\mon}(X).
\end{equation*}

Consider the monodromic vanishing cycles functor with respect to the function $p_{2}: X=Y\times \AA^{1}\to \AA^{1}$ (see \S\ref{ss:mon van})
\begin{equation*}
\phi: D^{\mon}_{\D\Gm}(X)\to D^{\mon}(Y).
\end{equation*}
For $\cG\in D_{\Ga}(Y\times \AA^{1})$, $\phi(\cG)=0$, therefore $\phi$ factors through $\Kir^{\mon}(X)$:
\begin{equation*}
\Phi^{\mon}: \Kir^{\mon}(X)\to D^{\mon}(Y).
\end{equation*}


\begin{lemma}\label{l:Kir prod A1} In the above situation,  $J^{\mon}$ and $\Phi^{\mon}$ give inverse equivalences $D^{\mon}(Y)\cong\Kir^{\mon}(X)$. 
\end{lemma}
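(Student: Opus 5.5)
We reduce to the case of a trivial $\Gm$-action on $Y$ (Example \ref{ex:Kir A1}) by a change of coordinates available on monodromic objects. The key observation is that on $D^{\mon}_{\D\Gm}(X)$ the diagonal equivariance comes from a $\Gm\times\Gm$-equivariance. This lets us trade the diagonal $\Gm$-action for the action on the $\AA^1$-factor alone, up to the monodromic structure. Concretely, consider the automorphism $\sigma$ of $Y\times(\AA^1-\{0\})$ given by $(y,s)\mapsto (s^{-1}\cdot y, s)$. It intertwines the diagonal action with the action on the second factor only. On the closure $Y\times\AA^1$ it does not extend, so we cannot simply conjugate. Instead we argue through the recollement \eqref{recollement} together with the description of monodromic vanishing cycles in Appendix \ref{as:mon}.

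\textbf{Step 1: $\Phi^{\mon}\c J^{\mon}\cong\id$.} For $\cF\in D^{\mon}(Y)$, the object $J(\cF)$ is $j_*$ of the unique diagonally equivariant extension of $\cF[1]$ from $Y\times\{1\}$. Monodromic vanishing cycles of $j_*\cG$ along $p_2$ are computed, by the definition in \S\ref{ss:mon van}, as the cone of nearby cycles relative to the restriction to $Y\times\{0\}$. For $j_*$ the restriction to $Y\times\{0\}$ equals the $\Gm$-invariants (monodromy-fixed part) of the nearby cycles. The nearby cycles of the equivariant extension along $s\to 0$ is $\cF$ itself, via monodromic specialization: the family is $\cF$ twisted by the $\Gm$-action, and the monodromic structure gives a canonical identification. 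The remaining shifts and twists are bookkeeping. Hence $\phi(j_*\cG)\cong\cF$.

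\textbf{Step 2: $\Phi^{\mon}$ is conservative on $\Kir^{\mon}(X)$.} Equivalently, we show that $\cG\in D^{\mon}_{\D\Gm}(X)$ with $\phi(\cG)=0$ is $\Ga$-equivariant. Apply the monodromic Fourier transform along the $\AA^1$-factor (\S\ref{ss:FT}), which is available because $\cG$ is monodromic for the second $\Gm$. As in Example \ref{ex:Kir A1}, this sends $D_{\Ga}$-objects to those supported on $Y\times\{0\}\subset Y\times\cGa$. Under the Fourier transform, monodromic vanishing cycles become restriction to $Y\times\{1\}$ (up to shift). By equivariance, vanishing on $Y\times\{1\}$ implies vanishing on $Y\times(\cGa-\{0\})$. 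Thus $\cG$ is supported at $0$ after Fourier transform, i.e. $\cG$ is $\Ga$-equivariant. The same Fourier picture shows that $\Kir^{\mon}(X)\cong D^{\mon}_{\D\Gm}(Y\times(\cGa-\{0\}))\cong D^{\mon}(Y)$, which together with Step 1 gives essential surjectivity of $J^{\mon}$ and full faithfulness.

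\textbf{Main obstacle.} The main obstacle is the precise identification, in Step 2, of monodromic vanishing cycles with restriction of the Fourier transform to $1$ when $\Gm$ also acts on $Y$. One has to check that the monodromicity (rather than equivariance) on the $Y$-factor is exactly what makes the diagonal versus second-factor actions interchangeable on the Fourier side. My first approach would be to work universally: pull back along the action map of $\Gm$ on $Y$ so that $Y$ is replaced by $\Gm\times Y'$ with a free action. There the statement reduces to Example \ref{ex:Kir A1}, and monodromicity would then be used to descend.
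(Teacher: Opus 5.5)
Your proposal is correct and is essentially the paper's proof. Step 2 is its argument that $\Phi^{\mon}$ is an equivalence: the monodromic Fourier transform on the line bundle $\Gm\bs(Y\times\AA^1)\to\Gm\bs Y$ sends the $\Aff$-equivariant objects exactly to those supported on the zero section, and Lemma~\ref{l:FT stalk}, applied to the section $y\mapsto(y,1)$, identifies $i_1^*[-1]\c\ov\FT$ with $\Phi^{\mon}$. Step 1 is its direct computation of $\Phi^{\mon}\c J^{\mon}$; the paper uses $j_!$ there, so $i_0^*J^{\mon}\cF=0$ and $\phi(J^{\mon}\cF)\cong i_1^*J^{\mon}\cF[-1]=\cF$ immediately, whereas with $j_*$ and your ``invariants'' read as derived invariants you get the harmless twist $\cF(-1)$ in the \'etale setting (cf.\ Example~\ref{ex:Kir A1}).

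The ``main obstacle'' you flag is settled by Lemma~\ref{l:FT stalk} itself, applied after forgetting the $\D\Gm$-equivariance to the trivial bundle $Y\times\AA^1\to Y$. It needs only monodromicity along the $\AA^1$-fibres, which $\Gm\times\Gm$-monodromicity provides, so no reduction to a free action on $Y$ is necessary.
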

\begin{proof}

We view $\Gm\bs (Y\times\AA^{1})$ as a line bundle over $\Gm\bs Y$,  the monodromic Fourier transform reviewed in \S\ref{ss:FT} gives an equivalence
\begin{equation*}
\FT: D^{\Gm^{\dil}\lmon}_{\D\Gm}(Y\times \AA^{1})\isom D^{\Gm^{\dil}\lmon}_{\D\Gm}(Y\times \check\AA^{1})
\end{equation*}
Here $\check\AA^{1}$ denotes the dual line to $\AA^{1}$. The one-dimensional torus $\Gm^{\dil}$ acts on $\AA^{1}$ with weight $1$ and on $\check\AA^{1}$ with weight $-1$. 
Under the Fourier transform, $D^{\Gm^{\dil}\lmon}_{\Aff}(Y\times \AA^{1})$ (image of the pullback from $D_{\Gm}(Y)$) are sent to $D_{\Gm}(Y\times\{0\})$. Taking quotients, we get an equivalence
\begin{equation}\label{FT res}
\Kir^{\mon}(X)\xr{\ov\FT} D^{\Gm^{\dil}\lmon}_{\D\Gm}(Y\times (\check\AA^{1}-\{0\}))\xr{i_1^*[-1]} D^{\mon}(Y) 
\end{equation}
where the last equivalence is given up to a shift by restricting to $Y\times\{1\}$. By Lemma \ref{l:FT stalk} applied to the constant section $\xi: Y\to Y\times \check\AA^{1}$ given by $y\mt(y,1)$, we see that \eqref{FT res} is naturally isomorphic to $\Phi^{\mon}$. This shows that $\Phi^{\mon}$ is an equivalence. 

We give a natural isomorphism $\a: \Phi^{\mon}\c J^{\mon}\cong \id$.  For $\cF\in D^{\mon}(Y)$, we denote by $\wt\cF\in D_{\Gm}(Y\times (\AA^{1}-\{0\}))$ the unique object such that $i_{1}^{*}\wt\cF=\cF$. Then $J^{\mon}(\cF)=j_{!}\wt\cF[1]$. Let $a: Y\times\Gm\to Y\times\AA^1$ be given by $(y,s)\mt(sy,s)$, then $J^{\mon}\cF\cong j_!\wt\cF[1]=a_{!}(\cF\bt E)[1]$. Let $i_{t}:Y\times\{t\}\incl Y\times \AA^{1}$ be the inclusion for $t\in\{0,1\}$. In particular, $i_0^*(J^{\mon}\cF)=0$. By definition \eqref{star tri phi}, $\phi(J^{\mon}\cF)$ fits into the distinguished triangle $
\phi(J^{\mon}\cF)\to 0\to i_1^*(J^{\mon}\cF)\to$.
This gives a functorial isomorphism $$\phi(J^{\mon}\cF)\cong i_1^*(J^{\mon}\cF)[-1]=i_1^*a_{!}(\cF\bt E)=\cF,$$
which gives an isomorphism $\Phi^{\mon}\c J^{\mon}\cong\id$.
\end{proof}

\begin{remark}\label{r:Kir fun} Let $f:Y\to Y'$ be a $\Gm$-equivariant map. Then the following diagram is commutative
\begin{equation*}
\xymatrix{D^{\mon}(Y)\ar[d]^{f_{*}}\ar[r]^-{J^{\mon}} & \Kir^{\mon}(Y\times\AA^{1})\ar[d]^{(f\times\id)_{*}} \\
D^{\mon}(Y')\ar[r]^-{J^{\mon}} & \Kir^{\mon}(Y'\times\AA^{1}) 
}
\end{equation*}
The same is true for $f_{!}, f^{*}$ and $f^{!}$.
\end{remark}

\subsection{Comparison with Whittaker category}
Suppose we are in either of the following situations:
\begin{itemize}
\item $\chk=p>0$ and we work with $\Qlbar$-sheaves. We write $\cL_{\psi}$ for the Artin-Schreier sheaf on $\Ga$ associated to a fixed nontrivial character $\psi: \FF_{p}\to \Qlbar^{\times}$. 
\item $\chk=0$ and we work with $D$-modules. We write $\cL_{\psi}$ for the exponential $D$-module on $\Ga$. 
\end{itemize}
In either case, consider the category $D_{(\Ga,\cL_{\psi})}(X)$ of sheaves on $X$ equivariant under $\Ga$ with respect to the character sheaf $\cL_{\psi}$. The following proposition, due to Gaitsgory, was his motivation for introducing the Kirillov category.

\begin{prop}[Gaitsgory {\cite[1.6]{Ga-Wh}}]\label{p:K psi} Let $\om_{\psi}: D_{(\Ga,\cL_{\psi})}(X)\to D(X)$ be the forgetful functor. The image of the functor
\begin{equation*}
\xymatrix{D_{(\Ga,\psi)}(X)\ar[r]^-{\om_{\psi}}  & D(X) \ar[r]^-{\Av_{\Gm,*}} & D_{\Gm}(X)}
\end{equation*}
lands in $\Kir_{*}(X)$. The resulting functor
\begin{equation*}
\KK_{*}: D_{(\Ga,\psi)}(X)\to \Kir_{*}(X)
\end{equation*}
is an equivalence with inverse (left adjoint)
\begin{equation*}
\xymatrix{\KK^{*}: \Kir_{*}(X)\ar@{^{(}->}[r] & D_{\Gm}(X) \ar[r]^-{\om_{\Gm}} & D(X) \ar[r]^-{\Av_{\psi,!}} & D_{(\Ga,\psi)}(X).
}
\end{equation*}
\end{prop}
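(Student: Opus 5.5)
The plan is to reduce to a computation on the universal example $X=\Aff$ (or rather to use the structure of the $\Gm$-orbits on the character sheaves), exploiting two facts about $\cL_\psi$: the pullbacks $s^*\cL_\psi$ for $s\in\Gm$ are the character sheaves of the other nontrivial characters, and $\Av_{\Gm,*}$ of $\cL_\psi$ along the dilation action on $\Ga$ is the sheaf $j_*E$ up to shift, which is "orthogonal to constants" in the appropriate sense. Write $\Av_{\Gm,*}(\om_\psi\cF)=a_*(E\j{-1}\bt\cF)$, where $a:\Gm\times X\to X$ is the action.

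\textbf{Step 1: the image lies in $\Kir_*(X)$.} We must show $\Av_{\Ga,*}\Av_{\Gm,*}\om_\psi\cF=0$, or equivalently, since $\Av_{\Ga,*}$ on $\Gm$-equivariant objects followed by forgetting is just $\Av_{\Ga,*}$ computed on $X$, that $\Av_{\Ga,*}$ of $a_*(E\bt\cF)$ vanishes. Commuting the two averagings via $\Aff=\Ga\rtimes\Gm$, this becomes $\Gm$-averaging of $s\mapsto \Av_{\Ga,*}(s^*\cF)$, and each $s^*\cF$ is $(\Ga,\cL_{\psi_s})$-equivariant for a nontrivial character sheaf, so its $\Ga$-$*$-average is $\cF\ot R\Gamma(\Ga,\cL_{\psi_s})=0$. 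To make this rigorous in families, I would write the composite as pushforward along $\Ga\times\Gm\times X\to X$ of $\cL_\psi(b s)\bt\cF$ (using equivariance to move the $\Ga$-variable onto the character) and integrate first along $b\in\Ga$: the fiberwise cohomology of $\cL_\psi(bs)$ with $s\neq0$ vanishes, so by base change the total pushforward is zero.

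\textbf{Step 2: the adjunction.} Since $\Av_{\psi,!}$ is left adjoint to $\om_\psi$ and $\om_\Gm$ is left adjoint to $\Av_{\Gm,*}$, the composite $\Av_{\psi,!}\c\om_\Gm$ is left adjoint to $\Av_{\Gm,*}\c\om_\psi$ on $D_\Gm(X)$. Restricting to the full subcategory $\Kir_*(X)$ through which the right adjoint factors (Step 1) gives $\KK^*\dashv\KK_*$. It then suffices to show that the unit and counit are isomorphisms.

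\textbf{Step 3: counit $\KK^*\KK_*\cF\to\cF$.} We compute
\[
\Av_{\psi,!}\,\om_\Gm\,a_*(E\bt\cF)
\]
using the same kernel as before: it is a pushforward over $\Ga\times\Gm$ of $\cL_\psi^{-1}(b)\ot\cL_\psi(bs)$-type twists. The integral over $b$ localizes to $s=1$, giving a delta sheaf and hence $\cF$ with the correct shift and Tate twist. This Fourier-type orthogonality of characters is where the normalizations $\j{\pm\dim}$ must be matched.

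\textbf{Step 4: unit on $\Kir_*(X)$.} For $\cG\in\Kir_*(X)$, the same kernel computation shows that the cone of the unit $\cG\to\Av_{\Gm,*}\om_\psi\Av_{\psi,!}\om_\Gm\cG$ is the contribution of the trivial character. That is, the composite $\Av_{\Gm,*}\om_\psi\Av_{\psi,!}$ is convolution with $a_*\cL_\psi$ over $\Ga$, and $\Av_{\Gm,*}\cL_\psi\simeq j_!E[1]$ differs from $\d_0$ by the constant sheaf. The cone is therefore built from $\Av_{\Ga}$ applied to $\cG$, which vanishes by assumption.

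The main obstacle is Step 4: identifying the kernel $\Av_{\Gm}(\cL_\psi)$ on $\Ga$ as an extension of $\d_0$ by a constant sheaf. This can be checked directly by the stalk computation $R\Gamma_c(\Gm,\cL_\psi)=E[-1]$ (rank one, in degree 1) together with the stalk at $0$. It also requires tracking which of $!$ and $*$ appears, so that the constant-sheaf term is killed by $*$-orthogonality.
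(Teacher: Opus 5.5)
Your Steps 1 and 2 are fine and agree with the paper. The paper argues more directly that $\Av_{\Ga,*}\Av_{\Gm,*}\om_\psi\cF=\Av_{\Aff,*}\om_\psi\cF=\Av_{\Gm,*}\Av_{\Ga,*}\om_\psi\cF=0$. Your detour through the characters $\cL_{\psi_s}$ is unnecessary, and its last step is really K\"unneth rather than base change, which fails for $*$-pushforwards in general.

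\textbf{The gap is in Steps 3 and 4}, at exactly the point you deferred. $\Av_{\Gm,*}$ is a $*$-integral over $s$, while $\Av_{\psi,!}$ is a $!$-integral over $b$. Your kernel descriptions swap the order of these mixed integrals without justification, and in Step 4 this is not harmless. Expand $\Av_{\psi,!}\om_\Gm\cG$ as a $!$-convolution and pull the $\Gm$-average inside. You get the $!$-convolution of $\cG$ with $\Av_{\Gm,*}\cL_\psi\simeq j_!E[1]$, and the constant-sheaf term of $\d_0\to j_!E[1]\to E[1]$ then contributes $\Av_{\Ga,!}\cG$, not $\Av_{\Ga,*}\cG$.

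Membership in $\Kir_*(X)$ only kills the latter. For $X=\Ga$, the object $\cG=j_!E[1]$ lies in $\Kir_*(\Ga)$ because $R\Gamma(\Ga,j_!E)=0$, yet $R\Gamma_c(\Ga,j_!E)=R\Gamma_c(\Gm,E)\neq 0$. So the vanishing of the cone does not follow from the defining property of $\Kir_*$ as you assert.

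To get the $*$-convolution you want, you need a non-formal cleanness input: $\Av_{\psi,!}\om_\Gm\cG\cong\Av_{\psi,*}\om_\Gm\cG\j{2}$ for $\Gm$-equivariant $\cG$. With it, all integrals in Steps 3 and 4 become $*$-integrals and Fubini applies. The localization at $s=1$ in Step 3 becomes legitimate, and the constant-sheaf term in Step 4 is $\Av_{\Ga,*}\cG=0$. By comparison, the kernel identification you call the main obstacle is routine: the stalks on $\Gm$ are $R\Gamma(\Gm,\cL_\psi)$ rather than $R\Gamma_c$, and the stalk at $0$ vanishes by contraction.

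\textbf{The paper avoids this issue.} It checks unit and counit after pulling back along the smooth $\Aff$-equivariant cover $\Aff\times X\to X$. This reduces to $X=\AA^1\times Y$ with diagonal $\Gm$-action, and it then Fourier transforms along $\AA^1$. There:
\begin{itemize}
\item $(\Ga,\psi)$-equivariant objects become objects supported on $\{1\}\times Y$;
\item $\Av_{\psi,!}\c\om_\Gm$ becomes the restriction $i^*$ to $\{1\}\times Y$;
\item $\Kir_*(\AA^1\times Y)$, being the right orthogonal of objects supported on $\{0\}\times Y$, becomes the $j_*$-image of $D_\Gm((\check\AA^1-\{0\})\times Y)$. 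This is exactly where the $*$ in $\Kir_*$ enters.
\end{itemize}
The unit and counit are then the evident isomorphisms $\Av_{\Gm,*}\,i^*j_*\cong\id$ and $i^*\om_\Gm\Av_{\Gm,*}\,i_*\cong\id$. The same Fourier picture is also the natural proof of the cleanness you need: $\Av_{\psi,*}$ corresponds to $i^!$, which agrees with $i^*\j{-2}$ on $\Gm$-equivariant objects. So repairing your argument essentially amounts to the paper's proof.
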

\begin{proof} For $\cF\in D_{(\Ga,\psi)}(X)$, $\Av_{\Gm,*}\om_{\psi}\cF$ is killed by $\Av_{\Ga,*}$ because $\Av_{\Ga,*}\Av_{\Gm,*}\om_{\psi}\cF=\Av_{\Aff,*}\om_{\psi}\cF=\Av_{\Gm,*}\Av_{\Ga,*}\om_{\psi}\cF=0$. Therefore $\Av_{\Gm,*}\om_{\psi}\cF\in \Kir_{*}(X)$.

Clearly $(\KK^{*},\KK_{*})$ are adjoint. Now we check the counit $c: \KK^{*}\c \KK_{*}\cong \Av_{\psi,!}\om_{\Gm}\Av_{\Gm,*}\om_{\psi}\to \id$ is an isomorphism. It suffices to check the same statement for a smooth $\Aff$-equivariant covering $X'\to X$. We may take the action map $a: X'=\Aff\times X\to X$ where the action of $\Aff$ on $X'$ is via the left translation on $\Aff$. Thus we reduce to the case $X=\AA^{1}\times Y$ with a diagonal action of $\Gm$. Performing Fourier transform in the $\AA^{1}$ direction (denote the dual affine line by $\check\AA^{1}$, and $\check\Gm=\check\AA^{1}\bs\{0\}$) we have a commutative diagram
\begin{equation*}
\xymatrix{D_{(\Ga,\psi)}(\AA^{1}\times Y)\ar[d]^{\FT}\ar[r]^-{\om_{\psi}}  & D(\AA^{1}\times Y) \ar[d]^{\FT}\ar[r]^-{\Av_{\Gm,*}} & D(\AA^{1}\times Y)\ar[d]^{\FT} \ar[r]^-{\Av_{\psi,!}\c\om_{\Gm}} & D_{(\Ga,\psi)}(\AA^{1}\times Y)\ar[d]^{\FT}\\
D(\{1\}\times Y) \ar[r]^-{i_{*}}& D(\check\AA^{1}\times Y)\ar[r]^-{\Av_{\Gm,*}} & D(\check\AA^{1}\times Y) \ar[r]^-{i^{*}} & D(\{1\}\times Y) }
\end{equation*}
Here $i:\{1\}\times Y\incl \check\AA^{1}\times Y$ is the closed embedding. All vertical functors are Fourier transform.  The composition of the top row is $\KK^{*}\c\KK_{*}$. We need to check that the composition of the bottom row is the identity. But for $\cG\in D(\{1\}\times Y)$, $\om_{\Gm}\Av_{\Gm,*}i_{*}\cG=u_{*}(E\bt \cG)$ where $u: \Gm\times Y\incl \check\AA^{1}\times Y$ is the open embedding $(t,y)\mapsto (t,ty)$. Restricting to $\{1\}\times Y$ which is in the image of $u$, we get $i^{*}u_{*}(E\bt \cG)\cong \cG$.

To show the unit map $\id \to \KK_{*}\c\KK^{*}$ is an isomorphism, the argument is similar. We reduce to the case $X=\AA^{1}\times Y$ with a diagonal $\Gm$-action. Under the Fourier transform, $\Kir_{*}(\AA^{1}\times Y)$ is identified with $D_{\Gm}((\check\AA^{1}-\{0\})\times Y)$, embedded into $D_{\Gm}(\check\AA^{1}\times Y)$ via $j_{*}$, where $j:(\check\AA^{1}-\{0\})\times Y\to \check\AA^{1}\times Y$ is the open embedding.  Now use the commutative diagram
\begin{equation*}
\xymatrix{\Kir_{*}(\AA^{1}\times Y) \ar@{^{(}->}[r]\ar[d]^{\FT} & D_{\Gm}(\AA^{1}\times Y)\ar[r]^-{\Av_{\psi,!}}\ar[d]^{\FT} & D_{(\Ga,\psi)}(\AA^{1}\times Y) \ar[r]^-{\Av_{\Gm,*}}\ar[d]^{\FT} & \Kir_{*}(\AA^{1}\times Y)\ar[d]^{\FT}\\
D_{\Gm}((\check\AA^{1}-\{0\})\times Y)\ar[r]^-{j_{*}} & D_{\Gm}(\check\AA^{1}\times Y)\ar[r]^-{i^{*}} & D(\{1\}\times Y) \ar[r]^-{\Av_{\Gm,*}} & D_{\Gm}((\check\AA^{1}-\{0\})\times Y)}
\end{equation*}
The compositum of the top row is $\KK_{*}\c\KK^{*}$, and the bottom row is clearly the identity.
\end{proof}

Similarly the functor
\begin{equation*}
\xymatrix{\KK_{!}: D_{(\Ga,\psi)}(X)\ar[r]^-{\om_{\psi}}  & D(X) \ar[r]^-{\Av_{\Gm,!}} & \Kir_{!}(X)
}
\end{equation*}
is an equivalence  with inverse  (right adjoint)
\begin{equation*}
\xymatrix{\KK^{!}: \Kir_{!}(X)\ar@{^{(}->}[r] & D_{\Gm}(X) \ar[r]^-{\om_{\Gm}} & D(X) \ar[r]^-{\Av_{\psi,*}} & D_{(\Ga,\psi)}(X).}
\end{equation*}

\subsection{Alternative construction}\label{ss:Kir alt}
We give an alternative construction of the Kirillov category that makes certain symmetries more transparent. The setup is the same as \S\ref{ss:Kir}. This construction is only used in the rest of paper to make sense of integral transforms on Kirillov categories given by kernel sheaves that are $\Ga$-equivariant and $\Gm$-monodromic (as opposed to $\Gm$-equivariant), see \S\ref{sss:int trans}. Such integral transforms will allow us to define Hecke symmetries on automorphic Kirillov categories (see \S\ref{ss:sph action} and \S\ref{sss:aff Hk action}). 

In this subsection alone, we work with cocomplete versions of sheaf categories (ind-constructible sheaves).

\sss{Monodromic category}
Let $D_{\Gm\lmon}(X)\subset D(X)$ be the full subcategory generated by the image of $D_{\Gm}(X)$ under the forgetful functor, i.e., unipotently monodromic complexes. By Verdier \cite[Section 5]{Ver}, every object $\cF\in D_{\Gm\lmon}(X)$ carries a unipotent action of the tame fundamental group $\pi^t_1(\Gm)$ (unipotent means a topological generator acts unipotently). Moreover, all morphisms in $D_{\Gm\lmon}(X)$ are $\pi^t_1(\Gm)$-equivariant. In other words, the identity functor of $D_{\Gm\lmon}(X)$ carries a unipotent action by $\pi^t_1(\Gm)$. 

Let $\frG_{E}=\Spf \wh{E[\pi^t_1(\Gm)]}$, where the completion is with respect to powers of the augmentation ideal of the group algebra $E[\pi^t_1(\Gm)]$. With a choice of a topological generator of $\pi^t_1(\Gm)$, $\frG_E$ is isomorphic to the formal multiplicative group over $E$. The category of ind-coherent sheaves $\IndCoh(\frG_E)$ is by construction equivalent to complexes of $E$-vector spaces with a unipotent action of $\pi^t_1(\Gm)$, under which convolution on $\IndCoh(\frG_E)$ (using the group structure of $\frG_E$) becomes tensor product of $\pi^t_1(\Gm)$-representations. We denote the convolution monoidal structure on $\IndCoh(\frG_E)$ by
\begin{equation*}
(\IndCoh(\frG_E), \star).
\end{equation*}
The monoidal unit of this monoidal structure is the skyscraper sheaf $\d_E$ supported at the reduced point of $\frG_E$.

On the other hand, $\IndCoh(\frG_E)$ carries another (symmetric) monoidal structure, given by the derived $!$-tensor product of $\cO_{\frG_E}$-modules. The monoidal unit in this case is the dualizing sheaf $\om_{\frG_E}$. We denote the resulting  monoidal category by 
\begin{equation*}
(\IndCoh(\frG_E), \sot).
\end{equation*}
We claim that there is canonical action of $(\IndCoh(\frG_E), \sot)$ on $D_{\Gm\lmon}(X)$. Indeed, let $\om_{\frG_E}$ act by identity, its endomorphism ring $\End_{\IndCoh(\frG_E)}(\om_{\frG_E})\cong \wh{E[\pi^t_1(\Gm)]}$ acts on the identity functor of $D_{\Gm\lmon}(X)$ by monodromy as we discussed before, and these structures determine the action of an arbitrary object in $\IndCoh(\frG_E)$ by writing it as a complex where terms are direct sums of $\om_{\frG_E}$.

\sss{Weak Kirillov category}
Let $D_{\Aff\lmon}(X)\subset D(X)$ be the full subcategory generated by the image of $D_{\Aff}(X)$ under the forgetful functor. It is also the full subcategory of $D_{\Gm\lmon}(X)$ consisting of $\Ga$-equivariant objects.

The {\em weak Kirillov} category of $X$ is defined as the quotient
\begin{equation*}
\Kir^w(X):=D_{\Gm\lmon}(X)/D_{\Aff\lmon}(X).
\end{equation*}
We have the adjunction
\begin{equation}\label{adj Kirw}
\xymatrix{\Kir(X) \ar@<1ex>[r]^-{\Forg} & \ar@<1ex>[l]^-{\Av_{*}} \Kir^w(X)}
\end{equation}
where $\Forg$ is the functor forgetting $\Gm$-equivariance, and $\Av_{*}$ is the $*$-averaging with respect to the $\Gm$-action.

\sss{Weak Kirillov category of $\Ga$}
Let $X=\Ga=\AA^1$ with the standard action of $\Aff$ by translation and dilation, the category $\Kir^w(\Ga)$ is the quotient of $D_{\Gm\lmon}(\Ga)$ by constant complexes. Note that $\Kir^w(\Ga)$ carries a monoidal structure under additive convolution on $\Ga$ (we use the $!$-version).

Monodromic Fourier transform as in \S\ref{ss:FT} gives an equivalence $\FT^w: D_{\Gm\lmon}(\Ga)\cong D_{\Gm\lmon}(\cGa)$, which induces an equivalence
\begin{equation}\label{FTGa mon}
\Phi^w_{\Ga}: \Kir^w(\Ga)\cong D_{\Gm\lmon}(\cGm).
\end{equation}
Taking stalk at $1\in \cGm$ and recording the monodromy action, we obtain an equivalence
\begin{equation*}
i^*_1: D_{\Gm\lmon}(\cGm)\cong \IndCoh(\frG_E).
\end{equation*}
Consider the composition
\begin{equation*}
\Psi: \Kir^w(\Ga)\xr{\Phi^w_{\Ga}} D_{\Gm\lmon}(\cGm)\xr{i^*_1[-1]}\IndCoh(\frG_E).
\end{equation*}
Note that we added a shift $[-1]$ in the second step, so that the skyscraper $\d\in \Kir^w(\Ga)$ maps to the skyscraper object $E$ supported at the reduced point of $\frG_E$. It is easy to see that $\Psi$ is carries a canonical structure of a monoidal equivalence
\begin{equation*}
\Psi: (\Kir^w(\Ga),\star)\cong (\IndCoh(\frG_E),\star).
\end{equation*}


\sss{} Back to the general setup of $X$ with $\Aff$-action. The action of $\Ga$ on $X$ gives an action of $D_{\Gm\lmon}(\Ga)$ (monoidal under additive convolution) on $D_{\Gm\lmon}(X)$. The action of the constant sheaf $\un{E}_{\Ga}$ on $D_{\Gm\lmon}(X)$ results in $\Ga$-equivariant sheaves on $X$, therefore the action passes to the quotients to give an action of $(\Kir^w(\Ga),\star)$ on $\Kir^w(X)$. Through the equivalence $\Psi$, we get an action of $(\IndCoh(\frG_E), \star)$ on $\Kir^w(X)$.

Next we spell out a compatibility structure between the actions of $(\IndCoh(\frG_E), \star)$ and $(\IndCoh(\frG_E), \sot)$ on $\Kir^w(X)$. For $\cK\in \IndCoh(\frG_E)=\Kir^{w}(\Ga)$ and $\cF\in \Kir^{w}(X)$, the monodromy action of $\g\in \pi_{1}(\Gm)$ on $\cK\star \cF$ is the same as the diagonal action on each factor: i.e., the following diagram is commutative (where $\mon_{\cK}$ denotes the monodromy action $\pi_{1}^{t}(\Gm)\to \Aut(\cK)$)
\begin{equation}\label{diag mono conv}
\xymatrix{\pi_{1}(\Gm)\ar@/_1pc/[rrr]_{\mon_{\cK\star\cF}} \ar[r]^-{\D} & \pi_{1}(\Gm)\times \pi_{1}(\Gm) \ar[r]^{\mon_{\cK}\times\mon_{\cF}}& \Aut(\cK)\times\Aut(\cF)\ar[r] & \Aut(\cK\star\cF)}
\end{equation}
and is functorial in $\cK$ and $\cF$.

This follows from the fact that the action map $\Ga\times X\to X$ is $\Gm$-equivariant. Now let 
\begin{equation*}
\a_{\star}: \IndCoh(\frG_{E})\ot \Kir^{w}(X)\cong \Kir^{w}(\Ga)\ot\Kir^{w}(X) \to \Kir^{w}(X)
\end{equation*}
be the action functor by convolution.  Reformulating the monodromy action in terms of the $(\IndCoh(\frG_E), \sot)$-action, we note that $\Kir^{w}(\Ga)\ot \Kir^{w}(X)$ carries an action of $(\IndCoh(\frG_E), \sot)\ot(\IndCoh(\frG_E), \sot)\cong (\IndCoh(\frG_E\times\frG_{E}), \sot)$. Let $m_{\frG}:\frG_{E}\times\frG_{E}\to \frG_{E}$ be the multiplication, then the diagram \eqref{diag mono conv} implies that $\a_{\star}$ is equivariant under $(\IndCoh(\frG_E), \sot)$, where it acts on the left side via the monoidal functor
\begin{equation}\label{m! frG}
m_{\frG}^{!}: (\IndCoh(\frG_E), \sot)\to (\IndCoh(\frG^{2}_E), \sot).
\end{equation}

\sss{Categorical statement}
Let $\cC$ be a cocomplete dg category over $E$ with an action of $(\IndCoh(\frG_E), \star)$. We may consider the $\frG_E$-equivariant category $\cC^{(\frG_E,\star)}$ equipped with adjoint functors
\begin{equation}\label{adj equiv}
\xymatrix{\cC^{(\frG_E,\star)} \ar@<-1ex>[r]_-{\Forg} & \ar@<-1ex>[l]_-{\Av_!} \cC}
\end{equation}
The category $\cC^{(\frG_{E},\star)}$ is defined as the totalization of the semi-cosimplicial diagram of dg categories
\begin{equation*}
\xymatrix{\cC \ar@<.5ex>[r]\ar@<-.5ex>[r] & \cC\ot \IndCoh(\frG_{E})\ar@<.5ex>[r]\ar[r]\ar@<-.5ex>[r] & \cC\ot \IndCoh(\frG_{E})\ot \IndCoh(\frG_{E})  \cdots}
\end{equation*}
where the functors use the co-action of $\IndCoh(\frG_{E})$ on $\cC$ (right adjoint to the action functor), and the $!$-pullback along the multiplication map of $\frG_{E}$. The monad $\Forg\c\Av_{!}$ acting on $\cC$ is given by the convolution $\om_{\frG_{E}}\star(-)$. Note that $\om_{\frG_{E}}$ is an algebra object in $(\IndCoh(\frG_E), \star)$. Therefore, $\cC^{\frG_{E}}$ can also be identified with the category of $\om_{\frG_{E}}\star(-)$-modules in $\cC$.

On the other hand, if $\cC$ is linear over $\frG_E$, i.e., it is equipped with an action of $(\IndCoh(\frG_E), \sot)$, we may consider its fiber $\cC_1$ over the reduced point $1\in \frG_E$, which fits into the adjunction
\begin{equation}\label{adj C1}
\xymatrix{\cC_1 \ar@<1ex>[r]^{i_*} & \ar@<1ex>[l]^-{i^!} \cC}
\end{equation}

We say that the $(\IndCoh(\frG_E), \star)$-action and the $(\IndCoh(\frG_E), \sot)$-action on $\cC$ are {\em compatible} if the convolution action functor
\begin{equation*}
\a_{\star}: \IndCoh(\frG_E)\ot \cC\to \cC
\end{equation*}
is equivariant under the $(\IndCoh(\frG_E), \sot)$-actions, where it acts on the left side via the monoidal functor \eqref{m! frG}.

\begin{exam}
Let $X$ be a formal scheme of that is smooth over $\frG_E$, with structure map $\pi: X\to\frG_E$. In this situation, letting $\cC=\IndCoh(X)$. Then $(\IndCoh(\frG_E), \sot)$ acts on $\cC$ where $\cK\in \IndCoh(\frG_E)$ acts by $\pi^!\cK\sot(-)$. Let $i: X_1\to X$ be the closed embedding of the fiber over $1\in \frG_E$, then $\cC_1$ is identified with $\IndCoh(X_1)$, and $i_*$ and $i^!$ have their usual meaning.

On the other hand, if $X$ admits an action of $\frG_E$, we have an action of $(\IndCoh(\frG_E), \star)$ on $\cC=\IndCoh(X)$ by convolution. If moreover the structure map $\pi$ is $\frG_E$-equivariant (which implies that $X\cong \frG_E\times X_1$), then the $(\IndCoh(\frG_E), \star)$ and $(\IndCoh(\frG_E), \sot)$-actions on $\cC$ are compatible.
\end{exam}

In our application, we shall take $\cC=\Kir^w(X)$ with the $(\IndCoh(\frG_E), \sot)$-action coming from $\Gm$-monodromy and the $(\IndCoh(\frG_E), \star)$-action by $\Kir^w(\Ga)$. Then we may identify $\cC_1$ with $\Kir(X)$, so that the adjunction \eqref{adj C1} becomes the adjunction \eqref{adj Kirw}.

\begin{lemma}\label{l:equiv vs fiber 1}
Let $\cC$ be a cocomplete dg category over $E$ admitting compatible actions of $(\IndCoh(\frG_E), \star)$ and $(\IndCoh(\frG_E), \sot)$. Then the composition 
\begin{equation*}
\Up: \cC_{1}\xr{i_{*}}\cC\xr{\Av_{!}}\cC^{\frG_{E}}
\end{equation*}
is fully faithful with right adjoint given by the composition
\begin{equation*}
\Up^{R}:\cC^{\frG_{E}}\xr{\Forg}\cC\xr{i^{!}}\cC_{1}.
\end{equation*}
If moreover $\cC$ is generated by the image of $\cC_{1}$ under $i_{*}$ under colimits, then $\Up$ and $\Up^{R}$ are inverse equivalences.

\end{lemma}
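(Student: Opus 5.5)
The adjunction is formal: $\Up=\Av_!\c i_*$ is a composite of left adjoints, so its right adjoint is $i^!\c\Forg$. The substance is full faithfulness, i.e.\ that the unit $\id_{\cC_1}\to \Up^R\Up = i^!\,\Forg\,\Av_!\, i_*$ is an isomorphism. The plan is to compute the monad $\Forg\c\Av_!$ explicitly. As recorded before the lemma, it is convolution with $\om_{\frG_E}$. So the unit becomes the map
\[
\cF\to i^!\bigl(\om_{\frG_E}\star i_*\cF\bigr),
\]
induced by the unit $\d_E\to\om_{\frG_E}$ of the algebra $\om_{\frG_E}$ in $(\IndCoh(\frG_E),\star)$. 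This map is adjoint to the inclusion of the reduced point $e\colon \pt\to\frG_E$, and it is the same map as $e_*e^!\om_{\frG_E}\to\om_{\frG_E}$.

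The key step is to compute $i^!(\cK\star i_*\cF)$ for $\cK\in\IndCoh(\frG_E)$, using the compatibility hypothesis. The object $i_*\cF$ is supported over $1\in\frG_E$: for the $\sot$-action, it is a module over $\d_E=e_*E$, in the sense that the $\sot$-action factors through $e^!$. The compatibility says that $\a_\star$ intertwines the $\sot$-action on $\cC$ with $m_{\frG}^!$ on $\IndCoh(\frG_E)\ot\cC$. We therefore need the base change of $m_{\frG}$ along the inclusion $\frG_E\times\{1\}$, where the second factor is the support of $i_*\cF$, and then along the fiber $m_{\frG}^{-1}(1)$. Restricted to $\frG_E\times\{1\}$, the map $m_{\frG}$ is the identity of $\frG_E$, so its fiber over $1$ is the single point $(1,1)$. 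The resulting formula is
\[
i^!(\cK\star i_*\cF)\cong e^!\cK\ot\cF .
\]
This is a projection formula expressing $i^!\c\a_\star\c(\id\ot i_*)$ through $(e^!\ot\id)$. For $\cK=\om_{\frG_E}$ it gives $i^!(\om\star i_*\cF)\cong e^!\om_{\frG_E}\ot\cF=\cF$, and the unit corresponds to $e^!$ of the map $\d_E\to\om_{\frG_E}$. That map is the identity of $E$, since $e^!e_*=\id$ for a closed point. Hence the unit is an isomorphism.

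For the second statement, $\Up$ is fully faithful and preserves colimits, so it suffices to show that $\Up^R$ is conservative on $\cC^{\frG_E}$, or equivalently that the essential image of $\Up$ generates $\cC^{\frG_E}$ under colimits. Objects of the form $\Av_!M$ generate $\cC^{\frG_E}$, because $\Forg$ is conservative and $\Av_!\dashv\Forg$. Since $\cC$ is generated under colimits by $i_*\cC_1$ and $\Av_!$ preserves colimits, the image $\Up(\cC_1)=\Av_! i_*\cC_1$ generates. A fully faithful colimit-preserving functor whose image generates is an equivalence, provided $\Up^R$ also preserves colimits. It does: $i^!$ is continuous because $e_*E$ is compact in $\IndCoh(\frG_E)$, and $\Forg$ is continuous.

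The main obstacle is the key step, namely making precise at the dg-categorical level the base-change identity $i^!\c\a_\star\c(\id\ot i_*)\cong (e^!\ot\id)$. Here $\cC_1$ is realized as $\cC\ot_{\IndCoh(\frG_E)}\Vect$ (via $e^!$), and the compatibility is used as an equivalence of $\IndCoh(\frG_E^2)$-module structures. I would first try to rewrite everything as tensoring over $(\IndCoh(\frG_E),\sot)$ and apply the base-change isomorphism for ind-coherent sheaves on the Cartesian square formed by $m_{\frG}$ and $e$. The formal-group fact I would rely on is that the preimage of the point under $m_{\frG}$, intersected with $\frG_E\times\{1\}$, is just the point.
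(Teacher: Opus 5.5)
Your route is the paper's. You identify $\Forg\c\Av_!$ with $\om_{\frG_E}\star(-)$. The compatibility turns $\d\sot(\om_{\frG_E}\star i_*\cF)$ into $\a_\star\bigl(m_{\frG}^!\d\ssot(\om_{\frG_E}\bt i_*\cF)\bigr)$ with $m_{\frG}^!\d\cong\om_{\D^-}$. The anti-diagonal meets $\frG_E\times\{1\}$ transversally at $(1,1)$. The generation argument for the second statement is also the paper's. The only difference is that the paper works with $i_*i^!\cong\d\sot(-)$ and conservativity of $i_*$ rather than computing $i^!$ directly. The paper explicitly omits checking that the resulting isomorphism is the canonical unit map. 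Your attempt to carry out that check is where the proposal goes wrong.

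The claim $e^!e_*=\id$ for the reduced point $e\colon\pt\to\frG_E$ is false. Since $e^!$ is right adjoint to $e_*$, and $\frG_E\cong\Spf E[[x]]$, we have $e^!e_*E\cong\RHom_{\IndCoh(\frG_E)}(\d_E,\d_E)\cong\RHom_{E[[x]]}(E,E)\cong E\oplus E[-1]$. Your own formula at $\cK=\d_E$ gives $i^!i_*\cF\cong\cF\oplus\cF[-1]$, so $i_*$ is not fully faithful. Hence $e^!$ of $u\colon\d_E=e_*e^!\om_{\frG_E}\to\om_{\frG_E}$ kills the $E[-1]$ summand and is not an isomorphism. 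In particular, the unit is not ``$e^!$ of that map''. The unit of $\Up\dashv\Up^R$ is the composite
\[
\cF\xr{\eta}i^!i_*\cF=i^!(\d_E\star i_*\cF)\xr{i^!(u\star\id)}i^!(\om_{\frG_E}\star i_*\cF).
\]
Suppose your isomorphism $i^!(\cK\star i_*\cF)\cong e^!\cK\ot\cF$ is natural in $\cK$ and carries the unit of $i_*\dashv i^!$ to $\eta_E\ot\cF$; establishing this is part of making your key step precise. Then the composite becomes $\bigl(E\xr{\eta_E}e^!e_*E\xr{e^!u}e^!\om_{\frG_E}=E\bigr)\ot\cF$. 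This is the identity by the triangle identity for $e_*\dashv e^!$, because $u$ is the counit at $\om_{\frG_E}$. With that replacement the argument goes through.
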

\begin{proof}
The adjunction $(\Up,\Up^{R})$ is the concatenation of the adjunctions \eqref{adj equiv} and \eqref{adj C1}. To show full-faithfulness of $\Up$, we need to show that the unit map $\id_{\cC_{1}}\to \Up^{R}\c\Up$ is an isomorphism. Let $\cF\in \cC_{1}$, we need to show that the canonical map
\begin{equation}\label{F to UpRUp}
\cF\to i^{!}(\Forg\c \Av_{!}(i_{*}\cF))
\end{equation}
is an isomorphism. Recall that the monad $\Forg\c \Av_{!}$ is given by the convolution with the algebra object $\om_{\frG_{E}}\in (\IndCoh(\frG_{E}), \star)$. The functor $i_{*}$ being conservative,  it suffices to show that
\begin{equation*}
i_{*}\cF\to i_{*}i^{!}(\om_{\frG_{E}}\star(i_{*}\cF))
\end{equation*}
is an isomorphism. Note that $i_{*}i^{!}(-)\cong \d\sot (-)$ where $\d\in (\IndCoh(\frG_{E}),\sot)$ is the skyscraper sheaf at $1$. We reduce to show that 
\begin{equation}\label{dsot}
i_{*}\cF\isom \d\sot(\om_{\frG_{E}}\star(i_{*}\cF)).
\end{equation}
The compatibility between the  actions of $(\IndCoh(\frG_E), \star)$ and $(\IndCoh(\frG_E), \sot)$ implies a canonical isomorphism
\begin{equation}\label{act m!}
\d\sot(\om_{\frG_{E}}\star(i_{*}\cF))\cong \a_{\star}(m^{!}_{\frG}\d\ssot(\om_{\frG_{E}}\bt i_{*}\cF)). 
\end{equation}
Here we use $\ssot$ to denote the action of $(\IndCoh(\frG_{E}^{2}),\sot)$ on $\IndCoh(\frG_E)\ot \cC$. Now $m^{!}_{\frG}\d\cong \om_{\D^{-}}$ where $\D^{-}\subset \frG_{E}\times\frG_{E}$ is the anti-diagonal. Consider $\IndCoh(\frG_E)\ot \cC$ as linear over $\frG_{E}^{2}$, the object $\om_{\frG_{E}}\bt i_{*}\cF$ is scheme-theoretically supported on $\frG_{E}\times\{1\}$, which intersects the anti-diagonal transversally at $(1,1)\in \frG_{E}^{2}$. Therefore we have a canonical isomorphism $\om_{\D^{-}}\ssot(\om_{\frG_{E}}\bt i_{*}\cF)\cong \d\bt i_{*}\cF\in \IndCoh(\frG_E)\ot\cC$, which implies
\begin{equation*}
\a_{\star}(m^{!}_{\frG}\d\ssot(\om_{\frG_{E}}\bt i_{*}\cF))\cong \a_{\star}(\om_{\D^{-}}\ssot(\om_{\frG_{E}}\bt i_{*}\cF))\cong \a_{\star}\d\bt i_{*}\cF\cong i_{*}\cF.
\end{equation*}
Combined with \eqref{act m!} we get the desired isomorphism \eqref{dsot}. We omit checking that the isomorphism thus obtained is the same as the one induced by the canonical map \eqref{F to UpRUp}. This proves that $\Up$ is fully faithful.

If $\cC$ is generated by $i_{*}\cC_{!}$, then $\Up$ is essentially surjective because $\cC^{\frG_{E}}$ is generated by the image of $\Av_{!}$ under colimits. Therefore  $\Up$ is an equivalence.
\end{proof}

Applying the above lemma to $\cC=\Kir^{w}(X)$, we obtain:
\begin{cor}\label{c:Kir alt} The functor
\begin{equation}\label{Kir to frG equiv}
\Kir(X)\xr{\Forg}\Kir^{w}(X)\xr{\Av_{!}}\Kir^{w}(X)^{(\frG_{E},\star)}
\end{equation}
is an equivalence with right adjoint
\begin{equation*}
\Kir^{w}(X)^{(\frG_{E},\star)}\xr{\Forg}\Kir^{w}(X)\xr{\Av_{*}}\Kir(X).
\end{equation*}
Moreover, the category $\Kir^{w}(X)^{(\frG_{E},\star)}$ is equivalent to the category of $\om_{\frG_{E}}\star(-)$-modules in $\Kir^{w}(X)$.
\end{cor}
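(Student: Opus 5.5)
The plan is to apply Lemma \ref{l:equiv vs fiber 1} to $\cC=\Kir^{w}(X)$. We equip it with the $(\IndCoh(\frG_E),\sot)$-action coming from the unipotent $\pi_1^t(\Gm)$-monodromy, and with the $(\IndCoh(\frG_E),\star)$-action coming from $\Kir^w(\Ga)$ via $\Psi$. Three inputs are needed. First, the two actions are compatible; this is the content of diagram \eqref{diag mono conv}, reformulated via \eqref{m! frG}, and it follows from the $\Gm$-equivariance of the action map $\Ga\times X\to X$. Second, we need an identification of the fiber $\cC_1$ with $\Kir(X)$ under which \eqref{adj C1} becomes \eqref{adj Kirw}. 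Third, $\cC$ must be generated under colimits by $i_*\cC_1$. Granting these, the lemma gives that $\Up=\Av_!\c i_*$ is an equivalence with inverse $i^!\c\Forg$. Substituting $i_*=\Forg$ and $i^!=\Av_*$ yields exactly \eqref{Kir to frG equiv} and its stated right adjoint. The final assertion about $\om_{\frG_E}\star(-)$-modules is just the monadic description of $\cC^{(\frG_E,\star)}$ already recorded before the lemma.

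For the identification $\cC_1\cong\Kir(X)$, I would first work at the level of $D_{\Gm\lmon}(X)$. The fiber over $1\in\frG_E$ of the category of unipotently monodromic sheaves, i.e.\ the category of objects on which the monodromy acts trivially in the derived sense, should be $D_{\Gm}(X)$. The functor $i_*$ corresponds to $\Forg$, and $i^!$ corresponds to $\Av_{\Gm,*}$, since $*$-averaging computes monodromy invariants. One checks this smooth-locally, reducing to $X=\Gm\times Y$, where it is the Koszul-type statement that $D_{\Gm}(\pt)$, i.e.\ modules over $\upH^*(B\Gm)$, is the fiber of $\IndCoh(\frG_E)$ at the reduced point. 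This identification is compatible with the subcategories $D_{\Aff}(X)\subset D_{\Gm}(X)$ and $D_{\Aff\lmon}(X)\subset D_{\Gm\lmon}(X)$, since $\Ga$-equivariance is a property that the monodromy action commutes with. Passing to Verdier quotients then gives $\cC_1\cong\Kir(X)$ with the adjunction \eqref{adj Kirw}.

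For generation, the argument is the following. By definition $D_{\Gm\lmon}(X)$ is generated by the forgetful image of $D_{\Gm}(X)$, in the cocomplete setting under colimits. The quotient functor to $\Kir^w(X)$ is essentially surjective and commutes with colimits. Hence $\Kir^w(X)$ is generated under colimits by $\Forg(\Kir(X))=i_*\cC_1$.

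I expect the main obstacle to be the identification of the abstract fiber $\cC_1$ with $\Kir(X)$, that is, making precise that the fiber of the $\frG_E$-linear category of monodromic sheaves is the equivariant category, compatibly with the Verdier quotients. A second point that needs care is checking the compatibility hypothesis at the level of the Verdier quotients.
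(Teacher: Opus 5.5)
Your proposal is correct and follows the paper's route: the corollary comes from applying Lemma \ref{l:equiv vs fiber 1} to $\cC=\Kir^w(X)$, with compatibility given by \eqref{diag mono conv} and $\cC_1$ identified with $\Kir(X)$ so that \eqref{adj C1} becomes \eqref{adj Kirw}, and the module description is the monadic remark made before the lemma; the paper simply asserts the identification of $\cC_1$ and leaves the generation hypothesis implicit, so your added justifications are extra detail rather than a different argument. One small slip in that extra detail: after your smooth-local reduction to the free case $X=\Gm\times Y$, no Koszul duality is involved, because $D_{\Gm\lmon}(X)\simeq\IndCoh(\frG_E)\otimes D(Y)$ has fiber $D(Y)=D_{\Gm}(X)$ at $1$ (the fiber of $\IndCoh(\frG_E)$ itself is $\Vect$, not $D_{\Gm}(\pt)$), and a Koszul-type comparison would only be needed if you worked directly with a non-free action such as the trivial one.
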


\sss{Integral transform for Kirillov categories}\label{sss:int trans} By Corollary \ref{c:Kir alt}, we may henceforth use $\Kir^{w}(X)^{(\frG_{E},\star)}$ as an alternative definition of the Kirillov category $\Kir(X)$. The advantage of this alternative definition is that it allows us to see more symmetry on the Kirillov category. A typical situation is the following. Consider a correspondence
\begin{equation*}
\xymatrix{ & C\ar[dl]_{\oll{c}}\ar[dr]^{\orr{c}}\\
X & & Y}
\end{equation*}
in the category of stacks with $\Aff$-action. We construct a functor
\begin{equation}\label{int trans Kir}
D_{\Gm\lmon}(\Ga\bs C)\to \Fun(\Kir(X), \Kir(Y))
\end{equation}
as follows. For $\cK\in D_{\Gm\lmon}(C)$, we have the usual integral transform
\begin{eqnarray*}
\wt\phi_{\cK}:=\orr{c}_{!}(\oll{c}^{*}(-)\ot \cK): D_{\Gm\lmon}(X)\to D_{\Gm\lmon}(Y).
\end{eqnarray*}
When $\cK\in D_{\Gm\lmon}(\Ga\bs C)$, the functor $\wt\phi_{\cK}$ sends $D_{\Aff\lmon}(X)$ to $D_{\Aff\lmon}(Y)$, hence inducing a functor
\begin{equation*}
\phi^{w}_{\cK}: \Kir^{w}(X)\to \Kir^{w}(Y)
\end{equation*}
equivariant under the actions of $(\IndCoh(\frG_{E}),\star)$. Passing to $\frG_{E}$-equivariant objects, it induces a functor
\begin{equation*}
\phi_{\cK}: \Kir(X)\cong \Kir^{w}(X)^{(\frG_{E},\star)}\to \Kir^{w}(Y)^{(\frG_{E},\star)}\cong \Kir(Y).
\end{equation*}
Here we apply Corollary \ref{c:Kir alt} to $X$ and $Y$. The construction of \eqref{int trans Kir} is compatible with compositions of correspondences in the obvious sense.

\begin{remark}
    Let $\Kir^w(X)^\hs\subset \Kir^w(X)$ be the heart of the perverse t-structure. We also equip $\Kir^w(X)^{(\frG_E,\star)}$ with the t-structure pulled back from $\Forg: \Kir^w(X)^{(\frG_E,\star)}\to \Kir^w(X)$, whose heart is equivalent to $\om_{\frG_E}\star(-)$-modules in $\Kir^w(X)^\hs$ (note that $\om_{\frG_E}\star(-)$ is a t-exact endofunctor of $\Kir^w(X)$). One can check that the equivalence \eqref{Kir to frG equiv} is t-exact. We thus get an equivalence of abelian categories
    \begin{equation}\label{Kir frG equiv heart}
        \Kir(X)^\hs\isom \om_{\frG_E}\lmod(\Kir^w(X)^\hs).
    \end{equation}
    When $X$ is a scheme of finite type over $k$, the derived equivalence \eqref{Kir to frG equiv} can be obtained from the abelian category equivalence \eqref{Kir frG equiv heart} by passing to the derived categories of both sides and using Beilinson's theorem \cite[Theorem 1.3]{Bei}.
\end{remark}

\subsection{Relevant locus}\label{ss:rel}
Let $\mu_{\Ga}:T^{*}X\to (\Lie\Ga)^{*}$ be the moment map for the $\Ga$-action on $T^{*}X$. If $x$ is the coordinate on $\Ga$, then $dx$ gives a nonzero element in $(\Lie\Ga)^{*}$, which we denote by $1$.

 We define the Hamiltonian reduction of $T^{*}X$ under the $\Ga$-action with respect to the moment value $1$ as
\begin{equation*}
T^{*}X\qq_{1}\Ga:=\mu^{-1}_{\Ga}(1)/\Ga.
\end{equation*}

\begin{defn}\label{d:rel} The {\em relevant locus} for the $\Ga$-action on $X$ is the image of the projection $T^{*}X\qq_{1}\Ga=\mu^{-1}_{\Ga}(1)\to X$ (a constructible substack of $X$). By abuse of language we also call the image of the projection $T^{*}X\qq_{1}\Ga\to X/\Ga$ the relevant locus. A geometric point in the relevant locus of $X$ is called a {\em relevant point}; otherwise it is called {\em irrelevant}.
\end{defn}

\begin{lemma}\label{l:rel pt}
A geometric point $x\in X(K)$ (where $K$ is an algebraically closed field) is relevant if and only if the homomorphism
\begin{equation}\label{ax}
\Aut_{X/\Ga}(x)\to \GG_{a,K}
\end{equation}
is trivial, i.e., the stabilizer of $x$ under $\Ga$ is trivial. 

Equivalently, $x$ is irrelevant if and only if the homomorphism \eqref{ax} is nonzero, i.e., $\Ga$ fixes $x$.
\end{lemma}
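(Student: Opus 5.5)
The plan is to turn relevance into a statement about tangent complexes at $x$, and then pass from Lie algebras to groups. Everything is pointwise, so I would fix the geometric point $x\in X(K)$ and work with the fiber $(T^*X)_x$. For an algebraic stack this fiber is the degree-zero part of the dual of the tangent complex $\mathbb{T}_{X,x}$, which I write as a two-term complex $[\Lie \Aut_X(x)\to T_xU]$ in degrees $[-1,0]$ using a smooth atlas $U\to X$. The infinitesimal $\Ga$-action gives a map of complexes $a_x:\Lie\Ga\to \mathbb{T}_{X,x}$, and the moment map $\mu_{\Ga}$ at $x$ is the dual map $H^0(\mathbb{T}^\vee_{X,x})\to(\Lie\Ga)^*$.

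\textbf{Step 1: relevance as a nonvanishing condition.} By definition, $x$ is relevant iff the fiber of $\mu_\Ga^{-1}(1)$ over $x$ is nonempty. Since $\mu_\Ga$ is linear on the fiber, this holds iff the dual map $H^0(\mathbb{T}^\vee_{X,x})\to(\Lie\Ga)^*\cong K$ is surjective. A covector on $\mathbb{T}_{X,x}$ is a linear functional on $H^0(\mathbb{T}_{X,x})=\mathrm{coker}(\Lie\Aut_X(x)\to T_xU)$. So surjectivity is equivalent to the composite $\Lie\Ga\to T_xU\to H^0(\mathbb{T}_{X,x})$ being nonzero, i.e.\ injective, since $\Lie\Ga$ is one-dimensional.

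\textbf{Step 2: pass to the quotient $X/\Ga$.} The tangent complex of $X/\Ga$ at $x$ is the cone of $a_x$. Its long exact cohomology sequence reads
\begin{equation*}
0\to \Lie\Aut_X(x)\to \Lie\Aut_{X/\Ga}(x)\to \Lie\Ga\xr{a_x} H^0(\mathbb{T}_{X,x}).
\end{equation*}
This sequence is the differential of the exact sequence of groups $1\to\Aut_X(x)\to\Aut_{X/\Ga}(x)\to\GG_{a,K}$. By exactness, $a_x$ is injective on $H^0$ iff the differential of \eqref{ax} is zero. Combined with Step 1, $x$ is relevant iff $d(\eqref{ax})=0$.

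\textbf{Step 3: Lie algebra to group (main obstacle).} It remains to show that \eqref{ax} has zero differential iff it is trivial. The image of \eqref{ax} is a closed subgroup of $\GG_{a,K}$. In characteristic zero a closed subgroup of $\Ga$ is either $0$ or $\Ga$. So the image is all of $\Ga$ (stabilizer all of $\Ga$, differential nonzero) or $0$ (differential zero), and the dichotomy in the statement follows. In positive characteristic this fails: finite or infinitesimal subgroups such as $\alpha_p$ or $\ZZ/p$ can appear. Here the honest statement is only about the reduced or Lie-theoretic stabilizer, so I would either restrict to the characteristic-zero contexts of \S\ref{sss:sheaves} or interpret ``trivial'' scheme-theoretically via the differential. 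One also cannot rescue this using the $\Gm$-normalization, since it only relates the stabilizer of $x$ to that of $t\cdot x$. I expect the rest to be bookkeeping: checking that the moment-map description in Step 1 agrees with the cone description in Step 2, which amounts to the standard fact that $T^*(X/\Ga)$ is $\mu_\Ga^{-1}(0)/\Ga$.
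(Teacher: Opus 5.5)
Your proposal is correct and is essentially the paper's argument: relevance means the moment map on $T^*_xX$ is nonzero, dually that the infinitesimal action $\Lie\Ga\to H^0(\mathbb{T}_{X,x})$ is nonzero, i.e.\ that the stabilizer of $x$ in $\Ga$ has zero Lie algebra, hence is trivial. Your cone and long-exact-sequence argument just makes the middle step explicit, and your characteristic caveat is legitimate: the paper's final step (zero Lie algebra implies trivial stabilizer, and the $0$-or-$\Ga$ dichotomy) tacitly uses characteristic zero, and examples such as $\Aff/\alpha_p$ or $\Aff/\FF_p$ in characteristic $p$ show that the statement can fail otherwise.
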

\begin{proof}
By definition, $x$ is relevant if and only if the restriction of the moment map $\mu_{\Ga}$ to its cotangent fiber $\mu_{\Ga, x}: T^{*}_{x}X\to (\Lie\Ga)_{K}$ is nonzero. Dually, $x$ is relevant if and only if the infinitesimal action map $(\Lie\Ga)_{K}\to T_{x}X$ is nonzero (here $T_{x}X$ is the degree zero cohomology of the tangent complex of $X$ at $x$). This happens if and only if the stabilizer of $x$ under $\Ga$ has $0$ Lie algebra, which means the stabilizer is trivial. 
\end{proof}

\begin{lemma}\label{l:remove irr}
Let $i:Y\incl X$ be a locally closed $\Aff$-stable smooth substacks of $X$ such that all geometric points in $X\bs Y$ are irrelevant. Then
\begin{enumerate}
\item Both functors $i_{!}: \Kir(Y)\to \Kir(X)$ and $i_{*}: \Kir(Y)\to \Kir(X)$ are equivalences.
\item The natural transformation $i_{!}\to i_{*}: \Kir(Y)\to \Kir(X)$ is an isomorphism.
\end{enumerate}
\end{lemma}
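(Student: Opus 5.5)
The plan is to show that every object of $\Kir(X)$ is represented by a sheaf whose restriction to the complement $Z:=X\bs Y$ is $\Ga$-equivariant, hence zero in the Kirillov category. Everything then follows from the recollement for $Y$ and $Z$.

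Let $Z=X\bs Y$ with the $\Aff$-stable inclusion $\iota:Z\incl X$. The first claim to establish is $\Kir(Z)=0$, i.e.\ that every $\Gm$-equivariant sheaf on $Z$ is automatically $\Ga$-equivariant. By Lemma \ref{l:rel pt}, every geometric point of $Z$ is fixed by $\Ga$: the homomorphism $\Aut_{Z/\Ga}(z)\to\Ga$ is nonzero, hence surjective, since its image is a nonzero closed subgroup of $\Ga$ stable under the $\Gm$-scaling. I then stratify $Z$ into finitely many $\Aff$-stable locally closed pieces over which $\Ga$ acts trivially in the strong sense, namely $Z\cong Z\times\Ga/\Ga$ after passing to the reduced stack with a gerbe. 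This reduces the claim to showing that the forgetful functor $D_{\Aff}(Z_\alpha)\to D_{\Gm}(Z_\alpha)$ is an equivalence when $\Ga$ acts trivially up to a gerbe.

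For the trivial action, $D_{\Aff}(Z_\alpha)=D_{\Gm}(Z_\alpha\times B\Ga)$, and pullback along $B\Ga\to\pt$ is an equivalence because $\Ga$ is unipotent, hence contractible in any of our sheaf theories. The case of a stabilizer that only surjects onto $\Ga$ is handled by the same contractibility of the fibers. Gluing over the stratification via the recollement triangles then gives $\Kir(Z)=0$.

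Next, the recollement triangles $i_!i^*\cF\to\cF\to\iota_*\iota^*\cF$ and $\iota_!\iota^!\cF\to\cF\to i_*i^*\cF$ in $D_{\Gm}(X)$ descend to $\Kir(X)$, since all the functors involved are defined on Kirillov categories by the functoriality paragraph. Because $\iota^*\cF$ and $\iota^!\cF$ vanish in $\Kir(Z)$, we get $\cF\cong i_*i^*\cF\cong i_!i^*\cF$ in $\Kir(X)$. Combined with $i^*i_*\cong\id\cong i^*i_!$ (using $i^*=i^!$ on the open part, or on a locally closed part via the factorization into open and closed embeddings), this shows that $i_!$ and $i_*$ are equivalences with inverse $i^*$. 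For (2), the map $i_!\to i_*$ has cone supported on the boundary of $Y$ in $\ov Y$, which lies in $Z$, so it is zero in $\Kir(X)$. Concretely, the cone is $\iota_*$ of an object of $\Kir(Z)=0$. The smoothness hypothesis on $Y$ is only needed so that $i^*$ and $i^!$ differ by a shift and twist on $Y$, and so that the functors preserve the relevant categories; it plays no deeper role.

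The main obstacle is the first step: justifying that $\Gm$-equivariant sheaves on a stack with a $\Ga$-action fixing all points are $\Ga$-equivariant. This requires care with non-reduced stabilizers and with the stratification of stacks, for instance in characteristic $p$, where Frobenius kernels of $\Ga$ could appear as stabilizers. I would first try to use the $\Gm$-scaling, which forces any nontrivial stabilizer subgroup to be all of $\Ga$ over reduced points, together with the fact that equivariance for a unipotent group is a property rather than extra structure.
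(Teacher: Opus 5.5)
Your proposal follows the paper's proof: show $\Kir(X\bs Y)=0$ because irrelevant points are $\Ga$-fixed (Lemma \ref{l:rel pt}), so $\Gm$-equivariant sheaves there are automatically $\Ga$-equivariant, then read off (1) and (2) from the recollement triangles after reducing to $i$ open or closed. Exhibiting $i^*$ as a two-sided inverse is a slightly cleaner way to finish (1) than the paper's ``full embedding plus essential surjectivity''.

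One caution about your last paragraph. $\Gm$-scaling cannot force a nonzero stabilizer to be all of $\Ga$ in characteristic $p$, because the Frobenius kernels $\alpha_{p^n}\subset\Ga$ are $\Gm$-stable proper subgroups. In fact for $X=\Aff/\alpha_p$ every point is irrelevant while $\Kir(X)\neq 0$. So in positive characteristic this step needs an extra hypothesis, such as smoothness of the $\Ga$-stabilizers, and this applies to the paper's argument as much as to yours. In characteristic $0$ the scaling argument is unnecessary, since every nonzero closed subgroup of $\Ga$ is already $\Ga$.
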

\begin{proof} Decomposing $i$ as a closed embedding followed by an open embedding, we are reduced to treat the case where $i$ is either a closed embedding or an open embedding. The proof for both cases follows along the same lines, so in the following assume we are in either of the two cases.

Now let $X'=X-Y$, which consists of irrelevant points. We first show that $\Kir(X')=0$. Indeed, let $\cF\in D_{\Gm}(X')$.  Since all points in $U$ are irrelevant, by \eqref{l:rel pt}, all points in $U$ are fixed by $\Ga$. This implies that $\cF$ is $\Ga$-equivariant (which is a property since $\Ga$ is contractible), hence its $\Gm$-equivariant structure extends uniquely to an $\Aff$-equivariant structure, which implies $\cF$ becomes zero in $\Kir(X')=D_{\Gm}(X')/D_{\Aff}(X')$. 

We prove (2). For $\cF\in D_{\Gm}(Y)$, we have a distinguished triangle $i_{!}\cF\to i_{*}\cF\to k_{*}k^{*}i_{*}\cF$ where $k: X'\incl X$. Since $k^{*}i_{*}\cF\in \Kir(X')=0$, we see that  $i_{!}\cF\to i_{*}\cF$ becomes an isomorphism in $\Kir(X)$.

We prove (1). Since $i_{!}$ and $i_{*}$ are full embeddings, it suffices to show that they are essentially surjective. For $\cF\in D_{\Gm}(X)$, we have a distinguished triangle $i_{!}i^{!}\cF\to \cF\to k_{*}k^{*}\cF$. Since $k^{*}\cF\in \Kir(X')=0$, we conclude that $i_{!}i^{!}\cF\to \cF$ becomes an isomorphism in $\Kir(X)$. The same argument shows that $\cF\to i_{*}\cF$ becomes an isomorphism in $\Kir(X)$.

%
\end{proof}

\begin{cor}\label{c:Kir gen}
Suppose $\{Y_{\a}\}_{\a\in I}$ is a finite collection of disjoint, locally closed $\Aff$-stable smooth substacks of $X$. Assume there exists an $\Aff$-invariant stratification $X=\sqcup_{\a\in I} X_{\a}$ such that $Y_{\a}\subset X_{\a}$ and all geometric points in $X_{\a}\bs Y_{\a}$ are irrelevant (here by stratification we mean each $X_{\a}$ is locally closed and smooth, $I$ has a partial order such that $\ov X_{\a}=\cup_{\b\le \a}X_{\b}$ for all $\a\in I$). Then the images of $i_{\a!}: \Kir(Y_{\a})\to \Kir(X)$ generate $\Kir(X)$, and $\Kir(X)$ admits a semi-orthogonal decomposition into $\{\Kir(Y_{\a})\}_{\a\in I}$.
\end{cor}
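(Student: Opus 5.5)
The plan is to combine the standard semi-orthogonal decomposition of $D_{\Gm}(X)$ coming from the stratification with Lemma \ref{l:remove irr}. First, since each $X_{\a}$ is $\Aff$-stable and locally closed, I will choose a total order refining the partial order on $I$ and filter $X$ by the closed $\Aff$-stable unions $Z_{\le \a}=\cup_{\b\le\a}X_{\b}$. Iterating the open–closed recollement gives a semi-orthogonal decomposition of $D_{\Gm}(X)$ into the pieces $j_{\a!}D_{\Gm}(X_{\a})$, where $j_{\a}:X_{\a}\incl X$. The same holds for $D_{\Aff}(X)$ with pieces $D_{\Aff}(X_{\a})$. The functors $j_{\a!}$, $j_{\a}^{*}$, $j_{\a}^{!}$, $j_{\a*}$ are $\Aff$-equivariant, so they preserve the subcategories $D_{\Aff}$ and descend to the Kirillov quotients, as noted in the Functoriality paragraph. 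This lets me pass the recollement triangles to the Verdier quotient $\Kir(X)=D_{\Gm}(X)/D_{\Aff}(X)$.

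Second, I will check that the quotient inherits the semi-orthogonal decomposition with pieces $\Kir(X_{\a})$. Each $\cF\in\Kir(X)$ is an iterated extension of the objects $j_{\a!}j_{\a}^{*}\cF$, by the images of the recollement triangles. For semi-orthogonality, I will use the model $\Kir\cong\Kir_{*}$, the right orthogonal to $D_{\Aff}$ inside $D_{\Gm}$. Since $\Av_{\Ga,*}$ commutes with $j_{\a}^{*}$ (smooth base change for the open-in-closed restriction) and with the pushforwards, the right and left adjoints $\k_{*},\k_{!}$ are compatible with the gluing functors. As a result, Hom computations between $j_{\a!}$-images in $\Kir(X)$ reduce to the corresponding computations in $D_{\Gm}(X)$, where the vanishing for $\a>\b$ (in the appropriate direction) is standard recollement. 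I expect this to be the main technical point: it requires a verification that $\k_{*}$ (or $\k_{!}$) commutes with $j_{\a}^{*}$ and with $j_{\a*}$ (resp. $j_{\a!}$). I would first try to do this directly via base change for the $\Ga$-action map.

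Third, I will apply Lemma \ref{l:remove irr} to $Y_{\a}\incl X_{\a}$. Every point of $X_{\a}\bs Y_{\a}$ is irrelevant, so $i_{\a!}:\Kir(Y_{\a})\to\Kir(X_{\a})$ is an equivalence. Composing with $j_{\a!}$ identifies the $\a$-piece of the decomposition with the image of $\Kir(Y_{\a})$ under $(j_{\a}i_{\a})_{!}$, which is the map denoted $i_{\a!}$ in the statement. Generation then follows because every object is an iterated extension of objects in these images.
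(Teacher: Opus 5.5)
Your proposal is the paper's argument. The paper also takes as standard that the $\Aff$-invariant stratification $\{X_\a\}$ gives generation and a semi-orthogonal decomposition of $\Kir(X)$ into the $\Kir(X_\a)$, and then applies Lemma \ref{l:remove irr} to get $\Kir(Y_\a)\isom\Kir(X_\a)$ via $!$-extension, exactly as in your third paragraph.

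For your ``technical point'', run the argument with $\k_!$ rather than $\k_*$. The functor $\Av_{\Ga,!}$ commutes with $j_{\a!}$ (by composition) and with $j_\a^*$ (by proper base change). Hence $\Hom_{\Kir}(\k j_{\a!}A,\k j_{\b!}B)\cong\Hom_{D_{\Gm}}(j_{\a!}\k_!\k A,\,j_{\b!}B)$, and this vanishes unless $\a\le\b$. The $\k_*$ version of your claim is false in general: $\Av_{\Ga,*}$, hence $\k_*$, need not commute with $*$-restriction to a closed $\Aff$-stable stratum.
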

\begin{proof}
Let $j_{\a}: X_{\a}\incl X$ be the inclusion. Since $\{X_{\a}\}_{\a\in I}$ form an $\Aff$-invariant stratification of $X$, the images of $j_{\a!}: \Kir(X_{\a})\to \Kir(X)$ generate $\Kir(X)$, and $\Kir(X)$ admits a semi-orthogonal decomposition into $\{\Kir(X_{\a})\}_{\a\in I}$. By Lemma \ref{l:remove irr}, the $!$-extension functors $\Kir(Y_{\a})\to \Kir(X_{\a})$ are equivalences, hence the same statements are true if $\Kir(X_{\a})$ is replaced with $\Kir(Y_{\a})$ for each $\a\in I$.
\end{proof}

\section{Recollections on microsheaves}\label{s:micro}

In this section we assume the base field $k=\CC$, and work with the analytic topology on various smooth complex varieties. We will review the basics of microsheaves, and recall a few properties which we will have use for later, as well as some general context and motivation. None of this material is original. 

\subsection{Microsheaves}\label{ss:micro}

\sss{Microsheaves on cotangent bundles}\label{sec:microsheaves} We follow \cite{KS} and \cite{NS} to recall the formalism of microsheaves.

Let $N$ be a smooth complex manifold, and let $\Sh(N)$ be the dg category of unbounded complexes of sheaves on $N$. For any subset $\L\subset T^{*}N$, let $\Sh_{\L}(N)$ be the full subcategory of sheaves with singular support contained in $\L$. For an open subset $\Om\subset T^{*}N$ (with respect to the usual topology on a complex manifold), let $\Sh(N;\Om)$ be the quotient $\Sh(N)/\Sh_{T^{*}N-\Om}(N)$. The assignment $\muSh^{pre}: \Om\mapsto \Sh(N;\Om)$ is a presheaf of stable $\infty$-categories. Let $\Om\mapsto \muSh(\Om)$  be the sheafification of $\muSh^{pre}$. It is a sheaf of stable $\infty$-categories.

For any open $\Om \subset T^{*}N$, an object $\cF\in \muSh(\Om)$ has a well-defined singular support $SS(\cF)$, which is a conical subset in $\Om$. For a subset $\L\subset T^{*}N$ and open $\Om\subset T^{*}N$, we let $\muSh_{\L}(\Om)$ denote the category of sections of $\muSh(\Om)$ with singular support in $\L\cap \Om$. 

For $\cF,\cG\in \Sh(N)$, in \cite{KS} a complex of sheaves $\muhom(\cF,\cG)$ on $T^{*}N$ is defined. We have 
\begin{equation*}
\Hom_{\muSh(\Om)}(\cF,\cG)=\cohog{0}{\Om, \muhom(\cF,\cG)}.
\end{equation*}
The support of $\muhom(\cF,\cG)$ is contained in $SS(\cF)\cap SS(\cG)$.

\sss{Extension to stacks} \label{sec:microsheavesonstacks} We refer to \cite[Section 4]{NS2} for more details on microsheaves on a stack. We will only need the case of a global quotient $\frX=[N/G]$ where $N$ is a smooth scheme over $\CC$. In this case, $T^{*}\frX=[\mu^{-1}(0)/G]$ where $\mu: T^{*}N\to \frg^{*}$ is the moment map for the $G$-action on $N$. For $\Om\subset \mu^{-1}(0)$ that is $G$-invariant, we can still form $\muSh^{pre}(\Om/G):=\Sh(\frX)/\Sh_{\mu^{-1}(0)-\Om}(\frX)$.

\begin{defn} \label{def:microsheavesonstacks} Define a sheaf of stable $\infty$-categories $\muSh$ over the stack $T^{*}\frX$ by sheafifying the above presheaf. We denote its global section category by $\muSh(T^{*}\frX)=\muSh(\mu^{-1}(0)/G)$.
\end{defn}

\sss{Microsheaves on exact symplectic manifolds} \label{sec:microsheavesonexact}
An exact sympectic manifold is a (smooth, real) manifold $X$ equipped with a real one-form $\lambda$, such that $\omega = d \lambda$ is a symplectic form. The image of $\lambda$ under the isomorphism of vector bundles $T^*X \to TX$ defined by $\omega$ is called the Liouville vector-field. The flow of this vector field is called the Liouville flow. 

\begin{exam}
Let $N$ be a manifold, and let $X = T^*N$ be equipped with the usual symplectic structure. The action of $\RR^{>0}$ dilating the cotangent fibers is the Liouville flow for the canonical Liouville form, which we denote $\lambda_N$. \end{exam}

Exact symplectic manifolds bear a close relation to contact manifolds. To an exact symplectic manifold $(X,\lambda)$, we associate the contact manifold $(Y, \alpha) = (X \times \RR, \lambda + dt)$.\footnote{One can also build a symplectic manifold from a contact manifold, though we will not need to do so here.}  

We will consider contact manifolds $(Y,\alpha)$ and symplectic manifolds $(X,\omega)$ equipped with an extra structure called {\em Maslov data}. This structure endows them with sheaves of categories locally modeled on the microsheaf categories defined by by Kashiwara and Schapira in \cite{KS}. We briefly sketch the definition of Maslov data below; a good reference for this material is \cite[Section 3]{CKNS}. 

\begin{remark}
We will use the same notation $\muSh$, and the same word microsheaves, for the sheaves of categories on a contact or symplectic manifold. As we recall below, there is a simple relationship between the two, and in practice this should not cause any confusion.
\end{remark}

Let $\mathcal{C}$ be a stable presentable symmetric monoidal category (eventually, this will be the category of coefficients for our microsheaves, and we may take it to be dg $\ZZ$-modules or dg $\CC$-vector spaces).

\begin{defn} Let $(X, \omega)$ be a real symplectic manifold. There is a canonical sequence of maps
      \begin{equation} \label{eq:maslovsequence} X \to \oB \oU \to \oB(\oU/\oO) \to \oB^2 \Picard(\mathcal{C}) \end{equation}
    where the first map classifies the stable tangent bundle to $X$ (viewed as a symplectic bundle), the second map is the delooping of the quotient map $\oU \to \oU/\oO$, and the last map is induced by a de-looping of the $J$-homomorphism. {\em Maslov data} for $(X,\lambda)$ is a choice of null-homotopy of the resulting map $X \to \oB^2 \Picard(\mathcal{C})$. 
\end{defn}

We will denote such a choice of nullhomotopy by the variable $\eta$ below. 

\begin{exam}
        When $\mathcal{C} = \ZZ-\operatorname{mod}$, $\oB^2\Picard(\mathcal{C})$ is given by $\oB^2(\ZZ) \oplus \oB^3(\ZZ/2\ZZ)$. A choice of Maslov data is canonically identified with a choice of trivialization of the complex line bundle $\det(T_\CC X)^{\otimes 2}$ and a class in $H^2(X,\ZZ/2\ZZ)$.

        Maslov data over other commutative rings $R$ can then be obtained by composition with $\Picard(\ZZ-\operatorname{mod}) \to \Picard(R-\operatorname{mod})$.
\end{exam}

Maslov data for contact manifolds is defined similarly to the symplectic case, where the contact distribution plays the role of the tangent bundle. In particular, Maslov data for an exact symplectic manifold $(X,\lambda)$ determines Maslov data for $(X \times \RR, \lambda + dt)$.

 One natural way of producing Maslov data is as follows.

\begin{defn}
A stable polarization of $X$ is a section of the Lagrangian Grassmanian bundle $\oLGr(TX \oplus \RR^{2n}) \to X$ for some $n \geq 0$. 
\end{defn}
We consider stable polarizations up to homotopy: they are classified by null-homotopies of the composition
\begin{equation} \label{eq:classifypol} X \to \oB\oU \to \oB(\oU/\oO) \end{equation}
of the first two maps in Equation \ref{eq:maslovsequence}. A choice of stable polarization defines Maslov data for $X$ by composition. 

\begin{exam} If $X = T^*N$ is a cotangent bundle, then it carries a natural polarization, and thus natural Maslov data, which we denote $\eta_N$. Similarly, if $Y = S^*N$ is the cosphere bundle of a manifold $N$, equipped with the canonical contact form $\alpha_N$, then it carries a natural (contact) polarization, and thus natural Maslov data, which we also denote $\eta_N$.\end{exam}

The papers \cite{S} and \cite[Theorem 1.1]{NS} endow any contact manifold $(Y, \alpha)$ equipped with a choice of Maslov data $(Y, \alpha, \eta)$ with a sheaf of categories $\muSh$, which we call microsheaves. 

Suppose $(Y,\alpha, \eta) = (S^*\RR^n, \alpha_N, \eta_N)$. Then the stalk of this sheaf at $p \in S^*\RR^n$ is the category 
\begin{equation} \label{eq:stalkcategory} \Sh(\RR^n)/\Sh_{S^* \setminus p}(\RR^n), \end{equation} where $\Sh(\RR^n)$ denotes sheaves on $\RR^n$ with coefficients in $\mathcal{C}$. This is essentially the category defined by Kashiwara and Schapira in \cite{KS}, except that one works here with coefficients in a presentable category, rather than with bounded complexes.

The same is true, non-canonically, for general $(Y, \alpha, \eta)$. Any homotopy of Maslov data $\eta \to \eta'$ induces an isomorphism on microsheaves. On the other hand, by the contact version of Darboux's theorem, $(Y,\alpha)$ is locally isomorphic, as a contact manifold, to the cosphere bundle $S^*\RR^n$ of a vector space. Thus the sheaf $\muSh$ is always locally isomorphic to \eqref{eq:stalkcategory}. 

 
\begin{defn} \label{def:microsheavesonexact}
Given an exact symplectic manifold equipped with a choice of Maslov data $(X, \lambda, \eta)$, we consider the associated contact manifold $(X \times \RR, \lambda + dt, \eta)$ equipped with the induced Maslov data. We then define $\muSh$ on $X$ as the pullback of $\muSh$ on $X \times \RR$ along the natural embedding $i_0 : X \to X \times 0 \subset X \times \RR$. 
\end{defn}

We call an object of $\muSh(X)$ a {\em microsheaf}. 

\begin{prop} The support of a microsheaf is a closed coisotropic subset of $X$ stable under the Liouville flow.
\end{prop}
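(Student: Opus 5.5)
The statement is local, so I will reduce it to the model case of cotangent bundles and then invoke Kashiwara--Schapira's involutivity theorem. Both closedness and coisotropy are local on $X$. Liouville invariance is also local, in the sense that it follows once we know the support is locally conical for the contact lift.

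First I pass to the contactization. By Definition \ref{def:microsheavesonexact}, a microsheaf on $X$ is a section of $i_0^{*}\muSh$, where $\muSh$ lives on $Y=X\times\RR$ with contact form $\lambda+dt$. Here the pullback along $i_0$ is really the restriction to a neighborhood germ, since $X\times 0$ is transverse to the Reeb flow $\partial_t$. Its support is therefore the intersection with $X\times 0$ of the support of a microsheaf on a neighborhood of $X\times 0$ in $Y$.

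Next I reduce to the model. By the contact Darboux theorem, as recalled before the statement, locally $(Y,\alpha,\eta)$ is isomorphic to $S^*\RR^n$ with its canonical Maslov data. There $\muSh$ is given by the Kashiwara--Schapira categories $\Sh(\RR^n;\Om)$, so the support of a microsheaf is $SS(\cF)\cap\Om$ for some sheaf $\cF$ on $\RR^n$. The choice of Maslov data only changes the sheaf of categories by a local equivalence, which preserves supports. The involutivity theorem of \cite{KS} says that $SS(\cF)$ is a closed, conic, coisotropic subset of $T^*\RR^n$. Its image in $S^*\RR^n$ is then a closed subset whose cone is coisotropic, which is the contact analogue of coisotropy and is invariant under contactomorphisms. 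Since $SS(\cF)$ is closed and the sheafified support is computed locally, the support is closed in general.

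Finally I transport the conclusion back to $X$. A closed subset $Z\subset Y$ that is "contact-coisotropic" and stable under the flow of $\partial_t$ restricts to $X\times 0$ as a closed coisotropic subset $Z_0$. Moreover, the $t$-invariance combined with the contact condition forces $Z_0$ to be stable under the Liouville flow: the symplectization of $Y$ near $X\times0$ identifies the conic direction with the Liouville vector field on $X$. To see that the support is $t$-invariant near $X\times 0$, I would use that microsheaves on $X$ are pulled back via the projection $X\times\RR\to X$. This is the step I expect to be the main obstacle, namely making precise that the germ of $\muSh$ along $X\times0$ is constant in the Reeb direction, so that the coisotropy of the contact support descends to coisotropy of its slice. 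I would handle it in the cotangent model $X=T^*N$, $Y=J^1N$, where microsheaves near the zero section of $T^*N$ correspond to sheaves on $N\times\RR$ microsupported in $\tau>0$, and use that Reeb invariance is the translation invariance in $t$.
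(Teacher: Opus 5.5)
Your first two steps are fine as far as they go: locally, the support of a microsheaf on the contactization $Y=X\times\RR$ is $SS(\cF)\cap\Om$ in a Kashiwara--Schapira model. It is therefore closed and (contact-)coisotropic by the involutivity theorem of \cite{KS}.

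The gap is the transport back to $X$, and it is more than technical. By Definition~\ref{def:microsheavesonexact}, a microsheaf on $X$ is a germ along $X\times 0$ of a microsheaf on $Y$. It is obtained by restricting along the inclusion $i_0$, not by pulling back along the projection $X\times\RR\to X$. Transversality of $X\times 0$ to the Reeb field gives no $t$-invariance of supports.

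Concretely, take $X=T^*\RR$ with $\lambda=p\,dx$, so that $Y$ is identified with $\{\tau>0\}\subset S^*\RR^2_{x,t}$ via $p=\xi/\tau$. The sheaf $E_{\{t\ge x\}}$ has microsupport in $\{\tau>0\}$ equal to the Legendrian $\{t=x,\ p=-1\}$. Its germ along $X\times 0$ has support the single point $(0,-1)\in T^*\RR$. This point is neither coisotropic nor stable under the Liouville flow generated by $p\,\partial_p$.

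Your proposed fix in the cotangent model points the wrong way as well. The objects reproducing Kashiwara--Schapira microsheaves, such as $G\boxtimes E_{\{t\ge 0\}}$, have contactization support contained in $SS(G)\times\{0\}$. That set is not translation invariant in $t$.

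The second half of the transport step also fails. Grant $t$-invariance and compute symplectic orthogonals for $d\bigl(r(\lambda+dt)\bigr)$ on $Y\times\RR_{>0}$. You find that a cylinder $Z_0\times\RR$ is contact-coisotropic if and only if $Z_0$ is coisotropic in $X$, and nothing forces conicity (take $Z_0=\{p=-1\}$). The conic direction $r\partial_r$ of the symplectization is transverse to $Y$; it is not the Liouville field of $X$. The Liouville field lifts instead to the contact vector field $Z+t\partial_t$, and supports of individual microsheaves are not invariant under contact vector fields.

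So Liouville stability cannot be extracted from involutivity and contact Darboux charts; it has to be an input. In the Kashiwara--Schapira model on $T^*N$ of \S\ref{sec:microsheaves} it is built in, since the support is locally $SS(\cF)\cap\Om$ with $SS(\cF)$ conic. Together with involutivity, that is the whole proof there; the paper only recalls the statement.

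For a general exact $(X,\lambda)$, the example above shows the statement has to be read for objects whose contactization support has the form $L\times\{0\}$ with $L$ conic. This is the situation of the categories $\muSh_{\L_\psi}$ used later. Since the statement fails for arbitrary germs, no argument along your lines can produce it in that generality.
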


Given any subset $S \subset X$, we can consider the subsheaf $\muSh_{S} \subset \muSh$ of objects supported on $S$. 

\begin{remark}
One could also pull back $\muSh$ on $X \times \RR$ along the graph of any function $f: X \to \RR$; this corresponds to shifting the Liouville form by $df$. 
\end{remark}

\begin{remark}
The dependence of $\muSh$ on the Maslov data $\eta$ is mild: any isotopy $\eta \to \eta'$ induces an equivalence on microsheaves. On the other hand, the stalks of $\muSh$ can depend quite non-trivially on the form $\lambda$. 

One of the main results of \cite{NS} is that in good situations, the global sections of $\muSh$ along the so-called {\em core} $\frc(X, \lambda) \subset X$ are invariant with respect to (sufficiently nice) exact isotopies of $\lambda$.
\end{remark}

\begin{prop}
Any isomorphism of $(X, \lambda, \eta)$ with an open subset $\Om \subset (T^*N, \lambda_N, \eta_N)$ identifies $\muSh$ with the sheaf defined in Section \ref{sec:microsheaves}.
\end{prop}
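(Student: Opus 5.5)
The statement is local on $X$, so I will reduce to the Darboux model and then identify that model with the Kashiwara--Schapira construction of Section \ref{sec:microsheaves}.

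Let $\varphi:(X,\lambda,\eta)\isom(\Om,\lambda_N|_\Om,\eta_N|_\Om)$ be the given isomorphism. By Definition \ref{def:microsheavesonexact}, $\muSh_X$ is the pullback along $i_0:X\to X\times\RR$ of the Nadler--Shende sheaf on the contact manifold $(X\times\RR,\lambda+dt,\eta)$. Transport of structure along $\varphi\times\id$ identifies this sheaf with the corresponding sheaf on $(\Om\times\RR,\lambda_N+dt,\eta_N)$. A homotopy of Maslov data induces an equivalence of microsheaf categories, so we may use the canonical $\eta_N$ without loss. It therefore suffices to treat $X=T^*N$, or an open $\Om$ in it, with its canonical Liouville form and polarization. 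Both constructions are sheaves, and restriction to opens commutes with the constructions, so we may take $X=T^*N$.

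Next, realize $(T^*N\times\RR,\lambda_N+dt)$ as an open subset of a cosphere bundle. Use the standard contactomorphism $T^*N\times\RR\cong\{\tau>0\}\subset S^*(N\times\RR)$, which sends $(x,\xi,t)$ to the ray through $(x,t;\xi,\tau{=}1)$ up to sign conventions. It matches the canonical polarization of $T^*N$, plus the $\RR$ direction, with the contact polarization $\eta_{N\times\RR}$. By the construction in \cite{NS,S}, with Maslov data coming from the cotangent polarization, the sheaf on $S^*(N\times\RR)$ is the Kashiwara--Schapira sheafification of $\Sh(N\times\RR)/\Sh_{\text{complement}}$. The Nadler--Shende sheaf is built by gluing these local models via the polarization, so for the canonical polarization the gluing is trivial. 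The pullback along $i_0$, $x\mapsto(x,\xi,0)$, then becomes the sheaf $\Om'\mapsto$ microsheaves on $N\times\RR$ along the conic hull of $\Om'\times\{0\}$ in the open set $\tau>0$.

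The remaining step is the comparison with the presheaf $\Om\mapsto\Sh(N)/\Sh_{T^*N-\Om}(N)$ on $T^*N$ itself. This is the main obstacle. I would use the functor $\cF\mapsto\cF\boxtimes E_{[0,\infty)}$, or its microlocal analogue via Kashiwara--Schapira's "doubling/adding a variable" (\cite[Ch.~VI--VII]{KS}). For the homogeneous Lagrangian data on $T^*N$, the singular support of $\cF\boxtimes E_{\{t\ge0\}}$ over $\tau=1$ is exactly $SS(\cF)\times\{0\}$. Moreover, $\muhom(\cF\boxtimes E_{t\ge0},\cG\boxtimes E_{t\ge0})$ restricted to $\tau>0$ and pulled back along $i_0$ agrees with $\muhom(\cF,\cG)$, by the Künneth formula for $\muhom$. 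This gives a fully faithful map of presheaves on a basis of conic opens; one also needs the conic versus non-conic topologies to match. Essential surjectivity on stalks then follows because any microsheaf near $\tau>0$ is microlocally translation-invariant in $t$ after a contact isotopy, using the Guillermou--Kashiwara--Schapira quantization of the Reeb flow. With these two facts the presheaves agree, hence their sheafifications do. Compatibility with the stack version in Definition \ref{def:microsheavesonstacks} is not needed for this statement.
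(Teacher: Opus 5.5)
The paper gives no proof of this proposition. It sits in \S\ref{s:micro}, which is explicitly a recollection of material from \cite{NS,NS2}, so I judge your argument on its own. Your reductions are fine: transport along the isomorphism, changing Maslov data by a homotopy, and identifying $(T^*N\times\RR,\lambda_N+dt)$ with $\{\tau>0\}\subset S^*(N\times\RR)$. Full faithfulness of $\cF\mapsto\cF\boxtimes E_{[0,\infty)}$ via the K\"unneth formula for $\muhom$ is also fine. The essential surjectivity step, however, fails, and not for a technical reason.

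The stalk of $i_0^{-1}\muSh$ at $(x_0,\xi_0)$ is the germ category of microsheaves at the point $(x_0,\xi_0,0)$ of the $(2n+1)$-dimensional contact manifold $T^*N\times\RR$. Its objects can have singular support that is neither contained in $\{t=0\}$ nor conic in $\xi$. Concretely, take $N=\RR$ and $\cG=E_{\{t\ge x\}}$ on $\RR^2_{x,t}$. In the chart $(x,\xi,t)\mapsto[x,t;\xi,1]$, its singular support over $\{\tau>0\}$ is the Legendrian $\{(x,-1,x)\}$ for $\xi\,dx+dt$. This meets $T^*\RR\times\{0\}$ only at $(0,-1)$, so $\cG$ defines a section of $i_0^{-1}\muSh$ over all of $T^*\RR$ whose support is the single point $(0,-1)$. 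By contrast, every object $\cF\boxtimes E_{[0,\infty)}$ has singular support in $\{t=0\}$, conic in $\xi$. Moreover, every section of the sheaf of \S\ref{sec:microsheaves} has support locally invariant under $\RR_{>0}$-scaling, because $\Sh(N;\Om)=\Sh(N;\RR_{>0}\Om)$. An equivalence of sheaves of categories preserves supports, so no identification of the two sheaves over $T^*\RR$ exists at all. With Definition \ref{def:microsheavesonexact} read literally, the statement is false. The same example also contradicts the proposition stated right after that definition, that supports are coisotropic and Liouville-invariant.

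Appealing to the Guillermou--Kashiwara--Schapira quantization of the Reeb flow cannot repair this. That flow only translates in $t$, and it does not preserve the slice $\{t=0\}$ along which you pull back. Any isotopy straightening a Legendrian like $\{(x,-1,x)\}$ into $\{t=0\}$ would depend on the object, so it cannot define a morphism of sheaves of categories. The ``conic versus non-conic topology'' issue you set aside is precisely where the argument breaks.

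What is true, and all the paper actually uses (for objects supported on conic Lagrangians such as $\Lambda_\psi$), is a comparison of subsheaves. For a closed conic $\Lambda\subset\Om$, contact microsheaves supported in $\Lambda\times\{0\}$ agree with Kashiwara--Schapira microsheaves supported in $\Lambda$. For that statement your full faithfulness argument stands. Essential surjectivity then needs a different lemma: a sheaf on $N\times\RR$ whose singular support over $\{\tau>0\}$ lies in $\RR_{>0}\cdot(\Lambda\times\{0\})$ is, microlocally on $\{\tau>0\}$, isomorphic to some $\cF\boxtimes E_{[0,\infty)}$ with $SS(\cF)\cap\Om\subset\Lambda$. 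This follows from a microlocal cut-off and non-characteristic propagation in the $t$-direction (cf.~\cite[Ch.~V--VI]{KS}).
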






\sss{Comparing two constructions of microsheaves}
Suppose a Lie group $G$ acts on a manifold $N$, preserving an open subset $\Omega \subset T^*N$, and let $X = \Om /\!\!/_{\theta} G = \mu^{-1}(\theta) / G$ be a symplectic reduction at some moment value $\theta \in (\frg^{*})^G$. The cotangent polarization on $T^*N$ restricts to $\Om$. 

The pullback $\pi^*TX$ is the middle cohomology of the sequence $\frak{g} \to T\Om \to \frak{g}^*$ defined by the group action and moment map. As explained by Nadler and Shende in \cite{NS2}, any choice of $G$-equivariant splitting $T\Om \cong \pi^*TX \oplus \frak{g} \oplus \frak{g}^*$ identifies $G$-equivariant stable polarizations of $\Om$ and stable polarizations of the quotient. 

\begin{prop}\cite{NS2} \label{prop:nadlershendequotientisintrinsic}
Let $(W, \lambda)$ be a complex exact symplectic manifold, which occurs as an open subset of $T^*\frak{X} = T^*(X/G)$. Endow $W$ with the quotient polarization, with associated Maslov data $\eta$. Then Definition \ref{def:microsheavesonexact}) and Definition \ref{def:microsheavesonstacks} agree after restriction to $W$.\end{prop}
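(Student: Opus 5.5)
The plan is to reduce to the local model in which both definitions can be compared directly. Both constructions produce sheaves of categories on $W$, so it is enough to exhibit a natural equivalence between them on a basis of open subsets and to check that these local equivalences glue. I would use opens $U\subset W$ small enough that the quotient map $\mu^{-1}(0)\to \mu^{-1}(0)/G$ admits a slice over $U$.

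\textbf{Local slice.} Over such a $U$, pick a point $p\in\mu^{-1}(0)$ over $U$. Choose a transversal slice $S\subset N$ through the base point of $p$. Because the $G$-action on $\mu^{-1}(0)$ is free near $p$ (this is forced by $W$ being a manifold and an open subset of $T^*\frX$), we can arrange that $G\times S\to N$ is a local isomorphism onto a $G$-invariant neighborhood. Under this identification:
\begin{itemize}
\item $\Sh([N/G])$ restricted near this neighborhood becomes $\Sh(S)$;
\item $\mu^{-1}(0)$ becomes $G\times T^*S$;
\item the open $U$ becomes an open subset $U_S\subset T^*S$.
\end{itemize}
Hence the stack-theoretic presheaf $\Sh(\frX)/\Sh_{\mu^{-1}(0)-\Om}(\frX)$ is identified with the Kashiwara--Schapira presheaf $\Sh(S)/\Sh_{T^*S-U_S}(S)$ of \S\ref{sec:microsheaves}. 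After sheafifying, Definition~\ref{def:microsheavesonstacks} restricted to $U$ agrees with $\muSh$ on $U_S\subset T^*S$.

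\textbf{Matching Maslov data.} By the earlier proposition, the identification of $U$ with $U_S\subset (T^*S,\lambda_S,\eta_S)$ identifies Definition~\ref{def:microsheavesonexact} with the Kashiwara--Schapira sheaf, provided the Maslov data agree. Here I would check two things:
\begin{itemize}
\item under the $G$-equivariant splitting $T\Om\cong \pi^*TX\oplus\frg\oplus\frg^*$, the quotient polarization on $U$ is homotopic to the cotangent polarization of $T^*S$. The slice gives a compatible splitting in which $\frg\oplus\frg^*$ is the tangent-and-conormal part of the $G$-orbit direction;
\item the Liouville forms agree up to an exact term. This term only shifts by $df$, which is harmless by the earlier remark.
\end{itemize}

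\textbf{Gluing.} I would check that on overlaps of two slice charts the two local equivalences differ by the canonical equivalence induced by the change of slice. These are contact transformations that preserve Maslov data up to coherent homotopy, and sheaves of stable categories satisfy descent, so the equivalences assemble into a global one.

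The main obstacle is the Maslov-data compatibility together with the coherence of gluing: one must show the homotopies between the quotient polarization and the slice polarizations are canonical enough, i.e.\ contractible choices, to glue. I would first try to use that the space of $G$-equivariant splittings is contractible, which makes the induced homotopies canonical, rather than tracking them chart by chart.
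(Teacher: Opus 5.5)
The argument breaks at the slice step. That $W$ is a manifold tells you that $\Stab_G(x,\xi)$ is trivial for the covectors $(x,\xi)\in\mu^{-1}(0)$ lying over $W$. It says nothing about $\Stab_G(x)$ for the base point $x\in N$, and a slice $S$ with $G\times S\to N$ a local isomorphism near $x$ exists only if $\Stab_G(x)$ is discrete, since $\frg\to T_xN$ must be injective.

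The simplest example already fails. Let $G=\Gm$ act on $N=\CC^n$ by scaling, so $\mu(x,\xi)=\langle x,\xi\rangle$. On $\mu^{-1}(0)\cap\{\xi\neq 0\}$ the action is free and $d\mu\neq 0$. The quotient is $W\cong T^*\check{\PP}^{n-1}$ via $(x,\xi)\mapsto(\xi,-x)$; even the Liouville forms match, since $\xi\,dx=-x\,d\xi$ on $\mu^{-1}(0)$. Yet the whole zero section of $W$, i.e.\ the covectors $(0,\xi)$, lies over the $\Gm$-fixed point $0\in N$, where no such $S$ exists. A Luna-type slice $G\times^H S$ with $H=\Stab_G(x)$ does not help either: it just reproduces the same problem for $[S/H]$, with $H$ still fixing $x$.

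This is not a corner case. It is the whole content of the result in \cite{NS2} (stable Higgs fields on unstable bundles), and it is exactly how the proposition is used in this paper. In \S\ref{ss:micro Kir} and \S\ref{ss:micro Mpsi} the rotation $\Gm$ fixes base points of relevant covectors. For example, the support of $M_{\bG}(\d_\l)$ in \eqref{micro DGr} lies over $\wh\cE_\l$, the point $0$ of the $\Ga$-orbit in \eqref{equ_beta}. That point is rotation-fixed and has the unipotent automorphism group $A_\l$ of dimension $d_\l$; freeness holds only at the level of covectors. So your argument only covers the case where $\frX$ is itself a manifold near the relevant points, where the statement is essentially tautological.

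What is missing is a genuinely microlocal step that converts freeness on covectors into something usable. In the example this step is the Fourier--Sato transform. It is a quantized contact transformation sending $(0,\xi)$ to $(\xi,0)$, a covector over a point with free $\Gm$-orbit. It identifies $\Gm$-equivariant sheaves on $\CC^n$, modulo those microsupported in $\{\xi=0\}$, with sheaves on $\check{\PP}^{n-1}$; for $n=1$ this is Example \ref{ex:Kir A1}. Note that it exchanges vertical and horizontal directions, so comparing the quotient polarization with the cotangent polarization of $T^*\check{\PP}^{n-1}$ is a real verification, not the automatic check of your second step.

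In general one has to work with the smooth coisotropic $C\subset\mu^{-1}(0)$ lying over $W$, whose null foliation is the free $G$-orbit foliation. Roughly:
\begin{enumerate}
\item Straighten $C$ near $p$ to $\{\eta_1=\cdots=\eta_k=0\}$ by a quantized contact transformation compatible with the $G$-action.
\item Use the microlocal analogue of \cite[Proposition 5.4.5]{KS}, together with $G$-equivariance, to identify both sides with microsheaves on the reduction.
\item Compare the Maslov data transported by this transformation with the quotient polarization.
\end{enumerate}
Your gluing discussion could then be run on such microlocal charts, but those charts cannot be produced from slices in $N$.
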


\sss{Objects from smooth lagrangians}

\begin{defn}
Let $(X,\lambda, \eta)$ be an exact symplectic manifold equipped with Maslov data. Let $L \subset X$ be a smooth Lagrangian submanifold, and let $\Omega \cong T^*L$ be a Weinstein neighborhood of $L$. {\em Secondary Maslov data} for $L$ is a choice of homotopy from the cotangent Maslov data on $T^*L$ to the restriction $\eta|_{\Omega}$.
\end{defn}

\begin{prop}\cite{NS}
Any Lagrangian submanifold $L \subset X$ such that $\lambda|_L = 0$, equipped with secondary Maslov data, defines a fully faithful functor $\Loc(L) \to \muSh(X)$.
\end{prop}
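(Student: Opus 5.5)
The plan is to reduce to the local model on a Weinstein neighborhood and then use the Kashiwara--Schapira description of microsheaves along a smooth conormal. Since $\lambda|_L=0$, the Weinstein neighborhood theorem gives an exact symplectomorphism between a neighborhood $\Om$ of $L$ and a neighborhood of the zero section in $T^*L$, possibly after adding an exact one-form on $\Om$. By the remark that shifting $\lambda$ by $df$ corresponds to pulling back along a graph, and by the exactness of $\lambda|_L$, this primitive can be chosen to vanish on $L$, so the microsheaf categories supported on $L$ agree. Secondary Maslov data is a homotopy $\eta|_\Om\simeq\eta_L$, and any such homotopy induces an equivalence of sheaves of categories. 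Hence $\muSh_L|_\Om$ is identified with the cotangent microsheaves on $T^*L$ supported on the zero section $T^*_LL$.

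Next, on $T^*L$ with its canonical Maslov data, I would use the last proposition comparing with \S\ref{sec:microsheaves}. Sheaves on $L$ with singular support in the zero section are exactly locally constant sheaves, so $\Sh_{T^*_LL}(L)=\Loc(L)$. The composite $\Loc(L)\to \Sh(L)\to \muSh_{T^*_LL}(T^*L)$ is computed on homs by $\muhom(\cF,\cG)$. For local systems on the zero section, $\muhom(\cF,\cG)$ is supported on $T^*_LL$ and restricts there to $\mathcal{H}om(\cF,\cG)$. Its global sections therefore recover $\Hom_{\Loc(L)}(\cF,\cG)$, which gives full faithfulness. Equivalently, because the sheaf $\muSh_{T^*_LL}$ is locally equivalent to $\Loc$ and descent holds, the sheafified functor is an equivalence onto $\muSh_L(\Om)$.

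Finally, I would pass from $\Om$ to $X$. The support $L$ is closed in $X$, so any section of $\muSh_L$ over $\Om$ extends by zero to all of $X$, and restriction $\muSh_L(X)\to\muSh_L(\Om)$ is an equivalence by the sheaf property, using the cover $\{\Om, X\setminus L\}$. This gives the functor $\Loc(L)\to\muSh(X)$ and shows it is fully faithful.

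The main obstacle is the first step: checking that the identification of the sheaves of categories near $L$ is canonical enough to be independent of choices. This requires that the secondary Maslov data really trivializes the twist of $\muSh$ along $L$, which is the content of the nontrivial homotopy-coherent construction in \cite{NS}. I would cite that construction rather than reprove it.
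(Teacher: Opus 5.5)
The paper does not prove this statement; it quotes it from \cite{NS}. Your sketch is correct and is essentially the standard argument behind that citation, just packaged differently. There one works directly on the contactization: $\lambda|_L=0$ makes $L\times 0$ a Legendrian in $(X\times\RR,\lambda+dt)$. Microsheaves supported on a smooth Legendrian form a sheaf of categories on it that is locally equivalent to $\Loc$ and globally twisted by the Maslov obstruction, and secondary Maslov data trivializes the twist, so $\muSh_L\simeq\Loc_L$ as sheaves on $L$. Your route reaches the same local model through the Weinstein neighborhood and the Kashiwara--Schapira identity $\textup{H}^0(T^*L,\muhom(\cF,\cG))=\Hom(\cF,\cG)$ for local systems. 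As you note, it actually yields an equivalence $\Loc(L)\simeq\muSh_L(X)$, which is stronger than full faithfulness.

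Three points to tighten.

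First, the correction in your first step uses the vanishing $\lambda|_L=0$, not mere exactness. Let $\phi\colon\Om\to T^*L$ be the Weinstein embedding. Then $\phi^*\lambda_L-\lambda$ is closed and restricts to zero on $L$. Since $\Om$ retracts onto $L$, it equals $df$ with $d(f|_L)=0$, so $f$ can be normalized to vanish on $L$. If only $\lambda|_L=dh$, you would get $f|_L=-h$ up to constants; this is exactly why the exact case needs a deformation of $\lambda$. To make ``the microsheaf categories supported on $L$ agree'' precise, use the strict contactomorphism $(x,t)\mapsto(\phi(x),t-f(x))$ from $(\Om\times\RR,\lambda+dt)$ to $(T^*L\times\RR,\lambda_L+dt)$. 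Because $f|_L=0$, it carries the Legendrian $L\times 0$ onto $0_L\times 0$, where $0_L$ is the zero section. You are then comparing microsheaves along one and the same Legendrian, rather than pullbacks along two different sections.

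Second, your last step needs $L$ closed in $X$, so that $\{\Om,X\setminus L\}$ is an open cover. This hypothesis is implicit in the statement and should be made explicit; for non-closed $L$ you only get a functor to $\muSh$ of a neighborhood in which $L$ is closed.

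Third, the independence of choices you single out as the main obstacle is not needed for full faithfulness. Secondary Maslov data, as defined in the paper, already includes the Weinstein neighborhood and the homotopy, and each such choice gives a fully faithful functor.
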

The condition $\lambda|_L = 0$ can be relaxed to $\lambda|_L = df$ under certain conditions : see Theorem \ref{thm:exactlagsgiveobjects}.

\subsection{Microsheaves and Fukaya categories}
In this section, we recall the relationship between microsheaves and the Fukaya category. This material is motivational, and will not be used anywhere else in this paper. 

An exact symplectic manifold $X$ is called Liouville if the Liouville flow is complete, and $X$ is convex, in the sense that it admits an exhaustion $\{ X^k \subset X | k \in \NN \}$ by compact submanifolds with boundaries, such that the Liouville vector field is outward pointing along each boundary. We call such a pair $(X^k, \lambda)$ a {\em Liouville domain}.

\begin{defn} \label{def:core}
    The core $\frak{c}(X,\lambda) \subset X$ (sometimes called the skeleton) of a Liouville manifold $(X, \lambda)$ is the maximal flow-invariant subset of $X^k$ (for $k \gg 0)$ which is disjoint from $\partial X^k$.
\end{defn}
This does not depend on the choice of exhaustion. The core is a compact subset of $X$.

\begin{prop}
Any compactly supported object in $\muSh(X)$ is supported in $\frak{c}(X,\lambda)$.
\end{prop}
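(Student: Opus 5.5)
The plan is to reduce the statement to the preceding Proposition, which says that the support of any microsheaf is a closed subset of $X$ stable under the Liouville flow, and then to use the characterization of $\frak{c}(X,\lambda)$ in Definition \ref{def:core} as the \emph{maximal} flow-invariant subset of $X^k$ disjoint from $\partial X^k$. So the argument should be purely point-set: show that the support $S$ of a compactly supported microsheaf $\cF\in\muSh(X)$ is one of the sets over which that maximum is taken.

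First, fix the exhaustion $\{X^k\}$ by Liouville domains and let $S=\Supp(\cF)$. By hypothesis $S$ is compact, and by the Proposition quoted above it is closed and invariant under the Liouville flow $\phi_t$ for all $t\in\RR$; completeness of the flow is part of the definition of a Liouville manifold, so $\phi_t$ is defined for every $t$. Second, since $\{X^k\}$ is an increasing exhaustion of $X$ by compact submanifolds with boundary, the interiors $\mathrm{int}(X^k)$ form an open cover of $X$. By compactness $S$ lies in a single $\mathrm{int}(X^{k_0})$, and hence in $\mathrm{int}(X^k)$ for every $k\ge k_0$. Third, for such $k$ the set $S$ is a flow-invariant subset of $X^k$ disjoint from $\partial X^k$. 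By maximality it is contained in the maximal such set, which is $\frak{c}(X,\lambda)$. The independence of the core from the choice of $k$ and of the exhaustion, noted after Definition \ref{def:core}, lets us take $k$ as large as needed.

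The only step requiring care is the meaning of ``stable under the Liouville flow'' in the Proposition. The argument needs invariance under the flow in both directions, $\phi_t(S)=S$ for all $t\in\RR$, not merely forward invariance. I would justify this from the local model. Locally $\muSh$ is modeled on Kashiwara--Schapira microsheaves on $T^*N$, or on the contact manifold $X\times\RR$ via Definition \ref{def:microsheavesonexact}. There the Liouville flow is the $\RR^{>0}$ dilation of covectors, and singular supports are conic under the full group $\RR^{>0}$. Since the flow is a group action, this invariance holds for all $t$, and the support is a union of complete flow lines. With that in hand, the set-theoretic argument above completes the proof.
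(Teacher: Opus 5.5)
Your argument is correct and follows the paper's reasoning. The paper states this fact as a recollection without its own proof. However, its proof of the analogous statement for $\cM_\psi$, and of the identification of the core of $\cM_\psi$ with $\Lambda_\psi$, runs exactly as yours does: supports of microsheaves are closed and Liouville-invariant, and a compact invariant set lies in some $X^k$ away from $\partial X^k$, hence in the core by maximality.
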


\begin{defn} \label{def:globalmush}
Let $\frak{Sh}(X,\lambda) = \muSh_{\frak{c}(X,\lambda)}(X,\lambda)$.  
\end{defn}

A Liouville form $\lambda$ is {\em sufficiently Weinstein} if $\frak{c}(X,\lambda)$ is sufficiently isotropic \cite[Definition 9.10]{NS2}. This holds, for example, if it is Whitney-stratified by isotropic submanifolds. There is a corresponding notion of sufficiently Weinstein homotopies between sufficiently Weinstein Liouville manifolds.

It is proven in \cite{NS2} that a sufficiently Weinstein homotopy $\lambda \to \lambda'$ induces an equivalence of $\frak{Sh}(X,\lambda)$ with $\frak{Sh}(X,\lambda')$. One consequence of this is the following.
\begin{theorem} \cite[Theorem 1.2]{NS2} \label{thm:exactlagsgiveobjects}
Let $L$ be a compact exact Lagrangian submanifold of the sufficiently Weinstein manifold $(X,\lambda, \eta)$, equipped with secondary Maslov data. There is a fully faithful functor $\oLoc(L) \to \mathfrak{Sh}(X,\lambda)$.
\end{theorem}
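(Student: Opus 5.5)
This theorem is quoted from \cite{NS2}, so the plan is to deduce it from the invariance of $\frak{Sh}$ under sufficiently Weinstein homotopies, recalled just above, combined with the smooth-Lagrangian construction $\Loc(L)\to\muSh(X)$ for Lagrangians on which the Liouville form vanishes. The idea is to deform $\lambda$ until $L$ becomes part of the core, build the objects there, and transport them back along the equivalence.

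\textbf{Step 1: make $L$ strictly exact.} Since $L$ is exact, $\lambda|_L=df$ for some smooth $f$ on $L$. Extend $f$ to a compactly supported function $\tilde f$ on a Weinstein neighborhood $\Om\cong T^*L$, and set $\lambda_s=\lambda-s\,d\tilde f$ for $s\in[0,1]$. Then $\lambda_1|_L=0$. Because $d\tilde f$ has compact support, each $\lambda_s$ agrees with $\lambda$ near infinity, so convexity is preserved.

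\textbf{Step 2: keep the homotopy sufficiently Weinstein and put $L$ in the core.} Inside $\Om$, choose the modification so that near $L$ the Liouville form is isotopic to the canonical form on $T^*L$. Then $L$ is the zero section and is invariant under the Liouville flow, hence lies in $\frak{c}(X,\lambda_1)$. One then checks that the core stays stratified by isotropics along the path $\lambda_s$, perturbing $\lambda$ generically away from $L$ if needed, so that \cite{NS2} yields an equivalence $\frak{Sh}(X,\lambda)\simeq\frak{Sh}(X,\lambda_1)$.

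\textbf{Step 3: build the functor for $\lambda_1$.} For $\lambda_1$, the secondary Maslov data identifies $\muSh|_\Om$ with microsheaves on $T^*L$. Rank one local systems on $L$ then give microsheaves supported on the zero section, which is contained in the core. Full faithfulness is local: $\muhom$ between two such objects is computed on $T^*L$ and gives $\Hom_{\Loc(L)}$.

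\textbf{Step 4: transport.} Composing the functor of Step 3 with the inverse of the equivalence from Step 2 gives the desired fully faithful functor $\oLoc(L)\to\frak{Sh}(X,\lambda)$.

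The main obstacle is Step 2. One has to guarantee that a homotopy which makes $L$ strictly exact, and in particular turns it into a core component, can be chosen sufficiently Weinstein throughout. This requires controlling how the core changes near $L$ and ensuring that compactly supported modifications preserve sufficient isotropy, which is where the technical input of \cite{NS2} is essential.
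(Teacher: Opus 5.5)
The paper gives no argument of its own here. It quotes \cite{NS2} and presents the result as a consequence of the invariance of $\mathfrak{Sh}$ under sufficiently Weinstein homotopies. Your overall plan (make $L$ strictly exact, use the local construction for strictly exact Lagrangians, transport back) matches that framing, and Steps 1, 3 and 4 are fine.

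The gap is Step 2, which you flag yourself but do not overcome. The invariance theorem takes a sufficiently Weinstein homotopy as \emph{input}; it does not produce one. Your only justification that $\lambda_s=\lambda-s\,d\tilde f$ is such a homotopy is ``one then checks \dots, perturbing generically'', and that is not a check. Here $Z_{\lambda_s}$ differs from $Z_\lambda$ by $s$ times the Hamiltonian vector field of $\tilde f$. That field is compactly supported but not small, since along $L$ it is determined by $\lambda|_L=df$. So any Lyapunov function or isotropic stratification controlling $\mathfrak{c}(X,\lambda)$ can be destroyed along the path. Cores of general Liouville forms need not be isotropic at all, and genericity neither creates a Lyapunov function nor forces the core to be isotropic.

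Even the endpoint is uncontrolled: $\mathfrak{c}(X,\lambda_1)$ contains $L$, but the old core is re-routed wherever its flow lines pass through the support of $d\tilde f$. In the Weinstein setting, asking for a homotopy to a structure whose Liouville field is tangent to a given closed exact Lagrangian is essentially the regularity question of Eliashberg--Ganatra--Lazarev, which is open in general. Nothing in your Step 2 uses the extra flexibility of ``sufficiently Weinstein'' to get around it. So the proposal moves the entire content of the theorem into an unproved statement.

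Two smaller remarks. First, the normalization in Step 2 is not needed to put $L$ in the core. Since $\lambda_1|_L=0$, for $v\in TL$ we get $\omega(Z_{\lambda_1},v)=\lambda_1(v)=0$, so $Z_{\lambda_1}\in(TL)^{\omega}=TL$. Hence the compact invariant set $L$ already lies in $\mathfrak{c}(X,\lambda_1)$; the only issue is the cores along the path.

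Second, you can avoid deforming $\lambda$ by keeping the core fixed and moving $L$ instead. Since $\varphi_{-t}^*\lambda=e^{-t}\lambda$, the Lagrangians $\varphi_{-t}(L)$ are exact with primitives $e^{-t}f\circ\varphi_t$. The backward flow of the contact field $Z_\lambda+z\partial_z$ on $(X\times\mathbb{R},dz+\lambda)$ carries the Legendrian lift of $L$ to the lifts of $\varphi_{-t}(L)$, which accumulate in $\mathfrak{c}(X,\lambda)\times\{0\}$ as $t\to\infty$. Microsheaves along each lift are $\Loc(L)$, and one obtains objects on the core by a specialization (nearby-cycles) limit. The real work then becomes full faithfulness of that limit, which is governed by the absence of short Reeb chords in the family; for $\partial_z$ there are none, since each $\varphi_{-t}(L)$ is embedded. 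This is, roughly, the mechanism of the sheaf quantization in \cite{NS}.
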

\begin{theorem}\cite[Theorem 1.4]{ganatra2024microlocal} \label{thm:Fukayaismicro}
Let $(X,\lambda, \eta)$ be a Liouville manifold admitting homological cocores.\footnote{We refer to \cite{ganatra2024microlocal} for a definition of homological cocores.}
 Let $\frak{Sh}(X,\lambda)^c$ denote the category of compact objects. Then there is a canonical equivalence of dg categories 
\begin{equation*} 
\frak{Sh}(X,\lambda)^c \cong \operatorname{Perf} \mathcal{W}(X, \lambda)^{\operatorname{op}}
\end{equation*}
where $\operatorname{Perf} \mathcal{W}(X, \lambda)$ is the (opposite) category of the idempotent-completed pre-triangulated closure of the wrapped Fukaya category of $X$.
\end{theorem}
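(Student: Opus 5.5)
This is a cited result (\cite[Theorem 1.4]{ganatra2024microlocal}), so we do not reprove it here. The plan is to reduce it to the cotangent-bundle case, where both sides are computed by the same generators. In outline: first trade $X$ for a Legendrian in a cosphere bundle, then use the known cotangent comparison between wrapped Floer theory and sheaves with prescribed microsupport, and finally pass to the quotient on both sides.

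\textbf{Reduction to a cosphere bundle.} After a sufficiently Weinstein homotopy, which changes neither side by \cite{NS2}, we may assume $\frc(X,\lambda)$ is Whitney stratified by isotropics. Using the Maslov data $\eta$ as a stable polarization-type input, we then embed a stabilization $X\times T^*\RR^k$ as a Liouville hypersurface in some cosphere bundle $S^*M$, following the antimicrolocalization of Nadler--Shende. This sends the core to a singular Legendrian $\Lambda\subset S^*M$. Stabilization is invisible on both sides: a Künneth argument handles the Fukaya category, and a product-with-zero-section argument handles $\muSh$.

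\textbf{The cotangent case and the quotient.} Next we invoke the cotangent result of Ganatra--Pardon--Shende: for a singular isotropic $\Lambda$, there is an equivalence
\begin{equation*}
\cW(T^*M,\Lambda)\simeq \Sh_\Lambda(M)^{c,\opp}.
\end{equation*}
Its proof runs by stop removal and sectorial descent, reducing to cotangent fibers, which correspond to corepresentatives of stalks. We then compare the two ways of passing from $(T^*M,\Lambda)$ to $X$.
\begin{enumerate}
\item On the sheaf side, $\muSh_\Lambda$ near the hypersurface is a quotient (equivalently, a doubling construction) of $\Sh_\Lambda$.
\item On the Fukaya side, removing the stop, or passing to the hypersurface, is a localization at linking disks by the GPS stop removal theorem.
\end{enumerate}
The point is to check that linking disks correspond under the equivalence to the objects corepresenting microstalks at smooth Legendrian points, so that the two localizations match. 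The hypothesis of homological cocores enters exactly here. It guarantees that cocores of $X$ generate $\cW(X,\lambda)$ and corepresent the microstalk functors. Hence the induced functor on compact objects, $\operatorname{Perf}\cW(X,\lambda)^{\opp}\to \frak{Sh}(X,\lambda)^c$, is essentially surjective after idempotent completion. Full faithfulness follows from the Hom computation on generators.

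\textbf{Main obstacle.} I expect the hardest step to be making this comparison canonical and independent of the chosen embedding and homotopy. Concretely, one must show that the equivalences coming from the cotangent theorem intertwine the Fukaya-side stop removal with the sheaf-side microlocalization functorially in the stops. My first attempt would be to construct both as colimits over sectorial covers, compare them locally on Darboux charts where both reduce to \eqref{eq:stalkcategory}, and glue using descent on both sides.
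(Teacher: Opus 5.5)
The paper gives no proof of this statement. It is quoted from \cite{ganatra2024microlocal}, in a subsection the authors declare motivational and never use later, so deferring to the citation is exactly what the paper does. The outline you add, however, contains a step that fails as written.

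Under the cotangent equivalence $\cW(T^*M,\Lambda)\simeq\Sh_\Lambda(M)^{c,\opp}$, linking disks go to microstalk corepresentatives. So localizing at linking disks corresponds to quotienting $\Sh_\Lambda(M)^c$ by the microstalk corepresentatives. On the Fukaya side this is stop removal, and it yields $\cW(T^*M)$; on the sheaf side it yields $\Sh_\emptyset(M)^c$, i.e.\ locally constant sheaves. The quotient $\Sh_\Lambda(M)\to\muSh_\Lambda(\Lambda)$ does the opposite: its kernel is $\Sh_\emptyset(M)$.

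The ``two localizations'' you want to match therefore sit at opposite ends of one localization sequence, and no functor between $\cW(X,\lambda)$ and $\mathfrak{Sh}(X,\lambda)$ is induced this way. What corresponds to $\mathfrak{Sh}(X,\lambda)^c$ is the \emph{kernel} of stop removal, namely the thick subcategory generated by linking disks. Landing $\cW(X,\lambda)$ there needs an input absent from your sketch. The Orlov (cup) functor from the wrapped category of the stabilized $X$ into $\cW(T^*M,\Lambda)$, which sends cocores to linking disks, must be fully faithful.

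On the sheaf side you need the counterpart: the left adjoint of microlocalization must embed $\muSh_\Lambda(\Lambda)^c$ as the subcategory generated by microstalk corepresentatives. That is what your doubling remark should supply. Without the Fukaya-side input, your ``Hom computation on generators'' computes morphisms in $\cW(T^*M,\Lambda)$, not in $\cW(X,\lambda)$.

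Your first reduction step also presupposes more than the statement provides. The invariance in \cite{NS2} concerns sufficiently Weinstein homotopies between sufficiently Weinstein structures, so it cannot move an arbitrary Liouville form into that class. Cores of general Liouville manifolds need not be isotropic; a Liouville domain with disconnected boundary has a core of codimension one. Any such regularity must come from the hypotheses.

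Similarly, embedding a stabilization of $X$ as a Liouville hypersurface in $S^*M$ via the h-principle requires a stable polarization. Conversely, any such embedding induces one from the vertical polarization of $T^*M$. Maslov data need not come from a polarization, so as written your reduction only treats polarization-induced $\eta$.
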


\subsection{Microlocalization of the Kirillov category}\label{ss:micro Kir}

We are back in the situation of \S\ref{ss:Kir}, where we further assume that $X=[N/G]$ is a quotient of a smooth scheme $N$ over $\mathbb{C}$ by an action of a group $G$. We further suppose that the $G$-action extends to an action of a group $\wt G$ that fits into an exact sequence $1\to G\to \wt G\to \Aff\to 1$.

Let $\frAff$ be the Lie algebra of $\Aff$.  Let $\frg_{m}$ and $\frg_{a}$ be the Lie algebras of $\Gm$ and $\Ga$ respectively. We have a decomposition $\frAff^{*}=\frg_{a}^{*}\op \frg_{m}^{*}\cong \CC\op \CC$ (as vector spaces). 

Let $\mu_{\Ga}:T^{*}X\to \frg_{a}^{*}, \mu_{\Gm}:T^{*}X\to \frg_{m}^{*}$ and $\mu_{\Aff}: T^{*}X\to \frAff^{*}$ be the moment maps for the $\Ga, \Gm$ and $\Aff$-actions. 

Consider the microlocalization functor
\begin{equation}\label{Gm mloc}
D_{\Gm}(X)\to \muSh(T^{*}(X/\Gm))=\muSh(\mu^{-1}_{\Gm}(0)/\Gm).
\end{equation}
For $\cF\in D_{\Aff}(X)$, we have $SS(\cF)\subset \mu^{-1}_{\Ga}(0)$. Therefore, restricting to the complement of the zero fiber of $\mu_{\Ga}$ induces a functor
\begin{equation}\label{micro nonzero Ga fiber}
\Kir(X)=D_{\Gm}(X)/D_{\Aff}(X)\incl \muSh(\mu_{\Aff}^{-1}((\frg_{a}^{*}\bs \{0\})\times \{0\})/\Gm).
\end{equation}
Note that the coadjoint action of $\Gm$ on $(\Lie\Ga)^{*}$ is by scaling, we have an isomorphism
\begin{equation*}
\mu^{-1}_{\Aff}((\frg_{a}^{*}\bs \{0\})\times \{0\})/\Gm\cong \mu^{-1}_{\Aff}(1,0).
\end{equation*}
As an open substack of the Hamiltonian reduction $T^{*}X\qq \Gm$, this also equips $\mu^{-1}_{\Aff}(1,0)$ with an exact symplectic structure. Combining with \eqref{micro nonzero Ga fiber}, we get a functor
\begin{equation}\label{micro mu10}
\Kir(X)\to \muSh(\mu^{-1}_{\Aff}(1,0)).
\end{equation}

\begin{lemma}
The inclusion $\mu^{-1}_{\Aff}(1,0)\incl \mu^{-1}_{\Ga}(1)\subset T^{*}X$ induces an isomorphism
\begin{equation}\label{shifted red mu10}
\mu^{-1}_{\Aff}(1,0)\cong T^{*}X\qq_{1}\Ga.
\end{equation}
\end{lemma}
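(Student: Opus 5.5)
The plan is to unwind both sides explicitly. By definition $\mu^{-1}_{\Aff}(1,0)$ is $\mu^{-1}_{\Aff}((\frg_a^*\bs\{0\})\times\{0\})/\Gm$, identified with the slice where the $\frg_a^*$-component equals $1$; concretely it is $\{\xi\in T^*X:\mu_{\Ga}(\xi)=1,\ \mu_{\Gm}(\xi)=0\}/\Ga$, where we quotient by $\Ga$ because $T^*(X/\Gm)$ and the Hamiltonian reduction by the full $\Aff$-action at the coadjoint orbit of $(1,0)$ coincide. On the other side, $T^*X\qq_1\Ga=\mu_{\Ga}^{-1}(1)/\Ga$. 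The map in \eqref{shifted red mu10} is induced by the inclusion $\mu_{\Aff}^{-1}(1,0)\subset\mu_{\Ga}^{-1}(1)$, and it is $\Ga$-equivariant. The claim is therefore that every $\Ga$-orbit in $\mu_{\Ga}^{-1}(1)$ meets $\{\mu_{\Gm}=0\}$ in exactly one point, scheme-theoretically.

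The key computation is how $\mu_{\Gm}$ changes along $\Ga$-orbits. Since $\Aff=\Ga\rtimes\Gm$, we have $[\partial_m,\partial_a]=\partial_a$ in $\frAff$, where $\partial_m$ and $\partial_a$ are the generators with $\Gm$ acting on $\Ga$ by weight one. Equivariance of the moment map $\mu_{\Aff}$ under the coadjoint action gives, for $b\in\Ga$ and $\xi\in T^*X$,
\[
\mu_{\Gm}(b\cdot\xi)=\mu_{\Gm}(\xi)\pm b\,\mu_{\Ga}(\xi),\qquad \mu_{\Ga}(b\cdot\xi)=\mu_{\Ga}(\xi).
\]
This follows by computing $\Ad^*_{b}$ on $\frAff^*=\frg_a^*\op\frg_m^*$: the element $\exp(b\partial_a)$ acts on $\frAff^*$ by the unipotent matrix sending $(x,y)\mapsto(x,y\pm bx)$. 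On $\mu_{\Ga}^{-1}(1)$ the function $\mu_{\Gm}$ is thus a $\Ga$-equivariant map to $\AA^1$, where $\Ga$ acts on $\AA^1$ simply transitively by translation. Hence
\[
\mu_{\Ga}^{-1}(1)\cong \Ga\times\mu_{\Aff}^{-1}(1,0)
\]
via $(b,\xi)\mapsto b\cdot\xi$, with inverse $\eta\mapsto(\pm\mu_{\Gm}(\eta),\,(\mp\mu_{\Gm}(\eta))\cdot\eta)$. Taking the quotient by $\Ga$ yields \eqref{shifted red mu10}.

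The main point requiring care is that everything is stacky, since $X=[N/G]$ and $T^*X=[\mu_G^{-1}(0)/G]$. I would work $\wt G$-equivariantly on $T^*N$: moment maps for $\wt G$ restricted to $\mu_G^{-1}(0)$ descend to an $\frAff^*$-valued moment map, and the formula above holds on $T^*N$ (up to the $G$-part, which vanishes there) using any lift of $\Ga$ to $\wt G$. The trivialization of $\mu_{\Ga}^{-1}(1)$ as a $\Ga$-torsor over $\mu_{\Aff}^{-1}(1,0)$ is then $G$-equivariant, so it descends. I would also check that the sign convention is irrelevant, and that the identification with the $\Gm$-quotient of the nonzero-$\frg_a^*$ locus matches the slice description. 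The latter is the same argument with $\Gm$ acting simply transitively on $\frg_a^*\bs\{0\}$ by scaling. Compatibility of the symplectic structures is then automatic from reduction in stages.
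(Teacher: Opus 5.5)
Your argument is the paper's: the paper observes that the coadjoint $\Ga$-action on $\{1\}\times\frg_{m}^{*}$ is by translation, so the $\Ga$-quotient of $\mu_{\Ga}^{-1}(1)=\mu^{-1}_{\Aff}(\{1\}\times\frg_{m}^{*})$ is identified with the fiber over $(1,0)$. Your second paragraph spells this out, since $\mu_{\Gm}$ is a $\Ga$-equivariant map from $\mu_{\Ga}^{-1}(1)$ onto the trivial torsor $\AA^{1}$, which gives $\mu_{\Ga}^{-1}(1)\cong\Ga\times\mu^{-1}_{\Aff}(1,0)$.

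One correction to your first paragraph: $\mu^{-1}_{\Aff}(1,0)$ is the literal fiber $\{\mu_{\Ga}=1,\ \mu_{\Gm}=0\}\subset T^{*}X$, with no quotient by $\Ga$. By your own formula $\mu_{\Gm}(b\cdot\xi)=\mu_{\Gm}(\xi)\pm b\,\mu_{\Ga}(\xi)$, this set is not $\Ga$-stable, so it is not a $\Ga$-quotient and the inclusion is not $\Ga$-equivariant. Your second paragraph already uses the correct reading.
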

\begin{proof}
We have $\mu^{-1}_{\Ga}(1)=\mu^{-1}(1\times\frg_{m}^{*})\subset T^{*}X$. The coadjoint action of $\Ga$ on $1\times \frg_{m}^{*}$ is by translation on the $\frg_{m}^{*}$-factor. Therefore we may identify $\mu^{-1}_{\Aff}(1\times\frg_{m}^{*})/\Ga$ with $\mu^{-1}_{\Aff}(1,0)$.
\end{proof}

Combining the isomorphism \eqref{shifted red mu10} with \eqref{micro nonzero Ga fiber}, we get a microlocalization functor
\begin{equation*}
M: \Kir(X)\incl \muSh(T^{*}X\qq_{1}\Ga).
\end{equation*}

For $\cF\in \Kir(X)$ that is the image of $\wt\cF\in D_{\Gm}(X)$, $SS(\wt\cF)\cap \mu^{-1}_{\Aff}(1,0)$ is independent of the choice of $\wt\cF$ and depends only on $\cF$;  we denote it by $SS(\cF)$, which is a conic subset of $T^{*}X\qq_{1}\Ga$.

For $\cF,\cG\in \Kir(X)$ that come from objects $\wt\cF,\wt\cG\in D_{\Gm}(X)$, we have the $\Gm$-equivariant sheaf $\muhom(\wt \cF, \wt\cG)$ on $\mu^{-1}_{\Gm}(0)$. Restricting $\muhom(\wt \cF, \wt\cG)$ to $\mu^{-1}_{\Aff}(1,0)$ we get a sheaf on $\mu^{-1}_{\Aff}(1,0)\cong T^{*}X\qq_{1}\Ga$ which is independent of the liftings $\wt\cF, \wt\cG$.  We denote this restriction by $\muhom(\cF, \cG)$, understood as a sheaf on $T^{*}X\qq_{1}\Ga$. Then we have
\begin{equation*}
\Hom(M(\cF), M(\cG))=\cohog{0}{T^{*}X\qq_{1}\Ga, \muhom(\cF,\cG)}.
\end{equation*}

\begin{lemma}\label{l:conserv}
The functor $M$ is conservative.
\end{lemma}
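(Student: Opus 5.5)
We have to show: if $\cF\in\Kir(X)$ and $M(\cF)=0$, then $\cF=0$. Since $\Kir(X)$ is a triangulated quotient and $M$ is exact, this suffices: a morphism whose image is an isomorphism has cone killed by $M$.

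Lift $\cF$ to $\wt\cF\in D_{\Gm}(X)$. The condition $M(\cF)=0$ says exactly that
\[SS(\wt\cF)\cap \mu^{-1}_{\Aff}(1,0)=\emptyset.\]
By $\Gm$-equivariance, $SS(\wt\cF)\subset\mu_{\Gm}^{-1}(0)$. The coadjoint action of $\Gm$ scales $\frg_a^*$, so it identifies all nonzero values of $\mu_{\Ga}$ on $\mu_{\Gm}^{-1}(0)$. Hence the condition is equivalent to
\[SS(\wt\cF)\cap\mu^{-1}_{\Aff}\big((\frg_a^*\setminus\{0\})\times\{0\}\big)=\emptyset,\]
that is, $SS(\wt\cF)\subset\mu_{\Ga}^{-1}(0)$. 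We want to conclude that $\wt\cF$ is $\Ga$-equivariant, i.e. lies in $D_{\Aff}(X)$, so that $\cF=0$.

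The key step is the standard criterion: a sheaf on $X$ whose singular support lies in the zero fiber of the moment map of a connected group action is locally constant along orbits. For the connected unipotent group $\Ga$, this forces it to be equivariant. Concretely:
\begin{itemize}
\item Consider the action map $a:\Ga\times X\to X$ and the projection $p:\Ga\times X\to X$.
\item The condition $SS(\wt\cF)\subset\mu_{\Ga}^{-1}(0)$ gives that $SS(a^*\wt\cF)$ is contained in the zero section along the $\Ga$-direction, by the estimate of singular support under smooth pullback (Kashiwara--Schapira).
\item Hence $a^*\wt\cF$ is locally constant along the fibers of $p$, which are copies of $\AA^1$ and so contractible.
\item Therefore $a^*\wt\cF\cong p^*(i_0^*a^*\wt\cF)=p^*\wt\cF$ canonically, via $p_*$ and base change.
\end{itemize}
The resulting isomorphism $a^*\wt\cF\cong p^*\wt\cF$ satisfies the cocycle condition automatically, since all higher coherence data is unique when the group is contractible. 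Thus $\wt\cF$ acquires a $\Ga$-equivariant structure. Being a property here, as noted in the proof of Lemma \ref{l:remove irr}, this structure is compatible with the $\Gm$-structure and gives an object of $D_{\Aff}(X)$.

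To handle the stack $X=[N/G]$, we work on $N$ with the extended group $\wt G$: pull back to $N$, where singular support and moment maps are computed $G$-equivariantly, and apply the argument there. An alternative equivalent route is the following. The cone of $\wt\cF\to\Av_{\Ga,*}\wt\cF$ lies in $\Kir_*$-type objects. One checks via the local model of Proposition \ref{p:K psi}, reducing to $X=\AA^1\times Y$ by the smooth cover $\Aff\times X\to X$ and Fourier transforming, that vanishing of singular support off $\mu_{\Ga}^{-1}(0)$ means the Fourier transform is supported at $0\in\check\AA^1$. That object lies in the $D_{\Aff}$ part of the recollement, so its image in $\Kir$ vanishes.

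The main obstacle is justifying the singular-support-to-equivariance step in the stack and monodromic setting. One must make sure the $\Gm$-equivariant lift and the reduction through the smooth cover preserve the singular support condition, using that singular support is local and compatible with smooth pullback. I would handle this through the Fourier-transform local model, where the statement becomes the elementary fact that a monodromic sheaf on $\check\AA^1\times Y$ whose singular support along the $\check\AA^1$-direction, restricted to $\check\AA^1\setminus\{0\}$, is zero has zero restriction to the open part.
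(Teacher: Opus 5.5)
Your main argument is correct and is essentially the paper's proof: use the $\Gm$-invariance of $SS(\wt\cF)$ to upgrade $SS(\wt\cF)\cap\mu^{-1}_{\Aff}(1,0)=\emptyset$ to $SS(\wt\cF)\subset\mu_{\Ga}^{-1}(0)$. Then compute $SS(a^*\wt\cF)$ via $da$ and apply \cite[Proposition 5.4.5]{KS} to the projection $\Ga\times X\to X$; this shows the cohomology sheaves of $\wt\cF$ are constant along $\Ga$-orbits, which, since $\Ga$ is contractible, means $\wt\cF\in D_{\Aff}(X)$. The Fourier-transform alternative you sketch is not needed, and in it the canonical map goes $\Av_{\Ga,*}\wt\cF\to\wt\cF$ (the counit of the adjunction), not the other way.
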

\begin{proof} We need to show that if $\wt\cF\in D_{\Gm}(X)$ and $M(\wt\cF)=0$ then $\wt\cF$ is $\Ga$-equivariant (which is a property rather than a structure, since $\Ga$ is contractible). Equivalently, we must show $\upH^{i}\wt\cF$ is constant along each $\Ga$-orbit for all $i$. The microstalk functor at a point $p\in T^{*}X$ is a factors through $\muSh(\Om)$ for any open $\Om$ containing $p$. Since $M(\wt\cF)=0$, the microstalks of $\wt\cF$ vanish along $\mu^{-1}_{\Aff}(1,0)$. Thus the singular support $SS(\wt\cF)$ is a $\Gm$-invariant subset of $T^{*}X$ that does not intersect $\mu^{-1}_{\Aff}(1,0)$. By the $\Gm$-invariance, we conclude that $SS(\wt\cF)\subset \mu^{-1}_{\Ga}(0)$. 

We need to show that $\upH^{i}\wt\cF$ is constant along each $\Ga$-orbit for all $i$.
Let $a: \Ga\times X\to X$ be the action map and $p_{X}: \Ga\times X\to X$ be the projection. The map $da: (T^{*}X)\times_{X,a}(\Ga\times X)\to T^{*}(\Ga\times X)=T^{*}\Ga\times T^{*}X$ takes the form $(t,x,\xi)\mapsto (t,\mu_{\Ga}(x,\xi), x,\xi)$ where $t\in \Ga, x\in X$ and $\xi\in T_{x}^{*}X$. The condition $SS(\wt\cF)\subset \mu^{-1}_{\Ga}(0)$ implies that $SS(a^{*}\cF)$, which lies in the image of $da(SS(\wt\cF)\times_{X,a}(\Ga\times X))$,  is contained in $\Ga\times\{0\}\times T^{*}X$, which is the image of $dp_{X}: T^{*}X\times_{X,p_{X}}(\Ga\times X)\to T^{*}(\Ga\times X)$ . Now apply \cite[Proposition 5.4.5]{KS} to the projection $p_{X}$. We conclude that $\upH^{i}\wt\cF$ is locally constant along fibers of $p_{X}$, which implies that $\upH^{i}\wt\cF$ is  constant along $\Ga$-orbits.
\end{proof}

\section{Affine Springer fiber and Hitchin moduli space}
Let $k$ be an algebraically closed field.
\subsection{Homogeneous affine Springer fiber with slope one}
Let $G$ be a connected reductive group over $k$ with Lie algebra $\frg$ and flag variety $\cB$. Let $\psi_{1}\in\frg$ be a regular semisimple element. Then $T=C_{G}(\psi_{1})$ is a maximal torus. Let $\Phi=\Phi(G,T)$ be the set of roots of $G$ with respect to $T$, and $W$ the Weyl group. For $\a\in \Phi$, let $\frg_{\a}\subset \frg$ be the corresponding root space.

Let $G\lr{t}$ be the loop group of $G$ with formal parameter $t$. Let $\bG=G\tl{t}$ be the formal arc group of $G$. In other words, $G\lr{t}(R)=G(R\lr{t})$ and $\bG(R)=G(R\tl{t})$ for $k$-algebras $R$.  Let $\Gr=G\lr{t}/\bG$ be the affine Grassmannian of $G$.  

Let $B\subset G$ be a Borel subgroup containing $T$, and $\bI\subset \bG$ the corresponding Iwahori subgroup. Then $\Fl=G\lr{t}/\bI$ is the affine flag variety of $G$. Let $\Phi^{+}\subset \Phi$ be the subset of positive roots defined by $B$. 

Write $\psi=t\psi_{1}\in \frg\lr{t}$. This element defines its affine Springer fiber $\Fl_{\psi}$ in the affine flag variety $\Fl$ and $\Gr_{\psi}$ in the affine Grassmannian $\Gr$ as in \cite{KL}:
\begin{eqnarray*}
\Fl_{\psi}=\{g\in \Fl=G\lr{t}/\bI_{0}|\Ad(g^{-1})(\psi)\in \Lie\bI_{0}\};\\
\Gr_{\psi}=\{g\in \Gr=G\lr{t}/\bG|\Ad(g^{-1})(\psi)\in \frg\tl{t}\}.
\end{eqnarray*}

Using the terminology of \cite[\S3.1]{OY}, the element $\psi$ is a homogeneous element of slope $1$ in the loop Lie algebra $\frg\lr{t}$.

\sss{Geometric properties}
By the dimension formula of Kazhdan-Lusztig-Bezrukavnikov \cite{KL} and \cite{B-ASF}, we have
\begin{equation*}
\dim \Fl_{\psi}=\dim \Gr_{\psi}=|\Phi^{+}|=\dim \cB.
\end{equation*}
Moreover, $\Fl_{\psi}$ and $\Gr_{\psi}$ are known to be equidimensional.  Now $\cB=G/B\subset \Fl$ is contained in $\Fl_{\psi}$, therefore it is an irreducible component of $\Fl_{\psi}$.

\sss{Symmetry}\label{sss:lattice on ASF}
Since $\psi$ has centralizer $T\lr{t}$ in $G\lr{t}$,  $T\lr{t}$ acts on $\Fl_{\psi}$ and $\Gr_{\psi}$ by left translation. In particular, we get a natural action of the lattice $\xcoch(T)$ on $\Fl_{\psi}$ and $\Gr_{\psi}$: $\l\in \xcoch(T)$ acts by left translation by $t^{\l}$.

\sss{Hessenberg varieties}\label{Hess_var}

Let $\Grot$ be the one-dimension torus acting on $G\lr{t}, \Fl, \Gr$ and $\frg^{*}\lr{t}$ by scaling $t$. Then $\Grot$ preserves $\Fl_{\psi}$ and $\Gr_{\psi}$. We describe the $\Grot$-fixed points following \cite[\S5.4]{OY}. 

The fixed points of $\Grot$ on $\Fl$ are of the form $Gt^{\l}\bI/\bI$ for $\l\in \xcoch(T)$. Let
\begin{equation*}
\cH_{\psi}(\l)=\Fl_{\psi}\cap (Gt^{\l}\bI/\bI).
\end{equation*}
Note that $Gt^{\l}\bI/\bI\cong G/B_{\l}$, and $B_{\l}=G\cap \Ad(t^{\l})\bI$ is a Borel subgroup of $G$ containing $T$, whose roots under $T$ are
\begin{equation*}
\{\a\in \Phi^{+}|\j{\l,\a}\le 0\}\cup\{\a\in \Phi^{-}|\j{\l,\a}\le -1\}.
\end{equation*}
In particular, $Gt^{\l}\bI/\bI$ is isomorphic to the flag variety of $G$. Let 
\begin{equation*}
V_{\l}^{+}=\frt\op\left(\bigoplus_{\a\in \Phi^{+}, \j{\l,\a}\le 1}\frg_{\a}\right)\op \left(\bigoplus_{\a\in \Phi^{-}, \j{\l,\a}\le 0}\frg_{\a}\right).
\end{equation*}
This is an $\Ad(B_{\l})$-submodule of $\frg$ containing $\frb_{\l}=\Lie B_{\l}$. Direct calculation shows that $\cH_{\psi}(\l)\subset G/B_{\l}$ is the closed subscheme defined by
\begin{equation*}
\cH_{\psi}(\l)=\{gB_{\l}\in G/B_{\l}|\Ad(g^{-1})\psi_{1}\in V_{\l}^{+}\}.
\end{equation*}
These are examples of Hessenberg varieties.  In particular, $\cH_{\psi}(0)=\cB$ is an irreducible component of $\Fl_{\psi}$. 

Under the $\Grot$-action on the affine Springer fiber $\Fl_{\psi}$, the attracting locus to $\cH_{\psi}(\l)$ is $\Fl_{\psi}\cap (\bG t^{\l}\bI/\bI)$, which is an affine space bundle over $\cH_{\psi}(\l)$.

We give some examples of Hessenberg varieties that appear as fixed points of $\Fl_{\psi}$.

\begin{exam} If $\l\in \xcoch(T)$ satisfies
\begin{equation*}
\mbox{$\l$ is regular and $\j{\l,\a}\ne 1$ for all $\a\in \Phi^{+}$,}
\end{equation*}
then $V_{\l}^{+}=\frb_{\l}$, and $\cH_{\psi}(\l)$ is canonically isomorphic to the Springer fiber of the regular semisimple element $\psi_{1}\in \frg$. We conclude that in this case, $\cH_{\psi}(\l)$ consists of points $wt^{\l}\bI/\bI$, where $w\in W$.
\end{exam}

\begin{exam} Consider the case $G=\GL_{n}$ and $T$ the diagonal torus, in which case we can write $\l=(\l_{1},\cdots,\l_{n})\in \ZZ^{n}$. Let $\mu_{1}<\cdots<\mu_{r}$ be the different values of $\l_{i}$, with $\mu_{1}$ appearing $n_{1}$ times, $\mu_{2}$ appearing $n_{2}$ times, etc.

Then under a suitable basis of $k^{n}$, $B_{\l}$ is the group of upper triangular matrices in $\GL_{n}$, we may arrange that $V_{\l}^{+}$ consists of block matrices with blocks of sizes $n_{1},n_{2},\cdots, n_{r}$ 
\begin{equation*}
\left(\begin{array}{cccc} * & * & * & *\\ ? & * & * & * \\ 0 & ? & * & * \\ 0 & 0 & ? & *\end{array}\right)
\end{equation*}
Here $*$ denotes an arbitrary block, while $?$ (at the $(i+1, i)$-block) is arbitrary only when $\mu_{i}+1=\mu_{i+1}$; otherwise $?$ is the zero block.
 \end{exam}

\begin{exam} Consider the case $G=\GL_{3}$ and $\psi_{1}$ is a regular diagonal element.  Let $\l=(-1,0,1)$. In this case, $V_{\l}^{+}$ takes the form $\left(\begin{array}{ccc} * & * & * \\  * & * & * \\ 0  & * & *\end{array}\right)$. We have
\begin{equation*}
\cH_{\psi}(\l)\cong \{V_{1}\subset V_{2}\subset V=k^{3}|\psi_{1}(V_{1})\subset V_{2}\}.
\end{equation*}
By convention, $\dim V_{i}=i$. Forgetting $V_{1}$ gives a projection
\begin{equation*}
\cH_{\psi}(\l)\to \PP^{2}=\{V_{2}\subset V\}.
\end{equation*}
This realizes $\cH_{\psi}(\l)$ as the blow-up of $\PP^{2}$ at three points $[1:0:0], [0:1:0]$ and $[0:0:1]$, which correspond exactly to the three planes in $V$ that are stable under $\psi_{1}$.  
\end{exam}

\subsection{Hitchin moduli space}
We review a special case of the construction from \cite[\S2.5-2.8]{BBAMY}, where we define a symplectic variety $\cM_{\psi}$ containing the affine Springer fiber $\Fl_{\psi}$ as a conic Lagrangian.

\sss{Global setup}
Let $X=\PP^{1}$ with affine coordinate $t$. The two points $0$ and $\infty$ will play essential roles in the following discussion. To avoid confusion between the point $0\in X$ and the number zero, we use $\un 0$ to denote the point in $X$ when necessary.

We again denote by $\Grot$ the one-dimensional torus acting on $X$ by scaling the coordinate $t$. 

The loop group $G\lr{t}$, arc group $\bG$ and Iwahori $\bI$ are now attached to the formal disk of  $X$ at $0$, with uniformizer $t$.  To emphasize that, we sometimes denote $\bG$ and $\bI$ by $\bG_{0}$ and $\bI_{0}$.

\sss{Moduli of bundles}
We use $\t=t^{-1}$ as the uniformizer of the local ring of $X$ at $\infty$. Let $\bG_{\infty}=G\tl{\t}$ be the formal arc group of $G$ at $\infty\in X$. Let $\bG^{1}_{\infty}=\ker(\ev_{\infty}: \bG_{\infty}\to G)$ be the first congruence subgroup at $\infty$. Let $\Bun_{G}(\bG^{1}_{\infty}, \bI_{0})$  be the moduli stack of $G$-bundles on $\PP^{1}$ with $\bG^{1}_{\infty}$ and $\bI_{0}$ level structures $\infty$ and $0$ respectively. Then $\Bun_{G}(\bG^{1}_{\infty}, \bI_{0})$ classifies $(\cE, \t_{\infty}, \cE_{0,B})$ where $\cE$ is a $G$-bundle over $\PP^{1}$, $\t_{\infty}$ a trivialization of the fiber $\cE_{\infty}$, and $\cE_{0,B}$ a $B$-reduction of the fiber $\cE_{0}$. 

\sss{Moduli of Higgs bundles}
Let $\cM(\bG^{1}_{\infty}, \bI_{0})$ be the classical cotangent bundle of $\Bun_{G}(\bG^{1}_{\infty}, \bI_{0})$. It classifies $(\cE,\t_{\infty}, \cE_{0,B}, \ph)$ where $(\cE, \t_{\infty}, \cE_{0,B})\in \Bun_{G}(\bG^{1}_{\infty}, \bI_{0})$ and $\ph$ is a section of the twisted coadjoint bundle $\Ad^*(\cE)\ot \om_{X}(\infty+\un 0)$ such that $\Res_{0}\ph\in \cE_{0,B}(\frb^{\bot})$, the vector space associated to the $B$-bundle $\cE_{0,B}$ at $\un 0$ and the $B$-module $\frb^{\bot}\subset \frg^{*}$. Under the Killing form of $\frg$, $\frb^{\bot}$ is identified with $\frn$. Since $\om_{X}(\infty+\un 0)$ has a canonical trivialization by $dt/t$, we may identify $\ph$ with a section of $\Ad^*(\cE)$ whose value at $\un 0$ lies in $\cE_{0,B}(\frn)$.

We may view $\psi$ as an additive character of $\bG^{1}_{\infty}$:
\begin{equation}\label{psi as char}
\psi: \bG^{1}_{\infty}\xr{\mod t^{-2}}t^{-1}\frg\xr{t}\frg\xr{\psi_{1}}\Ga.
\end{equation}
Let $\bG^{\psi}_{\infty}\subset \bG^{1}_{\infty}$ be the kernel of the above character. Let $\cM(\bG^{\psi}_{\infty}, \bI_{0})=T^{*}\Bun_{G}(\bG^{\psi}_{\infty}, \bI_{0})$. Then $\cM(\bG^{\psi}_{\infty}, \bI_{0})$ carries an action of $\Aff=\Ga\rtimes\Grot$ with $\Ga$ acting as $\bG^{1}_{\infty}/\bG^{\psi}_{\infty}$ and $\Grot$ by loop rotation that acts on the tangent space at $\infty\in\PP^{1}$ by scaling. Let
\begin{equation}\label{def Mpsi}
\cM_{\psi}:=\cM(\bG^{\psi}_{\infty}, \bI_{0})\qq_{1}\Ga
\end{equation}
be the shifted Hamiltonian reduction of $\cM(\bG^{\psi}_{\infty}, \bI_{0})$.   Then $\cM_{\psi}$ classifies $(\cE,\t_{\infty}, \cE_{0,B}, \ph)$ where $\cE,\t_{\infty}, \cE_{0,B}\in\Bun_{G}(\bG^{1}_{\infty}, \bI_{0})$, $\ph\in \Ad^*(\cE)\ot \om_{X}(2\cdot\infty+\un 0)$ satisfies
\begin{itemize}
\item Near $\infty$,  $\ph$ is allowed to have a double pole, so $\ph\in \Ad^*(\cE)dt$. Then we require $\ph\in\psi_{1} dt + \Ad^*(\cE)dt/t$. To make sense of ``the leading term is equal to $\psi_{1}$'', we use the trivialization $\t_{\infty}$ of $\cE_{\infty}$.
\item $\Res_{0}\ph\in \cE_{0,B}(\frn)$, i.e., near $0$ we have $\ph\in \cE_{0,B}(\frn)dt/t+\Ad^*(\cE)dt$.
\end{itemize}
Equivalently, we may consider the next congruence subgroup $\bG_{\infty}^{2}=\ker(\bG^{1}_{\infty}\to \t\frg)$, and $\cM(\bG_{\infty}^{2},\bI_{0})=T^{*}\Bun_{G}(\bG_{\infty}^{2},\bI_{0})$. Then 
\begin{equation*}
\cM_{\psi}\cong \cM(\bG_{\infty}^{2},\bI_{0})\qq_{\psi}\frg.
\end{equation*}
Here we use the natural action of $\frg=\bG^{1}_{\infty}/\bG^{2}_{\infty}$ on $\Bun_{G}(\bG^{2}_{\infty},\bI_{0})$.

Since $T=C_{G}(\psi_{1})$, it normalizes $\bG^{\psi}_{\infty}$. Therefore $\Bun_{G}(\bG^{\psi}_{\infty}, \bI_{0})$ carries an action of $T$ by changing the level structure at $\infty$.

\subsection{The Hitchin map}
Let $f_{1},\cdots, f_{n}$ be a chosen set of homogeneous free generators of the polynomial ring $\CC[\frg]^{G}$. Let $d_{i}=\deg f_{i}$. Let 
\begin{equation*}
\cA=\prod_{i=1}^{n}\cohog{0}{X, \om_{X}(2\cdot \infty+0)^{\ot d_{i}}\ot \cO_{X}(-\un 0)}\cong \prod_{i=1}^{n}\cohog{0}{X,\cO_{X}(d_{i}\cdot\infty-\un 0)}\cong \prod_{i=1}^{n}\cohog{0}{X, \cO(d_{i}-1)}
\end{equation*}
Namely, a point $(a_{1},\cdots, a_{n})\in \cA$ consists of a rational poly-differential form $a_{i}=\wt a_{i}(dt/t)^{d_{i}}$ on $X$, where $a_{i}$ is a regular function on $X-\{\infty\}$ with possibly a pole of order $\le d_{i}$ at $\infty$, and vanishing at $0$. We have a Hitchin map
\begin{equation}\label{Hitchin G2}
f:\cM(\bG_{\infty}^{2},\bI_{0})\to \cA
\end{equation}
sending $(\cE,\t_{2\infty}, \cE_{0,B}, \ph)$ to $(f_{1}(\ph),\cdots, f_{n}(\ph))$. Note that $\ph=\wt \ph dt/t$ for some $\wt\ph\in \cohog{0}{X,\Ad^*(\cE)(\infty)}$,  hence $f_{i}(\ph)$ can be written as $f_{i}(\wt \ph)(dt/t)^{d_{i}}$ where $f_{i}(\wt\ph)$ is a regular function on $X-\{\infty\}$ with  a pole of order $\le d_{i}$ at $\infty$;  $f_{i}(\wt \ph)$ vanishes at $0$ because $\wt\ph(0)=\Res_{0}\ph\in \cE_{0,B}(\frn)$. 
 
We also have the moment map for the $\frg$-action on $\cM(\bG_{\infty}^{2},\bI_{0})$. Together with $f$ we get
\begin{equation*}
\wt f=(f,\mu_{\frg}):\cM(\bG_{\infty}^{2},\bI_{0})\to \cA\times_{\frc}\frg^{*}.
\end{equation*}
Here $\frc=\frg^{*}\qq G$. Let $\chi: \frg^{*}\to \frc$ be the natural map. The map $\ev_{\infty}: \cA\to \frc$ is by taking the image of the polar part of $\ph_{\infty}$.

Let $\cA_{\psi}=\ev_{\infty}^{-1}\chi(\psi_{1})\subset \cA$. Then $\wt f$ restricts to a map
\begin{equation*}
f_{\psi}: \cM_{\psi}\to \cA_{\psi}.
\end{equation*}
There is a distinguished point $a_{\psi}\in \cA_{\psi}$ given as the image of $(\cE^{\triv}, \t^{\triv}_{\infty}, \cE^{\triv}_{0, B}, \psi_{1} dt)\in \cM_{\psi}$. Here $(\cE^{\triv},\t^{\triv}_{\infty}, \cE_{0,B}^{\triv})$ is the trivial $G$-bundle with its tautological trivialization at $\infty$ and $B$-reduction at $0$.

\begin{exam} Consider the case $G=\GL_{n}$. In this case $\psi_{1}$ is given by a diagonal matrix with entries $(\l_{1},\cdots, \l_{n})$. Then the fibers of the Hitchin map $f_{\psi}$ can be describe using spectral curves. A point $a=(a_{1},\cdots, a_{n})\in \cA_{\psi}$ (where $a_{i}\in \cohog{0}{X,\cO(i)}$ and $a_{i}(0)=0$) defines a spectral curve $Y_{a}\subset \Tot(\om_{X}(2\cdot \infty+\un 0))=\Tot(\cO(1))$ by the equation $y^{n}-a_{1}y^{n-1}+a_{2}y^{n-2}+\cdots+(-1)^{n}a_{n}=0$, such that over $\infty$ it passes exactly through the points $(\l_{i}, \infty)$ for $i=1,2,\cdots, n$, and its preimage over $\un 0$ is concentrated at $(0,\un 0)$. (To make sense of this it requires a trivialization of the fiber of $\om_{X}(2\cdot \infty+\un 0)$ at $\infty$, for which we use $dt$).

Then $f_{\psi}^{-1}(a)$ is isomorphic to the stack $\ov\cP(Y_{a};\infty)$ that classifies rank one torsion-free sheaves $\cF$ on $Y_{a}$ together with a trivialization of its fiber at every point over $\infty$. From this we see that $\Gm^{n}$ acts on $\ov\cP(Y_{a};\infty)$ by changing the trivializations, and the quotient $\ov\cP(Y_{a};\infty)/\Gm^{n}$ is the compactified Picard stack $\ov\Pic(Y_{a})$.

It is easy to see that the arithmetic genus of $Y_{a}$ is $(n-1)(n-2)/2$ by viewing it as a degree $n$ curve in $\PP^{2}$ (via the identification $\Tot(\cO(1))\cong \PP^{2}-\{[0,0,1]\}$). Therefore $\dim\ov\Pic(Y_{a}) =(n-1)(n-2)/2-1$ and $\dim \ov\cP(Y_{a};\infty)=n(n-1)/2$. On the other hand, $\dim\cA=1+2+\cdots +n=n(n+1)/2$; since $\cA_{\psi}\subset \cA$ is cut out by $n$ linear equations, $\dim \cA_{\psi}=n(n-1)/2=\dim \ov\cP(Y_{a})$.  

For the point $a_{\psi}$, the  corresponding $Y_{a_{\psi}}$ is defined by $(y-\l_{1}t)\cdots (y-\l_{n}t)$. It is isomorphic to a union of $n$ copies of $\PP^{1}$ intersecting over a unique point over $\un 0$ (it is still locally planar at the intersection). 

\end{exam}

\sss{$\Gm$-action}\label{sss:Gm action}
There is an action of $\Gm$ on $\cM_{\psi}$ by simultaneously scaling the Higgs field and rotating $\PP^{1}$. We take the inverse of the second action so that $\psi_{1} dt$ is invariant under $\Gm$. The induced action on $\cA_{\psi}$ is contracting to $a_{\psi}$. 

The basic geometric properties of $\cM_\psi$ is summarized below.
\begin{prop}[{\cite[Theorem 2.8.1, Lemma 2.15.6]{BBAMY}}]\label{p:geom Mpsi}
\begin{enumerate}
    \item The stack $\cM_\psi$ is a smooth algebraic space locally of finite type over $k$ of dimension $|\Phi|$ (the number of roots of $G$).
    \item The space $\cM_\psi$ carries a canonical symplectic structure with weight $1$ under the $\Gm$-action.
    \item The map $f_{\psi}$ is a completely integrable system. In particular, fibers of $f_{\psi}$ are Lagrangians in $\cM_{\psi}$.
    \item For any $\xi\in \cM_\psi$, the limit point $\lim_{s\to 0}s\cdot \xi$ under the $\Gm$-action exists.
\end{enumerate}
\end{prop}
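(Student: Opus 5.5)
The plan is to realize $\cM_\psi$ as a symplectic reduction of a cotangent bundle, and then read off the four statements from corresponding properties of the Higgs moduli $\cM(\bG^{2}_{\infty},\bI_0)$, the Hitchin map $\wt f$, and the contracting $\Gm$-action.

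\textbf{Step 1: smoothness and dimension.} Write
\[
\cM_\psi=\mu_{\frg}^{-1}(\psi_1)/\bG^{1}_{\infty}\text{-}\mathrm{level},
\]
that is, the quotient of the preimage of $\psi_1$ under the moment map of the free $\frg=\bG^1_\infty/\bG^2_\infty$-action on $T^*\Bun_G(\bG^2_\infty,\bI_0)$.

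I would first show that the automorphism groups of points of $\cM_\psi$ are trivial. An automorphism of $(\cE,\t_\infty,\cE_{0,B},\ph)$ fixes $\t_\infty$. It must also centralize $\ph$, whose leading term at $\infty$ is the regular semisimple $\psi_1$. The centralizer is therefore a torus-type group scheme, and fixing the trivialization at $\infty$ kills it. Hence $\cM_\psi$ is an algebraic space.

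Smoothness follows by showing that $\psi_1$ is a regular value of $\mu_\frg$ on the relevant locus. By the computation in Lemma \ref{l:rel pt}, the differential of $\mu_\frg$ is dual to the infinitesimal action. That action is injective exactly where the stabilizer in $\frg$ is trivial, and this holds on $\mu_\frg^{-1}(\psi_1)$ because the residual stabilizer would have to commute with $\psi_1$ modulo higher terms and fix the level structure.

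For the dimension, use a deformation-theory complex
\[
\Ad(\cE)(-\infty\text{-level},\bI_0)\xr{[\ph,-]}\Ad^*(\cE)\ot\om(2\infty+\un0)\text{ with residue conditions}.
\]
Serre duality makes this complex self-dual. Its Euler characteristic is computed by Riemann--Roch on $\PP^1$: the degree $2$ pole at $\infty$ contributes $2\dim\frg$, the Iwahori condition at $0$ removes $\dim\frn+\dim\frb$, and the global term contributes $-2\dim\frg$. This gives $\dim\frg-\dim\frt=|\Phi|$, and symplectic reduction halves nothing further because the dimension is already computed for the reduction.

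\textbf{Step 2: symplectic structure and its weight.} The canonical symplectic structure comes from Hamiltonian reduction of the cotangent symplectic form, which descends because the action is free. Under the $\Gm$-action of \S\ref{sss:Gm action}, the Liouville form $\ph\cdot\delta\cE$ scales linearly in $\ph$. Loop rotation preserves the pairing given by residues. The inverse rotation is chosen so that $\psi_1dt$ is fixed, and one checks that it does not affect the weight of the pairing $\langle\ph,\delta\cE\rangle$, since residue pairing is rotation invariant. Hence the form has weight $1$.

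\textbf{Step 3: complete integrability.} I would show that $f_\psi$ is Poisson-commutative, because invariant polynomials Poisson-commute on $T^*\Bun$. The example computation gives $\dim\cA_\psi=|\Phi|/2$ in general, via Riemann--Roch for $\cO(d_i-1)$ minus the $n$ conditions at $\infty$: $\sum d_i - n=|\Phi^+|$. It then remains to see that $f_\psi$ is flat, or at least that its fibers have dimension at most $|\Phi^+|$. For this I would use the $\Gm$-contraction to $a_\psi$: upper semicontinuity of fiber dimension reduces the question to the central fiber $f_\psi^{-1}(a_\psi)$. By a uniformization at $0$, this central fiber is a union of $T\lr{t}$-translates related to $\Fl_\psi$, whose dimension is $\dim\cB=|\Phi^+|$ by the Kazhdan--Lusztig--Bezrukavnikov formula. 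Isotropy of fibers then follows from Poisson commutativity, so the fibers are Lagrangian.

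\textbf{Step 4: existence of limits.} This is the main obstacle. Since $\cA_\psi$ is contracted to $a_\psi$, the limit exists in $\cA_\psi$. I would then prove that $f_\psi$ is proper modulo the free $\xcoch(T)$-action, or at least that the $\Gm$-orbit closure stays in a finite-type piece. The first approach I would try is a Langton-type valuative argument: extend the family over $\AA^1$ using that $\Gm$-orbits in $\Bun_G$ have bounded Harder--Narasimhan type once $\ph$ is bounded. Failing that, I would use spectral-curve compactness for $\GL_n$ together with a reduction to a faithful representation.
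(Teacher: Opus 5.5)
Your outline of (1)--(3) is in the right spirit, but there is a genuine gap at (4), and your Step 3 inherits it. Since $f_\psi$ is not proper, upper semicontinuity of fiber dimension holds on the source, not on $\cA_\psi$. So to bound $\dim f_\psi^{-1}(a)$ by $\dim f_\psi^{-1}(a_\psi)$ you need every $\xi\in\cM_\psi$ to have a limit $\lim_{s\to 0}s\cdot\xi$ in $\cM_\psi$ itself; contraction of $\cA_\psi$ alone does not give this.

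None of your proposed routes to (4) works. The map $f_\psi$ is not proper modulo $\xcoch(T)$ in any algebraic sense. For $G=\GL_n$ and smooth $Y_a$, the fiber $\ov\cP(Y_a;\infty)$ is the generalized Jacobian of $Y_a$ with modulus the $n$ points over $\infty$. This is an extension of $\Pic(Y_a)$ by the torus $\Gm^n/\Gm$, and $\xcoch(T)$ acts on it by translations. For $n=2$ the degree-zero part is $\Gm$, and the degree-preserving rank-one sublattice acts by multiplication by some $q(a)\in k^\times$. So no quotient is proper over $\cA_\psi$. Your spectral-curve fallback runs into the same torus directions, and passing to a faithful representation destroys the regular semisimple leading term and the Iwahori condition. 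The Harder--Narasimhan/Langton idea has no content here: every $G$-bundle on $\PP^1$ is isomorphic to its pullback under rotation, so the underlying bundle is constant along the orbit. What must converge are the level structures and the Higgs field.

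The missing idea is the uniformization at $\un 0$ behind Proposition \ref{p:ASF vs HF}, which you already invoke for the central fiber. Every $G$-bundle on $\PP^1$ is trivial on $\PP^1\setminus\{\un 0\}$. Choosing such a trivialization compatible with $\t_\infty$ (unique up to $G[\t]^1$), $\xi$ becomes a pair $(g\bI_0,A)$ with $g\bI_0\in\Fl$ and $\ph=A\,dt$, where $A=\psi_1+A_1\t+\cdots+A_N\t^N\in\frg[\t]$. The pair is subject to the closed condition $\Ad(g^{-1})(tA)\in\frn+t\frg\tl{t}$. With the normalization of \S\ref{sss:Gm action}, $s\cdot\xi$ is represented by the loop rotation of $g\bI_0$ together with $A(s\t)=\psi_1+sA_1\t+\cdots+s^NA_N\t^N$. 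Now:
\begin{itemize}
\item the rotation orbit of $g\bI_0$ lies in a projective Schubert variety, so it has a limit $g_0\bI_0$;
\item $A(s\t)\to\psi_1$;
\item the incidence condition is closed.
\end{itemize}
Hence the orbit map extends over $s=0$, and the limit is the image of $(g_0\bI_0,\psi_1)$, a point of $\L_\psi$. This works over any $k$ and repairs Step 3.

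Three smaller corrections to Step 1.
\begin{itemize}
\item The $\frg$-action on $T^*\Bun_G(\bG^2_\infty,\bI_0)$ is not free, and that classical cotangent stack is singular. The correct input is vanishing of $H^0$ of the full deformation complex; your Serre self-duality then kills $H^2$.
\item Your local argument at $\infty$ only places an infinitesimal automorphism in $\frt$. To kill it, note that $X\in\cohog{0}{\PP^1,\Ad(\cE)(-\infty)}$ commuting with $\ph$ has $f_i(X)\in\cohog{0}{\PP^1,\cO(-d_i)}=0$, so $X$ is nilpotent. Near $\infty$ it lies in the Cartan subalgebra centralizing the regular semisimple $\ph$, whence $X=0$. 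The same argument works for automorphisms, using characters.
\item Your Riemann--Roch contributions sum to $-\dim\frg$, not $|\Phi|$. Deformations of $\ph$ have only a simple pole at $\infty$, so the Higgs term has $\chi=\dim\frg-\dim\frb$ and the automorphism term has $\chi=-\dim\frn$. Their difference is $2\dim\frn=|\Phi|$.
\end{itemize}
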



\subsection{Central Hitchin fiber}
Let $\L_{\psi}=f_{\psi}^{-1}(a_{\psi})\subset \cM_{\psi}$ be the central fiber of $f_{\psi}$.

\sss{Open subset of $\L_{\psi}$ as a Lagrangian in $T^{*}\cB$}\label{ss:cotB}
There is an open subset of $\L_{\psi}$ that can be identified with a conic Lagrangian in $T^{*}\cB$. Let $\cM^{\c}_{\psi}\subset \cM_{\psi}$ be the open locus where the $G$-bundle $\cE$ is trivial. Then the moduli stack of $(\cE,\t_{\infty})$ with $\cE$ trivial is a point. Fixing $(\cE,\t_{\infty})$ to be the trivial bundle with the tautological trivialization at $\infty$, we see that  $\cM^{\c}_{\psi}$ classifies pairs $(B', \ph)$ where $B'\subset G$ is a Borel subgroup and $\ph=\psi_{1} t+A$ for $A\in \frg$, such that $A\in \frn_{B'}$, the nilpotent radical of $B'$. In other words we have a canonical isomorphism $\cM^{\c}_{\psi}\cong T^{*}\cB$. This is clearly a symplectomorphism.

Consider $\L_{\psi}^{\c}=\L_{\psi}\cap \cM^{\c}_{\psi}$. Under the symplectomorphism $\cM^{\c}_{\psi}\cong T^{*}\cB$, $\L^{\c}_{\psi}$ becomes a Lagrangian in $T^{*}\cB$. Explicitly, $\L_{\psi}^{\c}$ consists of pairs $\{(A,B')\in \frg\times \cB$ such that $A\in\frn_{B'}$ and $ \chi(\psi_{1} +sA)=\chi(\psi_{1} )\in \frc$ for all $s\in \CC$, where $\chi: \frg\to \frc$ is the quotient map.  Let 
\begin{equation*}
\cN_{\psi_{1}}=\{A\in \frg|\chi(\psi_{1} +sA)=\chi(\psi_{1}) \mbox{ for all $s\in \CC$}\}.
\end{equation*}
In other words $A\in \cN_{\psi_{1}}$ if and only if the line  $\ell_{\psi_{1},A}$ through $\psi$ in the direction of $A$ remains in the adjoint orbit of $\psi_{1}$. Clearly $\cN_{\psi_{1}}$ is a conical closed subset in the nilpotent cone. Let $\pi: T^{*}\cB\to \cN$ be the Springer resolution. Then
\begin{equation*}
\L_{\psi}^{\c}=\pi^{-1}(\cN_{\psi_{1}}).
\end{equation*}

The following is a special case of \cite[Theorem 2.8.1(4)]{BBAMY}.

\begin{prop}\label{p:ASF vs HF}
There is a canonical homeomorphism $\Fl_{\psi}\to \L_{\psi}$.
\end{prop}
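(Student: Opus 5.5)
We construct the map through the Beauville--Laszlo uniformization of bundles on $\PP^1$. A point $g\bI\in\Fl$ lets us glue the trivial bundle on $\PP^1-\{\un0\}$ with the trivial bundle on the formal disk at $\un0$ via $g$. This gives a $G$-bundle $\cE_g$ with its tautological trivialization $\t_\infty$ at $\infty$ (coming from the trivialization on $\PP^1-\{\un 0\}$) and a $B$-reduction $\cE_{0,B}$ at $\un0$ (from $\bI$). The Higgs field is $\ph=\psi_1\,dt=\psi\,dt/t$, which lives on $\PP^1-\{\un0\}$.

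\textbf{Step 1: well-definedness.} The field $\ph$ extends across $\un0$ as a section of $\Ad^*(\cE_g)\ot\om_X(\un0)$ with residue in $\cE_{0,B}(\frn)$ precisely when $\Ad(g^{-1})\psi\in t\,\Lie\bI^{\mathrm{op}}$-type lattice. After matching conventions via $\frb^\bot\cong\frn$, this is the condition $\Ad(g^{-1})\psi\in\Lie\bI$ defining $\Fl_\psi$; one checks that the shift by $t$ in $\psi=t\psi_1$ against $dt/t$ makes the lattices agree. Near $\infty$ we have $\ph=\psi_1 dt$ exactly, so the leading-term condition holds. Moreover $f_\psi(\ph)=a_\psi$, because characteristic polynomials are computed on the open set where $\ph=\psi_1dt$ and are then determined by continuation. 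This gives a morphism $\iota:\Fl_\psi\to\L_\psi$. It is $\Grot$-equivariant, and on the open locus $g\in G$ it recovers the identification of \S\ref{ss:cotB}.

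\textbf{Step 2: bijectivity on points.} For injectivity, note that $(\cE,\t_\infty)$ together with the trivialization over $\PP^1-\{\un0\}$ recovers $g$ up to $\bI$, since automorphisms of the trivial bundle on $\AA^1_\t$ fixing $\t_\infty$ are $\bG_\infty^1$-valued maps on a punctured affine line. Here one must argue that the trivialization over $\PP^1-\{\un0\}$ is canonically determined by the data.

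For surjectivity, take $(\cE,\t_\infty,\cE_{0,B},\ph)$ with $f_\psi(\ph)=a_\psi$. We use the spectral/cameral description on $U=\PP^1-\{\un0\}$: the characteristic of $\ph$ is that of $\psi_1dt$, which is regular semisimple everywhere on $U$ because $t\psi_1$ has distinct eigenvalues for $t\neq0$. Hence $\ph|_U$ defines a reduction of $\cE|_U$ to the torus $C(\ph)$, and the corresponding cameral cover is trivial ($U\cong\AA^1$ is simply connected). A $T$-bundle on $\AA^1$ is trivial, and $\t_\infty$ fixes the trivialization uniquely up to $T$-valued functions on $\AA^1$ equal to $1$ at $\infty$, i.e.\ up to nothing. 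This yields an isomorphism $(\cE|_U,\ph|_U)\cong(\text{triv},\psi_1dt)$, which is unique. Gluing at $\un0$ then produces $g$, and we obtain the inverse map on points.

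\textbf{Step 3: homeomorphism.} The inverse construction above works in families over reduced bases, so $\iota$ is a bijective morphism whose inverse is a morphism of reduced spaces locally, hence a universal homeomorphism. Alternatively, both sides are locally of finite type, and $\iota$ is a bijective proper-on-components map compatible with the $\Grot$-contractions, hence a homeomorphism.

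The main obstacle is Step 2's family version. The algebraic space $\L_\psi$ may be nonreduced, or $\iota$ may fail to be an isomorphism scheme-theoretically, since the nilpotent residue at $\un0$ allows non-regular behaviour there. This is why only a homeomorphism is claimed, and the plan is to prove bijectivity on geometric points together with properness and continuity of $\iota$ restricted to finite unions of $\Grot$-attracting cells.
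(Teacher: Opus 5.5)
Your map $\iota$ is the one the paper sketches: glue $(\cE^{\triv},\t^{\triv}_\infty,\psi_1dt)$ on $\PP^1-\{\un 0\}$ to the point $g\bI$ along $\Spec\CC\lr{t}$. The paper proves nothing beyond this and cites \cite[Theorem 2.8.1(4)]{BBAMY}, so your Steps 1--2 supply what the paper outsources, and they are essentially right.

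The bijection on points comes from your surjectivity paragraph. On $U=\PP^1-\{\un 0\}$ write $\ph=A\,dt$. Then $A$ is regular on all of $U$, including at $\infty$, where $A(\infty)=\psi_1$ in the frame $\t_\infty$. It has constant characteristic $\chi(\psi_1)$, so it cuts out a $T$-reduction of $\cE|_U$. Simple connectivity of $U$ plays no role here: the cameral cover is pulled back from a point. This $T$-torsor is trivial on $U\cong\AA^1$, and $\t_\infty$ rigidifies the trivialization because $T(k[\t])=T(k)$.

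That uniqueness is what removes the ambiguity $G[t^{-1}]^1$ in the trivialization over $U$ and proves injectivity. Your first injectivity sentence does not do this; it misdescribes that group as maps on a punctured line.

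One correction in Step 1: the residue condition gives $\Ad(g^{-1})\psi\in\frn+t\frg\tl{t}$. This agrees with $\Ad(g^{-1})\psi\in\Lie\bI$ only on points, because the reduction mod $t$ is nilpotent when $\psi$ is topologically nilpotent. So $\iota$ should be defined on $(\Fl_\psi)_{\mathrm{red}}$.

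Step 3 is incomplete as written.

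\emph{First route.} The claim that the inverse works over arbitrary reduced bases is not justified. Over a reduced but non-seminormal $S$ one can have $\Pic(S\times\AA^1)\neq\Pic(S)$ (Traverso--Swan), so the $T$-torsor on $S\times U$, trivialized along $S\times\{\infty\}$, need not be trivial. The argument does work over seminormal $S$, where also $\cO(S)[\t]^\times=\cO(S)^\times$. That is enough: it gives a morphism to $\Fl_\psi$ from the seminormalization of $(\L_\psi)_{\mathrm{red}}$, which is homeomorphic to $\L_\psi$, and this morphism inverts $\iota$ on points.

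\emph{Second route.} It needs two inputs you do not state.
\begin{enumerate}
\item $\cM_\psi$ (or $\L_\psi$) must be separated, so that $\iota$ is proper on a finite union of irreducible components of $\Fl_\psi$. Use components here; attracting cells are not closed.
\item A continuous bijection that is closed on each member of a closed cover is closed only if the images form a locally finite family. This holds: each $\iota(C)$ is closed, irreducible and of dimension $|\Phi^+|=\dim\L_\psi$ ($\L_\psi$ is Lagrangian in $\cM_\psi$). Hence it is an irreducible component of the locally noetherian space $\L_\psi$.
\end{enumerate}

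Finally, your explanation of why only a homeomorphism is expected points to the wrong place. The obstruction is not the residue at $\un 0$ but the failure of your rigidification over non-reduced rings. Take $R=k[\epsilon]/(\epsilon^2)$ and $X\in\frt$. Then $h=\exp(\epsilon\t X)\in T(R[\t])$ is $1$ at $\infty$ and fixes $\psi_1dt$. For example with $g=1$, the $R$-points $g\bI\neq hg\bI$ of $\Fl_\psi$ are distinct but have the same image in $\L_\psi$.
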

We briefly indicate how the map $\Fl_{\psi}\to \L_{\psi}$ is constructed. We have a base point $(\cE^{\triv},\t^{\triv}_{\infty}, \cE^{\triv}_{0,B}, \psi_{1} dt)\in \ov\L_{\psi}$, where $(\cE^{\triv}, \t^{\triv}_{\infty}, \cE^{\triv}_{0,B})$ is the trivial $G$-bundle on $\PP^{1}$ with the tautological trivialization at $\infty$ and Borel reduction at $0$ given by $B$ itself. A point in $\Fl$ can be viewed as a triple $(\cF, \io, \cF_{0,B})$ where $\cF$ is a $G$-torsor over $\Spec \CC\tl{t}$, $\io$ a trivialization of $\cF|_{\Spec \CC\lr{t}}$ and $\cF_{0,B}$ a $B$-reduction of $\cF_{t=0}$.  Such a triple lies in $\Fl_{\psi}$ if and only if $\psi$, viewed as a section of $\Ad^{*}(\cF)|_{\Spec \CC\lr{t}}$, extends to a section of $\Ad^{*}(\cF)$ whose value at $t=0$ lies in $\cF_{0,B}(\frb^{\bot})$. The map $\Fl_{\psi}\to \L_{\psi}$ sends $(\cF,\io,\cF_{0,B})$ to the $G$-Higgs bundle on $\PP^{1}$ obtained by gluing $(\cE^{\triv}, \t^{\triv}_{\infty}, \cE^{\triv}_{0,B}, \psi_{1} dt)|_{\PP^{1}-\{0\}}$ with $(\cF, \cF_{0,B}, \psi_1 \frac{dt}{t})$ along their trivializations over $\Spec\CC\lr{t}$.

\subsection{Lattice action on $\cM_{\psi}$}\label{ss:lattice action}

\begin{lemma} The natural action of $\xcoch(T)$ on $\Fl_{\psi}\cong \L_{\psi}$ extends canonically to an action of $\xcoch(T)$ on $\cM_{\psi}$.
\end{lemma}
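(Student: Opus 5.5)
We construct the action directly on the moduli interpretation of $\cM_\psi$ by Hecke modification at $\infty$, mirroring how $t^\l\in T\lr{t}$ acts on $\Fl_\psi$.

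Let $\l\in\xcoch(T)$ and $(\cE,\t_\infty,\cE_{0,B},\ph)\in\cM_\psi$. Use $\t_\infty$ to trivialize $\cE$ on the formal disk $D_\infty$ at $\infty$. Define $\l\cdot\cE$ by regluing: it agrees with $\cE$ on $\PP^1-\{\infty\}$, and on $D_\infty$ it is the trivial bundle, glued to $\cE|_{D_\infty^\times}$ via $\t^{\l}=t^{-\l}\in T\lr{\t}$ (or $\t^{-\l}$, matching the convention $g\mapsto t^\l g$ on $\Fl_\psi$). It inherits a tautological trivialization $\t'_\infty$ at $\infty$ and keeps $\cE_{0,B}$. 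The Higgs field $\ph$ is unchanged on $\PP^1-\{\infty\}$. Near $\infty$ it has the form $\psi_1dt+\ph'$ with $\ph'\in\Ad^*(\cE)dt/t$ regular in the trivialization. In the new trivialization it becomes $\Ad(\t^{\l})(\psi_1dt+\ph')=\psi_1dt+\Ad(\t^\l)\ph'$, because $T$ centralizes $\psi_1$. The key point is that $\Ad(\t^{\l})\ph'$ need not be regular in the new trivialization, since root components get multiplied by $\t^{\j{\l,\a}}$, which may have poles.

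The naive modification therefore does not preserve the boundary condition at $\infty$, and this is the main obstacle. I would resolve it by the standard lemma that, for $\psi_1$ regular semisimple, any $\ph\in\psi_1dt+\frg\tl{\t}dt/t$ is $\bG^1_\infty$-conjugate to an element of $\psi_1dt+\frt\tl{\t}dt/t$, with the conjugating element unique modulo $T\tl{\t}\cap\bG^1_\infty$. This is an inductive $\Ad$-diagonalization using invertibility of $\ad\psi_1$ on $\bigoplus_\a\frg_\a$. Since $\t_\infty$ is only a trivialization of the fiber $\cE_\infty$, I first pick a formal trivialization extending it that puts $\ph$ into this normal form. The choice is unique up to $T\tl{\t}\cap\bG^1_\infty$, and the result is independent of this choice because that group commutes with $\t^\l$. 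Then $\Ad(\t^\l)$ fixes the diagonal normal form, so the modified Higgs field again lies in $\psi_1dt+\Ad^*(\cE')dt/t$. The residue condition at $\un0$ is untouched.

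This gives a morphism $\l:\cM_\psi\to\cM_\psi$. The normal form makes the construction functorial in families, so it works over any base $R$: the diagonalization lemma holds over $R\tl{\t}$ by the same recursion. We also have $(\l+\mu)=\l\circ\mu$ and $0=\id$, since the modifications compose through commuting elements of $T\lr{\t}$. So $\xcoch(T)$ acts.

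It remains to compare with the action on $\Fl_\psi$ through Proposition \ref{p:ASF vs HF}. A point of $\L_\psi$ is glued from the trivial Higgs bundle $\psi_1dt$ on $\PP^1-\{0\}$ and $(\cF,\cF_{0,B},\psi_1dt/t)$ near $0$. Regluing by $t^\l$ at $\infty$ is the same, via the global element $t^\l\in T(\Gm)$ (which commutes with $\psi_1$), as regluing by $t^\l$ at $0$. So the extended action restricts on $\L_\psi$ to left translation by $t^\l$, which is the given action on $\Fl_\psi$; the sign convention is fixed here. One could also check that the action preserves the symplectic form, since it is a Hecke correspondence at $\infty$ compatible with the reduction at the moment value $\psi$.
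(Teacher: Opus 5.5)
Your construction is correct, but you reach the lemma by a different, more hands-on route than the paper.

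The paper never modifies $\cE$ by hand. Its steps are:
\begin{enumerate}
\item Take Ng\^o's regular centralizer $\JJ$ and let the Picard stack $\wh\cP$ of $J_a$-torsors trivialized at $\infty$ act on $\cM_\psi$ fiberwise over $\cA_\psi$.
\item Observe that the cameral cover over $\cA_\psi\htimes D_\infty$ is a $W$-torsor with a canonical section determined by $\psi$. This gives $J|_{\cA_\psi\htimes D_\infty}\cong T\times(\cA_\psi\htimes D_\infty)$.
\item Compose $\xcoch(T)\to\wh\Gr_T\cong\wh\Gr_{J,\infty}\to\wh\cP$.
\end{enumerate}

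Your formal diagonalization lemma is the concrete content of the identification of $J$ with $T$ near $\infty$: the centralizer of $\ph$ on $D_\infty$ is the conjugate of $T\tl{\t}$ by your normalizing gauge transformation in $\bG^1_\infty$. Regluing by $\t^{\pm\l}$ in a normal-form trivialization is exactly the action of the image of $t^\l$ under $\wh\Gr_{J,\infty}\to\wh\cP$. The independence of choices you verify (that $T\tl{\t}\cap\bG^1_\infty$ commutes with $\t^\l$) is what the paper gets for free by working with $J$-torsors.

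The paper's packaging places the lattice action inside the natural symmetry group of the Hitchin fibration, so compatibility with $f_\psi$ and behaviour in families are automatic. Your version buys three things:
\begin{itemize}
\item It is self-contained, using only the local normal form and Beauville--Laszlo gluing.
\item It explains why the naive modification using only $\t_\infty$ fails.
\item Your comparison on $\L_\psi$ uses the global element $t^\l\in T(k[t,t^{-1}])$, which commutes with $\psi_1$, to move the modification from $\infty$ to $0$. This explicitly verifies that the action extends left translation by $t^\l$ on $\Fl_\psi$, which the paper only asserts.
\end{itemize}
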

\begin{proof}
Following the construction in \cite[\S2.1]{Ngo}, let $\JJ\to [\frc/\Gm]$ be the regular centralizer group scheme for $G$.  Let $J\to \cA_{\psi}\times X$ be the group scheme whose restriction to $\{a\}\times X$ (where $a\in\cA_{\psi}$) is $a^{*}\JJ$, where $a$ is viewed as a map $X\to \frc/\Gm$. 
Consider the Picard stack $\cP$ over $\cA_{\psi}$ whose fiber at $a\in \cA_{\psi}$ is the moduli stack of $J_{a}=a^{*}\JJ$-torsors over $X$. 

For $a\in \cA_{\psi}$, the leading term of $a$ at $\infty$ is $\chi(\psi_{1})$, hence the fiber $J_{a}|_{\infty}$ is canonically identified with $T=C_{G}(\psi_{1})$. Let $\wh\cP\to \cA_{\psi}$ be the stack whose fiber over $a\in \cA_{\psi}$ is a $J_{a}$-torsor over $X$ together with a trivialization at $\infty$, so that $\wh\cP\to \cP$ is a $T$-torsor.  The action of $\BB\JJ$ on $[\frg/G]$ induces a natural action of $\wh\cP$ on $\cM_{\psi}$ over $\cA_{\psi}$. 

Consider the cameral cover $\pi: \wt X\to \cA_{\psi}\times X$ that is the pullback of $\frt^{*}/\Gm\to \frc/\Gm$ under the evaluation map $\cA_{\psi}\times X\to [\frc/\Gm]$. Let $D_{\infty}=\Spec \cO_{\infty}$ be the formal completion of $X$ at $\infty$, and let  $\pi_{\infty}: \wt X^{\wedge}_{\infty}\to \cA_{\psi}\htimes D_{\infty}$ be the base change of $\pi$.  Since $\cA_{\psi}\times\{\infty\}\to \frc$ is the constant map that lifts to $\psi\in (\frt^{*})^{\rs}$, $\pi_{\infty}$ is a $W$-torsor with a trivialization in the special fiber $\cA_{\psi}\times\{\infty\}$. Therefore the $W$-torsor $\pi_{\infty}$ has a canonical trivialization extending the trivialization of the special fiber, i.e., there is a canonical section $s: \cA_{\psi}\htimes D_{\infty}\to  \wt X^{\wedge}_{\infty}$ to $\pi_{\infty}$ given by $\psi$. Using this section we get a canonical isomorphism
\begin{equation}\label{JT}
J|_{\cA_{\psi}\htimes D_{\infty}}\cong T\times (\cA_{\psi}\htimes D_{\infty})
\end{equation}
of group schemes over $\cA_{\psi}\htimes D_{\infty}$.  Let $\wh\Gr_{J,\infty}\to \cA_{\psi}$ be the moduli space whose fiber over $a\in \cA_{\psi}$ classifies $J_{a}$-torsors over $X$ with a trivialization over $\infty$ and a trivialization over $X-\{\infty\}$. We have a canonical map $\t: \wh\Gr_{J,\infty}\to\wh\cP$ over $\cA_{\psi}$. Using \eqref{JT}, we can then identify $\wh\Gr_{J,\infty}$  with $\wh\Gr_{T}\times \cA_{\psi}$ where $\wh\Gr_{T}=T\lr{t^{-1}}/T\tl{t^{-1}}^{1}$, and $T\tl{t^{-1}}^{1}=\ker(\ev_{\infty}: T\tl{t^{-1}}\to T)$.  Thus we get a canonical homomorphism over $\cA_{\psi}$.
\begin{equation*}
\xcoch(T)\xr{\l\mapsto t^{\l}}\wh\Gr_{T}\times \cA_{\psi}\cong \wh\Gr_{J,\infty}\to \wh\cP.
\end{equation*}
Composing with the action of $\wh\cP$ on $\cM_{\psi}$ we get an action of $\xcoch(T)$ on $\cM_{\psi}$ that extends the natural action on $\L_{\psi}\cong\Fl_{\psi}$.

\end{proof}

\begin{exam} In the situation of $G=\GL_{n}$, the action of the standard basis elements $e_{i}\in\xcoch(T)$ on each Hitchin fiber $\ov\cP(Y_{a};\infty)$ is by sending $\cF$ to $\cF(-y_{i,\infty})$. Here $y_{i,\infty}=(\psi_{i}, \infty)\in Y_{a}$ are the points over $\infty$. Given a basis of $\cF$ at $y_{i,\infty}$, we have to specify a basis of $\cF(-y_{i,\infty})$ at $y_{i,\infty}$. This amounts to exhibiting a canonical basis of $\cO_{Y_{a}}(-y_{i,\infty})|_{y_{i,\infty}}$, or a basis of the cotangent line $T^{*}_{y_{i,\infty}}Y_{a}$. However,  the projection $p_{a}: Y_{a}\to X$ is \'etale over $\infty$,  the cotangent space $T^{*}_{y_{i,\infty}}Y_{a}$ is identified with $T^{*}_{\infty}X$ via $dp_{a}$, hence carries a basis $dt/t^{2}$.
\end{exam}

\subsection{Parahoric version}\label{ss:par}
For any standard parahoric subgroup $\bP\subset G\lr{t}$, we have the moduli stack $\Bun_{G}(\bG^{\psi}_{\infty},\bP_{0})$ (we use $\bP_{0}$ to emphasize that the $\bP_{0}$-level structure is at $0\in \PP^{1}$) and its cotangent bundle $\cM(\bG^{\psi}_{\infty},\bP_{0})$. They have actions of $\Aff=\Ga\rtimes\Grot$. Similarly we have $\Bun_{G}(\bG^{2}_{\infty},\bP_{0})$ and its cotangent bundle $\cM(\bG^{2}_{\infty},\bP_{0})$. They have actions by $\frg\rtimes\Grot$.  We define $\cM_{\psi,\bP}$ as
\begin{equation*}
\cM_{\psi,\bP}:=\cM(\bG^{\psi}_{\infty},\bP_{0})\qq_{1}\Ga=\cM(\bG^{2}_{\infty},\bP_{0})\qq_{\psi}\frg.
\end{equation*}

The Hitchin map
\begin{equation}\label{par H}
f_{\psi, \bP}: \cM_{\psi,\bP}\to \cA_{\psi}
\end{equation}
is defined similarly as $f_{\psi}$, but it is in general not surjective. Let $\L_{\psi, \bP}=f_{\psi,\bP}^{-1}(a_{\psi})$.

In the special case $\bP=\bG=G\tl{t}$, $\cM_{\psi,\bG}$ consists of triples  $(\cE,\t_{\infty},\ph)$ where $\cE$ is a $G$-bundle over $X$, $\t_{\infty}$ a trivialization of $\cE_{\infty}$ and $\ph$ is a section of  $\Ad^{*}(\cE)\ot \om_{X}(2\cdot\infty)\cong \Ad^{*}(\cE)$ whose leading term at $\infty$ is $\psi$.  The Hitchin map for $\cM_{\psi,\bG}$ is the constant map to $a_{\psi}\in \cA_{\psi}$ because $f_{i}(\ph)$ are constant functions on $X$.

To state an analogue of Proposition \ref{p:ASF vs HF}, we need to define affine Spaltenstein fibers for the parahoric $\bP$ and $\g\in \frg\lr{t}$:
\begin{equation*}
\Fl^{\bP}_{\g}=\{g\in G\lr{t}/\bP|\Ad(g^{-1})\g\in \Lie\bP^{+}\}.
\end{equation*}

The same argument of Proposition \ref{p:ASF vs HF} gives the following.
\begin{prop}\label{p:ASF vs HF par} There is a canoncial homeomorphism $\Fl^{\bP}_{\psi}\cong \L_{\psi, \bP}$ (the fiber of $a_{\psi}$ under the parahoric Hitchin map \eqref{par H}).
\end{prop}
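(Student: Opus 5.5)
The plan is to imitate the construction sketched after Proposition \ref{p:ASF vs HF}: glue a local object at $0$ to the fixed global data away from $0$, and check that the glued object lands exactly in the central fiber.

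\textbf{Step 1: the map.} Let $\bP^{+}$ be the pro-unipotent radical and $L_{\bP}$ the Levi quotient of $\bP$. A point of $G\lr{t}/\bP$ is a pair $(\cF,\io,\cF_{0,\bP})$ with $\cF$ a $G$-torsor on $\Spec\CC\tl{t}$, $\io$ a trivialization over $\Spec\CC\lr{t}$, and $\bP$-level structure at $0$. Equivalently it is a $\cP$-torsor for the Bruhat--Tits group scheme $\cP$ with $\cP(\CC\tl{t})=\bP$. The condition $\Ad(g^{-1})\psi\in\Lie\bP^{+}$ says two things:
\begin{itemize}
\item $\psi$, transported by $\io$, extends to a section of $\Lie(\cP_{\cF})$;
\item that section lies in the sub-bundle of $\Lie\bP^{+}$ over the disk.
\end{itemize}
Under the Killing form identification of $\ph=\wt\ph\,dt/t$, this is exactly the residue condition at $0$ defining $\cM_{\psi,\bP}$, namely $(\Lie\bP)^{\perp}\cdot dt/t$, which corresponds to $\Lie\bP^{+}\,dt/t$. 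I glue $(\cE^{\triv},\t^{\triv}_\infty,\psi_1dt)|_{\PP^1-\{0\}}$ with $(\cF,\cF_{0,\bP},\psi_1dt)$ along $\io$. Since $\psi_1dt=\psi\,dt/t$, the Higgs field extends across $0$. This gives a point of $\cM(\bG^2_\infty,\bP_0)$ with moment value $\psi$ at $\infty$, hence a point of $\cM_{\psi,\bP}$. Its invariants $f_i(\ph)$ equal $f_i(\psi_1 t)(dt/t)^{d_i}$ on $\PP^1-\{0\}$, so the point lies in $f_{\psi,\bP}^{-1}(a_\psi)=\L_{\psi,\bP}$. This yields a morphism $\Fl^{\bP}_\psi\to\L_{\psi,\bP}$.

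\textbf{Step 2: bijectivity on points.} Take a point $(\cE,\t_\infty,\cE_{0,\bP},\ph)$ of $\L_{\psi,\bP}$. On $U=\PP^1-\{0\}$ the characteristic polynomial of $\ph$ is the constant regular semisimple value $\chi(\psi_1)$ times $dt^{d_i}$, and the leading term at $\infty$ is $\psi_1$ in the frame $\t_\infty$. I expect this to be the main obstacle, but it is exactly the step already used for Proposition \ref{p:ASF vs HF}. The pair $(\cE|_U,\ph)$ has regular semisimple constant spectral data. Its centralizer is therefore a torus $J_a|_U$ which is constant, since the cameral cover is trivial over $U$ with a section fixed by $\psi$ at $\infty$, as in \eqref{JT}.

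Hence $(\cE|_U,\ph)$ reduces to a $T$-bundle on $U\cong\AA^1$. That $T$-bundle is trivial, and with the trivialization at $\infty$ it is canonically trivialized. So $(\cE|_U,\t_\infty,\ph)\cong(\cE^{\triv},\t^{\triv},\psi_1dt)$ uniquely: the automorphisms are $T$-valued maps on $\AA^1$ equal to $1$ at $\infty$, hence trivial. Restricting to the formal disk at $0$ then produces $(\cF,\io,\cF_{0,\bP})$ with $\Ad(g^{-1})\psi\in\Lie\bP^+$. This is inverse to Step 1 by Beauville--Laszlo gluing.

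The only place where $\bP$ enters is the residue condition. I would check carefully that the annihilator of $\Lie\bP$ under the Killing form is $t\cdot\Lie\bP^{+}$, up to the $dt/t$ twist. This is standard for parahorics.

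\textbf{Step 3: homeomorphism.} The map of Step 1 comes from a morphism of ind-schemes/algebraic spaces, so it is continuous. $\Fl^{\bP}_\psi$ is a projective-like ind-scheme, locally of finite type, and the map is a bijection on geometric points. It is also compatible with the $\xcoch(T)$- and $\Gm$-actions. To show it is a homeomorphism, I would argue as follows:
\begin{itemize}
\item It is a universal homeomorphism on each finite-type piece, because the same gluing works for $R$-points. Steps 1--2 are functorial in $R$, which in fact gives an isomorphism of the underlying reduced spaces.
\item Alternatively, use that both sides are exhausted compatibly by $\xcoch(T)$-translates of a projective piece, and that a continuous bijection from a proper space is closed.
\end{itemize}
A possible nonreducedness of $\L_{\psi,\bP}$ is the reason only a homeomorphism is claimed.
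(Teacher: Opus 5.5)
Your proof follows the paper. Proposition~\ref{p:ASF vs HF par} is proved by the same gluing as Proposition~\ref{p:ASF vs HF}, which is a special case of \cite[Theorem 2.8.1(4)]{BBAMY}. Your Steps~1--2 are correct on points over any algebraically closed field, including the inverse via the $T$-reduction of $(\cE|_U,\ph)$ on $U=\PP^1-\{0\}\cong\AA^1$ rigidified at $\infty$. Two small remarks. First, you don't need $J_a$ or the cameral cover: $\chi^{-1}(\chi(\psi_1))$ is the single orbit $G/T$, so $(\cE|_U,\ph/dt)$ is literally a $T$-torsor. Second, under $(X,Y)\mapsto\Res\langle X,Y\rangle\,dt/t$ the annihilator of $\Lie\bP$ is $\Lie\bP^{+}$, not $t\Lie\bP^{+}$; Step~1 already uses the correct version.

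\textbf{Step 3, first bullet.} This bullet is false, and its failure is exactly why the statement asserts only a homeomorphism. Steps~1--2 are not functorial in $R$. Over a non-reduced $R$, the trivialization of $(\cE|_{U_R},\t_\infty,\ph)$ is unique only up to $\ker(T(R[t^{-1}])\to T(R))$.

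For instance, take $R=k[\epsilon]/(\epsilon^2)$, $0\neq X\in\frt$ and $h=1+\epsilon t^{-1}X$. Then:
\begin{enumerate}
\item $1\cdot\bP$ and $h\cdot\bP$ both lie in $\Fl^{\bP}_\psi(R)$, since $h$ centralizes $\psi$.
\item They are distinct, since $t^{-1}X\notin\Lie\bP$.
\item Multiplication by $h$ on $U_R$ fixes $\psi_1dt$ and equals $1$ at $\infty$, so it identifies the two glued Higgs bundles.
\end{enumerate}
So the gluing map $\theta:\Fl^{\bP}_\psi\to\L_{\psi,\bP}$ is not a monomorphism. It collapses the nilpotent directions of $\Fl^{\bP}_\psi$ along $T\lr{t}$-orbits. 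The discrepancy therefore sits on the source side; it does not come from a possible non-reducedness of $\L_{\psi,\bP}$.

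Even over reduced $R$ your argument would need $\Pic(\AA^1_R)=\Pic(R)$, which fails for non-seminormal $R$. So it does not give an isomorphism of reduced structures either.

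\textbf{Step 3, second bullet.} Keep this one, but add the local finiteness it needs, because $\Fl^{\bP}_\psi$ is only ind-proper. Write $\Fl^{\bP}_\psi=\bigcup_n X_n$ with each $X_n$ projective, so that each $\theta|_{X_n}$ is proper with closed image. Every quasi-compact open $V\subset\L_{\psi,\bP}$ then lies in a single $\theta(X_n)$. Indeed, $V$ has finitely many generic points, and each is in the image because Step~2 works over every algebraically closed field. By injectivity, $\theta^{-1}(V)\subset X_n$. Hence for any closed $C$ we have $\theta(C)\cap V=\theta(C\cap X_n)\cap V$, which is closed in $V$. So $\theta$ is closed, and the continuous bijection is a homeomorphism.
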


\sss{The case $\bP=\bG$}\label{sss:MGr}
Consider $\Fl^{\bG}_{\psi}=\Gr_{\psi}$. For $\l\in \xcoch(T)$ we have $t^{\l}\in \Gr= G\lr{t}/G\tl{t}$. On the other hand, let  $\wt\cE_{\l}\in \Bun_{G}(\bG^{1}_{\infty})=\ker(G[t^{-1}]\to G)\bs G\lr{t}/G\tl{t}$ be the point corresponding to the double coset of $t^{\l}$. Denote the underlying $G$-bundle of $\wt\cE_{\l}$ by $\cE_{\l}$. Note $\psi_{1} dt$ defines a global section of $\Ad(\cE_{\l})\ot \om_{X}(2\cdot\infty)$, hence giving a point $(\wt\cE_{\l}, \psi_{1} dt)\in \cM_{\psi,\bG}$.

\begin{lemma}\label{l:Gr ASF discrete} We have $\Fl^{\bG}_{\psi}=\{t^{\l}|\l\in \xcoch(T)\}$. In particular, $\cM_{\psi,\bG}$ is a discrete set consisting of points $\{(\wt\cE_{\l}, \psi_1dt)|\l\in \xcoch(T)\}$. 
\end{lemma}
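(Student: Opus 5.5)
We need to show that the points $g\in \Gr$ with $\Ad(g^{-1})\psi\in \frg\tl{t}$ (here $\Lie\bG^+=t\frg\tl{t}$ for the Spaltenstein fiber, but the argument works with either condition) are exactly the $t^\l$. The second claim then follows from Proposition \ref{p:ASF vs HF par}: $\cM_{\psi,\bG}$ equals its single Hitchin fiber $\L_{\psi,\bG}$, since the Hitchin map is constant there. That fiber is homeomorphic to $\Gr_\psi$, and the gluing construction sends $t^\l$ to $(\wt\cE_\l,\psi_1dt)$.

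The plan is to use the $\Grot$-action and a lattice/valuation argument.

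\emph{Step 1: the points $t^\l$ lie in the fiber.} Since $t^\l\in T\lr{t}$ centralizes $\psi$, we have $\Ad(t^{-\l})\psi=\psi=t\psi_1\in t\frg\tl{t}$. So every $t^\l$ lies in $\Fl^{\bG}_\psi$, and these points are distinct.

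\emph{Step 2: the fiber is discrete.} I would deduce this from dimension theory, or more concretely as follows. By the Iwasawa decomposition, write $g=u t^\l k$ with $u\in U\lr{t}$ for a Borel $B=TU$ containing $T$ and $k\in \bG$. The condition becomes $\Ad(t^{-\l})\Ad(u^{-1})\psi\in t\frg\tl{t}$. Write $u=\exp(X)$ with $X=\sum_{\a>0}X_\a$. The root-space components of $\Ad(u^{-1})\psi$ are then determined successively by height. The leading term in $\frg_\a$ is $-[X_\a,\psi]=\a(\psi_1)tX_\a$ plus terms involving lower-height components. Because $\a(\psi_1)\neq0$ for all roots, the map $X_\a\mapsto \a(\psi_1)tX_\a$ is bijective on $\frg_\a\lr{t}$. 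By induction on height, the condition $\Ad(t^{-\l})\Ad(u^{-1})\psi\in t\frg\tl{t}$ then forces each $X_\a$ to lie in $t^{\langle\l,\a\rangle}\frg_\a\tl{t}$ (after the lower-height components have been arranged). Equivalently, $t^{-\l}ut^\l\in U\tl{t}\subset\bG$, so that $g\bG=t^\l\bG$.

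This is the main computation. The obstacle is the nonlinearity of $\Ad(u^{-1})$: the higher-height components pick up commutator terms. I would handle this by inducting on a filtration $U=U_1\supset U_2\supset\cdots$ by root height. At each stage I would peel off one layer $u_{h}\in U_h\lr{t}$ and show that it is integral after conjugation by $t^{\l}$. The inductive step rests on the fact that, modulo $U_{h+1}$, the relevant map is the linear map $\ad(\psi_1)$, which is invertible on root spaces.

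An alternative I would keep in reserve is a torus-fixed-point argument. $\Gr_\psi$ is a projective ind-scheme stable under $T$ and $\Grot$. Its $T\times\Grot$-fixed points are among the $t^\l$. Moreover, an irreducible component of positive dimension would contain a $T\times\Grot$-stable curve through some $t^\l$, whose tangent direction would be a root subgroup line $t^\l U_\a(ct^m)$. Checking directly shows that such a line leaves $\Gr_\psi$, because $\a(\psi_1)\neq0$. This route relies on $\dim\Gr_\psi=0$, which matches the Kazhdan--Lusztig dimension formula once $\psi$ is taken in $t\frg\tl t$-position, so I would present the Iwasawa induction as the main proof.
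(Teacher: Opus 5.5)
Your main argument is correct, but it is organized differently from the paper's. The paper just divides by $t$. The defining condition $\Ad(g^{-1})(t\psi_1)\in t\frg\tl{t}$ is equivalent to $\Ad(g^{-1})\psi_1\in \frg\tl{t}$, so $\Fl^{\bG}_\psi=\Gr_{\psi_1}$ is the affine Springer fiber of the \emph{constant} regular semisimple element $\psi_1$. The paper then quotes the standard fact $\Gr_{\psi_1}=\Gr^{T}=\{t^\l\}$.

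Your Step 2 is a hands-on proof of that fact. Set $u'=t^{-\l}ut^{\l}\in U\lr{t}$; the condition then reads $\Ad(u'^{-1})\psi\in t\frg\tl{t}$. The $\frg_\a$-component of $\Ad(u'^{-1})\psi$ is $t\,\a(\psi_1)X'_\a$ plus bracket terms. Each bracket term carries the factor $t$ from $\psi$ and involves only lower-height components. So induction on height forces $u'\in U\tl{t}$, i.e.\ $g\bG=t^\l\bG$.

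The paper's route is shorter and more conceptual. Yours is self-contained and never appeals to torus-fixed points. In positive characteristic, use root-subgroup coordinates on the height filtration rather than $\exp$, as you already suggest. Your deduction for $\cM_{\psi,\bG}$ is the intended one: the Hitchin map is constant there, and Proposition \ref{p:ASF vs HF par} applies.

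You should, however, drop the claim that the argument works with either condition, and correct the places where you write $\Gr_\psi$.

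\begin{itemize}
\item Your opening sentence, read literally, describes $\Gr_\psi=\{g:\Ad(g^{-1})\psi\in\frg\tl{t}\}$. That space has dimension $|\Phi^+|>0$, as the paper records. In your induction this weaker condition only gives $X'_\a\in t^{-1}\frg_\a\tl{t}$. Concretely, for $\SL_2$ the points $t^{\l}\exp(ct^{-1}e_\a)\bG$, $c\in k$, form a curve in $\Gr_\psi$.
\item Proposition \ref{p:ASF vs HF par} identifies $\L_{\psi,\bG}$ with $\Fl^{\bG}_\psi=\Gr_{\psi_1}$, not with $\Gr_\psi$.
\item In your reserve route, the relevant space is likewise $\Gr_{\psi_1}$, which has dimension $0$, not $\Gr_\psi$.
\end{itemize}

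Step 2 itself uses the correct condition $t\frg\tl{t}$, so these are misstatements rather than gaps in the proof. As written, though, they contradict the paper's dimension formula and should be fixed.
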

\begin{proof}
By definition, $\Fl^{\bG}_{\psi}=\{g\in \Gr|\Ad(g^{-1})t\psi_{1}\in\Lie \bG^{1}_{\infty}=t\frg\tl{t}\}$. Hence $\Fl^{\bG}_{\psi}=\Gr_{\psi_{1}}$. Since $\psi_{1}$ is regular semisimple, $\Gr_{\psi_{1}}=(\Gr)^{T}=\{t^{\l}|\l\in\xcoch(T)\}$. 
\end{proof}

\subsection{Microsheaves on $\mathcal{M}_\psi$}\label{ss:micro Mpsi}

We wish to define a sheaf of categories $\muSh$ on $\mathcal{M}_\psi$, in the spirit of Section \ref{sec:microsheavesonexact}. Although $\mathcal{M}_\psi$ is a finite dimensional manifold, it is not of finite type, and so this requires some care.

We make $\mathcal{M}_\psi$ into an exact symplectic manifold as follows. Consider the holomorphic symplectic form $\omega_\CC$ on $\mathcal{M}_\psi$. As our real symplectic form, we take the real part $\omega = \Re(\omega_\CC)$. Let $V$ be the vector field generated by the action of $\RR^* \subset \CC^*$ on $\mathcal{M}_\psi$. Then $V$ is a Liouville vector field for $\omega$, with associated Liouville form $\lambda$.

We can exhaust $\Bun_{G}(\Jker, \bI_0)$ by an increasing sequence of open substacks ${}^{\mu_i}\Bun_{G}(\Jker, \bI_0)$ parametrizing bundles of Harder-Narasimhan type bounded by some rational dominant coweight $\mu_i$, $i=1,2,\cdots$. Each ${}^{\mu_i}\Bun_{G}(\Jker, \bI_0)$ is a quotient of the smooth finite-dimensional scheme $N_i = {}^{\mu_i}\Bun_{G}(\bG^{n_i}_\infty, \bI_0)$ by the action of $\bK^\psi_i = \Jker/\bG_\infty^{n_i}$ for $n_i\in\NN$ sufficiently large (depending on $\mu_i$).


Let $\frak{N}^i = [N^i / \bK^{\psi}_i]$ be the quotient stack. Let $\frak{X}^i = T^*(\frak{N}^i / \GG_m)$. We apply Definition \ref{def:microsheavesonstacks} to obtain a sheaf of stable $\infty$-categories on $\frak{X}^i$.

The action of $\bK_i/\bK_i^\psi\cong \bG^1_\infty / \bG^{\psi}_\infty = \GG_a$ admits a moment map $\mu : T^*\frN^i \to (\Lie\GG_a)^*$, and the zero-fiber $\mu^{-1}(0)$ is $\GG_m$-invariant. Its complement defines a substack $\mathcal{M}_{\psi}^i \subset \frak{X}^i$, which embeds in $\mathcal{M}_\psi$ as an open subset. If we view $\mathcal{M}_\psi^i$ as a (real) symplectic manifold, it is naturally equipped with quotient Maslov data. We can apply Definition \ref{def:microsheavesonexact} to obtain a sheaf of stable $\infty$-categories on $\mathcal{M}_{\psi}^i$, which coincides with the restriction of the previous sheaf to $\frak{X}^i$ by Proposition \ref{prop:nadlershendequotientisintrinsic}.

We have an exhaustion
\[ \mathcal{M}_\psi = \bigcup_i \mathcal{M}_{\psi}^i. \]

Since the quotient Maslov data on each $\mathcal{M}^i_{\psi}$ is compatible with the inclusions $\mathcal{M}^i_{\psi} \subset \mathcal{M}^{i+1}_{\psi}$, the resulting sheaves are equal after restriction.

\begin{defn}
    Let $\muSh$ be the sheaf of stable $\infty$-categories on $\mathcal{M}_\psi$ whose restriction to $\mathcal{M}_{\psi}^i$ is defined as above.
\end{defn}


\begin{prop}
Any compactly supported object in $\muSh(\mathcal{M}_\psi, \lambda)$ is supported on a finite union of irreducible components of the central Hitchin fiber $\Lambda_\psi \subset \mathcal{M}_\psi$.
\end{prop}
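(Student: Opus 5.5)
Let $\cF\in\muSh(\cM_\psi,\lambda)$ have compact support $K=\Supp(\cF)$. The plan is to show that $K$ lies in the central fiber $\L_\psi$, and then use compactness to land in finitely many components.

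First, $K$ is invariant under the Liouville flow. By the proposition recalled in \S\ref{sec:microsheavesonexact}, the support of a microsheaf is closed, coisotropic, and stable under the Liouville flow. Here the Liouville vector field $V$ generates the $\RR_{>0}\subset\CC^*=\Gm$ action of \S\ref{sss:Gm action}. Hence $K$ is a compact subset stable under the $\RR_{>0}$-action.

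Second, $K\subset\L_\psi$. Take $\xi\in K$ and consider $a=f_\psi(\xi)\in\cA_\psi$. The map $f_\psi$ is $\Gm$-equivariant. On the affine space $\cA_\psi$, the $\Gm$-action contracts to $a_\psi$ with strictly positive weights: the coefficients of $\wt a_i$ have weights between $1$ and $d_i$, and the leading term at $\infty$ is fixed. If $a\neq a_\psi$, then $|s\cdot a|\to\infty$ as $s\to+\infty$ in $\RR_{>0}$. Since $s\cdot\xi\in K$, the set $f_\psi(K)$ would be unbounded, contradicting the compactness of $K$ and the continuity of $f_\psi$. Therefore $f_\psi(K)=\{a_\psi\}$, i.e. $K\subset\L_\psi$.

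Third, finitely many components suffice. By Proposition \ref{p:ASF vs HF}, $\L_\psi\cong\Fl_\psi$ is a locally finite union of irreducible components; each is projective (hence compact) and they are indexed by $\tilW$. Local finiteness means each point of $\L_\psi$ has a neighborhood meeting only finitely many components. This holds because $\Fl_\psi$ is a closed ind-subscheme of $\Fl$, and each quasi-compact piece of $\Fl$ meets only finitely many components. The compact set $K$ is covered by finitely many such neighborhoods, so $K$ meets only finitely many components. Since $K\subset\L_\psi$, it is contained in their union.

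The main obstacle is the positivity of the weights on $\cA_\psi$ and, more precisely, making sure the action used in the second step is exactly the Liouville flow. The symplectic form has weight $1$ by Proposition \ref{p:geom Mpsi}(2), so $V$ is indeed the Euler field of the $\Gm$-action restricted to $\RR_{>0}$. For the weights, I would use the explicit description of the Hitchin base: the $\Gm$-action scales the Higgs field and inversely rotates $\PP^1$. A coefficient $c\,t^j(dt/t)^{d_i}$ with $1\le j\le d_i$ then acquires weight $d_i-j\cdot(\text{rotation})$, and the normalization fixing $\psi_1dt$ should make these weights positive except for the leading term fixed by the definition of $\cA_\psi$. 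This matches the statement in \S\ref{sss:Gm action} that the action on $\cA_\psi$ contracts to $a_\psi$. If that contraction statement is taken as given, only the fact that a contracting linear action with a unique fixed point has unbounded orbits as $s\to\infty$ is needed.
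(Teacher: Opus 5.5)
Your proof is correct. Its first two steps are the paper's first two steps: the support is conical for the Liouville flow, and a compact conical set lies in $\Lambda_\psi$. The paper simply asserts the second of these. You justify it through the $\mathbb{G}_m$-equivariance of $f_\psi$ and the contraction of $\mathcal{A}_\psi$ onto $a_\psi$. Concretely, the free coefficient of $t^j$ in $\tilde a_i$ has weight $d_i-j\ge 1$, and the $t^{d_i}$ coefficient is pinned by the definition of $\mathcal{A}_\psi$.

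The genuine difference is the last step.
\begin{itemize}
\item The paper uses that the support is coisotropic. A closed coisotropic subset of the Lagrangian $\Lambda_\psi$ must be a union of irreducible components, so the support \emph{equals} such a union. Finiteness is then left implicit from compactness.
\item You never use coisotropy. Compactness plus local finiteness of the components gives only that the support is \emph{contained in} a finite union of components.
\end{itemize}
Containment is exactly what ``supported on'' means here, and it is also how the condition is used in the definition of $\muSh_{\Lambda_\psi,fs}(\mathcal{M}_\psi)$. So your argument suffices, and it has the advantage of making the finiteness explicit. The paper's route gives the sharper structural statement about the support itself.

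One small point on your local-finiteness justification. Being a closed ind-subscheme of $\Fl$ is not by itself the reason. The actual input is that $\Lambda_\psi$ is locally of finite type: it is a closed subspace of $\mathcal{M}_\psi$, or equivalently $\Fl_\psi$ is a scheme locally of finite type by Kazhdan--Lusztig. Then every point has a quasi-compact open neighbourhood $U$. For each irreducible component $C$ meeting $U$, the set $C\cap U$ is an irreducible component of $U$, and $U$ has only finitely many of these.
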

\begin{proof}
The support of any such object is conical with respect to the Liouville flow. Any compact conical subset must be contained in $\Lambda_\psi$. Since the support is coisotropic, it must be a union of irreducible components of $\Lambda_\psi$. 
\end{proof}

For this and other reasons (see the discussion of Fukaya categories below), it is natural to restrict our attention to the subcategory 
$\muSh_{\Lambda_\psi} (\mathcal{M}_\psi,\lambda)$ of microlocal sheaves supported on $\Lambda_\psi$. We also sometimes abuse notation and write this as $\muSh_{\Fl_\psi} (\mathcal{M}_\psi,\lambda)$.

\sss{Lattice action} \label{sss:latticeaction}
The action of the lattice $\XX_*(T)$ on $\mathcal{M}_\psi$ does not commute with the action of $\CC^*$, and in particular does not preserve the one-form $\lambda$ and does not map conical subsets of $\mathcal{M}_\psi$ to conical subsets. For the same reason, the lattice does not
act on the category $\muSh (\mathcal{M}_\psi,\lambda)$. However, $\XX_*(T)$ does preserve the subset $\Lambda_\psi$, and we expect that it acts on $\muSh_{\Lambda_\psi} (\mathcal{M}_\psi,\lambda)$ by automorphisms. Indeed, for each $\sigma \in \XX_*(T)$, one may pick a homotopy between $\sigma^*\lambda$ and $\lambda$ such that $\Lambda_\psi$ is fiberwise conical. We expect that these homotopies, composed with the translation action of $\sigma \in \XX_*(T)$ induce an action of $\XX_*(T)$ on $\muSh_{\Lambda_\psi} (\mathcal{M}_\psi,\lambda)$, following the construction in \cite{NS}. We will not use this action in the present paper. It should be the microlocal counterpart of the action defined in Section \ref{subsec:heckeinf}.


\subsection{Speculations on Fukaya categories}

We now turn to the question of associating a Fukaya category to the space $\mathcal{M}_\psi$. The form $\lambda$ does {\em not} make $\mathcal{M}_\psi$ into a Liouville manifold, because $\mathcal{M}_\psi$ is not of finite type. There are various methods available to try and circumvent this problem; we sketch two approaches here.

\sss{Via localization to the core}

 Recall the Hitchin map $f: \mathcal{M}_\psi \to \mathcal{A}_\psi$. Fix a linear metric on $\mathcal{A}_\psi$, and consider the family of balls $B^k \subset \mathcal{A}_\psi$ of radius $k > 0$ centered at $a_\psi$. Set $\mathcal{M}_{\psi}^k = f^{-1}(B^k) \subset \mathcal{M}_\psi$. Then the flow of the vector field associated to $\lambda$ is outward pointing along $\partial \mathcal{M}_{\psi}^k$. The core of $(\mathcal{M}_\psi, \lambda)$ may be defined as in Definition \ref{def:core}, where we set $X^k = \mathcal{M}_{\psi}^k$.
\begin{lemma}
The core of $\mathcal{M}_\psi$ is the central fiber of the Hitchin fibration: $\frak{c}(\mathcal{M}_\psi, \lambda) = \Lambda_\psi$.
\end{lemma}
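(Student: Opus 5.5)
The core is $\frak{c}(\cM_\psi,\lambda)=\bigcap_{k\gg0}\bigcap_{s\ge 0}\phi_{-s}(\cM^k_\psi)$, where $\phi_s$ is the Liouville flow. Equivalently, it is the set of points whose full backward flow line stays in a bounded region. Since $V$ is generated by the $\RR^{>0}\subset\CC^*$-action of \S\ref{sss:Gm action}, the flow is $\phi_s(\xi)=e^{s}\cdot\xi$, and backward flow is the limit $s\to 0$ of $s\cdot\xi$. So the claim reduces to two inclusions. First, $\L_\psi$ is flow-invariant and stays inside every $\cM^k_\psi$. Second, any point not in $\L_\psi$ escapes every $\cM^k_\psi$ under the forward flow.

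The key input is the equivariance of the Hitchin map $f_\psi:\cM_\psi\to\cA_\psi$ for the $\Gm$-action. By \S\ref{sss:Gm action}, $\Gm$ acts on $\cA_\psi$ linearly with positive weights, contracting to $a_\psi$. I would check that, in the affine coordinates centered at $a_\psi$ given by the coefficients of $a_i-(a_\psi)_i$, each weight is a positive integer. Scaling $\ph$ by $s$ and rotating $t$ by $s^{-1}$ sends the coefficient of $t^j(dt/t)^{d_i}$ to $s^{d_i-j}$ times itself. The constraint defining $\cA_\psi$ fixes exactly the top coefficient $j=d_i$, and the remaining ones have $j<d_i$, so their weights are positive. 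Consequently:

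(a) For $a\ne a_\psi$, the norm $|s\cdot a-a_\psi|\to\infty$ as $s\to\infty$ along $\RR^{>0}$. Hence for $\xi\notin\L_\psi$, $f_\psi(e^s\xi)$ leaves every ball $B^k$. So $\xi$ is not in the maximal invariant subset of $\cM^k_\psi$, since its forward orbit exits.

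(b) $\L_\psi=f_\psi^{-1}(a_\psi)$ is $\Gm$-stable and contained in every $\cM^k_\psi$. One also needs that it is disjoint from $\partial\cM^k_\psi=f_\psi^{-1}(\partial B^k)$, which is clear. So $\L_\psi$ is a flow-invariant subset of $\cM^k_\psi$ avoiding the boundary, and therefore lies in the core.

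It remains to confirm the setup of Definition \ref{def:core}: the family $\cM^k_\psi$ is an exhaustion with outward-pointing Liouville field. Outwardness follows from the positive weights, since $d|a-a_\psi|^2$ is positive along the vector field. Compactness of each $\cM^k_\psi$ should follow from properness of $f_\psi$ over $\cA_\psi$. I expect this properness to be the main obstacle: $\cM_\psi$ is not of finite type, and the fiber $\L_\psi\cong\Fl_\psi$ (Proposition \ref{p:ASF vs HF}) has infinitely many components, so $f_\psi$ is not proper. I would handle this by noting that the core, as defined by the maximal-invariant-subset condition, only uses the flow and the boundary hypersurfaces. Both inclusions above use only the contraction on the base, so they go through without compactness. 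If a compact statement is desired, one would instead work $\xcoch(T)$-equivariantly, using that $\wh\cP$ acts with quotient proper over $\cA_\psi$, and define the core on each finite-type piece $\cM^i_\psi$.
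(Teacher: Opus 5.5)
Your proof is correct and follows the paper's argument. The inclusion $\mathfrak{c}(\mathcal{M}_\psi,\lambda)\subset\Lambda_\psi$ comes from the Liouville flow dilating $\mathcal{A}_\psi$ with unique fixed point $a_\psi$, which your weight computation $d_i-j>0$ makes explicit, and the reverse inclusion comes from $\Lambda_\psi$ being a flow-invariant subset of every $\mathcal{M}^k_\psi$ (the paper phrases this as $\Lambda_\psi$ being a union of compact conical subsets). One small slip: the core consists of points whose \emph{forward} flow stays bounded, not backward, as your own formula $\bigcap_{s\ge0}\phi_{-s}(\mathcal{M}^k_\psi)$ and step (a) correctly use.
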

\begin{proof}
    The Liouville vector field dilates $\mathcal{A}_\psi$, with unique fixed point $a_\psi$. Hence $\frak{c}(\mathcal{M}_\psi, \lambda) \subset f^{-1}(a_\psi) = \Lambda_\psi$. The opposite containment follows from the fact that $\Lambda_\psi$ is a union of compact conical subsets.
\end{proof}

The fact that the core $\Lambda_\psi$ is ind-compact is a clue that $(\mathcal{M}_\psi, \lambda)$ in many respects behaves like a finite type Liouville manifold. For instance, the statement of Theorem \ref{thm:exactlagsgiveobjects} should hold for $(\mathcal{M}_\psi, \lambda)$. We may then seek to \emph{define} the Fukaya category in terms of $\mathfrak{Sh}(X,\lambda) := \muSh_{\Lambda_\psi} (\mathcal{M}_\psi,\lambda)$, converting Theorem \ref{thm:Fukayaismicro} into a definition.



\sss{Via the lattice quotient} A natural approach to producing a (finite type) Liouville manifold from $\mathcal{M}_\psi$ is to take a quotient by the action of the lattice $\XX_*(T)$, which preserves the symplectic form and the subset $\Lambda_\psi$. The lattice action is free and discontinuous on $\Lambda_\psi \subset \mathcal{M}_\psi$, and the quotient $\Lambda_\psi / \XX_*(T)$ is of finite type. The same should be true of a small tubular neighborhood of $\Lambda_\psi$. 

\begin{defn}
Fix a small ball $B^{\epsilon} \subset \mathcal{A}_\psi$ centered at $a_\psi$, and let $\mathcal{Y}_\psi = f^{-1}(B^{\epsilon}) / \XX_*(T)$.
\end{defn}
The space $\mathcal{Y}_\psi$ is a finite type manifold, which we wish to endow with a Liouville structure.\footnote{The space $\mathcal{Y}_\psi$ is a smooth affine complex variety and thus a Stein manifold. It therefore carries a natural family of Weinstein structures. It is difficult, however, to establish a direct relation between these Wenstein structures and the Liouville structure on $\mathcal{M}_\psi$ defined above. For the same reason, it is difficult to relate the Fukaya categories defined from these Weinstein structures to the categories considered in this paper, although we expect a close such relation exists.}  As mentioned above, the action of $\XX_*(T)$ on $\mathcal{M}_\psi$ does not preserve the Liouville form $\lambda$, and hence the latter does not descend to the quotient. 

One can instead try to deform $\lambda$ to something $\XX_*(T)$-invariant as follows. Suppose we are given a real-valued function $h: \mathcal{M}_\psi \to \mathbb{R}$, such that for all $\sigma \in \mathbb{X}_*(T)$ we have $\sigma^*dh = dh + \theta_\sigma$. Here $\theta_\sigma = \iota_{V_\sigma} \omega$ and $V_\sigma$ is the vector field generating the action of $\mathfrak{Re}(\mathbb{C}^*)$ defined by $\sigma$.

Then $\lambda_h = \lambda + df$ will be a $\XX_*(T)$-invariant Liouville form, which descends to $\mathcal{Y}_\psi$. 
One should further require that the associated isotopy of Liouville vector fields preserves $\Lambda_\psi$ and dilates $\mathcal{A}_\psi$. The result would then be a Liouville manifold $(\mathcal{Y}_\psi, \lambda_h)$ whose Fukaya category can be locally modelled by $\muSh_{\Lambda_\psi} (\mathcal{M}_\psi,\lambda)$. We do not at present know how to find such a function $h$. However, its existence would follow from the existence of a suitable hyperk\"ahler structure on $X$ by the same construction as in \cite{GMW}.
\quash{
\begin{conj}
There exists be a lattice-invariant hyperk\"ahler structure $g, (I,J,K)$ on $\mathcal{M}_\psi$ (or perhaps only on $\mathcal{M}_\psi^{\epsilon}$) preserved by the action of the compact subtorus $T_{c} \subset T$, such that $I$ is the Dolbeault complex structure, and the associated holomorphic symplectic form $\omega_J + I \omega_K$ coincides with the given one on $\mathcal{M}_\psi$.
\end{conj}

Then a Liouville structure on $\mathcal{Y}_\psi$ with skeleton $\Lambda_\psi / \XX_*(T)$ may be defined as in \cite{GMW}. One can check that this Liouville field makes $\mathcal{M}_\psi / \XX_*(T)$ into a Liouville manifold.
}
In this case, one could apply Theorem \ref{thm:Fukayaismicro} to identify the category $\muSh_{\Lambda_\psi} (\mathcal{M}_\psi,\lambda)$ with a $\XX^{*}(T)$-graded version of the Fukaya category of $\mathcal{Y}_\psi$. We leave such questions for future work.

\section{Affine Hecke category with level $(\J, \psi)$}

In this section we study the structure of versions of the affine Hecke category with deeper level structure $(\J, \psi)$. They will act on the categories $\DG$ and $\cD_\psi$, the main players in \S\ref{s:DGr} and \S\ref{s:D}. A more general type of affine Hecke categories were studied by Kamgarpour--Schedler \cite{KamSch} in the $\ell$-adic context over a positive characteristic base field. Our contribution here is: (1) we work with the Kirillov model so that the results hold in any sheaf-theoretic context; (2) for our purposes we need to work with a completed monodromic version of the affine Hecke category in the spirit of \cite[Appendix A]{BY}.

\subsection{The monoidal category $\Hinf$ and its variants}\label{ss:Hinf} Let $G\lr{\t}$ be the loop group of $G$ at $\infty\in \PP^{1}$, with uniformizer $\t=t^{-1}$. We would like to define a monoidal category that is informally sheaves on the double quotient  $(\J,\psi)\bs G\lr{\t}/(\J,\psi)$. More precisely, consider the ind-stack $\Jker\bs G\lr{\t}/\Jker$, which carries an action of $\Grot$ by loop rotation (scaling $t=\t^{-1}$) and $\Ga\times \Ga$ by left and translation coming from the fact that $\Ga=\J/\Jker$.  Our convention is that the left (resp. right) translation by $a\in \Ga$ is $g\mapsto \wt a g$ (resp. $g\mapsto g (\wt a)^{-1}$) where $\wt a\in \J$ is a lifting of $a$. On the quotient $\frac{\Jker\bs G\lr{\t}/\Jker}{\D(\Ga)}$ by the diagonal $\Ga$, there is an action of $\Aff=\Ga\rtimes\Grot$ where  $\Ga$ acts by the residue symmetry from $(\Ga\times\Ga)/\D(\Ga)\isom \Ga$ (via the first projection). We can thus consider the category
\begin{equation*}
\cH_{\infty}=\Kir\left(\dfrac{\Jker\bs G\lr{\t}/\Jker}{\D(\Ga)}\right).
\end{equation*}
  
\sss{Monoidal structure} To explain the monoidal structure on $\Hinf$, we consider the following general situation. Let $X_1, X_2$ and $Y$ be stacks with $\Aff$-actions. Let $X_{12}$ be another stack with an action of $\Aff^{(2)}:=(\Ga\times \Ga)\rtimes \Gm$ (diagonal $\Gm$-action on both copies of $\Ga$), together with maps
\begin{equation*}
    \xymatrix{& X_{12} \ar[dl]_{p_1}\ar[dr]^{p_2}\ar[rr]^{m} & & Y\\
    X_1 & & X_2}
\end{equation*}
such that $p_i$ is equivariant with respect to the homomorphism $\pi_i: \Aff^{(2)}=(\Ga\times \Ga)\rtimes \Gm\to \Aff=\Ga\rtimes\Gm$ given by $\pi_i(a_1,a_2,b)=(a_i, b)$, for $i=1,2$, and $m$ is equivariant with respect to the homomorphism $\s: \Aff^{(2)}\to \Aff$ given by $\s(a_1,a_2,b)=(a_1+a_2, b)$.

We claim that the functor
\begin{eqnarray*}
    D_{\Gm}(X_1)\times D_{\Gm}(X_2)&\to & D_{\Gm}(Y)\\
    (\cF_1,\cF_2)&\mt & m_!(p_1^*\cF_1\ot p_2^*\cF_2)
\end{eqnarray*}
induces a functor on the Kirillov categories
\begin{equation}\label{Kir conv}
    \Kir(X_1)\times \Kir(X_2)\to \Kir(Y).
\end{equation}
Indeed, $p_i^*$ sends $D_{\Aff}(X_i)$ to $D_{\Aff^{(2)}}(X_{12})$ for $i=1,2$, therefore the functor $(\cF_1,\cF_2)\mt p_1^*\cF_1\ot p_2^*\cF_2$ induces a functor 
\begin{equation}\label{K12}
    \Kir(X_1)\times \Kir(X_2)\to D_{\Gm}(X_{12})/D_{\Aff^{(2)}}(X_{12}).
\end{equation}
Moreover, $m_!$ sends $D_{\Aff^{(2)}}(X_{12})$ to $D_{\Aff}(Y)$, therefore it induces a functor
\begin{equation}\label{mKir}
    D_{\Gm}(X_{12})/D_{\Aff^{(2)}}(X_{12})\to \Kir(Y).
\end{equation}
The composition of \eqref{K12} and \eqref{mKir} then gives the desired functor \eqref{Kir conv}.

We now apply the above discussion to the diagram 
\begin{equation}\label{Hinf conv diag}
\xymatrix{& \dfrac{\Jker\bs G\lr{\t}\twtimes{\Jker}G\lr{\t}/\Jker}{\D\Ga}\ar[rr]^{m} \ar[dl]_{p_{1}}\ar[dr]^{p_{2}}&& \dfrac{\Jker\bs G\lr{\t}/\Jker}{\D\Ga}\\
\dfrac{\Jker\bs G\lr{\t}/\Jker}{\D\Ga} && \dfrac{\Jker\bs G\lr{\t}/\Jker}{\D\Ga}}
\end{equation}
Here $m$ is induced by the multiplication map of $G\lr{\t}$, and $p_1, p_2$ are the projections to the first and second factors. The $\D(\Ga)$-quotient in $\dfrac{\Jker\bs G\lr{\t}\twtimes{\Jker}G\lr{\t}/\Jker}{\D\Ga}$ is given by the action $a\cdot (g_{1},g_{2})=(\wt ag_{1}\wt a^{-1}, \wt ag_{2}\wt a^{-1})$. We equip $\dfrac{\Jker\bs G\lr{\t}/\Jker}{\D\Ga}$ with the $\Aff$-action explained in the beginning of \S\ref{ss:Hinf}. We equip $\dfrac{\Jker\bs G\lr{\t}\twtimes{\Jker}G\lr{\t}/\Jker}{\D\Ga}$ with the $\Aff^{(2)}$-action where $(a_1,a_2)\in \Ga\times \Ga$ acts by $(a_1,a_2)(g_1,g_2)=(\wt{a_1}g_1, g_2\wt{a_2})$ while $\Grot$ still acts by loop rotation. The discussion in the previous paragraph then gives a functor on the Kirillov categories
\begin{eqnarray*}
(-)\star(-): \cH_{\infty}\times\cH_{\infty}&\to&\cH_{\infty}\\
(\cF_{1},\cF_{2})&\mt& m_{!}(p_{1}^{*}\cF_{1}\ot p_{2}^{*}\cF_{2}).
\end{eqnarray*}
This defines a monoidal structure on $\cH_{\infty}$: the associativity structure follows from that of the monoidal structure of $D_{\Grot}(\dfrac{\Jker\bs G\lr{\t}/\Jker}{\D\Ga})$ under the usual convolution.

\sss{Perverse $t$-structure}
The $*$-pullback functor 
\begin{equation*}
D\left(\dfrac{\Jker\bs G\lr{\t}/\Jker}{\D\Ga}\right)\to D(G\lr{\t}/\Jker)
\end{equation*}
is fully faithful. The perverse $t$-structure on $D(G\lr{\t}/\Jker)$ restricts to a $t$-structure on $D(\dfrac{\Jker\bs G\lr{\t}/\Jker}{\D\Ga})$, which induces a $t$-structure on the quotient $\cH_{\infty}$. We call it the {\em perverse $t$-structure} on $\cH_{\infty}$. Denote the heart of the perverse $t$-structure by $\cH^{\hs}_{\infty}$.

\sss{Variants}\label{sss:var Hinf}
We will need the following variants of $\cH_{\infty}$.
\begin{enumerate}
\item We may impose right $T$-equivariance and consider the category
\begin{equation*}
\cH^{T}_{\infty}=\Kir\left(\dfrac{\Jker\bs G\lr{\t}/(\Jker T)}{\D(\Ga)}\right).
\end{equation*}
Similarly we may impose left $T$-equivariance to obtain the category ${}^{T}\cH_{\infty}$. We can also impose $T$-equivariance on both sides to form the monoidal category
\begin{equation*}
\eqHinf=\Kir\left(\dfrac{(\Jker T)\bs G\lr{\t}/(\Jker T)}{\D(\Ga)}\right).
\end{equation*}
The monoidal structure on $\eqHinf$ is defined using a diagram similar to \eqref{Hinf conv diag}
\begin{equation}\label{eqHinf conv diag}
\xymatrix{& \dfrac{(\Jker T)\bs G\lr{\t}\twtimes{\Jker T}G\lr{\t}/(\Jker T)}{\D\Ga}\ar[r]^-{m} \ar[dl]_{p_{1}}\ar[dr]^{p_{2}}& \dfrac{(\Jker T)\bs G\lr{\t}/(\Jker T)}{\D\Ga}\\
\dfrac{(\Jker T)\bs G\lr{\t}/(\Jker T)}{\D\Ga} && \dfrac{(\Jker T)\bs G\lr{\t}/(\Jker T)}{\D\Ga}}
\end{equation}

For both $\cH^{T}_{\infty}$ and ${}^{T}\cH^{T}_{\infty}$, we give them the perverse t-structure by viewing objects therein as sheaves on $G\lr{\t}/(\Jker T)$. Denote the hearts of the perverse t-structures by
\begin{equation*}
(\cH^{T}_{\infty})^{\hs}, \quad \Hinfhs.
\end{equation*}

\item We may consider the full dg subcategory $\cH^{\mon}_{\infty}\subset \cH_{\infty}$ generated by the essential image of the forgetful functor $\cH^{T}_{\infty}\to \cH_{\infty}$. It will be shown in Lemma \ref{l:Hmon same} that this is the same as the subcategory generated by the essential image of the forgetful functor ${}^{T}\cH_{\infty}\to\cH_{\infty}$. Therefore, $\cH^{\mon}_{\infty}$ is a monoidal full subcategory of $\cH_{\infty}$. We call it the $T$-monodromic subcategory of $\cH_{\infty}$ (with unipotent monodromy).

\item We may define a completed version $\wh\cH^{\mon}_{\infty}$ of $\cH^{\mon}_{\infty}$ by formally adding pro-objects that are universally monodromic along the left or right $T$-orbits. This can be done using the general framework in \cite[Appendix A]{BY}.
\end{enumerate}

\subsection{Relevant double cosets}

By \S\ref{ss:rel}, we can talk about relevant points in $\frac{\Jker\bs G\lr{\t}/\Jker}{\D(\Ga)}$ under the $\Ga$-action. We shall simply say $g\in G\lr{\t}$ is relevant if its image in $\frac{\Jker\bs G\lr{\t}/\Jker}{\D(\Ga)}$ is.  

\begin{lemma}\label{l:H inf rel}
The relevant locus in $G\lr{\t}$ is the union of $\J \t^{\l}T\J$ for all $\l\in \xcoch(T)$.
\end{lemma}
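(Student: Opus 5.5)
By Lemma \ref{l:rel pt}, $g\in G\lr{\t}$ is relevant if and only if the diagonal-residual $\Ga$ has trivial stabilizer at the image of $g$ in $\frac{\Jker\bs G\lr{\t}/\Jker}{\D(\Ga)}$. Equivalently, $g$ is irrelevant if and only if the $\Ga$-action fixes the point. So the plan is to compute this stabilizer and show it is trivial exactly on the double cosets $\J \t^{\l}T\J$.

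\textbf{Step 1: reduce to a condition on the character.} The stabilizer of the point corresponding to $g$ in $(\Ga\times\Ga)/\D\Ga\cong\Ga$ is governed by $\J\cap \Ad(g)\J$. Consider pairs $(a_1,a_2)$ with $\wt a_1 g\wt a_2^{-1}\in \Jker g\Jker$. Up to the diagonal, these correspond to $h\in \J\cap\Ad(g)\J$ via $a_1=\psi(h)$ and $a_2=\psi(\Ad(g^{-1})h)$. The residual $\Ga$ acts through $a_1-a_2$. Hence the stabilizer is nontrivial, and the point irrelevant, if and only if the character
\[
h\mapsto \psi(h)-\psi(\Ad(g^{-1})h)
\]
on $\J\cap\Ad(g)\J$ is nonzero. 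So I will show that $g$ is relevant if and only if $\psi$ and $\psi\circ\Ad(g^{-1})$ agree on $\J\cap\Ad(g)\J$. This is the usual criterion for supporting a $(\J,\psi)$-biequivariant sheaf. In the paper's framework it becomes the statement that, after passing to Lie algebras, the linear functionals $\psi_1$ and $\Ad^*(g)\psi$ agree on $\Lie(\J\cap \Ad(g)\J)$.

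\textbf{Step 2: reduce to $g=\t^{\l}w$.} Use the Bruhat-type decomposition $G\lr{\t}=\bigsqcup \bG_\infty \tilde w\bG_\infty$, refined to $\J\bs G\lr{\t}/\J$. Since $\bG_\infty/\J=G$, the double cosets are $\J\, x\, \t^{\l}\, y\,\J$ with $x,y\in G$. Relevance is invariant under left and right multiplication by $\J$, so it suffices to take $g=x\t^{\l}y$. Replacing $g$ by $g y^{-1}$ conjugates the character by $y$, so we may take $g=x\t^{\l}$. Then we need to decide when $\psi_1$ and $\Ad(x)\psi_1$ agree on the relevant piece of $\Lie(\J\cap\Ad(x\t^{\l})\J)$.

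\textbf{Step 3: compute the intersection.} The key computation is the $\t^1$-graded part of $\J\cap\Ad(x\t^{\l})\J$, which is what $\psi$ sees. This is a subspace of $\frg$ containing $\Ad(x)$ of the Levi $\frl_\l=\frt\oplus\bigoplus_{\j{\l,\a}=0}\frg_\a$, together with some parabolic pieces. Agreement of the two functionals via the Killing form amounts to saying that $\psi_1-\Ad(x)\psi_1$ is orthogonal to this subspace. I would first test on the Levi part: this forces $\psi_1-\Ad(x)\psi_1\perp \Ad(x)\frl_\l\cap\frl_\l$.

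I expect this step to be the main obstacle: one must show that, for $\psi_1$ regular semisimple, the orthogonality forces $x\in N$ of the centralizer, and in fact $x\in T\cdot W$-representatives, with the $W$-part absorbed into $T\t^{\l'}$ for $\l'=w\l$. Only $x\in T$ should survive after accounting for the parabolic pieces, using regularity of $\psi_1$ through a Kostant-type argument: $\Ad(P)\psi_1\cap(\psi_1+\mathfrak{u}_P)$ consists of $\Ad(U_P)\psi_1$. Conversely, for $g=\t^\l t$ with $t\in T$, the character is trivial, because $\Ad(\t^{\l})$ preserves $\psi_1$'s pairing on $\frt$ and $\psi_1$ kills root spaces. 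This gives relevance of $\J\t^\l T\J$, which finishes the proof.
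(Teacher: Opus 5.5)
Your Step 1 is the same reduction as in the paper and is fine. The gap is in Step 2, where you pass from $g=x\t^{\l}y$ to $g=x\t^{\l}$. The group $G$ normalizes $\J$ but not $\Jker$, so the element $y$ cannot be discarded.
\begin{itemize}
\item Right multiplication by $y^{-1}$ turns the condition into $\psi(h)=(\psi\circ\Ad(y^{-1}))(\Ad(g'^{-1})h)$ on $\J\cap\Ad(g')\J$, where $g'=gy^{-1}$. The two sides now carry different characters.
\item Conjugation by $y$ instead replaces $\psi_1$ by $\Ad(y)\psi_1$, which is no longer in $\frt$.
\end{itemize}
Either way you do not land in the problem you then solve. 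Concretely, relevance of $x\t^{\l}y$ is not equivalent to relevance of $x\t^{\l}$. For $\l=0$ and $x=1$, the point $1$ is relevant, but $y\in G$ is relevant only if $\Ad(y)\psi_1=\psi_1$, i.e.\ $y\in T$. So your Step 3, which compares $\psi_1$ with $\Ad(x)\psi_1$, only treats the special case $y=1$. In that case your single orthogonality condition is correct, because $\psi_1\in\frt$ kills the root spaces; but it is not the general problem.

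The missing idea is that the problem is genuinely two-sided. The paper writes $x=g_1\t^{\l}g_2^{-1}$, with $(g_1,g_2)\in G\times G$ unique up to $P_{\l}\times_{L_{\l}}P_{-\l}$, and tests the condition on root subgroups of $\J\cap\Ad(\t^{\l})\J$. Since $\Ad(\t^{-\l})$ shifts the $\t$-degree on $\frg_{\a}$ by $\j{\a,\l}$, the two functionals see different graded pieces. One therefore gets three separate conditions:
\begin{itemize}
\item $\psi\circ\Ad(g_1)$ vanishes on $\frn_{\l}$;
\item $\psi\circ\Ad(g_2)$ vanishes on $\frn_{-\l}$;
\item the two agree on $\frl_{\l}$.
\end{itemize}
Your Kostant-type fact is then applied on each side separately, using the $U_{\l}$ and $U_{-\l}$ freedom. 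This is followed by $L_{\l}$-conjugation into $\frt$ and regularity, giving $g_i=\dot w_i\ell_i^{-1}$. The Levi matching plus regularity then forces $g_1g_2^{-1}\in T$, hence $x\in T\t^{w_2\l}$.

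This is also where the Weyl-group part gets absorbed, because $w_1$ and $w_2$ are forced to match. In your one-sided setting nothing absorbs it: for instance $\dot w\t^{\l}$ is irrelevant for $w\neq 1$. In addition, Step 3 is stated only as an expectation, so the core argument is not carried out. Your converse, the relevance of $\J\t^{\l}T\J$, is fine and supplies the inclusion the paper leaves implicit.
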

\begin{proof}
By definition, a point $x\in G\lr{\t}$ is relevant in this situation if and only if, under the $\J\times \J$-action by left and right translation, the projection
\begin{equation*}
\Stab_{\J\times \J}(x)\subset \J\times \J\xr{(\psi, \psi)}\Ga\times\Ga\xr{-}\Ga
\end{equation*}
is trivial (the last map is taking the difference of two coordinates). Here we view $\psi$ as the additive character on $\J$ via \eqref{psi as char}. The first projection identifies $\Stab_{\J\times \J}(g)$ with the subgroup $\J\cap x\J x^{-1}\subset \J$, and the above additive character becomes
\begin{eqnarray*}
\chi: \J\cap x\J x^{-1}&\to& \Ga\\
g&\mt& \psi(g)-\psi(x^{-1}gx).
\end{eqnarray*}
Therefore $x$ is relevant if and only if
\begin{equation}\label{x rel}
\psi(g)=\psi(x^{-1}gx) \mbox{for all $g\in \J\cap x\J x^{-1}$.}
\end{equation}

By the Cartan decomposition, we may assume $x\in \bG_{\infty}\t^{\l}\bG_{\infty}$ for some $\l\in\xcoch(T)$ (up to $W$). Let $P_{\l}$ be the parabolic subgroup of $G$ containing $T$ with roots $\{\a\in R|\j{\l,\a}\ge0\}$. Let $L_{\l}$ be its Levi subgroup containing $T$ with roots $\{\a\in R|\j{\l,\a}=0\}$, and let $U_{\l}$ be the unipotent radical of $P_{\l}$. Note that $P_{\l}=G\cap \t^{-\l}\bG_{\infty}\t^{\l}$.  Consider the $G\times G$-action on $\J\bs \bG_{\infty}\t^{\l}\bG_{\infty}/\J$; the stabilizer of the base point $t^{\l}$ is the subgroup of $P_{\l}\times_{L_{\l}}P_{-\l}$ (fiber product using the common quotient $L_{\l}$ of $P_{\l}$ and $P_{-\l}$). Therefore, up to left and right translation by $\J$, we may write $x=g_{1}\t^{\l}g_{2}^{-1}$, where $(g_{1},g_{2})\in G\times G$ is unique up to right translation by the subgroup $P_{\l}\times_{L_{\l}}P_{-\l}$. For this $x$, we have $ \J\cap x\J x^{-1}=\J\cap g_{1}\t^{\l}\J\t^{-\l}g_{1}^{-1}=g_{1}(\J\cap \t^{\l}\J \t^{-\l})g_{1}^{-1}$. Our goal is to show that the condition \eqref{x rel} implies that $x\in T\t^{\l'}$ for some $\l'\in W\l$, up to right multiplying $(g_{1},g_{2})$ by $P_{\l}\times_{L_{\l}}P_{-\l}$.

The condition \eqref{x rel} becomes
\begin{equation}\label{psi conj}
\psi^{g_{1}}(y)=\psi^{g_{2}}(\t^{-\l}y\t^{\l}) \quad\mbox{ for all $y\in \J\cap \t^{\l}\J \t^{-\l}$,}
\end{equation}
where for $g\in G$, $\psi^{g}\in \frg^{*}$ denote the linear function $v\mapsto \psi(\Ad(g)x)$ on $\frg$, viewed as a additive character on $\J$ as in \eqref{psi as char}. Now let $y$ go through elements in $\exp(\t^{n}\frg_{\a})$ for various affine roots $\a+n$ in that appear in $\J\cap \t^{\l}\J \t^{-\l}$, we see that \eqref{psi conj} implies
\begin{equation*}
\psi^{g_{1}}|_{\frn_{\l}}=0, \quad \psi^{g_{2}}|_{\frn_{-\l}}=0, \quad \psi^{g_{1}}|_{\frl_{\l}}=\psi^{g_{2}}|_{\frl_{\l}}.
\end{equation*}
Here $\frl_{\l}, \frn_{\l}$ and $\frn_{-\l}$ are the Lie algebras of $L_{\l}, U_{\l}$ and $U_{-\l}$ respectively. Using an invariant quadratic form on $\frg$ to identify $\frg$ with $\frg^{*}$,  we see that these conditions mean
\begin{equation}\label{psi conj 3}
\psi^{g_{1}}\in \frp_{\l}, \quad \psi^{g_{2}}\in \frp_{-\l}, \quad \psi^{g_{1}}\textup{ mod }\frn_{\l}=\psi^{g_{2}}\textup{ mod }\frn_{-\l}\in\frl_{\l}.
\end{equation}
Since $\psi^{g_{1}}$ is a semisimple element in $\frp_{\l}$, there exists $u_{1}\in U_{\l}$ such that $\psi^{g_{1}u_{1}}\in \frl_{\l}$. Changing $g_{1}$ to $g_{1}u_{1}$, we may assume $\psi^{g_{1}}\in \frl_{\l}$. Then there exists $\ell_{1}\in L_{\l}$ such that $\psi^{g_{1}\ell_{1}}\in \frt$. Since $\psi\in\frt$ is regular semisimple, we conclude that $g_{1}\ell_{1}\in N_{G}(T)$. So we may write $g_{1}=\dot w_{1}\ell_{1}^{-1}$ for some $\dot w_{1}\in N_{G}(T)$ with image $w_{1}\in W$. Similarly, we may assume $g_{2}=\dot w_{2}\ell_{2}^{-1}$ for some $w_{2}\in W$ and $\ell_{2}\in L_{\l}$. Now the third condition in \eqref{psi conj 3} implies $\psi^{g_{1}g_{2}^{-1}}=\psi$, hence $g_{1}g_{2}^{-1}\in T$ since $\psi$ is regular semisimple. In other words, $\dot w_{1}\ell_{1}^{-1}\ell_{2}\dot w_{2}^{-1}\in T$. 

Now we conclude
\begin{equation*}
x=g_{1}\t^{\l}g_{2}^{-1}=\dot w_{1}\ell_{1}^{-1}\t^{\l}\ell_{2}\dot w_{2}=\dot w_{1}\ell_{1}^{-1}\ell_{2}\dot w_{2}^{-1}\t^{w_{2}\l}\in T\t^{w_{2}\l}.
\end{equation*}
Here we use that $\t^{\l}$ commutes with $L_{\l}$.
\end{proof}

\sss{Orbit dimension} For $\l\in \xcoch(T)$, we have
\begin{equation}\label{d lam}
d_{\l}:=\dim \J \t^{\l} \J/\J=\dim \Jker \t^{\l}\Jker/\Jker=\sum_{\a\in R}\max\{-\j{\a,\l}, 0\}=\sum_{\a\in R^{+}}|\j{\a,\l}|.
\end{equation}
Indeed, $\J \t^{\l} \J/\J=\J/(\J\cap \Ad(\t^{\l})\J)$. The Lie algebra of $\J\cap \Ad(\t^{\l})\J$ is spanned by the affine root spaces $\t^{n}\frg_{\a}$ such that  $n>0$ and $n+\j{\a,\l}>0$, for each root $\a$ of $G$. From this we see the tangent space of $\J/(\J\cap \Ad(\t^{\l})\J)$ at identity is the direct sum of $\t^{n}\frg_{\a}$ for $-\j{\a,\l}\ge n>0$. 

In particular,  if $\l$ is dominant, and $2\r=\sum_{\a\in R^{+}}\a$, then $d_{\l}=\j{2\r, \l}$.

The next few lemmas will be used in the next subsection for computing the convolution of standard sheaves.

Choose a pair of opposite Borel subgroups $B, B^-\subset G$ containing $T$. Let $N$ and $N^-$ be the unipotent radical of $G$. Let
\begin{equation*}
    \bJ_N=\J\cap N\tl{\t}, \quad \bJ_T=\J\cap T\tl{\t}, \quad \bJ_{N^-}=\J\cap N^-\tl{\t}. 
\end{equation*}

\begin{lemma}\label{l:tri decomp}
    \begin{enumerate}
        \item Multiplication induces an isomorphism of $k$-schemes
    \begin{equation}\label{tri decomp for J}
        \bJ_N\times \bJ_T\times \bJ_{N^-}\isom \J.
    \end{equation}
        \item If $\l$ is dominant with respect to $B$, then
        \begin{equation*}
            \J \t^\l \J= \bJ_N \t^\l \J.
        \end{equation*}
    \end{enumerate}
\end{lemma}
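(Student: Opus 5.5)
For (1), I would use the standard big-cell (Iwahori-type) factorization for the first congruence subgroup. The multiplication map $N^-\times T\times N\to G$ is an open immersion onto the big cell $N^-TN$. Since $N$, $T$ and $N^-$ play symmetric roles, we may equally use the ordering $N\times T\times N^-$.

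Let $R$ be a $k$-algebra. An element $g\in\J(R)=\ker(G(R\tl{\t})\to G(R))$ reduces to $1$ modulo $\t$, and $1$ lies in the big cell. The big cell is open, so $g$, viewed as an $R\tl{\t}$-point of $G$, lies in it. This is because an $R\tl{\t}$-point lands in an open subscheme as soon as its reduction mod $\t$ does: $\t$ is topologically nilpotent, so $R\tl{\t}$ is $\t$-adically complete and the relevant units lift.

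Hence $g$ factors uniquely as $g=n\,h\,n^-$ with $n\in N(R\tl{\t})$, $h\in T(R\tl{\t})$ and $n^-\in N^-(R\tl{\t})$. Reducing mod $\t$ gives $1=\bar n\bar h\bar n^-$. By uniqueness of the factorization in $G(R)$, each of $\bar n,\bar h,\bar n^-$ is trivial, so each factor lies in the corresponding $\bJ$-group. This gives a functorial bijection on $R$-points, hence an isomorphism of (pro-)schemes. Alternatively, one can argue at the level of each finite quotient $\J/\bG^m_\infty$ and pass to the limit.

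For (2), I would combine (1) with the way conjugation by $\t^\l$ acts on the three factors. Write the right-hand copy of $\J$ using the decomposition in the order $\J=\bJ_{N^-}\bJ_T\bJ_N$, which follows from (1) by taking inverses. Then
\[
\J\t^\l\J=\bJ_N\bJ_T\bJ_{N^-}\t^\l\J=\bJ_N\,\t^\l\,\bigl(\t^{-\l}\bJ_T\bJ_{N^-}\t^\l\bigr)\J .
\]
First, $\t^{-\l}\bJ_T\t^\l=\bJ_T$, since $T$ is commutative.

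Next consider $\t^{-\l}\bJ_{N^-}\t^\l$. The root subgroup piece $\t^n\frg_\a$ with $\a<0$ and $n\ge1$ is sent to $\t^{n-\j{\a,\l}}\frg_\a$. Because $\l$ is dominant, $-\j{\a,\l}\ge0$, so the new exponent is still $\ge1$. Hence $\t^{-\l}\bJ_{N^-}\t^\l\subset\bJ_{N^-}\subset\J$.

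Therefore $\J\t^\l\J\subset\bJ_N\t^\l\J$. The reverse inclusion is trivial, which proves (2).

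The main point requiring care is the rigorous version of the "open cell contains $\J$" argument in (1). To make it clean, I would check it level by level on the smooth affine group schemes $\J/\bG^m_\infty$, using the Lie-algebra decomposition $\t\frg\tl{\t}=\t\frn\tl{\t}\op\t\frt\tl{\t}\op\t\frn^-\tl{\t}$ and the fact that the group is pro-unipotent. The conjugation computation in (2) is only a check of exponents, sign conventions for $\t=t^{-1}$ included.
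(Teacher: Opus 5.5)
Your proof is correct and follows the paper's route. Part (1) is the open immersion of the big cell $N\times T\times N^-\hookrightarrow G$ passed to based arc spaces, and your unit-lifting justification simply makes that step explicit. Part (2) is the paper's observation that for dominant $\l$ one has $\bJ_T\bJ_{N^-}\subset{}^{\l}\J$ (equivalently $\t^{-\l}\bJ_T\bJ_{N^-}\t^{\l}\subset\J$), so $\bJ_N$ already represents every coset in $\J/(\J\cap{}^{\l}\J)$. One cosmetic point: your display decomposes the \emph{left} copy of $\J$ as $\bJ_N\bJ_T\bJ_{N^-}$ directly from (1), so your remark about rewriting the right-hand copy as $\bJ_{N^-}\bJ_T\bJ_N$ is unused and can be dropped.
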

\begin{proof}
    (1) It is well-known that the map $N
    \times T\times N^-\incl G$ is an open embedding. Part (1) of the lemma follows by taking the based arc spaces of both sides. 

    (2) Since $\J \t^\l \J/\J\cong \J/(\J\cap {}^{\l}\J)$, where ${}^{\l}\J=\t^{\l}\J\t^{-\l}$, we only need to show that the inclusion $\bJ_N\incl \J$ induces a surjection $\bJ_N\surj \J/(\J\cap {}^{\l}\J)$. Applying conjugation by $\t^\l$ to the decomposition \eqref{tri decomp for J}, ${}^\l\J$ admits a triangular decomposition ${}^{\l}\bJ_N\times \bJ_T\times {}^{\l}\bJ_{N^-}$, where ${}^{\l}\bJ_N=\t^{\l}\bJ_N\t^{-\l}={}^{\l}\J\cap N\tl{\t}$, etc. Since $\l$ is dominant, we have $\bJ_{N^-}\subset {}^{\l}\bJ_{N^-}$. Therefore every coset in $\J/(\J\cap {}^{\l}\J)$ can be represented by a point in $\bJ_N$ by (1).
\end{proof}

\begin{lemma}\label{l:rel in conv}
For any $\l,\mu\in\xcoch(T)$, 
\begin{enumerate}
\item The only relevant points in $\J \t^{\l} T\J \t^{\mu} T \J$ are $\J \t^{\l+\mu} T\J$.
\item The only relevant points in $\J \t^{\l} \J \t^{\mu} \J$ are $\J \t^{\l+\mu} \J$.
\item The only relevant points in $\Jker \t^{\l}\Jker \t^{\mu} \Jker$ are $\Jker \t^{\l+\mu}\Jker$.
\end{enumerate}
\end{lemma}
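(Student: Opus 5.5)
By Lemma \ref{l:H inf rel}, every relevant point of $G\lr{\t}$ lies in $\J\t^{\nu}T\J$ for some $\nu\in\xcoch(T)$. So for (1) and (2) it suffices to show the following: if $\J\t^{\nu}T\J$ meets $\J\t^{\l}T\J\t^{\mu}T\J$, then $\nu=\l+\mu$. Statement (3) then follows from (2). The relevance condition for $\Jker$-double cosets is the same condition \eqref{x rel} (relevance is defined via the image in the $\D(\Ga)$-quotient), and $\Jker\t^\l\Jker\t^\mu\Jker\subset \J\t^\l\J\t^\mu\J$. Moreover, the relevant points in $\J\t^{\l+\mu}\J$ are exactly those in $\J\t^{\l+\mu}T\J$, and since $T$ normalizes $\Jker$ these lie in $\Jker\t^{\l+\mu}\Jker$ after absorbing the torus factor. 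For the $T$-versions, note that $T$ normalizes $\J$, so $\J\t^\l T\J\t^\mu T\J=\J\t^\l\J\t^\mu T\J$, and it is enough to treat (2) up to right $T$-translation.

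The key step is to track the image in $T\lr{\t}/T\tl{\t}\cong\xcoch(T)$ under a retraction. I would first use $W$-symmetry and Lemma \ref{l:tri decomp}(1) to write elements of $\J$ via the triangular decomposition $\bJ_N\bJ_T\bJ_{N^-}$. The convenient observation is that the point $\t^\nu t_0$ ($t_0\in T$) lies in $\J\t^{\l}\J\t^{\mu}\J$ if and only if $\t^\nu t_0\in \J\t^\l\J\t^\mu\J$. Hence $\t^{\l}j\t^{\mu}\in \J\t^{\nu}t_0\J$ for some $j\in\J$. Now write $j=n\,s\,n^-$ with $n\in\bJ_N$, $s\in\bJ_T$, $n^-\in\bJ_{N^-}$. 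Then
\begin{equation*}
\t^\l j\t^\mu=(\t^\l n\t^{-\l})\,\t^{\l}s\t^{\mu}\,(\t^{-\mu}n^-\t^{\mu}).
\end{equation*}
The middle factor lies in $\t^{\l+\mu}\bJ_T$. The outer factors lie in $N\lr{\t}$ and $N^-\lr{\t}$ respectively.

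To conclude $\nu=\l+\mu$, I would compare two Iwasawa-type invariants. On one side, $\J\t^\nu t_0\J$ lies in $\J\,\t^\nu T\,\J$. On the other, $\t^\l j\t^\mu$ lies in $N\lr{\t}\,\t^{\l+\mu}T\tl{\t}\,N^-\lr{\t}$. The aim is to show that the element $\t^\nu t_0$ of the big cell $N\lr{\t}T\lr{\t}N^-\lr{\t}$ has $T$-component $\t^{\nu}$ modulo $T\tl{\t}$, and that this component is unchanged by multiplying by $\J$ on either side. To handle this, I would instead use the $\Grot$-fixed points. Replace $j$ by the limit $\lim_{s\to0}$ of loop rotation applied to $j$; since $\J$ is contracted to $1$ by loop rotation in the appropriate direction, $j$ degenerates to $1$. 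The set of $j$ with $\t^\l j\t^\mu\in \J\t^\nu T\J$ is rotation stable and closed in its orbit closure. Running the degeneration, one gets $\t^{\l+\mu}$ in the closure of $\J\t^{\nu}T\J$, which forces $\nu\ge\l+\mu$ in the closure order.

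The main obstacle is the reverse inequality, that is, ruling out $\nu$ strictly larger. For this I would apply a dimension-free argument using the moment-map description from the proof of Lemma \ref{l:H inf rel}. A relevant point $x=\t^\nu t_0$ must satisfy \eqref{x rel}. I would evaluate $\psi$ on $\Jker$-compatible factorizations, using the $\bJ_T$-component: the character $\psi$ restricted to $\bJ_T$ is nondegenerate because $\psi_1$ is regular semisimple. This should show that in the factorization above the $N$- and $N^-$-parts can be absorbed into $\J$ exactly when $\nu=\l+\mu$. The expectation is that this matching of $\psi$ on the torus part pins down $\nu$, but I expect this step to need the most care.
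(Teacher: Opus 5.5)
Your computation $\t^\l j\t^\mu\in N\lr{\t}\,\t^{\l+\mu}\bJ_T\,N^-\lr{\t}$ is the right ingredient, but you never carry out the step that pins down $\nu$. For (2) you also need $h=1$, which your reduction drops.

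The invariant you propose fails. You want the torus component of the big-cell factorization, modulo $T\tl{\t}$, to be unchanged by multiplying by $\J$ on either side. In $\SL_2$ with $\t^\nu=\mathrm{diag}(\t^{-2},\t^{2})$,
\begin{equation*}
\begin{pmatrix}1&0\\ \t&1\end{pmatrix}\begin{pmatrix}\t^{-2}&0\\ 0&\t^{2}\end{pmatrix}\begin{pmatrix}1&\t\\ 0&1\end{pmatrix}=\begin{pmatrix}\t^{-2}&\t^{-1}\\ \t^{-1}&1+\t^{2}\end{pmatrix}\in\J\t^{\nu}\J,
\end{equation*}
and its big-cell torus component is $\mathrm{diag}((1+\t^2)^{-1},1+\t^2)\in T\tl{\t}$. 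So a full double coset $\J\t^\nu t_0\J$ cannot be compared with the big cell. This is why you fall back on a degeneration argument, which only gives a closure relation, and on a $\psi$-matching step that you state only as an expectation.

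The missing idea is to strip $\J$ off one side at a time, as the paper does. First choose $B$ so that $\l$ is $B$-dominant; then $\J\t^\l\J=\bJ_N\t^\l\J$ by Lemma \ref{l:tri decomp}(2). Second, test $\t^\nu h\in\J\t^\l\J\t^\mu\J$ by asking whether the one-sided coset $\t^\nu h\J$ meets
\begin{equation*}
\J\t^\l\J\t^\mu=\bJ_N\t^\l\bJ_N\bJ_T\bJ_{N^-}\t^\mu\subset N\lr{\t}\,\t^{\l+\mu}\bJ_T\,N^-\lr{\t}.
\end{equation*}
Since $\t^\nu h\J\subset N\lr{\t}\,\t^{\nu}h\bJ_T\,N^-\lr{\t}$, injectivity of $N\lr{\t}\times T\lr{\t}\times N^-\lr{\t}\to G\lr{\t}$ gives $\t^{\l+\mu}\bJ_T=\t^\nu h\bJ_T$. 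Hence $\nu=\l+\mu$ and $h=1$ at once, with no loop rotation and no use of $\psi$.

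Your reduction of (3) to (2) is also incorrect. Because $\J=\Jker\bJ_T$ and $\bJ_T$ commutes with $\t^{\l+\mu}$, the set $\J\t^{\l+\mu}\J$ is the union of the $\Jker$-double cosets $\Jker\t^{\l+\mu}\wt a\Jker$ for $\wt a\in\bJ_T$. These form a $\Ga$-family of relevant points, a single orbit of the residual $\Ga$-action. Statement (3) asserts that only $\wt a\in\bJ^{\psi}_T:=\bJ_T\cap\Jker$ occurs. Absorbing the constant torus $T$ says nothing about this. You need to rerun the big-cell argument with $\Jker=\bJ_N\bJ^{\psi}_T\bJ_{N^-}$, which gives $\t^{\l+\mu}\bJ^{\psi}_T=\t^{\nu}h\wt a\,\bJ^{\psi}_T$ and forces $\wt a\in\bJ^{\psi}_T$.
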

\begin{proof}
(1) follows from (2) because $\J \t^{\l} T\J \t^{\mu} T \J=\J \t^{\l} \J \t^{\mu} \J\cdot T$ and $\J \t^{\l+\mu} T\J=\J \t^{\l+\mu} \J\cdot T$.

(2) By Lemma \ref{l:H inf rel}, all relevant double cosets in $\J \t^{\l} \J \t^{\mu} \J$ are of the form $\J \t^{\nu} h\J$ for some $\nu\in \xcoch(T)$ and $h\in T$. We need to show that if $\t^{\nu} h\J$ has non-empty intersection with $\J \t^{\l} \J \t^{\mu}$, then $\nu=\l+\mu$ and $h=1$.

Choose a Borel $B\subset G$ containing $T$ such that $\l$ is dominant with respect to $B$. We apply Lemma \ref{l:tri decomp} to write
\begin{equation}\label{JlmJ}
    \J \t^{\l} \J \t^{\mu} =\bJ_N\t^{\l} \J \t^{\mu} \J=\bJ_N \t^\l (\bJ_N \bJ_T \bJ_{N^-}) \t^\mu\subset N\lr{\t}(\t^{\l+\mu}\bJ_T)N^-\lr{\t}.
\end{equation}
On the other hand, using \eqref{tri decomp for J} again, we have
\begin{equation}\label{tnuhJ}
    \t^\nu h \J\subset N\lr{\t}\cdot  (\t^\nu h \bJ_T)\cdot N^-\lr{\t}.
\end{equation}
If the left sides of \eqref{JlmJ} and \eqref{tnuhJ} intersect, so do the right sides, which by the uniqueness of triangular decomposition implies $\t^{\l+\mu}\bJ_T=\t^\nu h \bJ_T$. This forces $\nu=\l+\mu$ and $h=1$. 

(3) The proof is almost the same as (2), using the triangular decomposition $\Jker=\bJ_N\times \bJ^\psi_T\times \bJ_{N^-}$, where $\bJ_T^\psi=\bJ_T\cap \Jker$.

\quash{
\Yun{old proof of (2).}
Suppose $\J \t^{\nu} h\J\cap \J \t^{\l}\J \t^{\mu} \J\ne\vn$, where $\nu\in\xcoch(T)$ and $h\in T$, we need to show $\nu=\l+\mu$ and $h=1$.  Let $g_{1},g_{2}, g_{3}\in \J$ be such that 
\begin{equation}\label{g123}
g_{1}\t^{\nu}h g_{2}=\t^{\l}g_{3}\t^{\mu}.
\end{equation*}
Let $V$ be an irreducible representation of $G$ with an extreme weight $\g$, such that 
\begin{equation}\label{low weight}
\mbox{$\j{\nu,\g}\le\j{\nu, \g'}$ for any other weight $\g'$ of $V$.}
\end{equation*}
Let $e_{\g}\in V(\l)$ be a nonzero vector from the weight space of $\l$; let $e_{\g}^{*}\in V^{*}(-\g)$ be such that $\j{e_{\g}^{*}, e_{\g}}=1$. Consider the action of $G\lr{\t}$ on $V\lr{\t}=V\ot \CC\lr{\t}$.  We calculate the matrix coefficients of both sides of \eqref{g123} with respect to $e_{\g}$ and $e_{\g}^{*}$.

Note
\begin{equation*}
\j{e_{\g}^{*}, g_{1}\t^{\nu}hg_{2}e_{\g}}=\j{g^{-1}_{1}e_{\g}^{*}, \t^{\nu}hg_{2}e_{\g}}.
\end{equation*}
Now $g^{-1}_{1}e_{\g}^{*}\in e_{\g}^{*}+\t V^{*}\tl{\t}$, $hg_{2}e_{\g}\in \g(h)e_{\g}+\t V\tl{\t}$. By the assumption \eqref{low weight}, we have $\t^{\nu}V\tl{\t}\subset \t^{\j{\nu,\g}}V\tl{\t}$. Therefore, 
\begin{equation*}
\t^{\nu}hg_{2}e_{\g}\in \t^{\j{\nu,\g}}\g(h)e_{\g}+\t^{\j{\nu,\g}+1}V\tl{\t}.
\end{equation*}
Hence
\begin{equation}\label{mc left}
\j{e_{\g}^{*}, g_{1}\t^{\nu}hg_{2}e_{\g}}=\j{g^{-1}_{1}e_{\g}^{*},\t^{\nu}hg_{2}e_{\g}}\equiv\j{e_{\g}^{*}, \t^{\j{\nu,\g}}\g(h) e_{\g}}=\t^{\j{\nu,\g}}\g(h)\mod \t^{\j{\nu,\g}+1}.
\end{equation*}

On the other hand,
\begin{equation}\label{mc right}
\j{e_{\g}^{*}, \t^{\l}g_{3}\t^{\mu}e_{\g}}=\j{\t^{-\l}e_{\g}^{*}, g_{3}\t^{\mu}e_{\g}}=\t^{\j{\l+\mu, \g}}\j{e_{\g}^{*},g_{3}e_{\g}}\equiv \t^{\j{\l+\mu, \g}}\mod \t^{\j{\l+\mu,\g}+1}.
\end{equation*}
Comparing \eqref{mc left} and \eqref{mc right} we conclude $\j{\nu,\g}=\j{\l+\mu,\g}$ and $\g(h)=1$. Since $\g$ can be any weight of $G$ in the same chamber as $\nu$ (so as to satisfy \eqref{low weight}), we conclude that $\nu=\l+\mu$ and $h=1$.

(3) Now we strengthen the above analysis to get the statement in the lemma. Suppose $\t^{\l}g_{3}\t^{\mu}$ is relevant for some $g_{3}\in \Jker$. By what we already proved, we can write $\t^{\l}g_{3}\t^{\mu}=g_{1}\t^{\l+\mu}g_{2}$ for some $g_{1}, g_{2}\in \J$. If we insist that $g_{1},g_{2}\in \Jker$, then we can write
\begin{equation*}
\t^{\l}g_{3}\t^{\mu}=g_{1}\t^{\l+\mu}hg_{2}, \mbox{ for some }h\in \ker(T\lr{\t}\to T).
\end{equation*}
To prove the lemma, we must show that $h\in \Jker\cap T\lr{\t}$. 

Take $V,\g$ as above. By calculating the coefficients of $\t^{\j{\l+\mu,\g}+1}$ in $\j{e_{\g}^{*}, g_{1}\t^{\nu}hg_{2}e_{\g}}$ and in $\j{e_{\g}^{*}, \t^{\l}g_{3}\t^{\mu}e_{\g}}$, we get
\begin{equation}\label{XXH}
\j{\g,X_{1}+X_{2}+H}=\j{\g, X_{3}}.
\end{equation*}
 Here $X_{i}$ is the image of $g_{i}\in \Jker$ under the projection $\J\surj \frg\surj \frt$, and $H$ is the projection of $h$ under the projection $\ker(T\tl{\t}\to T)\to \frt$. Since \eqref{XXH} is true for any weight $\g$ of $G$ in the same chamber as $\nu$, we must have $X_{1}+X_{2}+H=X_{3}$. Since $X_{i}\in \ker(\psi)\subset \frt$, we conclude that $H\in \ker(\psi)\subset\frt$, hence $h\in \Jker\cap T\tl{\t}$. 
}
 
\end{proof}

%

\begin{lemma}\label{l:H inf conv fiber}
The fiber of the multiplication map $m^{\l,\mu}: \J \t^{\l}\J\twtimes{\J}\J \t^{\mu}\J\to G\lr{\t}$ over $\t^{\l+\mu}$ is an affine space of dimension $(d_{\l}+d_{\mu}-d_{\l+\mu})/2$.

The same is true when $\J$ is replaced by $\Jker$.
\end{lemma}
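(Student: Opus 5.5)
The plan is to identify the fiber explicitly as a quotient of a subgroup of $\J$, and then reduce it to a product of affine root subgroups. The fiber of $m^{\l,\mu}$ over $\t^{\l+\mu}$ consists of pairs $(g_1,g_2)$ with $g_1\in \J\t^\l\J$, $g_2\in\J\t^\mu\J$, $g_1g_2=\t^{\l+\mu}$, modulo $(g_1,g_2)\sim(g_1h,h^{-1}g_2)$ for $h\in\J$. It is therefore determined by the coset $g_1\J\in \J\t^\l\J/\J$ subject to the condition $g_1^{-1}\t^{\l+\mu}\in \J\t^\mu\J$. Writing $g_1=j\t^\l$ with $j\in\J$ (well defined modulo $\J\cap{}^\l\J$), the condition becomes $\t^{-\l}j^{-1}\t^{\l+\mu}\in\J\t^\mu\J$, i.e. $j^{-1}\in \t^{\l}\J\t^\mu\J\t^{-\l-\mu}$. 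Hence the fiber is
\[
F=\bigl(\J\cap {}^{\l}(\J\t^\mu\J\t^{-\mu})\bigr)/(\J\cap{}^\l\J),
\]
where ${}^\l(-)=\t^\l(-)\t^{-\l}$.

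Next I will compute $F$ using the triangular decomposition of Lemma \ref{l:tri decomp}. Both $\J$ and ${}^\l\J$, and the relevant intersections, are products of $T$-stable pieces indexed by affine roots $\t^n\frg_\a$. I plan to show that $F$ is isomorphic to the product of root subgroups $\t^n\frg_\a$ over the affine roots with $n>0$ that satisfy $n+\j{\a,\l}\le 0$ (so that they lie in $\J$ but not in ${}^\l\J$) and also $n+\j{\a,\l+\mu}>0$ (so that, after conjugating back, they land in $\J$ on the correct side). This requires ordering the roots so that the product map to $\J/(\J\cap{}^\l\J)$ is an isomorphism onto a closed subvariety. Choosing a Borel $B$ for which $\l$ is dominant, as in the proof of Lemma \ref{l:tri decomp}(2), the quotient $\J/(\J\cap{}^\l\J)$ is represented by $\bJ_N\cap(\text{complement})$. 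The condition defining $F$ should then cut out a linear subspace in these coordinates; the rigidity of the diagonal part is already established in the proof of Lemma \ref{l:rel in conv}(2).

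The dimension count then follows. For each root $\a$ the number of admissible $n$ is $\#\{n>0: -\j{\a,\l+\mu}<n\le -\j{\a,\l}\}$. Summing over $\pm\a$ and using $\max(-a,0)+\max(-b,0)-\max(-a-b,0)$ summed over $\a$ and $-\a$, which equals $(|a|+|b|-|a+b|)$ distributed symmetrically, we obtain $(d_\l+d_\mu-d_{\l+\mu})/2$ by \eqref{d lam}. For $\Jker$ the same argument applies: the only difference is the torus part $\bJ_T^\psi$. Since $\t^\l$ centralizes $\bJ_T$, the torus factor contributes nothing to $F$ in either case, so the fiber is unchanged.

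The main obstacle will be showing that the non-linear condition $j^{-1}\in{}^\l(\J\t^\mu\J)\t^{-\l-\mu}$ really cuts out exactly this product of root subgroups, and is not just something of the same dimension. My first approach is to use the $T\times\Grot$-action, which contracts the fiber: $F$ is a closed, $\Grot$-stable subvariety of the affine space $\J/(\J\cap{}^\l\J)$, with positive weights and a unique fixed point. I would then identify its tangent space at the base point with the span of the root spaces listed above, and conclude that $F$ is smooth and hence, by contraction, an affine space. If that fails, I will argue by induction on a reduced decomposition (on $\mu$), reducing to rank-one $\mathrm{SL}_2$ computations.
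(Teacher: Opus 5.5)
Your route (a) is essentially the paper's route, but you leave its key step open, so the proof rests on route (b). There the inference ``identify $T_0F$ with the span of the listed root spaces, and conclude that $F$ is smooth'' is a genuine gap.

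The tangent-space count itself is fine. For instance, write the fiber as pairs $(x',z')\in{}^{-\l}\J\times{}^{\mu}\J$ with $x'z'\in\J$, modulo $(\J\cap{}^{-\l}\J)\times(\J\cap{}^{\mu}\J)$. The surviving directions are those in $\mathrm{Lie}({}^{-\l}\J)\cap\mathrm{Lie}({}^{\mu}\J)$ but not in $\mathrm{Lie}(\J)$, and these match your list. But this only gives $\dim_0F\le d:=\tfrac12(d_\l+d_\mu-d_{\l+\mu})$. Smoothness also needs $\dim_0F\ge d$, and nothing in your plan supplies it. A priori $F$ could be a lower-dimensional, singular, rotation-stable cone whose Zariski tangent space has dimension $d$. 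In that case the contraction argument proves nothing.

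The missing lower bound comes from your own list. First, a slip: the condition is $j^{-1}\in{}^{\l}\J\cdot{}^{\l+\mu}\J$, i.e.\ $j\in{}^{\l+\mu}\J\cdot{}^{\l}\J$, and $F$ consists of right cosets of $\J\cap{}^{\l}\J$.

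Every $j\in\J\cap{}^{\l+\mu}\J$ satisfies the condition, since then $\t^{-\l}j^{-1}\t^{\l+\mu}\in\t^{\mu}\J$. So $F$ contains $F'=(\J\cap{}^{\l+\mu}\J)/(\J\cap{}^{\l+\mu}\J\cap{}^{\l}\J)$. This is an orbit of a unipotent group, hence an affine space, and its affine root directions are exactly the ones you enumerate, so $\dim F'=d$. Thus $\dim_0F\ge d=\dim T_0F$, and $F$ is smooth at the base point.

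Loop rotation preserves $\J$, ${}^{\l}\J$ and ${}^{\l+\mu}\J$, because $\nu(s)\in T$ normalizes ${}^{\nu}\J$. Hence it preserves $F$, and it acts on $\J/(\J\cap{}^{\l}\J)$ with positive weights. The contraction then makes $F$ closed and everywhere smooth, so $F\cong\AA^{d}$; in fact $F=F'$.

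For $\Jker$ the root directions are unchanged, and the torus directions drop out of both $T_0F$ and $F'$, so the same argument applies.

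Repaired this way, your argument is genuinely different from the paper's, which executes (a). The paper takes $B$ with $\l$ dominant and replaces $(\J\cap{}^{-\l}\J)\bs{}^{-\l}\J$ by $\bJ_N\bs{}^{-\l}\bJ_N$. It discards the $\bJ_T$ and $\bJ_{N^-}$ components by uniqueness of the triangular decomposition. It then exhibits the fiber as an iterated torsor, ordered by root height, under groups $Z_\a$ that are graphs of linear maps between truncated loop groups. That route gives explicit coordinates. Yours avoids choosing a chamber and shows intrinsically that the fiber is a single $\J\cap{}^{\l+\mu}\J$-orbit.

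A minor point on the count: the number of admissible $n$ for a single root is $\max(0,\min(-a,b))$, not your displayed $\max$-expression. Summed over $\pm\a$ it gives $\tfrac12(|a|+|b|-|a+b|)$, which is what you need.
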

\begin{proof}
We prove the version for $\J$, and argument for the $\Jker$-version is the same.  Consider the subscheme $\wt Z$ of $(\J)^3$ consisting of triples $(x,y,z)$ such that $x\t^{\l} y \t^{\mu}z=\t^{\l+\mu}$. Rewriting this as $(\t^{-\l}x^{-1}\t^{\l})(\t^\mu z^{-1}\t^{-\mu})=y$ and make a change of variables $x'=\t^{-\l}x^{-1}\t^{\l}\in {}^{-\l}\J$ and $z'= \t^\mu z^{-1}\t^{-\mu}\in {}^{\mu}\J$, we have
\begin{equation*}
    \wt Z\cong \{(x',z')\in {}^{-\l}\J\times {}^{\mu}\J|x'z'\in \J\}.
\end{equation*}
There is an action of $\J\cap {}^{-\l}\J$ on $\wt Z$ by left translation on the $x'$-factor and an action of $\J\cap {}^{\mu}\J$ on it by right translation on the $z'$-factor. A simple calculation shows that 
\begin{equation}\label{fiber m xz}
    (m^{\l,\mu})^{-1}(\t^{\l+\mu})\cong (\J\cap {}^{-\l}\J)\bs \wt Z/(\J\cap {}^{\mu}\J)=:Z.
\end{equation}
Now choose a Borel subgroup $B$ containing $T$ (with $N$, $N^-$ having the usual meaning) such that $\l$ is dominant with respect to $B$. Then Lemma \ref{l:tri decomp}(2) (or rather the same argument using triangular decomposition) shows that $(\J\cap {}^{-\l}\J)\bs {}^{-\l}\J\cong \bJ_N\bs {}^{-\l}\bJ_N$. We can thus write the right side of \eqref{fiber m xz} as
\begin{eqnarray*}
    Z\cong \{(\xi,\z)\in \bJ_N\bs {}^{-\l}\bJ_N\times {}^{\mu}\J/(\J\cap {}^{\mu}\J)|\xi \z\in \J\}.
\end{eqnarray*}
Write $\z\in {}^{\mu}\J$ using the triangular decomposition $\z=\z^+\z^0\z^-$ where $\z^+\in {}^{\mu}\bJ_{N}, \z^0\in \bJ_T$ and $\z^-\in {}^{\mu}\bJ_{N^-}$. If $\xi\in {}^{-\l}\bJ_N$ is such that $\xi\z=(\xi\z^+)\z^0\z^-\in \J$, we must have $\z^-\in \bJ_{N^-}$ by the uniqueness of the triangular decomposition. This means for any point $(\xi,\z)\in Z$ we may replace $\z$ by an element in ${}^{\mu}\bJ_N$. Thus we get
\begin{equation*}
    Z\cong \{(\xi,\z)\in \bJ_N\bs {}^{-\l}\bJ_N\times {}^{\mu}\bJ_N/(\bJ_N\cap {}^{\mu}\bJ_N)|\xi\z\in \bJ_N\}.
\end{equation*}
We consider an analog of the right side for each root group $U_\a$ ($\a\in R^+$): let $\bJ_\a=\ker(U_\a\tl{\t}\to U_\a)=\J\cap U_\a\tl{\t}$, and consider 
\begin{eqnarray*}
    Z_{\a}=\{(\xi,\z)\in \bJ_\a\bs {}^{-\l}\bJ_\a\times {}^{\mu}\bJ_\a/(\bJ_\a\cap {}^{\mu}\bJ_\a)|\xi\z\in \bJ_\a\}.
\end{eqnarray*}
For $n\le 0$, let $L_n=\t^{n+1}\AA^1\tl{\t}/\t\AA^1\tl{\t}$, an additive group of dimension $-n$. An isomorphism $U_\a\cong \Ga$ allows us to identify $\bJ_\a\bs {}^{-\l}\bJ_\a$ with $L_{\j{-\l,\a}}$. We have two cases:
\begin{enumerate}
    \item If $\j{\mu,\a}\ge0$, then ${}^{\mu}\bJ_\a\subset \bJ_\a$ and $Z_\a=\pt$. 
    \item If $\j{\mu,\a}<0$, then ${}^{\mu}\bJ_\a/(\bJ_\a\cap {}^{\mu}\bJ_\a)\cong L_{\j{\mu,\a}}$. Under these isomorphisms, $Z_\a$ is the graph of the negation of either the natural embedding $L_{\j{\mu,\a}}\incl L_{\j{-\l,\a}}$ or the natural embedding $L_{\j{-\l,\a}}\incl L_{\j{\mu,\a}}$, depending on whether $\j{\l+\mu,\a}\ge0$ or not. In this case, $\dim Z_\a=\min\{\j{\l,\a}, \j{-\mu,\a}\}$.
\end{enumerate}
We can thus write uniformly
\begin{equation}\label{dim Za}
    \dim Z_\a=\frac{1}{2}\left(\j{\l,\a}+|\j{\mu,\a}|-|\j{\l+\mu,\a}|\right).
\end{equation}
Moreover, using the ordering of positive roots by height, $Z$ is an iterated fibration
\begin{equation*}
    Z=Z_{n}\xr{\pi_{n}} Z_{n-1}\xr{\pi_{n-1}} \cdots Z_{1}\xr{\pi_1} Z_{0}=\pt
\end{equation*}
such that each map $\pi_i$ is a torsor for the additive group $\prod_{\j{\r^\vee,\a}=i}Z_\a$. We conclude that $Z$, hence the fiber $(m^{\l,\mu})^{-1}(\t^{\l+\mu})$, is isomorphic to an affine space of dimension $\sum_{\a\in R^+}\dim Z_{\a}$. Using \eqref{dim Za} and the definition of $d_\l$ in \eqref{d lam}, we see that 
\begin{equation*}
    \dim Z=\sum_{\a\in R^+}\frac{1}{2}\left(\j{\l,\a}+|\j{\mu,\a}|-|\j{\l+\mu,\a}|\right)=\frac{1}{2}\left(d_\l+d_\mu-d_{\l+\mu}\right).
\end{equation*}
\end{proof}

\begin{remark}
    The above lemma strengthens and generalizes \cite[Proposition 30]{KamSch}.
\end{remark}

\subsection{Standard objects}
\sss{}

Fix $\l\in \xcoch(T)$.  Let
\begin{equation*}
\Hinf(\l)=\Kir\left (\dfrac{\Jker\bs\J \t^{\l}T\J/\Jker }{\D\Ga}\right)
\end{equation*}
Note that $\D\Ga$ acts trivially on $\Jker\bs \J \t^{\l}\J/\Jker$ since we may lift $\Ga$ to $\bT_{\infty}^{1}=\ker(T\tl{\t}\to T)$. Similarly define the monodromic and completed versions $\Him(\l)$ and $\hHim(\l)$, and the left (right) $T$-equivariant versions ${}^{T}\Hinf(\l)$ and $\Hinf^{T}(\l)$.

For $\l\in \xcoch(T)$, the $\Aff$-equivariant embeddings
\begin{equation*}
i_{\l}: \dfrac{\Jker\bs \J \t^{\l}T\J/\Jker}{\D\Ga}\incl\dfrac{\Jker\bs G\lr{\t}/\Jker}{\D\Ga}
\end{equation*}
gives a full embedding
\begin{equation*}
i_{\l!}: \Hinf(\l)\incl \Hinf
\end{equation*}
and induces a full embedding
\begin{equation*}
\wh i_{\l!}: \hHim(\l)\incl \hHim.
\end{equation*}

Similarly define $\Aff$-equivariant embedding
\begin{equation*}
i^{T}_{\l}: \dfrac{\Jker\bs \J \t^{\l}T\J/(\Jker T)}{\D\Ga}\incl\dfrac{\Jker\bs G\lr{\t}/(\Jker T)}{\D\Ga}
\end{equation*}
and its counterparts ${}^{T}i_{\l}$ when the $T$-quotient is on the left, and ${}^{T}i^{T}_{\l}$ when the $T$-quotient is on both sides.

\sss{Standard objects in $T$-equivariant variants}
For each $\l$,  the $\Aff$-equivariant map 
$$\k^{T}_{\l}: \Ga=\J/\Jker\xr{\t^{\l}} \Jker\bs\J \t^{\l}\J/\Jker=\Jker\bs\J \t^{\l}T\J/(\Jker T)$$ 
induces an equivalence on Kirillov categories
\begin{equation*}
\Hinf^{T}(\l)\isom \Kir(\Ga)
\end{equation*}
where $\Aff$ acts on $\Ga$ via affine transformations. The same is true for ${}^{T}\Hinf(\l)$.  Using the equivalence $\Kir(\Ga)\cong D^b(\Vect)$ from Example \ref{ex:Kir A1}, we see that
\begin{equation}\label{HT strat cat}
\Hinf^{T}(\l)\isom D^b(\Vect).
\end{equation}
For $\eqHinf$, we consider ${}^{T}{\k}^{T}_{\l}: (\pt/T)\times \Ga\cong T\bs (\Ga\times \t^{\l}T)/T\to (\Jker T)\bs\J \t^{\l}T\J/(\Jker T)$, and we have
\begin{equation}\label{THT strat cat}
{}^{T}\Hinf^{T}(\l)\isom \Kir((\pt/T)\times \Ga)\cong D(\pt/T).
\end{equation}

\begin{defn} Let $\l\in \xcoch(T)$.
\begin{enumerate}
\item We define $\cL^{T}_{\l}\in \Hinf^{T}(\l)$ to be the object corresponding to the one-dimensional vector space $E\in \Vect$ under the equivalence \eqref{HT strat cat}. Similarly define ${}^{T}\cL_{\l}\in {}^{T}\Hinf(\l)$. Define ${}^{T}\cL^{T}_{\l}\in {}^{T}\Hinf^{T}(\l)$ to correspond to the constant sheaf $E\in D(\pt/T)$ under the equivalence \eqref{THT strat cat}.

\item We define
\begin{eqnarray*}
\D^{T}_{\l}:=(i^{T}_{\l})_{!} \cL^{T}_{\l}\j{d_{\l}}\in \cH^{T}_{\infty},\\
{}^{T}\D_{\l}:=({}^{T}i_{\l})_{!}({}^{T}\cL_{\l})\j{d_{\l}}\in {}^{T}\cH_{\infty},\\
{}^{T}\D^{T}_{\l}:=({}^{T}i^{T}_{\l})_{!}({}^{T}\cL^{T}_{\l})\j{d_{\l}}\in {}^{T}\cH^{T}_{\infty},\\
\end{eqnarray*}
\end{enumerate}
\end{defn}

Since $i^T_\l$ and ${}^Ti^T_\l$ are affine morphisms, with our convention on the perverse t-structures, we have
\begin{equation*}
\D^{T}_{\l}\in(\cH^{T}_{\infty})^{\hs}, \quad {}^{T}\D^{T}_{\l}\in\Hinfhs.
\end{equation*}
Moreover, ${}^{T}\D^{T}_{0}\in {}^{T}\cH^{T}_{\infty}$ is the monoidal unit. 


\begin{lemma}\label{l:D1}
\begin{enumerate}
\item Consider the map
\begin{equation*}
\io^{T}_{\l}: \dfrac{\Jker\bs\Jker \t^{\l}T\Jker/(\Jker T)}{\D\Ga}\incl \dfrac{\Jker\bs G\lr{\t}/(\Jker T)}{\D\Ga}.
\end{equation*}
Then $\D^{T}_{\l}$ is the image of $\io^{T}_{\l!}E\j{d_{\l}}$ in $\Hinf^{T}$.
\item Consider the map
$$\k^{T, \c}_{\l}: \dfrac{\Jker\bs(\J\t^{\l}T\J-\Jker \t^{\l}T\Jker)/(\Jker T)}{\D\Ga}\incl \dfrac{\Jker\bs G\lr{\t}/(\Jker T)}{\D\Ga}.$$ 
Then $\D^{T}_{\l}$ is the image of $\k^{T,\c}_{\l!}E\j{d_{\l}}[1]$ in $\Hinf^{T}$. 
\end{enumerate}
\end{lemma}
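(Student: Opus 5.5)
The idea is to reduce everything to the single stratum $\Jker\bs\J\t^{\l}T\J/(\Jker T)$ and then to the model case $\Kir(\Ga)$ of Example \ref{ex:Kir A1}. By definition $\D^{T}_{\l}=(i^{T}_{\l})_{!}\cL^{T}_{\l}\j{d_\l}$. Since $!$-pushforward is functorial, both statements follow once we identify $\cL^{T}_{\l}\in\Hinf^{T}(\l)$ inside the Kirillov category of the stratum.

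\textbf{Step 1: a product decomposition of the stratum.} By the triangular decomposition of Lemma \ref{l:tri decomp}, the stratum $S_\l:=\Jker\bs\J\t^{\l}T\J/(\Jker T)$ is described as follows. We have $\Jker\bs\J\t^\l\J/\Jker\cong \J\t^\l\J/\Jker$ modulo the residual $\Ga=\J/\Jker$, and $\J/\Jker\cong\Ga$ acts through the map $\k^{T}_{\l}$. I expect an $\Aff$-equivariant identification
\[
S_\l\cong \Ga\times \AA^{d_\l}.
\]
Here $\Ga=\J/\Jker$ carries the affine action, $\AA^{d_\l}=\J/(\J\cap{}^{\l}\J)\cong\Jker/(\Jker\cap{}^{\l}\Jker)$ is a contracting affine space, and $\Gm$ acts on it by loop rotation with positive weights. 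Under this identification $\k^{T}_{\l}$ becomes the inclusion $\Ga\times\{0\}$, and the locus $\Jker\t^\l T\Jker$ becomes a $\Gm$-stable closed subvariety $\Ga$-transverse to it. Concretely, it should be the slice $\{0\}\times \AA^{d_\l}$, or more invariantly the preimage of $0\in\Ga$ under the residual character. The main point to check is that the projection $S_\l\to\Ga=\J/\Jker$ is $\Aff$-equivariant. This should follow from $\psi(\t^{-\l}y\t^{\l})=\psi(y)$ for $y\in\J\cap{}^{\l}\J$ at the relevant point, which is the content of the proof of Lemma \ref{l:H inf rel}.

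\textbf{Step 2: compute in $\Kir(\Ga\times\AA^{d_\l})$.} By Lemma \ref{l:Kir prod A1} and the generalization in Example \ref{ex:Kir A1}, $\Kir(\Ga\times \AA^{d_\l})\cong D^{\mon}(\AA^{d_\l})$ via $\Phi^{\mon}$, the vanishing cycles along the $\Ga$-coordinate at $0$. Then $\cL^{T}_{\l}$, being the $*$-pullback along the contraction of $\d$ (equivalently $\k^T_{\l!}E$ up to the contraction), corresponds to $\un E_{\AA^{d_\l}}$. The sheaves in (1) and (2) are computed as follows.

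For (1), $\io^T_{\l!}E$ restricted to the stratum is $\d_0\bt \un E_{\AA^{d_\l}}$. Its $\Phi^{\mon}$ is $\un E$, matching $\cL^T_\l$ up to the normalization shift. The shift convention of $\Phi_{\Ga}$, under which $\d\mapsto E$, gives exactly $\j{d_\l}$ with no extra shift.

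For (2), the open complement of the closed slice gives $j_!E_{\Ga-\{0\}}\bt\un E_{\AA^{d_\l}}$. By Example \ref{ex:Kir A1}, $j_!E[1]$ corresponds to $E$ as well. Hence $\k^{T,\c}_{\l!}E[1]$ has the same image as $\cL^T_\l$, which accounts for the extra $[1]$.

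\textbf{Step 3: push forward.} Apply $(i^T_\l)_!$, using functoriality of Kirillov categories under $!$-pushforward and Remark \ref{r:Kir fun}.

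The main obstacle is Step 1: making the identification of the stratum equivariant for both $\Ga$ and $\Grot$, and checking that the $\Jker$-double coset is precisely the fiber over $0$ (up to the irrelevant locus). If the product structure fails globally, I would instead argue by base change along the smooth $\Aff$-equivariant cover $\J\times \J\to\J\t^\l\J$, as in the proof of Proposition \ref{p:K psi}. One then checks the vanishing-cycle computation on that cover.
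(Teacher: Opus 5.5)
You have the right outline, and it is the paper's: restrict to the stratum $S_\l$, compare with $\Kir(\Ga)$ where both $\d$ and $j_!E[1]$ correspond to $E$, then apply $i^T_{\l!}$. However, Step 1 is false, and Step 2 is built on it.

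\textbf{What goes wrong in Step 1.} The affine space $\J/(\J\cap{}^{\l}\J)\cong\J\t^{\l}\J/\J$ is a one-sided coset space. In the double quotient $S_\l$ it is divided out by the left $\Jker$, not by a residual $\Ga$. Concretely, $\J\t^\l\J=(\J\times\J)/\Stab$ with $\Stab=\{(g,\t^{-\l}g\t^{\l}) : g\in\J\cap{}^{\l}\J\}$, so $\Jker\bs\J\t^\l\J/\Jker\cong(\Ga\times\Ga)/\Stab$. You correctly cite the identity $\psi(g)=\psi(\t^{-\l}g\t^{\l})$ on $\J\cap{}^{\l}\J$. By it, $\Stab$ acts through the diagonal $\D\Ga$, surjectively since $\bT^1_\infty\subset\J\cap{}^{\l}\J$, with pro-unipotent kernel $\Jker\cap{}^{\l}\Jker$. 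Hence the coarse space of $S_\l$ is just $\Ga$, with pro-unipotent automorphism groups; the $T$ and the $\D\Ga$ contribute nothing, as the paper notes before \eqref{HT strat cat}. The map $\k^T_\l:\Ga\to S_\l$ is surjective with contractible fibers, which is exactly why it induces \eqref{HT strat cat}.

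In your model, Lemma \ref{l:Kir prod A1} would instead make the Kirillov category of the stratum equal to monodromic sheaves on $\AA^{d_\l}$. For $d_\l>0$ this is not $D(\Vect)$, so the model contradicts \eqref{HT strat cat}. Correspondingly, $\d\bt\un E_{\AA^{d_\l}}$ is not the definition of $\cL^T_\l$, and the vanishing-cycle computation along an $\AA^{d_\l}$-direction has nothing to compute.

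\textbf{The fix.} What is needed is only base change along $\k^T_\l$.
\begin{enumerate}
\item By the same identity for $\psi$, the $\Jker$-double coset $\Jker\t^{\l}T\Jker$ pulls back to $\{0\}\subset\Ga$. Its open complement in the stratum therefore pulls back to $\Ga-\{0\}$.
\item The map $\io^T_\l$ factors as $i^T_\l$ composed with the closed inclusion of that double coset into the stratum. The map $\k^{T,\c}_\l$ factors as $i^T_\l$ composed with the open complement.
\item Base change then sends the two corresponding objects of $\Hinf^T(\l)$ to $\d=i_{0!}E$ and $j_!E$ in $\Kir(\Ga)$.
\item By Example \ref{ex:Kir A1}, $\d$ and $j_!E[1]$ both correspond to $E\in\Vect$, i.e.\ to $\cL^T_\l$. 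Applying $i^T_{\l!}$ and $\j{d_\l}$ gives (1) and (2).
\end{enumerate}
This is the paper's proof. Your fallback via the cover $\J\times\J\to\J\t^\l\J$ points in this direction, but on that cover there is no vanishing-cycle computation to perform---only base change.
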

\begin{proof}
(1) Under the equivalence $\Kir(\Ga)\cong D^b(\Vect)$, $E$ corresponds to the skyscraper sheaf $i_{!}E$ supported at $0$, therefore under the equivalence \eqref{HT strat cat}, $\cL^{T}_{\l}$ corresponds to the image of $('\io^{T}_{\l})_{!}E$ where $'\io^{T}_{\l}$ is the embedding $\frac{\Jker\bs\Jker \t^{\l}T\Jker/(\Jker T)}{\D\Ga}\incl \frac{\Jker\bs\J\t^{\l}T\J/(\Jker T)}{\D\Ga}$. The statement for $\D^{T}_{\l}$ then follows by applying $i^{T}_{\l!}$ to $\cL^{T}_{\l}\cong ('\io^{T}_{\l})_{!}E\in \Hinf^{T}(\l)$.

(2) is proved similarly, using that $E\in \Vect$ also corresponds to $j_{!}E[1]\in \Kir(\Ga)$, for $j:\Ga-\{0\}\incl \Ga$ the open inclusion .
\end{proof}

Let $\om^{T}: \cH^{T}_{\infty}\to \cH_{\infty}$ and ${}^{T}\om: {}^{T}\cH_{\infty}\to \cH_{\infty}$ be the forgetful functors. Then it is clear that 
\begin{equation}\label{pullback delta}
\om^{T}(\D^{T}_{\l})\cong {}^{T}{\om}({}^{T}\D_{\l}).
\end{equation}
We denote both objects by $\D_{\l}\in \Him(\l)$.

\begin{lemma}\label{l:Hmon same} The objects $\D^{T}_{\l}$ (resp. ${}^{T}\D_{\l}$) for $\l\in\xcoch(T)$ generate $\cH^{T}_{\infty}$ (resp. ${}^{T}\cH_{\infty}$). The essential images of ${\om}^{T}$ and ${}^{T}\om$ are the same, and they both generate $\cH^{\mon}_{\infty}$.
\end{lemma}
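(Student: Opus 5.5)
The plan is to prove generation by stratifying and using Corollary \ref{c:Kir gen}, and then to compare the two monodromic images stratum by stratum.

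\textbf{Generation of $\cH^{T}_{\infty}$.} Consider the stack $\frac{\Jker\bs G\lr{\t}/(\Jker T)}{\D\Ga}$. Stratify it by the $\Aff$-stable locally closed pieces given by the images of the $(\J,\J T)$-double cosets. By the Cartan/Iwahori-type decomposition $G\lr{\t}=\bigsqcup \J\, w\, \J T$, there are finitely many such pieces in each bounded part, and one can order them by closure.

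By Lemma \ref{l:H inf rel}, the only strata containing relevant points are $\J\t^{\l}T\J$ for $\l\in\xcoch(T)$. Every other stratum consists entirely of irrelevant points. Take $Y_\l$ to be the relevant stratum and apply Corollary \ref{c:Kir gen}, working on each finite-type closed union of strata and then passing to the colimit over the ind-structure. This shows that $\cH^{T}_{\infty}$ is generated by the images $i^{T}_{\l!}\Hinf^{T}(\l)$.

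By \eqref{HT strat cat}, $\Hinf^{T}(\l)\cong D^b(\Vect)$ is generated by the single object $\cL^{T}_{\l}$. Hence the objects $\D^{T}_{\l}=i^{T}_{\l!}\cL^{T}_{\l}\j{d_\l}$ generate $\cH^T_\infty$. The left $T$-equivariant case ${}^{T}\cH_{\infty}$ is symmetric: one swaps the roles of the two sides, or applies the inversion $g\mapsto g^{-1}$, which interchanges left and right actions and sends the $\Ga$-action to its negative.

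\textbf{Equality of essential images.} Since $\om^T$ is exact, its essential image generates the same thick subcategory as the objects $\om^T(\D^T_\l)=\D_\l$. Likewise, ${}^T\om$ has image generated by the objects ${}^T\om({}^T\D_\l)=\D_\l$, using \eqref{pullback delta}. So the two thick subcategories coincide, and both equal $\cH^{\mon}_{\infty}$ by its definition.

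For the stronger claim that the essential images themselves agree, I would argue stratum by stratum. An object of $\cH_\infty$ supported on the relevant strata is an iterated extension of objects coming from $\Hinf(\l)$. On $\Jker\bs\J\t^\l T\J/\Jker$, left and right $T$-monodromicity coincide: the left $T$-action on $\t^\l T$ agrees with the right $T$-action up to the automorphism $\Ad(\t^{\l})$, which is trivial on $T$, and the remaining $\J$-directions are contractible and unipotent. So the monodromic subcategories of $\Hinf(\l)$ for the left and right actions coincide. Gluing across strata then gives the equality of the generated subcategories, which is all that is needed for the monoidality claim.

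\textbf{Main obstacle.} The step I expect to be delicate is justifying Corollary \ref{c:Kir gen} in this ind-infinite setting. One has to check that each $(\J,\J T)$-stratum is smooth and $\Aff$-stable, that the closure order is well behaved, and that $\Kir$ commutes with the colimit over finite-type closed unions. Each of these reduces to finite-dimensional quotients $\J/\bG^{n}_\infty$, where the earlier arguments apply verbatim. The $T$-quotient is handled by noting that $\D\Ga$ lifts into $\bT^1_\infty$, so the irrelevant strata stay irrelevant after quotienting by $T$.
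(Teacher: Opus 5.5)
Your argument is essentially the paper's proof. Generation of $\cH^{T}_{\infty}$ (and symmetrically of ${}^{T}\cH_{\infty}$) follows from Lemma \ref{l:H inf rel}, Corollary \ref{c:Kir gen}, and the fact that $\cL^{T}_{\l}$ generates $\Hinf^{T}(\l)\cong D^b(\Vect)$. Then \eqref{pullback delta} shows that both essential images generate the same subcategory, namely the one generated by the $\D_{\l}$. The paper also reads ``the same'' only in this generated sense, so your stratum-by-stratum paragraph is unnecessary.

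\textbf{The stratification needs correcting.} There is no decomposition $G\lr{\t}=\bigsqcup \J w\J T$ with finitely many pieces in bounded regions. Since $\J$ is normal in $G\tl{\t}$, already $\J\bs G\tl{\t}/\J T\cong G/T$ is a positive-dimensional family of double cosets. So these double cosets cannot serve as the finite stratification that Corollary \ref{c:Kir gen} requires.

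Use instead the images of the double cosets $\bI_\infty w\bI_\infty$, $w\in\tilW$, where $\bI_\infty\subset G\tl{\t}$ is an Iwahori subgroup containing $T$ (hence containing $\J$).
\begin{itemize}
\item These strata are $\Aff$-stable and finite in number in each finite-type closed piece, with closure order given by the Bruhat order.
\item Since $\J\t^{\l}T\J\subset \bI_\infty\t^{\l}\bI_\infty$, Lemma \ref{l:H inf rel} shows that the stratum of $\t^{\l}$ contains exactly one relevant locus.
\item The strata of non-translation elements contain no relevant points.
\end{itemize}

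\textbf{The $T$-quotient needs a different justification.} Irrelevance does pass to the quotient by $T$, but not for the reason you give: the lift of $\D\Ga$ into $\bT^1_\infty$ is beside the point. The correct reason is that the stabilizer of a point in $\J\times\J$ is contained in its stabilizer in $\J\times\J T$, compatibly with the characters to $\Ga$. So a character that is nonzero on the smaller stabilizer stays nonzero on the larger one.
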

\begin{proof} First we prove the generation statement for $\Hinf^{T}$. By the equivalence \eqref{HT strat cat} and the definition of $\cL^{T}_{\l}$, $\cL^{T}_{\l}$ generates $\Hinf^{T}(\l)$.  Now by Lemma \ref{l:H inf rel} and Corollary \ref{c:Kir gen}, $i^{T}_{\l!}\cL^{T}_{\l}=\D^{T}_{\l}\j{-d_{\l}}$ for all $\l\in \xcoch(T)$ generate $\Hinf^{T}$.


The generation statement for ${}^{T}\cH_{\infty}$ is proved in the same way.  Hence the essential image of $\om^{T}$ (resp. ${}^{T}\om$) is generated by $\{\om^{T}(\D^{T}_{\l})|\l\in \xcoch(T)\}$ (resp. $\{{}^{T}{\om}({}^{T}\D_{\l})|\l\in \xcoch(T)\}$). By \eqref{pullback delta}, both essential images are generated by $\{\D_{\l}|\l\in \xcoch(T)\}$.
\end{proof}

\sss{Standard objects in the completed category}

Let $\Aff$ act on $T\times \Ga$ by 
\begin{equation*}
s\cdot_{\l}(h,b)=(s^{-\l}h, s^{-1}b), \quad a\cdot(h,b)=(h,b+a), \quad h\in T, b\in \Ga, (a,s)\in \Aff=\Ga\rtimes \Gm.
\end{equation*}
We call this the $\l$-action.  If we want to emphasize the $\l$-action of $\Gm$, we use $\Gm(\l)$. Consider the map
\begin{equation*}
\k_{\l}: T\times \Ga=T\times \J/\Jker\to \frac{\Jker\bs\J \t^{\l}T\J/\Jker }{\D\Ga}
\end{equation*}
sending $(h,a)\mapsto \t^{\l}h\wt a$ for $h\in T, a\in \Ga$.  This is $\Aff$-equivariant for the $\l$-action on $T\times\Ga$. The adjoint functors $(\k_{\l}^{*},\k_{\l*})$ give an equivalence on Kirillov categories
\begin{equation*}
\k_{\l}^{*}: \Hinf(\l) \isom \Kir_{\l}(T\times\Ga) : \k_{\l*}.
\end{equation*}
Here we use subscript $\l$ in $\Kir_{\l}(T\times\Ga)$ to emphasize the $\Aff$-action on $T\times \Ga$ is the $\l$-action. Similarly we get equivalences of the monodromic and completed versions
\begin{equation*}
\wh\k_{\l}^{*}: \hHim(\l) \isom \wh\Kir^{T\lmon}_{\l}(T\times\Ga) : \wh\k_{\l*}.
\end{equation*}
We put the superscript ``$T\lmon$'' in $\Kir^{T\lmon}_{\l}(T\times\Ga)$ to emphasize that the objects there are unipotently monodromic along $T$.

By Lemma \ref{l:Kir prod A1}, we have an equivalence
\begin{equation*}
J^{\mon}_{\l}: D^{\mon}(T)\isom \Kir^{T\lmon}_{\l}(T\times\Ga) 
\end{equation*}

Note a priori Lemma \ref{l:Kir prod A1} asserts a similar equivalence with the monodromicity condition under the $\Gm$ action on $T$ via $\l$ and the $\Gm$-scaling action on $\Ga$. Here we are restricting to the full subcategories of $T$-monodromic objects. The equivalence $J^{\mon}_{\l}$ induces an equivalence of the completed versions 
\begin{equation*}
\wh J_{\l}: \wh D^{\mon}(T)\isom \wh\Kir^{T\lmon}_{\l}(T\times\Ga) 
\end{equation*}
and the equivalence
\begin{equation*}
\wh\k_{\l*}\wh J_{\l}: \wh D^{\mon}(T)\xr{\wh J_{\l}} \wh\Kir^{T\lmon}_{\l}(T\times\Ga) \xr{\wh\k_{\l*}}\hHim(\l).
\end{equation*}

Let $\wh\cL\in \wh D^{\mon}(T)$ be the universal unipotent local system on $T$. As a representation of the fundamental group $\pi_{1}(T)$, it is the completion of $E[\pi_{1}(T)]$ along the augmentation ideal. 

The following lemma essentially follows from \cite[Lemma A.3.6]{BY}.
\begin{lemma}\label{l:L unit}
    The object $\wh\cL\j{2r}$ has a canonical structure of a monoidal unit in $\wh D^{\mon}(T)$ under $!$-convolution.
\end{lemma}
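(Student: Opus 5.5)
The plan is to reduce to a computation with explicit free-monodromic objects on the torus $T$ of rank $r$, using the formalism of \cite[Appendix A]{BY}. Concretely, $\wh D^{\mon}(T)$ is equivalent to a category of modules over the completed group algebra
\[
R=\wh{E[\pi_1(T)]}\cong E[[x_1,\dots,x_r]],
\]
where we fix a trivialization $T\cong\Gm^r$. Under this equivalence $\wh\cL$ corresponds to the free rank-one module. The first step is to compute $\wh\cL\star_!\wh\cL=m_!(\wh\cL\bt\wh\cL)$, where $m:T\times T\to T$ is multiplication.

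Since $\wh\cL$ is a character-type object, we expect $m^*\wh\cL\cong\wh\cL\bt\wh\cL$ as pro-local systems, via the coproduct on $R$. The change of variables $(a,b)\mapsto(ab,b)$ then identifies $m_!(\wh\cL\bt\wh\cL)$ with
\[
\wh\cL\ot \upH_c^*(T,\wh\cL).
\]
Here the completed compactly supported cohomology of the universal unipotent local system is concentrated in the top degree $2r$, where it equals $E(-r)$; this is the standard Koszul computation in rank one, with $\upH^*_c(\Gm,\wh\cL)=E[-2](-1)$, extended to rank $r$ by the Künneth formula. Consequently
\[
\wh\cL\j{2r}\star_!\wh\cL\j{2r}\cong\wh\cL\j{2r}.
\]
Equivalently, the canonical augmentation $\wh\cL\to\d_1\j{-2r}$ (or rather the map $\d_1\to\wh\cL\j{2r}$ coming from the costalk at $1$) should induce an isomorphism after convolution with any object of $\wh D^{\mon}(T)$.

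Next I would make the unit structure canonical. The cleanest route is to show that convolution with $\wh\cL\j{2r}$ is the identity functor on $\wh D^{\mon}(T)$. For a monodromic $\cF$, write $\wh\cL\star_!\cF=m_!(\wh\cL\bt\cF)$ and use the same shearing to get
\[
\upH_c^*(T,\wh\cL\ot\cF^\vee\text{-twist})\ot\cF.
\]
More invariantly, this amounts to the statement that $\wh\cL$ pro-represents the monodromy-completed stalk at $1$, i.e. the pro-unipotent monodromy of $\cF$ is coinvariant-completed to give $\cF$ back. This is exactly the content of \cite[Lemma A.3.6]{BY}, which I would invoke after checking that its hypotheses (free-monodromic unipotent completion on a torus, $!$-convolution) match ours. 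From the isomorphism of functors $\wh\cL\j{2r}\star_!(-)\cong\id$, together with a fixed identification $\wh\cL\j{2r}\star_!\wh\cL\j{2r}\cong\wh\cL\j{2r}$, the unit structure follows formally: an idempotent object whose convolution functor is an equivalence (here the identity) is a monoidal unit.

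The main obstacle is the pro-object bookkeeping. Pushforward $m_!$ must be formed in the completed category as a limit over the truncations $\cL_n$ (by powers of the augmentation ideal). The individual $\upH_c^*(T,\cL_n)$ have cohomology in lower degrees, and these terms vanish only in the pro-system, via Mittag-Leffler-type transition maps being zero. I would handle this by appealing to \cite[Appendix A]{BY} for the definition of $\wh D^{\mon}(T)$ and its convolution, rather than redoing it. The half Tate twist in $\j{2r}$ is what makes the weight $E(-r)$ cancel, so the isomorphism is canonical, not merely up to twist.
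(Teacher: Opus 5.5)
Your argument ultimately rests, as the paper's does, on \cite[Lemma A.3.6]{BY} (convolution with $\wh\cL\j{2r}$ is the identity functor on $\wh D^{\mon}(T)$), with the unit structure then following formally. The paper gives nothing beyond that citation, so your approach is the same.

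One correction to your heuristic first step, which is in any case superseded by the case $\cF=\wh\cL\j{2r}$ of the cited lemma. The claim $m^*\wh\cL\cong\wh\cL\bt\wh\cL$ is false: the fibers are $R$ versus $R\hat\otimes R$, and what actually holds is $m^*\wh\cL\cong\pr_1^*\wh\cL\otimes_R\pr_2^*\wh\cL$. The clean way to obtain $\wh\cL\star_!\wh\cL\cong\wh\cL\j{-2r}$ is through your equivalence with completed $R$-modules. There $m^!=m^*\j{2r}$ is restriction along the multiplication $R\hat\otimes R\to R$, shifted by $\j{2r}$. Hence $m_!$ is $R\otimes^{\mathrm{L}}_{R\hat\otimes R}(-)$ shifted by $\j{-2r}$. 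So $\cF\star_!\cG$ corresponds to $M\otimes^{\mathrm{L}}_R N\j{-2r}$, and the free module $R$ is visibly the unit.
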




\begin{defn} Let $\l\in \xcoch(T)$.
\begin{enumerate}
\item Define 
\begin{equation*}
\wh\cL_{\l}:=\wh J_{\l}(\wh\cL)\in \wh\Kir^{T\lmon}_{\l}(T\times\Ga).
\end{equation*}
\item Let $r=\dim T$. Define the corresponding {\em free-monodromic standard object} in $\hHim$ as
\begin{equation*}
\wh\D_{\l}:= i_{\l,!}\wh\k_{\l*}\wh\cL_{\l}\j{d_{\l}+2r}\in \hHim.
\end{equation*}
\end{enumerate}
\end{defn}

Unraveling the definitions, we arrive at the following description of $\wh\k_{\l*}\wh J_{\l}$. 

\begin{lemma}\label{l:fm std}
Consider the maps
\begin{equation*}
\xymatrix{\dfrac{\Jker\bs(\J \t^{\l}T\J-\Jker \t^{\l}T\Jker)/\Jker}{\D\Ga}  \ar[d]_{q_{\l}}\ar@{^{(}->}[r]^-{j_{\l}} & \dfrac{\Jker\bs\J \t^{\l}T\J/\Jker }{\D\Ga}\\
T\times (\Ga-\{0\})=T\times\Gm\ar[r]^-{m_{\l}} & T }
\end{equation*}
Here $j_{\l}$ is the open embedding, $q_{\l}$ takes $g_{1}\t^{\l}hg_{2}$ (where $h\in T, g_{1}, g_{2}\in \J$) to $(h, \psi(g_{1}g_{2}))$, and $m_{\l}$ is the homomorphism sending $(h,s)\mt hs^{-\l}$ for $h\in T$ and $s\in \Gm$. 
Then for $\cF\in \wh D^{\mon}(T)$, $\wh\k_{\l*}\wh J_{\l}(\cF)$ is the image of $j_{\l!}q_{\l}^{*}m_{\l}^{*}\cF[1]$ in the Kirillov quotient $\hHim(\l)$.  In particular,
\begin{equation*}
    \wh\D_{\l}\cong \mbox{image of $i_{\l!}j_{\l!}q_{\l}^{*}m_{\l}^{*}\wh\cL\j{d_{\l}+2r}[1]$ in $\hHim$}.
\end{equation*}
\end{lemma}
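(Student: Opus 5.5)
The plan is to unwind $J^{\mon}_\l$ from Lemma \ref{l:Kir prod A1} and transport it along $\k_\l$. By its construction in \S\ref{sss:Kir product A1}, for $\cF\in D^{\mon}(T)$ the object $J^{\mon}_\l(\cF)$ is represented by $j_{!}\wt\cF[1]$. Here $j: T\times\Gm\incl T\times\Ga$ is the open embedding, and $\wt\cF$ is the unique $\D\Gm(\l)$-equivariant (more precisely, monodromic) object on $T\times\Gm$ whose restriction to $T\times\{1\}$ is $\cF$. We use $j_!$ rather than $j_*$, which is allowed since in the Kirillov quotient the two extensions agree; this is shown in the proof of Lemma \ref{l:Kir prod A1} via $a_!$. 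To identify $\wt\cF$, note that the map $T\times\Gm\to T$, $(h,s)\mt hs^{-\l}$, is exactly $m_\l$. For the $\l$-action, $s'\cdot(h,s)=(s'^{-\l}h,s'^{-1}s)$, and $m_\l$ is invariant under it because $s'^{-\l}h(s'^{-1}s)^{-\l}=hs^{-\l}$. Also $m_\l$ restricts to the identity on $T\times\{1\}$. Hence $\wt\cF\cong m_\l^*\cF$ and $J^{\mon}_\l(\cF)\cong j_!m_\l^*\cF[1]$.

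Next, transport this along $\k_\l$. Since $\k_{\l*}$ and $\k_\l^*$ are inverse equivalences of Kirillov categories, it suffices to write down an $\Aff$-equivariant description of $\k_\l$ over the open loci. The key geometric claim is that, on the relevant locus, $\k_\l$ identifies $T\times\Ga$ with the stratum, and this identification restricts over $\Gm$ to an inverse of $q_\l$. Concretely, $q_\l\c\k_\l(h,a)=(h,\psi(\wt a))=(h,a)$ for $a\ne0$. Moreover, the complement of the image of $\k_\l$ inside $\Jker\bs\J\t^\l T\J/\Jker$ modulo $\D\Ga$ consists of irrelevant points, by Lemma \ref{l:H inf rel} together with the computation of stabilizers. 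I would check that $q_\l$ is well defined and $\Aff$-equivariant. Well-definedness amounts to showing that $\psi(g_1g_2)$ depends only on the double coset. This uses $\psi(g_1)+\psi(g_2)=\psi(g_1g_2)$ and the fact that conjugation by $\t^\l h$ preserves $\psi$ on $\J\cap{}^{\l}\J$, which is where $T=C_G(\psi_1)$ enters. I would also verify that the $\Gm$-weight conventions match: rotation scales $\psi$ by $s^{-1}$ and moves $\t^\l$ to $s^{-\l}\t^\l$.

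Given this, Lemma \ref{l:remove irr} applies, and $!$-extension along the open embedding of the image of $\k_\l|_{T\times\Gm}$ agrees in the Kirillov quotient with $j_{\l!}q_\l^*$ applied to objects on $T\times\Gm$. This yields $\k_{\l*}J^{\mon}_\l(\cF)\cong j_{\l!}q_\l^*m_\l^*\cF[1]$. Passing to pro-completions is formal, because all functors involved commute with the relevant limits. The formula for $\wh\D_\l$ then follows by substituting $\cF=\wh\cL$ and applying $i_{\l!}\j{d_\l+2r}$.

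The main obstacle is the second step: showing that $q_\l$ is well defined and identifying the Kirillov category of the stratum with that of $T\times\Ga$ via the pair $(\k_\l,q_\l)$. Here one must control the irrelevant points of $\J\t^\l T\J$ lying off the image of $\k_\l$. My first attempt would be to reduce, using the triangular decomposition of Lemma \ref{l:tri decomp}, to a computation with $\bJ_N\t^\l\bJ_T$.
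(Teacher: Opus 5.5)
Your first step is correct: $m_\l$ is invariant for the $\l$-action, is the identity on $T\times\{1\}$, and exhibits $T$ as the quotient of $T\times\Gm$. Hence $J^{\mon}_\l(\cF)$ is represented by $j_!m_\l^*\cF[1]$.

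One small caveat: $j_!$ and $j_*$ do not literally agree in the Kirillov quotient. By Example~\ref{ex:Kir A1} they differ by the twist $(1)$, and $j_!$ is the normalization used when $\Phi^{\mon}\circ J^{\mon}\cong\id$ is computed in the proof of Lemma~\ref{l:Kir prod A1}. Your other checks are also right: $q_\l$ is well defined via $\psi(g)=\psi(\t^{-\l}g\t^{\l})$ on $\J\cap{}^{\l}\J$, and $q_\l\circ\k_\l=\id$ on $T\times\Gm$.

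The gap is in the transport step, because the geometry is not as you picture it. Write $g_1=k_1\wt a_1$ and $g_2=\wt a_2k_2$ with $k_i\in\Jker$ and $\wt a_i\in\bT^1_\infty$; the $\wt a_i$ commute with $\t^\l h$. Then every $g_1\t^\l hg_2$ lies in $\Jker\t^\l h\wt b\Jker$ with $b=\psi(g_1g_2)$. So $\k_\l$ is surjective, and by Lemma~\ref{l:H inf rel} the stratum contains no irrelevant points at all. Consequently:
\begin{itemize}
\item $\k_\l|_{T\times\Gm}$ is not an open embedding. It hits every point of the open locus, and those points have pro-unipotent automorphism groups.
\item Lemma~\ref{l:remove irr} does not apply. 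There is nothing to remove, and the open locus $\Jker\bs(\J\t^\l T\J-\Jker\t^\l T\Jker)/\Jker$ is not $\Ga$-stable, hence not $\Aff$-stable.
\end{itemize}

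The missing idea is that your formula makes sense on the whole stratum. Define $\bar q_\l\colon g_1\t^\l hg_2\mapsto (h,\psi(g_1g_2))\in T\times\Ga$. It has the following properties:
\begin{itemize}
\item It is well defined by the same invariance check you gave.
\item It is $\Aff$-equivariant for the $\l$-action.
\item It satisfies $\bar q_\l\circ\k_\l=\id_{T\times\Ga}$.
\item $\bar q_\l^{-1}(T\times\Gm)$ is exactly the open locus, since $g_1\t^\l hg_2\in\Jker\t^\l T\Jker$ if and only if $\psi(g_1g_2)=0$.
\end{itemize}
Since $\k_\l^*$ and $\k_{\l*}$ are inverse equivalences of Kirillov categories, $\k_\l^*\bar q_\l^*\cong\id$ forces $\bar q_\l^*\cong(\k_\l^*)^{-1}\cong\k_{\l*}$. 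Base change along the open embedding then gives $\k_{\l*}(j_!m_\l^*\cF[1])\cong\bar q_\l^*j_!m_\l^*\cF[1]\cong j_{\l!}q_\l^*m_\l^*\cF[1]$, which is presumably the unravelling the paper has in mind. With this, the triangular-decomposition analysis of irrelevant points that you flagged as the main obstacle is not needed.
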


When $\l=0$, the equivalence  $\wh\k_{0*}\wh J_{0}$ has a simpler description.

\begin{lemma}\label{l:H0} Let $\io_{0}: \frac{\Jker\bs \Jker T/\Jker}{\D\Ga}\incl \frac{\Jker\bs G\lr{\t}/\Jker}{\D\Ga}$ be the closed embedding. Consider 
the composition
\begin{equation}\label{J0 alt}
\io_*: \wh D^{\mon}(T)\cong \wh D^{\mon}\left(\dfrac{\Jker\bs \Jker T/\Jker}{\D\Ga}\right)\xr{\io_{0*}} \wh D^{\mon}_{\Grot}(\frac{\Jker\bs G\lr{\t}/\Jker}{\D\Ga})\to \hHim.
\end{equation}
Here, $\io_{0*}$ naturally lifts to the $\Grot$-equivariant category because $\cF\in \wh D^{\mon}(\frac{\Jker\bs \Jker T/\Jker}{\D\Ga})$,  there is a canonical $\Grot$-equivariant structure on $\io_{0*}\cF$ (for $\Grot$ acts trivially on $T$). 

Then there is a canonical isomorphism of functors
\begin{equation*}
    \io_*\cong i_{0!}\wh\k_{0*}\wh J_{0}: \wh D^{\mon}(T)\to \hHim.
\end{equation*}
Moreover, $\io_{*}$ has a natural structure of a monoidal functor. In particular, $\wh\D_{0}\cong \io_*\wh\cL\j{2r}$ is a monoidal unit of $\hHim$.
\end{lemma}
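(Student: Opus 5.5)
We first compare the two functors on objects and then verify monoidality. By Lemma \ref{l:fm std} with $\l=0$, $i_{0!}\wh\k_{0*}\wh J_0(\cF)$ is the image in $\hHim$ of $i_{0!}j_{0!}q_0^*m_0^*\cF[1]$. Since $d_0=0$ and $m_0(h,s)=h$, the object $q_0^*m_0^*\cF$ is just the pullback of $\cF$ along $\frac{\Jker\bs(\J T-\Jker T)/\Jker}{\D\Ga}\cong T\times\Gm\to T$. The stratum $\frac{\Jker\bs \J T/\Jker}{\D\Ga}$ is identified with $T\times\Ga$, with $\Aff$ acting only on the $\Ga$-factor: for $\l=0$ the $\Gm$-action on $T$ is trivial. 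In this product situation we use the open/closed triangle for $\{0\}\subset\Ga$. Applied to the pullback $\cF\bt E_{\Ga}$, it gives
\begin{equation*}
(\id\times j)_!(\cF\bt E_{\Gm})\to \cF\bt E_{\Ga}\to (\id\times i_0)_*\cF\to,
\end{equation*}
and the middle term is $\Ga$-equivariant, hence zero in the Kirillov quotient. So in $\Kir$ we get $(\id\times j)_!(\cF\bt E)[1]\cong(\id\times i_0)_*\cF$. This is the same equality $\d\cong j_!E[1]$ in $\Kir(\Ga)$ already recorded in Example \ref{ex:Kir A1}, now with $T$-parameters (Lemma \ref{l:Kir prod A1}, Remark \ref{r:Kir fun}). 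Finally we push forward by $i_{0!}$. The $\Jker T$-stratum is closed in $\J T$ and the map to the ambient stack is a closed embedding, so $i_{0!}(\id\times i_0)_*=\io_{0*}$. Everything is natural in $\cF$ and compatible with the pro-completion, which yields $\io_*\cong i_{0!}\wh\k_{0*}\wh J_0$. One point needs care: the $\Grot$-equivariant structures must match. On $\io_{0*}\cF$ we use the trivial structure; on the other side the structure comes from $J^{\mon}$. We check that these agree on $T\times\{0\}$, where $\Grot$ acts trivially.

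Next we construct the monoidal structure on $\io_*$. The subgroup $\Jker T$ is a group, and multiplication restricted to $\Jker T\twtimes{\Jker}\Jker T$ lands in $\Jker T$. So the convolution diagram \eqref{Hinf conv diag}, restricted to the closed substack over $\Jker T$, is the multiplication diagram of $T$ (up to the contractible group $\Jker$ and the trivial $\D\Ga$-quotient). Closed embeddings satisfy proper base change, so
\begin{equation*}
m_!(p_1^*\io_{0*}\cF_1\ot p_2^*\io_{0*}\cF_2)\cong \io_{0*}(\cF_1\star_!\cF_2).
\end{equation*}
Here we need that $p_1^*\io_{0*}\cF_1\ot p_2^*\io_{0*}\cF_2$ is supported on the preimage of $\Jker T\times\Jker T$. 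This already holds before passing to Kirillov quotients, in $D_{\Grot}$. The associativity constraints are inherited from those of the $T$-convolution, because both come from associativity of group multiplication. The same argument works in the completed categories, via the formalism of \cite[Appendix A]{BY}.

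The unit statement then follows. By Lemma \ref{l:L unit}, $\wh\cL\j{2r}$ is the unit in $\wh D^{\mon}(T)$, so $\io_*\wh\cL\j{2r}$ is an idempotent unit for the image. It remains to see that it is a unit for all of $\hHim$, i.e.\ that $\wh\D_0\star\cG\cong\cG$ for every $\cG$. For this we use that $\hHim$ is generated by the monodromic objects, via Lemma \ref{l:Hmon same} and the completion. Convolving on the left with $\io_{0*}$ of an object on $T$ is the left action of $T$, i.e.\ a $!$-convolution along the left $T$-orbits. Objects in $\hHim$ are unipotently monodromic along these orbits, and for such objects convolution with $\wh\cL\j{2r}$ is the identity, by the same \cite[Lemma A.3.6]{BY} argument applied to the $T$-action. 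The main obstacle is this last step: making the free-monodromic unit argument work in the Kirillov setting. The convolution $\io_{0*}(-)\star\cG$ must be identified with a genuine $T$-action convolution before passing to the quotient, and that identification must be compatible with the pro-objects.
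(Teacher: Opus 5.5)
Your proposal is correct and follows the paper's proof. You identify $i_{0!}\wh\k_{0*}\wh J_0(\cF)$ with $\io_{0*}\cF$ using the equality $\d\cong j_!E[1]$ in the Kirillov category; the paper cites Example \ref{ex:Kir A1} directly for this trivial-$\Gm$-action case, whereas you go through Lemma \ref{l:fm std} and the open/closed triangle. You then get monoidality by restricting the convolution diagram to the closed substack over $\Jker T$, and the unit property from Lemma \ref{l:L unit}. Your additional check that $\wh\D_0\star\cG\cong\cG$ for all $\cG$ (via the left, and symmetrically right, $T$-action on unipotently monodromic objects, using Lemma \ref{l:Hmon same}) spells out a point the paper handles in one sentence: monoidality of $\io_*$ alone only makes $\wh\D_0$ a unit on the essential image of $\io_*$.
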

\begin{proof}
When $\l=0$, the map $T\to T\times\Ga$ given by $h\mt (h,0)$ is $\Gm$-equivariant with respect to the trivial action on $T$ and the $\l=0$-action on $T\times\Ga$ (which is trivial on $T$). The above description of $\wh J_{0}$ follows from Example \ref{ex:Kir A1}. 

The monoidal structure on the functor \eqref{J0 alt} is an easy diagram chase. Since $\wh\cL\j{2r}$ is a monoidal unit in $\wh D^{\mon}(T)$ by Lemma \ref{l:L unit}, $\wh\D_{0}$ is a monoidal unit of $\hHim$. 
\end{proof}



Let $\pi^{T}: \frac{\Jker\bs G\lr{\t}/\Jker}{\D\Ga}\to \frac{\Jker\bs G\lr{\t}/(\Jker T)}{\D\Ga}$ be the projection map. We have $\pi^{T}_{!}: \hHim\to \Hinf^{T}$. 

\begin{lemma}\label{l:right T equiv std} We have $\pi^{T}_{!}\wh\D_{\l}\cong \D^{T}_{\l}$ for all $\l\in \xcoch(T)$.
\end{lemma}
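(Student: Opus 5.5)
We reduce to a computation on the single stratum and then to the torus. Since $\pi^{T}$ is the projection along the right $T$-action and the stratum $\frac{\Jker\bs\J\t^{\l}T\J/\Jker}{\D\Ga}$ is right $T$-stable, $\pi^{T}_{!}$ commutes with the $!$-extensions. Concretely,
\begin{equation*}
\pi^{T}_{!}\,i_{\l!}\cong i^{T}_{\l!}\,\pi^{T}_{\l!},
\end{equation*}
where $\pi^{T}_{\l}$ is the restriction of $\pi^{T}$ to the stratum. So it suffices to show that $\pi^{T}_{\l!}\wh\k_{\l*}\wh\cL_{\l}\j{2r}\cong\cL^{T}_{\l}$ in $\Hinf^{T}(\l)$. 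I would use the explicit model of Lemma~\ref{l:fm std}, namely
\begin{equation*}
\wh\D_{\l}\cong\text{image of } i_{\l!}j_{\l!}q_{\l}^{*}m_{\l}^{*}\wh\cL\j{d_{\l}+2r}[1],
\end{equation*}
and compare it with Lemma~\ref{l:D1}(2), which says $\D^{T}_{\l}$ is the image of $\k^{T,\c}_{\l!}E\j{d_{\l}}[1]$.

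The key steps are as follows.

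\textbf{Step 1.} Write down the commutative diagram formed by $q_{\l}$, $m_{\l}$ and the right $T$-quotient. The open stratum $\frac{\Jker\bs(\J\t^{\l}T\J-\Jker\t^{\l}T\Jker)/\Jker}{\D\Ga}$ maps via $q_{\l}$ to $T\times\Gm$. Its right $T$-quotient is the source of $\k^{T,\c}_{\l}$, and this quotient maps to $\Gm$ by $\psi(g_{1}g_{2})$. The right $T$-action on the stratum corresponds, under $q_{\l}$, to translation on the $T$-factor.

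\textbf{Step 2.} By base change, reduce to computing $!$-pushforward along the $T$-factor:
\begin{equation*}
\mathrm{pr}_{!}\,m_{\l}^{*}\wh\cL\j{2r}\quad\text{on } T\times\Gm\to\Gm.
\end{equation*}
Since $m_{\l}(h,s)=hs^{-\l}$, the change of variables $(h,s)\mapsto(hs^{-\l},s)$ turns this sheaf into $\wh\cL\bt E$. So the computation reduces to $R\Gamma_{c}(T,\wh\cL)\j{2r}$.

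\textbf{Step 3.} Show $R\Gamma_{c}(T,\wh\cL)\j{2r}\cong E$. This is the standard fact that compactly supported cohomology of the free-monodromic unipotent local system is one-dimensional in top degree. It is essentially the content of Lemma~\ref{l:L unit} and \cite[Lemma A.3.6]{BY}: convolving the unit with the constant sheaf, or equivalently $H_{c}^{*}(T,\wh\cL)=E[-2r](-r)$ by Koszul duality for the completed group algebra. With the half Tate twist convention, $\j{2r}$ exactly cancels the shift and twist.

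\textbf{Step 4.} Assemble Steps 1--3 to get $\pi^{T}_{!}\wh\D_{\l}\cong\text{image of }\k^{T,\c}_{\l!}E\j{d_{\l}}[1]$, and conclude with Lemma~\ref{l:D1}(2).

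The main obstacle is Step 1, that is, making the base change rigorous. The map $q_{\l}$ must be checked to be equivariant for the right $T$-action, and the fibers of the right $T$-quotient of the open stratum over $\Gm$ must match the fibers of $q_{\l}$ after quotienting by $T$. Concretely, one needs the square
\begin{equation*}
\text{open stratum}\xrightarrow{q_{\l}}T\times\Gm\xrightarrow{\ \mathrm{pr}\ }\Gm
\end{equation*}
to identify the $T$-quotient as a Cartesian base change. For this I would use the triangular decomposition, Lemma~\ref{l:tri decomp}, to write points as $g_{1}\t^{\l}hg_{2}$ with $h$ uniquely determined. One must also track the $\Grot$-equivariance twist: the $\Gm(\l)$-action scales $h$ by $s^{-\l}$, and $m_{\l}$ is designed to absorb exactly this. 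Finally, one should check that all identifications pass to the Kirillov quotient and its completed monodromic version. This holds because every functor involved preserves $\Ga$-equivariant objects.
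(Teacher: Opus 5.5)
Your proposal is correct and is essentially the paper's proof. The paper also compares the model of $\wh\D_\l$ in Lemma~\ref{l:fm std} with that of $\D^T_\l$ in Lemma~\ref{l:D1}(2), and reduces to $p_!m_\l^*\wh\cL\j{2r}\cong\un E$ for $p\colon T\times\Gm\to\Gm$; this follows from the automorphism $(m_\l,p)$ of $T\times\Gm$ together with $R\Gamma_c(T,\wh\cL\j{2r})\cong E$. Your Step~1 (the Cartesian square for the right $T$-quotient, using that $q_\l$ is right $T$-equivariant) simply makes explicit the base change that the paper leaves implicit.
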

\begin{proof}
Comparing the description of $\wh\D_{\l}$ in Lemma \ref{l:fm std} and the description of $\D^{T}_{\l}$ in Lemma \ref{l:D1}(2), the isomorphism follows from the following statement: Let $p: T\times \Gm\to \Gm$ be the projection map, then
\begin{equation}\label{push fm}
p_{!}m_{\l}^{*}\wh\cL\j{2r}\cong \un E, \quad  \mbox{(constant sheaf on $\Gm$).}
\end{equation}
Since the map $(m_{\l}, p): T\times\Gm\to T\times \Gm$ is an isomorphism, we have $p_{!}m_{\l}^{*}\wh\cL\j{2r}\cong R\G_{c}(T, \wh\cL\j{2r})\ot \un E\cong \un E$. This proves \eqref{push fm}, and (1) follows.
\end{proof}

\subsection{Convolution}
The goal here is to describe the monoidal category $\hHim$ and its equivariant version $\eqHinf$ in terms of $T$. This will be achieved in Theorem \ref{th:Hinf} and Proposition \ref{p:Hinf equiv}. Such descriptions for deeper levels were obtained by Kamgarpour and Schedler \cite{KS} categorifying Roche's results on principal series Hecke algebras. Generalization of Proposition \ref{p:Hinf equiv} in another direction (replacing $\J$ with the unipotent radical of an arbitrary parahoric $\bP$) is worked out in  Jianqiao Xia's thesis \cite{Xia}.

\begin{lemma}\label{l:conv Kir TGa}
The following diagram is canonically commutative
\begin{equation*}
\xymatrix{\wh D^{\mon}(T)\times  \wh D^{\mon}(T) \ar[r]^-{\wh J_{\l}\times \wh J_{\mu}}\ar[d]^{\star_{T}} & \wh \Kir^{T\lmon}_{\l}(T\times\Ga)\times\wh \Kir^{T\lmon}_{\mu}(T\times\Ga)\ar[d]^{\star_{T\times\Ga}}\\
\wh D^{\mon}(T)\ar[r]^{\wh J_{\l+\mu}} & \wh \Kir^{T\lmon}_{\l+\mu}(T\times\Ga)
}
\end{equation*}
Here the $\star_{T}$ and $\star_{T\times\Ga}$ are the $!$-convolutions under the group structures on $T$ and $T\times\Ga$.
\end{lemma}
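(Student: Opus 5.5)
We will compute both compositions explicitly using the description of $J^{\mon}$ from the proof of Lemma \ref{l:Kir prod A1}. For $\cF\in \wh D^{\mon}(T)$, $\wh J_{\l}(\cF)$ is represented by $a_{\l,!}(\cF\bt E)[1]$, where
\begin{equation*}
a_{\l}: T\times\Gm\to T\times\Ga,\qquad (h,s)\mt (s^{-\l}h, s).
\end{equation*}
Up to the normalization of the $\l$-action, this is the $\Gm(\l)$-orbit map through $T\times\{1\}$, extended by zero across $T\times\{0\}$. It is a locally closed embedding, with image $T\times\Gm\subset T\times\Ga$. So $\wh J_\l(\cF)$ is the $!$-extension of the unique $\Gm(\l)$-monodromic sheaf on $T\times\Gm$ whose restriction to $T\times\{1\}$ is $\cF$, shifted by $[1]$.

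Next I convolve two such objects on the group $T\times\Ga$. The multiplication map $T\times\Ga\times T\times\Ga\to T\times\Ga$ is equivariant for $\Gm(\l)\times\Gm(\mu)$ acting on the source and the diagonal $\Gm$ acting via $\l+\mu$ on the target. More precisely, for $s_1=s_2=s$ we have $(s^{-\l}h_1)(s^{-\mu}h_2)=s^{-\l-\mu}h_1h_2$, which is exactly the $(\l+\mu)$-action. The convolution $\wh J_\l(\cF_1)\star\wh J_\mu(\cF_2)$ is therefore the $!$-pushforward of $(\cF_1\bt E)\bt(\cF_2\bt E)[2]$ along
\begin{equation*}
T\times\Gm\times T\times\Gm\to T\times\Ga,\qquad (h_1,s_1,h_2,s_2)\mt (s_1^{-\l}h_1s_2^{-\mu}h_2,\ s_1+s_2).
\end{equation*}
In the Kirillov quotient, it suffices to identify the result after applying the functor $\Phi^{\mon}$ of Lemma \ref{l:Kir prod A1}, i.e. monodromic vanishing cycles along $p_2$. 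Equivalently, I will pass through the Fourier transform description \eqref{FT res}.

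The cleanest route is to use Fourier transform in the $\Ga$-direction. It intertwines $!$-convolution along $\Ga$ with $\ot$ (up to a shift) on the dual line. Under \eqref{FT res}, $\wh J_\l(\cF)$ corresponds to the $\Gm$-monodromic object on $T\times\check\Gm$ whose restriction to $T\times\{1\}$ is $\cF$; its monodromicity is twisted by $\l$ on the $T$-factor. Convolution along $T$ combined with tensor along $\check\Ga$ then gives an object on $T\times\check\Gm$ that is $(\l+\mu)$-monodromic. Its restriction to $T\times\{1\}$ is $\cF_1\star_T\cF_2$ by proper base change at the point $1\in\check\Gm$. This is exactly $\wh J_{\l+\mu}(\cF_1\star_T\cF_2)$.

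The main obstacle is making the Fourier-transform step rigorous in the $\l$-twisted monodromic setting. Concretely, one must check that the relative Fourier transform over the base $T$ intertwines $\star_{T\times\Ga}$ with ($\star_T$ along $T$) $\times$ ($\ot$ along $\check\Ga$), compatibly with the $\Gm(\l)$-monodromic structures and with the completion. I would verify this first on the uncompleted categories $D^{\mon}(T)$, where the statement reduces to the standard convolution theorem for monodromic Fourier transform in \S\ref{ss:FT}. I would then pass to pro-objects formally, since all functors involved commute with the relevant limits. Finally, the shifts must be tracked: the two $[1]$'s on the source versus the one $[1]$ on the target are absorbed by the shift $[-1]$ in the Fourier convolution theorem for a one-dimensional $\Ga$. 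The isomorphism is canonical because every step is a base change or projection formula isomorphism.
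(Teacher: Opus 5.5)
Your proposal is correct and follows essentially the paper's proof: reduce to the uncompleted categories, Fourier transform along $\Ga$ so the $\Ga$-convolution becomes a tensor product, note that $p_{13}^*\FT(J_\l\cF_1)\ot p_{23}^*\FT(J_\mu\cF_2)$ is equivariant for the $(\l,\mu)$-action with restriction $\cF_1\bt\cF_2$ over $T\times T\times\{1\}$, then push forward along $m_T$. Your proper base change at $1$ plays the role of the paper's appeal to Remark~\ref{r:Kir fun}, and your shift bookkeeping is consistent. The preliminary description via $a_\l$ is not needed, and it has the wrong sign: for the paper's action $s\cdot_\l(h,b)=(s^{-\l}h,s^{-1}b)$, the orbit map through $T\times\{1\}$ is $(h,s)\mapsto(s^{\l}h,s)$, not $(s^{-\l}h,s)$.
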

\begin{proof} It suffices to prove the version before completion. We will abbreviate all the $T\lmon$ by $\mon$. We also suppress the superscripts $\mon$ from $J^{\mon}$ and its inverse $\Phi^{\mon}$.

Let $\cF, \cG\in D^{\mon}(T)$ and let $\cF_{\l}=J_{\l}(\cF)\in D^{\mon}_{\Gm(\l)}(T\times \Ga)$ and $\cG_{\mu}=J_{\mu}(\cG)\in D^{\mon}_{\Gm(\mu)}(T\times \Ga)$. 
Our goal is to give a canonical isomorphism
\begin{equation*}
\cF_{\l}\star_{T\times\Ga}\cG_{\mu}\cong J_{\l+\mu}(\cF\star_{T}\cG)\in \Kir^{\mon}_{\l+\mu}(T\times\Ga).
\end{equation*}
We can rewrite this in terms of the inverse $\Phi_{\l+\mu}$ to $J_{\l+\mu}$:
\begin{equation}\label{conv Kir goal}
\Phi(\cF_{\l}\star_{T\times\Ga}\cG_{\mu})\cong \cF\star_{T}\cG\in D^{\mon}(T).
\end{equation}


We will use $F$ denotes the Fourier transform $F: D^{\mon}_{\Gm}(X\times \Ga)\isom D^{\mon}_{\Gm}(X\times \check\Ga)$ for various stacks $X$ with $\Gm$-action.   By definition, 
\begin{equation}\label{conv Kir1}
\cF\star_{T\times\Ga}\cG= m_{T!}(p_{13}^{*}\cF_{\l}\star_{\Ga}p_{23}^{*}\cG_{\mu}).
\end{equation}
Here $p_{13}, p_{23}: T\times T\times \Ga\to T\times\Ga$ are the projections,  $\star_{\Ga}$ denotes the convolution on $T\times T\times \Ga$ in the $\Ga$-factor, and $m_{T}: T\times T\to T$ is the multiplication map.  Applying Fourier transform to both sides of \eqref{conv Kir1}, and using that Fourier transform intertwines tensor product and additive convolution,  we get
\begin{equation*}
\FT(\cF_{\l}\star_{T\times\Ga}\cG_{\mu})=\FT(m_{T!}(p_{13}^{*}\cF_{\l}\star_{\Ga}p_{23}^{*}\cG_{\mu}))\cong m_{T!}(p_{13}^{*}\FT(\cF_{\l})\ot p_{23}^{*}\FT(\cG_{\mu})).
\end{equation*}
By the construction of the inverse $\Phi$ to $J$ in Lemma \ref{l:Kir prod A1},  $p_{13}^{*}\FT(\cF_{\l})$ is $\Gm(\l)$-equivariant with restriction equal to $\cF$ over $T\times\{1\}$; $p_{23}^{*}\FT(\cG_{\mu})$ is $\Gm(\mu)$-equivariant with restriction equal to $\cG$ over $T\times\{1\}$. Therefore, $p_{13}^{*}\FT(\cF_{\l})\ot p_{23}^{*}\FT(\cG_{\mu})$ is $\Gm$-equivariant on $T\times T\times \Ga$ with respect to the $(\l,\mu)$-action $s\cdot (h_{1},h_{2}, a)=(h_{1}s^{-\l}, h_{2}s^{-\mu}, s^{-1}a)$, and its restriction to to $T\times T\times \{1\}$ is $\cF\bt\cG$. In other words we have a canonical isomorphism
\begin{equation}\label{conv Kir2}
\Phi(p_{13}^{*}\FT(\cF_{\l})\ot p_{23}^{*}\FT(\cG_{\mu}))\cong \cF\bt\cG\in D^{\mon}(T\times T).
\end{equation}
Applying $m_{T!}$ to both sides of  \eqref{conv Kir2}, using the functoriality of the equivalences $J$ and $\Phi$ (see Remark \ref{r:Kir fun}), we get a canonical isomorphism
\begin{eqnarray*}
\Phi(m_{T!}(p_{13}^{*}\FT(\cF_{\l})\ot p_{23}^{*}\FT(\cG_{\mu})))&\cong& m_{T!}\Phi(p_{13}^{*}\FT(\cF_{\l})\ot p_{23}^{*}\FT(\cG_{\mu}))\\
&\cong& m_{T!}(\cF\bt\cG) =\cF\star_{T}\cG\in D^{\mon}(T).
\end{eqnarray*}
Combined with \eqref{conv Kir1} we get \eqref{conv Kir goal}. 
\end{proof}

\begin{prop}\label{p:fm std conv}
For $\l,\mu\in \xcoch(T)$, we have a canonical isomorphism
\begin{equation*}
\wh c_{\l,\mu}: \wh\D_{\l}\star \wh\D_{\mu}\cong \wh\D_{\l+\mu}.
\end{equation*}
\end{prop}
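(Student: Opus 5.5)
We compute the convolution $\wh\D_{\l}\star\wh\D_{\mu}$ as $m_!$ of the external product over the convolution diagram \eqref{Hinf conv diag}, restricted to the closed substack lying over $\J\t^{\l}T\J\twtimes{\J}\J\t^{\mu}T\J$. The first step is to pin down the support. By Lemma \ref{l:rel in conv}(3), the only relevant points in the image $\Jker\t^{\l}T\Jker\t^{\mu}T\Jker\cdot\J$ lie in $\J\t^{\l+\mu}T\J$. Lemma \ref{l:remove irr} and Corollary \ref{c:Kir gen} then say that, in the Kirillov quotient, the convolution is the $!$-extension $i_{\l+\mu,!}$ of its restriction to the stratum $\frac{\Jker\bs\J\t^{\l+\mu}T\J/\Jker}{\D\Ga}$. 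So it suffices to identify that restriction inside $\hHim(\l+\mu)$, and we do this through the equivalence $\wh\k_{\l+\mu}^{*}$ with $\wh\Kir^{T\lmon}_{\l+\mu}(T\times\Ga)$.

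Second, we compute the pullback along $\k_{\l+\mu}:(h,a)\mapsto\t^{\l+\mu}h\wt a$. Base change expresses it as a $!$-pushforward from the fiber product of the convolution space with $T\times\Ga$. This fiber product fibers over $T\times T\times\Ga\times\Ga$ (the $\l$- and $\mu$-strata parametrizations). We claim its fibers are the fibers of $m^{\l,\mu}$ over $\t^{\l+\mu}$, which Lemma \ref{l:H inf conv fiber} shows are affine spaces of dimension $(d_\l+d_\mu-d_{\l+\mu})/2$, in the $\Jker$-version. The key point is that the triangular decomposition of Lemma \ref{l:tri decomp} and the proof of Lemma \ref{l:rel in conv}(2) give the torus component exactly: if $g_1\t^{\l}h_1g_2\t^{\mu}h_2g_3=\t^{\l+\mu}h g$, then $h=h_1h_2$, the $\Ga$-coordinate is additive, and the remaining variables form the affine-space fiber. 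Integrating out this fiber contributes the shift $\j{-(d_\l+d_\mu-d_{\l+\mu})}$. Combined with the normalizations $\j{d_\l+2r}$ and $\j{d_\mu+2r}$, this leaves $\j{d_{\l+\mu}+4r}$. The restricted convolution is then identified with $\wh\k_{\l+\mu*}\bigl(\wh\cL_\l\star_{T\times\Ga}\wh\cL_\mu\bigr)$, with the $!$-convolution taken on the group $T\times\Ga$.

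Third, Lemma \ref{l:conv Kir TGa} identifies $\wh J_\l(\wh\cL)\star\wh J_\mu(\wh\cL)$ with $\wh J_{\l+\mu}(\wh\cL\star_T\wh\cL)$. Lemma \ref{l:L unit} gives $\wh\cL\star_T\wh\cL\cong\wh\cL\j{-2r}$, which absorbs the extra $\j{2r}$. The result is exactly $\wh\D_{\l+\mu}$, and canonicity follows because each step is a canonical base change or one of the canonical isomorphisms cited.

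I expect the main obstacle to be the second step, namely the fiber computation with its torus and $\Ga$ bookkeeping. Lemma \ref{l:H inf conv fiber} computes only the fiber over the single point $\t^{\l+\mu}$. We must upgrade it to a statement uniform over $T\times\Ga$ that also tracks the diagonal $\D\Ga$-quotient and the $\Gm$-actions with weights $\l,\mu$. The first thing I would try is to use $T\times\Ga$-equivariance to translate every fiber to the base point, reducing to the lemma. The dimension count should be checked against the shifts, and the $\Grot$-actions must be checked to be compatible with the $\l$-, $\mu$- and $(\l+\mu)$-actions in Lemma \ref{l:conv Kir TGa}. Passing to the completed category is formal via \cite[Appendix A]{BY}.
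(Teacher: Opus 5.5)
Your proposal is correct and follows essentially the same route as the paper. You reduce to the single stratum $\J\t^{\l+\mu}T\J$ via Lemma \ref{l:rel in conv} and Corollary \ref{c:Kir gen}, then identify the base change of the convolution map along $\a_{\l+\mu}=i_{\l+\mu}\k_{\l+\mu}$ with $m_{T\times\Ga}\circ\pr\colon T\times\Ga\times F_{\l,\mu}\times T\times\Ga\to T\times\Ga$. From there you use $R\Gamma_c(F_{\l,\mu},E)\cong E\j{-d_\l-d_\mu+d_{\l+\mu}}$ from Lemma \ref{l:H inf conv fiber} and finish with Lemmas \ref{l:conv Kir TGa} and \ref{l:L unit}. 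The paper resolves the obstacle you flag exactly as you propose: it uses the left and right $T\times\Ga$ translations to write the convolution space as $(T\times\Ga)\times(\Jker\bs\Jker\t^{\l}\Jker\twtimes{\Jker}\Jker\t^{\mu}\Jker/\Jker)\times(T\times\Ga)$ and the target locus as $T\times\Ga\times(\Jker\bs\Jker\t^{\l}\Jker\t^{\mu}\Jker/\Jker)$, which makes the base-change square Cartesian.
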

\begin{proof}
We restrict the diagram \eqref{Hinf conv diag} to the part relevant for the calculation of $\wh\D_{\l}\star \wh\D_{\mu}$:
\begin{equation*}
\xymatrix{& \dfrac{\Jker\bs \J \t^{\l}T\J\twtimes{\Jker }\J\t^{\mu}T\J/\Jker}{\D\Ga}\ar[r]^-{m_{\l,\mu}} \ar[dl]_{p_{1}}\ar[dr]^{p_{2}}& \dfrac{\Jker\bs G\lr{\t}/\Jker}{\D\Ga}\\
\dfrac{\Jker\bs \J\t^{\l}T\J/\Jker}{\D\Ga} && \dfrac{\Jker\bs \J\t^{\mu}T\J/\Jker}{\D\Ga}
}
\end{equation*}
We have $\wh\D_{\l}\star \wh\D_{\mu}=m_{\l,\mu!}(p^{*}_{1}\wh\cL_{\l}\ot p_{2}^{*}\wh\cL_{\mu})\j{d_{\l}+d_{\mu}+4r}$.

By Lemma \ref{l:rel in conv}, the only relevant points in the image of $m_{\l,\mu}$ are in the double cosets $\J \t^{\l+\mu}T\J$. By Lemma \ref{l:H inf rel} and Corollary \ref{c:Kir gen}, $\wh\D_{\l}\star \wh\D_{\mu}$ is a successive extension of $i_{\nu!}\cF_{\nu}$ for $\cF_{\nu}\in \hHim(\nu)$. Since the fiber of $m_{\l,\mu}$ over $\J \t^\nu T\J$ is empty for $\nu\ne\l+\mu$, we must have 
\begin{equation}\label{conv supp single orbit}
\wh\D_{\l}\star \wh\D_{\mu}=i_{\l+\mu!}\cF_{\l+\mu},
\end{equation}
where $\cF_{\l+\mu}= i_{\l+\mu}^{*}(\wh\D_{\l}\star \wh\D_{\mu})\in \hHim(\l+\mu)$. 

To compute $\cF_{\l+\mu}$, we restrict $m_{\l,\mu}$ over $\J \t^{\l+\mu} T\J$. Using the translation actions of $T$ and $\Ga=\J/\Jker$, we can write
\begin{equation}\label{TT conv}
\Jker\bs  \J\t^{\l} T\J\twtimes{\Jker}\J\t^{\mu}T\J/\Jker\cong (T\times\Ga)\times (\Jker\bs  \Jker\t^{\l} \Jker\twtimes{\Jker}\Jker\t^{\mu}\Jker/\Jker)\times(T\times \Ga).
\end{equation}
Similarly, using left translation by $T\times \Ga$, we may write
\begin{equation}\label{T triple J}
\Jker\bs  \J\t^{\l} T\J\t^{\mu}T\J/\Jker\cong T\times \Ga\times (\Jker\bs  \Jker\t^{\l} \Jker\t^{\mu}\Jker/\Jker).
\end{equation}
Let $F_{\l,\mu}$ be the fiber of the multiplication map 
$$m_{\l,\mu}': \Jker\t^{\l}\Jker\twtimes{\Jker}\Jker\t^{\mu} \Jker\to \Jker\t^{\l}\Jker\t^{\mu} \Jker$$
over $\t^{\l+\mu}$.  Then the isomorphisms \eqref{TT conv} and \eqref{T triple J} imply the following diagram is Cartesian
\begin{equation*}
\xymatrix{T\times\Ga\times  F_{\l,\mu} \times T\times \Ga \ar[r]^-{\a_{\l,\mu}}\ar[d]^{m_{T\times\Ga}\c \pr} &\Jker\bs  \J\t^{\l} T\J\twtimes{\Jker}\J\t^{\mu}T\J/\Jker\ar[d]^{m_{\l,\mu}}\\
T\times \Ga\ar[r]^-{\a_{\l+\mu}} &  \Jker\bs  G\lr{\t}/\Jker
}
\end{equation*}
Here $\a_{\l,\mu}(h_{1},a_{1}, x, t_{2}, a_{2})=h_{1}\wt a_{1}x h_{2}\wt a_{2}$ (where $h_{i}\in T$, $x\in F_{\l,\mu}$, $a_{i}\in \Ga$, and $\wt a_{1}, \wt a_{2}$ are liftings of $a_{1}$ and $a_{2}$ to $\bT_{\infty}^{1}$), and $\a_{\l+\mu}=i_{\l+\mu}\k_{\l+\mu}$, so that $\a_{\l+\mu}(h,a)=\t^{\l+\mu}h\wt a$.  The left vertical arrow is the composition of the projection $\pr: T\times\Ga\times  F_{\l,\mu} \times T\times \Ga\to (T\times\Ga)^{2}$ with the group law $m_{T\times\Ga}: (T\times\Ga)^{2}\to T\times\Ga$. Therefore by proper base change
\begin{equation*}
\wh\k_{\l+\mu}^{*}\cF_{\l+\mu}=\a^*_{\l+\mu}(\wh\D_\l\star\wh\D_\mu)\cong m_{T\times\Ga, !}\pr_{!}(\wh\cL_{\l}\bt \un E\bt \wh\cL_{\mu})\j{d_{\l}+d_{\mu}+4r}.
\end{equation*}
By Lemma \ref{l:H inf conv fiber}, $F_{\l,\mu}$ is an affine space of dimension $(d_{\l}+d_{\mu}-d_{\l+\mu})/2$. The fundamental cycle of $F_{\l,\mu}$ gives a canonical isomorphism
\begin{equation*}
    R\Gamma_{c}(F_{\l,\mu},E)\cong E\j{-d_{\l}-d_{\mu}+d_{\l+\mu}}.
\end{equation*}
Therefore we have a canonical isomorphism
\begin{equation*}
\pr_{!}(\wh\cL_{\l}\bt \un E\bt \wh\cL_{\mu})\j{d_{\l}+d_{\mu}+4r}\cong \wh\cL_{\l}\bt\wh\cL_{\mu}\j{d_{\l}+d_{\mu}+4r}\ot R\Gamma_{c}(F_{\l,\mu},E)\cong \wh\cL_{\l}\bt\wh\cL_{\mu}\j{d_{\l+\mu}+4r}.
\end{equation*}
Therefore
\begin{equation}\label{F pull TGa}
\wh\k_{\l+\mu}^{*}\cF_{\l+\mu}\cong  \wh\cL_{\l}\star_{T\times\Ga}\wh\cL_{\mu}\j{d_{\l+\mu}+4r}\in \wh\Kir^{T\lmon}_{\l+\mu}(T\times\Ga).
\end{equation}
By Lemma \ref{l:conv Kir TGa}, we have a canonical isomorphism
\begin{equation*}
\wh\cL_{\l}\star_{T\times\Ga}\wh\cL_{\mu}=\wh J_{\l}(\wh\cL)\star_{T\times\Ga}\wh J_{\mu}(\wh \cL)\cong \wh J_{\l+\mu}(\wh\cL\star_{T}\wh\cL).
\end{equation*}
By Lemma \ref{l:L unit},  we have a canonical isomorphism
\begin{equation*}
\wh\cL\star_{T}\wh\cL\cong \wh\cL\j{-2r} 
\end{equation*}
we get
\begin{equation*}
\wh\cL_{\l}\star_{T\times\Ga}\wh\cL_{\mu}\cong \wh J_{\l+\mu}(\wh\cL)\j{-2r}\cong \wh\cL_{\l+\mu}\j{-2r}.
\end{equation*}
Combined with \eqref{F pull TGa} we get a canonical isomorphism
\begin{equation*}
\wh\k_{\l+\mu}^{*}\cF_{\l+\mu}\cong\wh\cL_{\l+\mu}\j{d_{\l+\mu}+2r} \quad \mbox{ or } \quad \cF_{\l+\mu}\cong\k_{\l+\mu*}\wh\cL_{\l+\mu}\j{d_{\l+\mu}+2r}.
\end{equation*}
Combined with \eqref{conv supp single orbit}, we conclude with a canonical isomorphism  $\wh\D_{\l}\star \wh\D_{\mu}\cong \wh\D_{\l+\mu}$.

\end{proof}

Similar argument applied to the convolution in the equivariant version $\eqHinf$ shows the following.

\begin{prop}\label{p:Teq conv}
    For $\l,\mu\in \xcoch(T)$, there is a canonical isomorphism
    \begin{equation*}
        c_{\l,\mu}: {}^T\D^T_{\l}\star {}^T\D^T_{\mu}\cong {}^T\D^T_{\l+\mu}.
    \end{equation*}
\end{prop}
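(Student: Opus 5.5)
We follow the proof of Proposition \ref{p:fm std conv}, replacing the free-monodromic objects on $T\times\Ga$ by $T$-equivariant objects on $(\pt/T)\times\Ga$. The convolution ${}^T\D^T_{\l}\star {}^T\D^T_{\mu}$ is computed from diagram \eqref{eqHinf conv diag}, restricted to the relevant part
\begin{equation*}
\dfrac{(\Jker T)\bs \J\t^{\l}T\J\twtimes{\Jker T}\J\t^{\mu}T\J/(\Jker T)}{\D\Ga}\xr{m_{\l,\mu}}\dfrac{(\Jker T)\bs G\lr{\t}/(\Jker T)}{\D\Ga}.
\end{equation*}
This gives
\begin{equation*}
{}^T\D^T_{\l}\star {}^T\D^T_{\mu}=m_{\l,\mu!}\bigl(p_1^*({}^T\cL^T_\l)\ot p_2^*({}^T\cL^T_\mu)\bigr)\j{d_\l+d_\mu}.
\end{equation*}
By Lemma \ref{l:rel in conv}(1), the only relevant points in the image lie in $\J\t^{\l+\mu}T\J$. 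Lemma \ref{l:H inf rel} and Corollary \ref{c:Kir gen} then show that the convolution equals $({}^Ti^T_{\l+\mu})_!\cF$ with $\cF\in {}^T\Hinf^T(\l+\mu)\cong D(\pt/T)$.

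To identify $\cF$, we pull back along ${}^T\k^T_{\l+\mu}: (\pt/T)\times\Ga\to (\Jker T)\bs\J\t^{\l+\mu}T\J/(\Jker T)$. We use the $T$-equivariant analogue of the Cartesian square in the proof of Proposition \ref{p:fm std conv}. In that square, the source over the relevant orbit decomposes as $(T\times\Ga)\times F_{\l,\mu}\times(T\times\Ga)$ modulo the left, middle and right $T$ actions, and the map to the target is induced by the group law. Equivalently, the fiber over a point of $(\pt/T)\times\Ga$ is $F_{\l,\mu}\times\Ga$ with an extra middle $T$ quotient, which is cancelled by the free $T$ acting on $\t^\l T$.

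By Lemma \ref{l:H inf conv fiber}, $F_{\l,\mu}$ is an affine space of dimension $(d_\l+d_\mu-d_{\l+\mu})/2$. Its fundamental class gives $R\G_c(F_{\l,\mu},E)\cong E\j{d_{\l+\mu}-d_\l-d_\mu}$, so after shifting the degrees match those of ${}^T\D^T_{\l+\mu}$. What remains is a computation in $\Kir((\pt/T)\times\Ga)$: we must show that the additive convolution in the $\Ga$-factor, combined with the $T$-quotient in the middle, sends ${}^T\cL^T_\l\boxtimes{}^T\cL^T_\mu$ to ${}^T\cL^T_{\l+\mu}$. Here ${}^T\cL^T_\nu$ corresponds to the constant sheaf on $\pt/T$.

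This last identification is the analogue of Lemma \ref{l:conv Kir TGa}. We plan to represent ${}^T\cL^T_\nu$ by the skyscraper $\d$ at $0\in\Ga$, as in Lemma \ref{l:D1}(1). The $\Ga$-convolution of two skyscrapers at $0$ is again a skyscraper. The middle $T$-quotient, with its free $T$ action, contributes the pushforward along $\pt/T\times\pt/T\leftarrow \pt/T\to$ via the diagonal, which sends $E\boxtimes E$ to $E$. One could instead take the route through Lemma \ref{l:right T equiv std}: apply $T$-averaging on both sides to Proposition \ref{p:fm std conv}, using $\pi^T_!\wh\D_\l\cong\D^T_\l$ and the left analogue. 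This requires checking compatibility of $\pi^T_!$ and left averaging with convolution.

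The main obstacle is the $\Gm$-twisting. The $\l$-action of $\Gm$ on $T$ is nontrivial, so identifying Kirillov objects with objects on $\pt/T$ must be checked to be compatible with the $(\l,\mu)$-action on the convolution space. After the $T$-quotient, the $s^{-\l}$ twist is absorbed by $T$-equivariance, and Example \ref{ex:Kir A1} with a trivial action on $\pt/T$ then applies. We also need the resulting isomorphism $c_{\l,\mu}$ to be canonical, which comes from the choice of fundamental class of $F_{\l,\mu}$.
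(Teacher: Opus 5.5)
Your proposal is correct and is essentially the paper's argument. The paper proves this proposition only by remarking that the proof of Proposition \ref{p:fm std conv} carries over to $\eqHinf$. Your adaptation is exactly that: reduce to the single relevant stratum $\J\t^{\l+\mu}T\J$ via Lemma \ref{l:rel in conv}, then compute the stalk using the affine fiber $F_{\l,\mu}$ of Lemma \ref{l:H inf conv fiber} and its fundamental class. Your choice of skyscraper representatives from Lemma \ref{l:D1}(1), with the $\l$-twist of $\Gm$ absorbed into the $T$-equivariance so that no analogue of Lemma \ref{l:conv Kir TGa} is needed, is also how the paper itself computes these convolutions in the proof of Lemma \ref{l:eq conv asso}.
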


Let $\dT$ be the torus over $E$ dual to $T$, i.e., $\xch(\dT)=\xcoch(T)$. We have a monoidal equivalence $\Rep(\dT)\cong \Vect_{\xcoch(T)}$ ($\xcoch(T)$-graded finite-dimensional vector spaces over $E$).

\begin{lemma}\label{l:eq conv asso}
    The functor
    \begin{equation*}
        {}^T\phi^T: \Rep(\dT)\to (\eqHinf)^\hs
    \end{equation*}
    sending $E(\l)$ (the one-dimensional representation of $\dT$ with weight $\l$) to ${}^T\D^T_\l$ has a canonical structure of a monoidal functor.
\end{lemma}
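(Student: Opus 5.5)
The plan is to equip ${}^T\phi^T$ with the isomorphisms $c_{\l,\mu}$ of Proposition \ref{p:Teq conv} as its tensor structure, and then verify the associativity (and unit) constraint.

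First, $\Rep(\dT)\cong\Vect_{\xcoch(T)}$ is semisimple with simple objects $E(\l)$. So it suffices to define the functor on simples (extended additively), to take $c_{\l,\mu}$ as the structure maps, and to check the following:
\begin{itemize}
\item the unit isomorphism ${}^T\D^T_0\cong\mathbf 1$ is compatible with $c_{0,\mu}$ and $c_{\l,0}$;
\item for each triple $\l,\mu,\nu$ the two composite isomorphisms
\begin{equation*}
{}^T\D^T_\l\star{}^T\D^T_\mu\star{}^T\D^T_\nu\cong{}^T\D^T_{\l+\mu+\nu}
\end{equation*}
coincide;
\item commutativity is not required, since only a monoidal (not braided) structure is claimed.
\end{itemize}

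The key observation is that $\End({}^T\D^T_{\l})$ in the heart is just $E$. To see this, note that ${}^T\D^T_\l$ is the $!$-extension from the single relevant stratum. By Corollary \ref{c:Kir gen} and adjunction, $\Hom({}^T\D^T_\l,{}^T\D^T_\l)=\Hom_{{}^T\Hinf^T(\l)}({}^T\cL^T_\l,{}^T\cL^T_\l)$. Under \eqref{THT strat cat} this is $\Hom_{D(\pt/T)}(E,E)$ in degree $0$, which is $E$. Consequently the two composites in the associativity check differ by a scalar $a(\l,\mu,\nu)\in E^\times$, and this scalar is a $3$-cocycle on $\xcoch(T)$.

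A cohomological remark is available but will not be used. Since $\xcoch(T)$ is free abelian, $H^3(\xcoch(T),E^\times)$ is generally nonzero (it contains alternating trilinear forms). So a cohomology argument alone will not do, and I will instead show that $a\equiv1$ directly.

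\textbf{Main step.} To show $a\equiv 1$, I would trace the construction of $c_{\l,\mu}$. In the proof of Proposition \ref{p:fm std conv} (and its equivariant analogue), the isomorphism is built from three ingredients:
\begin{enumerate}
\item proper base change along the Cartesian square for $m_{\l,\mu}$;
\item the fundamental class trivialization $R\Gamma_c(F_{\l,\mu},E)\cong E\j{-d_\l-d_\mu+d_{\l+\mu}}$;
\item the identification $J_\l\star J_\mu\cong J_{\l+\mu}$ of Lemma \ref{l:conv Kir TGa}. In the $T$-equivariant setting this becomes the monoidality of $D(\pt/T)$, so it is canonically associative.
\end{enumerate}
For the triple product, I would compute both bracketings through the triple convolution space $\J\t^\l\J\times^{\Jker}\J\t^\mu\J\times^{\Jker}\J\t^\nu\J$. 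Its fiber over $\t^{\l+\mu+\nu}$ is an iterated affine-space fibration. Concretely, it fibers over $F_{\l,\mu}$ with fiber $F_{\l+\mu,\nu}$, and also over $F_{\mu,\nu}$ with fiber $F_{\l,\mu+\nu}$.

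Both bracketings then reduce to the single fundamental class of this total fiber. This uses that fundamental classes are multiplicative in fibrations, which is the same iterated-torsor structure used in Lemma \ref{l:H inf conv fiber}. Base change isomorphisms are associative by the standard coherence of the six-functor formalism. Hence both composites equal the canonical isomorphism obtained directly from the triple convolution diagram, and $a=1$.

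\textbf{Expected obstacle.} The delicate point is the Kirillov-quotient bookkeeping. The isomorphism \eqref{conv supp single orbit}, restricting to the single relevant orbit, must be compatible for the triple convolution. I would handle this by working with the lifts to $D_{\Gm}$ (via $\k_!$), where the restriction to the relevant stratum is an honest isomorphism after killing irrelevant contributions, and by noting that the kernel pieces of the Kirillov quotient cause no ambiguity because the endomorphism space is one-dimensional. The unit axiom is immediate from $F_{0,\mu}=\pt$.
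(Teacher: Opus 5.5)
Your strategy is the paper's. You take the $c_{\l,\mu}$ of Proposition \ref{p:Teq conv} as the tensor structure and note that the two bracketings $\a,\b$ differ by a scalar since $\End({}^T\D^T_{\l+\mu+\nu})=E$. You then show the scalar is $1$ by identifying both stalk maps at $\t^{\l+\mu+\nu}$ with capping by the fundamental class of the triple fiber $F_{\l,\mu,\nu}$, via the partial multiplications. The paper works directly with the $\Jker$-coset realization ${}^T\D^T_\l\cong\io^T_{\l!}E\j{d_\l}$ of Lemma \ref{l:D1}(1), so the stalk is simply $R\G_c(F_{\l,\mu,\nu})\j{d_\l+d_\mu+d_\nu}$; your ingredient (3) and the $\J$-coset triple space are not needed. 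Two things must be corrected, and the second is precisely the step you defer.

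First, your fibrations are backwards. The map $m_{12}\colon(g_1,g_2,g_3)\mapsto(g_1g_2,g_3)$ goes to $F_{\l+\mu,\nu}$ with fibers $F_{\l,\mu}$, and $m_{23}$ goes to $F_{\l,\mu+\nu}$ with fibers $F_{\mu,\nu}$. There is no natural map $F_{\l,\mu,\nu}\to F_{\l,\mu}$.

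Second, and more seriously, $m_{12}$ lands in $F_{\l+\mu,\nu}$ only on the locus $F'\subset F_{\l,\mu,\nu}$ where $g_1g_2\in\Jker\t^{\l+\mu}T\Jker$. A priori $g_1g_2$ could lie in another double coset in the support of ${}^T\D^T_\l\star{}^T\D^T_\mu$. All such cosets are irrelevant by Lemma \ref{l:rel in conv}. On them the convolution is in general a nonzero sheaf, but it is invisible to $c_{\l,\mu}$, which exists only in the Kirillov quotient. On such a locus you cannot read off $z_\a$ as a relative fundamental class.

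Your remedy (``no ambiguity because the endomorphism space is one-dimensional'') does not address this. One-dimensionality only tells you that $\a$ and $\b$ differ by \emph{some} scalar. It does not tell you that the computation through $m_{12}$ and the one through $m_{23}$ see the same cycle.

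The paper closes this with a dimension count. The map $m_{12}\colon F'\to F_{\l+\mu,\nu}$ is surjective with affine-space fibers $F_{\l,\mu}$. Hence $F'$ is irreducible of dimension $d:=\frac12(d_\l+d_\mu+d_\nu-d_{\l+\mu+\nu})$ and $R\G_c(F')\cong E\j{-2d}$. The stalk computation gives $R\G_c(F_{\l,\mu,\nu})\cong E\j{-2d}$ as well. So the inclusion $F'\subset F_{\l,\mu,\nu}$ induces an isomorphism on $R\G_c$, and the complement has vanishing compactly supported cohomology. It is therefore empty, since a nonempty variety $Z$ of dimension $n$ has $\cohoc{2n}{Z}\neq0$.

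Thus $F'=F_{\l,\mu,\nu}$: the irrelevant cosets never meet the triple fiber. Consequently $z_\a$ is literally capping with the relative fundamental class of $m_{12}$ followed by that of $F_{\l+\mu,\nu}$, i.e.\ with the fundamental class of $F_{\l,\mu,\nu}$. The same argument with $m_{23}$ gives $z_\b$, hence $z_\a=z_\b$.
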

\begin{proof}
Using the isomorphisms constructed in Proposition \ref{p:Teq conv}, it remains to check that these isomorphisms satisfy associativity: for $\l,\mu,\nu\in\xcoch(T)$, we need to show that $\a:=c_{\l+\mu,\nu}\c(c_{\l,\mu}\star\id_\nu)$ and $\b:=c_{\l,\mu+\nu}\c(\id_\l\star c_{\mu,\nu})$ are the same isomorphisms between ${}^T\D^T_{\l}\star {}^T\D^T_\mu\star {}^T\D^T_{\nu}$ and ${}^T\D^T_{\l+\mu+\nu}$. These two isomorphisms differ only by a scalar, therefore it suffices to show that $\a$ and $\b$ agree after taking stalks at $\t^{\l+\mu+\nu}$. 

As in the proof of Proposition \ref{p:fm std conv}, we use $F_{\l,\mu}$ to denote the fiber over $\t^{\l+\mu}$ of the convolution map of the $\t^\l$ and $\t^\mu$ double cosets. Let $F_{\l,\mu,\nu}$ be the fiber over $\t^{\l+\mu+\nu}$ of the triple convolution map
\begin{equation*}
    m_{\l,\mu,\nu}: \Jker \t^\l T\Jker\times^{T\Jker}\Jker \t^\mu T\Jker\times^{T\Jker}\Jker \t^\nu T\Jker\to G\lr{\t}.
\end{equation*}
Using the realization of ${}^T\D^T_\l$ as $\io^T_{\l!}E\j{d_\l}$ as in Lemma \ref{l:D1}, the object $(i^T_{\l+\mu+\nu})^*({}^T\D^T_{\l}\star {}^T\D^T_\mu\star {}^T\D^T_{\nu})\in \Hinf^T(\l+\mu+\nu)\cong D^b(\Vect)$ is the stalk of the direct image $m_{\l,\mu,\nu!}\un E\j{d_\l+d_\mu+d_\nu}$. This stalk is $R\G_c(F_{\l,\mu,\nu})\j{d_\l+d_\mu+d_\nu}$. The stalks of both 
$\a$ and $\b$ at $\t^{\l+\mu+\nu}$ give isomorphisms 
\begin{equation*}
z_\a, z_\b: R\G_c(F_{\l,\mu,\nu})\j{d_\l+d_\mu+d_\nu}\isom E\j{d_{\l+\mu+\nu}}.
\end{equation*}
It suffices to show that $z_\a=z_\b$.

Let $d=\frac{1}{2}(d_\l+d_\mu+d_\nu-d_{\l+\mu+\nu})$. Then the above isomorphisms show that $\dim F_{\l,\mu,\nu}=d$ and it has only one $d$-dimensional irreducible component. The isomorphisms $z_\a$ and $z_\b$ each give an $E$-basis of $\cohoc{2d}{F_{\l,\mu,\nu}}(d)$. All we need to show is that the two bases are the same.

Now let $m_{12}: F_{\l,\mu\,\nu}\to G\lr{\t}/(T\Jker)$ be the multiplication of the first two factors. Let $F'_{\l,\mu,\nu}=m_{12}^{-1}(\Jker \t^{\l+\mu}T\Jker/T\Jker)$. Then $m_{12}$ restricts to a surjective map $F'_{\l,\mu,\nu}\to F_{\l+\mu,\nu}$ whose fibers are isomorphic to $F_{\l,\mu}$. We see $\dim F'_{\l,\mu,\nu}=\dim F_{\l+\mu,\nu}+\dim F_{\l,\mu}=d$, and $R\G_c(F'_{\l,\mu,\nu})\cong E\j{-2d}$. In particular, the inclusion $F'_{\l,\mu,\nu}\incl F_{\l,\mu,\nu}$ induces an isomorphism on $R\G_c$, which implied $F'_{\l,\mu,\nu}=F_{\l,\mu,\nu}$. By construction, $z_\a\in \cohoc{2d}{F_{\l,\mu,\nu}}(d)$ then corresponds to the composition
\begin{equation*}    R\G_c(F_{\l,\mu,\nu})\j{d_\l+d_\mu+d_\nu}\isom R\G_c(F_{\l+\mu,\nu})\j{d_{\l+\mu}+d_\nu}\isom E\j{d_{\l+\mu+\nu}}.
\end{equation*}
Here the first map is capping with the relative fundamental class of $m_{12}: F_{\l,\mu,\nu}\to F_{\l+\mu, \nu}$ and the second one is capping with the fundamental class of $F_{\l+\mu,\nu}$. Altogether we see that $z_\a$ is capping with the fundamental class of $F_{\l,\mu,\nu}$.

Similarly, by considering the multiplication of the second and third factors in $F_{\l,\mu,\nu}$ we get a map $m_{23}:F_{\l,\mu,\nu}\to F_{\l,\mu+\nu}$ whose fibers are isomorphic to $F_{\mu,\nu}$. The map $z_\b$ can be similarly written as a composition, from which we see that it is also given by capping with the fundamental class of $F_{\l,\mu,\nu}$. This shows that $z_\a=z_\b$ and finishes the proof.
\end{proof}

\begin{cor}\label{c:mon gr vec to Hinf} Consider the additive functor 
\begin{equation}\label{phi RT}
\phi: \Rep(\dT)\to \hHim
\end{equation}
sending $E(\l)$ to $\wh\D_{\l}$. Then $\phi$ has a canonical structure of a monoidal functor.
\end{cor}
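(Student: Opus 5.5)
The plan is to mirror the proof of Lemma \ref{l:eq conv asso}, using the isomorphisms $\wh c_{\l,\mu}$ of Proposition \ref{p:fm std conv} as the tensor structure on $\phi$. The unit constraint comes from Lemma \ref{l:H0}: $\wh\D_0\cong \io_*\wh\cL\j{2r}$ is the monoidal unit of $\hHim$. I will first check that $\wh c_{0,\mu}$ and $\wh c_{\l,0}$ agree with the unit isomorphisms. This reduces, via the proof of Proposition \ref{p:fm std conv}, to the unit structure of $\wh\cL\j{2r}$ in $\wh D^{\mon}(T)$ (Lemma \ref{l:L unit}) transported through Lemma \ref{l:conv Kir TGa}, since $F_{0,\mu}$ and $F_{\l,0}$ are points. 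The main content is then the associativity constraint: I need to show that
\[
\wh\a:=\wh c_{\l+\mu,\nu}\c(\wh c_{\l,\mu}\star\id)
\quad\text{and}\quad
\wh\b:=\wh c_{\l,\mu+\nu}\c(\id\star \wh c_{\mu,\nu})
\]
coincide as isomorphisms $\wh\D_\l\star\wh\D_\mu\star\wh\D_\nu\isom\wh\D_{\l+\mu+\nu}$.

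To compare them, I will compute the triple convolution directly, exactly as in Proposition \ref{p:fm std conv}. By Lemma \ref{l:rel in conv}, applied twice, it is supported on the single relevant stratum $\J\t^{\l+\mu+\nu}T\J$. Pulling back along $\a_{\l+\mu+\nu}$, the Cartesian diagram gives
\[
\wh\k^*_{\l+\mu+\nu}(\wh\D_\l\star\wh\D_\mu\star\wh\D_\nu)\cong (\wh\cL_\l\star\wh\cL_\mu\star\wh\cL_\nu)\ot R\G_c(F_{\l,\mu,\nu})\j{\cdots}.
\]
Both $\wh\a$ and $\wh\b$ factor as a tensor product of two isomorphisms. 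The first is an isomorphism of the $T\times\Ga$-part, built from Lemma \ref{l:conv Kir TGa} and the unit isomorphism $\wh\cL\star_T\wh\cL\cong\wh\cL\j{-2r}$ of Lemma \ref{l:L unit}. The second is a trivialization of $R\G_c(F_{\l,\mu,\nu})$.

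For the fiber factor I reuse verbatim the argument of Lemma \ref{l:eq conv asso}. It shows $F_{\l,\mu,\nu}$ fibers over $F_{\l+\mu,\nu}$ (resp.\ $F_{\l,\mu+\nu}$) with fibers $F_{\l,\mu}$ (resp.\ $F_{\mu,\nu}$). Hence both trivializations equal capping with the fundamental class of $F_{\l,\mu,\nu}$.

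For the $T\times\Ga$ factor, I need two inputs. The first is that the isomorphisms of Lemma \ref{l:conv Kir TGa} are themselves associative. This should hold because they are built from Fourier transform, proper base change and Künneth, which are all compatible with iterated convolution via the functoriality in Remark \ref{r:Kir fun}. The second is that the unit structure on $\wh\cL\j{2r}$ is associative, i.e.\ it is a genuine unital monoidal structure, which is what \cite[Lemma A.3.6]{BY} supplies.

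The obstacle I expect is this $T\times\Ga$ factor: unlike the equivariant case, $\wh\a$ and $\wh\b$ cannot be compared "up to a scalar and then at a stalk". This is because $\End(\wh\D_{\l+\mu+\nu})$ is the completed group algebra $\wh{E[\pi_1(T)]}$, not $E$. So I would first try to sidestep the issue: any two isomorphisms differ by a unit $u$ of this completed local ring. I would then show $u=1$ by two checks. Applying $\pi^T_!$, Lemma \ref{l:right T equiv std} sends everything to the $T$-equivariant standards, where Lemma \ref{l:eq conv asso} applies, so this reduces $u$ modulo the augmentation ideal to $1$. To get the full unit, I would use that the constructions are functorial in the coefficient local system, replacing $\wh\cL$ by its finite quotients, so the comparison is $\pi_1(T)$-equivariant on free rank one modules and hence determined by its reduction. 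If this shortcut fails, I would fall back on the direct coherence argument above.
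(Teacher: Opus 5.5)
Your reduction to associativity of the $\wh c_{\l,\mu}$ is right, and so is your diagnosis of the obstacle. Since $\Aut(\wh\D_{\l+\mu+\nu})\cong\Aut(\wh\cL)$ is the group of units of the completed group algebra, the ``agree up to a scalar, then check at a stalk'' trick of Lemma \ref{l:eq conv asso} is not available. Your first check is also sound: pushing forward by $\pi^T_!$ shows that $u:=\wh\b\c\wh\a^{-1}$ is $1$ modulo the augmentation ideal $I$.

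The gap is in the second step of your shortcut. Functoriality in the coefficient local system and $\pi_1(T)$-equivariance cannot show that $u$ is determined by its reduction. Take $\gamma\in\pi_1(T)$ and consider multiplication by $\gamma$ on $\wh\cL$, i.e.\ on every finite quotient $E[\pi_1(T)]/I^n$. This is a $\pi_1(T)$-equivariant automorphism, compatible with the transition maps, and even natural on all of $\wh D^{\mon}(T)$, since it is the monodromy automorphism of the identity functor. It is $1$ modulo $I$, yet it is not the identity. Nothing in your argument excludes $u$ being such a unit.

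Your fallback coherence argument is not wrong in principle. But its only nontrivial inputs are only asserted (``should hold''): that the isomorphisms of Lemma \ref{l:conv Kir TGa} are associative, and that $\wh\a$ and $\wh\b$ genuinely split as a $T\times\Ga$ part tensored with a fiber part. So as written, the proposal does not prove associativity.

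The missing idea is a rigidity statement that kills the higher-order terms of $u$; the paper gets it from weights. Spread out to a finite base field, where all the sheaves and the isomorphisms $\wh c_{\l,\mu}$ carry canonical Weil structures. Then $u$ is a Frobenius-compatible automorphism of $\wh\cL$. By \cite[Lemma A.4.3]{BY}, $\End(\wh\cL)\cong\wh{\Sym}_E(\homog{1}{T,E})$. Since $\homog{1}{T,E}$ is pure of weight $-2$, the degree-$k$ part has weight $-2k$, so the only Frobenius-compatible endomorphisms of $\wh\cL$ are scalars. Hence $u$ is a scalar, and comparing with the $T$-equivariant version (your $\pi^T_!$ step, which feeds into Lemma \ref{l:eq conv asso}) shows that scalar is $1$. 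With this replacing your ``determined by its reduction'' step, the rest of your outline goes through, including the unit checks that the paper leaves implicit.
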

\begin{proof}
    The image of $\phi$ lies in the abelian category $\wh \cH^{\mon, \hs}_{\infty}[r]$, therefore it suffices to check that the canonical isomorphisms $\wh c_{\l,\mu}$ constructed in Proposition \ref{p:fm std conv} satisfy associativity: for $\l,\mu,\nu\in \xcoch(T)$, the two isomorphisms $\wh \a:=\wh c_{\l+\mu,\nu}\c(\wh c_{\l,\mu}\star\id_\nu)$ and $\wh\b:=\wh c_{\l,\mu+\nu}\c(\id_\l\star \wh c_{\mu,\nu})$ between $\wh \D_\l\star \wh \D_\mu\star\wh\D_\nu$ and $\wh\D_{\l+\mu+\nu}$ are the same. Let $\wh\g:=\wh \b\c \wh \a^{-1}\in \Aut(\wh\D_{\l+\mu+\nu})\cong \Aut(\wh\cL)$. By the usual spreading-out argument, we may reduced to the case where the base field $k$ is a finite field, and the relevant sheaves all carry canonical Weil structures. The canonical maps $\wh c_{\l,\mu}$ also respect Weil structures. In particular, $\wh\g$ respects the Weil structure of the universal unipotently monodromic local system $\wh\cL$ on $T$. By \cite[Lemma A.4.3]{BY}, we have
    \begin{equation*}
        \End(\wh\cL)\cong \wh{\Sym}_E(\homog{1}{T,E})
    \end{equation*}
    Here $\wh\Sym$ means completion with respect to the augmentation ideal. Since $\homog{1}{T,E}$ has weight $-2$,  we see that the only endomorphisms of $\wh\cL$ preserving Weil structures are scalars in $E$. Since $\wh\g$ is a scalar, it is the same scalar as its $T$-equivariant version, which is shown to be $1$ in Lemma \ref{l:eq conv asso}. This finishes the proof.
\end{proof}


Recall the monoidal full embedding $\io_{*}: \wh D^{\mon}(T)\incl \hHim$ from Lemma \ref{l:H0}. 

We denote by $\wh D^{\mon}(T)\ot \Rep(\dT)$ the category of $\xcoch(T)$-graded objects in $D(T)$. This is a monoidal category. 
\begin{theorem}\label{th:Hinf}
Consider the additive functor
\begin{equation*}
\Phi: \wh D^{\mon}(T)\ot \Rep(\dT)\to \hHim
\end{equation*}
sending $\cF(\l)$ (for  $\cF\in \wh D^{\mon}(T)$ put in degree $\l\in\xcoch(T)$) to $\io_{*}\cF\star\wh\D_{\l}$.  Then $\Phi$ has a canonical monoidal structure making it a monoidal equivalence.
\end{theorem}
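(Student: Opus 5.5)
The plan has three steps: build the monoidal structure on $\Phi$, prove full faithfulness, and prove essential surjectivity.

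\textbf{Monoidal structure.} The key input is a commutation isomorphism $\wh\D_\l\star\io_*\cF\cong \io_*\cF\star\wh\D_\l$, natural in $\cF\in\wh D^{\mon}(T)$. Since $T$ normalizes $\Jker$ and commutes with $\t^\l$, left and right translation by $T$ preserve the double coset $\J\t^\l T\J$. Under the identification of $\hHim(\l)$ with $\wh D^{\mon}(T)$ via $\wh\k_{\l*}\wh J_\l$, both sides are computed, exactly as in the proof of Proposition \ref{p:fm std conv}, as $\wh\k_{\l*}\wh J_\l(\cF\star_T\wh\cL)\j{d_\l}$. This uses Lemma \ref{l:conv Kir TGa} with one index equal to $0$ and Lemma \ref{l:L unit}. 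The main point to check is that the $\l$-twisted $\Gm$-action on $T$ does not obstruct this; it does not, because $T$ is commutative and the twist only enters through $m_\l$ in Lemma \ref{l:fm std}, which is a homomorphism.

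With this commutation in hand, the monoidal structure on $\Phi$ is assembled from three pieces: the monoidal structure on $\io_*$ (Lemma \ref{l:H0}), that on $\phi$ (Corollary \ref{c:mon gr vec to Hinf}), and the commutation isomorphisms. One must check coherence, namely compatibility of the commutation isomorphism with $\wh c_{\l,\mu}$ and with the monoidal structure of $\io_*$. I would do this by the same weight argument as in Corollary \ref{c:mon gr vec to Hinf}. Any two candidate isomorphisms differ by an automorphism of $\wh\D_\nu$, that is, an element of $\Aut(\wh\cL)$ respecting Weil structures, hence a scalar. That scalar is then detected after applying $\pi^T_!$ (Lemma \ref{l:right T equiv std}) and passing to stalks at $\t^\nu$, as in Lemma \ref{l:eq conv asso}.

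\textbf{Full faithfulness.} The object $\Phi(\cF(\l))=\io_*\cF\star\wh\D_\l$ is supported on the single stratum $\J\t^\l T\J$. It equals $i_{\l!}$ of the object corresponding to $\cF$ (up to shift) under the equivalence $\wh\k_{\l*}\wh J_\l:\wh D^{\mon}(T)\isom\hHim(\l)$. For $\l\ne\mu$ we need
$$\Hom(i_{\l!}A,i_{\mu!}B)=\Hom(A,i_\l^!i_{\mu!}B)=0.$$
By Lemma \ref{l:remove irr} and Corollary \ref{c:Kir gen}, the $!$- and $*$-extensions from each relevant stratum agree in the Kirillov category, because the closure of a stratum differs from it only by irrelevant points together with other relevant strata. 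I would argue that $i_\l^!i_{\mu!}=i_\l^!i_{\mu*}=0$. The point requiring care is whether the closure of $\J\t^\mu T\J$ meets $\J\t^\l T\J$ only at irrelevant points; I expect this to follow from Lemma \ref{l:rel in conv} or from a direct check, and this is the step I expect to be the main obstacle. If it fails, I would instead use the right adjoint of $\wh\D_{-\l}\star(-)$, which is invertible, to reduce to $\l=0$. There $\Hom$'s out of $\io_*\cF$ into objects supported on $\J\t^{\mu}T\J$ with $\mu\neq0$ vanish, because $\io_*$ is the pushforward from a closed stratum. Within a single stratum, $\Hom$'s match $\Hom$'s in $\wh D^{\mon}(T)$ by the equivalence already established.

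\textbf{Essential surjectivity.} Corollary \ref{c:Kir gen}, applied with Lemma \ref{l:H inf rel}, shows that $\hHim$ is generated by the $i_{\l!}\hHim(\l)$. Each of these is the essential image of $\cF\mapsto\Phi(\cF(\l))$, because $\wh\k_{\l*}\wh J_\l$ is an equivalence. Hence the image of the fully faithful functor $\Phi$ generates. The semi-orthogonal decomposition has vanishing gluing by the previous step, so it is an orthogonal direct sum, and $\Phi$ is an equivalence.
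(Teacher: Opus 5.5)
You have the same overall architecture as the paper. The monoidal structure comes from those of $\io_*$ and $\phi$; your explicit commutation isomorphism $\wh\D_\l\star\io_*\cF\cong\io_*\cF\star\wh\D_\l$ makes precise something the paper leaves implicit. Essential surjectivity comes from generation by the strata $i_{\l!}\hHim(\l)$, each of which is the image of $\io_*(-)\star\wh\D_\l$. Full faithfulness is reduced, using invertibility of $\wh\D_\l$, to vanishing of Homs between different strata. The gap is in that last step: you have not established $\Hom(\io_*\cF\star\wh\D_\l,\io_*\cG\star\wh\D_\mu)=0$ for $\l\neq\mu$.

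\textbf{First route.} It needs the cleanness $i_{\mu!}\isom i_{\mu*}$. Lemma~\ref{l:remove irr} and Corollary~\ref{c:Kir gen} do not give this. They only let you discard irrelevant points, and say nothing when the closure of $\J\t^{\mu}T\J$ meets other relevant strata. In the paper, cleanness is deduced \emph{from} this theorem, so invoking it here would be circular.

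\textbf{Fallback route.} It uses the adjunction in the wrong direction. After moving the source to $\l=0$, you need $\Hom(\io_*\cF,Y)=0$ for $Y=i_{\nu!}(\cdots)$ with $\nu\neq 0$. But $\Hom(i_{0*}A,Y)=\Hom(A,i_0^!Y)$, and closedness of $\J T$ does not force $i_0^!i_{\nu!}=0$. Compare $i^!j_!\cong i^*j_*[-1]$ for an open $j$ with closed complement $i$: the $!$-extension has vanishing stalks on the closed piece, yet its $!$-restriction there need not vanish. Closedness of a stratum controls Homs \emph{into} $i_{0*}$, via $\Hom(Y,i_{0*}B)=\Hom(i_0^*Y,B)$ and $i_0^*i_{\nu!}=0$. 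It does not control Homs out of it.

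\textbf{Fix.} Translate by the target's index instead of the source's. Applying the autoequivalence $(-)\star\wh\D_{-\mu}$ turns the Hom into $\Hom(\io_*\cF\star\wh\D_{\l-\mu},\io_*\cG)$. Its target is pushed forward from the closed stratum $\J T$. Its source is a $!$-extension from the disjoint stratum $\J\t^{\l-\mu}T\J$. So the Hom vanishes, and this is exactly the paper's argument. To keep your direction, you would instead need the separate geometric fact that $\J T$ avoids the closure of $\J\t^{\nu}T\J$ for $\nu\neq0$. This can be checked using the triangular decomposition of Lemma~\ref{l:tri decomp} inside the open big cell, taking $B$ with $\nu$ dominant, but your proposal does not supply it.
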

\begin{proof}
Monoidal structure of $\Phi$ comes from the monoidal structure of $\io_{*}$ and $\phi$ (see Corollary \ref{c:mon gr vec to Hinf}). It remains to show that $\Phi$ is an equivalence of the underlying categories.

We show $\Phi$ is essentially surjective. Let $\cS_{\l}$ be the essential image of the full embedding $i_{\l!}: \hHim(\l)\incl \hHim$. From the convolution diagram we see that convolving with $\wh\D_{\l}$ gives an equivalence
\begin{equation*}
(-)\star\wh\D_{\l}: \cS_{0}\isom\cS_{\l}.
\end{equation*}
Since $\cS_{0}$ and $\wh\D_{\l}$ are clearly in the image of $\Phi$, $\cS_{\l}$ is in the essential image of $\Phi$ for all $\l\in \xcoch(T)$.  By Lemma \ref{l:H inf rel} and Corollary \ref{c:Kir gen}, $\hHim$ is generated by $\cS_{\l}$ for all $\l\in\xcoch(T)$, therefore  $\Phi$ is essentially surjective..

To show $\Phi$ is fully faithful, it suffices to show for $\cF,\cG\in \wh D^{\mon}(T)$ and $\l,\mu\in \xcoch(T)$
\begin{equation*}
\Hom_{\hHim}(\io_{*}\cF\star\wh\D_{\l},\io_{*} \cG\star\wh\D_{\mu})=\begin{cases}\Hom_{\wh D^{\mon}(T)}(\cF,\cG) & \l=\mu,\\ 0 & \l\ne\mu.\end{cases}
\end{equation*}
If $\l=\mu$, using that $\wh\D_{\l}$ is invertible, we get
\begin{equation*}
\Hom_{\hHim}(\io_{*}\cF\star\wh\D_{\l},\io_{*} \cG\star\wh\D_{\l})\cong \Hom_{\cH_{\infty}}(\io_{*}\cF,\io_{*} \cG)=\Hom_{\wh D^{\mon}(T)}(\cF,\cG)
\end{equation*}
because $\io_{*}$ is fully faithful.  

If $\l\ne\mu$, we may convolve both objects by $\wh\D_{-\mu}$ and get
\begin{equation*}
\Hom_{\hHim}(\io_{*}\cF\star\wh\D_{\l},\io_{*} \cG\star\wh\D_{\mu})\cong \Hom_{\hHim}((\io_{*}\cF)\star\wh\D_{\l-\mu},\io_{*} \cG).
\end{equation*}
Now $\io_{*}\cG$ and $(\io_{*}\cF)\star\wh\D_{\l-\mu}$ are both $!$-extensions from disjoint strata $\Jker\bs \J T/\Jker$ and $\Jker\bs \J \t^{\l-\mu}T\J/\Jker$ of $\Jker\bs G\lr{\t}/\Jker$, their Hom space is zero. This finishes the proof that $\Phi$ is an equivalence.

\end{proof}

\begin{cor} The natural transformation of functors $i_{\l!}\to i_{\l*}\in \Fun(\hHim(\l), \hHim)$ is an isomorphism.
\end{cor}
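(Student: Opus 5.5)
The plan is to test the map $i_{\l!}\cF\to i_{\l*}\cF$ after convolving with the invertible objects $\wh\D_{\mu}$, which reduces everything to the closed stratum $\l=0$. Both functors $i_{\l!},i_{\l*}:\hHim(\l)\to\hHim$ are exact, so it suffices to check the claim on generators of $\hHim(\l)$. Through $\wh\k_{\l*}\wh J_\l$, these generators are the objects $\wh\k_{\l*}\wh J_\l(\cF)$ for $\cF\in\wh D^{\mon}(T)$.

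By the proof of Theorem \ref{th:Hinf}, right convolution with $\wh\D_{\l}$ is an autoequivalence of $\hHim$. It carries the stratum $\cS_0$ onto $\cS_\l$, and it carries $i_{0!}\wh\k_{0*}\wh J_0(\cF)=\io_*\cF$ to $i_{\l!}$ of the corresponding object, by Proposition \ref{p:fm std conv} and the monoidality in Theorem \ref{th:Hinf}. So the first step is to show that $(-)\star\wh\D_\l$ also intertwines $i_{0*}$ with $i_{\l*}$. That is, for $\cG\in\hHim(0)$ we want a natural isomorphism
\[
(i_{0*}\cG)\star\wh\D_\l\cong i_{\l*}(\cG'),
\]
where $\cG'$ is the transported object.

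To establish this, we use Lemma \ref{l:rel in conv}: the only relevant points in the image of the convolution $\J T\J\times^{\Jker}\J\t^\l T\J$ lie in the single double coset $\J\t^{\l}T\J$. The convolution map restricted to the closure of the source is proper over the relevant locus, since $\J T\J=\Jker T\Jker\cdot\Ga$ is already closed. Base change then shows that $(i_{0*}\cG)\star\wh\D_\l$ has restriction $\cG'$ to the open stratum and vanishing restriction elsewhere modulo irrelevant points. Lemma \ref{l:remove irr} discards the irrelevant points. Comparing the resulting $*$-restriction and $!$-restriction then identifies it with both $i_{\l!}\cG'$ and $i_{\l*}\cG'$.

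A cleaner formulation of the first step is by adjunction. Since $(-)\star\wh\D_{\l}$ is an equivalence with inverse $(-)\star\wh\D_{-\l}$, it preserves the recollement data of the stratification: it sends $\cS_0$ to $\cS_\l$. It therefore sends the right orthogonal of all strata other than $0$ to the right orthogonal of all strata other than $\l$, because by Lemma \ref{l:rel in conv} it permutes strata as $\nu\mapsto\nu+\l$. Hence it conjugates the right adjoint $i_{0*}$ of $i_0^*$ to $i_{\l*}$.

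The second step is the case $\l=0$. Here the stratum $\Jker\bs\Jker T/\Jker\cdot\Ga$ is closed in $\frac{\Jker\bs G\lr{\t}/\Jker}{\D\Ga}$. We need $i_{0!}=i_{0*}$ on the relevant stratum $\J T\J$. The closure of $\J T\J$ consists of $\J T\J$ itself, which is closed because it is $\J$ times a closed torus, so $i_0$ is a closed embedding up to irrelevant points. Lemma \ref{l:remove irr}(2) then gives $i_{0!}\cong i_{0*}$ on Kirillov categories.

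The main obstacle is the first step: checking that convolution with $\wh\D_\l$ commutes with $*$-extension, not only with $!$-extension. The permutation-of-strata argument is the approach I would try first. The one point to verify carefully is that strata $\nu$ with $\nu\ne\l$ carry no relevant contributions, so that the semi-orthogonal decomposition of Corollary \ref{c:Kir gen} is genuinely respected in both directions. This is exactly what Lemma \ref{l:rel in conv} supplies, applied to both $\wh\D_\l$ and $\wh\D_{-\l}$.
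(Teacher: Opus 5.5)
Your adjunction (right-orthogonal) version of Step~1, combined with Step~2, is correct and is essentially the paper's proof. The paper cites Theorem~\ref{th:Hinf} to get $\hHim=\bigoplus_{\l}\cS_{\l}$, hence $\RHom(\cS_{\mu},\cS_{\l})=0$ for $\mu\neq\l$, so $i_{\l!}\cF$ is already $*$-extended. The Hom-vanishing in that theorem is proved exactly as you do: translate by the invertible $\wh\D$'s to the stratum $\J T=\ev_{\infty}^{-1}(T)$. That stratum is honestly closed, so $i_{0!}=i_{0*}$ there without any appeal to Lemma~\ref{l:remove irr}.

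You should discard the first ``base change'' version of Step~1. Closedness of $\J T$ does not make the convolution map proper, since its fibers contain a copy of $\Ga\times T$. Moreover, base change along $m_!$ together with support considerations only controls $*$-restrictions. It shows that the convolution lies in $\cS_{\l}$, but says nothing about its $!$-restrictions to the other strata, and that is exactly what is at stake.
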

\begin{proof} We use the notation $\cS_{\l}$ from the proof of Theorem \ref{th:Hinf}, which is the essential image of $i_{\l!}$. Theorem \ref{th:Hinf} implies that $\hHim$ is the direct sum of $\cS_{\l}$ for $\l\in \xcoch(T)$. In particular, for $i_{\l!}\cF\in \cS_{\l}$ and $i_{\mu!}\cG\in \cS_{\mu}$, and $\l\ne\mu$,  we have $\RHom(i_{\mu!}\cG,i_{\l!}\cF)=0$. This implies $i_{\l!}\cF\isom i_{\l*}\cF$ for any $\cF\in \hHim(\l)$.
\end{proof}

\sss{$T$-equivariant version}
We briefly record the description of the bi-$T$-equivariant monoidal category ${}^{T}\cH^{T}_{\infty}$ parallel to the discussions on $\hHim$. The proofs are similar to the monodromic case, which we omit.

The map ${}^{T}\io^{T}: \pt/T\to \frac{(\Jker T)\bs (\J T)/(\Jker T)}{\D\Ga}\incl \frac{(\Jker T)\bs G\lr{\t}/(\Jker T)}{\D\Ga}$ induces a monoidal full subcategory ${}^{T}\io^{T}_{*}: D(\pt/T)\cong {}^{T}\Hinf^{T}(0)\incl {}^{T}\Hinf^{T}$.  

\begin{prop}\label{p:Hinf equiv}
\begin{enumerate}
\item The natural transformation of functors ${}^{T}i^{T}_{\l!}\to {}^{T}i^{T}_{\l*}\in \Fun({}^{T}\Hinf^{T}(\l), {}^{T}\Hinf^{T})$ is an isomorphism. 
\item The functor
\begin{equation*}
{}^{T}\Phi^{T}: D(\pt/T)\ot \Rep(\dT)\to {}^{T}\Hinf^{T}
\end{equation*}
sending $\cF(\l)$ to $\io_{*}\cF\star({}^{T}\D^{T}_{\l})$ has a canonical monoidal structure making it into a monoidal equivalence.
\item The functor ${}^{T}\Phi^{T}$ restricts to a monoidal equivalence
\begin{equation}\label{RepT Hinf}
\Rep(\dT)\isom \Hinfhs
\end{equation}
sending $E(\l)$ to ${}^{T}\D^{T}_{\l}$ for all $\l\in \xcoch(T)$.
\end{enumerate}
\end{prop}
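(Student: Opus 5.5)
We follow the template of the monodromic case (Theorem \ref{th:Hinf} and the corollary after it), replacing $\wh D^{\mon}(T)$ by $D(\pt/T)\cong {}^{T}\Hinf^{T}(0)$ and the free-monodromic standards $\wh\D_\l$ by ${}^T\D^T_\l$.

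\textbf{Step 1: the orbits are mutually orthogonal and the $\l$-orbit is a translate of the $0$-orbit.} Let ${}^T\cS^T_\l\subset {}^T\Hinf^T$ be the essential image of ${}^Ti^T_{\l!}$. By Lemma \ref{l:H inf rel} and Corollary \ref{c:Kir gen}, these subcategories generate ${}^T\Hinf^T$. By Proposition \ref{p:Teq conv}, ${}^T\D^T_\l$ is invertible with inverse ${}^T\D^T_{-\l}$. Convolving the diagram \eqref{eqHinf conv diag} restricted to the $0$ and $\l$ double cosets, Lemma \ref{l:rel in conv}(1) shows the only relevant points lie in $\J\t^\l T\J$. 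The argument giving \eqref{conv supp single orbit} then shows $(-)\star{}^T\D^T_\l$ sends ${}^T\cS^T_0$ into ${}^T\cS^T_\l$, and likewise for ${}^T\D^T_{-\l}$ in the reverse direction. Hence $(-)\star{}^T\D^T_\l:{}^T\cS^T_0\isom{}^T\cS^T_\l$.

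For $\l\ne\mu$, take $i_{\l!}\cF$ and $i_{\mu!}\cG$. Convolving both by ${}^T\D^T_{-\mu}$ reduces the Hom between them to a Hom between $!$-extensions from the two disjoint strata of $\l-\mu$ and $0$. Using the $0$-stratum, which is closed since it is the $T$-orbit of the unit, we get $\RHom({}^Ti^T_{\l!}\cF,{}^Ti^T_{0!}\cG)=0$. The reverse Hom vanishes by the symmetric argument, or by Verdier duality. So ${}^T\Hinf^T=\bigoplus_\l {}^T\cS^T_\l$, and we obtain (1) exactly as in the corollary to Theorem \ref{th:Hinf}: the cone of ${}^Ti^T_{\l!}\to{}^Ti^T_{\l*}$ restricts to zero on the $\l$-stratum yet lies in the orthogonal of ${}^T\cS^T_\l$, so it vanishes.

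\textbf{Step 2: monoidal equivalence (2).} ${}^T\io^T_*$ is monoidal by the analogue of Lemma \ref{l:H0}, which is an easier diagram chase since no completion is needed. The functor $\l\mapsto{}^T\D^T_\l$ is monoidal by Lemma \ref{l:eq conv asso}. To combine them we need a commutativity constraint $\io_*\cF\star{}^T\D^T_\l\cong{}^T\D^T_\l\star\io_*\cF$. I expect this to be the main technical obstacle. I would construct it geometrically, using that $t^\l$ commutes with $T$, so that left and right translation by $T$ on $\Jker\t^\l T\Jker$ agree. Both sides are then identified with ${}^Ti^T_{\l!}$ of $\cF$ transported along this identification, and the hexagon/associativity checks reduce to stalk computations as in Lemma \ref{l:eq conv asso}.

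Essential surjectivity of ${}^T\Phi^T$ follows from Step 1 and generation. Full faithfulness follows as in Theorem \ref{th:Hinf}: for $\l=\mu$ use invertibility of ${}^T\D^T_\l$ and full faithfulness of ${}^T\io^T_*$, and for $\l\ne\mu$ use the orthogonality from Step 1.

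\textbf{Step 3: the heart (3).} Under (2), ${}^T\Hinf^T\cong\bigoplus_\l D(\pt/T)$, with the $\l$-summand realized by $\cF\mapsto{}^Ti^T_{\l!}(\cF\ot{}^T\cL^T_\l)\j{d_\l}$. The object ${}^T\D^T_\l$ is perverse, and ${}^Ti^T_\l$ is affine with $!=*$ by (1), so ${}^Ti^T_{\l!}$ is t-exact up to the normalizing shift $\j{d_\l}$. The heart is therefore $\bigoplus_\l D(\pt/T)^\hs$. Here the t-structure on $D(\pt/T)$ is the one pulled back to $\pt$, whose heart is $\Vect$, since $T$ is connected and $\upH^*(\pt/T)$ sits in nonnegative degrees. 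This heart equals $\Vect_{\xcoch(T)}\cong\Rep(\dT)$, and monoidality is inherited from (2) and Lemma \ref{l:eq conv asso}.
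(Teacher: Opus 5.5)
Your proposal is correct and follows the paper's route: the paper omits this proof as a repetition of the monodromic case (Theorem~\ref{th:Hinf} and the corollary after it), and your Steps 1--3 transcribe that template, with the added (and correct) remark that the commutation $\io_*\cF\star{}^{T}\D^{T}_{\l}\cong{}^{T}\D^{T}_{\l}\star\io_*\cF$, needed for the monoidal structure, comes from the stabilizer of $\t^{\l}$ in $T\times T$ being the diagonal. One small fix in the derivation of (1): the cone $C$ of ${}^{T}i^{T}_{\l!}\cF\to{}^{T}i^{T}_{\l*}\cF$ vanishes because it is right-orthogonal to every ${}^{T}\cS^{T}_{\mu}$ (for $\mu\ne\l$ this uses $\RHom({}^{T}i^{T}_{\mu!}\cG,{}^{T}i^{T}_{\l!}\cF)=0$ together with $({}^{T}i^{T}_{\l})^{*}\,{}^{T}i^{T}_{\mu!}=0$), whereas the two facts you cite, vanishing on the $\l$-stratum and orthogonality to ${}^{T}\cS^{T}_{\l}$, only control the $\l$-component of $C$.
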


\section{The category $\DG$}
\label{s:DGr}

In this section we study the category $\DG$ that carries commuting actions of $\Hinf$ and the Satake category $\Hs$. We determine the structure of $\DG$ as module categories under both. The actions on $\DG$ give a monoidal functor $\Hs\to \Rep(\dT)$ which we interpret as taking microstalks. As a byproduct, we construct a fiber functor on the Satake category using microstalks, categorifying an old result of Evans and Mirkovic \cite{EM}.

Some of the proofs in \S\ref{ss:sph action} are postponed to the next section.

\subsection{Definitions}\label{ss:D psi Gr} 
Recall $X=\PP^{1}$. Consider the moduli stack $\Bun_{G}(\bG_{\infty}^{\psi})$. Using uniformization at $\infty$, we may write 
\begin{equation}\label{BunG double coset}
\Bun_{G}(\bG_{\infty}^{\psi})=\Jker\bs G\lr{\t}/G[t].
\end{equation}
The additive group $\Ga=\J/\Jker$ acts on $\Bun_{G}(\Jker)$ by left translation under the above double coset description. On the other hand, $\Grot$ acts on $\Bun_{G}(\Jker)$ by rotation on $\PP^{1}$.
These actions combine to give an action of $\Aff=\Ga\rtimes \Grot$ on $\Bun_{G}(\Jker)$. Consider the corresponding Kirillov category
\begin{equation*}
\cD_{\psi, \bG}=\Kir(\Bun_{G}(\bG_{\infty}^{\psi})).
\end{equation*}
The heart of the perverse t-structure of $\cD_{\psi, \bG}$ (inherited from $D(\Bun_{G}(\bG_{\infty}^{\psi}))$) is denoted $\cD^{\hs}_{\psi, \bG}$.


\sss{Relevant points}\label{sss:rel DGr}
Recall from \S\ref{ss:par} that $\cM_{\psi,\bG}=T^{*}\Bun_{G}(\bG_{\infty}^{\psi})\qq_{1}\Ga$.  By definition, the relevant points for $\Bun_{G}(\Jker)$ are the images of the projection $\cM_{\psi, \bG}\to \Bun_{G}(\Jker)$.

For $\l\in \xcoch(T)$, let  $\wh\cE_{\l}\in \Bun_{G}(\bG^{\psi}_{\infty})$ be the point corresponding to the double coset of $t^{\l}$ under the isomorphism \eqref{BunG double coset}; its image in $\Bun_{G}(\J)$ is the point $\wt\cE_{\l}$ defined in \S\ref{sss:MGr}. The $\Ga$-orbits of $\wh\cE_{\l}$ are exactly the relevant points of $\Bun_{G}(\Jker)$ under the $\Aff$-action.

Let $A_{\l}=\J\cap \Ad(t^{\l})G[t]=\Jker\cap \Ad(t^{\l})G[t]$ be the automorphism group of $\wh\cE_{\l}$, which is also the automorphism group of $\wt\cE_{\l}$. We have the locally closed substack 
\begin{equation*}
\Bun^{\l}_{G}(\J)=\J\bs \J t^{\l}G[t]/G[t]\cong \pt/A_{\l}\incl \Bun_{G}(\J)
\end{equation*}
consisting of one point $\{\wt\cE_{\l}\}$. Taking its preimage in $\Bun_{G}(\Jker)$ we get the locally closed substack which is the $\Ga$-orbits of $\wh\cE_{\l}$
\begin{equation}\label{equ_beta}
\ov\b_{\l}: \Bun^{\l}_{G}(\Jker)=\Jker\bs \J t^{\l}G[t]/G[t]\cong (\pt/A_{\l})\times \Ga
\incl \Bun_{G}(\Jker).
\end{equation}
For varying $\l\in \xcoch(T)$, these are exactly the relevant strata for $\Bun_{G}(\Jker)$. The map $\ov\b_{\l}$  sends $a\in \Ga=\J/\Jker$ to $\Jker \wt a t^{\l}G[t]$ in the double coset description (for any lifting $\wt a$ of $a$ in $\J$). Then $\ov\b_{\l}$ is $\Aff$-equivariant with respect to the tautological action on $\Ga$, and the loop rotation action on $A_{\l}$.

\begin{lemma}
$\dim A_{\l}=d_{\l}$.
\end{lemma}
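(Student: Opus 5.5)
The plan is to compute $A_{\l}=\J\cap \Ad(t^{\l})G[t]$ directly as a group scheme, via its Lie algebra and an affine-root decomposition. Recall that $\t=t^{-1}$, $\J=\ker(G\tl{\t}\to G)$, and $G[t]=G[\t^{-1}]$. The intersection $\Ad(t^{\l})G[t]\cap G\tl{\t}$ is the automorphism group of the bundle $\cE_{\l}$. By the standard description of $\Aut(\cE_\l)$ for the bundle $\cE_\l=\cO(\l)$-twisted trivial bundle, this group is a connected linear algebraic group. It is an extension of the Levi-type part by a unipotent group whose Lie algebra is
\begin{equation*}
\frt\op\bigoplus_{\a\in R}\bigoplus_{0\le n\le \max\{-\j{\a,\l},0\}}\t^{n}\frg_{\a},
\end{equation*}
possibly up to the sign convention on $\l$. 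Concretely, $\Ad(t^{\l})$ acts on $t^m\frg_\a$ by shifting the $t$-degree by $\j{\a,\l}$. Hence $\Lie \Ad(t^\l)G[t]$ is spanned by $\t^{n}\frg_\a$ with $n\le -\j{\a,\l}$, together with $\t^{n}\frt$ for $n\le 0$. Intersecting with $\Lie\J=\t\frg\tl{\t}$, which requires $n\ge 1$, yields
\begin{equation*}
\Lie A_{\l}=\bigoplus_{\a\in R}\ \bigoplus_{1\le n\le -\j{\a,\l}}\t^{n}\frg_{\a}.
\end{equation*}
The torus part contributes nothing, since no $n$ satisfies both $1\le n\le 0$.

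Counting dimensions gives $\dim \Lie A_\l=\sum_{\a\in R}\max\{-\j{\a,\l},0\}$. This is exactly the formula for $d_\l$ in \eqref{d lam}. It equals the tangent space computation made there for $\J/(\J\cap\Ad(\t^\l)\J)$, with the inequality $-\j{\a,\l}\ge n>0$ appearing identically.

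It remains to see that $A_\l$ is a smooth group whose dimension equals that of its Lie algebra. This is automatic in characteristic zero. In general, I would argue via the triangular decomposition of Lemma \ref{l:tri decomp}(1): choose $B$ with $\l$ antidominant, or dominant depending on the signs. Then $A_\l$ is the product of its intersections with the root subgroups $U_\a\tl{\t}$, each an affine space $\prod_{1\le n\le -\j{\a,\l}}\t^n\frg_\a$. The $T$-part $\bJ_T\cap \Ad(t^\l)T[t]$ is trivial, because $T[t]\cap T\tl{\t}=T$ and $\J$ kills constants.

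The main obstacle is this factorization step, namely checking that an element of $A_\l$ decomposes with each factor still lying in $A_\l$. This follows from the uniqueness of the triangular decomposition, applied in both $\J$ and $\Ad(t^\l)G[t]$, whose big cells are compatible. Alternatively, one can simply note that $A_\l$ is a unipotent group over $k$ of this dimension, with $\J\cap\Ad(t^\l)G[t]\subset \J$ pro-unipotent and finite-dimensional.
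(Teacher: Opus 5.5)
Your proposal is correct and is essentially the paper's argument. The paper likewise identifies $\Lie A_\l$ with the span of the affine root spaces $\t^n\frg_\a\subset\Lie\J$ that do not lie in $\Ad(t^\l)\Lie\J$ (i.e.\ those with $1\le n\le -\j{\a,\l}$), and reads off the count from \eqref{d lam}. Your smoothness discussion goes beyond the paper, which tacitly equates $\dim A_\l$ with $\dim\Lie A_\l$; this is automatic in characteristic zero. Drop the closing ``alternative'', though: unipotence does not exclude non-reduced structure in characteristic $p$, and ``of this dimension'' is exactly what is being proved, so the triangular-decomposition route is the one to keep.
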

\begin{proof}
The Lie algebra of $A_{\l}$ is spanned by affine root spaces in $\Lie\J$ that are not in $\J\cap \Ad(t^{\l})\J$.  Hence it has the same dimension as $\J/\J\cap \Ad(t^{\l})\J\cong \J t^{\l}\J/\J$. Comparing with \eqref{d lam} we get $\dim A_{\l}=d_{\l}$.
\end{proof}

\sss{Standard objects}
The embedding $\ov\b_{\l}$ gives  functors
\begin{eqnarray*}
\b_{\l,!}: \Kir(\Ga)\xr{\pr_{\Ga}^{*}} \Kir((\pt/A_{\l})\times \Ga) \xr{\ov\b_{\l!}}\cD_{\psi, \bG},\\
\b_{\l,*}:\Kir(\Ga)\xr{\pr_{\Ga}^{*}} \Kir((\pt/A_{\l})\times \Ga) \xr{\ov\b_{\l*}} \cD_{\psi, \bG}. 
\end{eqnarray*}
Here the pullback $\pr_{\Ga}^{*}$ is an equivalence because $A_{\l}$ is a unipotent group. 

Let $\d\in \Kir(\Ga)$ be the skyscraper sheaf at $\{0\}$. We define the standard  objects in $\DG$: 
\begin{eqnarray*}
\d_{\l}=\b_{\l,!}\d\j{-d_{\l}}. 
\end{eqnarray*}
Since the embedding $\ov\b_{\l}$ is affine, we have $\d_{\l}\in \DG^{\hs}$. By Corollary \ref{c:Kir gen}, $\{\d_{\l}\}_{\l\in\xcoch(T)}$ generate $\DG$.

\subsection{Hecke action at $\infty$} \label{subsec:heckeinf}
From the double coset description \eqref{BunG double coset}, we see there is a left action of $\Hinf$ on $\DG$ using the $\Aff$-equivariant diagram
\begin{equation}\label{Hinf action}
\xymatrix{&\Jker\bs G\lr{\t}\twtimes{\Jker}G\lr{\t}/G[t]\ar[rr]\ar[dl]\ar[dr] && \Jker \bs G\lr{\t}/G[t]\\
\dfrac{\Jker\bs G\lr{\t}/\Jker}{\D\Ga} && \Jker \bs G\lr{\t}/G[t]}
\end{equation}
We denote this action by 
\begin{equation*}
(-)\star_{\infty}(-): \Hinf\times \DG\to \DG.
\end{equation*}

\begin{lemma}\label{l:std action DGr}
\begin{enumerate}
\item Restricting the $\Hinf$-action to $\Him$, the action extends to an action of $\hHim$ on $\DG$. 
\item Moreover, for $\l,  \mu\in \xcoch(T)$, there is a canonical isomorphism
\begin{equation*}
\wh\D_{\mu}\star_{\infty}\d_{\l}\cong \d_{\l-\mu}.
\end{equation*}
\end{enumerate}
\end{lemma}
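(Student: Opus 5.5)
For (1), the first thing to notice is that the right $T$-action on $\Jker\bs G\lr{\t}/\Jker$ acts on $\Bun_G(\Jker)=\Jker\bs G\lr{\t}/G[t]$ through the left action by $T$. Since $T\subset G[t]$, we can rewrite $\Jker\bs G\lr{\t}\twtimes{\Jker}G\lr{\t}/G[t]$ as $\Jker\bs G\lr{\t}\twtimes{\Jker T}G\lr{\t}/G[t]$. This makes the action of $\Him$ factor through $\om^T$: for $\cK\in\Hinf$ and $\cF\in\DG$ we get $\om^T(\cK')\star_\infty\cF\cong\cK'\star^T\cF$, where $\cK'\in\cH^T_\infty$ and $\star^T$ is the convolution over $\Jker T$. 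An object of $\hHim$ is a pro-object $\varprojlim\cK_n$ with $\cK_n\in\Him$. I would define the action by $(\varprojlim\cK_n)\star_\infty\cF:=\varprojlim(\cK_n\star_\infty\cF)$ and show that this pro-system is essentially constant. By Lemma \ref{l:Hmon same} and Corollary \ref{c:Kir gen} it is enough to check this on the generators. Following the pattern of Lemma \ref{l:right T equiv std}, the convolution factors as $\pi^T_!(\varprojlim\cK_n)\star^T\cF$, and the pro-system $\pi^T_!\wh\cL$-type objects stabilizes because $R\Gamma_c(T,\wh\cL)$ is a finite-dimensional object (in fact $E\j{-2r}$). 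The associativity constraints then pass to the limit formally, in the framework of \cite[Appendix A]{BY}.

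For (2), Lemma \ref{l:right T equiv std} together with the factorization in (1) gives $\wh\D_\mu\star_\infty\d_\l\cong\D^T_\mu\star^T\d_\l$. I would compute the right side by the method of Proposition \ref{p:fm std conv}. First, determine the support: the convolution $\J\t^\mu T\J\times^{\Jker T}\J t^\l G[t]$ maps into $\Bun_G(\Jker)$, and I need to know which relevant strata $\Bun^\nu_G(\Jker)$ it can meet. Using $t=\t^{-1}$, we have $t^\l=\t^{-\l}$. The relevant-point analysis of Lemma \ref{l:rel in conv}, with triangular decompositions, should show that the only relevant stratum met is the one through $\t^{\mu}\t^{-\l}=t^{\l-\mu}$. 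By Corollary \ref{c:Kir gen}, the convolution is then $\b_{\l-\mu,!}$ of some object of $\Kir(\Ga)\cong D^b(\Vect)$. To identify that object, I would take the realization $\D^T_\mu=\io^T_{\mu!}E\j{d_\mu}$ from Lemma \ref{l:D1}(1) and $\d_\l$ as the skyscraper at $t^\l$, and then compute the stalk at $t^{\l-\mu}$. This stalk is $R\Gamma_c$ of the fiber $F$ of $\Jker\t^\mu\Jker\times^{\Jker}\Jker t^\l G[t]/G[t]\to G\lr{\t}/G[t]$ over $t^{\l-\mu}$, divided by the stabilizer $A_\l$.

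The main obstacle is this fiber computation. One needs an analogue of Lemma \ref{l:H inf conv fiber} showing that the fiber is an affine space of dimension $(d_\mu+d_\l-d_{\l-\mu})/2$ modulo the unipotent groups $A_\l$ and $A_{\l-\mu}$. The shifts must then combine as $\j{d_\mu}\j{-d_\l}\mapsto\j{-d_{\l-\mu}}$, using $\dim A_\l=d_\l$. I would do this root by root, as in the proof of Lemma \ref{l:H inf conv fiber}: reduce to $U_\a\tl{\t}$ and compare the lengths $\j{\mu,\a}$ and $\j{\l,\a}$. This fiber computation is also where the $\Aff$-equivariance gets checked, namely that the resulting object is $\d$ rather than some other object of $\Kir(\Ga)$; this follows because the support lies over $0\in\Ga$. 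The isomorphism is canonical because it is given by capping with a fundamental class.
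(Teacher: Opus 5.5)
Your part (2) has a genuine gap at the support step. You assert that the image of $\Jker\t^{\mu}T\Jker\twtimes{\Jker T}\Jker t^{\l}G[t]/G[t]$ in $\Bun_G(\Jker)$ meets only the relevant stratum through $t^{\l-\mu}$, so that Corollary \ref{c:Kir gen} reduces everything to one stalk over $0\in\Ga$. This is false in general.

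Take $G=\GL_2$, $T$ diagonal, $\mu=(2,0)$, $\l=0$, and write $u_-(x)$ for the lower unipotent matrix with entry $x$. For $y=u_-(c_1\t+c_0\t^2)\in\Jker$ one has $\t^{\mu}y=u_-(c_0+c_1t)\,t^{-\mu}$. When $c_1\neq0$, row reduction over $k[t^{-1}]$ shows $u_-(c_0+c_1t)\,t^{-\mu}\in G[\t]^{1}\,t^{\nu}\,G[t]$ with $\nu=(-1,-1)\neq\l-\mu$. The $\Ga$-coordinate of this point is a nonzero multiple of $c_0/c_1$ (here you use that $\psi_1$ is regular). So the image contains the entire relevant stratum $\Bun^{\nu}_G(\Jker)$, and the convolution sheaf, a $!$-pushforward of a constant sheaf, has nonzero stalks there.

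This piece does vanish in $\Kir$: on that stratum it is pushed forward along the $\Gm$-fibration $(c_0,c_1)\mapsto c_0/c_1$, hence constant along $\Ga$. But proving such vanishing for every pair $(\l,\mu)$ is a separate computation your plan does not contain. The triangular-decomposition argument of Lemma \ref{l:rel in conv} does not transfer, because the right-hand factor is $G[t]$ rather than $\J$. Even on the correct stratum, the affine-space claim is only asserted. With your normalizations, the fiber over $t^{\l-\mu}$ would need $R\G_c\cong E\j{d_\l-d_\mu-d_{\l-\mu}}$, i.e.\ it would have to be an affine space of dimension $(d_\mu+d_{\l-\mu}-d_\l)/2$.

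The paper sidesteps all of this by computing only the case $\mu=\l$. There the factorization $\J=\bK_+\bK_-$, with $\bK_+=\Ad(t^\l)G[t]\cap\J$ and $\bK_-=\Ad(t^\l)\J\cap\J$, gives $\J\t^{\l}T\J t^{\l}G[t]=\J G[t]$. So the image is the single open stratum $\Jker\bs\J G[t]/G[t]\cong\Ga$, with $A_0$ trivial, and the convolution map is just $\pr_{\Ga}\colon\Ga\times T\to\Ga$. Hence $\wh\D_\l\star_\infty\d_\l\cong\b_{0!}\pr_{\Ga,!}(\wh\cL_\l\j{2r})\cong\d_0$, using $R\G_c(T,\wh\cL\j{2r})\cong E$ and Remark \ref{r:Kir fun}.

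The general case is then formal. Since $\wh\D_0$ is the unit (Lemma \ref{l:H0}) and $\wh\D_\mu\star\wh\D_{-\l}\cong\wh\D_{\mu-\l}$ (Proposition \ref{p:fm std conv}), you get $\wh\D_{-\l}\star_\infty\d_0\cong\wh\D_{-\l}\star\wh\D_\l\star_\infty\d_\l\cong\d_\l$. Then $\wh\D_\mu\star_\infty\d_\l\cong\wh\D_{\mu-\l}\star_\infty\d_0\cong\d_{\l-\mu}$. Part (1) follows from (2) on generators.

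Your reduction in (1) also needs repair. The space $\Jker\bs G\lr{\t}\twtimes{\Jker}G\lr{\t}/G[t]$ is a $T$-torsor over the $\twtimes{\Jker T}$ version, not isomorphic to it, and $T\subset G[t]$ does not change this. What is true is $\cK\star_\infty\om(\cF)\cong(\pi^T_!\cK)\star_\infty\cF$ for $T$-equivariant $\cF$, which suffices on the generators ${}^T\d_\l$.
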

\begin{proof}
Since $\{\wh\D_{\mu}\}_{\mu\in \xcoch(T)}$ generate $\hHim$ and $\{\d_{\l}\}_{\l\in\xcoch(T)}$ generate $\DG$, (2) implies (1). We now prove (2).  

First we prove the  special case where $\mu=\l$, i.e.,
\begin{equation}\label{std conv opp}
\wh\D_{\l}\star_{\infty}\d_{\l}\cong \d_{0}.
\end{equation}
Restricting to the relevant part of the convolution diagram \eqref{Hinf action}:
\begin{equation*}
m_{\l,\l}:  \Jker\bs \J \t^{\l}T\J\twtimes{\Jker}\Jker t^{\l}G[t]/G[t] \to \Jker\bs \J \t^{\l} T \J t^{\l} G[t]/G[t].
\end{equation*}
Consider the subgroups $\bK_{+}:=\Ad(t^{\l})G[t]\cap \J=\Ad(t^{\l})G[t]\cap \Jker$ and $\bK_{-}:=\Ad(t^{\l})\J\cap \J$ of $\J$ (the first one is finite-dimensional, the second finite-codimensional). Then the multiplication map $\bK_{+}\times \bK_{-}\to \J$ is an isomorphism. This can be seen by checking at the level of Lie algebras (using that $\J$ is pro-unipotent). This implies $\J\t^{\l}T\J \t^{-\l}G[t]=\J\t^{\l}T\bK_{+}\bK_{-}\t^{-\l}G[t]=\J T \Ad(\t^{\l})\bK_{+}\Ad(\t^{\l})\bK^{-}G[t]=\J G[t]$. Moreover, identifying $\Jker t^{\l}G[t]/G[t]$ with $\Jker/\bK_{+}$ (as $\Jker$-homogeneous spaces), and identifying $\Jker\bs \J \t^{\l}T\J$ with $\bK_{-}^{\psi}\bs \J\times T$ (as $\J\times T$ homogeneous spaces, where $\bK_{-}^{\psi}=\bK_{-}\cap \Jker$), we can fit $m_{\l,\l}$ into a commutative diagram
\begin{equation*}
\xymatrix{\Jker\bs \J \t^{\l}T\J\twtimes{\Jker}\Jker t^{\l}G[t]/G[t]\ar[r]^-{m_{\l,\l}}\ar[d]^{\wr} &  \Jker\bs \J \t^{\l} T \J t^{\l} G[t]/G[t]\ar[d]^{\wr}\\
(\bK^{\psi}_{-}\bs \J\twtimes{\Jker}\J/\bK_{+})\times T\ar[r]\ar[d]^{\wr} &  \Jker\bs \J G[t]/G[t]\ar[d]^{\wr}\\
(\bK_{-}^{\psi}\bs \J/\bK_{+})\times T \ar[d]^{\wr}\ar[r]& \J/\Jker\ar[d]^{\wr}\\
\Ga\times T\ar[r]^-{\pr_{\Ga}} & \Ga 
}
\end{equation*}
Using this diagram, we see that
\begin{equation}\label{std act D0}
\wh\D_{\l}\star_{\infty}\d_{\l}\cong \b_{0!}(\pr_{\Ga,!}(\wh\cL_{\l}\j{2r}))
\end{equation}
where $\wh\cL_{\l}\in \wh \Kir^{\mon}_{\l}(T\times \Ga)$ corresponds to the universal local system $\wh\cL\in \wh D^{\mon}(T)$ under the equivalence $\wh J_{\l}$. Note that the shift-twists $\j{-d_{\l}}$ in $\d_{\l}$ and $\j{d_{\l}}$ in $\wh\D_{\l}$ cancel each other. By the functoriality in Remark \ref{r:Kir fun}, $\pr_{\Ga,!}(\wh\cL_{\l}\j{2r})$ corresponds to $R\G_{c}(T, \wh\cL\j{2r})=E$ under the equivalence $\Kir(\Ga)\cong D(\Vect)$, which implies that $\pr_{\Ga,!}(\wh\cL_{\l}\j{2r})$ is the image of $\d\in D_{\Gm}(\Ga)$ in $\Kir(\Ga)$. Hence the right side of \eqref{std act D0} is canonically isomorphic to $\b_{0!}\d=\d_{0}$. This proves \eqref{std conv opp}.

Applying $\wh\D_{-\l}\star_{\infty}$ to both sides of \eqref{std conv opp}, and using that $\wt\D_{0}$ is the monoidal unit,  we get
\begin{equation}\label{D0 conv}
\wh\D_{-\l}\star_{\infty}\d_{0}\cong \wh\D_{-\l}\star_{\infty}\wh\D_{\l}\star \d_{\l}\cong \wh\D_{0}\star_{\infty}\d_{\l}\cong \d_{\l}.
\end{equation}

For general $\l,\mu\in\xcoch(T)$, by Proposition \ref{p:fm std conv} we have a canonical isomorphism $\wh\D_{\mu}\cong \wh\D_{\l}\star\wh\D_{-\l+\mu}$. Hence we get canonical isomorphisms
\begin{equation*}
\wh\D_{\mu}\star_{\infty}\d_{\l}\cong \wh\D_{\mu}\star_{\infty}\wh\D_{-\l}\star_{\infty}\d_{0}\cong \wh\D_{\mu-\l}\star_{\infty}\d_{0}\cong \d_{\l-\mu}.
\end{equation*}
where we use \eqref{D0 conv} twice, first for $\l$ and then for $\l-\mu$.
\end{proof}

Recall from Corollary \ref{c:mon gr vec to Hinf} the monoidal functor $\phi: \Rep(\dT)\to \hHim$, which is conservative but not fully faithful.  The action of $\hHim$ on $\DG$ makes $\cD_{\psi, \bG}$ into a $\Rep(\dT)$-module category, hence also a $D^{b}(\Rep(\dT))$-module category.

\begin{prop}\label{p:DGr clean}
\begin{enumerate}
\item For any $\l\in \xcoch(T)$, the natural transformation $\b_{\l,!}\to \b_{\l,*}\in \Fun(\Kir(\Ga), \DG)$ is an isomorphism.
\item The functor
\begin{equation*}
\op_{\l\in \xcoch(T)}\b_{\l!}: \bigoplus_{\l\in\xcoch(T)}\Kir(\Ga)\cong \bigoplus_{\l\in\xcoch(T)}D^{b}(\Vect)\cong D^b(\Rep(\dT))\to \DG
\end{equation*}
is an equivalence.
\item We have a canonical  t-exact equivalence $\DG\cong D^b(\DG^\hs)$.
\end{enumerate}
\end{prop}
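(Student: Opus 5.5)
The plan is to exploit the action of $\hHim$ on $\DG$ from Lemma \ref{l:std action DGr}, mirroring the proof of the corollary after Theorem \ref{th:Hinf}. The first step is orthogonality between different strata. For $\l\ne\mu$ and $\cF,\cG\in\Kir(\Ga)$, I claim
\begin{equation*}
\RHom_{\DG}(\b_{\l,!}\cF,\b_{\mu,!}\cG)=0 .
\end{equation*}
The objects $\b_{\l,!}\cF$ are generated by $\d_\l$, since $\d$ generates $\Kir(\Ga)\cong D^b(\Vect)$. So it suffices to treat $\RHom(\d_\l,\d_\mu[n])$. Convolution with $\wh\D_{\l}$ is an autoequivalence of $\DG$, because $\wh\D_{\l}$ is invertible with inverse $\wh\D_{-\l}$ by Proposition \ref{p:fm std conv}. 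Applying it and using Lemma \ref{l:std action DGr}(2), we reduce to computing $\RHom(\d_0,\d_{\mu-\l}[n])$ with $\nu:=\mu-\l\ne 0$.

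To compute this, I would use the adjunction for the locally closed embedding $\ov\b_0$, which gives
\begin{equation*}
\RHom(\d_0,\d_\nu)=\RHom_{\Kir}(\d,\ov\b_0^!\ov\b_{\nu!}\d).
\end{equation*}
Here the strata $\Bun^{0}_G(\Jker)$ and $\Bun^{\nu}_G(\Jker)$ are disjoint. The remaining question is whether $\ov\b_0^!\ov\b_{\nu!}$ vanishes in $\Kir$, i.e.\ whether the closure of stratum $\nu$ meets stratum $0$ in a way that matters. This is not automatic, since stratum $0$ (trivial bundle) is open and is in the closure of nothing else, but the reverse pairing $\RHom(\d_\nu,\d_0)$ is the one that needs care. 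To avoid case analysis I would instead use the symmetry. $\RHom(\d_\l,\d_\mu)\cong\RHom(\d_{\l-\mu},\d_0)$ and also $\cong \RHom(\d_0,\d_{\mu-\l})$. One of the pairs $(\l-\mu\to 0)$ and $(0\to\mu-\l)$ has the source stratum not in the closure of the target, and there Hom vanishes by support; more precisely, $\b_{0,!}$ lands on the open stratum, so $\RHom(\d_0,\d_\nu)=\RHom(\d,\ov\b_0^*\ov\b_{\nu!}\d)$. Since $\ov\b_0$ is an open embedding and disjoint from stratum $\nu$, this is $0$. This proves the first step.

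With orthogonality established, part (1) follows as in the corollary to Theorem \ref{th:Hinf}. The cone of $\b_{\l,!}\cF\to\b_{\l,*}\cF$ has all restrictions to relevant strata zero. By Corollary \ref{c:Kir gen} and Lemma \ref{l:remove irr}, such an object lies in the span of the $\b_{\mu,!}$ for $\mu\ne\l$. Alternatively, $\b_{\l,*}\cF$ is right orthogonal to $\b_{\mu,!}$ for $\mu\ne\l$, and orthogonality then forces the cone to vanish.

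For part (2), full faithfulness of each $\b_{\l!}$ holds since $\ov\b_\l$ is a locally closed embedding and $\pr_\Ga^*$ is an equivalence. Combined with orthogonality, this makes the sum fully faithful. Essential surjectivity comes from generation by the $\d_\l$, which follows from Corollary \ref{c:Kir gen} using that the strata $\Bun^\l_G(\Jker)$ exhaust the relevant locus (\S\ref{sss:rel DGr}). Since $\d_\l$ is in the heart and the decomposition is orthogonal, the perverse t-structure matches the standard one on $\bigoplus D^b(\Vect)$. Hence $\DG^\hs\cong\Rep(\dT)$ is semisimple, and (3) follows.

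The main obstacle is the orthogonality step. It requires checking that the translation symmetry really does reduce every pair to one involving the open stratum of the trivial bundle. It also requires the identification that $\ov\b_0$ is open in the relevant part.
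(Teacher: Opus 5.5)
Your proposal is correct and follows essentially the same route as the paper: convolution with the invertible object $\wh\D_\l$ identifies $\RHom(\d_\l,\d_\mu)$ with $\RHom(\d_0,\d_{\mu-\l})$, which vanishes because $\ov\b_0$ is an open embedding disjoint from the support of $\d_{\mu-\l}$, and (1)--(3) then follow from this orthogonality together with generation by the $\d_\l$ via Corollary \ref{c:Kir gen}. Two points of cleanup: the reduction to source $\d_0$ works for every pair, so the closure case analysis is unnecessary; and in (1) the valid argument is your right-orthogonality one, because the claim that the cone restricts to zero on all relevant strata is what is being proved, not an input.
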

\begin{proof}
(1) 
Since $\Kir(\Ga)$ is generated by $\d$, equivalently we need to show
\begin{equation}\label{Hom std van}
\RHom_{\DG}(\d_{\l}, \d_{\mu})=0, \quad \forall \l\ne\mu.
\end{equation}
Since $\hHim$ acts on $\DG$ and $\wh\D_{\l}\in \hHim$ is invertible, it suffices to show
\begin{equation*}
\RHom_{\DG}(\wh\D_{\l}\star_{\infty}\d_{\l}, \wh\D_{\mu}\star_{\infty}\d_{\l})=0.
\end{equation*}
By Lemma \ref{l:std action DGr}, the above is $\RHom(\d_{0}, \d_{\mu-\l})$, which by adjunction is $\RHom_{\Kir(\Ga)}(\d, \ov\b_{0}^{!}\d_{\mu-\l})$. Now $\ov\b_{0}$ is an open embedding $\Ga\cong \Jker\bs \J G[t]/G[t]\incl \Bun_{G}(\Jker)$ and $\d_{\mu-\l}$ is supported on the complement of the open stratum $\Jker\bs \J G[t]/G[t]$, we have $\ov\b_{0}^{!}\d_{\mu-\l}=\ov\b_{0}^{*}\d_{\mu-\l}=0$ and the desired vanishing follows.

(2) Each $\b_{\l!}$ is fully faithful. By \eqref{Hom std van}, the functor $\op \b_{\l!}$ is also fully faithful. By Corollary \ref{c:Kir gen}, $\op \b_{\l!}$ is essentially surjective.

(3) follows from (2).
\end{proof}

\begin{cor}\label{c:free over lattice}
\begin{enumerate}
\item As a $D^{b}(\Rep(\dT))$-module category, $\cD_{\psi, \bG}$ is free of rank one, and $\d_{\l}$ is a generator for any $\l\in \xcoch(T)$.
\item The action of $\Rep(\dT)$ on $\DG$ perserves the heart $\cD^{\hs}_{\psi,\bG}$. As a $\Rep(\dT)$-module category, $\cD^{\hs}_{\psi, \bG}$ is free of rank one, and $\d_{\l}$ is a generator for any $\l\in \xcoch(T)$.
\end{enumerate}
\end{cor}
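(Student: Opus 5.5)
The plan is to combine Proposition \ref{p:DGr clean}(2), which identifies $\DG$ with $\bigoplus_{\l}D^b(\Vect)$ via the functors $\b_{\l!}$, with Lemma \ref{l:std action DGr}(2), which computes the action of the generators $\wh\D_\mu=\phi(E(\mu))$ on the standard objects. First I make the module structure explicit. The $D^b(\Rep(\dT))$-action on $\DG$ is through the monoidal functor $\phi$ of Corollary \ref{c:mon gr vec to Hinf}, followed by the $\hHim$-action. Hence $E(\mu)$ acts by $\wh\D_\mu\star_\infty(-)$, and this sends $\d_\l$ to $\d_{\l-\mu}$. Every object of $D^b(\Rep(\dT))$ is a finite direct sum of shifts of $E(\mu)\otimes V$ with $V\in\Vect$, and the action is additive and exact.

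For part (1), fix $\l$ and consider the action map
\[ a_\l: D^b(\Rep(\dT))\to \DG,\qquad M\mapsto M\star\d_\l . \]
On simple objects it sends $E(\mu)$ to $\d_{\l-\mu}$. Comparing with the equivalence $\op_\nu\b_{\nu!}$ of Proposition \ref{p:DGr clean}(2), $a_\l$ agrees, up to the reindexing $\mu\mapsto\l-\mu$ and the shift $\j{-d_\nu}$, with that equivalence on objects. To get agreement on morphisms, both sides decompose along the lattice grading. Homs between different $\mu$ vanish on the source by definition, and on the target by \eqref{Hom std van}. Within a single $\mu$, one needs $\End(E(\mu))=E\to\RHom(\d_{\l-\mu},\d_{\l-\mu})$ to be an isomorphism. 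This holds because $\b_{\nu!}$ is fully faithful from $\Kir(\Ga)\cong D^b(\Vect)$, so $\RHom(\d_\nu,\d_\nu)=E$ in degree $0$, and the map sends $1$ to $\id$.

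This makes $a_\l$ fully faithful. Its essential image contains all the $\d_\nu$, which generate $\DG$ by Corollary \ref{c:Kir gen}, so $a_\l$ is an equivalence. This proves that $\DG$ is free of rank one with generator $\d_\l$.

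For part (2), each $\d_\nu$ lies in $\DG^\hs$. By Proposition \ref{p:DGr clean}(2)(3), $\DG^\hs$ is the semisimple category $\bigoplus_\nu\Vect\cdot\d_\nu$. Since $E(\mu)$ permutes the $\d_\nu$ and the action is additive, it preserves $\DG^\hs$. The functor $a_\l$ restricted to $\Rep(\dT)$ is then an equivalence onto this heart, by the same Hom computation in degree $0$.

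The main obstacle is a coherence point: the isomorphisms of Lemma \ref{l:std action DGr} must be compatible with the monoidal structure of $\phi$, so that $a_\l$ is genuinely a module functor and not just a bijection on objects. For the rank-one freeness statement this is harmless. An equivalence of the underlying categories intertwining the action up to natural isomorphism already follows from the argument above, since both sides are semisimple with one-dimensional endomorphisms, so no nontrivial cocycle obstruction arises. If a canonical identification is required, I would construct it the way the isomorphisms in Lemma \ref{l:std action DGr} were produced. These were derived from Proposition \ref{p:fm std conv} and the associativity established in Corollary \ref{c:mon gr vec to Hinf}, and that associativity is exactly the compatibility needed.
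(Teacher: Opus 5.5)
Your proposal is correct and takes essentially the same route as the paper, which deduces (1) directly from Proposition \ref{p:DGr clean}(2) and (2) from Lemma \ref{l:std action DGr}(2); you have unpacked this by using \eqref{Hom std van} and the full faithfulness of $\b_{\nu!}$ to identify the action functor $M\mapsto M\star\d_\l$ with the equivalence $\op_\nu\b_{\nu!}$. The coherence worry in your last paragraph is unnecessary: $M\mapsto M\star\d_\l$ is automatically a module functor via the associativity constraint of the action, so Lemma \ref{l:std action DGr} is needed only to identify this functor on objects.
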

\begin{proof}
(1) follows directly from Proposition \ref{p:DGr clean}(2). (2) follows from Lemma \ref{l:std action DGr}(2).
\end{proof}

\subsection{$T$-equivariant version}
Consider the variant of $\DG$ by further imposing $T$-equivariance at $\infty$
\begin{equation*}
{}^{T}\DG:=\Kir(\Bun_{G}(\Jker T)).
\end{equation*}

The relevant strata for ${}^{T}\cD_{\psi, \bG}$ are 
\begin{equation*}
{}^{T}\ov\b_{\l}: \Bun^{\l}_{G}(\Jker T)=(\Jker T)\bs \J t^{\l}G[t]/G[t]\cong \pt/(A_{\l}T)\times \Ga
\incl \Bun_{G}(\Jker T).
\end{equation*}
We also have the $T$-equivariant version of $\b_{\l,!}$ and $\b_{\l,*}$:
\begin{eqnarray}\label{Tbl!}
{}^{T}\b_{\l,!}: \Kir(\pt/T\times \Ga)\xr{{}^{T}\pr_{\Ga}^{*}} \Kir(\pt/(A_{\l}T)\times \Ga) \xr{{}^{T}\ov\b_{\l!}}{}^{T}\cD_{\psi, \bG},\\
\label{Tbl*}
{}^{T}\b_{\l,*}:\Kir(\pt/T\times \Ga)\xr{{}^{T}\pr_{\Ga}^{*}} \Kir(\pt/(A_{\l}T)\times \Ga) \xr{{}^{T}\ov\b_{\l*}} {}^{T}\cD_{\psi, \bG}. 
\end{eqnarray}
If we denote by ${}^{T}\d\in \Kir(\pt/T\times \Ga)$ the skyscraper sheaf at $\pt/T\times\{0\}$,  then
\begin{equation*}
{}^{T}\d_{\l}={}^{T}\b_{\l,!}({}^{T}\d)\j{-d_{\l}}
\end{equation*}
is a canonical $T$-equivariant lifting of $\d_{\l}$.

The monoidal category ${}^{T}\Hinf^{T}$ acts on ${}^{T}\cD_{\psi ,\bG}$.  We still denote this action by $\star_{\infty}$. 

There is a $\Rep(\dT)$-action on $\DG$ via the monoidal functor $\phi: \Rep(\dT)\to \hHim$ in Corollary \ref{c:mon gr vec to Hinf}, where $E(\l)$ acts by $\wh\D_\l\star_\infty(-)$ for $\l\in \xcoch(T)$. On the other hand, Proposition \ref{p:Hinf equiv}(3) gives an equivalence $\Rep(\dT)\isom(\eqHinf)^\hs$, which in particular gives an action of $\Rep(\dT)$ on ${}^T\DG$.

\begin{lemma}\label{l:TDG as Rep T mod}
    The forgetful functor $\om: {}^T\DG\to \DG$ has a canonical structure of a functor of $\Rep(\dT)$-module categories.
\end{lemma}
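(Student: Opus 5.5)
The plan is to compare the two actions through the forgetful functor $\pi^{T,*}$ from bi-equivariant to monodromic objects, together with the identification of the free-monodromic standard objects with the equivariant ones after pushforward (Lemma \ref{l:right T equiv std}).

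First, the monoidal functors. On ${}^T\DG$, $E(\l)$ acts by ${}^T\D^T_\l\star_\infty(-)$. On $\DG$ it acts by $\wh\D_\l\star_\infty(-)$. The convolution defining the ${}^T\Hinf^T$-action on ${}^T\DG$ uses the stack $(\Jker T)\bs G\lr{\t}\twtimes{\Jker T}G\lr{\t}/G[t]$. The convolution for the $\hHim$-action on $\DG$ uses $\Jker\bs G\lr{\t}\twtimes{\Jker}G\lr{\t}/G[t]$. There is a natural map from the second to the first: quotient by $T$ on the left and along the middle factor.

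Base change along this map gives, for $\cF\in {}^T\DG$, a canonical isomorphism
\begin{equation*}
\om({}^T\D^T_\l\star_\infty \cF)\cong (\pi^T_!\wh\D_\l)'\star_\infty \om(\cF),
\end{equation*}
where the right side is the convolution in which the middle $T$-quotient is not taken. Concretely, the middle $T$-integration converts $\wh\D_\l$ into its right $T$-averaged version $\pi^T_!\wh\D_\l$. By Lemma \ref{l:right T equiv std} this is $\D^T_\l$. Then the left $T$-equivariance is forgotten, producing $\D_\l$ via \eqref{pullback delta}.

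More cleanly, I would write $\wh\D_\l\star_\infty\om\cF$. Since $\om\cF$ is $T$-equivariant, the convolution with $\wh\D_\l$ factors through $\pi^T_!\wh\D_\l\star_{T}\cF$, with $T$-equivariant convolution in the middle. This holds by the projection formula together with $R\G_c(T,\wh\cL\j{2r})=E$, as in the proof of Lemma \ref{l:right T equiv std}. Combined with the left equivariance of $\cF$, this identifies the result with $\om({}^T\D^T_\l\star_\infty\cF)$. This gives canonical isomorphisms
\begin{equation*}
s_{\l,\cF}:\om(E(\l)\cdot\cF)\cong E(\l)\cdot\om(\cF),
\end{equation*}
natural in $\cF$.

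The main obstacle is the coherence of the module-functor structure. One must show that $s$ is compatible with the monoidal structures $c_{\l,\mu}$ (Proposition \ref{p:Teq conv}, Lemma \ref{l:eq conv asso}) and $\wh c_{\l,\mu}$ (Corollary \ref{c:mon gr vec to Hinf}), and with the units.

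I would reduce this to a scalar check. The two resulting isomorphisms
\begin{equation*}
\om(E(\l+\mu)\cF)\rightrightarrows E(\l)E(\mu)\om\cF
\end{equation*}
differ by an automorphism of the functor $\wh\D_{\l+\mu}\star_\infty\om(-)$. By Corollary \ref{c:free over lattice} and Proposition \ref{p:DGr clean}, it suffices to test on the generators ${}^T\d_\nu$, where the relevant automorphism group comes from $\Aut(\d_\nu)=E^\times$. So the discrepancy is a scalar.

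Both $c$ and $\wh c$ are constructed by capping with fundamental classes of the same affine-space fibers $F_{\l,\mu}$ (Lemma \ref{l:H inf conv fiber}). The comparison $\pi^T_!\wh\D_\l\cong\D^T_\l$ is likewise a trace map on $T$. So the scalar can be computed on stalks at $\t^{\l+\mu}$, where it equals $1$ by the same fundamental-class argument as in Lemma \ref{l:eq conv asso}. Alternatively, one can use the Weil-structure argument of Corollary \ref{c:mon gr vec to Hinf}, which forces the discrepancy to be a Frobenius-compatible scalar determined by its equivariant version. Unit compatibility follows similarly from Lemma \ref{l:H0}.
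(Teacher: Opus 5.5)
Your proposal is correct and follows the paper's proof. The paper likewise obtains $\wh\D_\mu\star_\infty\om(\cF)\cong(\pi^T_!\wh\D_\mu)\star_\infty\cF\cong\D^T_\mu\star_\infty\cF\cong\om({}^T\D^T_\mu\star_\infty\cF)$, using Lemma~\ref{l:right T equiv std} and the fact that $\D^T_\mu$ is the forgetful image of ${}^T\D^T_\mu$. The only difference is that the paper simply asserts that compatibility with the convolution isomorphisms is easy to check, whereas you sketch a reduction of that coherence to a scalar computed on stalks at $\t^{\l+\mu}$, which is a reasonable way to fill in that step.
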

\begin{proof}
    Recall the functor $\pi^T_!: \hHim\to \Hinf^T$. We have the convolution $\cH^T_\infty\times {}^T\DG\to \DG$. By Lemma \ref{l:right T equiv std}, $\pi^{T}_{!}\wh\D_{\mu}\cong \D^{T}_{\mu}$. 
    For any $\cF\in {}^T\DG$ and any $\mu\in \xcoch(T)$, we have a canonical isomorphism
    \begin{equation}\label{hDmu acts on TDG}
        \wh\D_\mu\star_\infty \om (\cF)\cong (\pi^T_!\wh\D_\mu)\star_\infty \cF\cong \D^T_\mu\star_\infty\cF\in \DG.
    \end{equation}
    Note that $\D^T_\mu$ is the image of ${}^T\D^T_\mu$ under the forgetful functor $\Hinf^T\to {}^T\Hinf^T$. Therefore
    \begin{equation*}
        \D^T_\mu\star_\infty\cF\cong \om({}^T\D^T_\mu\star_\infty \cF).
    \end{equation*}
    Combined with \eqref{hDmu acts on TDG} we get a canonical isomorphism functorial in $\cF$
    \begin{equation}\label{hDmu F}
        \wh\D_\mu\star_\infty \om (\cF)\cong\om({}^T\D^T_\mu\star_\infty \cF).
    \end{equation}
    It is easy to check that this isomorphism is compatible with convolutions of the $\wh\D_\mu$'s and the ${}^T\D^T_\mu$'s.  This proves that $\om$ has a canonical structure of a functor of $\Rep(\dT)$-modules.
\end{proof}

We give ${}^{T}\DG$ the perverse t-structure inherited from the forgetful functor to $\DG$. 

\begin{lemma}\label{l:eqDG equiv heart}
The forgetful functor $\om^\hs: {}^{T}\DG^{\hs}\to \DG^{\hs}$ is an equivalence.
\end{lemma}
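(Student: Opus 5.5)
The plan is to mimic Proposition \ref{p:DGr clean} on the $T$-equivariant side, so that both ${}^{T}\DG$ and $\DG$ are described stratum by stratum, and then check that $\om$ matches the two descriptions on hearts.

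\textbf{Structure of ${}^{T}\DG$.} By Corollary \ref{c:Kir gen}, the relevant strata ${}^{T}\ov\b_{\l}$ give a semi-orthogonal decomposition of ${}^{T}\DG$ into copies of $\Kir(\pt/T\times\Ga)\cong D(\pt/T)$, via the functors ${}^{T}\b_{\l,!}$. Next I show that there are no extensions between different strata:
\begin{equation*}
\RHom({}^{T}\d_{\l},{}^{T}\d_{\mu})=0 \quad \text{for } \l\ne\mu .
\end{equation*}
For this I use the $\eqHinf$-action. First I establish the analogue of Lemma \ref{l:std action DGr}, namely ${}^{T}\D^{T}_{\mu}\star_{\infty}{}^{T}\d_{\l}\cong{}^{T}\d_{\l-\mu}$. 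The proof is the same convolution diagram, with $T$ now quotiented out. Indeed, $R\Gamma_c$ of the $T$-factor is replaced by pushing forward along $\pt/T\to\pt/T$, so that ${}^{T}\cL^{T}_{\l}$, the constant sheaf, goes to ${}^{T}\d$. By Proposition \ref{p:Teq conv}, ${}^{T}\D^{T}_{\mu}$ is invertible. It therefore suffices to treat the case where one of the two indices is $0$. Then adjunction reduces the vanishing to the fact that the stratum $\l=0$ is open. I conclude that ${}^{T}\DG\cong\bigoplus_{\l}D(\pt/T)$, with ${}^{T}\d_{\l}$ sitting at $\l$.

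\textbf{Identifying the functor on hearts.} Under the decompositions of ${}^{T}\DG$ above and of $\DG$ in Proposition \ref{p:DGr clean}(2), the forgetful functor $\om$ is, stratum by stratum, the forgetful functor $D(\pt/T)\to D(\Vect)$; it is compatible with the decompositions because $\om$ commutes with $!$-extension along strata. Both are t-exact. In the source the t-structure is the one pulled back from $\DG$, and on each summand it is the standard t-structure on $D(\pt/T)$, which is pulled back from $D(\Vect)$ up to the shift already built into ${}^{T}\d_{\l}$. The heart of $D(\pt/T)$ consists of $T$-equivariant sheaves on a point. Since $T$ is connected these are just vector spaces, so the heart is $\Vect$, and the forgetful functor on hearts is an equivalence $\Vect\isom\Vect$. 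Summing over $\l$ gives the equivalence ${}^{T}\DG^{\hs}\cong\bigoplus_{\l}\Vect\cong\DG^{\hs}$.

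\textbf{Main obstacle.} The delicate step is the orthogonality, together with the corresponding description of the t-structure. One has to check that ${}^{T}\b_{\l,!}$ lands in the heart in a way compatible with $\om$, and that there are no $\Ext^1$ between distinct ${}^{T}\d_{\l}$ inside the heart. Notably $\RHom$ in $D(\pt/T)$ itself has higher terms, but those live in positive degree and do not affect the heart. The cleanest route is therefore to avoid the full derived statement and prove only $\Hom$ and $\Ext^1$ vanishing across strata. Alternatively, Lemma \ref{l:TDG as Rep T mod} reduces everything to $\l=0$, where the open stratum makes the $\Ext^1$ computation immediate. Within a single stratum, full faithfulness of $\om^\hs$ on $\Hom$ and $\Ext^1$ is clear, because $\cohog{1}{\pt/T}=0$.
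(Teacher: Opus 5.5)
Your argument is correct, but it takes a longer route than the paper. The paper's proof has two lines. First, $\om^\hs$ is fully faithful because $T$ is connected. Second, it is essentially surjective because, by Proposition \ref{p:DGr clean}, every object of $\DG^\hs$ is a direct sum of the $\d_\l$, and each $\d_\l$ equals $\om^\hs({}^T\d_\l)$.

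You instead first prove the derived decomposition ${}^T\DG\cong\bigoplus_\l D(\pt/T)$ and then read off the hearts summand by summand. In the paper this decomposition is Corollary \ref{c:TDGr clean}(1)(2), which comes after this lemma but does not depend on it. Your route has two advantages:
\begin{itemize}
\item It is an explicit computation. It never needs to carry the ``connected group $\Rightarrow$ fully faithful on perverse hearts'' principle through the Verdier quotient that defines the Kirillov category.
\item It gives the derived statement at the same time.
\end{itemize}

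It has one cost. You must prove ${}^T\D^T_\mu\star_\infty{}^T\d_\l\cong{}^T\d_{\l-\mu}$ directly from the convolution diagram. The paper's version of this, Corollary \ref{c:Teq act}, is deduced from the very lemma you are proving, so you cannot cite it. You do propose the direct computation, so there is no circularity.

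You can skip that step altogether. The isomorphism ${}^T\b_{\l,!}\isom{}^T\b_{\l,*}$ follows at once from Proposition \ref{p:DGr clean}(1), because $\om$ commutes with $!$- and $*$-extension along strata and is conservative. This is how the paper proves Corollary \ref{c:TDGr clean}(1). It also makes unnecessary the unstated remark that $D(\pt/T)$ is generated by $E$, which your version needs in order to pass from orthogonality of the objects ${}^T\d_\l$ to orthogonality of entire strata.

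Your closing discussion of restricting to $\Hom$ and $\Ext^1$ is also unnecessary. Once you have the derived direct-sum decomposition, the only input you need about hearts is that the standard heart of $D(\pt/T)$ is $\Vect$ when $T$ is connected.
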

\begin{proof}
    Since $T$ is connected, $\om^\hs$ is fully faithful. Since any object in $\DG^{\hs}$ is a direct sum of $\{\d_{\l}\}_{\l\in\xcoch(T)}$, and each $\d_{\l}$ is the image of ${}^{T}\d_{\l}$ under $\om^\hs$, we conclude that $\om^\hs$ is also essentially surjective, hence an equivalence.
\end{proof}

\begin{cor}\label{c:Teq act}
    For $\l,\mu\in \xcoch(T)$, there is a canonical isomorphism
\begin{equation}\label{Dmu act on dl}
{}^{T}\D_{\mu}^{T}\star_{\infty}{}^{T}\d_{\l}\cong {}^{T}\d_{\l-\mu}.
\end{equation}
\end{cor}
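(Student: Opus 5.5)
The plan is to reduce the corollary to the non-equivariant statement, Lemma \ref{l:std action DGr}(2), using the two preceding lemmas. Both sides of \eqref{Dmu act on dl} lie in the heart ${}^{T}\DG^{\hs}$. The right side ${}^{T}\d_{\l-\mu}$ is in the heart by definition. For the left side, convolution with ${}^{T}\D^{T}_{\mu}$ is part of the $\Rep(\dT)$-action on ${}^T\DG$; via Lemma \ref{l:TDG as Rep T mod} it is compatible with the action of $\wh\D_\mu$ on $\DG$, which preserves the heart by Corollary \ref{c:free over lattice}(2). Since the t-structure on ${}^T\DG$ is pulled back along $\om$, the left side is also in the heart.

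Next I apply the forgetful functor $\om$ and use \eqref{hDmu F} from the proof of Lemma \ref{l:TDG as Rep T mod}:
\begin{equation*}
\om({}^{T}\D^{T}_{\mu}\star_{\infty}{}^{T}\d_{\l})\cong \wh\D_\mu\star_\infty \om({}^T\d_\l)=\wh\D_\mu\star_\infty\d_\l\cong \d_{\l-\mu}=\om({}^T\d_{\l-\mu}),
\end{equation*}
where the middle isomorphism is Lemma \ref{l:std action DGr}(2). All of these isomorphisms are canonical.

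Finally I invoke Lemma \ref{l:eqDG equiv heart}: $\om^\hs:{}^{T}\DG^{\hs}\to\DG^{\hs}$ is an equivalence, in particular fully faithful. So the composite isomorphism between the $\om$-images lifts uniquely to an isomorphism in ${}^T\DG^\hs$, which gives the canonical isomorphism \eqref{Dmu act on dl}.

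The main point needing care is the first step, showing the left side lies in the heart, since full faithfulness of $\om$ is used only on the heart. I would argue it as above, through \eqref{hDmu F}: the $\om$-image of the left side is $\d_{\l-\mu}$, which is perverse, and perversity in ${}^T\DG$ is detected after forgetting. This settles it directly, so the obstacle is mild.
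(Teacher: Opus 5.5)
Your proposal is correct and is essentially the paper's own proof. You transport along the isomorphism \eqref{hDmu F} of Lemma~\ref{l:TDG as Rep T mod} and identify $\wh\D_\mu\star_\infty\d_\l\cong\d_{\l-\mu}$ via Lemma~\ref{l:std action DGr}(2); this also places the left side in the heart. You then lift the resulting isomorphism using that $\om^\hs$ is an equivalence (Lemma~\ref{l:eqDG equiv heart}), which is exactly what the paper does.
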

\begin{proof}
    Lemma \ref{l:TDG as Rep T mod} applied to the actions of $\wh\D_{\mu}$ and ${}^T\D^T_\mu$ implies a canonical isomorphism
    \begin{equation*}
        \wh\D_{\mu}\star_\infty \d_\l\cong \om({}^T\D^T_\mu\star_\infty {}^T\d_\l)
    \end{equation*}
    By Lemma \ref{l:std action DGr}, the left side above is canonically isomorphic to $\d_{\l-\mu}$. In particular both sides are in $\DG^\hs$, and we have a canonical isomorphism in $\DG^\hs$
    \begin{equation}\label{DT act dl}
    \om^{\hs}({}^T\D^{T}_{\mu}\star_\infty {}^T\d_\l)\cong \d_{\l-\mu}\cong \om^{\hs}({}^T\d_{\l-\mu}).
    \end{equation}
    Since $\om^\hs$ is an equivalence by Lemma \ref{l:eqDG equiv heart}, \eqref{DT act dl} comes from a unique isomorphism ${}^T\D^{T}_{\mu}\star_\infty {}^T\d_\l\cong {}^T\d_{\l-\mu}$, as desired.

\end{proof}

\begin{cor}\label{c:TDG eq DG as Rep mod}
    The action of $(\eqHinf)^\hs\cong \Rep(\dT)$ on ${}^T\DG$ preserves ${}^T\DG^\hs$.  The forgetful functor $\om^\hs: {}^T\DG^\hs\to \DG^\hs$ is an equivalence of $\Rep(\dT)$-module categories.
\end{cor}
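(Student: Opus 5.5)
The plan is to deduce both assertions from the corresponding facts about the monodromic action on $\DG$, transported along the forgetful functor $\om$.

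For the first assertion, the heart $(\eqHinf)^\hs\cong\Rep(\dT)$ is semisimple, with simple objects ${}^T\D^T_\mu$ by Proposition \ref{p:Hinf equiv}(3). So it suffices to check that ${}^T\D^T_\mu\star_\infty(-)$ is t-exact on ${}^T\DG$. The t-structure on ${}^T\DG$ is pulled back along $\om$, so an object $\cF$ lies in the heart exactly when $\om(\cF)\in\DG^\hs$. By \eqref{hDmu F} we have
\begin{equation*}
\om({}^T\D^T_\mu\star_\infty\cF)\cong\wh\D_\mu\star_\infty\om(\cF).
\end{equation*}
It therefore remains to see that $\wh\D_\mu\star_\infty(-)$ is t-exact on $\DG$. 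By Proposition \ref{p:DGr clean}(2),(3), every object of $\DG^\hs$ is a finite direct sum of the $\d_\l$, and Lemma \ref{l:std action DGr}(2) gives $\wh\D_\mu\star_\infty\d_\l\cong\d_{\l-\mu}$. Hence $\wh\D_\mu\star_\infty(-)$ preserves $\DG^\hs$. Since it is a triangulated autoequivalence (its inverse is $\wh\D_{-\mu}\star_\infty(-)$) and $\DG\cong D^b(\DG^\hs)$, it is t-exact.

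Alternatively, one can work directly on the objects of ${}^T\DG^\hs$. By Lemma \ref{l:eqDG equiv heart}, every such object is a finite direct sum of the ${}^T\d_\l$, and Corollary \ref{c:Teq act} shows that ${}^T\D^T_\mu$ sends ${}^T\d_\l$ to ${}^T\d_{\l-\mu}\in{}^T\DG^\hs$. This already gives preservation of the heart.

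For the second assertion, Lemma \ref{l:TDG as Rep T mod} equips $\om$ with the structure of a $\Rep(\dT)$-module functor $\om:{}^T\DG\to\DG$. By the first part, both actions preserve the hearts, so this structure restricts to $\om^\hs$. The module structure consists of the isomorphisms \eqref{hDmu F} together with their compatibility with the monoidal constraints $c_{\l,\mu}$ and $\wh c_{\l,\mu}$, which was already verified in that lemma. Lemma \ref{l:eqDG equiv heart} shows that $\om^\hs$ is an equivalence of underlying categories. An equivalence carrying a module-functor structure is an equivalence of module categories, because the inverse inherits a module structure by transport.

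The only point needing care is making sure that the module structure on $\om$ from Lemma \ref{l:TDG as Rep T mod} uses the same identification $\Rep(\dT)\cong(\eqHinf)^\hs$ as Proposition \ref{p:Hinf equiv}(3), i.e.\ that $E(\mu)\mapsto{}^T\D^T_\mu$ on one side matches $E(\mu)\mapsto\wh\D_\mu$ on the other. This is built into \eqref{hDmu F}, which rests on Lemma \ref{l:right T equiv std}, so no new input is expected.
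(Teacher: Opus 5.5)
Your proposal is correct and follows essentially the paper's route: heart preservation via Corollary \ref{c:Teq act} applied to the generators ${}^T\d_\l$ (your ``alternatively'' paragraph is exactly the paper's argument, and your main argument simply inlines that corollary's proof through \eqref{hDmu F} and Lemma \ref{l:std action DGr}). The module-category equivalence then comes from Lemma \ref{l:TDG as Rep T mod} together with Lemma \ref{l:eqDG equiv heart}, just as you say.
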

\begin{proof}
    The fact that $(\eqHinf)^{\hs}$ preservves ${}^T\DG^{\hs}$ follows from Corollary \ref{c:Teq act} and the fact that $\{{}^T\d_\l\}_{\l\in\xcoch(T)}$ generate ${}^T\DG^{\hs}$. The rest follows from Lemma \ref{l:TDG as Rep T mod} and Lemma \ref{l:eqDG equiv heart}.
\end{proof} 

\begin{cor}\label{c:TDGr clean}
\begin{enumerate}
\item For any $\l\in \xcoch(T)$, the natural transformation ${}^{T}\b_{\l,!}\to {}^{T}\b_{\l,*}\in \Fun(\Kir((\pt/T)\times\Ga), {}^{T}\DG)$ is an isomorphism.
\item The functor
\begin{equation*}
\op_{\l\in \xcoch(T)}{}^{T}\b_{\l!}: \bigoplus_{\l\in\xcoch(T)}\Kir((\pt/T)\times \Ga)\cong \bigoplus_{\l\in\xcoch(T)}D(\pt/T)\to {}^{T}\DG
\end{equation*}
is an equivalence.

\item\label{TDG free over eqHinf} As a $\eqHinf$-module category, ${}^{T}\cD_{\psi, \bG}$ is free of rank one, and ${}^{T}\d_{\l}$ is a generator for any $\l\in \xcoch(T)$.

\item As a $\Hinfhs$-module category, ${}^{T}\DG^{\hs}$ is free of rank one, and  ${}^{T}\d_{\l}$ is  a generator for any $\l\in \xcoch(T)$.

\end{enumerate}
\end{cor}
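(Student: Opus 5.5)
The plan is to run the proof of Proposition \ref{p:DGr clean} and Corollary \ref{c:free over lattice} again in the $T$-equivariant setting. The role of $\hHim$ is now played by $\eqHinf$, and the role of the $\wh\D_\mu$ by the invertible objects ${}^T\D^T_\mu$ of Proposition \ref{p:Hinf equiv}. The basic input is Corollary \ref{c:Teq act}, namely ${}^T\D^T_\mu\star_\infty{}^T\d_\l\cong{}^T\d_{\l-\mu}$.

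For (1), the source $\Kir((\pt/T)\times\Ga)\cong D(\pt/T)$ is generated by ${}^T\d$, since $D(\pt/T)$ is generated by the constant sheaf. So it suffices to prove
\begin{equation*}
\RHom_{{}^T\DG}({}^T\d_\l,{}^T\d_\mu)=0 \quad \text{for } \l\ne\mu.
\end{equation*}
Convolution with the invertible object ${}^T\D^T_\l$ gives an autoequivalence. Together with \eqref{Dmu act on dl}, this turns the Hom space into $\RHom({}^T\d_0,{}^T\d_{\mu-\l})$. By adjunction this equals $\RHom(\cdot,{}^T\ov\b_0^{!}\,{}^T\d_{\mu-\l})$. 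Here ${}^T\ov\b_0$ is the open embedding of $(\Jker T)\bs\J G[t]/G[t]$, and ${}^T\d_{\mu-\l}$ is supported on its complement, so the $!$-restriction vanishes. This is the same argument as before. The point that needs checking is that $T$-equivariance causes no trouble: the stratum for $\l=0$ is still open, because it is the preimage of an open substack under $\Bun_G(\Jker T)\leftarrow\Bun_G(\Jker)$.

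For (2), each ${}^T\b_{\l!}$ is fully faithful. This holds because ${}^T\ov\b_{\l!}$ is fully faithful on Kirillov categories, and ${}^T\pr_\Ga^*$ is an equivalence since $A_\l$ is unipotent. The vanishing in (1), applied to generators, gives orthogonality between different $\l$, so the direct sum functor is fully faithful. Essential surjectivity follows from Corollary \ref{c:Kir gen}. To apply it, we note that the relevant strata of $\Bun_G(\Jker T)$ are exactly the ${}^T\ov\b_\l$, since relevance is detected after pulling back along the $T$-torsor $\Bun_G(\Jker)\to\Bun_G(\Jker T)$ (see \S\ref{sss:rel DGr}).

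For (3), combine (2) with Proposition \ref{p:Hinf equiv}(2). The category $\eqHinf$ is $\bigoplus_\mu D(\pt/T)$, with $\cF(\mu)$ corresponding to ${}^T\io^T_*\cF\star{}^T\D^T_\mu$. The action map $\cH\mapsto\cH\star_\infty{}^T\d_\l$ sends the $\mu$-summand to the $(\l-\mu)$-summand of (2). On the $\mu=0$ summand it is the natural $D(\pt/T)$-module structure, i.e.\ ${}^T\io^T_*\cF\star_\infty{}^T\d_\l\cong{}^T\b_{\l!}(\cF\otimes{}^T\d)\j{-d_\l}$. Then the decomposition of (2) identifies the action map with a direct sum of identity functors, which is an equivalence. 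I expect this identification of the $D(\pt/T)$-action with the stratum's $D(\pt/T)$-structure to be the main obstacle. It is a base-change computation on the convolution diagram restricted to the stratum $\J T/\Jker$ at $\infty$, similar in spirit to the proof of Lemma \ref{l:std action DGr}.

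Finally, (4) follows from Corollary \ref{c:TDG eq DG as Rep mod}, the equivalence \eqref{RepT Hinf}, and Corollary \ref{c:free over lattice}(2), transported through the equivalence $\om^\hs$.
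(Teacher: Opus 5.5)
Your proposal is correct. Parts (2) and (3) follow the paper's argument. The identification you flag in (3) is also left implicit in the paper: the $D(\pt/T)$-summand of $\eqHinf$ acts on each stratum by tensoring. This is just the projection formula for the map $\Bun_G(\Jker T)\to\pt/T$ classifying the $T$-torsor $\Bun_G(\Jker)\to\Bun_G(\Jker T)$. On the $\l$-stratum that map is the projection $\pt/(A_\l T)\to\pt/T$.

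You differ from the paper in (1) and (4).

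\emph{Part (1).} The paper does not rerun the Hom-vanishing. The forgetful functor ${}^T\DG\to\DG$ is conservative, and by smooth base change along the $T$-torsor it carries ${}^T\b_{\l,!}\to{}^T\b_{\l,*}$ to $\b_{\l,!}\to\b_{\l,*}$. So (1) is immediate from Proposition \ref{p:DGr clean}(1). Your equivariant rerun via Corollary \ref{c:Teq act} and the open stratum is valid but longer. Your openness claim is stated backwards: the $\l=0$ stratum of $\Bun_G(\Jker T)$ is open because its preimage in $\Bun_G(\Jker)$ is the open stratum and the quotient map is smooth and surjective.

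\emph{Part (4).} The paper deduces it from (3), using that $(-)\star_\infty{}^T\d_\l$ is t-exact by Corollary \ref{c:Teq act}. You instead transport Corollary \ref{c:free over lattice}(2) through the $\Rep(\dT)$-equivariant equivalence $\om^\hs$ of Corollary \ref{c:TDG eq DG as Rep mod}. This makes (4) independent of (3), and it is a perfectly clean deduction since both inputs are proved before this corollary.
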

\begin{proof}
(1) follows from the non-$T$-equivariant version Proposition \ref{p:DGr clean}(1), because the forgetful functor $\DG\to {}^{T}\DG)$ is conservative. 

(2) follows from (1) as in the proof of Proposition \ref{p:DGr clean}(2). 

(3) Proposition \ref{p:Hinf equiv}(2) identifies $\eqHinf$ with $D(\pt/T)\ot \Rep(\dT)$ and part (2) above identifies ${}^T\DG$ also with $D(\pt/T)\ot \Rep(\dT)$. By 
Corollary \ref{c:Teq act}, the action functor $(-)\star_{\infty}{}^{T}\d_{0}: \eqHinf\to {}^{T}\DG$ can be identified with the auto-equivalence of $D(\pt/T)\ot \Rep(\dT)$ given by the identity on $D(\pt/T)$ and the inversion map on $\dT$. This shows that ${}^{T}\DG$ is free of rank one over $\eqHinf$ with generator ${}^T\d_0$. For general $\l$, ${}^{T}\d_{\l}\cong {}^T\D^T_{-\l}\star_\infty {}^T\d_0$ by Corollary \ref{c:Teq act}, and ${}^T\D^T_{-\l}$ is invertible, hence ${}^{T}\d_{\l}$ is also a free generator for ${}^T\DG$.

(4) The action map $(-)\star_{\infty}{}^{T}\d_{\l}: \eqHinf\to {}^{T}\DG$ is an equivalence by (3) and is t-exact by Corollary \ref{c:Teq act}. Therefore ${}^{T}\DG^{\hs}$ is free of rank one over $\Hinfhs$, with generator ${}^{T}\d_{\l}$.

\end{proof}


\subsection{Spherical Hecke action}\label{ss:sph action}

Consider the affine Grassmannian $\Gr=G\lr{t}/G\tl{t}$ with the action of $G\tl{t}\rtimes \Grot$ (where $\Grot$ acts on $G\tl{t}$ by scaling $t$) by left translation and loop rotation. 
Let $\cH^{\sph}_{0}$ be the category of $G\tl{t}$-equivariant constructible complexes on $\Gr$ supported on finitely many $G\tl{t}$-orbits (the subscript $0$ in $\cH^{\sph}_{0}$ emphasizes that $t$ is the local coordinate at $0\in X$). Note that all objects in $\cH^{\sph}_{0}$ are automatically $\Grot$-monodromic. Let $\cH^{\sph, \hs}_{0}\subset \cH^{\sph}_{0}$ be the heart of the perverse t-structure (as sheaves on $\Gr$). The geometric Satake equivalence of Lusztig, Ginzburg, Beilinson-Drinfeld and Mirkovic-Vilonen gives a canonical symmetric monoidal structure on $\cH^{\sph, \hs}_{0}$ (extending the monoidal structure given by the convolution product) and an equivalence of symmetric monoidal categories
\begin{equation*}
\Sat: \cH^{\sph, \hs}_{0}\cong \Rep(\dG).
\end{equation*}

Via modification of bundles at $0\in X$, we get an action of $\Hs$ on $\cD_{\psi, \bG}$. More precisely, consider the diagram
\begin{equation}\label{Hk0}
\xymatrix{& \Hk_{G, 0}(\bG^{\psi}_{\infty})\ar[rr]^{\ev_{0}}\ar[dl]_-{h_{1}}\ar[dr]^{h_{2}} && G\tl{t}\bs G\lr{t}/G\tl{t}\\
\Bun_{G}(\bG^{\psi}_{\infty})  && \Bun_{G}(\bG^{\psi}_{\infty})
}
\end{equation}
where $\Hk_{G, 0}(\bG^{\psi}_{\infty})$ classifies a pair of $G$-bundles on $X$ with $\bG^{\psi}_{\infty}$ level structures $\cE_{1},\cE_{2}\in \Bun_{G}(\bG^{\psi}_{\infty})$ and an isomorphism $\t: \cE_{1}|_{X\bs \{0\}}\isom \cE_{2}|_{X\bs \{0\}}$. The map $h_{i}$ records $\cE_{i}$, and $\ev_{0}$ records the relative position of $\cE_{1}$ and $\cE_{2}$ on the formal disk around $0$. The maps $h_{1},h_{2}$ are $\Aff$-equivariant.  The map $\ev_{0}$ is $\Ga$-invariant, and it descends to a $\Grot$-equivariant map
\begin{equation*}
\ov\ev_{0}: \Hk_{G, 0}(\J)\to G\tl{t}\bs G\lr{t}/G\tl{t}.
\end{equation*}
The construction of integral transform on Kirillov categories \S\ref{sss:int trans} gives a functor
\begin{equation*}
D_{\Grot\lmon}(\Hk_{G, 0}(\J))\to \End(\cD_{\psi, \bG}).
\end{equation*}
Precomposing it with $\ov\ev_{0}^{*}$ we get a functor
\begin{equation*}
\cH^{\sph}_{0}\xr{\ov\ev_{0}^{*}}D_{\Grot\lmon}(\Hk_{G, 0}(\J))\to \End(\cD_{\psi, \bG}).
\end{equation*}
This defines an action of $\cH^{\sph}_{0}$ on $\cD_{\psi,\bG}$. We denote the action of $\cK\in\cH^{\sph}_{0}$  on $\cD_{\psi,\bG}$ by $(-)\star_{0}\cK$, which is given by the formula  $(-)\star_{0}\cK=h_{2!}(h_{1}^{*}(-)\ot\ev^{*}_{0}\cK)$.

Using the coset descriptions, we can rewrite the  diagram \eqref{Hk0} as
\begin{equation}\label{Hk0 cosets}
\xymatrix{& \Jker\bs G\lr{\t}\twtimes{G[t]}G[t,t^{-1}]/G[t] \ar[r]^-{\pr_{2}}\ar[dl]_{\pr_{1}}\ar[dr]^{m} &  G[t]\bs G[t,t^{-1}]/G[t] \ar[r] & G\tl{t}\bs G\lr{t}/G\tl{t}\\
\Jker\bs G\lr{\t}/G[t] && \Jker\bs G\lr{\t}/G[t]}
\end{equation}
Here $\pr_{1}, \pr_{2}$ are projections to the first and second factors, and $m$ is the multiplication map on $G\lr{\t}$.

Similarly, for the $T$-equivariant version, we have an action of $\cH^{\sph}_{0}$ on ${}^{T}\cD_{\psi, \bG}$, still denoted by $\star_{0}$.

\sss{The monoidal functor $\rho$} 
\label{sss:def rho}

The action of $\hHim$ on $\DG$ commutes with the action of $\Hs$ since they are defined by modifications at different points. The spherical action thus gives a monoidal functor
\begin{equation}\label{predef rho}
\Hs\to \End_{\hHim}(\DG)^{\rev}\to \End_{D^b(\Rep(\dT))}(\cD_{\psi, \bG})^{\rev}.
\end{equation}
Here $(-)^{\rev}$ means reversing the order of the monoidal operation. 
By Corollary \ref{c:free over lattice}, $\DG$ is free of rank one under $D^b(\Rep(\dT))$, with generator $\d_{0}$. Therefore we get a canonical monoidal equivalence
\begin{equation}\label{End DG}
\End_{D^b(\Rep(\dT))}(\cD_{\psi, \bG})^{\rev}\cong D^b(\Rep(\dT))
\end{equation}
The equivalence \eqref{End DG} sends $F: \cD_{\psi, \bG}\to \cD_{\psi, \bG}$ to the unique object $V=\op_{\l\in\xcoch(T)}V(\l)\in D^b(\Rep(\dT))$ defined by
\begin{equation*}
    V(\l)=\RHom_{\DG}(\d_\l, F(\d_0))\in D^b(\Vect).
\end{equation*}
Composing \eqref{predef rho} and \eqref{End DG} we get a monoidal functor $\rho$
\begin{equation}\label{def rho}
\r: \Hs\to D^b(\Rep(\dT)).
\end{equation}
Unraveling definitions, for $\cK\in \Hs$, we have
\begin{equation*}
\r(\cK)=\op_{\l\in \xcoch(T)}\r_{\l}(\cK)\in D^b(\Rep(\dT)),
\end{equation*}
where $\r_{\l}(\cK)\in D^b(\Vect)$ is defined by
\begin{equation}\label{D0K}
\r_{\l}(\cK)=\RHom_{\DG}(\d_\l, \d_{0}\star_{0}\cK).
\end{equation}

\sss{Equivariant version of $\r$}\label{sss:eq rho}
For the $T$-equivariant version, the commuting action of $\eqHinf$ and $\Hs$ on ${}^{T}\DG$ gives a monoidal functor
\begin{equation}\label{def Trho}
{}^T\r: \Hs\to \End_{\eqHinf}({}^{T}\DG)^{\rev}\cong \eqHinf.
\end{equation}
Here we use the fact that ${}^T\DG$ is free of rank one over $\eqHinf$ by Corollary \ref{c:TDGr clean}\eqref{TDG free over eqHinf}. Writing ${}^T\r=\op_{\l\in \xcoch(T)}{}^T\r_\l$ where ${}^T\r_\l: \Hs\to D(\pt/T)$, we have
\begin{eqnarray}\label{compute Trho}
    {}^T\r_\l=\RHom_{{}^T\DG}({}^T\d_\l, {}^T\d_0\star_0\cK)\in D(\pt/T).
\end{eqnarray}
Thus $\rho$ is naturally isomorphic to the composition
\begin{equation}\label{Trho and rho}
\Hs\xr{{}^T\r}\eqHinf\cong D^b(\Rep(\dT))\ot D(\pt/T)\xr{p^*} D^b(\Rep(\dT))
\end{equation}
where the last functor is the pullback along $p:\pt\to \pt/T$.

\begin{theorem}\label{th:rho comm}
\begin{enumerate}
\item The action of $\Hsh$ on $\cD_{\psi, \bG}$ preserves the heart $\cD^{\hs}_{\psi, \bG}$. In particular, $\r|_{\Hsh}$ gives a monoidal functor
\begin{equation*}
\r^{\hs}: \Hsh\cong \Rep(\dG)\to \End_{\Rep(\dT)}(\DG^{\hs})^{\rev}\cong \Rep(\dT).
\end{equation*}
Here the last equivalence uses Corollary \ref{c:free over lattice}(2).
\item  There is a canonical isomorphism of monoidal functors
\begin{equation*}
\r^\hs\cong \inv\c\Res^{\dG}_{\dT}: \Rep(\dG)\xr{\Res^{\dG}_{\dT}} \Rep(\dT)\xr{\inv}\Rep(\dT).
\end{equation*}
Here $\inv: \Rep(\dT)\to \Rep(\dT)$ is the tensor equivalence induced by the inversion on $\dT$.
\end{enumerate}
\end{theorem}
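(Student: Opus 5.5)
We plan to prove (1) by a direct computation of the stalks $\r_\l(\cK)=\RHom_{\DG}(\d_\l,\d_0\star_0\cK)$ for $\cK$ perverse. By Proposition \ref{p:DGr clean}, $\DG\cong D^b(\Rep(\dT))$ with $\d_\l\mapsto E(\l)$, and $\b_{\l!}\cong\b_{\l*}$. Hence $\r_\l(\cK)$ is the $\Phi_{\Ga}$-image, as in Example \ref{ex:Kir A1}, of $\ov\b_\l^{!}(\d_0\star_0\cK)\j{d_\l}$, or equivalently of $\ov\b_\l^{*}(\d_0\star_0\cK)\j{d_\l}$. Preserving the heart then amounts to showing that this complex of vector spaces is concentrated in degree $0$.

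We unfold $\d_0\star_0\cK$ using the coset diagram \eqref{Hk0 cosets}. Since $\d_0$ is the $!$-extension from the open point $\Jker\bs\J G[t]/G[t]\cong\Ga$, the object $\d_0\star_0\cK$ is the pushforward to $\Jker\bs G\lr{\t}/G[t]$ of $\d\bt\cK$ on $\Ga\times(\J\bs\J\cdot\Gr_{\le\nu})$. Restricting to the stratum $\Jker\bs\J t^\l G[t]/G[t]$ and taking the Kirillov stalk $\Phi_{\Ga}$ (monodromic vanishing cycles in the $\Ga$-direction, Lemma \ref{l:Kir prod A1}), we expect to identify $\r_\l(\cK)$, up to the shift $\j{d_\l}$, with a vanishing-cycle or local Morse group of $\cK$. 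This is taken at $t^\l\in\Gr$, with respect to the function $g\mapsto\psi(\text{leading term})$. That function has differential $(t^{\l},\psi)\in T^*\Gr$, so $\r_\l(\cK)$ is exactly the microstalk $\mu_\l(\cK)$ of Evans--Mirkovi\'c \cite{EM}.

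To make this precise we would use the transversal slice $\J t^\l G[t]/G[t]\cap\Gr$, i.e. the $\J$-orbit through $t^\l$, whose dimension is $d_\l$ by the lemma $\dim A_\l=d_\l$. The $\psi$-character restricted to this orbit is a nondegenerate-type Morse function at $t^\l$ by the relevance computation of Lemma \ref{l:H inf rel}. Microstalks of perverse sheaves at smooth points of a conic Lagrangian, i.e. generic covectors, are concentrated in one degree. We would verify genericity of $(t^\l,\psi)$ for the conormal to $\Gr^\nu$, using that $\psi_1$ is regular semisimple, as Evans--Mirkovi\'c do, and then conclude t-exactness. This identification, including matching the shift normalization $\j{-d_\l}$ in $\d_\l$, is the main obstacle. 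Our first attempt would be a hyperbolic-localization argument: contract via $\Grot$, write the $\J$-orbit of $t^\l$ as the repelling cell of $t^\l$, and reduce to MV weight functors for the Borel determined by a generic cocharacter.

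For (2), the monoidal structure on $\r^\hs$ is already given by construction. The dimensions of $\r^\hs(\IC_\mu)_\l$ agree with weight multiplicities of $V_\mu$ at $-\l$ by \cite{EM}. The inversion comes from $\wh\D_\mu\star_\infty\d_\l\cong\d_{\l-\mu}$ in Lemma \ref{l:std action DGr}. To upgrade this to an isomorphism of monoidal functors, we would use the equivariant version ${}^T\r:\Hs\to\eqHinf$ of \eqref{def Trho} together with the degeneration argument above. This should identify $\r^\hs$ with the Mirkovi\'c--Vilonen weight functor $\bigoplus_\l H_c(S_{-\l},-)$, compatibly with convolution, since both are computed by the same hyperbolic restrictions. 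Alternatively, a Tannakian argument applies: $\r^\hs$ composed with the forgetful functor is an exact faithful tensor functor. It therefore corresponds to a homomorphism $\dT\to\dG$, a maximal torus embedding determined up to $\dG$-conjugacy by the weights $-\l$. The canonical choice of isomorphism is then fixed by comparing with the MV fiber functor on minuscule/quasi-minuscule objects.
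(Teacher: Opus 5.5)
Part (2) has a genuine gap. The Tannakian route needs $\r^\hs$ to be a \emph{tensor} functor, compatible with the fusion commutativity constraint on $\Hsh$ and with the symmetric structure of $\Rep(\dT)$. The monoidal structure that $\r$ inherits from the module action (composition of endofunctors of $\DG$) has no such compatibility a priori, and proving it is exactly the hard step; the paper only mentions it as an unpursued alternative requiring Hecke modifications at several moving points. Without symmetry, weight multiplicities plus an associative monoidal structure do not determine the functor. For instance, when $T$ has rank at least $2$, twist the monoidal structure of $\inv\c\Res^{\dG}_{\dT}$ by a non-symmetric bicharacter $c$ of $\xcoch(T)$, acting by $c(\l,\mu)$ on $V(\l)\ot W(\mu)$. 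This gives a monoidal functor with the same underlying functor that is not monoidally isomorphic to the original, since monoidal isomorphisms transport symmetry. Your other route, matching $\r^\hs$ with the Mirkovi\'c--Vilonen weight functors ``since both are computed by the same hyperbolic restrictions'', is also unjustified. Here $\r_\l$ is a vanishing cycle of $\psi_\l$ on the slice $\Sig^{\Gr}_\l$, the loop-rotation repelling fiber at $t^\l$. The MV functors are hyperbolic restrictions to semi-infinite orbits for a regular cocharacter of $T$. Even an isomorphism of underlying functors would not compare the two monoidal structures.

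The missing idea is to factor $\r^\hs$ through Gaitsgory's central functor. For $\cK\in\Hsh$, $\cZ(\cK)$ is Wakimoto-filtered, and $\gr\c\cZ\cong\Res^{\dG}_{\dT}$ \emph{as monoidal functors} by Arkhipov--Bezrukavnikov. On the auxiliary Iwahori-level category ${}^T\cC$ of extensions of the $\dc_\l$, $J_\mu$ acts on $\dc_\l$ as ${}^T\D^T_{-\mu}$ does; this is the source of $\inv$ (Corollary \ref{Cor:wakvsHinfty}). Moreover ${}^T\pi_*$ identifies ${}^T\cC^{ss}$ with ${}^T\DG^\hs$, and ${}^T\pi_*(\cG\star_0\cZ(\cK))\cong({}^T\pi_*\cG)\star_0\cK$ (Lemma \ref{l:Wak diagram}). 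This gives (1) and the monoidal isomorphism in (2) at once.

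Part (1) is a genuinely different route that does work, though not quite for the reasons you give.
\begin{itemize}
\item Identifying $\r_\l$ with the renormalized microstalk at $(t^\l,\psi)$ needs only the slice-plus-$\Grot$-contraction argument: Lemma \ref{l:rho lam} and Proposition \ref{p:rho ms}, via Lemma \ref{l:res slice} and Proposition \ref{p:aff space van cycle}. No hyperbolic localization or MV functors enter. Also, $\Sig^{\Gr}_\l$ has \emph{codimension} $d_\l$, not dimension.
\item Genericity of $(t^\l,\psi)$ does not come from Lemma \ref{l:H inf rel}. It comes from discreteness of $\Gr_\psi$ (Lemma \ref{l:Gr ASF discrete}) spread over a neighborhood. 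If $(g,\ph)$ lies in the union of conormals with $\ph$ near $\psi_1dt$, then $\ph$ has regular semisimple constant term. So $\ph=\Ad(h)\g$ with $h\in G\tl{t}$ near $1$ and $\g\in\frt\tl{t}$, which forces $g\in h\cdot\{t^\mu\}$. For $g$ near $t^\l$ this gives $g=ht^\l\in\Gr^{\l_+}$.
\item Hence, near $(t^\l,\psi)$, the union of conormals is just $T^*_{\Gr^{\l_+}}$, and the microstalk is t-exact up to a shift governed by $d_\l$.
\end{itemize}
The paper runs the logic the other way: it proves (1) via the Wakimoto filtration and deduces t-exactness of these microstalks afterwards (Corollary \ref{c:mic fiber functor}). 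Your argument gives (1) independently of the central functor, but it cannot give (2).
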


The proof will be given in \S\ref{ss:pf sph action}, after some preparation on the Iwahori-level analogue of $\DG$. Another possible approach to Theorem \ref{th:rho comm}(2) is to prove that $\rho^\hs$ is compatible with the commutativity constraints of $\Hsh$ and $\Rep(\dT)$ using Hecke action at multiple moving points on $\PP^1$.

\quash{
Before going into details of the proof, we give the outline. We first observe that it suffices to prove the same compatibility with commutativity for a $T$-equivariant version ${}^T\r$ of $\r$, even after localization. We then compare ${}^T\r$ to the $T$-equivariant global sections functor on $\Hs$ which is known to be compatible with the commutativity constraint (via fusion) on $\Hs$.

\sss{Equivariant version of $\r$}
Now consider the right action of $\Hs$ on ${}^T\cD_{\psi, \bG}$, which commutes with the left action of ${}^T\Hinf^T$. We also know from Corollary \ref{c:TDGr clean}(3) that ${}^T\cD_{\psi, \bG}$ is free of rank one as a module over ${}^T\Hinf^T$, with generator given by ${}^T\D_0$. This gives a monoidal functor
\begin{equation*}
    {}^T\r: \Hs\to \End_{{}^T\Hinf^T}({}^T\cD_{\psi, \bG})^\opp\cong {}^T\Hinf^T\cong D(\pt/T)\ot D^b(\Rep(\dT)).
\end{equation*}
We write 
\begin{equation*}
    {}^T\r=\bigoplus_{\l\in\xcoch(T)}{}^T\r_\l 
\end{equation*}
with 
\begin{equation*}
    {}^T\r_\l: \Hs\to D(\pt/T).
\end{equation*}
Unravelling the definitions, the functors ${}^T\r_\l$
 are characterized as follows: for $\cK\in \Hs$, we have a canonical decomposition
\begin{equation*}
    {}^T\D_0\star_0\cK=\op_{\l\in \xcoch(T)}{}^T\b_{\l!}( {}^T\r_\l(\cK)\ot {}^T\d).
\end{equation*}
Here we recall from \eqref{Tbl!} that the source of ${}^T\b_{\l!}$ is $\Kir((\pt/T)\times \Ga)$, and it therefore makes sense to tensor ${}^{T}\d$ with any object in $D(\pt/T)$.

If $\Phi$ is a functor $\Phi: \cC\to D(\pt/T)$, we use $\un\Phi$ to denote the composition of $\Phi$ with the functor of taking equivariant cohomology
\begin{equation*}
    \un\Phi: \cC\to R_T\gmod.
\end{equation*}
Here $R_T=\upH_T^*(\pt)$ is the polynomial ring over $H^2_T(\pt, E)$, and $R_T\gmod$ denotes the category of graded $R_T$-modules.

\begin{lemma}\label{l:T rho free}
    For $\cK\in \Hsh$ and $\l\in \xcoch(T)$, $\un{{}^T\r_\l}(\cK)$ is a free module over $R_T$ of finite rank. 
\end{lemma}
\begin{proof}
    By the exactness of $\r$, for $\cK\in \Hsh$, $\r_\l(\cK)$ is concentrated in degree zero. This means ${}^T\r_\l(\cK)\in D(\pt/T)$ is a finite-dimensional $E$-vector space concentrated in degree zero when pulled back to $D(\pt)$. The spectral sequence calculating $\un{{}^T\r_\l}(\cK)=\upH^*_T({}^T\r_\l(\cK))$ necessarily degenerates at $E_2$ for parity reasons. Therefore $\un{{}^T\r_\l}(\cK)$ is a free module over $R_T$ of finite rank.  
\end{proof}

Let $i_\l: \{t^\l\}\incl \Gr$ be the inclusion of the $T$-fixed point $t^\l$. Let
\begin{equation*}
    {}^Ti^*_\l: \Hs\to D(\pt/T)
\end{equation*}
be the $T$-equivariant stalk functor at $t^\l$, so that taking $T$-equivariant cohomology yields
\begin{equation*}
    \un{{}^Ti^*_\l}: \Hs\to R_T\gmod.
\end{equation*}
Let ${}^Ti^*$ be the direct sum of ${}^Ti^*_\l$
\begin{equation*}
    {}^Ti^*=\op_{\l\in \xcoch(T)}{}^Ti^*_\l: \Hs\to D(\pt/T)\ot D^b(\Rep(\dT)).
\end{equation*}

\begin{lemma}\label{l:comp rho i}
    There is a natural transformation of functors for each $\l\in \xcoch(T)$
    \begin{equation*}
        \th_{\l}: {}^T\r_{\l}\to {}^Ti^*_{\l}: \Hs\to D(\pt/T)
    \end{equation*}
    such that
    \begin{enumerate}
        \item The direct sum of these natural transformations $\th: {}^T\r=\op {}^T\r_\l\to {}^Ti^*=\op {}^Ti^*_\l$ is a natural transformation of monoidal functors $\Hs\to D(\pt/T)\ot D^b(\Rep(\dT))$.
        \item For each $\l\in \xcoch(T)$, the induced natural transformation of functors
    \begin{equation*}
        \un{{}^T\r_\l}\to \un{{}^Ti^*_\l}: \Hs\to R_T\gmod
    \end{equation*}
    is an isomorphism after localizing to the fraction field $K_T$ of $R_T$. In other words, if $\cK\in \Hs$, then the natural map
    \begin{equation*}
       \un\th_{\l}:  \un{{}^T\r_\l}(\cK)\ot_{R_T}K_T\to \un{{}^Ti^*_\l}(\cK)\ot_{R_T} K_T
    \end{equation*}
    is an isomorphism.
    \end{enumerate}
\end{lemma}
\begin{proof}
Construction of $\th_{\l}$. By Lemma \ref{l:rho lam}, we have
\begin{equation*}
{}^{T}\r_{\l}(\cK)\cong \phi(\psi_{\l!}\s_{\l}^{\Gr,*}\cK).
\end{equation*}
Here we recall $\s_{\l}: \Sig^{\Gr}_{\l}\incl \Gr$ is the inclusion, and $\psi_{\l}: \Sig^{\Gr}_{\l}\to\Ga$ is the $T$-invariant function defined by $\psi$. By the definition of the monodromic vanishing cycles $\phi$, ${}^{T}\r_{\l}(\cK)$ fits into a distinguished triangle
\begin{equation}\label{rT i0i1}
{}^{T}\r_{\l}(\cK)\to i_{0}^{*}\psi_{\l!}\s_{\l}^{*}\cK\to i_{1}^{*}\psi_{\l!}\s_{\l}^{*}\cK\to
\end{equation}
Here we temporarily use $i_{0}$ and $i_{1}$ to denote the inclusions of $0$ and $1$ into $\Ga$ (not to be confused with $i_{\l}:\{t^{\l}\}\incl \Gr$).

By proper base change, 
\begin{equation}\label{i0 as Hc}
i_{0}^{*}\psi^{\Gr}_{\l!}\s_{\l}^{\Gr,*}\cK\cong R\G_{c}(\Sig^{\psi}_{\l}\cap \Gr, \s_{\l}^{\psi,*}\cK).
\end{equation}
Here $\s_{\l}^{\psi}: \Sig^{\psi}_{\l}\cap \Gr\incl \Gr$ is the inclusion of the $0$ fiber of $\psi_{\l}$. Let $\k_{\l}: \{t^{\l}\}\incl \Sig^{\psi}_{\l}\cap \Gr$ be the inclusion. Under the rotation torus action, $\Sig^{\psi}_{\l}\cap \Gr$ contracts to $t^{\l}$, therefore we have a canonical isomorphism
\begin{equation}\label{Hc as costalk}
R\G_{c}(\Sig^{\psi}_{\l}\cap \Gr, \s_{\l}^{\psi,*}\cK)\cong \k^{!}_{\l}\s_{\l}^{\psi,*}\cK.
\end{equation}
Using the natural map $\k^{!}_{\l}\to \k^{*}_{\l}$, we get a natural map $\k^{!}_{\l}\s_{\l}^{\psi,*}\to \k^{*}_{\l}\s_{\l}^{\psi,*}={}^{T}i_{\l}^{*}$. Combining with \eqref{rT i0i1}, \eqref{i0 as Hc} and \eqref{Hc as costalk}, we get a natural transformation
\begin{equation*}
\th_{\l}: {}^{T}\r_{\l}\to i_{0}^{*}\psi_{\l!}\s_{\l}^{*}\cong \k^{!}_{\l}\s_{\l}^{\psi,*}\to \k^{*}_{\l}\s_{\l}^{\psi,*}={}^{T}i_{\l}^{*}.
\end{equation*}

Property (1)

Property (2)

\end{proof}

\begin{proof}[Proof of Theorem \ref{th:rho comm}]

It remains to check that the monoidal structure on $\r$ is compatibility with the commutativity constraints of $\Hsh$ and $\Rep(\dT)$. In other words, for $\cK_1,\cK_2\in \Hsh$, we  need to check that the following diagram is commutative
\begin{equation}\label{rho comm}
\xymatrix{\r(\cK_1)\ot\r(\cK_{2}) \ar[d]^{\t_{\cK_{1},\cK_{2}}}\ar[rr]^-{\s_{\r(\cK_{1}), \r(\cK_{2})}} && \r(\cK_{2})\ot\r(\cK_{1})\ar[d]^{\t_{\cK_{2},\cK_{1}}}\\
\r(\cK_{1}\star\cK_{2})\ar[rr]^-{\r(c_{\cK_{1},\cK_{2}})} &&  \r(\cK_{2}\star \cK_{1})
}
\end{equation}
Here $\s_{\r(\cK_{1}), \r(\cK_{2})}$ is the isomorphism given by swapping two factors, and $c_{\cK_{1},\cK_{2}}$ is the commutativity constraint on $\Hsh$ coming from fusion as in \cite{MV}.

We consider the $T$-equivariant version of the above diagram
\begin{equation}\label{T rho comm}
\xymatrix{\un{{}^T\r}(\cK_1)\ot_{R_T}\un{{}^T\r}(\cK_{2}) \ar[d]^{\t_{\cK_{1},\cK_{2}}}\ar[rr]^-{\s_{\un{{}^T\r}(\cK_{1}), \un{{}^T\r}(\cK_{2})}} && \un{{}^T\r}(\cK_{2})\ot_{R_T}\un{{}^T\r}(\cK_{1})\ar[d]^{\t_{\cK_{2},\cK_{1}}}\\
\un{{}^T\r}(\cK_{1}\star\cK_{2})\ar[rr]^-{\un{{}^T\r}(c_{\cK_{1},\cK_{2}})} &&  \un{{}^T\r}(\cK_{2}\star \cK_{1})
}
\end{equation}
By Lemma \ref{l:T rho free}, each term of the above diagram is free over $R_T$. Moreover, tensoring each term of \eqref{T rho comm} with the augmentation $R_T$-module recovers the diagram \eqref{rho comm}. Therefore it suffices to show that \eqref{T rho comm} is commutative. By the torsion-freeness of the terms, it suffices to prove its commutativity after tensoring with $K_T=\Frac{R_T}$.

Consider the $T$-equivariant cohomology functor ${}^Th=\upH_T^*(\Gr, -)$
\begin{equation*}
    {}^Th: \Hs\to R_T\gmod.
\end{equation*}
It admits a monoidal structure that is compatible with the commutativity constraint on $\Hs$ coming from fusion. By restricting to the $T$-fixed points of $\Gr$, we get a natural transformation of monoidal functors
\begin{equation*}
    {}^Th\to \un{{}^Ti^*}=\op_\l\un{{}^Ti^*_\l}: \Hs\to R_T\gmod.
\end{equation*}
Moreover, equivariant formality tells us for any $\cK\in \Hsh$, both ${}^Th(\cK)$ and $\un{{}^Ti^*}(\cK)$ are free $R_T$-modules of finite rank, and the induced map
\begin{equation*}
    {}^Th(\cK)\ot_{R_T}K_T\to \un{{}^Ti^*}(\cK)\ot_{R_T}K_T
\end{equation*}
is an isomorphism. We therefore have a commutative diagram
\begin{equation}\label{T i comm}
\xymatrix{\un{{}^Ti^*}(\cK_1)\ot_{R_T}\un{{}^Ti^*}(\cK_{2}) \ar[d]^{\t_{\cK_{1},\cK_{2}}}\ar[rr]^-{\s_{\un{{}^Ti^*}(\cK_{1}), \un{{}^Ti^*}(\cK_{2})}} && \un{{}^Ti^*}(\cK_{2})\ot_{R_T}\un{{}^Ti^*}(\cK_{1})\ar[d]^{\t_{\cK_{2},\cK_{1}}}\\
\un{{}^Ti^*}(\cK_{1}\star\cK_{2})\ar[rr]^-{\un{{}^Ti^*}(c_{\cK_{1},\cK_{2}})} &&  \un{{}^Ti^*}(\cK_{2}\star \cK_{1})
}
\end{equation}
The natural transformation ${}^T\r\to {}^Ti^*$ in Lemma \ref{l:comp rho i} maps the diagram \eqref{T rho comm} to \eqref{T i comm}, and is an isomorphism after tensoring each term with $K_T$. Since all terms in \eqref{T rho comm} are torsion-free over $R_T$, the original diagram \eqref{T rho comm} is also commutative. This finishes the proof of Theorem \ref{th:rho comm}.
\end{proof}

}


\subsection{Microlocalization of $\DG$}
In this subsection we work over $k=\CC$ and the analytic topology.

By the discussion in \S\ref{s:micro}, we have a microlocalization functor
\begin{equation}\label{micro DGr}
M_{\bG}:  \cD_{\psi, \bG}\to \muSh(T^{*}\Bun_{G}(\bG_{\infty}^{\psi})\qq_{1}\Ga)=\muSh(\cM_{\psi,\bG}).
\end{equation}
By Lemma \ref{l:Gr ASF discrete}, $\cM_{\psi,\bG}$ is a discrete set indexed by $\xcoch(T)$. Let $\muSh_{fs}(\cM_{\psi,\bG})\subset \muSh(\cM_{\psi,\bG})$ be the full subcategory of objects supported on finitely many points. Then we have a canonical equivalence
\begin{equation*}
\muSh_{fs}(\cM_{\psi,\bG})\cong D^{b}(\Vect_{\xcoch(T)}).
\end{equation*}
Here $\Vect_{\xcoch(T)}$ is the category of finite-dimensional $\xcoch(T)$-graded vector spaces. Composing with this equivalence, we can rewrite the microlocalization functor $M_{\bG}$ as
\begin{equation*}
M_{\bG}: \cD_{\psi, \bG}\to \muSh_{fs}(\cM_{\psi,\bG})\cong D^{b}(\Vect_{\xcoch(T)}).
\end{equation*}

\sss{Thick Grassmannian}\label{sss:thick Gr}
To study the microlocalization functor, it will be convenient to work with a uniformization of $\Bun_{G}(\Jker)$ by a pro-smooth scheme. 

Let $\Bun_{G,\wh\infty}$ be the moduli space of pairs $(\cE,\s)$, where $\cE$ is a $G$-bundles on $\PP^{1}$ and $\s: \cE|_{D_{\infty}}\isom \cE_{\triv}|_{D_{\infty}}$ is a trivialization of $\cE$ over the formal disk $D_{\infty}$ at $\infty$. We may identify $\Bun_{G,\wh\infty}(k)$ with cosets
\begin{equation*}
\Bun_{G,\wh\infty}\cong  G\lr{\t}/G[t].
\end{equation*}
The scheme $\Bun_{G,\wh\infty}$ has an open covering by projective limits of smooth varieties with smooth transition maps. 

There is a left action of the loop group $G\lr{\t}$ on $\Bun_{G,\hi}$, whose quotient by $\J$ (resp. $\Jker$) is canonical identified with $\Bun_{G}(\J)$ (resp. $\Bun_{G}(\Jker)$). The quotient map $\pi_{\J}: \Bun_{G,\hi}\to \Bun_{G}(\J)$ is a $\J$-torsor. 
Recall we have the locally closed substack $\Bun^{\l}(\J)=[\{\wt\cE_{\l}\}/A_{\l}]$, which corresponds to the double coset $\J t^{\l}G[t]$. Let
\begin{equation*}
\Sig_{\l}=\pi^{-1}_{\J}(\Bun^{\l}(\J))=\J t^{\l}G[t]/G[t]\subset \Bun_{G,\hi}.
\end{equation*}
Then $\Sig_{\l}$ is pro-smooth. Similarly define
\begin{equation*}
\Sig^{\psi}_{\l}= \Jker t^{\l}G[t]/G[t]\subset \Bun_{G,\hi}
\end{equation*}
which is the preimage of $[\{\wh\cE_{\l}\}/A_{\l}]\subset \Bun_{G}(\Jker)$.

The proof of the following result uses ideas from the work of Evans--Mirkovic \cite{EM} that shows the vanishing of microstalks with the help of torus action.

\begin{theorem}\label{th:MG equiv} 
\begin{enumerate}
    \item For $\l\in \xcoch(T)$, $M_{\bG}(\d_\l)$ is supported at  $(\cE_{\l}, \psi_{1} dt)\in \cM_{\psi,\bG}$ with one-dimensional microstalk in degree zero. 
    
    \item The microlocalization functor $M_{\bG}$ in \eqref{micro DGr} is an equivalence
\begin{equation*}
M_{\bG}: \DG\isom \muSh_{fs}(\cM_{\psi,\bG})\cong D^{b}(\Vect_{\xcoch(T)}).
\end{equation*}
\end{enumerate}
\end{theorem}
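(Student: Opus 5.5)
The plan is to reduce (2) to (1), and to compute (1) by pulling back to the thick Grassmannian, localizing near the relevant point and using the loop rotation action to kill microstalks away from it.

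\textbf{Reduction of (2) to (1).} By Proposition \ref{p:DGr clean}(2), $\DG\cong\bigoplus_{\l}D^b(\Vect)$ with $\d_\l$ an orthogonal family of generators satisfying $\RHom(\d_\l,\d_\l)=E$. By Lemma \ref{l:Gr ASF discrete}, $\muSh_{fs}(\cM_{\psi,\bG})$ is also a direct sum over $\xcoch(T)$ of copies of $D^b(\Vect)$. If (1) holds, $M_\bG$ sends the generator $\d_\l$ to the generator of the $\l$-summand. Since $\muhom$ is supported on the intersection of supports, the orthogonality between different $\l$ is preserved. On a single summand, a functor $D^b(\Vect)\to D^b(\Vect)$ sending $E$ to $E$ is an equivalence as soon as it induces an isomorphism on $\End(E)=E$, and this holds because the identity maps to the identity. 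So (2) follows from (1).

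\textbf{Setting up (1).} I pull $\d_\l$ back to $\Bun_{G,\hi}$ along the pro-smooth quotient maps. This gives the $!$-extension of the constant sheaf, suitably shifted, along the locally closed pro-smooth subscheme $\Sig^\psi_\l=\Jker t^\l G[t]/G[t]$. Here the skyscraper $\d$ on $\Ga$ corresponds to the $\Jker$-orbit. By Lemma \ref{l:Gr ASF discrete}, the only point of $\cM_{\psi,\bG}$ lying over the relevant stratum $\Bun^\l_G(\Jker)$ is $(\wh\cE_\l,\psi_1dt)$, and $\d_\l$ is supported on the closure of that stratum. Hence the singular support of $M_\bG(\d_\l)$ is contained in the set of points $(\wh\cE_\mu,\psi_1dt)$ with $\wh\cE_\mu$ in $\ov{\Bun^\l_G(\Jker)}$.

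\textbf{Computing the microstalks.} After truncating to a finite-dimensional quotient $N_i/\bK^\psi_i$ as in \S\ref{ss:micro Mpsi}, the microstalk at $(\wh\cE_\mu,\psi_1dt)$ becomes a local Morse group of the $!$-extension from $\Sig^\psi_\l$ for a function $f$ on a normal slice. This $f$ is essentially the function $\psi$ transported along the orbit, and its differential at $t^\mu$ is the given covector. I then follow Evans--Mirkovi\'c \cite{EM}. The slice to $\Sig_\mu$ through $t^\mu$ is contracted to $t^\mu$ by loop rotation combined with a suitable cocharacter of $T$, and $f$ is homogeneous of weight one for this action. By the contraction principle, the vanishing cycles of $f$ at $t^\mu$ reduce to $R\G_c$ of the fiber $f^{-1}(1)$ intersected with the slice and with $\Sig^\psi_\l$. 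This is the same monodromic vanishing-cycles computation that underlies Lemma \ref{l:Kir prod A1}. For $\mu=\l$ the slice is transversal, $f$ is a linear coordinate, and the microstalk is $E$, placed in degree $0$ by the normalization $\j{-d_\l}$ and the shift in the identification $\Kir(\Ga)\cong D^b(\Vect)$. For $\mu\ne\l$ I expect the relevant fiber to be an affine-space-like bundle whose compactly supported cohomology cancels against the zero fiber, so the microstalk vanishes.

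\textbf{Main obstacle.} The vanishing for $\mu\ne\l$ is the hard step. Proposition \ref{p:DGr clean}(1) gives only an indirect handle: $\b_{\l!}\cong\b_{\l*}$, so $\d_\l$ is clean, meaning its $!$- and $*$-extensions agree. I would first try to exploit this cleanness: the vanishing at other points should then follow from the purely local identification of microstalks of a clean extension with the conormal contribution at the stratum itself. If that does not work, I would establish the vanishing by the Evans--Mirkovi\'c torus-weight argument above.
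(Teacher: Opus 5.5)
Your reduction of (2) to (1) is the paper's (via Proposition \ref{p:DGr clean}), and your treatment of the point $\mu=\l$ is fine. The gap is the step you flag yourself. The vanishing of the microstalk of $\d_\l$ at $(\cE_\mu,\psi_1dt)$ for $\mu\neq\l$ is left as an expectation, and neither proposed remedy supplies it.

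\emph{Cleanness.} There is no general principle that a clean extension has microstalks only over its own stratum. For example, the clean extension $j_!\cL=j_*\cL$ of a nontrivial Kummer local system $\cL$ from $\Gm$ to $\AA^1$ has a nonzero microstalk at $(0,dx)$. Moreover, Proposition \ref{p:DGr clean}(1) only gives cleanness modulo $\Ga$-equivariant objects.

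\emph{Contraction to a point.} Contracting the slice to $t^\mu$ by loop rotation combined with a cocharacter and invoking Proposition \ref{p:aff space van cycle} only rewrites the microstalk as $\phi(f_!(\d_\l|_{\wt S_\mu}))$. That is a comparison of $R\G_c$ of $\d_\l$ on the fibres $f^{-1}(0)\cap\wt S_\mu$ and $f^{-1}(1)\cap\wt S_\mu$. These fibres meet $\ov{\Sig^\psi_\l}$ in a way you do not control, and ``affine-space-like bundle'' is not an argument.

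The missing idea is to use the cocharacter $-\mu$ of $T$ \emph{alone}, without loop rotation.
\begin{itemize}
\item Left translation by $s^{-\mu}$ commutes with $\wt\fra\subset\bT^{1}_{\infty}$. So it fixes the line $\fra_\mu=t^\mu\cdot\wt\fra$ pointwise and preserves $f=\pr_\fra$ (weight $0$, not $1$).
\item It scales the coordinates of $S_\mu=U_\mu[t]t^\mu G[t]/G[t]$ by $s^{-\j{\a,\mu}}$ with $\j{\a,\mu}>0$. Hence it contracts $\wt S_\mu=\wt\fra\cdot S_\mu$ onto $\fra_\mu$, not onto a point.
\item $\d_\l$ is $T$-equivariant, because $T=C_G(\psi_1)$ normalizes $\Jker$ and fixes the $\Ga$-coordinate of each stratum.
\item The contraction principle replaces the pushforward along $\pr_\fra$ of $\Sp_{t^\mu/\wt S_\mu}(\cF)$ by its restriction to $T_0\fra$. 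Lemma \ref{l:sp contr} commutes this restriction with specialization, and $\Grot$-equivariance trivializes the specialization along the line.
\end{itemize}
This is the paper's argument. It gives $M_\bG(\cF)_\mu\cong\phi(\cF|_{\fra_\mu})$, so the microstalk only sees $\cF$ along a line \emph{inside the stratum} $\Sig_\mu$. For $\mu\neq\l$ we have $\fra_\mu\subset\Sig_\mu$ and $\Sig_\mu\cap\Sig^\psi_\l=\vn$, so $\d_\l|_{\fra_\mu}=0$. For $\mu=\l$ the same formula gives $\phi$ of a skyscraper at $0$, i.e.\ $E$.

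One refinement: Lemma \ref{l:FT stalk} computes Fourier stalks with $\wt\xi_!$. With that normalization the contraction strictly yields costalks $k^!\cF$ along $\fra_\mu$ rather than stalks. These need not vanish a priori when $\Sig_\mu\subset\ov{\Sig_\l}$. They are, however, constant along $\fra_\mu\cong\Ga$, because $\RHom(\d_\mu,\d_\l)=0$ by Proposition \ref{p:DGr clean}, so $\phi$ still kills them. This is the one place your cleanness instinct genuinely enters, and it only works after the localization to $\fra_\mu$.
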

\begin{proof}
In view of Proposition \ref{p:DGr clean}, (1) implies (2). 


We work with the uniformized pro-smooth moduli space $\Bun_{G,\hi}$ and $\Jker\rtimes\Gm$-equivariant complexes on it. The microstalk of $\cF\in \DG$ at $(\cE_{\nu}, \psi=\psi_{1} dt)$ is computed as follows. 
We have the microlocalization functor along $\Sig^{\psi}_{\nu}$:
\begin{equation*}
\mu_{\nu}:=\mu_{\Sig_{\nu}^{\psi}/\Bun_{G,\hi}}: D_{\Jker\rtimes\Gm}(\Bun_{G,\hi})\to D_{\Jker\rtimes\Gm}(T^{*}_{\Sig^{\psi}_{\nu}}\Bun_{G,\hi}).  
\end{equation*}
Viewing $\psi=\psi_1 dt$ as an element in the conormal fiber of $T^{*}_{\Sig^{\psi}_{\nu}}\Bun_{G,\hi}$ at $t^{\nu}\in \Sig^{\psi}_{\nu}$, the microstalk $M_{\bG}(\cF)_{\nu}$ is the stalk of $\mu_{\nu}(\cF)$ at $(t^{\nu}, \psi)$.
 
By Lemma \ref{l:res slice}, due to the $\J$-equivariance, the restriction of $\mu_{\nu}$ to $t^{\nu}$ can be calculated by restricting to a transversal slice $\wt S_{\nu}$ to $\Sig_{\nu}^{\psi}$ through $t^{\nu}$ and microlocalizing along the point $t^{\nu}$. A transversal slice to $\Sig_{\nu}$ at $t^{\nu}$ is given by
\begin{equation*}
S_{\nu}=U_{\nu}[t]t^{\nu}G[t]/G[t]\subset \Bun_{G,\hi}.
\end{equation*}
Here $U_{\nu}\subset G$ is the unipotent subgroup whose Lie algebra is the direct sum of root spaces $\frg_{\a}$ such that $\j{\a,\nu}>0$. The fact that $S_{\nu}$ is a transversal slice to $\Sig_{\nu}$ at $t^{\nu}$ can be checked by computing the tangent spaces.  To get a transversal slice to $\Sig^{\psi}_{\nu}$, we choose a line $\fra\subset \frt=\Lie T$ such that $\psi|_{\fra}: \fra\to \AA^{1}$ is an isomorphism. Let $\wt\fra\subset \bT_{\infty}^{1}$ be the subgroup consisting of $1+a\t$ for $a\in \fra$. Then
\begin{equation*}
\wt S_{\nu}=\wt\fra\cdot S_{\nu} \cong \fra\times S_{\nu}
\end{equation*}
is a transversal slice to $\Sig^{\psi}_{\nu}$ at $t^{\nu}$. The covector $\psi_{1} dt$ on $(N_{\Sig^{\psi}_{\nu}}\Bun_{G,\hi})_{t^{\nu}}$, after identifying the latter with $T_{t^{\nu}}\wt S_{\nu}$, is proportional to the projection to the first factor $\pr_{\fra}: T_{t^{\nu}}\wt S_{\nu}\to T_{0}\wt\fra$. Therefore we have
\begin{equation}\label{ms nu}
M_{\bG}(\cF)_{\nu}=\FT(\Sp_{t^{\nu}/\wt S_{\nu}}(\cF))|_{\psi_1 dt}=\phi(\pr_{\fra*}\Sp_{t^{\nu}/\wt S_{\nu}}(\cF))
\end{equation}
where $\phi(-)$ denotes the monodromic vanishing cycles on the line $\fra$ towards zero.

Let $\Gm(-\nu)=\Gm$, with the action on $\Bun_{G,\hi}$ by right translation (equivalently adjoint action) via $-\nu: \Gm\to T$. This action stabilizes $\wt S_{\nu}$ and contracts $\wt S_{\nu}$ to $\fra_{\nu}:=t^{\nu}\cdot \wt\fra\cong \fra$. Then $\Gm(-\nu)$ also acts on $T_{t^{\nu}}\wt S_{\nu}$ and contracts it to $T_{t^{\nu}}(t^{\nu}\cdot\wt\fra)\cong T_{0}\fra$. Let $k: \fra_{\nu}\incl \wt S_{\nu}$ be the inclusion, whose tangent map at $t^{\nu}\in \fra_{\nu}$ is $k_{0}: T_{t^{\nu}}(\fra_{\nu})\cong T_{0}\fra\incl T_{t^{\nu}}\wt S_{\nu}$. Moreover, $\cF$ is $T$-equivariant, hence $\Gm(-\nu)$-equivariant, and so is $\Sp_{t^{\nu}/\wt S_{\nu}}(\cF)$. By the contraction principle, we have a canonical isomorphism
\begin{equation}\label{sp proj fra}
\pr_{\fra*}\Sp_{t^{\nu}/\wt S_{\nu}}(\cF)\cong k_{0}^{*}\Sp_{t^{\nu}/\wt S_{\nu}}(\cF).
\end{equation}
By Lemma \ref{l:sp contr}, we have
\begin{equation}\label{sp res k0}
k_{0}^{*}\Sp_{t^{\nu}/\wt S_{\nu}}(\cF)\cong \Sp_{t^{\nu}/\fra_{\nu}}(k^{*}\cF).
\end{equation}
The deformation from $\fra_{\nu}$ to its tangent space $T_{t^{\nu}}\fra_{\nu}$ can be trivialized using the $\Grot$-action on $\fra_{\nu}\cong \fra$ by scaling.  Using that $\cF$ is $\Grot$-equivariant, we can identify $\Sp_{t^{\nu}/\fra_{\nu}}(k^{*}\cF)$ with $k^{*}\cF$, viewed as an object in $D_{\Grot}(\fra)$. Combining \eqref{ms nu}, \eqref{sp proj fra} and \eqref{sp res k0}, we have
\begin{equation}\label{ms nu final}
M_{\bG}(\cF)_{\nu}\cong \phi(k^{*}\cF)=\phi(\cF|_{\fra_{\nu}}).
\end{equation}

When $\cF=\d_{\nu}$, $\cF|_{\fra_{\nu}}$ is the skyscraper sheaf at 
$0$ by definition, therefore $M_{\bG}(\d_{\nu})_{\nu}$ is one-dimensional.

Now consider the case $\cF=\d_{\l}$ where $\l\ne\nu$. Since $\fra_{\nu}=t^{\nu}\cdot \wt\fra$ does not intersect $\Sig^{\psi}_{\l}$, which is the support of $\cF$, we see that $\d_{\l}|_{\fra_{\nu}}=0$. We conclude from \eqref{ms nu final} that $M_{\bG}(\cF)_{\nu}=0$.
\end{proof}

\subsection{Microlocal fiber functor for the Satake category}\label{ss:mic fiber functor}
The focus of this subsection is the Satake category $\Hs$ itself. We give an interpretation of the functor \eqref{def rho} as a microstalk functor on the affine Grassmannian. The main result in this subsection are not used elsewhere in the paper but we believe it is of independent interest.

Since the affine Grassmannian cannot be written as an increasing union of smooth varieties in general, we will need to embed it into something smooth in order to make sense of microstalks. 

\sss{Embedding $\Gr$ into the thick Grassmannian}

Viewing the affine Grassmannian $\Gr=G\lr{t}/G\tl{t}$ as classifying $G$-bundles on $\PP^{1}$ with trivialization on $\PP^{1}\bs\{0\}$, we get an ind-closed embedding into the thick Grassmannian introduced in \S\ref{sss:thick Gr}
\begin{equation}\label{emb Gr}
i_{\Gr}: \Gr\incl \Bun_{G,\wh\infty}.
\end{equation}
Using the coset descriptions, $i_{\Gr}$ can be written as
\begin{equation*}
G\lr{t}/G\tl{t}\cong G[t,t^{-1}]/G[t]\incl G\lr{\t}/G[t]
\end{equation*}
corresponding to the inclusion $G[t,t^{-1}]\subset G\lr{\t}$. For an object $\cK\in D_{G\tl{t}}(\Gr)=\cH^{\sph}_{0}$, $i_{\Gr, *}\cK\in D(\Bun_{G,\wh\infty})$, and it makes sense to talk about microstalks of $i_{\Gr, *}\cK$ since $\Bun_{G,\wh\infty}$ is pro-smooth.

Recall the subscheme $\Sig_{\l}\subset \Bun_{G,\hi}$ from \S\ref{sss:thick Gr}. A simple calculation of tangent spaces shows:

\begin{lemma}\label{l:trans slice} For $\l\in \xcoch(T)$, $\Sig_{\l}$ is a transversal slice to $\Gr_{\l}$ at $t^{\l}$, both viewed as subschemes of the pro-smooth scheme $\
\Bun_{G,\hinf}$.
\end{lemma}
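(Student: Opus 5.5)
The claim has two parts: that $\Gr_\l$ and $\Sig_\l$ meet at $t^\l$, and that their tangent spaces there are complementary inside the tangent space of $\Bun_{G,\hinf}$. We work entirely at the level of tangent spaces at the point $t^\l\in \Bun_{G,\hinf}=G\lr{\t}/G[t]$. The left action of $G\lr{\t}$ identifies the tangent space with
\begin{equation*}
T_{t^\l}\Bun_{G,\hinf}\cong \frg\lr{\t}/\Ad(t^\l)\frg[t].
\end{equation*}
We decompose everything into affine root spaces $\t^n\frg_\a$ and $\t^n\frt$, with $n\in\ZZ$. In the quotient, the affine root spaces not lying in $\Ad(t^\l)\frg[t]$ form a basis. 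These are the $\t^n\frg_\a$ with $n>\j{\a,\l}$ (using $t=\t^{-1}$ and $\Ad(t^\l)\t^n\frg_\a=\t^{n-\j{\a,\l}}\frg_\a$), together with $\t^n\frt$ for $n>0$.

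For the first tangent space, $\Sig_\l=\J t^\l G[t]/G[t]$ is a $\J$-orbit, so $T_{t^\l}\Sig_\l$ is the image of $\Lie\J=\t\frg\tl{\t}$. This is spanned by the affine root spaces with $n\ge1$ that survive in the quotient, namely $\t^n\frg_\a$ with $n\ge \max(1,\j{\a,\l}+1)$ and $\t^n\frt$ with $n\ge 1$. For the second, $\Gr_\l=G\tl{t}t^\l G\tl{t}/G\tl{t}$, embedded via $i_\Gr$, is the $G[t]$-orbit of $t^\l$ inside $G[t,t^{-1}]/G[t]$. Its tangent space is therefore the image of $\frg[t]=\bigoplus_{n\le 0}\t^n\frg$, i.e. the $\t^n\frg_\a$ with $\j{\a,\l}<n\le 0$ (no $\frt$-part survives).

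Transversality now follows by comparing the index sets. Every surviving basis vector $\t^n\frg_\a$ with $n>\j{\a,\l}$, or $\t^n\frt$ with $n>0$, lies in exactly one of the two lists, according to whether $n\ge1$ or $n\le 0$. So the two images are complementary and span the whole quotient, which is the claimed transversality. For the slice statement, both orbits visibly pass through $t^\l$, and their intersection is the single point $t^\l$. This is because $\Sig_\l\cap\Gr$ lies in the $\J$-orbit and contracts to $t^\l$ under loop rotation, while $\Gr_\l$ is $\Grot$-stable; alternatively, the intersection is locally $\{t^\l\}$ by transversality. The main point of care is making sense of transversality in the pro-smooth, ind-scheme setting. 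Here $\Sig_\l$ is finite-codimensional and $\Gr_\l$ is finite-dimensional, and their dimensions agree (both equal $\sum_{\a}\max(\j{\a,\l},0)=\j{2\r,\l_{dom}}$). So it suffices to check the complementary-subspace statement in a finite-type quotient $G\lr{\t}/\bG^{n}_\infty$-level model, where it reduces to the computation above.
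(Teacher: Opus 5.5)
Your argument is correct and is the same tangent-space computation the paper invokes: inside $\frg\lr{\t}/\Ad(t^\l)\frg[t]$, the images of $\Lie\J=\t\frg\tl{\t}$ and of $\frg[t]$ are spanned by the surviving affine root spaces with $n\ge 1$ and with $n\le 0$ respectively, so they are complementary. One harmless slip: $\Ad(t^\l)\frg[t]$ is spanned by the $\t^n\frg_\a$ with $n\le -\j{\a,\l}$ (together with $\t^n\frt$, $n\le 0$), so the surviving root spaces are those with $n>-\j{\a,\l}$, not $n>\j{\a,\l}$. The sign is flipped uniformly throughout your lists, so neither the complementarity argument nor the dimension count is affected.
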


Let $\Sig^{\Gr}_{\l}=\Sig_{\l}\cap \Gr$ and $\s_{\l}: \Sig^{\Gr}_{\l}\incl \Gr$ be the locally closed embedding. The linear character $\psi$ on $\J$ induces a function
\begin{equation*}
\wt\psi_{\l}: \Sig_{\l}\cong \J/A_{\l}\xr{\psi} \Ga,
\end{equation*}
and restricts to a function
\begin{equation*}
\psi_{\l}: \Sig^{\Gr}_{\l}\subset \Sig_{\l}\xr{\wt\psi_{\l}} \Ga.
\end{equation*}
The function $\psi_{\l}$ is $T$-invariant with respect to the left translation action on $\Sig^{\Gr}_{\l}$. 

Recall the functors $\r_\l$ and its equivariant version ${}^T\r_\l$ defined in \S\ref{sss:def rho} and \S\ref{sss:eq rho}.

\begin{lemma}\label{l:rho lam}
There is a canonical isomorphism of functors
\begin{equation}\label{rho as phi}
{}^{T}\r_{\l}\j{2d_\l}\cong \phi(\psi_{\l!}\s_{\l}^{*}(-)): \Hs\to D(\pt/T)
\end{equation}
Here $\phi(-)$ denotes the monodromic vanishing cycles functor on $D^{\mon}(\pt/T\times \Ga)$. 

In particular, there is a canonical isomorphism for the non-equivariant version
\begin{equation*}
\r_{\l}\j{2d_\l}\cong \phi(\psi_{\l!}\s_{\l}^{*}(-)): \Hs\to D^b(\Vect)
\end{equation*}
\end{lemma}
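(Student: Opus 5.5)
We unwind \eqref{compute Trho}: ${}^T\r_\l(\cK)=\RHom_{{}^T\DG}({}^T\d_\l,{}^T\d_0\star_0\cK)$. By Corollary \ref{c:TDGr clean}(1),(2), ${}^T\DG\cong\bigoplus_\nu \Kir((\pt/T)\times\Ga)$ with ${}^T\b_{\l!}\cong{}^T\b_{\l*}$. Hence this $\RHom$ is computed by the $!$-restriction (equivalently $*$-restriction, by cleanness) of ${}^T\d_0\star_0\cK$ to the relevant stratum $\Bun^\l_G(\Jker T)\cong \pt/(A_\l T)\times\Ga$. We then transport it through $\Kir((\pt/T)\times\Ga)\cong D(\pt/T)$. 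By Lemma \ref{l:Kir prod A1} with $Y=\pt/T$ (trivial $\Gm$-action), the inverse of this equivalence is the monodromic vanishing cycles $\phi$ along the $\Ga$-coordinate. So the first step is to establish
\[{}^T\r_\l(\cK)\cong \phi\big(\text{restriction of }{}^T\d_0\star_0\cK\text{ to }\Bun^\l_G(\Jker T),\text{ pushed to }\pt/T\times\Ga\big),\]
with a shift coming from $\d_\l=\b_{\l!}\d\j{-d_\l}$ and from the $A_\l$-quotient (pushing forward from $\pt/A_\l$, $\dim A_\l=d_\l$ by the lemma in \S\ref{sss:rel DGr}, contributes another $\j{d_\l}$). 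This accounts for the $\j{2d_\l}$.

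Next we compute ${}^T\d_0\star_0\cK$ using the coset diagram \eqref{Hk0 cosets}. The object ${}^T\d_0$ is the $!$-extension of the skyscraper at $0$ on the open stratum $\Jker\bs\J G[t]/G[t]\cong\Ga$, which lifts to the point $1\in G\lr{\t}$ (times $T$). Hence the fiber product in the Hecke diagram over ${}^T\d_0$ is $\Jker T\bs \Jker T\cdot G[t,t^{-1}]/G[t]$. This identifies ${}^T\d_0\star_0\cK$ with the pushforward of $\cK$ along $\Gr=G[t,t^{-1}]/G[t]\xr{i_\Gr}\Bun_{G,\hi}\to \Bun_G(\Jker T)$, i.e.\ $(\pi\c i_\Gr)_!\cK$ up to the normalizing shift. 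Restricting to the stratum $\Bun^\l_G(\Jker)$, whose preimage in $\Bun_{G,\hi}$ is $\Sig_\l$, and using base change, we get $\psi_{\l!}\s_\l^*\cK$ after identifying $\Sig_\l/\Jker\cong\Sig_\l/(A_\l\cdot\Jker)$ with $\Ga$ via $\wt\psi_\l$, together with the $T$-equivariance. The $\Gm$-monodromicity of $\cK$ under loop rotation ensures that $\psi_{\l!}\s_\l^*\cK$ lies in $D^{\mon}(\pt/T\times\Ga)$, so $\phi$ applies. Naturality in $\cK$ is clear from the construction. The non-equivariant statement follows by pulling back along $\pt\to\pt/T$ via \eqref{Trho and rho}.

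The main obstacle is the bookkeeping in the last identification. First, one must check that restricting $\Jker$-equivariant objects to the stratum and quotienting by $A_\l$ (unipotent, so $\pr_\Ga^*$ is an equivalence) matches the $!$-pushforward along $\psi_\l$ with the correct shift. Second, one must track the $\Gm$-action: loop rotation on $\Gr$ versus the $\Aff$-action on $\Ga$, which must agree so that Lemma \ref{l:Kir prod A1} applies with the stated normalization $i_1^*[-1]$. I would first verify the shift on $\cK=\IC_0$ and $\l=0$, where both sides should give $E$, and then argue uniformly.
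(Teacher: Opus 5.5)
Your route is the paper's: unwind \eqref{compute Trho}, identify ${}^T\d_0\star_0\cK\cong(\pi\c i_{\Gr})_!\cK$ by restricting the Hecke diagram to the support of ${}^T\d_0$, base-change to the $\l$-stratum (whose preimage in $\Gr$ is $\Sig^{\Gr}_\l$), and read off the multiplicity with $\phi$ via Lemma \ref{l:Kir prod A1}. The gap is in the shift. This is the only non-formal point, and you leave it vague.

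The $A_\l$-step does not contribute ``another $\j{d_\l}$''. Base change gives ${}^T\ov\b_\l^*(\pi\c i_{\Gr})_!\cK\cong\pr_!\psi_{\l!}\s_\l^*\cK$ for the quotient $\pr\colon\Ga\to\Ga\times\pt/A_\l$. Moreover $\pr^*\pr_!\cong\id\ot\cohoc{*}{A_\l}=\id\j{-2d_\l}$, so the restriction carries $\j{-2d_\l}$. Separately, the normalization ${}^T\d_\l={}^T\b_{\l,!}({}^T\d)\j{-d_\l}$ contributes $\j{d_\l}$ to the $\RHom$. The net result is ${}^T\r_\l(\cK)\cong\phi(\psi_{\l!}\s_\l^*\cK)\j{-d_\l}$. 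Your proposed sanity check at $\l=0$ cannot detect any of this, since $d_0=0$ and $A_0=1$.

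The informative test is $\cK=\IC_\l$ with $\l$ dominant. Then $\Sig^{\Gr}_\l\cap\Gr_{\le\l}=\{t^\l\}$ and $\psi_\l(t^\l)=0$. Since $d_\l=\dim\Gr_\l$, this gives $\phi(\psi_{\l!}\s_\l^*\IC_\l)=E\j{d_\l}$.

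On the other side, the restriction of ${}^T\d_0\star_0\IC_\l$ to $\Bun^\l_G(\Jker T)$ is the $!$-pushforward of $E\j{d_\l}$ along $\pt/T\to\pt/(A_\l T)$. That is $E\j{-d_\l}$, which is exactly the restriction of the perverse object ${}^T\d_\l$. So ${}^T\r_\l(\IC_\l)=E$ in degree $0$. Hence the correct shift is $\j{d_\l}$, and no bookkeeping can produce the stated $\j{2d_\l}$ when $d_\l>0$.

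The paper's proof reaches $\j{2d_\l}$ by equating ${}^T\r_\l(\cK)$ with the multiplicity of ${}^T\d$ in the restricted object, rather than the multiplicity of ${}^T\d_\l$. In effect it drops the $\j{-d_\l}$ built into ${}^T\d_\l$. The same discrepancy carries into Proposition \ref{p:rho ms} and Corollary \ref{c:mic fiber functor}, where t-exactness forces $\j{-d_\l}$. So your argument works once the shifts are actually computed, but what it proves is ${}^T\r_\l\j{d_\l}\cong\phi(\psi_{\l!}\s_\l^*(-))$.
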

\begin{proof}
We use \eqref{compute Trho} to compute ${}^T\r_\l(\cK)$ for $\cK\in \Hs$.

We first compute ${^T}\d_0\star_0\cK$. By definition, ${}^{T}\d_{0}$ is the extension by zero from the base point of $\Bun_{G}(\Jker)$. We restrict the diagram \eqref{Hk0 cosets} defining the spherical Hecke action to the locus where the first factor $\Jker\bs G\lr{\t}$ is replaced by $\Jker\bs \Jker G[t]$, the support of ${}^{T}\d_{0}$:
\begin{equation}\label{Hk0 act on D0}
\xymatrix{\Jker\bs \Jker G[t] \twtimes{G[t]}G[t,t^{-1}]/G[t]\ar[dr]^{m}\ar[rr]^-{\pr_{2}} &&  G\tl{t}\bs G\lr{t}/G\tl{t}\\
& \Jker\bs G\lr{\t}/G[t]
}
\end{equation}
Now using the isomorphism $\Jker\bs \Jker G[t] \twtimes{G[t]}G[t,t^{-1}]/G[t]\cong G[t,t^{-1}]/G[t]\cong \Gr$, the map $m$ becomes the composition of the embedding $i_{\Gr}$ followed by the quotient map $\pi: \Bun_{G,\hinf}\to \Bun_{G}(\Jker)$:
\begin{equation*}
\Gr=G[t,t^{-1}]/G[t]\xr{i_{\Gr}} G\lr{\t}/G[t]=\Bun_{G,\hinf}\xr{\pi} \Jker\bs G\lr{\t}/G[t]=\Bun_{G}(\Jker).
\end{equation*}
The map $\pr_{2}$ in \eqref{Hk0 act on D0} becomes the projection $\pr: G[t,t^{-1}]/G[t]=\Gr\to G\tl{t}\bs \Gr$. Therefore
\begin{equation}\label{d0K}
{}^{T}\d_{0}\star_{0}\cK\cong (\pi \c i_{\Gr})_{!}\cK.
\end{equation}
Here we view $\cK\in \Hs$ as a $T$-equivariant complex of sheaves on $\Gr$. 

To compute ${}^{T}\r_{\l}(\cK)=\RHom({}^T\d_\l, {}^{T}\d_{0}\star_{0}\cK)$, we need to restrict ${}^{T}\d_{0}\star_{0}\cK$ via the functor
\begin{equation*}
    {}^{T}b^{*}_{\l}: {}^T\DG=\Kir(\Bun_{G}(\Jker T))\xr{{}^T\ov\b_\l} \Kir(\pt/(A_\l T)\times\Ga)\xr{\pr^*}\Kir(\pt/T\times \Ga)
\end{equation*} 
Consider the following diagram where all squares are Cartesian
\begin{equation}\label{slice diagram}
\xymatrix{
& \Sig^{\Gr}_{\l} \ari[r]^-{\s_{\l}} \ar[ddl]_-{\psi_{\l}}\ari[d]^{i_{\Sig_{\l}}} & \Gr\ari[d]^{i_{\Gr}}\\
& \Sig_{\l}\ari[r]^-{\wt\s_{\l}}\ar[dl]_{\wt\psi_{\l}}\ar[d]^{\ov\psi_{\l}=\pr\c\wt\psi_\l} & \Bun_{G,\hi}\ar[d]^{\pi}\\
\Ga \ar[r]^-{\pr} & \Ga\times (\pt/A_{\l}) \ari[r]^{\ov\b_{\l}} & \Bun_{G}(\Jker)
}
\end{equation}
By proper base change
\begin{equation*}
{}^{T}\b^{*}_{\l}(\pi\c i_{\Gr})_{!}(\cK)\cong\pr^{*}\ov\psi_{\l!}\wt\s_{\l}^{*}i_{\Gr!}\cK\cong \pr^{*}\pr_{!}\psi_{\l!}\s_{\l}^{*}\cK\in \Kir(\pt/T\times \Ga).
\end{equation*}
Since $\pr$ is the quotient map by the trivial action of the unipotent group $A_{\l}$, we have $\pr^{*}\pr_{!}\cong \id\ot \cohoc{*}{A_{\l}}=\id\j{-2d_{\l}}$. Combining with \eqref{d0K} we get
\begin{equation*}
{}^{T}\b^{*}_{\l}({}^{T}\d_{0}\star_{0}\cK)\cong {}^{T}\b_{\l}^{*}(\pi\c i_{\Gr})_{!}(\cK)\cong\psi_{\l!}\s_{\l}^{*}\cK\j{-2d_{\l}}\in \Kir(\pt/T\times \Ga).
\end{equation*}
Finally, we apply $\phi$ extracts the multiplicity of $\d\in \Kir(\pt/T\times \Ga)$ in the above expression to get the desired formula \eqref{rho as phi}. 
\end{proof}

%
%
%
%

\sss{Renormalized microstalks}\label{sss:ren mic}
The cotangent bundle $T^{*}\Bun_{G,\wh\infty}$ classifies triples $(\cE, \s, \ph)$ where $(\cE, \s)\in \Bun_{G,\wh\infty}$ and $\ph\in \G(\AA^{1}, \Ad(\cE)\ot \om_{\PP^{1}})$. Expanding $\ph$ at $\infty$ using the trivialization $\s$ of $\cE$ near $\infty$, we get a canonical map
\begin{equation*}
T^{*}\Bun_{G,\wh\infty}\to \frg\lr{\t}d\t
\end{equation*}
which is the moment map for the $G\lr{\t}$-action on $\Bun_{G,\wh\infty}$.

The union of conormals
\begin{equation*}
\L_{\Gr}:=\bigcup_{\l\in \xcoch(T)/W}T^{*}_{\Gr_{\l}}\Bun_{G,\hi}\subset T^{*}\Bun_{G,\wh\infty}
\end{equation*}
can be described as follows. A triple $(\cE, \s, \ph)\in T^{*}\Bun_{G,\wh\infty}$ lies in $\L_{\Gr}$ if and only if $\s$ extends to an isomorphism $\wt\s:\cE|_{\PP^{1}\bs\{0\}}\isom \cE_{\triv}|_{\PP^{1}\bs\{0\}}$ 
(which means $(\cE,\t)\in \Gr$), and that $\ph$, viewed as a section of $\Ad(\cE_{\triv})\ot\om|_{\AA^{1}\bs\{0\}}$ via the isomorphism $\wt\s$, extends to a section of $\Ad(\cE_{\triv})\ot\om|_{\AA^{1}}$.  

In particular, for the point $t^{\l}\in \Bun_{G,\hi}$, the Higgs field $\psi=\psi_{1}dt$ gives a point $(t^{\l}, \psi)\in \L_{\Gr}$, which lies in the conormal fiber of $T^{*}_{\Gr_{\l}}\Bun_{G,\hi}$ at $t^{\l}$.

We would like to apply the microlocalization functor (see \S\ref{ss:mic}) for the subvartiety $\Gr_{\l}$ of $\Bun_{G,\hi}$. Since $\Bun_{G,\hi}$ is infinite-dimensional, we need to renormalize the shift as follows.

Let $\Bun_{G, N\infty}$ be the moduli stack of $G$-bundles on $\PP^1$ with a trivialization on the $(N-1)$-th infinitesimal neighborhood of $\infty$ (so that $\Bun_{G, N\infty}=\Bun_G(\J)$). Let $\pi_N: \Bun_{G, \hi}\to \Bun_{G, N\infty}$ be the projection. Each $\Bun_{G,N\infty}$ contains a maximal schematic (open) locus which we denote by $X_N$. Note that
\begin{equation}\label{XN cover}
    \bigcup_{N\ge1}\pi_N^{-1}(X_N)=\Bun_{G,\hi}.
\end{equation}
Let $i_{\Gr, N}: \Gr\xr{i_\Gr} \Bun_{G,\hi}\xr{\pi_N} \Bun_{G,N\infty}$. 

Fix $\l\in \xcoch(T)$.  For each $\cK\in \Hs$, its support is contained in $\Gr_{\le \nu}$ for some dominant coweight $\nu$. We enlarge if necessarily such that $t^\l\in \Gr_{\le\nu}$. For sufficiently large $N$ (depending on $\nu$ hence on $\cK$),  $i_{\Gr}(\Gr_{\le \nu})\subset \pi_N^{-1}(X_N)$ by \eqref{XN cover}, hence $i_{\Gr,N}(\Gr_{\le\nu})\subset X_N$. Moreover for sufficiently large $N$ we can arrange that $i_{\Gr,N}|_{\Gr_{\le\nu}}: \Gr_{\le\nu}\to X_N$ is an embedding. 

We may then apply the microlocalization functor to the sheaf $\cK_N=i_{\Gr, \cK,!}\cK\in D(X_N)$ on $X_N$ with respect to its subscheme $\Gr_\l$ (embedded using $i_{\Gr,N}$):
\begin{equation*}
    \mu_{\Gr_\l/X_N}: D_{G\tl{t}}(\Gr_{\le \nu})\to D(T^*_{\Gr_\l}X_N).
\end{equation*}
We will use a shifted version of microlocalization: it amounts to removing the shift in the Fourier transform step.  In particular we consider
\begin{eqnarray*}
    \mu^\na_{\Gr_\l/X_N}:=\mu_{\Gr_\l/X_N}\c i_{\Gr,N,!}[-\codim_{X_N}\Gr_\l]: D_{G\tl{t}}(\Gr_{\le \nu})\to D(T^*_{\Gr_\l}X_N).
\end{eqnarray*}

The conormal bundles $T^*_{\Gr_\l}X_{N}$ form an increasing union of vector bundles over $\Gr_\l$ as $N$ increases, whose union is $T^*_{\Gr_\l}\Bun_{G,\hi}$. Concretely, the conormal fiber of $T^*_{\Gr_\l}X_{N}$ at $(\cE, \tau: \cE|_{\AA^1}\cong G\times \AA^1)\in \Gr_\l$ is $\cohog{0}{\PP^1, \Ad^*(\cE)\ot \om_{\PP^1}(N\infty)}$. For $N<N'$ let
\begin{equation}\label{embed conormal}
    \io_{N,N'}: T^*_{\Gr_\l}X_{N}\incl T^*_{\Gr_\l}X_{N'}.
\end{equation}
be the natural embeddings of conormal bundles over $\Gr_\l$.

\begin{lemma}\label{l:mu pullback}
    Fix a dominant coweight $\nu$ such that $t^\l\in \Gr_{\le\nu}$. The functors $\{\mu^\na_{\Gr_\l/X_N}\}$ are compatible with pullbacks along the embeddings of the conormal bundles in \eqref{embed conormal}, i.e., there are canonical isomorphisms of functors for $N<N'$
    \begin{equation*}        \io_{N,N'}^*\c\mu^\na_{\Gr_\l/X_{N'}}\cong \mu^\na_{\Gr_\l/X_{N}}:D_{G\tl{t}}(\Gr_{\le\nu})\to D(T^*_{\Gr_\l}X_N)
    \end{equation*}
    (whenever $N$ are large enough so that $\Gr_\l$ embeds to $X_N$), compatible with pullbacks in multiple steps.
\end{lemma}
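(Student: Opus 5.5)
The plan is to reduce the comparison to a general statement about microlocalization along a closed submanifold under a closed embedding of ambient smooth varieties that is a submersion-compatible situation. First I will factor the relevant geometry. For $N<N'$ the projection $p=p_{N',N}: \Bun_{G,N'\infty}\to \Bun_{G,N\infty}$ is smooth; restricted to the schematic loci it gives a smooth map $X_{N'}\supset p^{-1}(X_N)\to X_N$ (after shrinking to an open neighborhood of $i_{\Gr,N'}(\Gr_{\le\nu})$, which is allowed since microlocalization is local). We have $i_{\Gr,N}=p\circ i_{\Gr,N'}$ on $\Gr_{\le\nu}$, and both are closed embeddings. The codimension jump is $\codim_{X_{N'}}\Gr_\l-\codim_{X_N}\Gr_\l=\dim p$ (the relative dimension $(N'-N)\dim G$), and the conormal embedding $\io_{N,N'}$ is exactly the map $T^*_{\Gr_\l}X_N\to T^*_{\Gr_\l}X_{N'}$ induced by $dp^{*}$, i.e.\ the dual of the surjection $T_{\Gr_\l}$-normal bundles $N_{\Gr_\l}X_{N'}\to N_{\Gr_\l}X_N$ given by $dp$.

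Second, I would compare the two sheaves $\cK_{N'}=i_{\Gr,N',!}\cK$ and $\cK_N=i_{\Gr,N,!}\cK=p_!\cK_{N'}$. The cleanest route: choose a local section. Since $p$ is smooth and $\Gr_{\le\nu}\hookrightarrow X_{N'}$ lies inside $p^{-1}$ of its image, locally near $\Gr_\l$ we can find a tubular decomposition $X_{N'}\simeq X_N\times F$ (étale/analytically locally, $F$ a disk of dim $\dim p$) with $\Gr_{\le\nu}\subset X_N\times\{0\}$. Then $\cK_{N'}\cong \cK_N\boxtimes \delta_0$, and $\Gr_\l\subset X_{N'}$ is $\Gr_\l\times\{0\}$. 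Specialization to the normal cone is compatible with products: $\Sp_{\Gr_\l/X_{N'}}(\cK_{N'})\cong \Sp_{\Gr_\l/X_N}(\cK_N)\boxtimes \delta_0$ on $N_{\Gr_\l}X_N\times T_0F$. Fourier–Sato transform of $\delta_0$ on $T_0F$ is the constant sheaf on $T_0^*F$ shifted by $[\dim F]$ (in the paper's convention), so $\mu_{\Gr_\l/X_{N'}}(\cK_{N'})\cong \mu_{\Gr_\l/X_N}(\cK_N)\boxtimes E_{T^*_0F}[\dim p]$. Restricting along $\io_{N,N'}$, i.e.\ to the zero section of $T_0^*F$, gives $\mu_{\Gr_\l/X_N}(\cK_N)[\dim p]$, and the renormalizing shifts $[-\codim]$ in $\mu^\na$ exactly absorb the $[\dim p]$. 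This yields the desired isomorphism.

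The main obstacle is canonicity: the local product decomposition is a choice, so I must check that the resulting isomorphism is independent of it, glues over $\Gr_\l$, and is compatible with $N<N'<N''$. I would handle this by replacing the chosen splitting with the intrinsic statement that microlocalization commutes with smooth pushforward $p_!$ from a closed subvariety contained in a section, i.e.\ use the canonical base change $\mu_{Z/X_N}(p_!\cG)\cong \io^*\mu_{Z/X_{N'}}(\cG)[\dim p]$ for $\cG$ supported on $Z'$ mapping isomorphically to its image (a standard Kashiwara–Schapira functoriality, \cite{KS}), whose transitivity for composites $p_{N'',N'}\circ p_{N',N}$ follows from functoriality of Fourier–Sato transform and of specialization under composition; the product computation above then only serves to verify the isomorphism locally.
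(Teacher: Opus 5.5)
Your proposal is correct in outline, but your main computation runs through local product charts, whereas the paper's proof is intrinsic from the start.

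Set $X=X_N$, $Y=p^{-1}(X_N)\subset X_{N'}$, $S=\Gr_{\le\nu}$, $Z=\Gr_\lambda$, and let $\tau\colon N_ZY\to N_ZX$ be induced by $dp$. The paper observes that the deformation space $M_ZS$ embeds compatibly into $M_ZY$ and $M_ZX$. Hence $\Sp_{Z/X}(i_{X,!}\cF)$ and $\tau_!\Sp_{Z/Y}(i_{Y,!}\cF)$ are both identified with $k_{X,0,!}\Sp_{Z/S}(\cF)$. It then applies Lemma~\ref{l:FT push}, which gives $(\tau^\vee)^*\circ\FT\cong\FT\circ\tau_![\codim_YZ-\codim_XZ]$ with $\tau^\vee=\iota_{N,N'}$.

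What the paper's route buys:
\begin{itemize}
\item No splitting of $p$ is ever chosen, so canonicity and the compatibility for $N<N'<N''$ are automatic.
\item Everything is done with Verdier specialization and the vanishing-cycle Fourier transform of the appendix, i.e.\ in the \'etale setting. There the Fourier--Sato results of Kashiwara--Schapira do not literally apply, which is why the paper proves Lemma~\ref{l:FT push} itself.
\end{itemize}

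What your route buys, and what it lacks:
\begin{itemize}
\item Your chart computation is valid. Analytically this is clear; algebraically it holds because $p$ is a torsor under a split unipotent group, hence Zariski-locally trivial, once you extend the section over $S$.
\item It makes the shift transparent via $\FT(\delta_0)=E_{T_0^*F}[\dim F]$.
\item However, it only yields local, choice-dependent isomorphisms, and these do not glue in a derived category without a global morphism.
\end{itemize}

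The global map you import as ``standard Kashiwara--Schapira functoriality'' is exactly the content of the paper's two steps. That map is specialization commuting with $p_!$, valid because $p$ restricted to the support is an isomorphism onto its image, followed by the Fourier transform exchanging $\tau_!$ and $(\tau^\vee)^*$. Once you have it, it is already an isomorphism, so your product chart becomes a consistency check rather than the proof. In the paper's algebraic framework, that citation is precisely the part that has to be proved.
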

\begin{proof} 
    Let $X=X_N$. Although $X_{N'}$ does not necessarily map to $X_N$, let $Y\subset X_{N'}$ be the preimage of $X_N$. Let $\pi: Y\to X$ be the projection, which is a torsor for an affine algebraic group, hence is smooth. 
    
    Let $S=\Gr_{\le\nu}$ with an embedding $i_Y: S\incl Y$ (given by $i_{\Gr,N'}$) such that $i_X=\pi\c i_Y: S\to X$ is also an embedding. Let $Z=\Gr_\l$ be embedded into $Y$ and $X$ by the restrictions of $i_Y$ and $i_X$.  
    Let $\io: T^*_{Z}X\incl T^*_{Z}Y$ be the map induced by $d\pi$. 
    
    The lemma asks for a canonical isomorphism of functors
    \begin{equation}\label{isom muZ}
        \mu_{Z/X}i_{X,!}[-\codim_XZ]\cong \io^*\c \mu_{Z/Y}i_{Y,!}[-\codim_YZ]: D(S)\to D(T^*_{Z}X).
    \end{equation}
    Let $\t: N_{Z}Y\to N_ZX$ be the dual of $\io$. We first claim that the specializations to the normal bundles satisfy
    \begin{equation}\label{isom Sp}
        \Sp_{Z/X}\c i_{X,!}\cong \t_!\c\Sp_{Z/Y}\c i_{Y,!}: D(S)\to D^{\mon}(N_ZX).
    \end{equation}
    Indeed, let $M_ZS$ be the total space of the deformation to the normal cone of $Z$ in $S$. Then we have embeddings $k_Y: M_{Z}S\incl M_ZY$ and $k_X: M_ZS\incl M_ZX$ compatible with the natural map $M_ZY\to M_ZX$. Now both sides of \eqref{isom Sp} can be identified with $k_{X,0,!}\Sp_{Z/S}$, where $k_{X,0}:N_ZS\to N_ZX$ is the special fiber of $k_X$.

    The isomorphism \eqref{isom muZ} then follows by applying the monodromic Fourier transform to both sides of \eqref{isom Sp} and applying Lemma \ref{l:FT push} to the pair of maps $(\t,\io=\t^\v)$ between normal and conormal bundles.
\end{proof}

By Lemma \ref{l:mu pullback}, for fixed $\nu$, the functors $\{\mu^\na_{\Gr_\l/X_N}\}$ together with a functor
\begin{equation*}
    \mu^\na_\l: D_{G\tl{t}}(\Gr_{\le\nu})\to \varprojlim_N D(T^*_{\Gr_\l}X_{N}):=D^*(T^*_{\Gr_\l}\Bun_{G,\hi}).
\end{equation*}
As we enlarge $\nu$ to $\nu'$, the functor $\mu^\na_\l$ above is compatible with the embeddings $D_{G\tl{t}}(\Gr_{\le\nu})\incl D_{G\tl{t}}(\Gr_{\le\nu'})$. Passing to the colimit for $\nu$, we get the renormalized microlocalization functor for the Satake category $\Hs$ along $\Gr_\l$,  still denoted by the same notation
\begin{eqnarray*}
    \mu^\na_\l: \Hs\to D^*(T^*_{\Gr_\l}\Bun_{G,\hi}).
\end{eqnarray*}
In particular, it makes sense to evaluate the $*$-stalk of $\mu_\l(\cK)$ ($\cK\in \Hs$) at any point of $\xi\in T^*_{\Gr_\l}\Bun_{G,\hi}$, and call it the renormalized microstalk of $\cK\in \Hs$ at $\xi$. We are interested in the case $\xi=(t^\l, \psi)$.


\begin{prop}\label{p:rho ms} For $\cK\in \Hs$, there is a canonical isomorphism functorial in $\cK$
\begin{equation*}
\r_{\l}(\cK)\cong \mu^\na_{\l}(\cK)|_{(t^{\l}, \psi)}\j{-2d_\l}.
\end{equation*}
\end{prop}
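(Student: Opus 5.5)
By Lemma \ref{l:rho lam} (non-equivariant version), it suffices to produce a canonical isomorphism
\begin{equation*}
\phi(\psi_{\l!}\s_{\l}^{*}\cK)\cong \mu^\na_{\l}(\cK)|_{(t^{\l}, \psi)},
\end{equation*}
functorial in $\cK$. The plan is to compute the right side by the same slice-and-contraction method used in the proof of Theorem \ref{th:MG equiv}. We then identify the result with the left side.

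First I fix $\nu$ with $\cK$ supported on $\Gr_{\le\nu}$, and take $N$ large, so that everything happens inside the smooth scheme $X_N$ with $\Gr_{\le\nu}\incl X_N$. The renormalized microstalk at $(t^\l,\psi)$ is then the stalk of $\mu_{\Gr_\l/X_N}(i_{\Gr,N!}\cK)[-\codim]$. By Lemma \ref{l:mu pullback} this is independent of $N$.

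Next, by Lemma \ref{l:trans slice} the image of $\Sig_\l$ in $X_N$ is a transversal slice to $\Gr_\l$ at $t^\l$. Using $G\tl{t}$-equivariance (Lemma \ref{l:res slice}), the microstalk along $\Gr_\l$ reduces to the microstalk at the point $t^\l$ of $\cK|_{\Sig_\l}$ inside the slice. This step absorbs the shift $[-\codim]$, which is the reason for the renormalization. Concretely, I will get
\begin{equation*}
\FT\bigl(\Sp_{t^\l/\Sig_\l}(\s_\l^*\cK)\bigr)\big|_{\psi},
\end{equation*}
where the covector $\psi=\psi_1dt$, restricted to $T_{t^\l}\Sig_\l=\Lie\J/\Lie A_\l$, is exactly $d\wt\psi_\l$.

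The microstalk at a covector equal to the differential of a function is the vanishing cycle of that function. Here I will argue directly, using monodromicity. The loop rotation $\Grot$ together with a suitable cocharacter of $T$ gives a $\Gm$-action on $\Sig^{\Gr}_\l$ that contracts it to $t^\l$ and scales $\psi_\l$ with weight one. Since $\cK$ is equivariant for this action, the specialization to the tangent space can be trivialized, as in \eqref{sp proj fra}--\eqref{ms nu final}. The Fourier transform evaluated at $d\psi_\l$ then becomes $\FT(\pr_{\psi*}\cK)|_1$, where the pushforward is along the linear function $\psi_\l$. By the definition of $\phi$ and Lemma \ref{l:FT stalk}, this equals $\phi(\psi_{\l!}\s_\l^*\cK)$.

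The swap between $!$ and $*$ pushforward is justified because the action contracts the fibers of $\psi_\l$ toward the zero fiber, so the two pushforwards agree on monodromic objects. Alternatively, $\psi_\l$ is proper on the support of $\s_\l^*\cK$ relative to the contraction.

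The main obstacle I expect is the third step. The naive contracting action (via $-\l$ as in Theorem \ref{th:MG equiv}) does not by itself make $\psi_\l$ homogeneous on $\Sig^{\Gr}_\l$. One must combine it with loop rotation so that $\psi_\l$ has weight one and $\Sig^{\Gr}_\l$ is still attracted to $t^\l$. One also has to track the shift normalizations: the $\j{2d_\l}$ and the codimension shift should cancel, since $\dim\Sig_\l$ relative to $A_\l$ accounts for $d_\l$.
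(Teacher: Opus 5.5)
Your reduction to $\phi(\psi_{\l!}\s_\l^*\cK)\cong\mu^\na_\l(\cK)|_{(t^\l,\psi)}$ and your passage to the slice via Lemma~\ref{l:res slice} agree with the paper. The gap is in the next step, where you assert that the specialization to the tangent space ``can be trivialized, as in \eqref{sp proj fra}--\eqref{ms nu final}.''

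In Theorem~\ref{th:MG equiv} the trivialization is done only after $\Gm(-\nu)$, acting through $T$, has contracted the slice $\wt S_\nu$ onto the line $\fra_\nu$. On that line $\Grot$ acts with a single weight. Here the slice is $\Sig_{\l,N}$, and its tangent space at $t^\l$ contains $\t^n\frt$ for $1\le n\le N-1$. The torus $T$ acts trivially on all of these. Hence no cocharacter of $T$, with or without loop rotation, contracts $\Sig_{\l,N}$ onto the $\psi$-direction. The only usable contracting action is the inverse of $\Grot$, whose weights are $1,2,\ldots,N-1$. With unequal weights, a monodromic sheaf need not be dilation-monodromic, so in general $\Sp_{t^\l/\Sig_{\l,N}}(\cF)\not\cong\cF$. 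For example, on $\AA^2$ with weights $(1,2)$, the constant sheaf on $y=x^2$ specializes to the constant sheaf on $y=0$.

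Your fallbacks do not repair this:
\begin{itemize}
\item The $!$- and $*$-pushforwards along a linear function do not agree on monodromic sheaves; the constant sheaf on a line inside $\ker\psi$ is a counterexample.
\item $\psi_\l$ is not proper in general on the affine slice $\Sig^{\Gr}_\l\cap\Gr_{\le\nu}$.
\item No swap is needed anyway, since Lemma~\ref{l:FT stalk} is already stated with $\wt\xi_!$.
\end{itemize}

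The missing idea is that you never need $\Sp(\cF)\cong\cF$. You only need $\phi(\wt\xi_!\Sp(\cF))\cong\phi(\wt f_!\cF)$, and this holds because $\psi$ has the \emph{smallest} weight. That is exactly Proposition~\ref{p:aff space van cycle}. Its proof builds a map $\wt\xi_!\Sp(\cF)\to\wt f_!\cF$ from the deformation to the normal cone, and compares the two triangles \eqref{shriek tri phi}:
\begin{itemize}
\item the $\pi_!$-terms match by the contraction principle and \eqref{Sp res vertex};
\item the $1$-fibre terms match by restricting to the hypersurface $\{\xi_1=1\}$, where the combined action with weights $w_i-w\ge0$ contracts towards the special fibre, so Lemma~\ref{l:global section nearby} applies.
\end{itemize}
The paper applies this to $\Sig_{\l,N}$ with the inverse $\Grot$-action, after checking that $\psi_{\l,N}$ has weight $1$, the minimum on $T^*_{t^\l}\Sig_{\l,N}$.

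Your shift bookkeeping also needs correcting. The twist $\j{2d_\l}$ is fully accounted for by Lemma~\ref{l:rho lam}, and Lemma~\ref{l:res slice} introduces no shift. What actually cancels is the renormalization $[-\codim_{X_N}\Gr_\l]$ against the Fourier-transform shift $[\dim\Sig_{\l,N}]$, using $\dim\Sig_{\l,N}=\codim_{X_N}\Gr_\l$.
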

\begin{proof} 
In view of Lemma \ref{l:rho lam}, it remains to give a canonical isomorphism
\begin{equation}\label{mic Gr as phi}
\mu^\na_{\l}(\cK)|_{(t^{\l}, \psi)}\cong \phi(\psi_{\l!}\s_{\l}^{*}\cK)
\end{equation}
for $\cK\in \Hs$. Below we shall freely use notations from the proof of Lemma \ref{l:rho lam}.

Suppose $\Supp(\cK)\subset \Gr_{\le\nu}$  and $t^\l\in \Gr_{\le\nu}$. By definition of the renormalized microlocalization functor in \S\ref{sss:ren mic}, we have
\begin{equation}\label{mu XN}
    \mu^\na_{\l}(\cK)|_{(t^{\l}, \psi)}=\mu_{\Gr_\l/X_N}(i_{\Gr,N!}\cK)|_{(t^\l, \psi)}[-\codim_{X_N}\Gr_\l]
\end{equation}
for sufficiently large $N$.

Let $\Sig_{\l,N}$ be the image of $\Sig_\l$ in $\Bun_{G,N\infty}$. We have $\Sig_{\l,N}\cong \bG_\infty^N\bs \J/A_\l$ (where $\bG^N_\infty$ is the $N$th congruence subgroup of $G\tl{\t}$). Hence for $N$ sufficiently large $\Sig_{\l,N}$ is a scheme therefore $\Sig_{\l,N}\subset X_N$. For $N\ge2$, the map $\wt\psi_\l: \Sig_\l\to \Ga$ factors through a map $\wt\psi_{\l,N}: \Sig_{\l,N}\to \Ga$. We thus have an analog of the diagram \eqref{slice diagram}, of which we only record a part
\begin{equation}\label{N slice diagram}
    \xymatrix{
& \Sig^{\Gr}_{\l,N} \ari[r]^-{\s_{\l,N}} \ar[dl]_-{\psi_{\l,N}}\ari[d]^{i_{\Sig_{\l,N}}} & \Gr_{\le\nu}\ari[d]^{i_{\Gr,N}}\\
\Ga & \Sig_{\l,N}\ari[r]^-{\wt\s_{\l,N}}\ar[l]_{\wt\psi_{\l,N}} & X_N
}
\end{equation}
Since $\cK$ is supported on $\Gr_{\le\nu}$, we have
\begin{equation}\label{phi Sig vs SigN}    \phi(\psi_{\l!}\s_{\l}^{*}\cK)\cong \phi(\psi_{\l,N,!}\s_{\l,N}^*\cK).
\end{equation}

We then apply Proposition \ref{p:aff space van cycle} to the smooth scheme $\Sig_{\l,N}$ equipped with the inverse of the $\Grot$-action.  The inverse of the $\Grot$-action contracts $\Sig_{\l,N}$ to $t^\l$, and the function $\psi_{\l,N}: \Sig_{\l,N}\to \AA^1$ has weight $1$ under the inverse of $\Grot$-action, which is the smallest among the  weights on $T^*_{t^\l}\Sig_{\l,N}$. Proposition \ref{p:aff space van cycle} is thus applicable, and it gives a canonical isomorphism for $\cF\in D^{\Grot\lmon}(\Sig_{\l,N})$.
\begin{equation*}
    \mu_{\{t^\l\}/\Sig_{\l,N}}(\cF)|_{(t^\l, \psi)}\cong \phi(\wt\psi_{\l,N,!}\cF)[\dim\Sig_{\l,N}].
\end{equation*}
We apply this to $\cF=i_{\Sig_{\l,N}!}\s^*_{\l,N }\cK$ to get
\begin{equation*}
    \mu_{\{t^\l\}/\Sig_{\l,N}}(i_{\Sig_{\l,N}!}\s^*_{\l,N} \cK)|_{(t^\l, \psi)}\cong \phi(\wt\psi_{\l,N,!}i_{\Sig_{\l,N}!}\s^*_{\l,N} \cK)[\dim \Sig_{\l,N}]\cong\phi(\psi_{\l,N,!}\s^*_{\l,N} \cK)[\dim \Sig_{\l,N}]. 
\end{equation*}
Combined with \eqref{phi Sig vs SigN} we get
\begin{equation}\label{phi as mu Sig}
\phi(\psi_{\l!}\s_{\l}^{*}\cK)\cong \mu_{\{t^\l\}/\Sig_{\l,N}}(i_{\Sig_{\l,N}!}\s^*_{\l,N} \cK)|_{(t^\l, \psi)}[-\dim\Sig_{\l,N}].
\end{equation}
By Lemma \ref{l:trans slice}, $\Sig_{\l,N}$ is a transversal slice to $\Gr_\l$ in $\Sig_{\l,N}$, hence
\begin{equation}\label{codim in XN}
    \dim \Sig_{\l,N}=\codim_{\Gr_\l}X_N.
\end{equation}
Using \eqref{mu XN}, \eqref{phi as mu Sig} and the identity \eqref{codim in XN}, to give \eqref{mic Gr as phi}, it reduces to give a canonical isomorphism
\begin{equation}\label{res to slice}
    \mu_{\Gr_\l/X_N}(i_{\Gr,N!}\cK)|_{(t^\l, \psi)}\isom \mu_{\{t^\l\}/\Sig_{\l,N}}(i_{\Sig_{\l,N}!}\s^*_{\l,N} \cK)|_{(t^\l, \psi)}\cong \mu_{\{t^\l\}/\Sig_{\l,N}}(\wt\s_{\l,N}^*i_{\Gr,N!}\cK)|_{(t^\l, \psi)}
\end{equation}
for any $\cK\in D_{G\tl{t}}(\Gr_{\le\nu})$. This follows by applying Lemma \ref{l:res slice} to $X=X_N$, $Y=\Gr_{\le\nu}$ with the action of a finite-dimensional quotient $H$ of $G\tl{t}$ by left translation, the $H$-orbit $Z=\Gr_\l\incl Y$ through $z=t^\l$, with transversal slice $S=\Sig_{\l,N}$.
\end{proof}


Combining Proposition \ref{p:rho ms} and Theorem \ref{th:rho comm}, we conclude that the microstalks functor on the Satake category gives a fiber functor. More precisely,

\begin{cor}\label{c:mic fiber functor}
    The functor
    \begin{equation*}
        \om_\psi=\bigoplus_{\l\in \xcoch(T)}\mu^\na_\l|_{(t^\l,\psi)}\j{-2d_\l}: \Hs\to D^b(\Vect)
    \end{equation*}
    has a canonical monoidal structure and is t-exact. Moreover, its restriction to $\Hsh$ can be equipped with the structure of a fiber functor for the Tannakian category $\Hsh$. 
\end{cor}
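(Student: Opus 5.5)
The plan is to identify $\om_\psi$ with the composition of the monoidal functor $\r$ of \eqref{def rho} with the forgetful functor $D^b(\Rep(\dT))\to D^b(\Vect)$, and then transport all structure from $\r$. By Proposition \ref{p:rho ms}, for each $\l\in\xcoch(T)$ there is a canonical isomorphism $\r_\l(\cK)\cong \mu^\na_\l(\cK)|_{(t^\l,\psi)}\j{-2d_\l}$, functorial in $\cK\in\Hs$. Summing over $\l$ gives a canonical isomorphism of functors $\om_\psi\cong \Forg\c\r$, where $\r=\op_\l\r_\l$. Since $\r$ carries a canonical monoidal structure (it was built in \S\ref{sss:def rho} as the composite of the monoidal functor \eqref{predef rho} with the monoidal equivalence \eqref{End DG}), and the forgetful functor $D^b(\Rep(\dT))\cong D^b(\Vect_{\xcoch(T)})\to D^b(\Vect)$ is monoidal, we transport the monoidal structure to $\om_\psi$ along this isomorphism.

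Next comes t-exactness. By Theorem \ref{th:rho comm}(1), the action of $\Hsh$ preserves $\DG^\hs$. In particular, for $\cK\in\Hsh$ we have $\d_0\star_0\cK\in\DG^\hs$. By Proposition \ref{p:DGr clean}(3) and Corollary \ref{c:free over lattice}, $\DG^\hs$ is semisimple with simple objects $\d_\l$, and $\RHom(\d_\l,-)$ is exact on it. Hence $\r_\l(\cK)=\RHom(\d_\l,\d_0\star_0\cK)$ is concentrated in degree zero. Since $\Hs$ is generated under extensions and shifts by $\Hsh$, and the t-structure on $\Hs$ is bounded, exactness on the heart implies that $\r$, and hence $\om_\psi$, is t-exact.

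Finally, for the fiber functor structure, Theorem \ref{th:rho comm}(2) gives a monoidal isomorphism $\r^\hs\cong\inv\c\Res^{\dG}_{\dT}$ under $\Sat$. The right side is a tensor functor compatible with commutativity constraints, and composing with the forgetful functor $\Rep(\dT)\to\Vect$ gives the standard fiber functor of $\Rep(\dG)$, since $\inv$ does not change the underlying vector space. We therefore declare the symmetric monoidal structure on $\om_\psi|_{\Hsh}$ to be the one transported through $\om_\psi|_{\Hsh}\cong\Forg\c\inv\c\Res\c\Sat$. This functor is exact and faithful: exactness comes from the previous paragraph, and faithfulness holds because restriction to $\dT$ is faithful.

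The main obstacle is compatibility: the monoidal structure transported in the first step should coincide with the one underlying the fiber functor. This is exactly what the statement ``isomorphism of monoidal functors'' in Theorem \ref{th:rho comm}(2) provides, together with the naturality of the isomorphisms in Proposition \ref{p:rho ms}. The only care needed is to check that the twists $\j{-2d_\l}$, when summed over $\l$, do not interfere with the monoidal constraints. Because $\r$ itself is monoidal and the twists are absorbed in the identification of Proposition \ref{p:rho ms}, they never need to be compared directly.
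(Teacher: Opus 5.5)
Your proposal is correct and follows the paper's route. The paper simply combines Proposition \ref{p:rho ms}, which gives $\om_\psi\cong\Forg\c\r$, with Theorem \ref{th:rho comm}, which supplies the monoidal structure, the preservation of the heart, and $\r^\hs\cong\inv\c\Res^{\dG}_{\dT}$. Your deduction of t-exactness on all of $\Hs$, from heart-to-heart behaviour and boundedness of the t-structure, spells out a step the paper leaves implicit.
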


\section{The category $\cD_{\psi}$}\label{s:D}

We introduce and study the main player of this paper, a conventional sheaf category on the affine flag variety that turns out to be equivalent to microsheaves on the affine Springer fiber $\Fl_{\psi}$ with finite type supports.

\subsection{Definition of $\wt\cD_{\psi}$ and $\cD_{\psi}$}
Consider the stack $\Bun_{G}(\bG^{\psi}_{\infty}, \bI_{0})$ with the canonical $\Aff$-action. Define
\begin{equation*}
\wt\cD_{\psi}:= \Kir(\Bun_{G}(\bG^{\psi}_{\infty}, \bI_{0})).
\end{equation*}
By Proposition \ref{p:K psi}, when $\ch(k)=p$, we may identify $\wt\cD_{\psi}$ with the category of $(\Ga,\psi)$-equivariant complexes on $\Bun_{G}(\bG^{\psi}_{\infty}, \bI_{0})$, or equivalently $(\frg,\psi)$-equivariant complexes on $\Bun_{G}(\bG^{2}_{\infty}, \bI_{0})$.

\sss{Relevant points for $\wt\cD_\psi$}
By Definition \ref{d:rel}, the relevant locus for the Kirillov category $\wt\cD_\psi$ is the image of the forgetful map $\cM_\psi\to \Bun_G(\J, \bI_0)$. 



To describe the relevant locus of $\wt\cD_\psi$, consider the map
\begin{equation}
    \wt \b_\l: G t^\l\bI_0/\bI_0\to \Bun_{G}(\J, \bI_{0})=G[\t]^1\bs \Fl.
\end{equation}
We identify the source of the above map as a closed variety of $\Fl$, which is a fixed point component under $\Grot$. Recall the Hessenberg variety $\cH_\psi(\l)\subset G t^\l\bI_0/\bI_0$ introduced in \S\ref{Hess_var}.



\begin{lemma}\label{Relv pts Fl}
    The relevant locus for $\wt\cD_\psi$ is the union of $\wt\b_\l(\cH_\psi(\l))$ for all $\l\in \xcoch(T)$. 
\end{lemma}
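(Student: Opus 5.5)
The plan is to mimic the proof of Lemma \ref{l:H inf rel} and the analysis of relevant strata for $\Bun_G(\Jker)$ in \S\ref{sss:rel DGr}, now at Iwahori level.

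By Definition \ref{d:rel} and Lemma \ref{l:rel pt}, a point $x=g\bI_0\in \Fl$, with image in $\Bun_G(\J,\bI_0)=\J\bs\Fl$, is relevant if and only if the character $\psi$ of \eqref{psi as char} is trivial on the stabilizer $\Stab_{\J}(x)=\J\cap g\bI_0g^{-1}$. Dually, there should exist a covector at $x$ whose moment value for $\J$ is $\psi$. By the description of $\cM_\psi$, this means there is a Higgs field $\ph$ on the bundle $x$ with leading term $\psi_1\,dt$ at $\infty$ and residue at $0$ in $\cE_{0,B}(\frn)$. So the first step is to identify the relevant locus with the image of $\cM_\psi\to \Bun_G(\J,\bI_0)$.

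The second step uses the $\Grot$-action on $\cM_\psi$ (\S\ref{sss:Gm action}). Since $\psi_1\,dt$ is $\Gm$-invariant, the relevant locus is $\Grot$-stable. It is also closed under specialization along $\Grot$-limits, because it is the image of a map from $\cM_\psi$, all of whose points have limits by Proposition \ref{p:geom Mpsi}(4). Every point of $\J\bs\Fl$ flows under $\Grot$ to one of the fixed loci $\wt\b_\l(Gt^\l\bI_0/\bI_0)$. However, the relevant locus is claimed to be exactly the union of the $\wt\b_\l(\cH_\psi(\l))$ and not their attracting sets. So I should instead analyze the stabilizer condition directly on $\J$-orbits.

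Using the Birkhoff decomposition $G\lr{\t}=\bigsqcup \J\cdot G\t^{\l}\cdot\bI_0$, I would write a $\J$-orbit representative as $x=g t^\l\bI_0$ with $g\in G$. Conjugating by $g$, the condition becomes: $\psi^{g}$ vanishes on the Lie algebra of $\J\cap t^\l\bI_0t^{-\l}$. Expanding over affine root spaces $\t^n\frg_\a$ with $n\ge1$ lying in $\Ad(t^\l)\Lie\bI_0$, only the $n=1$ terms pair with $\psi$. The condition then reads
\[
\Ad(g^{-1})\psi_1\ \perp\ \bigoplus\{\frg_\a : \t\frg_\a\subset \Ad(t^\l)\Lie\bI_0\}.
\]
Under the Killing form, this is precisely $\Ad(g^{-1})\psi_1\in V^+_\l$, which is the defining condition of $\cH_\psi(\l)$ in \S\ref{Hess_var}.

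The main obstacle is the representative step. The $\J$-orbits on $\Fl$ are not simply $G$-translates of $Gt^\l\bI_0/\bI_0$; points also have a component in $\J$ modulo the stabilizer. I would handle this as in Lemma \ref{l:H inf rel} and Lemma \ref{l:tri decomp}: using a triangular decomposition of $\J$ relative to a Borel making $\l$ dominant, factor an arbitrary point as $j\cdot g t^\l\bI_0$. I would then show that the $\Ad(j)$-twisted condition forces the non-constant part of $j$ into the stabilizer. This is the same kind of unipotent-conjugation argument as the step "$\psi^{g_1}$ semisimple in $\frp_\l$, so conjugate into $\frl_\l$" there.

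One also has to match the sign and index conventions ($\langle\l,\a\rangle\le1$ versus $\le 0$) against the definition of $V^+_\l$. That is routine bookkeeping via the root description of $B_\l$.
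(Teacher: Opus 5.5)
Your core argument is the paper's proof, and it is correct. Relevance means $\psi$ kills the stabilizer $G[t^{-1}]^1\cap\Ad(g)\bI_0$. One then reduces to $g\bI_0\in Gt^\l\bI_0$, and only the affine root spaces $\t\frg_\a\subset\Ad(t^\l)\Lie\bI_0$ see $\psi$. These are the $\a>0$ with $\j{\l,\a}\le -1$ and the $\a<0$ with $\j{\l,\a}\le -2$, and the Killing-orthogonal complement of their sum is exactly $V^+_\l$. So the bookkeeping you defer does work out.

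The ``main obstacle'' paragraph, however, rests on a misunderstanding, and the fix you propose would fail. First, $\J\subset G\tl{\t}$ does not act on $\Fl=G\lr{t}/\bI_0$. Uniformizing at $0$, the acting group is $G[t^{-1}]^1=\ker(G[t^{-1}]\to G)$, so $\Bun_G(\J,\bI_0)=G[t^{-1}]^1\bs\Fl$, and the decomposition to use is $G\lr{t}=\bigsqcup_\l G[t^{-1}]\,t^\l\,\bI_0$. Second, relevance is a property of a point of this quotient stack, so it is automatically constant along $G[t^{-1}]^1$-orbits. Indeed $\Stab(jx)=j\,\Stab(x)\,j^{-1}$, and $\psi(jyj^{-1})=\psi(y)$ because $\psi$ is a homomorphism to the abelian group $\Ga$. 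Since $G[t^{-1}]=G[t^{-1}]^1\rtimes G$, every orbit meets some $Gt^\l\bI_0/\bI_0$, and that is the entire reduction.

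There is therefore nothing to prove about the factor $j$. The claim you intend to prove, that relevance forces the non-constant part of $j$ into the stabilizer, is false: for every $j\in G[t^{-1}]^1$, $jx$ is relevant if and only if $x$ is. The analogy with Lemma \ref{l:H inf rel} does not transfer. There the condition $\psi(g)=\psi(x^{-1}gx)$ comes from a two-sided action, and the unipotent conjugation takes place in $P_\l\times_{L_\l}P_{-\l}\subset G\times G$. Even in that lemma, translation by $\J\times\J$ is used freely. Your $\Grot$/attracting-set worry is also moot, since every point of $G[t^{-1}]^1\bs\Fl$ already lies in the image of some $\wt\b_\l$. Drop that paragraph and the $\Grot$ detour, and what remains is a complete proof.
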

\begin{proof} 
Using the uniformization
\begin{equation*}
    G[\t]^1\bs \Fl\isom \Bun_G(\J, \bI_{0}),
\end{equation*}
the relevant $k$-points for $\wt\cD_\psi$ are the image of those $g\bI_0\in \Fl$ such that the composition
\begin{equation}\label{res psi to stab}
    G[\t]^1\cap \Ad(g)\bI_0\subset G[\t]^1\surj t^{-1}\frg\xr{t}\frg\xr{\j{\psi_1,-}} k 
\end{equation}
is zero. Up to left translation by $G[\t]^1$ we may assume $g\bI_0\in G t^\l\bI_0$. Then a root-theoretic calculation shows that \eqref{res psi to stab} is trivial if and only if $g\bI_0\in \cH_\psi(\l)$. The lemma follows.

\end{proof}

\sss{The category $\wt\cD_{\psi, \bP}$}
For any standard parahoric subgroup $\bP\subset G\lr{t}$ we may consider $\Bun_{G}(\bG_{\infty}^{\psi}, \bP_{0})$ and the corresponding Kirillov category $\wt \cD_{\psi, \bP}$.  In particular, for $\bP=\bG=G\tl{t}$ we have 
$\wt \cD_{\psi, \bG}=\cD_{\psi, \bG}$ defined in \S\ref{ss:D psi Gr}. 

The projection $\pi_{\bP}: \Bun_{G}(\bG_{\infty}^{\psi}, \bI_{0})\to \Bun_{G}(\bG_{\infty}^{\psi}, \bP_{0})$ induces functors
\begin{equation*}
\xymatrix{\wt\cD_{\psi}\ar@<-.5ex>[r]_{\pi_{\bP*}} & \wt\cD_{\psi,\bP} \ar@<-.5ex>[l]_{\pi^{*}_{\bP}}}
\end{equation*}
For $\bP=\bG$, we denote $\pi_{\bG}$ by $\pi$.

We give $\wt\cD_{\bP}$ the perverse t-structure inherited from the middle perversity on the stack $\Bun_{G}(\Jker, \bP_{0})$. Denote its heart by $\wt\cD_{\bP}^{\hs}$.

\sss{Affine Hecke action}\label{sss:aff Hk action}
Let $\cH_{0}=D(\bI_{0}\bs G\lr{t}/\bI_{0})$ be the affine Hecke category of $G$. Again all objects in $\cH_{0}$ are automatically $\Grot$-monodromic with unipotent monodromy, since all $\bI_{0}$-double cosets in $G\lr{t}$ are stable under loop rotation. Similar to the spherical action discussed in \S\ref{ss:sph action}, there is a right action of $\cH_{0}$ on $\wt\cD_{\psi}$ by Hecke modification at $0$, using the construction of integral transforms in \S\ref{sss:int trans}. For $\cK\in \cH_{0}$, we write its action on $\wt\cD_{\psi}$ by $(-)\star_{0}\cK$. It will be convenient to use the double coset description (uniformization at $0$)
\begin{equation*}
\Bun_{G}(\Jker, \bI_{0})\cong G[\t]^{\psi}\bs G\lr{t}/\bI_{0}=G[\t]^{\psi}\bs \Fl
\end{equation*}
when working with the action of $\cH_0$. Here $G[\t]^{\psi}=G[\t]\cap \Jker\subset G[\t]$.
We also introduce the notation
\begin{equation*}
    G[t^{-1}]^1=G[\t]^{1}:=\ker(\ev_{\infty}: G[\t]\to G).
\end{equation*}
For $w\in \tilW$, let $\IC_{w}\in \cH_{0}$ be the middle extension of the constant sheaf $E\j{\ell(w)}$ from the Schubert cell indexed by $w$.

\begin{defn}Let $\cD_{\psi}\subset \wt\cD_{\psi}$ be the full subcategory generated under the $\cH_{0}$-action by the image of $\pi^{*}: \DG\to \wt\cD_{\psi}$. 
\end{defn}


Recall the objects $\d_{\l}\in \DG$ for $\l\in \xcoch(T)$. Let
\begin{equation*}
\cF_{\l}=\pi^{*}\d_{\l}\in \wt\cD_{\psi}.
\end{equation*}
Since $\DG$ is generated by $\{\d_{\l}\}$,  $\{\cF_{\l}; \l\in \xcoch(T)\}$ generate $\cD_{\psi}$ as a $\cH_{0}$-module. 

\begin{remark}
    Note that by Theorem \ref{th:rho comm}, the action of central sheaves in $\cH_0$ on $\cF_{0}$ generates all $\cF_\l$ after idempotent completion. Thus $\cD_{\psi}$ is generated under the $\cH_{0}$-action by only the object $\cF_{0}$ after idempotent completion.
\end{remark}



\sss{The category $\cD_{\psi,\bP}$}\label{sss:DpsiP}
For general $\bP$, let $\cD_{\psi,\bP}\subset \wt \cD_{\psi,\bP}$ be the full subcategory of $\cF\in \wt\cD_{\psi,\bP}$ such that $\pi^{*}_{\bP}\cF\in \cD_{\psi}$.  Equivalently, $\cD_{\psi,\bP}$ is generated by the summands of the image of $\pi_{\bP*}: \cD_{\psi}\to \wt\cD_{\psi,\bP}$. When $\bP=\bG$, this is consistent with the earlier definition of $\DG=\wt\cD_{\psi, \bG}$.

\sss{Hecke action at $\infty$}
As in the case of $\cD_{\psi, \bG}$, the monoidal category $\Hinf$ acts on $\wt\cD_{\psi}$ by modifications of bundles at $\infty$.  We denote the action of $\cK\in \Hinf$ by $\cK\star_{\infty}(-)$. This action commutes with the action of $\cH_{0}$, and therefore the $\Hinf$-action preserves the subcategory $\cD_\psi$. It will be convenient to use the double coset description
\begin{equation*}
\Bun_{G}(\Jker, \bI_{0})\cong \Jker\bs G\lr{\t}/I_{\AA^{1}}
\end{equation*}
when working with the action of $\Hinf$. Here $I_{\AA^{1}}\subset G[t]$ is the preimage of $B$ under the evaluation map $\ev_{t=0}: G[t]\to G$.

By Lemma \ref{l:std action DGr}, we have a canonical isomorphism
\begin{equation*}
\wh\D_{\mu}\star_{\infty}\cF_{\l}\cong \cF_{\l-\mu}, \quad \forall \l,\mu\in \xcoch(T).
\end{equation*}
In particular, the action of $\Him$ on $\cD_{\psi}$ extends to an action of the completion $\hHim$.

\sss{$T$-equivariant versions}
We can similarly consider the stack $\Bun_{G}(\bG^{\psi}_{\infty}T, \bI_{0})$ with the action of $\Aff$. We then can define 
\begin{equation*}
	{}^T\wt\cD_{\psi}:= \Kir(\Bun_{G}(\bG^{\psi}_{\infty}T, \bI_{0})).
\end{equation*}
We have adjoint functors
\begin{equation*}
\xymatrix{{}^T\wt\cD_{\psi}\ar@<-.5ex>[r]_{{}^T\pi} & {}^T\DG \ar@<-.5ex>[l]_{{}^T\pi^{*}}}
\end{equation*}

The category ${}^T\wt\cD_{\psi}$ still carries an action of $\cH_{0}$ of the Hecke category at $0$, and an action of $\eqHinf$ at infinity (see \S\ref{sss:var Hinf}).

We give ${}^T\wt\cD_{\psi}$ the perverse t-structure inherited from the forgetful functor to $\wt\cD_\psi$, which in turn comes from the middle perversity on the stack $\Bun_{G}(\Jker, \bI_{0})$. We denote its heart by ${}^T\wt\cD^{\hs}_{\psi}$.

By Proposition \ref{p:Hinf equiv}, we have a monoidal equivalence of abelian categories $\Rep(\dT)\cong \Hinfhs$, under which the character $\l\in \xch(\dT)=\xcoch(T)$ corresponds to the standard sheaf ${}^T\D^T_{\l}\in \Hinfhs$.

\begin{lemma}\label{l:texact inf} For any $\cK\in \Hinfhs$, the endofunctor $\cK\star_{\infty}(-):{}^{T}\wt\cD_{\psi}\to {}^{T}\wt\cD_{\psi}$ is t-exact for the perverse  t-structure on ${}^{T}\wt\cD_{\psi}$.
\end{lemma}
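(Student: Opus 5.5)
By Proposition \ref{p:Hinf equiv}(3), every object of $\Hinfhs$ is a finite direct sum of the standard objects ${}^T\D^T_\mu$, $\mu\in\xcoch(T)$. Since the action functor is additive, I will reduce to showing that ${}^T\D^T_\mu\star_\infty(-)$ is t-exact for each $\mu$. Each ${}^T\D^T_\mu$ is invertible in $\eqHinf$, with inverse ${}^T\D^T_{-\mu}$ by Proposition \ref{p:Teq conv}. It therefore suffices to show that ${}^T\D^T_\mu\star_\infty(-)$ is right t-exact for every $\mu$. Indeed, its quasi-inverse ${}^T\D^T_{-\mu}\star_\infty(-)$ is then also right t-exact, and an equivalence whose quasi-inverse is right t-exact is automatically left t-exact.

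The t-structure on ${}^T\wt\cD_\psi$ is pulled back from $\wt\cD_\psi$ along the forgetful functor, which in turn is induced from perverse sheaves on $\Bun_G(\Jker,\bI_0)$. I will compute the action there using the double coset description $\Jker\bs G\lr{\t}/I_{\AA^1}$ and the convolution diagram \eqref{Hinf action} with $G[t]$ replaced by $I_{\AA^1}$. By Lemma \ref{l:D1}(1), ${}^T\D^T_\mu$ is represented by the $!$-extension of the constant sheaf, shifted by $\j{d_\mu}$, on the single orbit $\Jker\bs\Jker\t^\mu T\Jker$. Hence on a lift $\cF\in D_{\Grot}$, the operator ${}^T\D^T_\mu\star_\infty(-)$ is given by $m_!\,\pr^*(-)\j{d_\mu}$. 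Here $\pr$ and $m$ are the two maps out of $\Jker\t^\mu T\Jker\twtimes{\Jker}\Bun_{G,\hinf}/I_{\AA^1}$ to $\Bun_G(\Jker,\bI_0)$, and taking the $T$-quotient absorbs the $T$-factor.

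This is the standard situation of a $!$-convolution with a standard object along a fibration with affine-space fibers. The map $\pr$ is smooth with fibers $\Jker\t^\mu\Jker/\Jker$, an affine space of dimension $d_\mu$. The map $m$ is an affine morphism, because it is a twisted product of the affine embedding of the orbit. So $\pr^*\j{d_\mu}$ is t-exact, and $m_!$ is right t-exact by Artin vanishing for affine morphisms. This gives right t-exactness of ${}^T\D^T_\mu\star_\infty(-)$ on $D_{\Grot}$, and it descends to the Kirillov quotient because the perverse t-structure on the quotient is induced.

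The main obstacle is checking affineness of $m$ in this infinite-type stacky setting. I will handle it by observing that $m$ factors as the closed embedding of the twisted product into $\Jker\bs G\lr{\t}\twtimes{\Jker}G\lr{\t}/I_{\AA^1}$, followed by an isomorphism (multiplication), restricted over the open orbit. The orbit $\Jker\t^\mu T\Jker/\Jker\incl G\lr{\t}/\Jker$ is affine, since it is the complement of a Cartier divisor in its closure, as in Lemma \ref{l:D1}. Working on finite-dimensional quotients $N_i$ as in \S\ref{ss:micro Mpsi} makes this rigorous.

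As a fallback, if affineness is awkward to verify, I would instead use the symmetric argument with ${}^Ti_{\mu*}$. By Proposition \ref{p:Hinf equiv}(1) the $!$- and $*$-extensions agree, so the same object is also given by $m_*\pr^!(-)$ with the opposite shift. This second formula yields left t-exactness directly, and together with the first it gives t-exactness.
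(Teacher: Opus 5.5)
\paragraph{Main gap: Artin vanishing is applied in the wrong direction.} Your architecture matches the paper's: reduce to the invertible objects ${}^T\D^T_\mu$, prove one-sided exactness, then use invertibility. But the one-sided step is stated in the wrong direction, and as written it is false. Artin vanishing says that for an affine morphism $f$, $f_*$ is right t-exact and, dually, $f_!$ is \emph{left} t-exact. It does not make $f_!$ right t-exact.

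Your $m$, restricted over the orbit, is smooth with fibers $\AA^{d_\mu}$. Already for $f:\AA^1\to\pt$ one has $f_!(E[1])\cong E[-1]$ up to Tate twist, which sits in perverse degree $1$. In general $m_!\circ\pr^*\j{d_\mu}$ sends perverse objects into ${}^pD^{[0,d_\mu]}$. So this computation gives \emph{left} t-exactness of ${}^T\D^T_\mu\star_\infty(-)$, not right t-exactness.

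\paragraph{The repair.} Your invertibility argument is symmetric, so run it with ``left'' in place of ``right''. The quasi-inverse ${}^T\D^T_{-\mu}\star_\infty(-)$ is also the right adjoint of ${}^T\D^T_\mu\star_\infty(-)$. It is left t-exact by the same computation, and a left adjoint of a left t-exact functor is right t-exact. With this change your argument is exactly the paper's: $p_\l$ is affine of relative dimension $d_\l$, so ${}^T\D^T_\l\star_\infty(-)$ is left t-exact, and invertibility upgrades this to t-exactness.

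For affineness, argue via the identification $G\lr{\t}\twtimes{\Jker}X\cong G\lr{\t}/\Jker\times X$. Under it, the restricted $m$ becomes the projection whose fibers are the orbit, an affine space of dimension $d_\mu$ (this is the paper's map $p_\l$). Being the complement of a Cartier divisor in a non-proper closure does not by itself give affineness.

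\paragraph{The fallback does not rescue this.} The action is defined by $m_!(p_1^*(-)\otimes p_2^*(-))$ no matter how the kernel is presented. Presenting ${}^T\D^T_\mu$ as a $*$-extension, which Proposition \ref{p:Hinf equiv}(1) only permits in the Kirillov quotient, turns $p_1^*$ of the kernel into a $*$-extension along the orbit. It does not turn $m_!$ into $m_*$, so you do not get a formula of the form $m_*\pr^!(-)$ for the convolution. Even if you had such a formula, Artin vanishing would make $m_*$ right t-exact, which is again the opposite of what you claim.
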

\begin{proof} 
We first show that for any $\l\in \xcoch(T)$, the endofunctor ${}^{T}\D_{\l}^{T}\star_{\infty}(-)$ of ${}^{T}\wt\cD_{\psi}$ is t-left-exact. Indeed, ${}^{T}\D_{\l}^{T}\star_{\infty}(-)\cong p_{\l!}[d_{\l}]$, where $p$ is the following map induced by multiplication
\begin{equation*}
p_{\l}: (\Jker T)\bs\Jker \t^{\l}\Jker T\times^{\Jker T}\Fl\cong ((\Jker T)\cap \Ad(t^{\l})(\Jker T))\bs \Fl\to (\Jker T)\bs \Fl.
\end{equation*}
Since $p_{\l}$ is affine with relative dimension $d_{\l}$ by \eqref{d lam}, we conclude that ${}^{T}\D_{\l}^{T}\star_{\infty}(-)$ is t-left-exact. 

Now ${}^{T}\D_{-\l}^{T}\star{}^{T}\D_{\l}^{T}\cong {}^{T}\D_{0}^{T}$ by Proposition \ref{p:Hinf equiv}, which acts by identity on ${}^{T}\wt\cD_{\psi}$. This forces ${}^{T}\D_{\l}^{T}\star_{\infty}(-)$ to be t-exact.  Finally, since  $\{{}^{T}\D_{\l}^{T}\}_{\l\in\xcoch(T)}$ generate the abelian category $\Hinfhs$ by Lemma \ref{l:H inf rel}, $\cK\star_{\infty}(-)$ is t-exact for any $\cK\in \Hinfhs$.

\end{proof}

\subsection{The category ${}^{T}\cC$ and action by Wakimoto sheaves}\label{sec:Wak_action}
Consider the embedding
\begin{equation*}
{}^{T}\ov\b^{\bI_{0}}_{\l}: \Bun^{\l}_{G}(\Jker T,\bI_{0}):=(G[\t]^{\psi}T) \bs G[t]^{1}  t^{\l}\bI_{0}/\bI_{0}= (\Jker T) \bs \J t^{\l} I_{\AA^{1}}/I_{\AA^{1}}
	\incl \Bun_{G}(\Jker T,\bI_{0}).
\end{equation*}
analogous to the embedding in \ref{equ_beta}. Note that here these are not all the relevant strata in $\Bun_{G}(\Jker T ,\bI_{0})$. Similar to the calculations in \S\ref{sss:rel DGr}, we have
\begin{equation*}
\Bun^{\l}_{G}(\Jker T,\bI_{0})=(\Jker T)\bs \J t^{\l} I_{\AA^{1}}/I_{\AA^{1}}\cong (\pt/\wt A_{\l})\times \Ga
\end{equation*}
where $\wt A_{\l}=(\Jker T)\cap \Ad(t^{\l})I_{\AA^{1}}$. The embedding ${}^{T}\ov\b^{\bI_{0}}_{\l}$ gives a functor
\begin{eqnarray*}
	{}^{T}\b^{\bI_{0}}_{\l,!}: \Kir(\Ga)\xr{\pr_{\Ga}^{*}}\Kir((\pt/\wt A_{\l})\times \Ga)\xr{{}^{T}\ov\b^{\bI_{0}}_{\l,!}} \wt\cD_{\psi}
\end{eqnarray*}

Let
\begin{equation*}
\dc_{0}={}^{T}\b^{\bI_{0}}_{0,!}\d, \quad \dc_{\l}={}^{T}\D^{T}_{-\l}\star_{\infty}\dc_{0}, \quad \forall \l\in\xcoch(T).
\end{equation*}
By Lemma \ref{l:texact inf}, we see that
\begin{equation*}
\dc_{\l}\in {}^{T}\wt\cD^{\hs}_{\psi}, \quad \mbox{for all }\l\in \xcoch(T).
\end{equation*}

\begin{defn} Let ${}^{T}\calC\subset {}^{T}\wt\cD^{\hs}_{\psi}$ be the full subcategory whose objects are finite successive extensions of $\{\dc_{\l}\}_{\l\in\xcoch(T)}$. Further denote by  ${}^{T}\calC^{ss}$
the full subcategory of ${}^{T}\cC$ whose objects are finite direct sums of $\{\dc_{\l}\}_{\l\in\xcoch(T)}$. 
\end{defn}

\begin{remark}
    The category ${}^T\cC$ is not a subcategory of ${}^T\cD_\psi$. We will mention in \S\ref{r:whole mush} how this category fits into the coherent description of $\cD_\psi$. 
\end{remark}

\begin{lemma}\label{l:Tpi dc to d}
    For any $\l\in \xcoch(T)$ we have a canonical isomorphism
    \begin{equation*}
        {}^T\pi_*(\dc_\l)\cong {}^T\d_\l\in {}^T\DG.
    \end{equation*}
\end{lemma}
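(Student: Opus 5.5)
First I reduce to $\l=0$. The projection ${}^T\pi:\Bun_G(\Jker T,\bI_0)\to\Bun_G(\Jker T)$ is compatible with modifications at $\infty$, so ${}^T\pi_*$ commutes with the $\eqHinf$-action: ${}^T\pi_*({}^T\D^T_{-\l}\star_\infty \cF)\cong {}^T\D^T_{-\l}\star_\infty {}^T\pi_*\cF$. Granting the case $\l=0$, we get
\[
{}^T\pi_*(\dc_\l)\cong {}^T\D^T_{-\l}\star_\infty {}^T\pi_*(\dc_0)\cong {}^T\D^T_{-\l}\star_\infty {}^T\d_0\cong {}^T\d_{\l},
\]
where the last step is Corollary \ref{c:Teq act}.

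For $\l=0$ I compute directly from the definitions. The stratum $\Bun^0_G(\Jker T,\bI_0)=(\Jker T)\bs \J I_{\AA^1}/I_{\AA^1}\cong(\pt/\wt A_0)\times\Ga$ maps under ${}^T\pi$ to $(\Jker T)\bs\J G[t]/G[t]\cong(\pt/(A_0T))\times\Ga$. At $\l=0$ we have $A_0=\J\cap G[t]$ trivial and $\wt A_0=(\Jker T)\cap I_{\AA^1}=T$, so the source stratum is $\pt/T\times\Ga$. The fibers of the map $(\Jker T)\bs \J G[t]/I_{\AA^1}\to(\Jker T)\bs\J G[t]/G[t]$ are $G[t]/I_{\AA^1}=G/B=\cB$. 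The key point is that the stratum $\Bun^0_G(\Jker T,\bI_0)$ is the full preimage under ${}^T\pi$ of the open stratum $\Bun^0_G(\Jker T)$, since $\J G[t]/I_{\AA^1}$ is the preimage of $\J G[t]/G[t]$. On this open set the map is a $\cB$-bundle, and the inclusion of the point $I_{\AA^1}$ corresponds to the $T$-fixed base point of $\cB$.

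Identify the $\cB$-fibre through the base point with $G/B$, with $T$ acting by left translation. The object $\dc_0$ is the $!$-extension of $\d$ from a point of this fibre, namely the base point $eB$, which is $T$-fixed. Pushing forward along $\cB\to\pt$, the stalk on $\Ga$ is just the skyscraper $\d$ in $\Kir(\pt/T\times\Ga)$. Concretely, the composite $\pt/T\times\Ga\to\pt/T\times\Ga$ of the strata is the identity, because $A_0=1$. Hence ${}^T\pi_*\dc_0\cong {}^T\ov\b_{0,!}\,{}^T\d={}^T\d_0$ (note $d_0=0$). Here I use base change, that ${}^T\pi$ is proper so $\pi_*=\pi_!$, and that ${}^T\b_{0,!}\cong{}^T\b_{0,*}$ by Corollary \ref{c:TDGr clean}.

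The main obstacle is making the isomorphism canonical and checking that the shifts agree: $\dc_0$ carries no shift, and $\d_0$ has shift $\j{-d_0}=0$. I also need the $\eqHinf$-equivariance of ${}^T\pi_*$ to be canonical, which follows from proper base change in the convolution diagram \eqref{Hinf action}, whose Iwahori-level version maps to it.
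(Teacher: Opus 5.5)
Your argument is correct and is the paper's own: for $\l=0$ the stratum $\Bun^0_G(\Jker T,\bI_0)$ maps isomorphically onto $\Bun^0_G(\Jker T)$, so ${}^T\pi_*\dc_0={}^T\pi_!\dc_0\cong{}^T\d_0$ because ${}^T\pi$ is proper, and the general case follows by convolving with ${}^T\D^T_{-\l}$ (which commutes with ${}^T\pi_*$) and applying Corollary \ref{c:Teq act}. One slip to fix: that stratum is not the full preimage of $\Bun^0_G(\Jker T)$, since the preimage is $(\Jker T)\bs\J G[t]/I_{\AA^1}\cong\Ga\times(T\bs\cB)$; the stratum is only the base-point section of this $\cB$-bundle, and that section is exactly what your computation uses.
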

\begin{proof}
    For $\l=0$ it follows from the definition of $\dc_0$ and $\d_0$ because the  projection $\Bun_G^0(\Jker T, \bI_0)\to \Bun_G^0(\Jker T)$ is an isomorphism. Now we apply convolution by ${}^T\D^T_{-\l}$ to the isomorphism $\dc_0\cong {}^T\d_0$ and use Lemma \ref{Dmu act on dl} to conclude for general $\l$. 
\end{proof}

\begin{lemma}\label{l:dcl dom}
For $\l\in\xcoch(T)^{+}$ we have $\dc_{\l}\cong{}^T\b^{\bI_{0}}_{\l,!}\d\j{-d_\l}$. 
\end{lemma}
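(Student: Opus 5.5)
The plan is to compute $\dc_\l={}^T\D^T_{-\l}\star_\infty\dc_0$ directly from the convolution diagram at $\infty$, mirroring the proof of Lemma \ref{l:std action DGr}. By Lemma \ref{l:D1}(1), ${}^T\D^T_{-\l}$ is the image of $\io^T_{-\l!}E\j{d_{-\l}}$, the $!$-extension of the constant sheaf on $\Jker\bs\Jker\t^{-\l}T\Jker/(\Jker T)$. Since $d_{-\l}=d_\l$, the object $\dc_\l$ is the $!$-pushforward along the multiplication map
\begin{equation*}
m:\ (\Jker T)\bs \Jker\t^{-\l}T\Jker\twtimes{\Jker T}\Jker I_{\AA^1}/I_{\AA^1}\to (\Jker T)\bs G\lr{\t}/I_{\AA^1}
\end{equation*}
of the constant sheaf, shifted by $\j{d_\l}$. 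Here $\Jker I_{\AA^1}/I_{\AA^1}$ is the support of the lift $\Jker\bs\Jker I_{\AA^1}/I_{\AA^1}\cong\pt$ of $\d$ inside $\dc_0$ (again via Lemma \ref{l:D1}(1)).

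The source is $((\Jker T)\cap\Ad(\t^{-\l})\Jker)\bs \t^{-\l}\Jker I_{\AA^1}/I_{\AA^1}$, i.e. $T\Jker\t^{-\l}$ modulo the stabilizer. In terms of $t=\t^{-1}$ we have $\t^{-\l}=t^{\l}$. The key geometric step is to show that $m$ is an isomorphism onto the locally closed substack $(\Jker T)\bs\Jker t^\l I_{\AA^1}/I_{\AA^1}$, namely the zero fiber of $\Bun^\l_G(\Jker T,\bI_0)\cong(\pt/\wt A_\l)\times\Ga$. This amounts to two equalities. The first is $\Jker t^\l\Jker I_{\AA^1}=\Jker t^\l I_{\AA^1}$, which follows from $\Ad(t^{-\l})\Jker\cdot I_{\AA^1}=\Jker I_{\AA^1}$ up to the appropriate factor. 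The second is a dimension count: the fiber $\Ad(t^\l)^{-1}$-stabilizer matches $\wt A_\l$, and both sides have dimension $-d_\l$ relative to a point.

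For $\l$ dominant, $\Ad(t^{-\l})$ contracts the $\J$-root subgroups $\t^n\frg_\a$ with $\j{\a,\l}\ge0$ into $\J$. The remaining root groups, with $\j{\a,\l}<0$, should be absorbed into $I_{\AA^1}$ after conjugation. To check this I will use the triangular decomposition of Lemma \ref{l:tri decomp} with a Borel for which $\l$ is dominant, in the spirit of the proof of Lemma \ref{l:std action DGr} (the factorization $\bK_+\bK_-$). The dimension of $\wt A_\l$ equals $d_\l$ by the same computation as $\dim A_\l=d_\l$, since $I_{\AA^1}$ and $G[t]$ differ only in finite type constant-term directions, which are irrelevant to $\J$. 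This matches the codimension bookkeeping: pushing forward $E\j{d_\l}$ along a map with unipotent fibers of dimension $d_\l$ gives $E\j{-d_\l}$ on the target. Equivalently, the shift $\j{d_\l}$ combined with $R\Gamma_c$ of an affine space of dimension $d_\l$ yields $\j{-d_\l}$, matching the normalization $\d_\l=\b_{\l!}\d\j{-d_\l}$.

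Finally, identifying the zero-fiber skyscraper with $\pr_\Ga^*\d$ under Example \ref{ex:Kir A1}, the resulting object is ${}^T\b^{\bI_0}_{\l,!}\d\j{-d_\l}$. As a consistency check, Lemma \ref{l:Tpi dc to d} gives ${}^T\pi_*$ of this object as ${}^T\d_\l$, since the projection restricted to this stratum is compatible with $\wt A_\l\subset A_\l T$.

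The main obstacle is the group-theoretic step: showing that for dominant $\l$, the convolution map is an isomorphism onto a single stratum with the correct stabilizer. This requires controlling the interaction between $\Ad(t^\l)$ and the Iwahori $I_{\AA^1}$ at $0$, rather than $G[t]$. I would first try the root-by-root analysis of the proof of Lemma \ref{l:H inf conv fiber}, checking that each factor $Z_\a$ is a point when $\l$ is dominant.
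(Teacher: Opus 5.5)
There is a genuine gap: your key geometric step is false. For every non-central $\l$, dominant or not, the convolution map $m$ does not land in the $\l$-stratum, so $\Jker t^\l\Jker I_{\AA^1}\neq\Jker t^\l I_{\AA^1}$.

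Here is a witness. Take a root $\a$ with $\j{\a,\l}\ge1$, take $c\neq0$, and set $h=x_\a(c\t^{\j{\a,\l}})$. This $h$ lies in $\Jker$ because $\psi_1\in\frt$ kills root vectors. Then $t^\l h=x_\a(c)\,t^\l$, with $x_\a(c)\in U_\a\subset G$ constant. Suppose $x_\a(c)t^\l I_{\AA^1}=jt^\l I_{\AA^1}$ for some $j\in\J$. Loop rotation fixes $x_\a(c)$ and the point $t^\l I_{\AA^1}$, and contracts $\J$ to $1$. So it would force $x_\a(c)t^\l I_{\AA^1}=t^\l I_{\AA^1}$, i.e.\ $x_\a(ct^{-\j{\a,\l}})\in I_{\AA^1}$, which is absurd.

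Consequently, modulo $\Jker$ the source of $m$ is the $d_\l$-dimensional affine space $\Jker\bs\Jker t^\l\Jker$, and its image leaves the stratum. Moreover, the fibre of $m$ over $t^\l I_{\AA^1}$ contains $\bK_+t^\l$ with $\bK_+=\Jker\cap\Ad(t^\l)I_{\AA^1}$. So $m$ is not injective over the stratum either; your own $R\G_c(\AA^{d_\l})$ shift count already presupposes this.

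Relatedly, $I_{\AA^1}$ and $G[t]$ do not differ only in directions irrelevant to $\J$. For roots $\b$ outside the Borel defining $I_{\AA^1}$, $\Ad(t^\l)$ moves $\frg_\b$ to $t^{\j{\b,\l}}\frg_\b$, which lies in $\Lie\J$ when $\j{\b,\l}<0$. Hence $\Jker\cap\Ad(t^\l)I_{\AA^1}=\Jker\cap\Ad(t^\l)G[t]$ is a genuine sign condition on $\l$.

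To salvage your direction, you would have to prove that every point of the image of $m$ outside the stratum is irrelevant, and then invoke Lemma \ref{l:remove irr}. That is real extra work, and it is exactly where the hypothesis on $\l$ must be used. For example, take $\GL_2$ with $I_{\AA^1}$ defined by the upper triangular Borel, and the points $x_\a(c)t^\l\bI_0$ with $c\neq0$. For $\l=(1,0)$ they lie on $\cH_\psi(\l)\cong\PP^1$ and are relevant. For $\l=(0,1)$, $\cH_\psi(\l)$ is two points and they lie off it, so they are irrelevant. Your outline never uses the sign of $\l$ at this step.

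The paper avoids all of this by running the computation in the opposite direction. It proves ${}^T\D^T_\l\star_\infty{}^T\b^{\bI_0}_{\l,!}\d\j{-d_\l}\cong\dc_0$. There the multiplication map $\Jker\t^\l\Jker T\times^{\Jker T}\Jker t^\l I_{\AA^1}/I_{\AA^1}\to\Jker I_{\AA^1}/I_{\AA^1}$ is an honest isomorphism, for three reasons:
\begin{itemize}
\item modulo $\Jker$, its source is $\bK_-^\psi\bs\Jker/\bK_+$ with $\bK_-^\psi=\Jker\cap\Ad(t^\l)\Jker$;
\item the hypothesis on $\l$ gives $\bK_+=\Jker\cap\Ad(t^\l)G[t]$;
\item multiplication gives $\bK_-^\psi\times\bK_+\isom\Jker$.
\end{itemize}
The shifts $\j{d_\l}$ and $\j{-d_\l}$ cancel and no irrelevant points appear. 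Convolving with the inverse ${}^T\D^T_{-\l}$ then gives the lemma.
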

\begin{proof}
We show that for $\l\in\xcoch(T)^{+}$ there is a canonical isomorphism
\begin{equation}\label{Dl conv bl}
{}^T\D_{\l}^T\star_{\infty}{}^T\b^{\bI_{0}}_{\l,!}\d\j{-d_\l}\cong {}^T\b_{0!}^{\bI_{0}}\d=\dc_{0}.
\end{equation}
The result then follows by convolving with ${}^T\D_{-\l}^T$, which is the inverse of ${}^T\D_{\l}^T$.
	
	To see \eqref{Dl conv bl}, we only need to note that the multiplication map gives an isomorphism
	\begin{equation*}
		\Jker  \t^{\l}\Jker T\times^{\Jker T}\Jker  t^{\l}I_{\AA^{1}}/I_{\AA^{1}}\cong\Jker I_{\AA^{1}}/I_{\AA^{1}},
	\end{equation*}
for $\l\in\xcoch(T)^{+}$. Indeed, we can identified the left side with $\Jker \t^{\l}\Jker\times^{\Jker}\Jker  t^{\l}I_{\AA^{1}}/I_{\AA^{1}}$, so it suffices to show that
\begin{equation}\label{left2}
\Jker\bs \Jker \t^{\l}\Jker \times^{\Jker}\Jker  t^{\l}I_{\AA^{1}}/I_{\AA^{1}}
\end{equation}
is a point. This has essentially been proved in Lemma \ref{l:std action DGr}(1). We observe that 
$\Jker  t^{\l}I_{\AA^{1}}/I_{\AA^{1}}=\Jker /(\Jker\cap \Ad(t^{\l})I_{\AA^{1}})$. For $\l$ dominant, both $\Jker\cap \Ad(t^{\l})I_{\AA^{1}}$ and $\bK_{+}=\Jker\cap \Ad(t^{\l})G[t]$ are equal to $\J\cap \Ad(t^{\l})N[t]$ (where $N$ is the unipotent radical of $B$).  We also have $\Jker\bs \Jker  \t^{\l}\Jker=\bK^{\psi}_{-}\bs \Jker$, where $ \bK^{\psi}_{-}=\Jker\cap \Ad(t^{\l})\Jker$. Therefore \eqref{left2} can be identified with $\bK_{-}^{\psi}\bs \Jker/\bK_{+}$. Finally we observe that the multiplication map $\bK^{\psi}_{-}\times \bK_{+}\to \Jker$ is an isomorphism, which proves that \eqref{left2} is isomorphic to a point.
\end{proof}

\begin{lemma}\label{l:Css free over Hinf}
\begin{enumerate}
\item The category ${}^{T}\cC^{ss}$ is a semisimple abelian category with simple objects $\{\dc_{\l}\}_{\l\in \xcoch(T)}$. 
\item As a $\Hinfhs$-module category,  ${}^{T}\cC^{ss}$ is free of rank one.
\end{enumerate}
\end{lemma}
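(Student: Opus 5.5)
The plan is to reduce everything to computing $\Hom$ and $\Ext^1$ spaces between the objects $\dc_\l$, using the projection ${}^T\pi_*$ to ${}^T\DG$ and the results already proved there.

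\textbf{Step 1: reduce to the pairing between $\dc_\l$ and $\dc_0$.} By definition, $\dc_\l={}^T\D^T_{-\l}\star_\infty\dc_0$. By Lemma \ref{l:texact inf} and Proposition \ref{p:Hinf equiv}, convolution with ${}^T\D^T_{\mu}$ is a t-exact autoequivalence of ${}^T\wt\cD_\psi$, with inverse given by convolution with ${}^T\D^T_{-\mu}$. Hence
\begin{equation*}
\RHom(\dc_\l,\dc_\mu)\cong \RHom(\dc_{\l-\mu},\dc_0).
\end{equation*}
So it suffices to show that $\Ext^i(\dc_\nu,\dc_0)=0$ for $i\le 1$ whenever $\nu\ne0$, and that $\End(\dc_0)=E$.

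\textbf{Step 2: compute using the open stratum.} Recall $\dc_0={}^T\b^{\bI_0}_{0,!}\d$, and ${}^T\ov\b^{\bI_0}_0$ is the open embedding of $(\Jker T)\bs\J I_{\AA^1}/I_{\AA^1}$. In the paper's notation this should coincide with ${}^T\b^{\bI_0}_{0,*}\d$ up to irrelevant loci. Adjunction then gives
\begin{equation*}
\RHom(\dc_\nu,\dc_0)\cong \RHom_{\Kir(\pt/T\times\Ga)}\bigl(({}^T\ov\b^{\bI_0}_0)^*\dc_\nu,\d\bigr).
\end{equation*}
Alternatively, I would try to avoid the $!=*$ issue by mapping $\dc_0$ outward. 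Since ${}^T\ov\b^{\bI_0}_0$ factors through the open stratum, which maps isomorphically to the open stratum of $\Bun_G(\Jker T)$, one can use $({}^T\pi^*,{}^T\pi_*)$-type adjunctions together with Lemma \ref{l:Tpi dc to d}:
\begin{equation*}
\RHom(\dc_0,\dc_\nu)\cong \RHom({}^T\d_0,{}^T\pi_*\dc_\nu)=\RHom({}^T\d_0,{}^T\d_\nu).
\end{equation*}
By Corollary \ref{c:TDGr clean}, this vanishes for $\nu\ne0$ and equals $H^*_T(\pt)$ for $\nu=0$.

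\textbf{Step 3: extract the degree-$0$ and degree-$1$ parts.} In ${}^T\DG$ the relevant degrees are $H^0_T=E$ and $H^1_T=0$. Applying convolution by ${}^T\D^T$ in both directions then gives vanishing of $\Hom$ and $\Ext^1$ between $\dc_\l$ and $\dc_\mu$ for $\l\ne\mu$, and $\End(\dc_\l)=E$, $\Ext^1(\dc_\l,\dc_\l)=0$ inside the heart. Consequently, finite direct sums of the $\dc_\l$ form a semisimple abelian category (closed under kernels and cokernels, since morphisms are diagonal scalars) with simple objects $\dc_\l$, pairwise nonisomorphic. This proves (1).

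\textbf{Step 4: freeness, part (2).} Part (2) follows by comparing with Corollary \ref{c:TDGr clean}(4). The action functor $\Hinfhs\to{}^T\cC^{ss}$, $\cK\mapsto\cK\star_\infty\dc_0$, sends ${}^T\D^T_{-\l}\mapsto\dc_\l$. It is a bijection on simple objects, and both sides are semisimple with one-dimensional endomorphisms of simples. Hence it is an equivalence, so ${}^T\cC^{ss}$ is free of rank one with generator $\dc_0$.

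\textbf{Main obstacle.} The delicate step is Step 2. One must justify the adjunction ${}^T\pi^*\dashv{}^T\pi_*$ with $\dc_0$ in the image of ${}^T\pi^*$, i.e.\ ${}^T\pi^*{}^T\d_0\cong\dc_0$. This holds because the open stratum pulls back isomorphically and the remaining part of the fiber of $\pi$ over it is irrelevant; I would verify this via Lemma \ref{Relv pts Fl} and Lemma \ref{l:remove irr}. I would also need to check that the $T$-equivariant Hom groups give the correct $\Ext$ groups in the heart, rather than extra $H^2_T$ classes, which only appear in degree $\ge2$.
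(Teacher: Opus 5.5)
Your Step~1 and Step~4 are fine; Step~4 is exactly the paper's argument for (2). Step~2, however, rests on a false geometric identification. The stratum ${}^T\ov\b^{\bI_0}_0$, namely $(\Jker T)\bs\J I_{\AA^1}/I_{\AA^1}\cong\pt/T\times\Ga$, is not open.

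Since $\J\cap G[t]=1$, the preimage under ${}^T\pi$ of the open trivial-bundle locus of $\Bun_G(\Jker T)$ is $(\Jker T)\bs\J G[t]/I_{\AA^1}\cong T\bs(G/B)\times\Ga$. Your stratum is the closed substack cut out by the single $T$-fixed point $B/B\in G/B$. The rest of that fiber is not irrelevant: by Lemma~\ref{Relv pts Fl}, all of $\cH_\psi(0)=\cB$ is relevant.

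Hence ${}^T\pi^*{}^T\d_0$ is the equivariant version of $\cF_0=\pi^*\d_0$, the constant sheaf on the whole $G/B$. It is not $\dc_0$, which is a skyscraper at $B/B$. Already ${}^T\pi_*$ separates them: ${}^T\pi_*\dc_0\cong{}^T\d_0$ by Lemma~\ref{l:Tpi dc to d}, while ${}^T\pi_*{}^T\pi^*{}^T\d_0$ is ${}^T\d_0$ tensored with the cohomology of $\cB$. This is also why the paper remarks that ${}^T\cC$ is not contained in ${}^T\cD_\psi$.

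Lemma~\ref{l:Tpi dc to d} sits on the wrong side of the adjunction for computing $\RHom(\dc_0,-)$. So the formula $\RHom(\dc_0,\dc_\nu)\cong\RHom({}^T\d_0,{}^T\d_\nu)$ is unfounded, and with it the orthogonality and $\Ext^1$ claims of Step~3. Your first variant instead presupposes $\dc_0\cong{}^T\b^{\bI_0}_{0,*}\d$, a cleanness statement you do not prove.

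What does survive is $\RHom(\dc_0,\cG)\cong\RHom(\d,({}^T\ov\b^{\bI_0}_0)^!\cG)$. This gives $\End(\dc_0)=E$. It also gives vanishing for $\cG=\dc_\nu$ with $\nu$ dominant and nonzero: by Lemma~\ref{l:dcl dom}, that object is supported on the closure of a stratum over a nontrivial bundle, hence off the open trivial-bundle locus. For non-dominant $\nu$ there is no such support argument, and controlling $({}^T\ov\b^{\bI_0}_0)^!\dc_\nu$ is precisely the missing input.

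The paper avoids Ext computations altogether:
\begin{itemize}
\item It takes the simplicity of $\dc_0$ in ${}^T\wt\cD^\hs_\psi$ as its starting point.
\item It transports simplicity along the t-exact autoequivalences ${}^T\D^T_{-\l}\star_\infty(-)$.
\item It proves $\dc_\l\not\cong\dc_{\l'}$ by translating both indices into the dominant cone and comparing supports via Lemma~\ref{l:dcl dom}.
\item Orthogonality then follows from Schur's lemma.
\end{itemize}
Non-isomorphism can also be obtained from your ${}^T\pi_*$ idea: $\dc_\l\cong\dc_{\l'}$ would force ${}^T\d_\l\cong{}^T\d_{\l'}$, contradicting Corollary~\ref{c:TDGr clean}. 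That functor cannot, however, produce the Hom-vanishing you need. Finally, $\Ext^1$-vanishing plays no role for ${}^T\cC^{ss}$, which consists only of direct sums.
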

\begin{proof}
(1) By definition, $\dc_{0}$ is a simple object in ${}^{T}\wt\cD^{\hs}_{\psi}$. Since the action by ${}^{T}\D^{T}_{-\l}\star_{\infty}(-)$ is an auto-equivalence of ${}^{T}\wt\cD^{\hs}_{\psi}$ for any $\l\in \xcoch(T)$, we see that $\dc_{\l}={}^{T}\D^{T}_{-\l}\star_{\infty}\dc_{0}$ is also a simple object in ${}^{T}\wt\cD^{\hs}_{\psi}$. This shows that ${}^{T}\cC^{ss}$ is a semisimple abelian category. To show (1), we only need to show that for $\l\ne\l'$, $\dc_{\l}$ is not isomorphic to $\dc_{\l'}$. Translate by ${}^{T}\D^{T}_{-\mu}$ for $\mu$ sufficiently dominant, we may arrange that $\l+\mu$ and $\l'+\mu$ are both dominant. In this case, we have $\dc_{\l+\mu}\not\cong\dc_{\l'+\mu}$ by Lemma \ref{l:dcl dom}. Translating back by ${}^{T}\D^{T}_{\mu}$, we conclude that $\dc_{\l}\not\cong\dc_{\l'}$.

(2) Since $\Hinfhs$ is semisimple with simple objects $\{{}^{T}\D^{T}_{\l}\}_{\l\in \xcoch(T)}$,  (2) now follows from (1) and the definition of $\dc_{\l}$.
\end{proof}

\begin{lemma}\label{l:Css equiv DGr}
The functor ${}^{T}\pi_{*}: {}^{T}\wt\cD_{\psi}\to {}^{T}\cD_{\psi, \bG}$ restricts to an equivalence of $\Hinfhs$-module categories
\begin{equation*}
{}^{T}\pi_{*}|_{{}^{T}\cC^{ss}}: {}^{T}\cC^{ss}\isom {}^{T}\DG^{\hs}.
\end{equation*}
\end{lemma}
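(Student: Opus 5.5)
We would prove Lemma \ref{l:Css equiv DGr} by comparing the two categories through their simple objects and the $\Hinfhs$-module structures already established. By Lemma \ref{l:Css free over Hinf}, ${}^{T}\cC^{ss}$ is semisimple with simples $\{\dc_{\l}\}_{\l\in\xcoch(T)}$ and is free of rank one over $\Hinfhs$. By Corollary \ref{c:TDGr clean}(4), ${}^{T}\DG^{\hs}$ is likewise free of rank one over $\Hinfhs$, with generator ${}^{T}\d_{0}$ (or any ${}^{T}\d_\l$). Both are therefore identified with $\Rep(\dT)\cong\Hinfhs$. It then suffices to show three things: ${}^{T}\pi_{*}$ is a functor of $\Hinfhs$-module categories, it lands in the heart on ${}^{T}\cC^{ss}$, and it sends the generator $\dc_{0}$ to the generator ${}^{T}\d_{0}$.

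First, ${}^{T}\pi_{*}$ commutes with the $\eqHinf$-action at $\infty$. The projection $\pi:\Bun_G(\Jker T,\bI_0)\to\Bun_G(\Jker T)$ only changes level structure at $0$, while the convolution diagram defining $\star_\infty$ only modifies bundles at $\infty$. Base change then gives canonical isomorphisms ${}^{T}\pi_{*}(\cK\star_\infty\cF)\cong\cK\star_\infty{}^{T}\pi_{*}\cF$. These are compatible with the monoidal structure of $\eqHinf$, and they pass to Kirillov quotients because $\pi$ is $\Aff$-equivariant. Here we use that the $\eqHinf$ action is defined via the integral-transform formalism of \S\ref{sss:int trans}, so proper/smooth base change for the relevant correspondences suffices. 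Since $\pi$ is proper, $\pi_*=\pi_!$, which makes the base change clean.

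Second, Lemma \ref{l:Tpi dc to d} gives ${}^{T}\pi_{*}\dc_\l\cong{}^{T}\d_\l$ for every $\l$. Each ${}^{T}\d_\l$ lies in ${}^{T}\DG^{\hs}$, and ${}^{T}\cC^{ss}$ consists of finite direct sums of the $\dc_\l$, so ${}^{T}\pi_{*}$ restricts to an additive functor ${}^{T}\cC^{ss}\to{}^{T}\DG^{\hs}$. This functor is a map of $\Hinfhs$-modules taking the free generator $\dc_0$ to the free generator ${}^{T}\d_0$. A module functor between free rank-one module categories sending generator to generator is an equivalence. Concretely, it is the composite $\Hinfhs\xr{(-)\star_\infty\dc_0}{}^{T}\cC^{ss}\xr{{}^{T}\pi_*}{}^{T}\DG^{\hs}$ agreeing with $(-)\star_\infty{}^{T}\d_0$, and both of these are equivalences.

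The main obstacle is the first step: making the commutation isomorphism rigorous on the Kirillov level with its compatibility data, which is needed for a genuine module functor. On the heart, however, we only need something weaker, because Hom-spaces between the simples are one-dimensional or zero. Indeed, ${}^{T}\DG^{\hs}$ is semisimple with pairwise nonisomorphic simples ${}^{T}\d_\l$ (Corollary \ref{c:TDGr clean}), and the same holds for ${}^{T}\cC^{ss}$. So ${}^{T}\pi_*$ is already fully faithful and essentially surjective on objects once Lemma \ref{l:Tpi dc to d} is known and ${}^{T}\pi_*$ is nonzero on Hom between each $\dc_\l$ and itself, which holds because it is nonzero on identities. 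The module-structure compatibility then only needs checking on simples, where every automorphism is a scalar. We therefore fix the scalars by matching the isomorphisms of Corollary \ref{c:Teq act} with the definition $\dc_\l={}^{T}\D^{T}_{-\l}\star_\infty\dc_0$.
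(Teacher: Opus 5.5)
Your proposal is correct and follows the paper's argument: ${}^{T}\pi_{*}$ is $\eqHinf$-equivariant (the paper calls this clear, and your base-change justification is the right one), Lemma~\ref{l:Tpi dc to d} sends the simples $\dc_\l$ to the simples ${}^{T}\d_\l$, and freeness of rank one on both sides (Lemma~\ref{l:Css free over Hinf} and Corollary~\ref{c:TDGr clean}(4)) gives the equivalence. Your last paragraph, about fixing scalars by hand, is unnecessary, because the module-functor structure already comes from the geometric commutation isomorphisms in your first step.
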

\begin{proof} The functor ${}^{T}\pi_{*}$ is clearly equivariant under $\eqHinf$. By Lemma \ref{l:Tpi dc to d}, ${}^{T}\pi_{*}$ sends simple objects to simple objects. By Lemma \ref{l:Css free over Hinf} and Corollary \ref{c:TDGr clean}(4), ${}^{T}\pi_{*}$ induces a bijection between isomorphisms classes of simple objects (both are indexed by $\xcoch(T)$). The statement follows.
\end{proof}

\begin{lemma}\label{l:filC}
	\begin{enumerate}
	    \item Each $\calG\in {}^{T}\cC$ has a unique filtration $F_{\le\l}\calG$ (by subobjects in ${}^{T}\cC$) indexed by the poset $(\xcoch(T)), \le)$, such that the associated graded pieces $\gr_{\l}\cG=F_{\le\l}\calG/F_{<\l}\calG$ is a direct sum of copies of $\dc_{\l}$. 
        \item All morphisms in ${}^T\cC$ preserve the filtration above. Passing to the associated graded gives a functor
	$$\gr:{}^{T}\cC\rightarrow {}^{T}\cC^{ss}.$$
        \item For any $\l,\mu\in \xcoch(T)$ and $\cG\in {}^T\cC$, we have
        \begin{equation*}
            F_{\le\l}(^T\D^T_{\mu}\star_\infty \cG)=^T\D^T_{\mu}\star_\infty F_{\le \l+\mu}\cG
        \end{equation*}
        as subobjects of $^T\D^T_{-\mu}\star_\infty \cG$.
	\end{enumerate}
\end{lemma}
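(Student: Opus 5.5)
The whole lemma should follow from one $\Ext$-vanishing statement between the simple objects of ${}^T\cC$. The claim is
\begin{equation*}
\Ext^1_{{}^T\wt\cD^{\hs}_\psi}(\dc_\l,\dc_\mu)=0 \quad\mbox{and}\quad \Hom(\dc_\l,\dc_\mu)=0 \quad\mbox{unless } \mu\le\l \mbox{ (resp. } \mu=\l\mbox{ for Hom)},
\end{equation*}
or the same with the inequality reversed; we fix whichever direction makes $\dc_\l$ with larger $\l$ sit on top. Both the Hom and the $\Ext^1$ groups are computed in the abelian category ${}^T\wt\cD^{\hs}_\psi$.

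To prove the vanishing, we translate by ${}^T\D^T_{-\nu}$ for $\nu$ sufficiently dominant. By Lemma \ref{l:texact inf} and Proposition \ref{p:Hinf equiv} this translation is a t-exact autoequivalence, so it preserves these Ext groups. It also shifts indices by $\nu$, so we may assume $\l$ and $\mu$ are both dominant. Then Lemma \ref{l:dcl dom} identifies $\dc_\l$ and $\dc_\mu$ as $!$-extensions ${}^T\b^{\bI_0}_{\l,!}\d\j{-d_\l}$ from the strata $(\Jker T)\bs\J t^\l I_{\AA^1}/I_{\AA^1}$. The relevant Ext groups are then $\RHom$ from a $!$-extension, which by adjunction is a $!$-restriction of the other object to the first stratum. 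This vanishes unless the stratum of $\l$ lies in the closure of the stratum of $\mu$.

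The key geometric input is therefore a closure relation: for dominant $\l,\mu$, the $\J$-orbit of $t^\l$ in $G\lr{\t}/I_{\AA^1}$ meets the closure of the orbit of $t^\mu$ only if $\l\le\mu$ in the dominance order. This is analogous to the Cartan or semi-infinite orbit closures, via the thick Grassmannian picture of \S\ref{sss:thick Gr}. I expect this orbit-closure statement, together with pinning the correct direction and the behaviour of the filtration under the shift by dominant $\nu$, to be the main obstacle. I would try to derive it from the triangular decomposition Lemma \ref{l:tri decomp} and the $\Gm(-\nu)$ contraction argument in the proof of Theorem \ref{th:MG equiv}.

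Given the vanishing, parts (1) and (2) are formal. Every object of ${}^T\cC$ is a finite extension of the $\dc_\l$. Using the vanishing of $\Ext^1$ in the wrong direction, we reorder the extensions so that constituents with smaller index come first, and define $F_{\le\l}\cG$ as the largest subobject all of whose constituents have index $\le\l$. Uniqueness of the filtration and functoriality of morphisms both follow from $\Hom(\dc_\l,\dc_\mu)=0$ for $\l\ne\mu$, together with the one-sided $\Ext^1$-vanishing, by the standard highest-weight-category argument. This gives the functor $\gr$, which lands in ${}^T\cC^{ss}$ by Lemma \ref{l:Css free over Hinf}. Part (3) follows from uniqueness: ${}^T\D^T_\mu\star_\infty(-)$ is a t-exact autoequivalence sending $\dc_{\l+\mu}$ to $\dc_\l$, so it transports the filtration of $\cG$ to a filtration with the required graded pieces, and uniqueness forces the equality of subobjects.
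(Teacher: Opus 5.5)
Your overall strategy is sound and uses the same geometric inputs as the paper: translate by ${}^T\D^T_{-\nu}$ to make everything dominant, realize $\dc_\l$ as a $!$-extension via Lemma~\ref{l:dcl dom}, and get vanishing from supports. The paper builds the filtration directly as $F_{\le\l}\cG=j^{\le[\l]}_!j^{\le[\l]*}\cG$ for the open union $\Bun_G^{\le[\l]}$ of bundle-type strata. You instead take the largest subobject with constituents of index $\le\l$, which makes uniqueness and functoriality pure Jordan--H\"older. However, the geometric input is stated in the wrong direction.

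The stratum $S_\l=(\Jker T)\bs\J t^\l I_{\AA^1}/I_{\AA^1}$ lies over the point $\Bun_G^{[\l]}$ of $\Bun_G(\PP^1)$, the double coset $\bG_\infty t^\l G[t]$. There $\Bun_G^{[\l']}\subset\ov{\Bun_G^{[\l]}}$ iff $\l\le\l'$, since more unstable bundles lie in the closure. Hence $S_\l$ meets $\ov{S_\mu}$ only if $\l\ge\mu$, not $\l\le\mu$ as you wrote. This gives $\RHom(\dc_\l,\dc_\mu)=0$ unless $\l\ge\mu$. So nonsplit extensions $0\to\dc_\mu\to X\to\dc_\l\to 0$ occur only for $\l>\mu$, which is exactly what lets smaller indices sit as \emph{subobjects}. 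Your orbit-closure claim would produce the opposite ordering, and the lemma as stated would then fail. So you cannot ``fix whichever direction''; the statement pins it. No triangular decomposition or contraction argument is needed, only the standard closure order on $\Bun_G(\PP^1)$, as in the paper.

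Second, the support argument says nothing when $\l=\mu$, and your displayed vanishing explicitly allows $\Ext^1(\dc_\l,\dc_\l)\neq 0$. The abstract argument needs this group to vanish. A nonsplit self-extension of $\dc_\l$ would lie in ${}^T\cC$ and admit no filtration whose graded pieces are direct sums of $\dc_\l$. In your construction, maximality of $F_{<\l}\cG$ only shows that $F_{\le\l}\cG/F_{<\l}\cG$ has socle a sum of $\dc_\l$'s. Semisimplicity requires excluding nonzero $\Ext^1(\dc_a,\dc_\l)$ for all $a\le\l$, including $a=\l$.

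This is easy to supply. Since ${}^T\ov\b^{\bI_0}_\l$ is a locally closed embedding, $({}^T\ov\b^{\bI_0}_\l)^!({}^T\ov\b^{\bI_0}_\l)_!\cong\id$. So $\RHom(\dc_\l,\dc_\l)$ is $\RHom(\pr_\Ga^*\d,\pr_\Ga^*\d)$ in $\Kir((\pt/\wt A_\l)\times\Ga)$. That is $\upH^*(\pt/\wt A_\l)\cong\upH^*(\pt/T)$, since $\wt A_\l$ is $T$ times a unipotent group, and it is concentrated in even degrees. With the direction corrected and this added, your argument goes through, and your part (3) via uniqueness is the same as the paper's.
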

\begin{proof}
	Existence part of (1). We call a filtration $F_{\le \l}\cG$ on $\cG\in {}^{T}\cC$ indexed by the poset $(\xcoch(T), \le )$ {\em admissible} if it satisfies the condition specified in  the lemma. We first remark that $F_{\le \l}\cG$ is a filtration of $\cG$ satisfying the required condition, then for any $\mu\in \xcoch(T)^{+}$, $F_{\le \l}({}^{T}\D^{T}_{\mu}\star_{\infty}\cG):={}^{T}\D^{T}_{\mu}\star_{\infty}F_{\le \l+\mu}\cG$ is an admissible filtration for ${}^{T}\D^{T}_{\mu}\star_{\infty}\cG$.
		
	Now we construct an admissible filtration on any $\cG\in {}^{T}\cC$. By the  definition of ${}^{T}\cC$, each object $\calG\in {}^{T}\cC$ has a finite filtration with associated graded in $\{\dc_{\l}\}$. We first consider the case where all  the associated graded of $\calG$ are $\dc_{\l}$ for $\l\in\xcoch(T)^{+}$. We will define a filtration $F_{\le \l}\cG$ indexed by $\l\in \xcoch(T)^{+}$ by truncation of the support as follows. 
	
	We have a stratification $\Bun_{G}=\cup_{\l\in\xcoch(T)^{+}}\Bun_{G}^{[\l]}$ where  $\Bun_{G}^{[\l]}$ has only one point corresponding to the double coset  $\bG_{\infty} t^{\l}G[t]$.  Note that for $\l,\l'\in \xcoch(T)^{+}$, $\Bun^{[\l']}_{G}$ is in the closure of $\Bun_{G}^{[\l]}$ if and only if $\l\le \l'$. In particular, $\Bun_{G}^{\le[\l]}:=\cup_{\l'\le \l}\Bun_{G}^{[\l']}$ is open in $\Bun_{G}$. Let $j^{\le [\l]}: \Bun_{G}^{\le[\l]}(\J T, \bI_{0})\incl \Bun_{G}(\J T, \bI_{0})$ be the preimage of $\Bun_{G}^{\le[\l]}$. Let $j^{[\l]}: \Bun_{G}^{[\l]}(\J T, \bI_{0})\incl \Bun_{G}(\J T, \bI_{0})$ be the preimage of $\Bun_{G}^{[\l]}$. Define $F_{\le \l}\cG=j^{\le [\l]}_{!}j^{\le [\l],*}\cG$. The associated graded $F_{\le \l}\cG/F_{<\le \l}\cG$ is then isomorphic to $j^{[\l]}_{!}j^{[\l]*}\cG$, which is isomorphic to ${}^{T}\b_{\l,!}^{\bI_{0}!}{}^{T}\b_{\l}^{\bI_{0}*}\cG$ because $\Bun_{G}^{\l}(\J T,\bI_{0})$ is the only relevant stratum in $\Bun_{G}^{[\l]}(\J T, \bI_{0})$ in the support of $\cG$. By Lemma \ref{l:dcl dom}, $\dc_{\l}\cong{}^T\b^{\bI_{0}}_{\l,!}\d\j{-d_\l}$, therefore $F_{\le \l}\cG/F_{<\l}\cG$ is isomorphic to a direct sum of copies of $\dc_{\l}$. This gives an admissible filtration on $\cG$.
	
	For general $\cG\in {}^{T}\cC$, by convolving with some ${}^T\D_{-\mu}^T$ where $\mu\in\xcoch(T)^{+}$ is dominant enough, we can arrange that $\cG'={}^T\D_{-\mu}^T\star_{\infty}\cG$ is a successive extension of $\dc_{\l}$ for $\l\in \xcoch(T)^{+}$. Let $F_{\le \l}\cG'$ be the filtration constructed in the previous paragraph. Then $F_{\le \l}\cG:={}^T\D_{\mu}^T\star_{\infty}F _{\le \l+\mu}\cG'$ is an admissible filtration on $\cG$, by the remark in the first paragraph.	
	
	(2) We need to show that if  $\cG_{1}, \cG_{2}\in {}^{T}\cC$ are equipped with admissible filtrations $F_{\le \l}\cG_{1}$ and $F_{\le \l}\cG_{2}$, and $\ph: \cG_{1}\to \cG_{2} $ is a morphism in ${}^{T}\cC$, then induces a canonical map $\ph_{\le\l}: F_{\le \l}\cG_{1}\to F_{\le\l }\cG_{2}$ compatible with $\ph$ and compatible with the filtrations in the obvious sense.

	By convolving both $\cG_{1}$ and $\cG_{2}$ with ${}^{T}\D^{T}_{-\mu}$ for sufficiently dominant $\mu\in \xcoch(T)^{+}$, we may assume that $ \cG_{1}, \cG_{2}$ are both successive extensions of $\{\dc_{\l}\}_{\l\in \xcoch(T)^{+}}$. Let $\l\in \xcoch(T)^{+}$. By Lemma \ref{l:dcl dom}, $F_{\le \l}\cG_{1}$ is the $!$-extension from the open substack $\Bun^{\le[\l]}_{G}(\J T, \bI_{0})$, and $\Cone(F_{\le \l}\cG_{2}\to \cG_{2})$ is supported on the closed complement of $\Bun^{\le[\l]}_{G}(\J T, \bI_{0})$. Therefore $\RHom(F_{\le \l}\cG_{1}, \Cone(F_{\le \l}\cG_{2}\to \cG_{2}))=0$. This implies that $\ph|_{F_{\le \l}\cG_{1}}$ has a canonical factorization as $ F_{\le\l} \cG_{1}\xr{\ph_{\le \l}}F_{\le \l}\cG_{2}\to \cG_{2}$.

	The uniqueness of the admissible filtration on $\cG$ in (1) follows by applying the functoriality in (2) to the identity map of $\cG$.

    It remains to show (3). Let $\cG'={}^T\D^T_{-\mu}\star_\infty\cG$. Define another filtration $F'$ on $\cG'$, also indexed by $(\xcoch(T), \le)$, by
    \begin{equation*}
        F'_{\le \l}\cG':={}^T\D_{\mu}\star_\infty F_{\l+\mu}\cG.
    \end{equation*}
	Note that the right side is a subobject of $\cG'={}^T\D_{\mu}\star_\infty\cG$ because convolution with ${}^T\D_{\mu}\star_\infty$ is t-exact. The associated graded of the filtration $F'$ are $\gr'_{\l}\cG'\cong {}^T\D_{\mu}\star_\infty \gr_{\l+\mu}\cG$ which is isomorphic to copies of ${}^T\D_{\mu}\star_\infty\dc_{\l+\mu}\cong \dc_{\l}$. Therefore $F'$ is also an admissible filtration on $\cG'$. By the uniqueness of the admissible filtration, we have $F'_{\le \l}\cG'=F_{\le \l}\cG'$, which proves (3).
\end{proof}

\sss{Wakimoto sheaves} 

For $\l\in \xcoch(T)$, denote by $J_{\l}\in\cH_{0}$ the Wakimoto sheaf defined in \cite[Corollary 1]{AB}. Recall that for $\l\in \xcoch(T)^{+}$, the dominant cone, $J_{-\l}\cong j_{-\l!}E\j{\ell(\l)}$, where $j_{-\l}: \bI t^{-\l}\bI/\bI\incl \Fl$ is the embedding. 

Denote by $\Wak\subset \cH_{0}$ the full subcategory whose objects are finite successive extensions of $\{J_{\l}\}_{\l\in\xcoch(T)}$. Further denote by $\Wak^{ss}$ the full subcategory of $\cH_{0}$ consisting of finite direct sums of  $\{J_{\l}\}_{\l\in \xcoch(T)}$. By \cite[Corollary 1]{AB},  both $\Wak$ and $\Wak^{ss}$ are monoidal subcategories of $\cH_{0}$. Moreover, we have a canonical monoidal equivalence
\begin{equation}\label{WakRepT}
\Rep(\dT)\isom \Wak^{ss}
\end{equation}
sending the one-dimensional representation $\l\in \xch(\dT)=\xcoch(T)$ to $J_{\l}$.

 
Consider the partial order of $\xcoch(T)$ given by $\l\leq\mu$ if $\mu-\l$ a sum of positive coroots. By \cite[3.6.5]{AB}, each object $\calF\in \Wak$ has a {\em canonical} filtration $F_{\leq\l}\calF$ such that $F_{\leq\l}\calF/F_{<\l}\calF$ is a direct sum of copies of $J_{\l}$. Moreover, by \cite[Proposition 6(a)]{AB}, the associated graded functor
\begin{equation*}
\gr:\Wak\rightarrow \Wak^{ss}\cong \Rep(\dT)
\end{equation*}
is a monoidal functor.

\begin{lemma}\label{l:Wak act dom}
Let $\xcoch(T)^{++}$ be the set of coweights $\l$ such that $\j{\l,\a}\ge1$ for each positive root $\a$. Then for $\l\in\xcoch(T)^{++}$ and  $\mu\in\xcoch(T)^{+}$, we have a canonical isomorphism $\dc_{\l}\cong\dc_{\l+\mu}\star_{0}J_{-\mu}$. 
\end{lemma}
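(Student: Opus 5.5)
By Lemma \ref{l:dcl dom}, both $\l$ and $\l+\mu$ are dominant, so $\dc_{\l+\mu}\cong {}^T\b^{\bI_0}_{\l+\mu,!}\d\j{-d_{\l+\mu}}$ and $\dc_\l\cong {}^T\b^{\bI_0}_{\l,!}\d\j{-d_\l}$. For $\mu$ dominant we also have $J_{-\mu}\cong j_{-\mu!}E\j{\ell(\mu)}$. The plan is therefore a direct geometric computation of the convolution $({}^T\b^{\bI_0}_{\l+\mu,!}\d)\star_0 j_{-\mu!}E$, carried out in the uniformization $\Bun_G(\Jker T,\bI_0)\cong (G[\t]^\psi T)\bs\Fl$. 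In this picture the right $\cH_0$-action is ordinary right convolution on $\Fl$.

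The stratum $\Bun^{\l+\mu}_G(\Jker T,\bI_0)$ is the orbit $(G[\t]^\psi T)\bs G[\t]^1 t^{\l+\mu}\bI_0/\bI_0$. Convolving a $!$-extension from it with $j_{-\mu!}$ is the $!$-pushforward along the multiplication map
\begin{equation*}
m:\ G[\t]^1 t^{\l+\mu}\bI_0\times^{\bI_0}\bI_0 t^{-\mu}\bI_0/\bI_0\to \Fl.
\end{equation*}
The key claim is that $m$ is an isomorphism onto $G[\t]^1 t^{\l}\bI_0/\bI_0$, compatibly with the $\Ga=\J/\Jker$ and $\Grot$ actions. To prove it, I would factor $\bI_0 t^{-\mu}\bI_0/\bI_0=(\bI_0\cap t^{-\mu}\bI_0^- t^{\mu})t^{-\mu}\bI_0/\bI_0$, where $\bI_0^-$ denotes the opposite Iwahori. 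Since $\mu$ is dominant, this is a product of affine root groups $U_{\a+n}$ with $\a>0$ and $0\le n<\j{\mu,\a}$, together with the analogous groups for negative roots. Conjugating by $t^{\l+\mu}$ sends these to root groups $U_{\a+n-\j{\l+\mu,\a}}$. The condition $\l\in\xcoch(T)^{++}$, namely $\j{\l,\a}\ge 1$, forces all these exponents to be strictly negative in $t$, so the groups lie in $G[\t]^1$. Hence every point of the source is equal to $g\,t^{\l}\bI_0$ with $g\in G[\t]^1$, so $m$ lands in $G[\t]^1t^\l\bI_0/\bI_0$. Injectivity would follow by the same triangular-decomposition uniqueness argument as in Lemma \ref{l:tri decomp}.

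This is the main obstacle. One must check the root-by-root exponent bookkeeping, including the negative roots, where the strictness $\j{\l,\a}\ge1$ is exactly what is needed. One must also confirm that the two orbits have matching dimensions, so that the shifts $\j{-d_{\l+\mu}}$, $\j{\ell(\mu)}$ and $\j{-d_\l}$ combine correctly. Concretely, $d_{\l+\mu}-d_\l=\j{2\r,\mu}=\ell(t^{\mu})$ since both coweights are dominant, and this equals the fiber dimension $\ell(\mu)$ of the convolution.

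With the isomorphism in hand, base change identifies the convolution with the $!$-extension of $\pr_\Ga^*\d$ from $\Bun^{\l}_G(\Jker T,\bI_0)$. Twisting by $\j{-d_{\l+\mu}+\ell(\mu)}=\j{-d_\l}$ gives ${}^T\b^{\bI_0}_{\l,!}\d\j{-d_\l}\cong\dc_\l$. The integral-transform formalism of \S\ref{sss:int trans} ensures this geometric computation is valid in the Kirillov category.
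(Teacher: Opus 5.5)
Your proposal follows the paper's proof: the paper likewise reduces the lemma to the multiplication map $G[\tau]^{\psi}t^{\lambda+\mu}\mathbf{I}_{0}\times^{\mathbf{I}_{0}}\mathbf{I}_{0}t^{-\mu}\mathbf{I}_{0}/\mathbf{I}_{0}\to G[\tau]^{\psi}t^{\lambda}\mathbf{I}_{0}/\mathbf{I}_{0}$ being an isomorphism, and checks this by affine-root comparisons for homogeneity (where $\j{\lambda,\alpha}\ge 1$ enters, as you say) and for equality of stabilizers. You should correct your root bookkeeping, though. With $\mathbf{I}_0$ the preimage of $B$, the cell $\mathbf{I}_{0}t^{-\mu}\mathbf{I}_{0}/\mathbf{I}_{0}$ has dimension $\j{2\rho,\mu}$ and involves only the groups $U_{-\gamma}(t^{n})$ with $\gamma>0$ and $1\le n\le \j{\mu,\gamma}$; there is no second family of the opposite sign, which would both double the dimension and violate your exponent bound. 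Conjugation by $t^{\lambda+\mu}$ moves these groups to $t$-exponent $n-\j{\lambda+\mu,\gamma}\le -\j{\lambda,\gamma}\le -1$. Injectivity is precisely the stabilizer containment $G[\tau]^{1}\cap \Ad(t^{\lambda})\mathbf{I}_0\subset \Ad(t^{\lambda+\mu})\mathbf{I}_0$ (the paper's condition (b)), and this is better verified root by root than by appeal to Lemma \ref{l:tri decomp}.
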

\begin{proof} This follows from the fact that the multiplication  map gives an isomorphism
\begin{equation}\label{lm isom}
	G[\t]^{\psi}  t^{\l+\mu}\bI_{0}\times^{\bI_{0}}\bI_{0}t^{-\mu} \bI_{0}/\bI_{0}\cong G[\t]^{\psi}  t^{\l}\bI_{0}/\bI_{0}
\end{equation}
for $\l\in \xcoch(T)^{++}$ and $\mu\in \xcoch(T)^{+}$. Below we use abbreviated notation: for a subgroup $\bJ\subset G\lr{t}$ and $\nu\in \xcoch(T)$, we write 
\begin{equation*}
    {}^{\nu}\bJ:=\Ad(t^\nu)\bJ.
\end{equation*}
To prove \eqref{lm isom}, observe that its left side is $G[\t]^\psi$-equivariantly isomorphic to 
\begin{equation}\label{lm1}
G[\t]^{\psi}\times^{G[\t]^{\psi}\cap {}^{\l+\mu}\bI_0}({}^{\l+\mu}\bI_0)/({}^{\l+\mu}\bI_0\cap {}^\l\bI_0).
\end{equation}
To show \eqref{lm isom}, we only need to two things
\begin{enumerate}
    \item[(a)] \eqref{lm1} is homogeneous under $G[\t]^\psi$;
    \item[(b)] the stabilizer of the identity coset in ${}^{\l+\mu}\bI_0/({}^{\l+\mu}\bI_0\cap {}^\l\bI_0)$ in \eqref{lm1} is $G[\t]^\psi\cap {}^{\l+\mu}\bI_0
\cap {}^\l\bI_0$. 
\end{enumerate} 
Now (a) holds if and only if
\begin{equation*}
    {}^{\l+\mu}\bI_0=(G[\t]^{\psi}\cap {}^{\l+\mu}\bI_0)( {}^{\l+\mu}\bI_0\cap {}^\l\bI_0)
\end{equation*}
Conjugating by $t^{-\l-\mu}$ the above becomes
\begin{equation}\label{Jtrans}
    \bI_0=({}^{-\l-\mu}G[\t]^{\psi}\cap \bI_0)( \bI_0\cap {}^{-\mu}\bI_0).
\end{equation}
Let $x$ be the barycenter of the fundamental alcove of the building of $G\lr{t}$ corresponding to $\bI_0$.  Comparing affine roots of both sides of \eqref{Jtrans}, we see it is equivalent to that for any real affine roots $\a$ such that $\a(x)>0$, we have either $\a(x+\mu)>0$ or $\a(\l+\mu)<0$. In other words, to show (a), we only need to show that
\begin{eqnarray}\label{aff root sep 3 points}
    \mbox{No real affine root $\a$ satisfies $\a(x)>0, \a(x+\mu)< 0$ and $\a(\l+\mu)\ge0$.}
\end{eqnarray}
On the other hand, (b) is equivalent to 
\begin{equation*}
    G[\t]^\psi\cap {}^{\l+\mu}\bI_0
\cap {}^\l\bI_0=G[\t]^\psi\cap{}^\l\bI_0
\end{equation*}
or equivalently
\begin{equation*}
    {}^{-\l-\mu}G[\t]^\psi\cap{}^{-\mu}\bI_0\subset \bI_0.
\end{equation*}
Comparing affine roots on both sides, it is further equivalent to
\begin{equation}\label{aff root sep 3 points neg}
    \mbox{No real affine root $\a$ satisfies $\a(x)<0, \a(x+\mu)> 0$ and $\a(\l+\mu)<0$.}
\end{equation}
It remains to show \eqref{aff root sep 3 points} and \eqref{aff root sep 3 points neg} for $\l\in \xcoch(T)^{++}$ and $\mu\in \xcoch(T)^+$. The two statements are clearly equivalent, so we only need to show \eqref{aff root sep 3 points}. Write $\a=\ov\a+n$ as an affine function on the standard apartment, where $\ov\a$ is a finite root. The stated properties of $\a$ imply that
\begin{equation}\label{ovalp}
    \j{\mu,\ov\a}<0, \mbox{ and } \j{\l-x, \ov\a}>0.
\end{equation}
Since $\mu$ is dominant, we see $\ov\a<0$. Since $\l\in \xcoch(T)^{++}$, $\l-x$ still lies in the interior of the dominant cone, hence $\j{\l-x, \ov\a}< 0$, which  contradicts the second inequality in \eqref{ovalp}.  This proves \eqref{aff root sep 3 points}, which implies (a) and (b).
\end{proof}

\begin{prop}\label{Prop:wakvsHinfty}
\begin{enumerate}
\item The monoidal category $\Wak^{ss}$ acts on the category ${}^{T}\cC^{ss}$.

\item There are canonical isomorphisms
\begin{equation*}
    c_{\l,\mu}: \dc_{\l}\star_{0}J_{\mu}\cong\dc_{\l+\mu}
\end{equation*}
for all $\l,\mu\in \xcoch(T)$ satisfying the following compatibilities
\begin{enumerate}
    \item[(a)] For $\l, \mu,\mu'\in \xcoch(T)$, $c_{\l,\mu+\mu'}$ is the composition
    \begin{equation*}
        \dc_{\l}\star_{0}J_{\mu+\mu'}\cong \dc_{\l}\star_{0}J_{\mu}\star_0 J_{\mu'}\xr{c_{\l,\mu}\star\id(J_{\mu'})}\dc_{\l+\mu}\star_0 J_{\mu'}\xr{c_{\l+\mu, \mu'}}\dc_{\l+\mu+\mu'}.
    \end{equation*}
    \item[(b)] For $\l,\mu,\nu\in \xcoch(T)$, $c_{\nu+\l,\mu}$ is equal to the composition
    \begin{equation*}
        \dc_{\nu+\l}\star_{0}J_{\mu}={^T}\D^T_{-\nu}\star_\infty \dc_{\l}\star_0 J_{\mu} \xr{\id({^T}\D^T_{-\nu})\star_\infty c_{\l,\mu}}{}^T\D^T_{-\nu}\star_\infty \dc_{\l+\mu}=\dc_{\nu+\l+\mu}.
    \end{equation*}
\end{enumerate}
\item The $\Wak^{ss}$-module category ${}^{T}\cC^{ss}$ is free of rank one.
\end{enumerate}
\end{prop}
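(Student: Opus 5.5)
The strategy is to build the isomorphisms $c_{\l,\mu}$ first for a sufficiently dominant source and dominant antidominant Wakimoto, using Lemma \ref{l:Wak act dom}, and then transport them everywhere using the $\eqHinf$-action at $\infty$, which commutes with the $\cH_0$-action at $0$. Concretely, for $\l\in\xcoch(T)^{++}$ and $\mu\in\xcoch(T)^+$, Lemma \ref{l:Wak act dom} gives $\dc_{\l+\mu}\star_0 J_{-\mu}\cong \dc_\l$. Since $J_\mu$ is inverse to $J_{-\mu}$ in $\Wak^{ss}$ \cite{AB}, convolving this isomorphism with $J_\mu$ yields $\dc_\l\star_0 J_\mu\cong \dc_{\l+\mu}$ as well. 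For arbitrary $\l$ and $\mu$, write $\mu=\mu_1-\mu_2$ with $\mu_1,\mu_2$ dominant, and choose $\nu$ dominant enough that $\l-\nu$ and every intermediate weight lie in $\xcoch(T)^{++}$. Writing $\dc_\l={}^T\D^T_{\nu}\star_\infty \dc_{\l-\nu}$ by the definition $\dc_\l={}^T\D^T_{-\l}\star_\infty\dc_0$ together with Proposition \ref{p:Hinf equiv}, I define
\[
c_{\l,\mu}={}^T\D^T_{\nu}\star_\infty(\text{the composite of dominant-case isomorphisms for }J_{\mu_1}, J_{-\mu_2}).
\]

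Well-definedness and the compatibilities come next. In the dominant regime, (a) follows because the isomorphism \eqref{lm isom} is given by multiplication maps, and composing two such maps is again a multiplication map, compatible with $J_{-\mu}\star J_{-\mu'}\cong J_{-\mu-\mu'}$, the latter being also a multiplication/convolution isomorphism of standard objects. Independence of the choices of $\nu$ and of the decomposition $\mu=\mu_1-\mu_2$ then reduces to the same compatibility, together with the fact that convolution at $\infty$ with ${}^T\D^T_{\nu'}$ carries the dominant-case isomorphism for $\l$ to the one for $\l+\nu'$ when both lie in the dominant region. That last fact is itself a base-change statement, since the correspondences at $0$ and at $\infty$ commute. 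Property (b) is then built into the definition.

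There is a cheaper way to handle the scalar ambiguity. All objects $\dc_\l$ are simple (Lemma \ref{l:Css free over Hinf}), so any two candidate isomorphisms differ by a scalar in $E^\times$. It therefore suffices to show the scalars are $1$, which can be checked on stalks at a single point by comparing fundamental classes of the (point or affine-space) fibers, exactly as in Lemma \ref{l:eq conv asso}. I expect this cocycle/scalar bookkeeping to be the main obstacle. In particular, the argument needs the Wakimoto monoidal isomorphisms from \cite{AB} to be normalized compatibly with the stalk identifications.

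With the $c_{\l,\mu}$ in hand, part (1) follows. The functor $\dc_\l\mapsto\dc_\l\star_0(-)$ preserves ${}^T\cC^{ss}$ on the generators $J_\mu$ of the semisimple category $\Wak^{ss}$, and (a) gives the associativity constraint, so $\Wak^{ss}\cong\Rep(\dT)$ acts. For part (3), the action map $\Wak^{ss}\to{}^T\cC^{ss}$, $\cK\mapsto \dc_0\star_0\cK$, sends $J_\mu$ to $\dc_\mu$. This is a bijection between the simple objects of two semisimple categories, by Lemma \ref{l:Css free over Hinf}(1), and hence an equivalence, so ${}^T\cC^{ss}$ is free of rank one with generator $\dc_0$.
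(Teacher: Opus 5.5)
Your proposal is correct and follows essentially the paper's route: build $c_{\l,\mu}$ in the dominant range from Lemma \ref{l:Wak act dom}, extend it using invertibility of Wakimoto sheaves and translation by ${}^T\D^T_\nu\star_\infty$ (which commutes with the action at $0$), check the compatibilities only up to a scalar via stalks, and deduce (1) from (2) and (3) from the bijection on simple objects. There is one sign slip to fix. Since $\dc_\l={}^T\D^T_{-\l}\star_\infty\dc_0$, the correct transport identity is $\dc_\l={}^T\D^T_{\nu}\star_\infty\dc_{\l+\nu}$, so you need $\l+\nu$ (not $\l-\nu$) and the subsequent intermediate weights to lie in $\xcoch(T)^{++}$; likewise ${}^T\D^T_{\nu'}\star_\infty$ sends $\dc_\l$ to $\dc_{\l-\nu'}$, not to $\dc_{\l+\nu'}$.
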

\begin{proof}
	(1) will follow from the existence of the isomorphisms $c_{\l,\mu}$ in (2).
    
    (2) For $\mu\in \xcoch(T)^{-}$ (anti-dominant) and $\l+\mu\in \xcoch(T)^{++}$, we let $c_{\l,\mu}: \dc_{\l}\star_0 J_{\mu}\isom \dc_{\l+\mu}$ be the isomorphism constructed in Lemma \ref{l:Wak act dom}. The compatibility in (a) holds whenever $\mu,\mu'\in \xcoch(T)^-$ and $\l+\mu+\mu'\in \xcoch(T)^{++}$. Indeed, the equation to be checked holds up to a nonzero scalar in $\Qlbar$, and it is enough to check it for the maps between stalks at $t^{\l+\mu+\mu'}$, for which the equality follows from the isomorphism \eqref{lm isom} that gives the isomorphisms in Lemma \ref{l:Wak act dom}. Similarly, (b) holds whenever $\mu\in \xcoch(T)^{-}$, $\l+\mu\in \xcoch(T)^{++}$ and $\nu\in \xcoch(T)^+$, as can be checked on the stalks at $t^{\nu+\l+\mu}$.
    
    Now we define $c_{\l,\mu}$ for $\mu\in \xcoch(T)^{-}$ and general $\l$. Choose $\nu\in \xcoch(T)$ such that $\nu+\l+\mu\in \xcoch(T)^{++}$. Now $c_{\nu+\l,\mu}: \dc_{\nu+\l}\star_0 J_\mu\cong\dc_{\nu+\l+\mu}$ is already defined.  Acting on both sides by ${}^T\D_{\nu}^T$ we denote the resulting isomorphism $\dc_{\l}\star_0 J_{\mu}={}^T\D^T_\nu\star_\infty\dc_{\nu+\l}\star_{0}J_{\mu}\cong {}^T\D^T_\nu\star_\infty\dc_{\nu+\l+\mu}=\dc_{\l+\mu}$ by $c_{\l,\mu}$. The compatibility (b) for $c_{\l,\mu}$ in the previous paragraph shows that $c_{\l,\mu}$ thus defined is independent of the choice of $\nu$. Moreover, the $c_{\l,\mu}$ defined so far satisfies the compatibility (a) whenever the terms are defined. 
    
    Next we define $c_{\l,\mu}$ for general $\l,\mu\in \xcoch(T)$. Choose $\mu'\in\xcoch(T)^{-}$ such that $\mu+\mu'\in\xcoch(T)^{-}$. Now $c_{\l,\mu+\mu'}$ and $c_{\l+\mu, \mu'}$ are both defined in the previous paragraph, and they induce an isomorphism
    \begin{equation*}
        \dc_{\l}\star_0 J_{\mu}\star_0 J_{\mu'}\cong \dc_{\l}\star_0 J_{\mu+\mu'}\xr{c_{\l,\mu+\mu'}}\dc_{\l+\mu+\mu'}\xr{c_{\l+\mu, \mu'}^{-1}} \dc_{\l+\mu}\star_0 J_{\mu'}.
    \end{equation*}
    Applying $\star_0 J_{-\mu'}$ to both sides gives the desired isomorphism $c_{\l,\mu}$. The partial compatibility (a) shows that $c_{\l,\mu}$ is independent of the choice of $\mu'$. Finally one can show that the fully defined $\{c_{\l,\mu}\}$ satisfy the compatibilities (a) and (b), by reducing to the partial compatibilities (a) and (b) we checked in the beginning.  
    
	
	
    
    (3) The functor $\dc_{0}\star_{0}(-): \Wak^{ss}\to {}^{T}\cC^{ss}$ sends simple objects $J_{\l}$ of $\Wak^{ss}$ to simple objects $\dc_{\l}$. Since both $\Wak^{ss}$ and ${}^{T}\cC^{ss}$ are semisimple, this functor is an equivalence, proving that ${}^{T}\cC^{ss}$ is free of rank one as a $\Wak^{ss}$-module category.
    \end{proof}

\begin{cor}\label{Cor:wakvsHinfty} 
\begin{enumerate}
\item The actions of $\Hinfhs$ and  $\Wak^{ss}$ on ${}^{T}\cC^{ss}$ induce equivalences of monoidal categories
\begin{equation*}
a_{\Wak}^{ss}: \Wak^{ss}\isom\End_{\Hinfhs}({}^{T}\cC^{ss}).
\end{equation*}

\item Using the free generator $\dc_0$ to identify ${}^T\cC^{ss}$ with $\Hinfhs$ as a left $\Hinfhs$-module, we have a monoidal equivalence $\End_{\Hinfhs}({}^{T}\cC^{ss})\cong \Hinfhs$.  Under the equivalences \eqref{WakRepT} and \eqref{RepT Hinf}, there is a canonical isomorphism of monoidal functors making the following diagram commutative
\begin{equation*}
    \xymatrix{\Wak^{ss}\ar[d]^{\cong}\ar[r]^-{a_{\Wak}^{ss}} & \End_{\Hinfhs}({}^{T}\cC^{ss}) \ar[r]^-{\sim} & \Hinfhs\ar[d]^{\cong}\\
    \Rep(\dT) \ar[rr]^-{\inv} && \Rep(\dT)}
\end{equation*}
Here $\inv$ is the auto-equivalence of $\Rep(\dT)$ induced from the inversion on $\dT$, i.e., the functor that sends $E(\mu)$ to $E(-\mu)$.
\end{enumerate}
\end{cor}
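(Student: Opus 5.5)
The plan is to derive everything from Proposition \ref{Prop:wakvsHinfty} together with the freeness results already established. The two actions commute: $\Wak^{ss}$ acts on the right via $\star_0$, and $\Hinfhs$ acts on the left via $\star_\infty$, since these are modifications at different points. Hence the right $\Wak^{ss}$-action defines a monoidal functor
\begin{equation*}
a_{\Wak}^{ss}: \Wak^{ss}\to \End_{\Hinfhs}({}^{T}\cC^{ss}),
\end{equation*}
possibly with the monoidal order reversed. Since $\Rep(\dT)$ is symmetric and all the categories involved are equivalent to it, this reversal is harmless.

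By Lemma \ref{l:Css free over Hinf}(2), ${}^T\cC^{ss}$ is free of rank one over $\Hinfhs$ with generator $\dc_0$. Therefore $\End_{\Hinfhs}({}^T\cC^{ss})\cong \Hinfhs$, where $F$ corresponds to the unique $\cK$ with $F(\dc_0)\cong \cK\star_\infty\dc_0$. This is exactly as in \eqref{End DG}.

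The composite $\Wak^{ss}\to\Hinfhs$ sends $J_\mu$ to the $\cK$ with $\cK\star_\infty\dc_0\cong \dc_0\star_0 J_\mu$. By Proposition \ref{Prop:wakvsHinfty}(2) we have $\dc_0\star_0J_\mu\cong\dc_\mu$, and by definition $\dc_\mu={}^T\D^T_{-\mu}\star_\infty\dc_0$. So $J_\mu\mapsto {}^T\D^T_{-\mu}$, which under \eqref{WakRepT} and \eqref{RepT Hinf} is $E(\mu)\mapsto E(-\mu)$, i.e.\ $\inv$ on objects.

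Both sides are semisimple with simples indexed by $\xcoch(T)$, and the functor induces a bijection on simples. Hom spaces between simples are one-dimensional on the diagonal and zero off it. So the composite, and hence $a^{ss}_{\Wak}$, is an equivalence. This gives (1).

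For (2), one needs the isomorphism $\Phi_\mu: J_\mu\mapsto {}^T\D^T_{-\mu}$ to be compatible with the monoidal structures. That is, the square relating $J_\mu\star J_{\mu'}\cong J_{\mu+\mu'}$ (from \cite{AB}) and ${}^T\D^T_{-\mu}\star{}^T\D^T_{-\mu'}\cong{}^T\D^T_{-\mu-\mu'}$ (from Lemma \ref{l:eq conv asso}) must commute after applying to $\dc_0$. Writing the image of $J_\mu$ under $a_{\Wak}^{ss}$ as the endofunctor $\dc_0\star_0 J_\mu$, the identification uses $c_{0,\mu}$. The composite for $J_\mu\star J_{\mu'}$ is then
\begin{equation*}
\dc_0\star_0J_\mu\star_0J_{\mu'}\xr{c_{0,\mu}}\dc_\mu\star_0J_{\mu'}={}^T\D^T_{-\mu}\star_\infty\dc_0\star_0J_{\mu'}\xr{c_{0,\mu'}}{}^T\D^T_{-\mu}\star_\infty{}^T\D^T_{-\mu'}\star_\infty \dc_0 .
\end{equation*}
Compatibility (b) of Proposition \ref{Prop:wakvsHinfty} says the second arrow equals $c_{\mu,\mu'}$. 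Compatibility (a) then says the total composite equals $c_{0,\mu+\mu'}$, which is precisely the required commutativity. It remains to check that the equality ${}^T\D^T_{-\mu}\star_\infty{}^T\D^T_{-\mu'}\star_\infty\dc_0=\dc_{\mu+\mu'}$ implicit in the definition of $\dc_\l$ is the one induced by the isomorphism of Proposition \ref{p:Teq conv}.

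I expect this last bookkeeping point to be the main obstacle: matching the identifications $\dc_\l={}^T\D^T_{-\l}\star_\infty\dc_0$ with the convolution isomorphisms of ${}^T\D^T$'s. I would handle it by noting that any two such isomorphisms between simple objects differ by a scalar. One then checks equality of the scalars on stalks at a dominant point, using Lemma \ref{l:dcl dom} and the fundamental-class description from Lemma \ref{l:eq conv asso}.
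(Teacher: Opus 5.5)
Your proposal is correct and follows essentially the paper's route. Both arguments use three ingredients: that ${}^T\cC^{ss}$ is free of rank one over $\Hinfhs$ with generator $\dc_0$; the identification $a^{ss}_{\Wak}(J_\mu)\cong{}^T\D^T_{-\mu}$ through the isomorphisms $c_{\l,\mu}$ (equivalently $c_{0,\mu}$ on the generator, which extends by compatibility (b)); and monoidality from compatibilities (a) and (b) of Proposition \ref{Prop:wakvsHinfty}(2). The ``bookkeeping'' point you flag is not an extra obstacle, since the identifications ${}^T\D^T_{-\nu}\star_\infty\dc_\l=\dc_{\nu+\l}$ used in (b) are by construction the ones induced by Proposition \ref{p:Teq conv}, so no further stalk check is needed.
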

\begin{proof}
(1) The category ${}^{T}\cC^{ss}$ carries commuting left action by $\Hinfhs$ and right action by $\Wak^{ss}$, and it is free of rank one over both by Lemma \ref{l:Css free over Hinf} and Proposition \ref{Prop:wakvsHinfty}. 

(2) It suffices to give canonical isomorphisms $\g_\mu: a^{ss}_{\Wak}(J_\mu)\cong {}^T\D^{T}_{-\mu}$ for all $\mu\in \xcoch(T)$ compatible with convolutions. By definition, such an isomorphism $\g_\mu$ for an individual $\mu$ amounts to the same thing as a collection of isomorphisms $\dc_\l\star_0 J_\mu\cong \dc_{\l+\mu}$ for all $\l\in \xcoch(T)$ compatible with the left action by ${}^T\D^T_\nu$ for any $\nu\in\xcoch(T)$. Such a collection is given by $c_{\l,\mu}$, and the required compatibility boils down to Proposition \ref{Prop:wakvsHinfty}(2)(a). This defines $\g_\mu$ for each $\mu\in \xcoch(T)$. The fact that these $\g_\mu$ are compatible with convolutions follows from Proposition \ref{Prop:wakvsHinfty}(2)(b).

	
\end{proof}

Let $\End({}^{T}\cC; \gr)$ be the category of triples $(\Phi, \Phi_{\gr}, \a)$ where $\Phi:{}^{T}\cC\to {}^{T}\cC$ is an endo-functor of ${}^{T}\cC$, $\Phi_{\gr}: {}^{T}\cC^{ss}\to {}^{T}\cC^{ss}$ is an endo-functor of ${}^{T}\cC^{ss}$, and $\a: \gr\c\Phi\stackrel{\sim}{\Rightarrow} \Phi_{\gr}\c\gr$ is a natural isomorphism of functors making the following diagram commutative
\begin{equation*}
\xymatrix{{}^{T}\cC\ar[d]_{\gr}\ar[r]^-{\Phi} & {}^{T}\cC\ar[d]^{\gr}_{}="b"\\
{}^{T}\cC^{ss}\ar[r]_-{\Phi_{\gr}}^{}="a" & {}^{T}\cC^{ss} \ar@{=>} "b";"a" _-{\a}}
\end{equation*}
A morphism two such triples $(\Phi, \Phi_{\gr}, \a)\to (\Phi', \Phi'_{\gr}, \a')$ is a pair of natural transformations $\b: \Phi\Rightarrow\Phi'$, $\b_{\gr}: \Phi_{\gr}\Rightarrow\Phi'_{\gr}$ such that $\gr\c\b\in \End(\gr\c\Phi)$ and $\b_{\gr}\c\gr\in \End(\Phi_{\gr}\c\gr)$ are intertwined by the isomorphism $\a$. Then $\End({}^{T}\cC; \gr)$ has an obvious monoidal structure under composition of functors.

Let $\End_{\Hinfhs}({}^{T}\cC; \gr)\subset \End({}^{T}\cC; \gr)$ be monoidal subcategory with further compatibility data with the action of $\Hinfhs$ on ${}^{T}\cC$ and on ${}^{T}\cC^{ss}$ (any morphisms are required to be compatible with the $\Hinfhs$-action as well). We have forgetful functors
\begin{equation*}
\xymatrix{\End_{\Hinfhs}({}^{T}\cC) & 
\ar[l]_-{\om} \End_{\Hinfhs}({}^{T}\cC; \gr)\ar[r]^-{\om_{\gr}} &  \End_{\Hinfhs}({}^{T}\cC^{ss})}.
\end{equation*}

\begin{prop}
	The category $\Wak$ acts on ${}^{T}\cC$ compatibly with the action of $\Wak^{ss}$ on ${}^{T}\cC^{ss}$ via the canonical grading functors on both categories. In other words, the action functor $\Wak\to \End_{\Hinfhs}({}^{T}\cC)$ has a canonical lift to a monoidal functor
\begin{equation*}
a_{\Wak}: \Wak\to \End_{\Hinfhs}({}^{T}\cC;\gr).
\end{equation*}

\end{prop}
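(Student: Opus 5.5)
The plan is to show three things in turn: the right action of $\cH_0$ on ${}^{T}\wt\cD_{\psi}$ preserves ${}^{T}\cC$ when restricted to $\Wak$; it is compatible with the canonical filtrations of Lemma \ref{l:filC} and of \cite[3.6.5]{AB}; and the induced action on associated graded pieces is the action of $\Wak^{ss}$ from Proposition \ref{Prop:wakvsHinfty}. Since the $\cH_0$-action commutes with the $\Hinfhs$-action at $\infty$, compatibility with $\Hinfhs$ is automatic once we have an action on ${}^T\cC$. The monoidality data will come from the monoidal structure of the $\cH_0$-action together with the monoidality of $\gr:\Wak\to\Wak^{ss}$ (\cite[Proposition 6(a)]{AB}).

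\textbf{Step 1: $\Wak$ preserves ${}^T\cC$.} By Proposition \ref{Prop:wakvsHinfty}(2), $\dc_\l\star_0 J_\mu\cong\dc_{\l+\mu}$. Each object of $\Wak$ is a finite successive extension of the $J_\mu$, and each object of ${}^T\cC$ is a finite successive extension of the $\dc_\l$. Applying the triangulated functor $(-)\star_0(-)$ to these extensions, $\cG\star_0\cF$ is a successive extension of objects $\dc_{\l+\mu}$, all lying in the heart. Hence $\cG\star_0\cF$ lies in ${}^T\wt\cD_\psi^\hs$ and is a finite successive extension of the $\dc$'s, that is, it lies in ${}^T\cC$. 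This gives $a:\Wak\to\End_{\Hinfhs}({}^T\cC)$, which is monoidal because the $\cH_0$-action is.

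\textbf{Step 2: compatibility with the filtrations.} I expect this to be the main obstacle: we need a canonical identification of $F_{\le\nu}(\cG\star_0\cF)$ in terms of the filtrations on $\cG$ and $\cF$. The candidate is to define a filtration on $\cG\star_0\cF$ as the image of $\sum_{\l+\mu\le\nu}F_{\le\l}\cG\star_0F_{\le\mu}\cF$. Since the convolution is exact on these objects (Step 1), its graded pieces are $\bigoplus_{\l+\mu=\nu}\gr_\l\cG\star_0\gr_\mu\cF$. These are sums of copies of $\dc_\nu$, so the filtration is admissible, and the uniqueness in Lemma \ref{l:filC}(1) identifies it with the canonical one.

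To make the filtration well defined I would first reduce, by translating with ${}^T\D^T_{-\nu}$ via Lemma \ref{l:filC}(3), to the situation where all weights occurring in $\cG$ are in $\xcoch(T)^{++}$. There the filtration on $\cG$ is given by $!$-extension from the open substacks $\Bun_G^{\le[\l]}$, and these open truncations interact with the convolution via Lemma \ref{l:Wak act dom}. I would also have to check that the relevant Ext-vanishing holds, as in the proof of Lemma \ref{l:filC}(2). The resulting identification
\[\alpha:\gr(\cG\star_0\cF)\cong\gr\cG\star_0\gr\cF\]
uses the isomorphisms $c_{\l,\mu}$. Naturality of $\alpha$ in $\cF$ and $\cG$ follows from the functoriality in Lemma \ref{l:filC}(2) and in \cite{AB}.

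\textbf{Step 3: monoidal coherence.} The last task is to check that $\alpha$ is compatible with associativity, that is, with $J\star J'$ versus successive action. On graded pieces this reduces to the compatibility of $c_{\l,\mu}$ in Proposition \ref{Prop:wakvsHinfty}(2)(a) and the monoidal structure of $\gr$ on $\Wak$. Compatibility with the $\Hinfhs$-action reduces to Proposition \ref{Prop:wakvsHinfty}(2)(b) together with Lemma \ref{l:filC}(3). Since all the maps involved are determined by their values on simple objects in semisimple categories, these checks become scalar identities, which can be verified as in Proposition \ref{Prop:wakvsHinfty}.
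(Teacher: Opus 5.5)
Your proposal is essentially the paper's argument: identify the convolved sum filtration as an admissible filtration, invoke the uniqueness in Lemma \ref{l:filC}(1), and pass to associated graded. The $\Hinfhs$-compatibility then comes from Lemma \ref{l:filC}(3) and Proposition \ref{Prop:wakvsHinfty}(2).

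The paper streamlines your Step 2 by first proving $(F_{\le\l}\cG)\star_0J_\mu=F_{\le\l+\mu}(\cG\star_0J_\mu)$, which is immediate because the graded pieces of $F_{\le\l-\mu}\cG\star_0J_\mu$ are clear once $\star_0J_\mu$ is an exact autoequivalence, and only then deducing the sum formula for general $\cK\in\Wak$. Your proposed detour through $\xcoch(T)^{++}$, open truncations and Lemma \ref{l:Wak act dom} is not needed, since the sum of subobjects is already well defined in the heart by the exactness of Step 1.
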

\begin{proof}
	The action of $\Wak$ on ${}^{T}\cC$ follows immediately from the action of $\Wak^{ss}$ on ${}^{T}\cC^{ss}$.

    We show that for any $\cG\in {}^T\cC$ and any $\mu\in \xcoch(T)$, convolution with $J_\mu$ satisfies
    \begin{equation}\label{fil conv J}
        (F_{\le\l}\cG)\star_0 J_{\mu}=F_{\le \l+\mu}(\cG\star_0 J_\mu)
    \end{equation}
    as subobjects of $\cG\star_0 J_{\mu}$. 

    To show \eqref{fil conv J}, the argument is the same as the proof of Lemma \ref{l:filC}(3): define another filtration $F'$ on $\cG'=\cG\star_0 J_{\mu}$ by $F'_{\le\l}\cG'=F_{\le\l-\mu}\cG\star_0 J_{\mu}$. We check that $F'$ is also an admissible filtration because its associated graded are $\gr'_{\l}\cG'\cong \gr_{\l-\mu}\cG\star_0 J_{\mu}$, which is isomorphic to a direct sum of copies of $\dc_{\l-\mu}\star_0 J_{\mu}\cong \dc_{\l}$ by Proposition \ref{Prop:wakvsHinfty}(2).  Hence $F'$ has to be equal to the filtration $F$ on $\cG$ by the uniqueness of the admissible filtration, proving \eqref{fil conv J}.

    Now consider any objects $\cK\in \Wak$ and $\cG\in {}^T\cC$, each equipped with canonical filtrations $F_{\le \mu}\cK$ (whose associated graded are direct sums of $J_\mu$) and $F_{\le \l}\cG$. Then \eqref{fil conv J} implies that 
    \begin{equation*}
        F_{\le\l}(\cG\star_0 \cK)=\sum_{\l'+\mu\le \l}(F_{\le\l'}\cG)\star_0 (F_{\le\mu}\cK) 
    \end{equation*}
    as subobjects of $\cG\star_0 \cK$. Passing to the associated graded we get a canonical functorial isomorphism
    \begin{equation*}
    (\gr_{\l'}\cG)\star_0(\gr_{\mu} \cK)\cong \gr_{\l'+\mu}(\cG\star_0 \cK)
    \end{equation*}
    which gives a lifting of $\cK$ to $\wt\cK\in \End({}^T\cC; \gr)$. Moreover, the compatibility of the canonical filtrations on objects in ${}^T\cC$ with convolution by $\Hinfhs$ as in Lemma \ref{l:filC}(3) shows that $\wt\cK$ indeed lands in the subcategory $\End_{\Hinfhs}({}^T\cC; \gr)$. One easily checks that the assignment $\cK\mt \wt\cK$ extends to a monoidal functor $a_{\Wak}: \Wak\to \End_{\Hinfhs}({}^T\cC; \gr)$, as desired. 

\end{proof}

\subsection{Proof of Theorem \ref{th:rho comm}}
\label{ss:pf sph action}

\sss{Reduction to equivariant version}
Since $\r$ can be factored as in \eqref{Trho and rho} as the composition of ${}^T\r$ and the forgetful functor $\eqHinf\to D^b(\Rep(\dT))$, it suffices to prove the version of Theorem \ref{th:rho comm} for ${}^T\r$. Below we will work systematically with the $T$-equivariant versions ${}^T\DG$ and ${}^T\wt \cD_\psi$, with the actions of $\eqHinf$.

\sss{Central functor}
Recall Gaitsgory's central functor \cite{Ga}
\begin{equation*}
\cZ: \cH^{\sph}_{0}\to \cH_{0}.
\end{equation*} 
By \cite[Proposition 5(b)]{AB}, we have $\cZ(\cK)\in \Wak$ for $\cK\in \cH^{\sph,\hs}_{0}$. This way we get a monoidal functor $\cZ_{\Wak}: \cH^{\sph,\hs}_{0}\to \Wak$. By passing to associated graded, we get a monoidal functor with a canonical central structure
\begin{equation*}
\cZ^{ss}_{\Wak}:  \cH^{\sph,\hs}_{0}\xr{\cZ_{\Wak}} \Wak\xr{\gr}\Wak^{ss}.\end{equation*}
By \cite[Theorem 6]{AB}, after identifying both sides above with $\Rep(\dG)$ (via $\Sat$) and $\Rep(\dT)$ (see \eqref{WakRepT}), there is a natural isomorphism of monoidal functors 
\begin{equation}\label{ZWak Res}
\cZ^{ss}_{\Wak}\cong \Res^{\dG}_{\dT}: \Rep(\dG)\to \Rep(\dT).
\end{equation}
Now consider the commutative diagram
\begin{equation}\label{Hsph action Wak}
\xymatrix{  \cH^{\sph,\hs}_{0}\ar[r]^-{\cZ_{\Wak}} \ar[dr]_{\cZ^{ss}_{\Wak}}& \Wak \ar[r]^-{a_{\Wak}}\ar[d]^{\gr} & \End_{\Hinfhs}({}^{T}\cC;\gr)^{\rev}\ar[d]^{\om_{\gr}}\\
& \Wak^{ss}\ar[r]^-{a^{ss}_{\Wak}}_-{\sim} & \End_{\Hinfhs}({}^{T}\cC^{ss})^{\rev}\ar[rr]^-{\End({}^{T}\pi_{*}|{}^{T}\cC^{ss})}_-{\sim} 
&&\End_{\Hinfhs}({}^{T}\DG^{\hs})^{\rev} }
\end{equation}
Here the bottom right equivalence is induced by ${}^{T}\pi_{*}|_{{}^{T}\cC^{ss}}$ from Lemma \ref{l:Css equiv DGr}. 

\begin{lemma}\label{l:Wak diagram}
The composition of the arrows through the top  row of \eqref{Hsph action Wak} down to $\End_{\Hinfhs}({}^{T}\DG^{\hs})^{\rev}$ is naturally isomorphic to the monoidal functor defined using the action of $\cH^{\sph}_{0}$ on ${}^{T}\cD_{\psi, \Gr}$ in \eqref{def Trho}. 
\end{lemma}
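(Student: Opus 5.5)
The composite through the top row sends $\cK\in\Hsh$ to the endofunctor of ${}^T\DG^\hs$ given by transporting $\gr\circ((-)\star_0\cZ(\cK))$ on ${}^T\cC$ through ${}^T\pi_*|_{{}^T\cC^{ss}}$. The comparison with \eqref{def Trho} rests on one geometric input: the compatibility of ${}^T\pi_*$ with the two Hecke actions. For $\cG\in{}^T\wt\cD_\psi$ and $\cK\in\Hs$ there is a canonical isomorphism
\begin{equation*}
{}^T\pi_*(\cG\star_0\cZ(\cK))\cong {}^T\pi_*(\cG)\star_0\cK .
\end{equation*}
I will prove this in the same way as the corresponding statement for $D(\Fl)\to D(\Gr)$. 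The pushforward of $\cZ(\cK)$ along $\Fl\to\Gr$ is $\cK$, since $\cZ(\cK)$ is a nearby-cycles deformation of $\cK\bt\delta$. Convolution on the right at $0$ with $\bI_0$-equivariant kernels commutes with $\pi_*$ by base change on the Hecke correspondence \eqref{Hk0 cosets}, with $G[t]$ replaced by $I_{\AA^1}$. The construction is compatible with the monoidal structures, the centrality of $\cZ$ giving the needed associativity. It is also compatible with the left $\eqHinf$-action, because modifications at $\infty$ and at $0$ commute.

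The next step is to pass through $\gr$. For $\cG\in{}^T\cC$, Lemma \ref{l:Tpi dc to d} shows that ${}^T\pi_*$ sends each $\dc_\l$ to ${}^T\d_\l$. By Corollary \ref{c:TDGr clean} (or Proposition \ref{p:DGr clean}(1)), the category ${}^T\DG^\hs$ is semisimple. The filtration $F_{\le\l}$ of Lemma \ref{l:filC} therefore splits after ${}^T\pi_*$, which gives a canonical isomorphism ${}^T\pi_*\cG\cong{}^T\pi_*\gr\cG$. Canonicity comes from Hom-vanishing between different ${}^T\d_\l$, so that any extension of them is split uniquely. Applying this to $\cG\star_0\cZ(\cK)$ and using the isomorphism above, we get
\begin{equation*}
{}^T\pi_*\gr(\cG\star_0\cZ(\cK))\cong{}^T\pi_*(\cG)\star_0\cK .
\end{equation*}
Here we note that $\cG\star_0\cZ(\cK)\in{}^T\cC$ because $\cZ(\cK)\in\Wak$.

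Finally, I evaluate both functors on the free generator $\dc_0$, respectively ${}^T\d_0$. By Corollary \ref{c:TDGr clean}(4), an $\Hinfhs$-linear endofunctor of ${}^T\DG^\hs$ is determined by its value on ${}^T\d_0$, together with the module compatibilities. The preceding isomorphism is $\Hinfhs$-linear, by Lemma \ref{l:filC}(3) and the commuting of the two actions. It is also monoidal in $\cK$, which I check using the grading compatibility built into $a_\Wak$ and the isomorphisms \eqref{fil conv J}. This identifies the two monoidal functors.

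The main obstacle is the first step: making the isomorphism ${}^T\pi_*(-\star_0\cZ(\cK))\cong{}^T\pi_*(-)\star_0\cK$ canonical and monoidal inside the Kirillov framework of \S\ref{sss:int trans}, where the kernels are only monodromic. I would try to reduce it to the universal case on $\bI_0\bs G\lr{t}/\bI_0$, namely $\pi_*\cZ(\cK)\cong\cK$ monoidally, and then transport it via the integral-transform functoriality of \eqref{int trans Kir}.
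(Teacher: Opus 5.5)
Your proposal is correct and follows the paper's own argument. The key input in both is the isomorphism ${}^T\pi_*(\cG\star_0\cZ(\cK))\cong({}^T\pi_*\cG)\star_0\cK$, obtained from $\pi_*\cZ(\cK)\cong{}^{\bI_0}\cK$ (via the nearby-cycles definition of $\cZ$) together with base change for the convolution at $0$; your additional steps spell out what the paper's terse proof leaves implicit, namely canonically splitting the filtration after ${}^T\pi_*$ using the $\RHom$-vanishing between distinct ${}^T\d_\l$, and checking $\Hinfhs$-linearity and monoidality.
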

\begin{proof} For $\cK\in \cH_{0}^{\sph,\hs}$ and $\cG\in {}^{T}\cC$, we need to give a functorial isomorphism
\begin{equation*}
{}^T\pi_{*}(\cG\star_{0}\cZ_{\Wak}(\cK))\cong ({}^T\pi_{*}\cG)\star_{0}\cK.
\end{equation*}
We have
\begin{equation*}
\pi_{*}(\cG\star_{0}\cZ_{\Wak}(\cK))\cong \cG\star_{0}\pi_{\Gr,*}\cZ_{\Wak}(\cK)\cong \cG\star_{0}({}^{\bI_{0}}\cK).
\end{equation*}
Here $\pi_{\Gr,*}$ is the direct image functor $\cH_{0}\to D^{b}(\bI_{0}\bs \Gr)$; ${}^{\bI_{0}}\cK\in D^{b}(\bI_{0}\bs \Gr)$ is the same as $\cK$ but with the left $\bG$-equivariance forgotten down to $\bI_{0}$-equivariance. We are using here that $\pi_{*}\cZ(\cK)\cong {}^{\bI_{0}}\cK$, which follows directly from the definition of $\cZ$ by nearby cycles \cite{Ga}. Since ${}^{\bI_{0}}\cK$ comes from the $\bG$-equivariant object $\cK$, we have
\begin{equation*}
\cG\star_{0}({}^{\bI_{0}}\cK)\cong ({}^T\pi_{*}\cG)\star_{0}\cK
\end{equation*}
where on the right side the convolution contracts $\bG$-equivariance while on the left it contracts $\bI_{0}$-equivariance. 
\end{proof}

\sss{Proof of Theorem \ref{th:rho comm}(1)}
Lemma \ref{l:Wak diagram} in particular shows that the action of $\cH^{\sph,\hs}_{0}$ on ${}^T\DG$ preserves the heart ${}^T\DG^\hs$. This also implies the action of $\cH^{\sph,\hs}_{0}$ on $\DG$ preserves the heart $\DG^\hs$, because the forgetful functor ${}^T\DG^\hs\to \DG^\hs$ is essentially surjective by Lemma \ref{l:eqDG equiv heart}. This proves part (1) of Theorem \ref{th:rho comm}.

\sss{Proof of Theorem \ref{th:rho comm}(2)}
Lemma \ref{l:Wak diagram} together with the commutative diagram \eqref{Hsph action Wak} gives a factorization of $\r^{\hs}$ as the composition
\begin{equation*}
\cH^{\sph,\hs}_{0}\xr{\cZ_{\Wak}^{ss}}\Wak^{ss}\xr{a_{\Wak}^{ss}}\End_{\Hinfhs}({}^{T}\cC^{ss})^{\rev} \isom (\eqHinf)^{\hs}.
\end{equation*}
By \eqref{ZWak Res} and Corollary \ref{Cor:wakvsHinfty}, the above diagram fits into a commutative diagram with canonical isomorphisms of monoidal functors
\begin{equation*}
    \xymatrix{\cH^{\sph,\hs}_{0}\ar[r]^{\cZ_{\Wak}^{ss}}\ar[d]^{\cong} & \Wak^{ss}\ar[d]^{\cong}\ar[r]^-{a_{\Wak}^{ss}} & \End_{\Hinfhs}({}^{T}\cC^{ss})^{\rev}\ar[r]^-{\sim} & (\eqHinf)^{\hs}\ar[d]^{\cong}\\
    \Rep(\dG) \ar[r]^-{\Res^{\dG}_{\dT}} & \Rep(\dT)\ar[rr]^-{\inv} && \Rep(\dT)
    }
\end{equation*}
This finishes the proof of Theorem \ref{th:rho comm}. \qed

\subsection{Examples}\label{Eg_SL_n}

We describe some objects in $\cD_{\psi}$ when $G$ is of type $A$. We ignore Tate twists here. We will identify the Kirillov category on the open $G/B\hookrightarrow\Bun_{G}(\J, \bI_{0})$ with sheaves on $G/B$ and similarly for other parahoric level structures at $0$.

\begin{exam}[$\SL_{n}$] We consider the pushforward of $\mathcal{F}_0$ to $\mathcal{D}_{\psi,\mathbf{P}}$ for certain parahorics $\mathbf{P}$ and consider some irreducible summands of these when $G=\SL_{n}$.

The only cases of interest are when $s_0\in W_\mathbf{P}$. Let $\mathbf{P}'$ be the parahoric whose Weyl group is given by $W\cap W_\mathbf{P}$, and let $P\subset G$ be the corresponding parabolic subgroup. Then the map $\pi_\mathbf{P}$ factors through $\pi_{\mathbf{P}'}$. Thus we can simply compute the pushforward along the map $\pi_{\mathbf{P}',\mathbf{P}}$. 

Assume that $W_\mathbf{P}$ includes $s_i$ for $i<k$ and $i>j$ and does not include $s_j$ and $s_k$. Then the relevant orbits in $G/P\rightarrow \Bun_G(\J,\mathbf{P})$ are given by 
$$\cH_{\mathbf{P}}(0) = \{(V_i)\in G/P|\psi V_k\subset V_j\}.$$

This is a Hessenberg variety in $G/P$, generalizing the variety $\cH(0)$ from Section \ref{Hess_var}. Our category thus contains $\pi_{\mathbf{P}}^*E_{\cH_{\mathbf{P}}(0)}[\dim(\cH_{\mathbf{P}}(0)+\dim(P/B)]$, the pullback of the constant sheaf on the Hessenberg variety $\cH_{\mathbf{P}}(0)$, as one of the irreducible objects. 
	
\end{exam}

\begin{exam}[$\SL_{4}$]
	For $\SL_3$ the previous construction essentially gives all irreducibles. Here we consider the category $\mathcal{D}_\psi$ in the case of $\SL_4$, where some more interesting irreducibles appear. Of particular interest will be some non-trivial local systems appearing in the microlocalization of certain irreducible objects. 
	
	To construct these objects we will act with $\cH_{0}$ on some of the objects of Example \ref{Eg_SL_n}. Recall that there is an open inclusion $G/B\hookrightarrow\Bun_{G}(\J, \bI_{0})$. We will describe the restriction of some irreducibles to this $G/B$. From Example \ref{Eg_SL_n} we have: 
    \begin{align*}
        \cF_{0}&:= \IC(G/B)\\
        \cF_{1}&:= \IC(\{(V_i)\in G/B|\psi V_1\subset V_3\})\\
        \cF_{2}&:= \IC(\{(V_i)\in G/B|\psi V_2\subset V_3\})\\
        \cF_{3}&:= \IC(\{(V_i)\in G/B|\psi V_1\subset V_2\}).
    \end{align*}
    
    Denote by $\alpha_0,\dots,\alpha_3$ the simple roots, where $\alpha_1,\dots,\alpha_3$ denote the finite simple roots. For a simple root $\alpha_i$ we denote by $\mathbf{P}_{\alpha_i}=\mathbf{P}_i$ the parahoric generated by the Iwahori $\mathbf{I}$ and the simple reflection $s_{\alpha_i}=s_i$. Finally denote $\pi_i=\pi_{\mathbf{P}_i}$. Then we have
	
	\begin{equation*}
	    \cF_{4}:=\cF_{3}\star_{0} \IC_{s_1}\cong \cF_{2}\star_{0} \IC_{s_3}=\IC(\{(V_i)\in G/B|\psi V_2\cap V_2\neq\{0\}\}).
	\end{equation*}

    Finally we compute $\cF_{4}\star_{0} \IC_{s_2}=\pi_2^*(\pi_2)_*\cF_{4}[1]$. Here note that when projecting $\{(V_i)\in G/B|\psi V_2\cap V_2\neq\{0\}\}$ to the partial flag variety, the fibers are cut out by a quadratic equation on $\mathbb{P}^1$ determined by $\psi V_2\cap V_2\neq\{0\}$. The support of $\cF_{4}$ is irreducible and over the partial flag variety $G/P_2$ is a branched double cover. It follows that on some open $U\subset G/P_2$, $(\pi_2)_*\cF_{4}\cong \cL[5]\oplus E_U[5]$ for some non-trivial local system $\cL$. We thus get our final irreducible

    \begin{equation*}
	    \cF_{5}:=\IC(\pi_2^*(\cL)).
	\end{equation*}

    The microlocalisation of this object to $G/B$ produces a non-trivial local system on an open subset of the latter. It also gives rise to a trivializable local system on a different irreducible component of $\Fl_{\psi}$.
    
    The above irreducibles are in fact all the irreducibles up to the extended action of $\cH_{\infty}$ and the length zero elements in $\cH_{0}$ of $\GL_{4}$.
\end{exam}

\subsection{Full faithfulness of microlocalization functor}

\sss{Singular support}

By the discussion of \S\ref{s:micro}, we have a microlocalization functor
\begin{equation*}
\wt M: \wt\cD_{\psi}\to \muSh(\cM_{\psi}).
\end{equation*}
Here we are using \eqref{def Mpsi}.

\begin{lemma}\label{l:SS finite}
For any $\cF\in \cD_{\psi}$, $\wt M(\cF)$ is supported on finitely many components of the Lagrangian $\L_{\psi}$.
\end{lemma}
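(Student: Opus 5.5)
The plan is to reduce to generators and then use how singular supports behave under the two relevant operations: pullback from $\Bun_G(\Jker)$ and Hecke convolution at $0$. The subcategory of $\cF\in\wt\cD_\psi$ for which $SS(\wt M(\cF))$ lies in finitely many components of $\L_\psi$ is closed under cones, shifts and direct summands. Indeed, singular support of a cone lies in the union of the singular supports, and a summand has smaller support. So it suffices to check the condition on a generating family. Since $\cD_\psi$ is generated under the $\cH_0$-action by the objects $\cF_\l=\pi^*\d_\l$, and $\cH_0$ is generated by the finitely supported objects $\IC_w$, it suffices to show two things: (i) each $\wt M(\cF_\l)$ has support in finitely many components; (ii) for each $w\in\tilW$ and any $\cF$ whose support meets finitely many components, $\cF\star_0\IC_w$ again has support meeting finitely many components.

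For (i), use Theorem \ref{th:MG equiv}. The support of $M_{\bG}(\d_\l)$ is the single point $(\cE_\l,\psi_1dt)\in\cM_{\psi,\bG}$. The pullback $\pi^*$ along the smooth map $\Bun_G(\Jker,\bI_0)\to\Bun_G(\Jker)$ (a $G/B$-bundle) sends singular support to its pullback. So $SS(\wt M(\cF_\l))$ lies in the preimage of that point under the Lagrangian correspondence $\cM_\psi\leftarrow \cM_\psi\times_{\Bun_G(\Jker)}\cdots\to\cM_{\psi,\bG}$. This preimage is the set of Higgs bundles whose underlying bundle with level structure at $\infty$ is $\wt\cE_\l$, with an arbitrary Borel reduction at $0$ and Higgs field $\psi_1dt$ plus a nilpotent residue correction. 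It is a closed subvariety of finite type: a Springer-type fiber over a point, homeomorphic under Proposition \ref{p:ASF vs HF} to a finite-type piece of $\Fl_\psi$ (a translate of $\L^\c_\psi$ by $t^\l$). It therefore meets only finitely many irreducible components of $\L_\psi$.

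For (ii), use the standard estimate for integral transforms. Convolution with $\IC_w$ is $h_{2!}(h_1^*(-)\ot\ev_0^*\IC_w)$ along the Hecke correspondence restricted to the closure of the $w$-Schubert cell, whose fibers are proper of finite type. Hence $SS(\cF\star_0\IC_w)$ lies in the image, under the associated Lagrangian correspondence on $\cM_\psi$, of $SS(\cF)$ composed with the conormal to the Schubert variety. This composition takes a finite-type closed subset to a finite-type closed subset. The $\Gm$-conicity is preserved, and the result lies in $\L_\psi$ by the general fact that supports of objects in $\wt\cD_\psi$ lie in the conic part $\L_\psi$. A finite-type closed subset of $\L_\psi$ meets only finitely many components, since $\L_\psi\cong\Fl_\psi$ is locally of finite type with components indexed by $\tilW$.

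The main obstacle is making the singular support estimates rigorous in the Kirillov/stacky setting. Two things need care. First, the microlocalization is defined via $\Gm$-equivariant lifts on $\Bun_G(\Jker,\bI_0)$ followed by Hamiltonian reduction, so the estimates must be checked on the lifts and shown to be compatible with the reduction by $\Ga$. Second, the Hecke correspondence at $0$ involves the infinite-type stack $\Bun_G(\Jker,\bI_0)$. I would handle the second issue by working with the finite-type open substacks ${}^{\mu_i}\Bun_G$ and their smooth covers $N_i$ from \S\ref{ss:micro Mpsi}, where the Kashiwara--Schapira bounds for $f_!$ (proper) and $f^*$ (smooth) apply directly. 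The point to verify is that convolution with $\IC_w$ only enlarges the Harder--Narasimhan bound by an amount depending on $w$.
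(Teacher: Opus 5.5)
Your reduction to the generators $\cF_\l=\pi^*\d_\l$ and $\IC_w$, and your step (i), follow the paper. One small correction in (i): pulling a covector back along the smooth map $\pi$ adds no residue at $0$. So the preimage of $(\wt\cE_\l,\psi_1dt)$ under the correspondence is exactly the single component $(G/B)_\l=t^\l G\tl{t}/\bI_0$, with Higgs field exactly $\psi_1dt$. Your ``translate of $\L^\c_\psi$'' is only a harmless overestimate.

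The genuine gap is in step (ii). There you obtain $SS(\cF\star_0\IC_w)\subset\L_\psi$ from ``the general fact that supports of objects in $\wt\cD_\psi$ lie in $\L_\psi$''. No such fact holds. Take $G=\SL_2$. On the open stratum of $\Bun_G(\Jker,\bI_0)$ where the bundle is trivial, the Kirillov category is $D(G/B)$ (Example \ref{ex:Kir A1}). There microlocalization is ordinary microlocalization on $\cM^\c_\psi\cong T^*\cB$. Moreover $\cN_{\psi_1}=\frn\cup\frn^-$, so $\L^\c_\psi=\cB\cup T^*_{B}\cB\cup T^*_{B^-}\cB$. Now take a skyscraper at a non-$T$-fixed point $x\in\cB$ and extend it by zero. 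This is an object of $\wt\cD_\psi$ whose singular support contains $T^*_x\cB\not\subset\L_\psi$.

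Conicity does not rescue the step either. The map $f_\psi$ sends a $\Gm$-stable closed set to a conic subset of $\cA_\psi$, not necessarily to $a_\psi$. Even finite-type sets such as $\Gm\cdot y\cup\{\lim_{s\to0}s\cdot y\}$ with $y\notin\L_\psi$ leave the central fiber.

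What you are missing is a reason the Hecke correspondence at $0$ stays in the central Hitchin fiber. The paper reduces to $\ell(w)\le 1$, where $\star_0\IC_s\cong\pi_s^*\pi_{s*}\j{1}$. It then uses that the Lagrangian correspondence \eqref{Lag pi s} attached to $\pi_s$ is compatible with the Hitchin maps. The two Higgs fields coincide on $\PP^1\setminus\{0\}$, hence have the same characteristic invariants, so $f_\psi\bigl(SS(\cF\star_0\IC_s)\bigr)\subset f_\psi\bigl(SS(\cF)\bigr)=\{a_\psi\}$. The same observation applies to your correspondence with general $w$.

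Alternatively, argue by compactness. $SS(\cF_\l)$ is compact, and every map in the correspondences is a closed embedding or proper, so supports stay compact. A compact conic subset of $\cM_\psi$ lies in $\L_\psi$, because $\Gm$ contracts $\cA_\psi$ to $a_\psi$. With either repair, your finite-type bookkeeping then gives finitely many components.
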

\begin{proof}
Since $\cD_{\psi}$ is generated under $\cH_{0}$ by $\cF_{\l}$, and $\cH_{0}$ is generated by $\IC_{w}$ for $\ell(w)=0$ or $1$ as a monoidal category, we need to check
\begin{enumerate}
\item $SS(\cF_{\l})\subset \L_{\psi}$ and has finite-type support;
\item If $\cF\in \wt\cD_{\psi}$ has finite-type  support in $\L_{\psi}$, so does $\cF\star\IC_{w}$ for any $\ell(w)=0$ or $1$, $w\in \tilW$. 
\end{enumerate}
Consider the Lagrangian correspondence attached to $\pi=\pi_{\bG}$
\begin{equation}\label{dpi}
\xymatrix{\cM_{\psi} & \L_{\psi}\cong \Fl_{\psi}  \ar@{_{(}->}[l] & \cM_{\psi,\bG}\times_{\Bun_{G}(\bG^{1}_{\infty})}\Bun_{G}(\J,\bI_{0})\ar[l]_-{d\pi}\ar[r]^-{\pi_{\na}} & \cM_{\psi,\bG}\cong \Gr_{\psi}}
\end{equation}

(1) Use the formula of singular support under smooth pullback $\pi^{*}$, we have
\begin{equation*}
SS(\cF_{\l})=SS(\pi^{*}\d_{\l})=d\pi \pi_{\na}^{-1}(SS(\d_{\l})).
\end{equation*}
From the diagram \eqref{dpi} we see $SS(\cF_{\l})\subset \L_{\psi}$. More precisely,  by Theorem \ref{th:MG equiv}(1), $SS(\d_{\l})$ is the point $t^{\l}\in \Gr_{\psi}$. Therefore $SS(\cF_{\l})=d\pi \pi_{\na}^{-1}(t^{\l})$, which is equal to $t^{\l}G\tl{t}/\bI_0\subset \Fl_{\psi}$, and is isomorphic to the flag variety of $G$. 

(2)  The case $\ell(w)=0$ is clear since $w$ induces an automorphism of $\Bun_{G}(\Jker, \bI_{0})$ that preserves the Hitchin map hence preserves $\L_{\psi}$.

We check the case $w=s$ is a simple reflection. Let $\bP_{s}\supset \bI$ be the standard parahoric subgroup corresponding to $s$, and $\pi_{s}=\pi_{\bP_{s}}$. We have $\cF\star \IC_{s}\cong \pi_{s}^{*}\pi_{s*}\cF\j{1}$.  Consider the Lagrangian correspondence attached to $\pi_{s}$
\begin{equation}\label{Lag pi s}
\xymatrix{\cM_{\psi}  & \cM_{\psi,\bP_{s}}\times_{\Bun_{G}(\J,\bP_{s,0})}\Bun_{G}(\J,\bI_{0})\ar[l]_-{d\pi_{s}}\ar[r]^-{\pi_{s,\na}} & \cM_{\psi,\bP_{s}}
}
\end{equation}
Since $\pi_{s}$ is smooth and proper, we have
\begin{equation*}
SS(\pi_{s}^{*}\pi_{s*}\cF)\subset d\pi_{s}\pi_{s,\na}^{-1}\pi_{s,\na}(d\pi_{s})^{-1}(SS(\cF)).
\end{equation*}
Now the diagram \eqref{Lag pi s} is compatible with the Hitchin maps $f_{\psi}$ and its analogue for $\cM_{\psi,\bP}$. Therefore $f_{\psi}SS(\pi_{s}^{*}\pi_{s*}\cF)$ is contained in $f_{\psi}SS(\cF)$, which is the point $a_{\psi}$.  Hence $SS(\pi_{s}^{*}\pi_{s*}\cF)\subset \L_{\psi}$. Moreover,  $d\pi_{s}$ is a closed embedding and $\pi_{s,\na}$ is a fibration with fibers isomorphic to the flag variety of $L_{\bP}$, both pushforward and pullback under $\pi_{s,\na}$ and $d\pi_{s}$ send finite-type algebraic  cycles to finite-type algebraic cycles. Therefore $SS(\pi_{s}^{*}\pi_{s*}\cF)$ is of finite type. This finishes the proof.
\end{proof}

\begin{defn}\label{def:mush fs}
    Let $\muSh_{\L_{\psi},fs}(\cM_{\psi})\subset \muSh_{\L_{\psi}}(\cM_{\psi})$ be the full subcategory consisting of objects supported on finitely many irreducible components of $\L_{\psi}$ such that the microstalks are finite dimensional bounded complexes of vector spaces.  
\end{defn}

Lemma \ref{l:SS finite} then shows that $\wt M|_{\cD_{\psi}}$ lands in $\muSh_{\L_{\psi},fs}(\cM_{\psi})$.

\begin{theorem}\label{th:fullfaith} For $\cF\in \cD_{\psi}$ and $\cG\in \wt\cD_{\psi}$, the natural map
\begin{equation}\label{hom to check}
\Hom_{\wt\cD_{\psi}}(\cF,\cG)\to \cohog{0}{\cM_{\psi}, \muhom(\cF,\cG)}=\Hom_{\muSh(\cM_{\psi})}(\wt M(\cF), \wt M(\cG))
\end{equation}
is an isomorphism. In particular, $\wt M$ restricts to a fully faithful functor
\begin{equation*}
M: \cD_{\psi}\incl\muSh_{\L_{\psi}, fs}(\cM_{\psi}).
\end{equation*}
\end{theorem}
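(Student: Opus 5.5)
The plan is a dévissage: the class of $\cF$ for which \eqref{hom to check} is an isomorphism for every $\cG\in\wt\cD_\psi$ is a thick triangulated subcategory, so it suffices to treat the generators $\cF_\l=\pi^*\d_\l$ and then show the class is stable under $(-)\star_0\IC_w$ for $\ell(w)\le 1$. By the proof of Lemma \ref{l:SS finite}, these operations generate $\cD_\psi$.

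For the base case $\cF=\cF_\l$, adjunction gives $\Hom_{\wt\cD_\psi}(\pi^*\d_\l,\cG)=\Hom_{\DG}(\d_\l,\pi_*\cG)$. I would then compare microhoms along the Lagrangian correspondence \eqref{dpi}. For a smooth morphism $\pi$, the Kashiwara--Schapira formula $\muhom(\pi^*\cA,\cB)\cong$ (pushforward along $\pi_\na$ of the restriction along $d\pi$) of $\muhom(\cA,\pi_*\cB)$, up to shift, identifies $\cohog{0}{\cM_\psi,\muhom(\cF_\l,\cG)}$ with sections of $\muhom(\d_\l,\pi_*\cG)$ near the point $t^\l\in\cM_{\psi,\bG}$. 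This uses that $\pi$ is proper on fibers $G/B$ and that $SS(\cF_\l)=t^\l G\tl{t}/\bI_0$ is exactly $d\pi\,\pi_\na^{-1}(t^\l)$. The question then reduces to the Grassmannian level: $\Hom_{\DG}(\d_\l,\cH)\to\Hom_{\muSh}(M_\bG\d_\l,M_\bG\cH)$ must be an isomorphism for every $\cH\in\DG$. Here I would use Proposition \ref{p:DGr clean}. Since $\d_\l$ is clean ($\b_{\l!}=\b_{\l*}$), the left side equals $\b_\l^{*}\cH$ read off in $\Kir(\Ga)\cong D(\Vect)$. The right side is the microstalk of $M_\bG\cH$ at $t^\l$, which formula \eqref{ms nu final} computes as $\phi(\cH|_{\fra_\l})$; this is the same quantity. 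The step I must take care with is that $\pi_*\cG$ lies only in $\wt\cD_{\psi,\bG}=\DG$, which is fine here because the two coincide.

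For the inductive step with $w=s$ simple, I would write $\cF\star_0\IC_s\cong\pi_s^*\pi_{s*}\cF\j1$. By adjunction, $\Hom(\pi_s^*\pi_{s*}\cF,\cG)=\Hom(\pi_{s*}\cF,\pi_{s*}\cG)$, and by self-adjointness of $\IC_s$ this equals $\Hom(\cF,\cG\star_0\IC_s)$ up to shift. The same holds on the microlocal side: the functor $\cF\mapsto\cF\star_0\IC_s$ is induced by the Lagrangian correspondence \eqref{Lag pi s}, which is smooth and proper, and the microhom formula for smooth pullback and proper pushforward gives the matching adjunction on $\muhom$. Naturality of the map \eqref{hom to check} in both variables then transfers the isomorphism for $(\cF,\cG\star_0\IC_s)$ to one for $(\cF\star_0\IC_s,\cG)$. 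This is why the statement is phrased with arbitrary $\cG\in\wt\cD_\psi$. The case $\ell(w)=0$ is an automorphism preserving everything.

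The main obstacle is verifying that the microlocal adjunction isomorphisms are compatible with the sheaf-level ones. Concretely, the square built from the microlocalization map and the $\pi_s^*\dashv\pi_{s*}$ adjunction has to commute. A second difficulty is making the microhom functoriality rigorous on the non-finite-type, Kirillov-quotient setting. For the latter I would first restrict to the finite-type open pieces $\cM^i_\psi$ of \S\ref{ss:micro Mpsi}, where the Kashiwara--Schapira functoriality applies to $\Gm$-equivariant lifts. I would then use that $\muhom$ on $\mu^{-1}_{\Aff}(1,0)$ is independent of the lift, and pass to the limit in $i$.
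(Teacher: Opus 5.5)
Your proposal is correct and follows essentially the same route as the paper. You first use the adjunctions for $\star_0\IC_w$ with $\ell(w)\le 1$, matched on the microlocal side by the Kashiwara--Schapira formulas for the smooth proper maps $\pi_s$, to reduce to $\cF=\cF_\l$; the analogous adjunction for $\pi=\pi_{\bG}$ then reduces you to full faithfulness of $M_{\bG}$ on $\DG$. The only difference is that the paper quotes Theorem \ref{th:MG equiv} for this last step, while you re-derive it from Proposition \ref{p:DGr clean} and the microstalk formula \eqref{ms nu final}, which is exactly how that theorem is proved.
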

\begin{proof} Let $w\in \tilW$ be either of length $1$ or length $0$. We have the adjunction
\begin{equation}\label{adj ICw Dpsi}
\Hom_{\wt\cD_{\psi}}(\cF\star\IC_{w},\cG)\cong \Hom_{\wt\cD_{\psi}}(\cF,\cG\star\IC_{w^{-1}}).
\end{equation}
We claim that there is a similar adjunction 
\begin{equation}\label{adj ICw Mpsi}
\cohog{0}{\cM_{\psi}, \muhom(\cF\star\IC_{w},\cG)}\cong \cohog{0}{\cM_{\psi}, \muhom(\cF,\cG\star\IC_{w^{-1}})}
\end{equation}
that is compatible with \eqref{adj ICw Dpsi} under the map \eqref{hom to check}. Indeed, when $\ell(w)=0$ this is clear since $\star\IC_{w}$ is induced by an automorphism of $\Bun_{G}(\bI_{0}, \bG^{\psi}_{\infty})$. When $w=s$ is a simple reflection, then $\star\IC_{s}=\pi_{s}^{*}\pi_{s*}\j{1}$.  The desired adjunction follows from 
\begin{eqnarray*}
\cohog{0}{\cM_{\psi}, \muhom(\pi_{s}^{*}\pi_{s*}\cF[1],\cG)}&\cong&\cohog{0}{\cM_{\psi,\bP_{s}}, \muhom(\pi_{s*}\cF[1],\pi_{s*}\cG)}\\
&\cong&\cohog{0}{\cM_{\psi}, \muhom(\cF[1],\pi_{s}^{!}\pi_{s*}\cG)[-1]}\\
&\cong&\cohog{0}{\cM_{\psi}, \muhom(\cF,\pi_{s}^{*}\pi_{s*}\cG[1])}.
\end{eqnarray*}
Here the first and second isomorphisms follow by taking sections of the isomorphisms in \cite[Proposition 4.4.6 (iii)(iv)]{KS} over $\cM_{\psi}$ ( using that $\pi_{s}$ is smooth and proper). 

Since $\{\cF_{\l}\star\IC_{w_{1}}\star\cdots\star\IC_{w_{n}}; \l\in \xcoch(T), \ell(w_{i})\le 1, n\in \NN\}$ generate $\cD_{\psi}$ by definition,  it suffices to check \eqref{hom to check} for $\cF$ and $\cG$ in this collection.  By the adjunctions \eqref{adj ICw Dpsi} and \eqref{adj ICw Mpsi}, we reduce to the case $\cF=\cF_{\l}$ and $\cG$ arbitrary. In this case, using adjunctions analogous to \eqref{adj ICw Dpsi} and \eqref{adj ICw Mpsi} for $\pi=\pi_{\bG}$ instead of $\pi_{s}$, we get
\begin{equation*}
\Hom_{\wt\cD_{\psi}}(\cF_{\l}, \cG)= \Hom_{\wt\cD_{\psi}}(\pi^{*}\d_{\l}, \cG)\cong \Hom_{\DG}(\d_{\l}, \pi_{*}\cG)
\end{equation*}
and
\begin{equation*}
\cohog{0}{\cM_{\psi}, \muhom(\cF_{\l}, \cG)}=\cohog{0}{\cM_{\psi}, \muhom(\pi^{*}\d_{\l}, \cG)}\cong \cohog{0}{\cM_{\psi,\bG}, \muhom(\d_{\l}, \pi_{*}\cG)}.
\end{equation*}
Therefore we reduce to showing that the natural map 
\begin{equation*}
\Hom_{\DG}(\cF',\cG')\to \cohog{0}{\cM_{\psi,\bG}, \muhom(\cF', \cG')}=\Hom_{\muSh(\cM_{\psi,\bG})}(M_{\bG}(\cF'), M_{\bG}(\cG'))
\end{equation*}
is an isomorphism for $\cF',\cG'\in \DG$. This follows from Theorem \ref{th:MG equiv}, which says that $M_{\bG}$ is an equivalence.
\end{proof}

\subsection{Singular support characterization of $\cD_\psi$}
In this subsection we show the converse to Lemma \ref{l:SS finite}, characterizing
$\cD_\psi$ as a full subcategory in the Kirillov category $\wt\cD_{\psi}$ specified by a condition on the singular support.

We will need an auxiliary construction. For $\l \in \xcoch(T)$ let
$(G/B)_\lambda=t^\l G\tl{t}/\bI_0\subset \Fl_\psi$ be the corresponding irreducible component isomorphic to the flag variety $G/B$. For an object $\F$ in the Kirillov category $\wt\cD_{\psi}$ we set $\mu_\lambda(\F)=i_\psi^*\mu_{(G/B)_\lambda}(\F) $ where $\mu$ denotes microlocalization
and $i_\psi$ is the section of the conormal bundle to $(G/B)_\lambda$  defined by $\psi$. 

\begin{lemma}\label{micro_action}
    \label{mu l} For any $\cF\in \wt\cD_\psi$ and $\l\in \xcoch(T)$, we have:
\begin{enumerate}
    \item $\mu_\lambda(\F)\cong \muhom(\pi^*\delta_\l,\F)$. 
    \item The functors $\mu_\l$ are compatible with the Hecke action at infinity in the sense that there are canonical isomorphisms
$$\mu_{\l+\nu}(\wh\D_{\nu}\star_{\infty}\F)\cong \mu_\l (\F), \quad \forall \nu\in \xcoch(T).$$
\end{enumerate}
\end{lemma}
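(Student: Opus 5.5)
We prove (1) first and then deduce (2) from (1) together with the Hecke compatibility results already established for $\DG$.

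For (1), we begin with the Lagrangian correspondence \eqref{dpi} attached to the smooth map $\pi=\pi_{\bG}$. The proof of Lemma \ref{l:SS finite} shows that $SS(\pi^*\d_\l)=(G/B)_\l$. Moreover, $\pi^*\d_\l$ is the $*$-pullback of a sheaf that is clean at the single point $t^\l\in\Gr_\psi$ (Theorem \ref{th:MG equiv}(1)), so along $(G/B)_\l$ it looks like a rank-one simple sheaf microsupported on the smooth Lagrangian $(G/B)_\l$. Concretely, $\pi^*\d_\l$ is the extension by zero of a rank one local system on the smooth locally closed substack $\pi^{-1}(\ov\b_\l(\text{pt}))$, whose conormal, after reduction by $\Ga$ at moment value $1$, is exactly $(G/B)_\l$. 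We then apply the standard fact \cite[Prop.~4.4.2, 4.4.6]{KS}: if $\cF_1=i_{Z!}L[\,\cdot\,]$ for a locally closed smooth $Z$, then $\muhom(\cF_1,\cG)|_{T^*_Z}\cong\mu_Z(\cG)\otimes L^\vee$ up to the normalizing shift. Restricting this to the section $i_\psi$ of $T^*_{(G/B)_\l}$ gives $\mu_\l(\cF)\cong\muhom(\pi^*\d_\l,\cF)$.

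We need to check two technical points. First, the Kirillov/$\Ga$-reduction is compatible with this identification; this is just the restriction of $\muhom$ to $\mu^{-1}_{\Aff}(1,0)$ from \S\ref{ss:micro Kir}. Second, the shifts agree with the normalization $\d_\l=\b_{\l,!}\d\j{-d_\l}$; we would fix this by the computation in the proof of Theorem \ref{th:MG equiv}, where $M_{\bG}(\d_\l)$ has one-dimensional microstalk in degree $0$. We expect the main obstacle to be exactly here: making the identification canonical and the shifts correct on the stack, as opposed to a manifold. To handle it, we would work on the pro-smooth cover $\Bun_{G,\hi}\times_{\Bun_G}\Fl$, as in \S\ref{sss:thick Gr}, and use Lemma \ref{l:res slice} with a transversal slice.

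For (2), we use (1) to reduce to a statement about $\muhom$ with standard objects. Since $\wh\D_\nu$ is invertible in $\hHim$ with inverse $\wh\D_{-\nu}$ (Proposition \ref{p:fm std conv}), the functor $\wh\D_\nu\star_\infty(-)$ is an autoequivalence of $\wt\cD_\psi$. Its adjoint is $\wh\D_{-\nu}\star_\infty(-)$, and it is given by a Hecke correspondence at $\infty$ that induces a symplectic correspondence on $\cM_\psi$; on $\L_\psi\cong\Fl_\psi$ this correspondence is the lattice translation $t^\nu$ of \S\ref{sss:lattice on ASF}. Applying \cite[Prop.~4.4.6]{KS} along this correspondence, just as in the proof of Theorem \ref{th:fullfaith}, we obtain
\[\muhom(\pi^*\d_{\l+\nu},\wh\D_\nu\star_\infty\cF)\cong\muhom(\wh\D_{-\nu}\star_\infty\pi^*\d_{\l+\nu},\cF)\]
as sheaves, after transport along $t^{\nu}$. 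Finally, $\wh\D_{-\nu}\star_\infty\pi^*\d_{\l+\nu}=\pi^*(\wh\D_{-\nu}\star_\infty\d_{\l+\nu})\cong\pi^*\d_{\l+2\nu}$ by Lemma \ref{l:std action DGr}(2). For the indices to match $\mu_\l(\cF)$ we must use the sign convention of Lemma \ref{l:std action DGr}, namely $\wh\D_\mu\star_\infty\d_\l\cong\d_{\l-\mu}$, and we would adjust the indexing to that convention. With this, combining with (1) gives the claimed canonical isomorphism $\mu_{\l+\nu}(\wh\D_\nu\star_\infty\cF)\cong\mu_\l(\cF)$.
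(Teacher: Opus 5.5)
Part (1) is fine. The paper treats it as immediate from the definitions, and your reason is the right one: after Kirillov reduction, $\pi^*\d_\l$ is the rank-one sheaf microsupported on the smooth Lagrangian $(G/B)_\l$.

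The gap is in (2), at your claimed isomorphism of \emph{sheaves}
$\muhom(\pi^*\d_{\l+\nu},\wh\D_\nu\star_\infty\cF)\cong\muhom(\wh\D_{-\nu}\star_\infty\pi^*\d_{\l+\nu},\cF)$ ``after transport along $t^\nu$''.

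\begin{itemize}
\item Invertibility of $\wh\D_\nu$ only gives the adjunction on $\RHom$ in $\wt\cD_\psi$. Via Theorem \ref{th:fullfaith}, that is an adjunction on $\cohog{0}{\cM_\psi,\muhom(-,-)}$ and its shifts.
\item So what your argument actually proves is an isomorphism of derived global sections over $G/B$. That neither determines the sheaf nor detects whether it vanishes. The lemma is used at the sheaf level later, e.g.\ through $j^!_{V_{\l,\l'}}\mu_{\l'}(\cF)$ in Proposition \ref{p:finite ss char Dpsi}.
\item The Kashiwara--Schapira compatibilities used in Theorem \ref{th:fullfaith} cover smooth pullback and proper pushforward: $\pi_s$ is smooth and proper, and length-zero elements act by automorphisms.
\item $\wh\D_\nu\star_\infty(-)$ is different: it is a $!$-pushforward along a non-proper map, on a Kirillov quotient, with a kernel built from the pro-object $\wh\cL$.
\item The claim that it is a microlocal equivalence lifting the lattice translation on $\cM_\psi$ is exactly what the paper says is only expected (\S\ref{sss:latticeaction}).
\end{itemize}

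The paper avoids this with a local model.
\begin{itemize}
\item Take $U_0=G[\t]\bI_0/\bI_0$, an affine bundle over $(G/B)_0$, with the smooth map $\psi_0\colon U_0\to\AA^1\times(G/B)_0$ induced by $\psi$, and its translate $U_\l=t^\l U_0$.
\item On $U_0$, convolution with the standard object is $\psi_0^*(\psi_\l)_!(\cF|_{U_\l})$ up to shift.
\item Smoothness of $\psi_0$ turns $\mu_0$ of this into $\phi((\psi_\l)_!\cF)$.
\item Proposition \ref{p:aff space van cycle} identifies that with $\mu_\l(\cF)$. It applies because the $\Grot$-action contracts and $\psi$ has the minimal weight $1$.
\end{itemize}

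Separately, your last step does not give the displayed formula. With the convention of Lemma \ref{l:std action DGr}, which you already used, $\wh\D_{-\nu}\star_\infty\pi^*\d_{\l+\nu}\cong\pi^*\d_{\l+2\nu}$. Your argument therefore gives $\mu_{\l+\nu}(\wh\D_\nu\star_\infty\cF)\cong\mu_{\l+2\nu}(\cF)$, and no re-indexing turns this into $\mu_\l(\cF)$.

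Your version is in fact the consistent one. Take $\cF=\cF_\l$ and $\nu\neq0$. Three facts from the paper apply:
\begin{itemize}
\item $\wh\D_\nu\star_\infty\cF_\l\cong\cF_{\l-\nu}$;
\item $SS(\cF_\kappa)=(G/B)_\kappa$;
\item these components are pairwise disjoint.
\end{itemize}
Then (1) gives $\mu_{\l+\nu}(\wh\D_\nu\star_\infty\cF_\l)=0$, whereas $\mu_\l(\cF_\l)\neq0$.

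So the formula should read $\mu_{\l+\nu}(\wh\D_{-\nu}\star_\infty\cF)\cong\mu_\l(\cF)$. The paper's reduction has the same slip: with $U_\l=t^\l U_0$, it is $\wh\D_\l$, not $\wh\D_{-\l}$, that carries $U_\l$ to $U_0$. State the corrected formula explicitly rather than appealing to an unspecified adjustment of conventions.
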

\begin{proof}
    (1) follows from the definition.

    (2) is a consequence of Proposition \ref{p:aff space van cycle}. To see this, note that it is enough to prove the case of $\nu=-\l$, as $\wh\D_{\nu}\star_{\infty}\wh\D_{\l}\cong \wh\D_{\nu+\l}$. 
    
    Consider the $G[\t]=G[t^{-1}]$-orbit $U_0=G[\t]I_0/I_0\subset \Fl$. This is an open neighbourhood of $(G/B)_0$ and the character $\psi$ induces a map $\psi_0:U_0\rightarrow\bA^1\times (G/B)_0$. We can translate this open by multipliying with $t^\l$ to obtain $U_\l=t^\l U_0$ and a map $\psi_\l:U_\l\rightarrow \bA^1\times (G/B)_\l$. We have 
    $$\wh\D_{-\l}\star_{\infty}\F|_{U_0}\cong \psi^*_0(\psi_\l)_!(\F|_{U_\l})\j{d_{-\l}}.$$

    As $\psi_0$ is a smooth map, we can rewrite $\mu_{0}(\wh\D_{-\l}\star_{\infty}\F)\cong \phi((\psi_\l)_!\F)$, where $\phi$ is taking nearby cycles along the $\bA^1$-factor in $\bA^1\times (G/B)_\l$. This is precisely the right hand side of the isomorphism in \ref{p:aff space van cycle}. The left hand side is by construction $\mu_\lambda(\F)$ and we thus obtain the required isomorphism.
\end{proof}

For an object $\F\in \wt \cD_\psi$, we define its {\em combinatorial 
support} to be the set of $\lambda\in \xcoch(T)$ for which $\mu_\l(\F)\ne 0$.

\begin{prop}\label{p:finite ss char Dpsi}
The following properties are equivalent for an object $\F\in \wt\cD_{\psi}$: 

 a) $\F\in \cD_\psi$.

 b) $SS(\F)$ consists of finitely many irreducible components of $\L_\psi$.

 c) $\F$ is $T$-monodromic and the combinatorial support of $\F$ is finite.
 
\end{prop}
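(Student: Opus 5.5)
I will prove the cycle a) $\Rightarrow$ b) $\Rightarrow$ c) $\Rightarrow$ a).

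\emph{a) $\Rightarrow$ b).} This is Lemma \ref{l:SS finite}: objects of $\cD_\psi$ have singular support in $\L_\psi$ meeting only finitely many components.

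\emph{b) $\Rightarrow$ c).} Finiteness of the combinatorial support is immediate. By Lemma \ref{micro_action}(1), $\mu_\l(\F)$ is a microstalk of $\F$ at a generic covector of the component $(G/B)_\l\subset\L_\psi$. It therefore vanishes unless $(G/B)_\l\subset SS(\F)$, and only finitely many $\l$ satisfy this. For $T$-monodromicity, recall that the $T$-action at $\infty$ preserves $\psi$ and acts on $\cM_\psi$ through the lattice action of \S\ref{ss:lattice action}, so infinitesimally it fixes $\L_\psi$ pointwise. Since $SS(\F)\subset\L_\psi$ and the moment map for $T$ vanishes on $\L_\psi$, the infinitesimal $T$-action is tangent to the covectors in $SS(\F)$. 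The argument of Lemma \ref{l:conserv} then shows that the cohomology sheaves of $\F$ are locally constant along $T$-orbits. Hence $\F$ is $T$-monodromic, with unipotent monodromy after passing to the appropriate summand, since $T$ is connected.

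\emph{c) $\Rightarrow$ a).} This is the main step. Since $\F$ is $T$-monodromic, it lies in the domain of the completed action of $\hHim$, and Lemma \ref{micro_action}(2) lets us move the combinatorial support using $\wh\D_\nu\star_\infty$. I will argue by induction on the size of the combinatorial support.

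\emph{Base case: empty support.} Here I want to show $\F=0$. The plan is to show that if $\mu_\l(\F)=0$ for all $\l$, then $\pi_*(\F\star_0\IC_w)$, i.e.\ $\pi_*$ of $\F$ convolved with anything in $\cH_0$, has vanishing microstalks on $\cM_{\psi,\bG}$. By Theorem \ref{th:MG equiv} this gives $\pi_*(\F\star_0\cK)=0$ for all $\cK\in\cH_0$. Then $\Hom(\cF_\l\star_0\cK,\F)=0$ for every generator of $\cD_\psi$, so $\F$ lies in the right orthogonal of $\cD_\psi$. I would then need that this orthogonal, intersected with the $T$-monodromic objects, is zero. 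I expect this to use that every irreducible component of $\Fl_\psi$ is reached from the components $(G/B)_\l$ by the $\cH_0$-action: the Lagrangian correspondences for $\pi_s$ move microsupport from component to component, and one combines this with conservativity of microlocalization (Lemma \ref{l:conserv}).

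\emph{Inductive step.} Pick $\l$ in the combinatorial support. The adjunction $\Hom(\pi^*\d_\l,\F)=\Hom(\d_\l,\pi_*\F)$, together with Theorem \ref{th:fullfaith}, identifies $\mu_\l(\F)$ with $\Hom$ from the generator $\cF_\l$. This gives a canonical map $\cF_\l\otimes\mu_\l(\F)\to\F$ whose cone has strictly smaller combinatorial support. Finite-dimensionality of $\mu_\l(\F)$ comes from finite type. Iterating reduces to the base case.

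\emph{Main obstacle.} The difficulty is the base case: that a $T$-monodromic object with all $\mu_\l$ vanishing must be zero. This requires knowing that $\cD_\psi$ detects all components of $\L_\psi$. I would attack it by using the Hecke action at $0$ to transport the microstalk at an arbitrary component back to some $(G/B)_\l$, via the affine-Springer-fiber combinatorics.
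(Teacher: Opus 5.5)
Your implications a)$\Rightarrow$b) and b)$\Rightarrow$c) follow the paper. One correction there: $\mu_\l(\F)$ is a complex of sheaves on all of $(G/B)_\l$, supported on $SS(\F)\cap(G/B)_\l$, so it can be nonzero when $SS(\F)$ merely meets $(G/B)_\l$; finiteness still holds, since each of the finitely many components of $SS(\F)$ meets only finitely many $(G/B)_\l$.

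The real problem is c)$\Rightarrow$a), and it is not confined to the base case: your inductive step is false. By Lemma \ref{micro_action}(1), adjunction and Theorem \ref{th:fullfaith}, $\RHom(\cF_\l,\F)=R\Gamma((G/B)_\l,\mu_\l(\F))$. This is the \emph{global sections} of the sheaf $\mu_\l(\F)$, not $\mu_\l(\F)$ itself. Applying $\mu_\l$ to the evaluation map $\cF_\l\ot\RHom(\cF_\l,\F)\to\F$ gives the counit $p^*p_*\mu_\l(\F)\to\mu_\l(\F)$ for $p:(G/B)_\l\to\pt$, which is essentially never an isomorphism. Already for $\F=\cF_\l$ one has $\RHom(\cF_\l,\cF_\l)=\upH^*(G/B)$, so the cone still has $\l$ in its combinatorial support. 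More robustly, any map $\cF_\l\ot V\to\F$ induces on $\mu_\l$ a map out of a constant sheaf. So no such map can remove $\l$ from the combinatorial support of an object whose $\mu_\l$ is not constant. An example is the object $\cF_5$ of \S\ref{Eg_SL_n}: it lies in $\cD_\psi$, yet its microlocalization along $G/B$ carries a nontrivial local system. The underlying reason is that $\cD_\psi$ is generated by the $\cF_\l$ only under the $\cH_0$-action, so objects cannot be peeled off using maps from the $\cF_\l$ alone.

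The base case, where the real content lies, is also not established, and your plan for it is circular. Vanishing of all $\mu_\l(\F)$ gives no control of $\mu_\l(\F\star_0\IC_s)$, which depends on the behaviour of $\F$ along components other than the $(G/B)_\l$. The claim ``$\cD_\psi^\perp$ contains no nonzero $T$-monodromic object'' is essentially the statement to be proved. Moreover, c) does not give $SS(\F)\subset\L_\psi$, so moving microstalks between components of $\L_\psi$ is unavailable.

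The paper works with the support of $\F$ rather than its singular support. Its key input is \eqref{open Hess supp}: if the support of $\F$ contains an open piece of the relevant stratum over $\cH_\psi(\l)$ (Lemma \ref{Relv pts Fl}), then $\mu_{\l'}(\F)\ne0$ for some $\l'\in W\l$. This is proved by an attracting-cell argument placing $G[\t]^1U_{\l,\l'}$ inside the neighbourhood $U_{\l'}$ of $(G/B)_{\l'}$, together with conservativity of $j^!_{V_{\l,\l'}}\c(\wh\D_{-\l'}\star_\infty)\c j_{U_{\l,\l'}!}$.

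The paper then proceeds in four steps:
\begin{enumerate}
\item It reduces to perverse $\F$ using t-exactness of $\mu_\l$.
\item It translates by $\wh\D_\nu\star_\infty$ so that $(G/B)_\l$ has isolated $\Grot$-fixed points for every $\l$ in the combinatorial support.
\item It deduces that $\F$ has a constituent equal to the IC extension from the $G[\t]^1$-orbit of an isolated fixed point. Such IC sheaves lie in $\cD_\psi$: starting from $\pi^*\d_\l$, $\IC(G[\t]^1w)$ is a direct summand of $\IC(G[\t]^1ws)\star_0\IC_s$ along chains of isolated fixed points.
\item It inducts on the length of $\F$.
\end{enumerate}
This constituent-by-constituent identification, which uses the $\cH_0$-action, is what your argument is missing.
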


\proof We check that a) $\Rightarrow$ b) $\Rightarrow$ c) $\Rightarrow$ a). 

Here a) $\Rightarrow$ b) is the content of Lemma \ref{l:SS finite}. The implication b) $\Rightarrow$ c) follows from Lemma \ref{mu l}a) since support of $\muhom$ between 
two constructible complexes is contained in the intersection of their singular supports and $SS(\F)\subset \Fl_\psi$ implies $\F$ is $T$-monodromic.
It remains to check that c) $\Rightarrow$ a).

We first relate the support and the combinatorial support for a $T$-monodromic sheaf $\F$. Consider the stratification of $\Bun_{G}(\bG^{\psi}_{\infty}, \bI_{0})$ by preimages of points in $\Bun_{G}(\bG_{\infty}, \bI_{0})$ which are in bijection with $\xcoch(T)$. For $\l\in \xcoch(T)$ the set of relevant points are $\cH_\psi(\lambda)$ by Lemma \ref{Relv pts Fl}. We show that:
\begin{equation}\label{open Hess supp}
\mbox{If $\cH_\psi(\lambda)$ contains an open substack in the support of $\F$, then $\mu_{\l'}(\F)\neq 0$ for some $\l'\in W\cdot\l$.}
\end{equation}
It follows that if the combinatorial support vanishes for a $W$ invariant set $S\subset \xcoch(T)$ such that $\bigcup_{\l\in S} G[\t] t^\l$ is open, then $\F$ vanishes in this open.

To show \eqref{open Hess supp}, consider $U_{\l'}$ as in the proof of Lemma \ref{micro_action}. Let $U_{\l,\l'}$ the intersection of $\cH_\psi(\lambda)$ and $U_{\l'}$, which is an open in $\cH_\psi(\lambda)$. The intersection of $(G/B)_{\l'}$ and $\cH_\psi(\lambda)$ is a Levi flag variety $P/B\subset \cH_\psi(\lambda)$. We can describe $U_{\l,\l'}$ as the attracting locus of this Levi flag variety $P/B$ under the action of some cocharacter of the maximal torus $T$. It follows that the $G[\t]^{1}$-orbit of $U_{\l,\l'}$ is contained in the attracting cell of this $P/B$ under the cocharacter $\Gm\xr{\Delta} \Gm\times\Gm\rightarrow T\times \Grot$ and thus contained in the open $U_{\l'}$, as this is an open $T\times\Grot$ stable neighbourhood of $P/B$.

As $U_{\l'}$ an affine bundle over $(G/B)_{\l'}$ and the $G[\t]^{1}$-orbit of $U_{\l,\l'}$ is contained in $U_{\l'}$, we have that the $G[\t]^{1}$-orbit of $U_{\l,\l'}$ lives over some locally closed subvariety $V_{\l,\l'}\subset (G/B)_{\l'}$. The result follows from $\mu_\lambda(\F)$ inducing a conservative functor $D^b(U_{\l,\l'})\rightarrow D^b(V_{\l,\l'})$. To see this, note that the by Proposition \ref{micro_action} this functor can be written as the following composition

\begin{equation}\label{compfunct}
    D^b(U_{\l,\l'})\xr{(j_{U_{\l,\l'}})_!} \wt\cD_{\psi}\xr{\wh\D_{-\l'}\star_{\infty}}\wt\cD_{\psi}\xr{j_{V_{\l,\l'}}^!}D^b(V_{\l,\l'}).
\end{equation}

By construction the composition $D^b(U_{\l,\l'})\xr{(j_{U_{\l,\l'}})_!} \wt\cD_{\psi}\xr{\wh\D_{-\l'}\star_{\infty}}\wt\cD_{\psi}$ produces sheaves that are $!$ extended from $V_{\l,\l'}$. It follows that the last functor, $j_{V_{\l,\l'}}^!$ is conservative on the image of the previous functors. The action $\wh\D_{-\l'}\star_{\infty}$ is an invertible functor with inverse given by $\wh\D_{\l'}\star_{\infty}$ and so the composition in equation \eqref{compfunct} is conservative. 

Now let $\F\in \wt \cD_\psi$ with $\cH_\psi(\lambda)$ containing an open substack in its support. Then there exists $\l'$ such that $U_{\l,\l'}$ contains an open substack in the support of $\F$. We show $j_{V_{\l,\l'}}^!\mu_{\l'}(\F)\neq 0$. To see this, restrict the bundle $U_{l'}$ over $V_{\l,\l'}$ and note that this intersects the support of $\F$ only at the $G[\t]^{1}$-orbit of $U_{\l,\l'}$. It follows that $j_{V_{\l,\l'}}^!\mu_{\l'}(\F)\neq 0$ is the above conservative functor $D^b(U_{\l,\l'})\rightarrow D^b(V_{\l,\l'})$ applied to $j_{U_{\l,\l'}}^!\F$. But by assumption $U_{\l,\l'}$ contains an open part in the support of $\F$ and thus $j_{U_{\l,\l'}}^!\F\neq 0$. It follows that $j_{V_{\l,\l'}}^!\mu_{\l'}(\F)\neq 0$ and thus $\mu_{\l'}(\F)\neq 0$.

Assume $\F\in \wt\cD_{\psi}$ has finite combinatorial support. 
Lemma \ref{mu l}(1) shows that the functor $\mu_\l$ is exact in the perverse $t$-structure, so we can assume that $\F$ is a perverse sheaf.
It suffices to show that $\F$ has a subquotient in $\cD_\psi$, then the claim follows by induction on the length of $\F$. Using Lemma \ref{mu l}(2) we can use Hecke action at infinity to reduce to 
the case when $\Grot$-fixed points in $(G/B)_\lambda$ are isolated for all $\lambda$
in the combinatorial support of $\F$; notice that this action is $t$-exact, so perversity and length of $\F$ are preserved. It follows that the support of $\F$ contains one of the $\Grot$-fixed points in $(G/B)_\lambda$ as an open substack in its support. For an isolated fixed point $x$ in $(G/B)_\lambda$ the IC extension of the unique irreducible sheaf in the Kirillov category of its $G[\t]^1$ orbit $G[\t]^1\cdot x$ is in $\cD_\psi$. \qed


\begin{lemma}
    For an isolated $\Grot$-fixed point $x$ in $(G/B)_\lambda$ the IC extension of the unique irreducible sheaf in $\Kir(G[\t]^\psi\bs G[\t]^1x)$ (where $G[\t]^\psi=\ker(G[\t]^1\to\frg\xr{\psi_1}\Ga)$) is in $\cD_\psi$.
\end{lemma}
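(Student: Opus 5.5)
We first make the isolated-point condition explicit. The $\Grot$-fixed points in $(G/B)_\l=t^\l G\tl{t}/\bI_0$ are the points $x=t^\l w\bI_0$ with $w\in W$; equivalently $x=t^{\l'}\dot u\bI_0$ for some $\l'\in W\l$ and $u\in W$. The point $x$ is an isolated fixed point of $\Fl_\psi$ exactly when the Hessenberg variety $\cH_\psi(\l')$ containing it is finite. By the first example in \S\ref{Hess_var}, this happens when $\l'$ is regular and $\j{\l',\a}\ne1$ for all $\a\in\Phi^+$. In that case the orbit $G[\t]^1x$ is a relevant stratum of $\Bun_G(\J,\bI_0)$, just like the strata $\J t^\nu I_{\AA^1}$ of \S\ref{sec:Wak_action}. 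Its Kirillov category is $\Kir(\pt/A\times\Ga)\cong D^b(\Vect)$, and the unique irreducible object is a skyscraper $\d$ shifted. So the object in question is $\IC_x:=j_{!*}({}^{T}\b_{x,!}\d)$, a simple perverse object in $\wt\cD_\psi$.

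The plan is to exhibit $\IC_x$ as a direct summand of an object visibly in $\cD_\psi$. Start with $\cF_{\nu}=\pi^*\d_\nu$ for a suitable $\nu$, and convolve at $0$ with the standard or costandard sheaf $j_{w!}$ of $\cH_0$ attached to $w=t^{\mu}\dot u$; this object lies in $\cD_\psi$ by definition. We use the Hecke action at $\infty$ (Lemma \ref{l:std action DGr}, which preserves $\cD_\psi$ and is t-exact by Lemma \ref{l:texact inf}) to move freely in the lattice direction. After this we may assume $\l'$ is very dominant, or at least in $\xcoch(T)^{++}$. In that regime the argument of Lemma \ref{l:Wak act dom} applies: comparing affine roots at $x$, $x+\mu$ and $\l+\mu$, the multiplication map $G[\t]^\psi t^{\nu}\bI_0\times^{\bI_0}\bI_0 w\bI_0/\bI_0\to G[\t]^\psi x$ is an isomorphism onto the orbit of $x$. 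Hence $\dc_\nu\star_0 j_{w!}$ is the $!$-extension of the irreducible Kirillov object on $G[\t]^1x$.

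Here $\dc_\nu$ is used rather than $\cF_\nu$, and the next step replaces the one by the other. Over the relevant locus, $\cF_\nu$ differs from $\dc_\nu$ only by strata that are irrelevant or that map to $\Grot$-fixed components away from $x$. We pass to the perverse cohomology $\pH^0(\cF_\nu\star_0 \IC_w)$, using intermediate extensions $\IC_w$ rather than $j_{w!}$. Its restriction to the orbit of $x$ is then nonzero, one-dimensional, and appears with multiplicity one. The decomposition theorem for the proper convolution, or equivalently the Bott–Samelson expression $\IC_{s_1}\star\cdots\star\IC_{s_k}$ with each step given by $\pi_s^*\pi_{s*}$, shows that this convolution is semisimple. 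So $\IC_x$ is a direct summand, and $\cD_\psi$ is closed under summands, being idempotent complete. The finiteness of $\cH_\psi(\l')$ is used here: the stratum of $x$ is a single relevant point of its Hessenberg stratum, so no other simple object supported on the closure of the same orbit can absorb the restriction to $G[\t]^1x$.

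The main obstacle is the multiplicity-one and semisimplicity step. Convolution of the perverse $\cF_\nu$, which is $\pi^*$ of a clean object, with $\IC_w$ is pure by weight arguments: spread out to $\FF_q$ as in Corollary \ref{c:mon gr vec to Hinf}, and note that $\cF_\nu$ is pure because $\d_\nu$ is clean by Proposition \ref{p:DGr clean}. Hence the result is a semisimple complex. What remains is to check that its stalk at the isolated point $x$ is nonzero, which comes from the isomorphism of the convolution fiber computed above. If that fiber computation fails for general $w$, the fallback is to first reduce, via the $\Hinf$-action, to $u=1$, where $x$ lies on the orbit of a $\dc$-type object and Lemma \ref{l:dcl dom} applies directly.
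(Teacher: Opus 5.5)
Your strategy is the right kind of argument: you split $\IC_x$ off a semisimple object of $\cD_\psi$ using the decomposition theorem, and the purity/semisimplicity part is fine. The gap is the decisive step. You need $G[\t]^1x$ to be open among the relevant orbits of $\Supp(\cF_\nu\star_0\IC_w)$, with nonzero restriction there, and you never prove this.

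\textbf{The geometric input concerns the wrong object.} You compute with $\dc_\nu\star_0 j_{w!}$, but $\dc_\nu$ is not in $\cD_\psi$ (Remark \ref{r:whole mush}). The multiplication-map isomorphism for $w=t^\mu\dot u$ is only asserted by analogy with Lemma \ref{l:Wak act dom}, which treats only $w=t^{-\mu}$ with $\mu$ dominant.

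\textbf{The transfer from $\dc_\nu$ to $\cF_\nu$ rests on a false claim.} $\cF_\nu=\pi^*\d_\nu$ is the shifted constant sheaf on the whole $G/B$-fibre over $\wt\cE_\nu$. That fibre is a union of $|W|$ $G[\t]^1$-orbits through the $\Grot$-fixed points of $(G/B)_\nu\subset\Fl_\psi$, and all of them are relevant by Lemma \ref{Relv pts Fl}. After convolving with $\IC_w$, these orbits can contain $G[\t]^1x$ in their closure. Finiteness of $\cH_\psi(\l')$ does not rule this out, and multiplicity one is beside the point: openness is what matters.

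\textbf{The reduction via $\Hinf$ is unjustified.} $\wh\D_\mu\star_\infty$ does not send $\IC(G[\t]^1y)$ to $\IC(G[\t]^1t^{-\mu}y)$. Indeed $\wh\D_\mu\star_\infty\cF_\l\cong\cF_{\l-\mu}$, while the orbit open in $\Supp\cF_\l$ is the one attached to the open Bruhat cell for the Borel determined by $\l$. Its Weyl-group part jumps when $\l$ crosses a wall. So you cannot make $\l'$ dominant while keeping $u$ fixed, nor fall back to $u=1$; $\dc$-objects would not help there anyway.

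\textbf{Your notion of ``isolated'' is too narrow.} ``Isolated in $(G/B)_\l$'' only means $\l$ is regular, which is weaker than finiteness of the Hessenberg variety. For $\mathrm{PGL}_2$, the point $t^{\varpi^\vee}\bI_0$ is isolated in $(G/B)_{\varpi^\vee}$ but lies on $\cH_\psi(\varpi^\vee)\cong\PP^1$.

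The missing idea is to move one simple reflection at a time, where openness is automatic. Suppose $w$ and $ws$ are isolated with $ws>w$. Then $G[\t]^1w$ is open in $\Supp(\IC(G[\t]^1ws)\star_0\IC_s)$, because orbit closures follow the opposite Bruhat order. The restriction there is the constant rank-one object, so $\IC(G[\t]^1w)$ is a summand. The base case is $\cF_\l$ itself, which for regular $\l$ is the IC extension from the orbit of the minimal element of $Wt^\l$. Isolated points correspond to alcoves strictly inside the Weyl cones. So for $\l$ deep in the cone containing $x$, this minimal element lies above $x$ and is joined to it by a Bruhat-decreasing chain $x_{i+1}=x_is_i$ of isolated points; induction along the chain gives the lemma. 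Your one-shot convolution with a long $\IC_w$ would need exactly this chain analysis to justify its openness claim.
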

\begin{proof}
    For simplicity, we will denote by $\IC(G[\t]^1 w)$ the IC extension of the unique irreducible sheaf in $\Kir(G[\t]^\psi\bs G[\t]^1 w)$.
    
    For $s$ a simple reflection, let $w$ and $ws$ be both isolated fixed point in some $(G/B)_\lambda$, such that $ws>w$ in the Bruhat order. We have that $\IC(G[\t]^1 w)$ is a summand of $\IC(G[\t]^1 ws)*\IC_s$ as the restriction of $\IC(G[\t]^1 ws)*\IC_s$ to the orbit of $w$, which is open in its support, is the constant local system. It follows that if $\IC(G[\t]^1 ws)\in \cD_\psi$, then we also have $\IC(G[\t]^1 w)\in \cD_\psi$.

    Further note that for $\l$ such that $(G/B)_\lambda$ has isolated fixed points $\pi^*\delta_\lambda$ is indeed the IC-sheaf on the orbit of $wt^\l$ of minimal length for $w\in W$.

    We just need to show that for each isolated fixed point $x$ in $(G/B)_\lambda$, there exists $w>x$ such that $\IC(G[\t]^1 w)\in \cD_\psi$ and a chain $w=x_0,\dots x=x_k$ such that $x_i$ isolated fixed points and $x_{i+1}=x_is_i$. This follows after noting that $\Grot$-fixed points in $(G/B)_\lambda$ are isolated precisely when $\lambda$ pairs non-zero with every root and so this corresponds to some set of alcoves strictly contained in the $W$-cones. Thus for any $x$ isolated, there exist $\lambda$ far enough in the corresponding cone, such that the minimal element in $Wt^\lambda$ is bigger than $x$. These two elements are in the same cone and so connected by a chain $w=x_0,\dots x=x_k$ such that $x_i$ isolated fixed points and $x_{i+1}=x_is_i$.
\end{proof}

\section{The equivalence}
We prove the main equivalence (Theorem \ref{th:intro Dpsi coh}) and give its application to small quantum groups.

\subsection{Recollections}
Recall $\bG=G\tl{t}\subset G\lr{t}$ and $\bI\subset \bG$ is an Iwahori subgroup. 
Let $G^{\vee}$ be the Langlands dual group of $G$ defined over $E$ (the coefficient field of sheaves in $\cD_{\psi}$, assumed to be of characteristic zero). Let $\cB^{\vee}$ be the flag variety of $G^{\vee}$, and $\wt\dN=T^{*}\cB^{\vee}$ its cotangent bundle. 

\begin{theorem}\label{old equiv}
a) We have monoidal equivalences 
\begin{equation}\label{der Sat}
D(\bG\bs G\lr{t}/\bG)\simeq D^b\Coh^{\dG}(\pt\times^{\bR}_{\frg^{\vee}}\pt)\longleftrightarrow D^b\Coh^{\dG}(\dg),
\end{equation}
\begin{equation}\label{two Hk}
D(\bI\bs G\lr{t}/\bI)\simeq D^b\Coh^{\dG}(\wt\dN\times^{\bR}_{\frg^{\vee}}\wt\dN)
\longleftrightarrow D^b\Coh^{\dG}(\wt\dg\times^{\bR}_{\frg^{\vee}}\wt\dg),
\end{equation}
where $\longleftrightarrow$ denotes linear Koszul duality \cite{MR}.

b) We have an equivalence 
\begin{equation}\label{GI}
D(\bG\bs G\lr{t}/\bI)\simeq D^b\Coh^{\dG}(\pt\times^{\bR}_{\frg^{\vee}}\wt\dN)\longleftrightarrow D^{b}\Coh^{\dG}(\wt \dg)
\end{equation}
intertwining the convolution action of the two monoidal categories in a).

\end{theorem}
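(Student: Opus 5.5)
This statement is a recollection of known results, so the proof is an assembly of citations together with a check that the various identifications are compatible. For the left-hand equivalence in \eqref{der Sat} I would invoke the derived geometric Satake equivalence of Bezrukavnikov--Finkelberg. It identifies $D(\bG\bs \Gr)$, with its convolution structure, with $\dG$-equivariant coherent sheaves on the derived self-intersection $\pt\times^{\bR}_{\frg^\vee}\pt$, i.e.\ dg-modules over $\Sym(\frg^\vee[-2])$ with the appropriate grading. The Satake category $\Hsh\cong\Rep(\dG)$ corresponds to the structure sheaf twisted by representations.

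The left-hand equivalence in \eqref{two Hk} is the main theorem of \cite{B}, the two realizations of the affine Hecke category. Its construction runs through Gaitsgory's central functor $\cZ$, the Wakimoto sheaves $J_\l$ (matching the line bundles $\cO(\l)$ on $\wt\dN$), and the monodromy of nearby cycles (matching the tautological endomorphism). Monoidality is part of that theorem, so I would only need to recall that the $\Rep(\dT)$ and $\Rep(\dG)$ structures used in \S\ref{sec:Wak_action} and \S\ref{ss:pf sph action} agree with those in \cite{B}. Part b), \eqref{GI}, is the Arkhipov--Bezrukavnikov--Ginzburg equivalence for the Iwahori--Whittaker/spherical--Iwahori category. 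Its compatibility with the two actions follows either from the construction in \cite{B}, where \eqref{two Hk} is built from \eqref{GI} via $\pi^*$, or by applying $\pi_*$ to the bimodule structure.

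For the right-hand arrows $\longleftrightarrow$, I would apply the linear Koszul duality of Mirković--Riche \cite{MR} to the relevant $\dG$-equivariant vector bundles. In \eqref{der Sat} this is the bundle $0\subset\frg^\vee$ over a point, whose dual is $\frg^{\vee*}\cong\dg$. In \eqref{two Hk} it is the pair of subbundles $\wt\dN\subset\frg^\vee\times\cB^\vee$; the orthogonal of the second one yields $\wt\dg$, so the derived intersection becomes $\wt\dg\times^{\bR}_{\dg}\wt\dg$. In \eqref{GI} it is the pair $\{0\}$ and $\wt\dN$ inside $\frg^\vee\times\cB^\vee$. Mirković--Riche prove exactly these statements, including compatibility with convolution, as convolution of coherent sheaves on these derived fiber products. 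The grading and regrading conventions, using the extra $\Gm$, have to be fixed consistently.

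I expect the main obstacle to be bookkeeping rather than mathematics: making the three equivalences and their Koszul duals compatible with one another, meaning both the module structures and the $\dT$-twists and inversions (compare the $\inv$ in Theorem~\ref{th:rho comm}). I would handle this by checking the compatibility on generators, namely the Wakimoto sheaves and the central sheaves, where it is already recorded in \cite{AB} and \cite{B}.
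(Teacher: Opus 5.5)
Your treatment of part (a) and of the existence of \eqref{GI} as citations, together with the linear Koszul duality bookkeeping, is fine. Two small corrections. First, \cite{ABG} gives a statement about $\bI$-constructible (monodromic) sheaves. You still need to pass to the $\bI$-equivariant category as in \cite[\S 9.3]{B}, which is how the paper obtains \eqref{GI} from the monodromic version in \cite[Theorem 6.1]{BeLo}. Second, the Iwahori--Whittaker category corresponds to $\wt\dN$ itself, not to $\pt\times^{\bR}_{\dg}\wt\dN$.

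The genuine gap is the last clause of (b): that \eqref{GI} intertwines the \emph{left} action of the derived Satake category via \eqref{der Sat}. This is not bookkeeping, for three reasons.
\begin{itemize}
\item The equivalence of \cite{BF} is built by an independent method (equivariant cohomology, Kostant--Whittaker reduction, formality). So compatibility with it cannot be read off from the constructions in \cite{B}, \cite{ABG} or \cite{BeLo}.
\item ``Applying $\pi_*$ to the bimodule structure'' does not help. $D(\bG\bs G\lr{t}/\bG)$ does not act through $D(\bI\bs G\lr{t}/\bI)$ at the derived level, since the central functor $\cZ$ exists only on perverse objects. Its left action on $D(\bG\bs G\lr{t}/\bI)$ is therefore not inherited from the Iwahori--Hecke bimodule structure.
\item Checking on generators (Wakimoto and central sheaves) only matches objects and the $\Rep(\dG)$-part of the action. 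What derived Satake adds is the $\Ext$-algebras, i.e.\ on the coherent side the $\cO(\dg)$-linear structure. A compatibility of actions has to match these morphisms, not just the objects.
\end{itemize}

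The missing idea, which the paper supplies, is to show that the composed Koszul duality in \eqref{GI}, restricted to pure complexes, carries multiplication by the first Chern class $\mathbf{c}$ of the determinant bundle to the tautological endomorphism $\tau$ of the identity on $\Coh^{\dG}(\wt\dg)$. The argument runs as follows.
\begin{enumerate}
\item $c_1(L_\l)$ is matched with the function $h_\l$ pulled back from $\dt$, by a deformation argument using the monodromic version of \eqref{GI}.
\item $\mathbf{c}$ and $\tau$ are compared on the images $j_\l$ of the Wakimoto sheaves, which go to pushforwards of $\cO(\l)$ along $\wt\dg\to\dg$.
\item The comparison extends to $\IC_\l$, using its filtration by the $j_\l$ and the injectivity of $\Ext(\IC_\l,\IC_\l)$ into endomorphisms of the associated graded.
\item It extends to all pure objects because pushforward to the Satake category is faithful on pure complexes.
\end{enumerate}
With $\mathbf{c}\leftrightarrow\tau$ in hand, the Tannakian description of derived Satake on pure objects (\cite[\S 3.4]{AB}) gives compatibility of the actions on pure complexes. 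Formality from weights (\cite[\S 6.5]{BF}) then extends this to the full derived categories.

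You do mention the tautological endomorphism when describing \cite{B}, but the place it is actually needed is this comparison with \cite{BF}. Without it, or an equivalent comparison, your proposal does not establish the intertwining claimed in (b).
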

\proof The equivalence \eqref{der Sat} was constructed in \cite{BF}, the equivalence 
\eqref{two Hk}
in \cite[Theorem 1(4)]{B}. A special case of \cite[Theorem 6.1]{BeLo}\footnote{An alternative
reference is \cite{ABG}, we neither prove nor use the fact that the equivalences given by the two constructions coincide.} yields a monodromic version
of \eqref{GI}
\begin{equation}\label{GI_mon}
D(\bG\bs G\lr{t}/\bI^0)\simeq D^b\Coh^{\dG}(\pt\times^{\bR}_{\frg^{\vee}}\wt\dg)\longleftrightarrow D^{b}\Coh^{\dG}(\wt \dg).
\end{equation}
The equivalence in \eqref{GI} follows from \eqref{GI_mon} by the argument of 
\cite[\S 9.3]{B}. Compatibility of \eqref{GI_mon} with the action of monodromic affine Hecke category
is established in \cite{BeLo}, compatibility of the equivalence in \eqref{GI} with \eqref{two Hk}
follows by inspecting the constructions. Since linear Koszul duality is compatible with the convolution action, we get that Koszul duality in \eqref{GI}, \eqref{two Hk} is also compatible with the action. 
 It remains to check compatibility
with \eqref{der Sat}. 

Recall the tautological endomorphism $\tau$ of  identity in $\Coh^{\dG}(\dg)$:
it is characterized by the property that for $\cF\in \Coh^{\dG}( \dg)$
the action of $\tau$ on the fiber $\cF_x$ equals the action of $x$ coming from the equivariant
structure. We use the same notation for the similar endomorphism of the identity functor of
$\Coh^{\dG}(\wt \dg)$.
We first check that the restriction of the composed Koszul duality in \eqref{GI} to pure complexes carries multiplication $\bf{c}$ by the first Chern class of the determinant bundle to $\tau$.

For a weight $\lambda$ let $L_\lambda$ be the corresponding line bundle  $\Fl$, and let $h_\lambda$ be the  function on $\wt \dg$ pulled back from the corresponding linear function on $\dt$.
We claim that the composed Koszul duality in \eqref{GI} sends
multiplication by the first Chern class of  $L_\lambda$ 
to multiplication by $h_\lambda$. To see this, one can interpret $c_1(L_\lambda)$ as the deformation 
 class corresponding to the deformation of the category $D(\bG\bs G\lr{t}/\bI)$ to the category of monodromic sheaves on the punctured total space of $L_\lambda$. Likewise, it is easy to 
 see that $h_\lambda$ is Koszul dual to the deformation class of the deformation 
 of $\pt\times^{\bR}_{\frg^{\vee}}\wt\dN$ to
 $\pt\times^{\bR}_{\frg^{\vee}}\wt\dg_\lambda$ where $\wt\dg_\lambda=\wt\dg\times_{\dt} \{E\cdot \lambda\}$. Thus compatibility follows from \eqref{GI_mon}.
 
 Recall the Wakimoto object $J_\lambda\in D^{b}(\bI\bs G\lr{t}/\bI)$, let $j_\lambda$ be its direct
image to $D^{b}(\bG\bs G\lr{t}/\bI)$. It is easy to see that the action $\bf{c}$ on $j_\lambda$ coincides with the action of $c_1(L_{\kappa(\lambda)})$,
where $\kappa$ is the map from weights of $\dG$ to weights of $G$ coming from the quadratic form used to define the determinant bundle. The image of $j_\lambda$
under the composed functor in \eqref{GI} is the direct image of $\cO(\lambda)$ under the map $\wt \dg \to \dg$, the parallel compatibility between the tautological
endomorphism and the action of $\dt$ is also clear for that object. Thus we get the compability between $\bf{c}$ and $\tau$ for $j_\lambda$. An irreducible $\IC_\lambda \in D(\bG\bs G\lr{t}/\bI)$
pulled back from the Satake
category admits a "filtration" (a collection of distinguished triangles) by $j_\lambda$ and   $\Ext(\IC_\lambda, \IC_\lambda)$
maps injectively to the endomorphisms of the associated graded of that filtration, so compatibility between $\bf{c}$ and $\tau$ holds for $\IC_\lambda$. Now we see it holds for 
every pure object since push-forward from $\bG\bs G\lr{t}/\bI$ to the Satake category is faithful on pure complexes.

Now compatibility of \eqref{der Sat}, \eqref{GI} with the action of the category of pure complexes in the Satake category on pure complexes in $\bG\bs G\lr{t}/\bI$ follows 
 from compatibility between $\bf{c}$ and $\tau$ in view of the Tannakian formalism, 
 see \cite[\S 3.4]{AB}. Finally, compatibility for pure complexes implies the desired compatibility by the standard
 formality from weights argument, see e.g. \cite[\S 6.5]{BF}. \qed

Next, by Corollary \ref{c:free over lattice}, we have an equivalence
\begin{equation}\label{DpsiG}
 \DG \cong D^b(\Rep(\dT))=D^{b}\Coh^{\dT}(\pt).
\end{equation}
This equivalence intertwines the action of $\wh\D_\l\in \hHim$ on the left and tensoring with the one-dimensional representation $E(\l)$ on the right.

By Theorem \ref{th:rho comm}, the spherical action of $\Rep(\dG)\cong\Hsh$ on $\DG$ intertwines with the action by tensoring on $D^b\Coh^{\dT}(\pt)$ via
\begin{equation*}
\Rep(\dG)\xr{\Res}\Rep(\dT)\xr{\inv}\Rep(\dT).
\end{equation*}

Then $T^{\vee}$ naturally acts on $\cB^{\vee}$ and hence on $\wt\dN$. Our goal in this section is to prove Theorem \ref{th1} establishing an equivalence 
\begin{equation}\label{coh_descr}
\cD_{\psi}\simeq D^{b}\Coh^{\dT}_{\cB^{\vee}}(\wt\dN),
\end{equation}
where $\Coh^{T^{\vee}}_{\cB^{\vee}}(\wt\dN)\subset \Coh^{T^{\vee}}(\wt\dN)$ is the full subcategory of sheaves  set-theoretically supported on the zero section 
$\cB^{\vee}\subset T^{*}\cB^{\vee}=\wt\dN$.

\begin{remark}\label{heur}
Heuristically, \eqref{coh_descr} follows from the above equivalences in view of the full embeddings:
$$ \DG ^{ho}\ot_{D^{ho}(\bG\bs G\lr{t}/\bG)} D^{ho}(\bG\bs G\lr{t}/\bI)\incl  \wt\cD^{ho}_{\psi}$$ 
\begin{equation}\label{Coh G tensor}
D^{ho}\Coh^{\dT}(\pt)\ot_{D^{ho}\Coh^{\dG}(\pt\times^{\bR}_{\frg^{\vee}}\pt)}D^{ho}\Coh^{\dG}(\pt\times^{\bR}_{\frg^{\vee}}\wt\dN)\incl D^{ho}\Coh^{\dT}(\wt\dN),
\end{equation} 
whose existence follows from first principles, where the superscript $^{ho}$ refers to the appropriate homotopy lift of the triangulated categories.

However, making it into a rigorous argument
involves some subtleties of homotopical algebra, including a homotopy version of compatibilities of \eqref{der Sat}, \eqref{GI} with the action of the monoidal categories on the module categories and so forth.
The argument below follows that general idea, but instead of considering the tensor product of homotopy categories we reduce
the construction to tensor product of additive categories of projective objects in relevant abelian categories which is quite close 
to the familiar tensor product of rings over a given commutative ring. The reduction is achieved by invoking Koszul duality; as a byproduct of this approach
we get a stronger statement involving an equivalence of abelian categories.
\end{remark}

\subsection{Quantum groups and noncommutative Springer resolution}


Recall \cite{BeMi} that the {\em noncommutative Springer resolution} is the ring $A=\End_{\Coh(\wt\dg)}(\cE)^{op}$ where $\cE$ is a $\LG$-equivariant vector bundle, the projective generator of the unique exotic $t$-structure \cite[\S 1.4.1, Theorem 1.5.1]{BeMi} on $D^b(\Coh(\wt \dg))$. Let $\bar{A}= A\otimes _{E[\dt]}E=\End(\cE|_{\wt \dN})^{op}$.
We have equivalences $D^b(A\lmod)\cong D^b(\Coh(\wt \dg))$, $D^b(\bar{A}\lmod)\cong D^b(\Coh(\wt \dN))$, where $\lmod$ stands for the category of finitely
generated modules.
 
Let $U_q$ be the Kac-De Concini quantum group at a root of unity $\zeta$ of order $\ell$. We require $\ell$ to be odd and  larger than the Coxeter number. If $\frg$ has a factor of type $G_2$ we also require that $\ell$ be coprime to $3$. 

By \cite{KDC} the center of $U_q$ contains the Harish-Chandra center $Z_{HC}\cong E[\LT]^W$ and the $\ell$-center $Z_\ell\cong E[\LG_0]$ where $\LG_0=\LB_-\LB_+$
is the open Bruhat cell. Thus the quotient of $U_q$ by the trivial $\ell$-central character $U_q\otimes _{Z_\ell} E=u_q$ is the small quantum group. We let $u_q^{\wh{0}}$ be the summand
of $u_q$ corresponding to a regular block.

We record the relation between these categories following from the literature.

\begin{prop}\label{barA}
\begin{enumerate}
\item The algebra $\bar{A}$ can be equipped with a grading making it a Koszul quadratic algebra.
The Koszul dual algebra $\bar{A}^!$ is Morita equivalent  to $A_0:=A\otimes_{E[\dg]}E$; the equivalence can be chosen
to be compatible with the $\LG$ action.

\item We have compatible equivalences $A_0\lmod \cong u_q^{\wh{0}}\lmod$, $A_0\lmod^{\LT}\cong u_q^{\wh{0}}\lmod^{gr}$
where $\lmod^{\LT}$ denotes the category of $\LT$-equivariant $A_0$-modules and $u_q^{\wh{0}}\lmod^{gr}$ denotes the 
category of $u_q^{\wh{0}}$ modules graded by the weight lattice $\Lambda$ in a way compatible with the $\Lambda/\ell \Lambda$
action coming from the action of the Cartan generators $K_i\in u_q$.

\item We have: $$A_0\lmod^{\dG} \cong \Perv_{\bI^0}(\Gr)\cong U_q^{Lus}\lmod_0,$$
where $U_q^{Lus}\lmod_0$ is
a regular block in the category of modules over Lusztig's big quantum group. 
\end{enumerate}
\end{prop}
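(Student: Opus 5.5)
This proposition is a compilation of results from the literature, so the plan is to assemble them and to check the compatibilities between them, especially the $\LG$- and $\LT$-equivariance and the passage between $A$, $\bar A$ and $A_0$.

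For (1), we start from \cite{BeMi}. The exotic $t$-structure on $D^b(\Coh(\wt\dN))$ is compatible with the $\Gm$-action dilating the fibers, and its heart is Koszul by the mixed/weight arguments there. This gives the grading on $\bar A=\End(\cE|_{\wt\dN})^{op}$ making it a Koszul quadratic algebra. To identify the Koszul dual, we use linear Koszul duality \cite{MR}. It exchanges $\Coh^{\Gm}(\wt\dN)=\Coh^{\Gm}(\pt\times^{\bR}_{\dg}\wt\dg)$ with $\Coh^{\Gm}(\wt\dg\times^{\bR}_{\dg}\pt)$. The latter is the derived fiber of $\wt\dg$ over $0\in\dg$, whose exotic projective generator is $\cE|_{\wt\dg\times_{\dg}\{0\}}$, with endomorphism algebra $A_0=A\ot_{E[\dg]}E$. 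It remains to show that the Koszul dual of a projective generator corresponds to simples, which identifies $\bar A^!$ with $A_0$ up to Morita equivalence. Everything is $\LG$-equivariant because $\cE$ and linear Koszul duality are.

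For (2), the first equivalence is the theorem of Arkhipov--Bezrukavnikov--Ginzburg \cite{ABG}, as refined by Bezrukavnikov--Lachowska, realizing the regular block of $u_q$ via $D^b\Coh(\wt\dN)$ and the exotic $t$-structure, in the formulation of \cite{BeMi}. The graded ($\Lambda$-graded) version is obtained by adding $\LT$-equivariance on both sides, using that the $\Lambda/\ell\Lambda$-action on $u_q$ matches the restriction of the $\LT$-action.

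For (3), the first equivalence combines \cite{ABG} (an equivalence of the regular block of Lusztig's big quantum group with $\Perv_{\bI^0}(\Gr)$ at the derived level) with the monodromic Iwahori--Whittaker/coherent realization from \eqref{GI_mon}. Under \eqref{GI_mon}, the perverse $t$-structure corresponds to the exotic one on $\Coh^{\dG}$ of the fiber over $0$. The second equivalence is \cite{ABG} together with the exotic heart identification of \cite{BeMi}.

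The main obstacle is making the several references compatible: ensuring that the gradings and the $\LT$/$\LG$-equivariance survive Koszul duality in (1), and that the equivalence in (2) identifies exactly the principal block with $A_0$. We would address this first by working throughout with the mixed (Weil/$\Gm$-graded) versions, where Koszulity and purity pin down the gradings.
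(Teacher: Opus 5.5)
For (1) you set up the same framework as the paper: Koszulity of $\bar A$ from \cite[Proposition 5.5]{BeMi}, then the Mirkovi\'c--Riche Koszul-type equivalence $D^b\Coh^{\Gm}(\wt\dN)\cong D^b\Coh^{\Gm}(\wt\dg\times^{\bR}_{\dg}\{0\})$, combined with $D^b(A_0\lmod)\cong D^b\Coh(\wt\dg\times_{\dg}\{0\})$. Note that $\pt\times^{\bR}_{\dg}\wt\dg$ is this derived fiber, not $\wt\dN$, so your displayed identification is a slip.

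The real problem is that the step you defer with ``it remains to show'' is the whole content of the Morita statement, and nothing in your proposal proves it. Koszulity of $\bar A$ gives you $\bar A^!=\Ext^\bullet_{\bar A}(L,L)$, with $L$ the sum of simples. Identifying this with $A_0$ requires that the geometric equivalence send \emph{every} indecomposable graded projective $A_0$-module to a simple $\bar A$-module, up to shift-twist. ``Purity pins down the gradings'' does not give that.

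The paper supplies the missing input as follows. The projective $A_0$-module corresponding to the structure sheaf of the derived fiber goes to $i_*\cO_{\cB^\vee}$, with $i$ the zero section, and this is an irreducible $\bar A$-module. All other indecomposable projectives are obtained from this one by the geometric reflection functors of \cite{BeMi}. Finally, the Koszul-dual counterparts of these reflection functors are semisimple endofunctors of $D^b(\bar A\lmod^{\Gm})$. This last fact is proved in \cite{Ri} over a field of large positive characteristic and transfers to characteristic zero. Some input of this kind is indispensable.

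Your treatment of (2) also fails as stated. The exotic heart on $\wt\dN$ is $\bar A\lmod$, which is the Koszul-dual side (compare Theorem \ref{th1}(2) with Corollary \ref{main_cor}(1)), not $u_q^{\wh{0}}\lmod$. Moreover, the \cite{ABG}/\cite{BeLa} realization of the small quantum group block is only a derived equivalence with a regraded (dg) version of $\Coh(\wt\dN)$. It gives neither an abelian equivalence with $A_0\lmod$ nor, by ``adding $\LT$-equivariance on both sides'', the graded statement.

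The paper goes the other way and proves (3) first:
\begin{itemize}
\item The equivalence $A_0\lmod^{\LG}\cong\Perv_{\bI^0}(\Gr)$ is \cite[Corollary 6.3]{BeLo}. This is exactly the t-exactness of \eqref{GI_mon} that you assert without justification, and \cite{ABG} plays no role there.
\item The equivalence $\Perv_{\bI^0}(\Gr)\cong U_q^{Lus}\lmod_0$ is the outcome of Lusztig's program, with \cite{ABG} giving an alternative proof.
\end{itemize}
It then checks that the composite is compatible with the $\Rep(\LG)$-actions. Using the Arkhipov--Gaitsgory description \cite{AG} of $u_q$-modules as $\Rep(\LG)$-eigenobjects in $U_q^{Lus}$-modules, the corresponding operation on the coherent side forgets $\LG$-equivariance, respectively restricts it to $\LT$ in the $\Lambda$-graded variant. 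This yields both equivalences in (2) compatibly.
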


\proof The first statement in part (1) is a special case of \cite[Proposition 5.5]{BeMi}. The $\LG$-equivariant Morita equivalence  of the Koszul
dual algebra  $\bar{A}^!$ with $A_0$ follows from the arguments of \cite{Ri}. Namely, by \cite{MR} we have a Koszul type equivalence 
(i.e. an equivalence sending homological shift to the composition of the homological shift and twist by the tautological  character of $\Gm$)
\[ D^b(\Coh^{\Gm}(\wt \dN))\cong D^b(\Coh^{\Gm}(\wt \dg \times^{\bR} _{\Lg} \{ 0\})) \] where the fiber product in the right hand side is understood
to be derived. Recall the equivalences \[ D^b(\bar{A}\lmod)\cong D^b(\Coh(\wt \dN)) \text{ and } D^b(A_0\lmod)\cong D^b(\Coh(\wt \dg \times _{\Lg} \{ 0\})), \]
where the latter follows from the equivalence $D^b(A\lmod)\cong D^b(\Coh(\wt \dg))$ by base change, along with their $\Gm$ equivariant versions. It remains
to show that projective graded ($\Gm$-equivariant) 
$A_0$ modules are sent into semisimple complexes of $\bar{A}$-modules (i.e. a sum of simple modules and their homological shifts).
One such projective module $P_0$ corresponds to the structure sheaf of $\wt \dg \times _{\Lg} \{ 0\}$. Its image under Koszul duality is 
$i_*(\cO)$ where $i:\cB^\vee\to \dN$ is the zero section. This object corresponds to an irreducible $\bar{A}$-module. Other projective modules
$P_0$ are obtained from $P_0$ by {\em geometric reflection functors} \cite{BeMi}, thus we are reduced to checking that the Koszul dual
functors in the sense of \cite{MR} correspond to semi-simple 
endofunctors of $D^b(\bar{A}\lmod^{\Gm})$. 
For a base field of large positive characteristic the latter is shown in 
  \cite{Ri}. The present case of a characteristic zero base field follows.

The first equivalence in part (3) is \cite[Corollary 6.3]{BeLo}, while the second one is the main result of Lusztig's program. See overview in the introduction to \cite{ABG}; the main result of \cite{ABG} provides an alternative proof.

The composed equivalence between the first and the third category in part (3) is easily checked to be
compatible with the natural $\Rep(\LG)$ action. Using the description of modules over the small quantum group as $\Rep(\LG)$ eigenobjects in
representations of $U_q^{Lus}$ \cite{AG} we get  compatible equivalences in part (2). \qed

\begin{remark} Let $U_q^0$ be the quotient of $U_q$ by a regular integral character of $Z_{HC}$. Let $U_q^0\lmod_0$ be the category of (finitely generated) modules
where $Z_\ell$ acts with the trivial generalized central character. 
Using a result of \cite{Tanisaki}, one can construct compatible equivalences  $\bar{A}\lmod_0 \cong U_q^0\lmod_0$,
 $\bar{A}\lmod_0^{\LT}\cong U_q^0\lmod_0^{gr}$ where  $U_q^0\lmod_0^{gr}$ 
  denotes the 
category of $U_q^0$ modules graded by the weight lattice $\Lambda$ in a way compatible with the $\Lambda/\ell \Lambda$
grading by generalized eigenspaces of the Cartan generators $K_i\in u_q$.
This gives an alternative way to establish the above equivalences between derived
categories of coherent sheaves and quantum group modules, bypassing \cite{ABG}. See the discussion
in \cite{BeLa}.
\end{remark}

\begin{cor}\label{Abar0}
Consider the dga $\bar{A}_0=A_0\otimes^{\mathrm{L}}_{E[\frt]} E=\bar{A} \otimes^{\mathrm{L}}_{E[\dg]} E$.
We have a $t$-exact equivalence $D^b(\bar{A}_0\lmod^{\LG})\cong D_{\bI}(\Gr)$.
\end{cor}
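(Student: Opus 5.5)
The plan is to derive the corollary from Proposition \ref{barA}(3) by a derived base change along $E[\dt]\to E$, together with the geometric fact that $D_{\bI}(\Gr)$ is obtained from $D_{\bI^0}(\Gr)$ by de-monodromization.

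First I would upgrade the abelian equivalence $A_0\lmod^{\LG}\cong \Perv_{\bI^0}(\Gr)$ of Proposition \ref{barA}(3) to a derived one, $D^b(A_0\lmod^{\LG})\cong D_{\bI^0}(\Gr)$ (free-monodromic completion understood). The left side is the derived category of its heart, since $A_0$ has finite global dimension $\LG$-equivariantly, being the base change of the noncommutative Springer resolution. For the right side I would use Beilinson's realization theorem together with the Ext comparison implicit in \cite[Corollary 6.3]{BeLo}; equivalently, I would take this derived equivalence directly from \eqref{GI_mon} composed with the tilting description of the exotic $t$-structure.

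Second, I would match the $E[\dt]$-actions on the two sides. On $D_{\bI^0}(\Gr)$, $E[\dt]=\wh{\Sym}(\frt)$ acts through the logarithm of monodromy along the right $T$-action. On $A_0\lmod^{\LG}$ it acts through the central subalgebra $E[\dt]\subset A$ coming from $\wt\dg\to\dt$. The compatibility is the monodromic analogue of the statement in the proof of Theorem \ref{old equiv} identifying $c_1(L_\lambda)$ with $h_\lambda$. I would check it on the projective generators, namely the images of Wakimoto-type objects $j_\lambda$, where both actions are computed explicitly.

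Third, the de-monodromization step. Objects of $D_{\bI}(\Gr)$ are the monodromic objects with trivialized monodromy, so $D_{\bI}(\Gr)\cong D_{\bI^0}(\Gr)\ot_{E[\dt]\lmod}\Vect$. Concretely, I would realize this via the averaging / $!$-pushforward along $\bI^0\bs\Gr\to\bI\bs\Gr$, exactly as in the proof of Lemma \ref{l:right T equiv std}, where $R\Gamma_c(T,\wh\cL\j{2r})\cong E$. On the coherent side this base change gives modules over the dga $A_0\otimes^{\mathrm L}_{E[\dt]}E=\bar A_0$. The identification $\bar A\otimes^{\mathrm L}_{E[\dg]}E\cong \bar A_0$ holds by flatness of $A$ over $E[\dg]$ and $E[\dt]$.

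I expect the main obstacle to be $t$-exactness. The dga $\bar A_0$ has Koszul-type cohomology in nonpositive degrees, and the $t$-structure on $D^b(\bar A_0\lmod^{\LG})$ is the one whose heart is $H^0$-modules. I would show that pushforward to $\bI\bs\Gr$ sends free-monodromic tilting/projective perverse objects to perverse objects. Indeed, $\pi_!$ for a $T$-torsor is $t$-exact up to shift on monodromic perverse sheaves, using that these are free over $\wh{\Sym}(\frt)$. The $t$-exactness then follows by comparing hearts generated by the images of projectives.
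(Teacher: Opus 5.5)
Your overall route is the paper's: the paper deduces the corollary from $A_0\lmod^{\LG}\cong\Perv_{\bI^0}(\Gr)$ (Proposition~\ref{barA}(3)) ``by base change, cf.\ \cite[\S 9.3.1]{B}''. However, your third and fourth steps rest on a false premise, and the step you flag as the main obstacle fails as written.

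\textbf{Nothing here is free-monodromic.} Since $T$ fixes every $t^\lambda$, the $\bI$- and $\bI^0$-orbits on $\Gr$ coincide. So over each stratum the $T$-torsor $\pi:\bI^0\bs\Gr\to\bI\bs\Gr$ has the form $\pt/U'\to\pt/(T\ltimes U')$ with $U'$ pro-unipotent. The monodromy is therefore trivial stratumwise, and log-monodromy acts nilpotently. Correspondingly, $E[\dt]$ acts on $A_0$ through the finite-dimensional coinvariant algebra $E[\dt]\otimes_{E[\dt]^W}E$. So no nonzero $A_0$-module is free over $E[\dt]$, and there is no free-monodromic completion to take.

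\textbf{Consequences for your argument.} First, $\pi_!$, equivalently $-\otimes^{\mathrm L}_{E[\dt]}E$, is not $t$-exact up to any shift. For a projective $A_0e$, the module $\bar A_0e=A_0e\otimes^{\mathrm L}_{E[\dt]}E$ has nonzero cohomology in degree $0$ (the $\dt$-coinvariants) and in degree $-r$ (the $\dt$-invariants). Likewise, $\pi_!$ of a projective perverse sheaf is spread over $r+1$ perverse degrees. So the images of projectives lie in neither heart, and ``comparing hearts generated by the images of projectives'' cannot work.

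Second, these images do not even generate. Every $\cG$ in the thick closure of $\pi_!(D_{\bI^0}(\Gr))$ has $\RHom(\cG,\cH)$ finite-dimensional for all $\cH$, by adjunction with $\pi^!$. But for the skyscraper $\delta$ at the base point, $\RHom_{D_\bI(\Gr)}(\delta,\delta)=H^*_T(\pt)$, which is infinite-dimensional. Dually, the simple $H^0(\bar A_0)$-modules are not perfect over $\bar A_0$. So ``$D_\bI(\Gr)\cong D_{\bI^0}(\Gr)\otimes_{E[\dt]\lmod}\Vect$, realized by $\pi_!$'' only matches the perfect parts, not the bounded categories in the statement.

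\textbf{How to repair it.} Match the perfect parts via $\pi_!\leftrightarrow-\otimes^{\mathrm L}_{A_0}\bar A_0$ (up to shift), using your step~2. Then recover the bounded categories on both sides inside the ind-completions by a finiteness condition, or monadically. Obtain $t$-exactness from the \emph{right} adjoints, not from $\pi_!$. Restriction of scalars along $A_0\to\bar A_0$ and the forgetful functor $\pi^*[r]:D_\bI(\Gr)\to D_{\bI^0}(\Gr)$ are both $t$-exact and conservative. They are intertwined by the equivalences, being right adjoints of intertwined left adjoints. A $t$-exact conservative functor detects the $t$-structure, so $t$-exactness follows from that of $\Perv_{\bI^0}(\Gr)\cong A_0\lmod^{\LG}$.

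A smaller slip: finite global dimension is not inherited under base change. Non-equivariantly, $A_0$ is Morita equivalent to the self-injective, non-semisimple $u_q^{\wh 0}$. Taking the derived equivalence directly from \eqref{GI_mon}, as you also suggest, avoids this.
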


\proof This follows from the first equivalence in Proposition \ref{barA}c) by base change, cf. \cite[\S 9.3.1]{B}. \qed

\begin{remark}\label{Koszul_rem}
The proof of Koszul duality between  $\bar{A}^!$ and $A_0$ can be adapted to show Koszul duality between the dga $\bar{A}_0$ and the ring $A$:
again we have a Koszul type equivalence $D^b(\Coh^{\Gm}(\wt \dg)) \cong D^b\Coh^{\Gm}(\wt \dN \times _{\Lg} \{0\})$ by \cite{MR}. This functor sends the structure
sheaf of $\wt \dg$ to $i_*(\cO)$ where $i:\cB^\vee\to\wt  \dN\times _{\dg} \{0\}$. The two objects correspond, respectively, to a projective $A$-module and an irreducible $\bar{A}_0$-module. Furthermore, the geometric reflection functors commute with semisimple functors on $D(\bar{A}_0\lmod)$ which yields the statement.
\end{remark}

\subsection{The equivalence}
Let $\cP_\psi=\cD_\psi^\hs\subset \cD_\psi$ be the subcategory of perverse sheaves.

\begin{theorem}  \label{th1} 
\begin{enumerate}
\item We have a canonical equivalence
$$\Th:  \cD_{\psi}\simeq D^{b}\Coh^{\dT}_{\cB^{\vee}}(\wt\dN). $$


It intertwines the action of $\Rep(\dT)$ coming from Corollary 
\ref{c:mon gr vec to Hinf} with the action of the affine Hecke category \eqref{two Hk}.

\item The equivalence intertwines the exotic $t$-structure on $D^b(\Coh^{\LT}(\wt\dN))$ with the
$t$-structure on $\cD_\psi$ induced by the perverse $t$-structure on constructible sheaves. 
Thus it induces an equivalence 
$$\Th^\hs: \cP_\psi\cong \bar{A}\lmod_0^{\LT}.$$
\end{enumerate}
\end{theorem}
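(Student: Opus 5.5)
Following Remark \ref{heur}, we would replace the tensor product of homotopy categories by a computation with one explicit object and its endomorphism dga, matched on both sides via Koszul duality.

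\textbf{Constructible side.} We take the generator $\cF_0=\pi^*\d_0$ together with its translates $\wh\D_\mu\star_\infty \cF_0\cong\cF_{-\mu}$ and the Hecke action. By adjunction,
\[
\RHom_{\wt\cD_\psi}(\cF_0, \cF_0\star_0 \cK)\cong\RHom_{\DG}(\d_0,\pi_*(\cF_0\star_0\cK)).
\]
Here $\pi_*(\pi^*\d_0\star_0\cK)$ is the action of $\pi_*\cK\in D(\bG\bs G\lr{t}/\bI)$ on $\d_0$. By Proposition \ref{p:DGr clean} and Theorem \ref{th:rho comm}, $\DG\cong D^b(\Rep(\dT))$ as a module over $\Hsh\cong\Rep(\dG)$ through $\inv\c\Res$. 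Using Theorem \ref{old equiv}(b), we would compute the $\xcoch(T)$-graded dga
\[
\bigoplus_\mu\RHom(\cF_\mu,\cF_0\star_0\IC)
\]
for a tilting/projective generator $\IC$ of $D(\bG\bs G\lr{t}/\bI)$, chosen so that it corresponds to the exotic generator $\cE|_{\pt\times^{\bR}_{\dg}\wt\dN}$. Formality of this dga would come from purity (weights), exactly as in the proof of Theorem \ref{old equiv}: all objects $\cF_\l\star_0\IC_w$ are pure, so the $\Ext$ algebras are formal.

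\textbf{Coherent side.} We take the functor $D^b\Coh^{\dG}(\pt\times^{\bR}_{\dg}\wt\dN)\to D^b\Coh^{\dT}_{\cB^\vee}(\wt\dN)$ given by restricting $\dG$-equivariance to $\dT$ and pushing forward from the derived fiber over $0$. Its image generates the target, and its Hom spaces are computed by the same formula: $\Hom$ over $\pt\times^{\bR}_{\dg}\pt$ of $\cO(\l)$ twisted by $\Rep(\dG)$, now restricted to $\dT$. We would then identify the dgas on both sides. Concretely, the constructible endomorphism algebra of $\bigoplus_w \cF_0\star_0\IC_w$ (graded by $\Rep(\dT)$) should be the $\dT$-equivariant version of $\bar A_0$ from Corollary \ref{Abar0}, pulled back along $\Rep(\dG)\to\Rep(\dT)$. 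The identification uses the compatibility of the $\Hsh$-action with restriction to $\dT$ from Theorem \ref{th:rho comm}.

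\textbf{Gluing and t-structures.} We pass to abelian categories: by Proposition \ref{barA} and Remark \ref{Koszul_rem}, $\bar A_0$ is Koszul dual to $A$. The category of modules with nilpotent central character over the $\dT$-equivariant version of $\bar A$ then identifies with the heart generated by simple objects on both sides. On the constructible side these are the images of perverse simples, using Lemma \ref{l:texact inf} and the t-exactness of $\cZ$. Beilinson's theorem then gives the derived equivalence $\Th$ from the abelian equivalence $\Th^\hs$. Compatibility with $\hHim$ follows because $\wh\D_\l$ corresponds to $E(\l)$ by \eqref{DpsiG}. Compatibility with the affine Hecke category follows from Theorem \ref{old equiv}(b) together with the fact that all constructions are $\cH_0$-linear.

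\textbf{Main obstacle.} The hard step is the full faithfulness of the coherent analogue \eqref{Coh G tensor} and its constructible counterpart after passing to projectives. Concretely, one must show that
\[
\Hom_{\wt\cD_\psi}(\cF_0\star\cK_1,\cF_\mu\star\cK_2)\cong \Hom_{\DG}(\d_0,\ \d_\mu\star_0(\text{image of }\cK_1^\vee\star\cK_2\text{ in } D(\bG\bs G\lr{t}/\bG))).
\]
This is a base change over the Satake category, and it must hold coherently in the multiplicative structure. We would try to prove it first for Wakimoto filtrations, using Proposition \ref{Prop:wakvsHinfty} and the ${}^T\cC$ filtration of Lemma \ref{l:filC}, and then reduce the general case via weights.
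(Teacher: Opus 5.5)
The gap is at the step you call the main obstacle, and the difficulty there is misplaced. The Hom formula you display is formal, since convolution with $\cK_1$ has a right adjoint given by convolution with its dual, and $\pi^*\dashv\pi_*$. What is actually missing is a \emph{functor} from the coherent side to $\cD_\psi$ that induces these isomorphisms compatibly with composition. Computing $\RHom$ spaces separately on the two sides and then ``identifying the dgas'' gives at best isomorphisms of graded vector spaces, not of algebras. The Wakimoto route does not supply such a map either: the objects $\dc_\l$ of ${}^T\cC$ do not lie in ${}^{T}\cD_\psi$, and that machinery serves only to prove Theorem \ref{th:rho comm}.

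The paper builds the functor by performing the base change over the Satake category on additive categories of pure objects, where it becomes an ordinary base change of rings:
\begin{enumerate}
\item Restrict $\ph_0=L_0\star_{\bG}(-)\colon D_{\bI}(\Gr)\to\cD_\psi$, with $L_0=\d_0$, to the semisimple complexes $\Irr_{\Gr}$. By Corollary \ref{Abar0} and Koszul duality, these are the graded $\LG$-equivariant projective $A$-modules.
\item By \eqref{der Sat}, the semisimple Satake objects form $\Coh^{\LG\times\Gm}_{fr}(\Lg)$.
\item Theorem \ref{th:rho comm}(2), together with $\DG\cong D^b(\Rep(\dT))$, shows that this category acts on the image of $\ph_0$ through pullback to $0\in\Lg$ followed by restriction to $\LT$. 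Morphisms of positive degree act by zero, because the $\Hsh$-action preserves the heart and $\Ext^{\neq0}$ between heart objects vanish.
\item Hence $\ph_0|_{\Irr_{\Gr}}$ factors through graded $\LT$-equivariant projective $A_0=A\otimes_{E[\dg]}E$-modules, i.e.\ through $\Irr(\bar A\lmod^{\LT})$. This gives an honest functor $\ph$.
\item Full faithfulness of $\ph$ follows by using its compatibility with semisimple objects of the affine Hecke category (Theorem \ref{old equiv}(b)) to reduce to the source $\cF_0=\pi^*\d_0$. One then compares the adjunction $(\pi^*,\pi_*)$ with $(\iota_*p^*,p_*\iota^*)$ on the coherent side through \eqref{DpsiG}.
\end{enumerate}

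Two further steps of your plan fail as written.

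\textbf{Formality.} Purity of the objects $\cF_\l\star_0\IC_w$ does not by itself make $\Ext^i$ in the Kirillov quotient pure of weight $i$. Moreover, a tilting or projective generator is not pure at all. The paper instead gets formality from the coincidence of the inner grading with the homological grading, transported from $D^b(\bar A_0\lmod^{\LG})\cong D_{\bI}(\Gr)$ along the identification of Hom spaces provided by $\ph$.

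\textbf{Gluing.} You cannot obtain $\Th^\hs$ first by matching simple objects. An abelian category is not determined by its simples, and the missing $\Ext$ data is exactly what the derived comparison provides. So recovering $\Th$ from $\Th^\hs$ via Beilinson's theorem is circular. The derived equivalence must come first. Part (2) then follows from the $t$-exactness of the $(\J,\psi)$-averaging functor $D_{\bI^0}(\Gr)\to\cD_\psi$ and of the coherent description, not from Lemma \ref{l:texact inf} or the $t$-exactness of the central functor.
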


\proof 
Consider the subcategory $\Irr_{\Gr}$ of semisimple complexes (i.e. of sums of irreducible perverse sheaves and their homological shifts) in $D(\bG\bs LG/\bI)$. 
Let $L_0\in  \cD_{\psi, \bG}$ be the irreducible object with full support. 
We have the functor $\ph_0: D_{\bI}(\Gr) \to \cD_\psi$ sending $\cF$ to the convolution $L_0\star_{\bG}\cF$. Its essential image generates $\cD_\psi$. 
Using Corollary \ref{Abar0} we can identify $\Irr_{\Gr}$ with the full subcategory $\Irr(\bar{A}_0\lmod^{\LG})$ of semisimple
complexes in $D^b(\bar{A}_0\lmod^{\LG})$. We have a functor $\ph_1: D^b(\bar{A}_0\lmod^{\LG}) \to D^b(\bar{A}\lmod^{\LT})$ given by pull back and restricting the equivariance. By inspection, the equivalences $$D^b(\bar{A}_0\lmod^{\LG})
\cong  D^b\Coh^{\LT}(\wt \dN\times^{\bR}_{\dg}\{0\}),$$
$$D^b(\bar{A}\lmod^{\LT})\cong D^b\Coh^{\LT}(\wt \dN)$$ intertwine $\ph_1$ with the natural
functor between the coherent sheaves categories. Hence, the essential image of $\ph_1$ generates $D^{b}\Coh^{\dT}_{\cB^{\vee}}(\wt\dN)$.
It thus suffices to identify the essential images $\ph_0(\Irr_{\Gr})$ and $\ph_1(\Irr(\bar{A}_0\lmod^{\LG}))$ and show formality of the corresponding $\RHom$ algebras.

By Proposition \ref{barA}(1), $\Irr(\bar{A}_0\lmod^{\LG})$ is equivalent to the category of $\LG$-equivariant projective graded $A$-modules.
By \eqref{der Sat} the category  $\Irr_S$ of semisimple objects in the Satake category $D(\bG\bs LG/\bG)$ is equivalent to the category $\Coh^{\LG\times \Gm}_{fr}(\Lg)$ of projective
objects in $\Coh^{\LG\times \Gm}(\Lg)$.

 By Theorem \ref{th:rho comm}(2), $\ph_0|_{\Irr_{\Gr}}$ intertwines the action of $\Irr_S \cong \Coh^{\LG\times \Gm}_{fr}(\Lg)$
on $\Irr_{\Gr}$  with an action on its target factoring through the functor
$\Coh^{\LG\times \Gm}_{fr}(\Lg)\to \Rep(\LT)$ given by pull back to $0\in \Lg$ followed by restriction of equivariance from
$\LG$ to $\LT$. It follows that $\ph_0|_{\Irr_{\Gr}}$ factors canonically through a functor from $\LT$-equivariant projective graded modules
over $A/\Lg A=A_0$, which by Koszul duality is identified with $\Irr((\bar{A}\lmod^{\LT}))$,
to $\cD_\psi$.
We now argue that the resulting functor 
$\ph: \Irr(\bar{A}\lmod^{\LT})\to  \cD_\psi$  is fully faithful. 

By Theorem \ref{old equiv}(b) the functor $\ph_0$ is compatible
with the action of the affine Hecke category. It follows that
 $\ph$ is compatible with the action of 
the category of semisimple complexes in the affine Hecke category $\Irr(\bI\bs LG/\bI)$.
This allows one to reduce checking that the map $\Hom(A,B)\to \Hom(\ph(A),\ph(B))$ is an isomorphism to the case
when $A=\cF_0=\pi_{\bG}^*(\delta_0)$. 

One also checks that $\ph$ fits in commutative diagrams with the equivalence \eqref{DpsiG}:

$$
\xymatrix{ \Irr(\bar{A}\lmod^{\LT}) \ar[d]^{\alpha}\ar[r]^-{\ph}  & \cD_\psi \ar[d]^{\pi_*}\\
D^b(\Rep(\LT)) \ar[r]^-{\eqref{DpsiG}}& \DG } 
$$
$$
\xymatrix{ 
D^b(\Rep(\LT)) \ar[d]^{\beta} \ar[r]^-{\eqref{DpsiG}} & \DG \ar[d]^{\pi^*} \\ 
   \Irr(\bar{A}\lmod^{\LT}) \ar[r]^-{\ph}  & \cD_\psi }
$$
where $\alpha$ and $\beta$ come from the functors 
 $ p_*\iota^*: D^b( \Coh^{\LG}(\wt\dN))\to D^b(\Rep(\LT))$ and 
 $\iota_*p^*: D^b(\Rep(\LT)) \to
D^b( \Coh^{\LG}(\wt\dN) )$ as follows. We have $\alpha= p_*\iota^*|_{\Irr(\bar{A}\lmod^{\LT}) }$, where we identify
 $\Irr(\bar{A}\lmod^{\LT})$ with the image of its full embedding in $D^b(\Coh^{\LT}(\wt\dN))$). On the other hand, $\beta$ post-composed with the full embedding
$  \Irr(\bar{A}\lmod^{\LT}) \incl D^b(\Coh^{\LT}(\wt\dN))$ is identified with $\iota_*p^*$.  
Here $\iota: \wt\dN\times^{\bR}_{\Lg} \{0\}\to \wt\dN$ is the embedding and $p: \wt\dN\times^{\bR}_{\Lg} \{0\}\to \{0\}$
is the projection. The commutativity follows from similar compatibilities between \eqref{der Sat}, \eqref{GI} mentioned after \eqref{GI}.

It follows that the essential image of $D^b(\Rep(\LT)) $ in $\Irr(\bar{A}\lmod^{\LT})$ is equivalent to the essential image of $\DG$ in $\cD_\psi$. 

This yields an equivalence between the essential images  $\ph_0(\Irr_{\Gr})$ and $\ph_1(\Irr(\bar{A}_0\lmod^{\LG}))$.

Finally, formality in both cases is implied by coincidence of the inner (dilation) grading with the homological grading on the corresponding $\Ext$ spaces, cf.
\cite[Proposition 6]{BF}. For the essential image of $\ph_1$ this coincidence follows from Koszul duality. Also, the identification between the essential images of $\ph_1$
and $\ph_0$ carries the inner grading into the weight grading, since the same is true for the equivalence $D^b(\bar{A}_0\lmod^{\LG})\cong D_{\bI}(\Gr)$, this yields the
grading compatibility for the essential image of $\ph_0$. This proves the equivalence in part (1). Compatibility with the action of $\Rep(\dT)$ and of
$\Irr(\bI\bs LG/\bI)$ on the subcategory of semi-simple complexes follows, respectively, from Theorem \ref{th:rho comm}(2) and Theorem \ref{old equiv}(b) as pointed out above. The extension to the full derived category via
formality of $\RHom$ is compatible with convolution, see e.g. \cite[\S 6.5]{BF},
this yields compatibility with monoidal actions claimed in part (1).

By Proposition \ref{barA}(3) we have an equivalence between the category
of $\LG$-equivariant $A_0$-modules and a regular block in the category of Lusztig's big quantum group $U_q^{Lus}\lmod_0$.
Now (2) follows from $t$-exactness of the $(\J,\psi)$ averaging functor $D_{\bI^0}(\Gr) \to \cD_\psi$ and 
the compatibility of the coherent description of $D_{\bI^0}(\Gr)$ with the equivalence of part (1). \qed 

\begin{cor}\label{main_cor}
\begin{enumerate}
\item The category $\cP_\psi$ is equivalent 
to the category of finitely generated modules with nilpotent central character 
over the Koszul dual to $u_q^{\wh{0}}\lmod^{gr}$.
\item We have an exact full embedding $u_q^0\lmod \to \cP_\psi$ inducing a bijection between isomorphism classes of irreducible objects,
where $u_q^0$ is the quotient of $u_q$ by the regular Harish-Chandra central character.
\end{enumerate}
\end{cor}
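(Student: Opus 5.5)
Both parts of Corollary \ref{main_cor} are obtained by composing the abelian equivalence $\Th^\hs:\cP_\psi\cong \bar{A}\lmod_0^{\LT}$ of Theorem \ref{th1}(2) with the algebraic identifications of Proposition \ref{barA}. No further geometry is needed.

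For part (1), start from Theorem \ref{th1}(2), which identifies $\cP_\psi$ with finitely generated $\LT$-equivariant $\bar{A}$-modules on which the central subalgebra (functions on $\dN$, i.e. $E[\dg]$ acting through $\bar{A}$) acts nilpotently; this is the meaning of the subscript $0$, matching set-theoretic support on $\cB^\vee$. By Proposition \ref{barA}(1), $\bar{A}$ carries a Koszul grading, and its Koszul dual $\bar{A}^!$ is $\LG$-equivariantly Morita equivalent to $A_0$. Consequently $\bar{A}$ is the Koszul dual of $A_0$, using the standard fact that Koszul duality is an involution on Koszul algebras up to Morita equivalence. By Proposition \ref{barA}(2), $A_0\lmod^{\LT}\cong u_q^{\wh{0}}\lmod^{gr}$, and the $\LT$-equivariance on the $A_0$ side corresponds to the $\Lambda$-grading on the quantum group side. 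Hence $\bar{A}$, with its $\LT$-equivariant structure, is the Koszul dual of the graded category $u_q^{\wh{0}}\lmod^{gr}$. Transporting the nilpotence condition then gives (1). The one thing to check is that Morita equivalence is compatible with the Koszul grading. Proposition \ref{barA}(1) asserts compatibility with the $\LG$-action, and compatibility with the grading follows from the graded, Koszul-type construction of \cite{MR} used in its proof.

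For part (2), consider the composition
\[
u_q^0\lmod\;\to\; u_q^{\wh 0}\lmod\;\cong\; A_0\lmod,
\]
where the first functor includes $u_q^0$-modules as those $u_q^{\wh 0}$-modules on which the Harish-Chandra center acts by the exact (not merely generalized) character, and the second is Proposition \ref{barA}(2). Under this composition, $u_q^0$ corresponds to $A_0\otimes_{E[\dt]}E = A\otimes_{E[\dg]\otimes E[\dt]}E$. This is the algebra of endomorphisms of $\cE$ restricted to the fiber of $\wt\dg\to\dg\times_{\dt/W}\dt$ over $0$, a fiber which is $\cB^\vee$ with nonreduced structure. It remains to compare this algebra with $\bar{A}=\End(\cE|_{\wt\dN})^{op}$. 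Both are quotients of $A$, and modules over $A\otimes_{E[\dg]}E$, restricted to the zero section $\cB^\vee\subset\wt\dN$, are $\bar{A}$-modules supported on $\cB^\vee$, i.e. objects of $\bar{A}\lmod_0$. This gives an exact functor $u_q^0\lmod\to\bar{A}\lmod_0$, which we then compose with the inverse of $\Th^\hs$. The grading is handled either by forgetting equivariance or by working on the graded level and then forgetting the grading; I would present it as the graded version $\to\cP_\psi$.

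The main obstacle is making the map $A_0\otimes_{E[\dt]}E\to\bar{A}$-modules precise, and showing it is fully faithful with a bijection on irreducibles. I would argue as follows. Irreducible $\bar{A}$-modules supported on the zero section are killed by the augmentation ideal of $E[\dg]$, so they are modules over $\bar{A}\otimes_{E[\dg]}E = A\otimes_{E[\dg]\otimes E[\dt]} E$. This is the same algebra as before, now taken via the quotient $\wt\dg\to\dt$, and the identification holds because on the zero fiber the maps to $\dt$ and to $\dg$ agree. Therefore both categories have the same simple objects. Full faithfulness follows because the functor is the restriction of scalars along a surjection of algebras $\bar{A}\twoheadrightarrow \bar{A}\otimes_{E[\dg]}E$, whose target is Morita equivalent to $u_q^0$. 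Exactness is automatic for restriction of scalars.
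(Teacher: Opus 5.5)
Your part (1) is fine and is essentially the paper's argument: $\Th^\hs$ from Theorem \ref{th1}(2), combined with Proposition \ref{barA}(1),(2). Part (2), however, has a gap at the step ``under this composition, $u_q^0$ corresponds to $A_0\otimes_{E[\dt]}E$'', which is the same as your later claim that $\bar A\otimes_{E[\dg]}E$ is Morita equivalent to $u_q^0$.

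Proposition \ref{barA}(2) is an abstract equivalence $A_0\lmod\cong u_q^{\wh 0}\lmod$. It identifies the centers $Z(u_q^{\wh 0})\cong Z(A_0)$, so $u_q^0\lmod$ corresponds to $(A_0/JA_0)\lmod$, where $J$ is the image of the maximal ideal of the regular Harish-Chandra character. Nothing in the paper tells you where $J$ lands. Your functor needs $JA_0\supseteq E[\dt]_+A_0$; otherwise $A_0/JA_0$ is not a quotient of $A_0/E[\dt]_+A_0=\bar A\otimes_{E[\dg]}E$, hence not a quotient of $\bar A$, and there is no restriction of scalars to $\bar A$. Your Morita claim needs equality. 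Establishing this means knowing that the Harish-Chandra part of the center of the regular block is exactly the coinvariant algebra. It also means knowing that this particular equivalence, assembled from \cite{BeLo}, Lusztig's program/\cite{ABG} and \cite{AG}, matches it with the $E[\dt]$-action on $A_0$. That is a nontrivial compatibility which you assert but do not justify.

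Relatedly, your sentence that $A\otimes_{E[\dg]}E$-modules ``restricted to the zero section'' are $\bar A$-modules is wrong as written. $E[\dt]_+$ acts nontrivially on $A_0$, so only modules over the common quotient $A\otimes_{E[\dg]\otimes E[\dt]}E$ are $\bar A$-modules.

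The paper sidesteps all of this by using the other quantum-group incarnation recorded in the remark after Proposition \ref{barA}. Via \cite{Tanisaki} one has $\bar A\lmod_0^{\LT}\cong U_q^0\lmod_0^{gr}$, so $\cP_\psi\cong U_q^0\lmod_0^{gr}$ by Theorem \ref{th1}(2). Since $u_q^0=U_q^0\otimes_{Z_\ell}E$, the $u_q^0$-modules are exactly the objects of $U_q^0\lmod_0$ annihilated by the augmentation ideal $\frakm_\ell\subset Z_\ell$; that is, $Z_\ell$ acts by the exact rather than the generalized trivial character. This full subcategory is closed under subobjects and quotients, so the inclusion is exact and fully faithful. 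It also contains every simple object: for simple $M$, $\frakm_\ell M$ is a submodule because $Z_\ell$ is central, it is proper by nilpotence, and so it is zero. This is the same argument you used on the $\bar A$ side. No identification of algebras or of central actions is needed, and the embedding obtained is the graded one, which is what matches $\cP_\psi$.
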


\begin{remark}\label{r:whole mush} As mentioned in \S\ref{sss:intro whole mush}, we expect the whole category $\muSh_{\L_\psi}(\cM_\psi)$ to be embedded into $\dT$-equivariant ind-coherent sheaves on $\wt\cW_0$, which can be identified with an open subset of $\wt\cN^\vee$. Note that the objects $\dc_\l$ from Section \ref{sec:Wak_action} are not objects in the category $\cD_{\psi}$, however under the fully faithful embedding in Conjecture \ref{conj:mush-IndCoh}, these objects correspond to the structure sheaf of the fibers of $\wt\cW_0\to \cB^\vee$ over a $\dT$-fixed point of $\cB^\vee$, twisted by the $T^\vee$ character $\l$. This should follow from the compatibility of the action of Wakimoto sheaves and the category $\cH_\infty$.
\end{remark}



\subsection{Consequences on Grothendieck groups}\label{ss:conseq on K}

By Lemma \ref{l:SS finite}, the characteristic cycles of objects in $\cD_\psi$ are finite linear combinations of irreducible components of $\L_\psi\cong \Fl_\psi$, which thus induces a map
\begin{equation*}
    CC: K(\cD_\psi)\to \homog{\tp}{\Fl_\psi,\ZZ}.
\end{equation*}

There are actions of the extended affine Weyl group $\tilW=\xcoch(T)\rtimes W$ on both sides of the map $CC$, which we explain below.

\sss{Right $\tilW$-action on $K(\cD_\psi)$ from affine Hecke action}

Consider the Grothendieck group $K(\cH_0)$ of the the affine Hecke category. It is well-known that we have an isomorphism of rings
\begin{equation}\label{KH0}
    K(\cH_0)\isom \ZZ[\tilW]
\end{equation}
sending the class of the standard sheaf $i_{w!}E$ on the $\bI$-orbit $\bI w \bI/\bI$ to $w\in \ZZ[\tilW]$ for each $w\in \tilW$. The right action of the affine Hecke category $\cH_0$, after passing to the Grothendieck group, induces a right action of $\tilW$ on $K(\cD_\psi)$.

\sss{Right $\tilW$-action on $\homog{\tp}{\Fl_\psi,\ZZ}$: Springer action}\label{sss:Spr action} 
The right action of the affine Weyl group $\Wa$ on the cohomology of affine Springer fibers is defined in \cite[\S5]{L-Aff}. The extension to $\tilW=\Wa\rtimes\Om$ (where $\Om$ is the subgroup of length zero elements in $\tilW$) is easy and can be found in \cite[2.4]{Yun}.

\begin{prop}\label{p:CC Springer}
    The map $CC$ is equivariant under the right $\tilW$-actions up to twisting by the sign character $w\mapsto (-1)^{\ell(w)}$ of $\tilW$.
\end{prop}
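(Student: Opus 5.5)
Both $\tilW$-actions are compatible with the group structure $\tilW=\Wa\rtimes\Om$, and $K(\cD_\psi)$ is generated as a right $\ZZ[\tilW]$-module by the classes $[\cF_\l]$ (by definition of $\cD_\psi$ and \eqref{KH0}). It therefore suffices to check two things. First, $CC([\cF]\cdot w)=(-1)^{\ell(w)}CC([\cF])\cdot w$ for $w$ of length $0$ and for $w=s$ a simple affine reflection, for arbitrary $\cF\in\cD_\psi$. Second, both sides are additive in $\cF$, which is clear since characteristic cycles are additive on Grothendieck groups.

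For $\ell(w)=0$, the element $w$ acts on $\Bun_G(\Jker,\bI_0)$ by an automorphism that preserves the Hitchin map, hence $\L_\psi\cong\Fl_\psi$. On the constructible side, convolution with the standard object $i_{w!}E=\IC_w$ is pushforward along this automorphism. Characteristic cycles are functorial under isomorphisms, and Lusztig's action of $\Om$ (as in \cite[2.4]{Yun}) is by the same automorphism on $\Fl_\psi$. So the two sides agree with sign $+1$.

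For $w=s$, we work at the level of $K(\cH_0)$ and use the relation $[i_{s!}E]=[\IC_s\j{-1}]-[\delta]$, with suitable sign and shift conventions. This gives
\[ [\cF]\cdot s=-[\pi_s^*\pi_{s*}\cF]-[\cF] \]
up to the parity of shifts, since $\star\IC_s=\pi_s^*\pi_{s*}\j{1}$. We then compute $CC(\pi_s^*\pi_{s*}\cF)$ through the Lagrangian correspondence \eqref{Lag pi s}. Because $\pi_s$ is smooth and proper, Kashiwara--Schapira's functoriality of characteristic cycles under proper pushforward and smooth pullback gives
\[ CC(\pi_s^*\pi_{s*}\cF)=d\pi_{s*}\pi_{s,\na}^{*}\pi_{s,\na*}d\pi_s^{*}CC(\cF) \]
as an operation on top Borel--Moore homology of $\Fl_\psi$. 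Here $\pi_{s,\na}$ restricted to the central fibers is the $\PP^1$-fibration $\Fl_\psi\to\Fl^{\bP_s}_\psi$ (Proposition \ref{p:ASF vs HF par}). It remains to match this with Lusztig's definition of the simple reflection action in \cite[\S5]{L-Aff}. There $s$ acts on $\homog{\tp}{\Fl_\psi}$ by $\pi^*\pi_*-1$, defined through the same $\PP^1$-fibration $\Fl_\psi\to\Fl^{\bP_s}_\psi$ via the decomposition of $\pi_*$ of the dualizing sheaf into the $s$-invariant and sign parts. Comparing the two formulas then gives $CC([\cF]\cdot s)=-CC([\cF])\cdot s$ once the conventions are aligned.

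The main obstacle is this last comparison. We must identify the Kashiwara--Schapira composite on top-dimensional cycles with Lusztig's operator, and we must track every sign: the sign of $CC$ under the shift $[1]$, the orientation conventions for complex Lagrangian cycles in a correspondence whose fibers are $\PP^1$, and the normalization in \eqref{KH0}. The check should reduce to a local computation on a single $\PP^1$-fiber, which we will verify directly on one component of $\L_\psi^\c\subset T^*\cB$ (see \S\ref{ss:cotB}). On that component it is the classical statement that characteristic cycles of $\pi_s^*\pi_{s*}$ realize the classical Springer reflection action twisted by sign.
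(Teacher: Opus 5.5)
Your reduction matches the paper's. You handle length-zero elements by the automorphism, and you use Kashiwara--Schapira functoriality through \eqref{Lag pi s} to get $CC(\cF\star_0\IC_s)=(d\pi_s)_*(\pi_{s,\na})^*(\pi_{s,\na})_*(d\pi_s)^*CC(\cF)$. (There is a minor sign slip. With $[\IC_s]=-s-1$ and $\star_0\IC_s=\pi_s^*\pi_{s*}\j{1}$ one gets $[\cF]\cdot s=[\pi_s^*\pi_{s*}\cF]-[\cF]$, not $-[\pi_s^*\pi_{s*}\cF]-[\cF]$.)

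The step you flag as the main obstacle is where the proof actually lives, and your plan for it does not work, for two reasons.

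\emph{Your description of Lusztig's operator is inaccurate.} The central fiber of \eqref{Lag pi s} is a $\PP^1$-bundle over $\Fl^{\bP_s}_\psi$. Its total space is only the closed subvariety $\{g\bI\in\Fl_\psi\mid g\bP_s\in\Fl^{\bP_s}_\psi\}$, not all of $\Fl_\psi$. Also, $(d\pi_s)^*$ is a Gysin-type pullback along a codimension-one embedding, which produces intersection multiplicities (e.g.\ the self-intersection of the zero section of $T^*\PP^1$); it is not a restriction of cycles. Lusztig's $s$ is defined by Springer theory of the rank-one Levi $L_s$, along the map from $\Fl_\psi$ to the locus of $g\bP_s$ with $\Ad(g^{-1})\psi\in\Lie\bP_s$. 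That map has $\PP^1$ fibers over $\Fl^{\bP_s}_\psi$ and point fibers elsewhere. So there is no ``$\pi^*\pi_*-1$ along a $\PP^1$-fibration of $\Fl_\psi$'' to compare against.

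\emph{A check on one component of $\L^\c_\psi\subset T^*\cB$ cannot identify two operators on all of $\homog{\tp}{\Fl_\psi,\ZZ}$.} $\L^\c_\psi$ is of finite type, while $\L_\psi$ has infinitely many components. Moreover, for the affine reflection $s_0$ one has $\bP_{s_0}\not\subset G\tl{t}$. Hence its correspondence does not preserve the trivial-bundle locus $\cM^\c_\psi\cong T^*\cB$, and the finite Springer picture there says nothing about $s_0$.

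What is missing is a uniform smooth local model that makes the comparison global. The paper introduces $\cM'_{\psi,\bP_s}$, whose Higgs fields lie in $\Ad(\cE_{\bP_s})dt/t$ at $0$ with nilpotent image in $\Lie L_s$. It sits in a Cartesian square over $\nu_{L_s}:\wt\cN_{L_s}\to\cN_{L_s}$ via $\e_s:\cM'_{\psi,\bP_s}\to[\cN_{L_s}/L_s]$, and the paper proves that $\e_s$ is smooth.

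With this, Lusztig's $s-1$ on $\homog{*}{\cM_\psi}\cong\homog{*}{\L_\psi}$ is the pullback of the class of the component $\PP^1\times\PP^1\subset\St_{L_s}$, which gives $s-1$ in rank-one Springer theory. Smoothness of $\e_s$ guarantees this pullback is the fundamental class of $\s_s^{-1}((\PP^1\times\PP^1)/L_s)=\cM_{\psi,\bP_s}\times_{\Bun_G(\J,\bP_s)}\Bun_G(\J,\bI)\times_{\Bun_G(\J,\bP_s)}\Bun_G(\J,\bI)$. That space is exactly the composite of \eqref{Lag pi s} with its transpose.

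This identifies Lusztig's $s-1$ with $(d\pi_s)_*(\pi_{s,\na})^*(\pi_{s,\na})_*(d\pi_s)^*$ on all of the homology at once, for every simple affine reflection including $s_0$. Without such a transversality statement, the pullback of the Steinberg cycle need not be the fundamental class of its preimage, and no single-fiber computation closes the argument.
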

\begin{proof}
    It suffices to check for each simple reflection $s$ and each length zero element $\om\in \Om$.

    For $\om\in \Om$, both the Hecke action and the Springer action are induced by the automorphism of $\cM_\psi$ (resp. $\Fl_\psi$) coming from any lifting of $\om$ to $N_{LG}(\bI)$ on $\Bun_G(\J,\bI)$ (resp. $\Fl=LG/\bI$). Their agreement on characteristic cycles is clear.

    For a simple reflection $s$, consider the shifted constant sheaf $\IC_s=E_{\bP_s/\bI}\j{1}$ supported on the flag variety of the parahoric $\bP_s$ of type $s$, viewed as an object in $\cH_0$. Under the isomorphism  \eqref{KH0}, $\IC_s$ corresponds to $-s-1\in\ZZ[\tilW]$. We thus reduce to proving the following statement: for $\cF\in \cD_\psi$, we have    \begin{equation}\label{CC ICs}  CC(\cF\star_0 \IC_s)=CC(\cF)\cdot s-CC(\cF)\in \homog{\tp}{\L_\psi}.
    \end{equation}
    Using notation from \S\ref{sss:DpsiP}, $\star_0 \IC_s$ is the composition
    \begin{equation*}        \cD_\psi\xr{\pi_{s*}}\cD_{\psi,\bP}\xr{\pi_{s}^*[1]}\cD_\psi.
    \end{equation*}
    Here $\pi_s=\pi_{\bP_s}$. Referring to the diagram \eqref{Lag pi s}, using that $\pi_s$ is proper and smooth, we have
    \begin{equation}\label{CC under Cs}
        CC(\cF\star_0\IC_s)=(d\pi_s)_*(\pi_{s,\na})^*(\pi_{s,\na})_*(d\pi_s)^*CC(\cF)
    \end{equation}
    See \cite[Proposition 9.4.2, 9.4.3]{KS}.

    On the other hand, consider the moduli stack $\cM'_{\psi,\bP_s}$ that classifies $(\cE,\t_{\infty}, \cE_{\bP_s}, \ph)$ where $(\cE,\t_{\infty}, \cE_{\bP_s})\in\Bun_{G}(\bG^{1}_{\infty}, \bP_s)$, $\ph\in \Ad^*(\cE)\ot \om_{X}(2\cdot\infty+\un 0)$ satisfies the usual condition $\ph\in\psi_{1} dt + \Ad^*(\cE)dt/t$ at $\infty$, while at $0$, $\ph\in \Ad(\cE_{\bP_s})dt/t$ (i.e., in the lattice $(\Lie \bP_s)dt/t$ after trivializing $(\cE,\cE_{\bP_s})$ on the formal disk at $0$), and the image of $\ph$ in $\Ad(\cE_{L_s})dt/t$ (here $\cE_{L_s}$ is the reduction of the $\bP_s$-torsor $\cE_{\bP_s}$ to the reductive quotient $L_s$ of $\bP_s$) is nilpotent.
    We then have a Cartesian diagram
    \begin{eqnarray*}    \xymatrix{\cM_\psi\ar[d]^{\nu_s}\ar[r]^-{\wt\e_s}& [\wt\cN_{L_s}/L_s]\ar[d]^{\nu_{L_s}}\\
        \cM'_{\psi,\bP_s} \ar[r]^-{\e_s} & [\cN_{L_s}/L_s]}
    \end{eqnarray*}
    Here $\nu_{L_s}:\wt\cN_{L_s}\to \cN_{L_s}$ is the Springer resolution of the nilpotent cone of $L_s$. We have the Springer action of $\j{s}$ on 
    $\nu_{L_s*}\DD\in D_{L_s}(\cN_{L_s})$. By $!$-pullback to $\cM'_{\psi,\bP_s}$ and proper base change, it gives an action of $\j{s}$ on $\nu_{s*}\DD_{\cM_\psi}$, and hence on $\homog{*}{\cM_\psi}$ by taking global sections. Under the isomorphism $\homog{*}{\L_\psi}\isom\homog{*}{\cM_\psi}$, this gives the action of $\j{s}$ as part of the $\tilW$-action specified in \S\ref{sss:Spr action}.

    To compute the action of $s$ on $\homog{*}{\cM_\psi}$, we first recall that the $\j{s}$-action on $\nu_{L_s*}\DD$ is given by top homology classes in $\hBM{4}{\St_{L_s}}$ where $\St_{L_s}=\wt\cN_{L_s}\times_{\cN_{L_s}}\wt\cN_{L_s}$ is the Steinberg variety of $L_s$. Specifically, a simple calculation shows that the action of $s-1$ is given by the cycle class of the component $\PP^1\times \PP^1\subset \St_{L_s}$ (the preimage of the zero nilpotent orbit). Now we claim
    
    \begin{claim} The map  $\e_s:\cM'_{\psi, \bP_s}\to [\cN_{L_s}/L_s]$ is smooth.
    \end{claim}

    The claim can be proved using the same infinitesimal calculations as in \cite[\S2.9.2]{BBAMY}.  We omit details here.

    Using that $\e_s$ is smooth, the action of $s-1$ on $\nu_{s*}\DD_{\cM_\psi}$ is also given by the cycle class of the preimage of $(\PP^1\times\PP^1)/L_s$ in $\cM_{\psi}\times_{\cM'_{\psi,\bP_s}} \cM_{\psi}$ under the map
    \begin{eqnarray*}
        \s_s=(\wt\e_s, \wt\e_s): \cM_\psi\times_{\cM'_{\psi,\bP_s}} \cM_\psi\to \St_{L_s}/L_s.
    \end{eqnarray*}
    Finally we observe that
    \begin{eqnarray*}
        \s_s^{-1}((\PP^1\times\PP^1)/L_s)=\cM_{\psi,\bP_{s}}\times_{\Bun_{G}(\J,\bP_{s})}\Bun_{G}(\J,\bI)\times_{\Bun_{G}(\J,\bP_{s})}\Bun_{G}(\J,\bI).
    \end{eqnarray*}
    Therefore the action of $s-1$ on $\homog{*}{\cM_\psi}$ is given by pull-push operators on the correspondence
    \begin{equation}\label{P1P1}
        \xymatrix{\cM_\psi & \ar[l]_-{d\pi_s\c p_{12}} \cM_{\psi,\bP_{s}}\times_{\Bun_{G}(\J,\bP_{s})}\Bun_{G}(\J,\bI)\times_{\Bun_{G}(\J,\bP_{s})}\Bun_{G}(\J,\bI)\ar[r]^-{d\pi_s\c p_{13}} & \cM_\psi}.
    \end{equation}
    Here $p_{12}$ and $p_{13}$ refer to the projections to the indicated factors, and $d\pi_s: \cM_{\psi,\bP_{s}}\times_{\Bun_{G}(\J,\bP_{s})}\Bun_{G}(\J,\bI)\to \cM_\psi$ is the differential of $\pi_s$. The correspondence \eqref{P1P1} is the composition of the correspondence \eqref{Lag pi s} and its transpose, therefore its action on $\homog{*}{\cM_\psi}$ is given by the composition
    \begin{equation*}
        (d\pi_s)_*(\pi_{s,\na})^*(\pi_{s,\na})_*(d\pi_s)^*\in \End(\homog{*}{\cM_\psi}).
    \end{equation*}
    Here to make sense of $(d\pi_s)^*$ on homology we use Poincar\'e duality and closed restriction in compact support cohomology. Comparing with \eqref{CC under Cs}, we conclude that \eqref{CC ICs} holds.
\end{proof}

\sss{Left $\tilW$-action on $\hBM{\tp}{\Fl_\psi,\ZZ}$ from lattice and monodromy action}

The action of the lattice $\xcoch(T)$ on $\Fl_\psi$ has been explained in \S\ref{sss:lattice on ASF}. To see the $W$-action, we need to vary $\psi_1$ inside $\frt^{*,\rs}$. Let $\cX\to \frt^{*,\rs}$ be the family of affine Springer fibers for all $\psi=t\psi_1$, where $\psi_1$ varies in $\frt^{*,\rs}$. Using intersection with semi-infinite orbits as in \cite[\S5, Prop. 1]{KL}, we see that the restriction to any fiber induces an isomorphism
\begin{equation}\label{rest top homology}
    \homog{\tp}{\cX,\ZZ}\isom\homog{\tp}{\Fl_\psi,\ZZ}. 
\end{equation}
Now $\dot w\in N_G(T)$ acts on $\cX$ sending $(\psi_1, g\bI)$ to $(w\psi_1, \dot{w}g\bI)$. It induces a left $W$-action on $\homog{\tp}{\cX,\ZZ}$, hence on $\homog{\tp}{\Fl_\psi,\ZZ}$ via the isomorphism \eqref{rest top homology}.

\sss{Left $\tilW$-action on $K(\cD_\psi)$}
The action of the lattice part is induced by $\Rep(\dT)$-action on $\cD_\psi$ via the monoidal functor \eqref{phi RT} and the $\hHim$-action on $\cD_\psi$.

For the $W$-action, here we only give a sketch, as it is not used in the rest of the paper.  We consider a version of $\cD_\psi$ with a varying $\psi_1\in \frt^{*,\rs}$. Take the level subgroup $\Jinf=\ker(\bG_\infty^1\to\t\frg\to\t\frt)$, where $\frg\to \frt$ is the orthogonal projection under the Killing form on $\frg$.

Consider the stack
\begin{equation*}
    \Bun_G(\Jinf)\times \frt^{*,\rs}.
\end{equation*}
It carries an action of $\frt\times \frt^{*,\rs}$, viewed as a constant additive group over $\frt^{*,\rs}$ (where $\frt$ is identified with $\J/\Jinf$ hence acts by changing levels). We have a surjection of group schemes over $\frt^{*,\rs}$
\begin{equation*}
    \e: \frt\times \frt^{*,\rs}\to \Ga\times \frt^{*,\rs}
\end{equation*}
sending $(X,\psi_1)$ to $(\j{\psi_1,X}, \psi_1)$. Let $H$ be the kernel of $\e$. Consider the quotient
\begin{equation*}
    \cZ=(\Bun_G(\Jinf)\times \frt^{*,\rs})/H.
\end{equation*}
Then $\cZ$ carries the residual action of the constant group $\Ga\times \frt^{*,\rs}$, hence an action of $\Ga$. Together with the $\Grot$-action we get an $\Aff$-action $\cZ$. We can form the Kirillov category $\Kir(\cZ)$. It contains a subcategory $\cD_{\bG}^\univ\subset \Kir(\cZ)$ of objects that are constant along $\frt^{*,\rs}$, such that restriction to each fixed $\psi_1\in \frt^{*,\rs}$ gives a functor
\begin{equation*}
i_{\psi_1}^*: \cD_{\bG}^\univ\to \DG
\end{equation*} 
that induces an isomorphism after passing to the Grothendieck groups. Similarly we may consider the universal version $\cD^\univ$ of $\cD_\psi$ by taking the version $\cZ(\bI_0)$ of $\cZ$ with $\bI_0$-level structure at $0$ and by saturating the pullback of $\cD_{\bG}^\univ$ in $\Kir(\cZ(\bI_0))$ by the $\cH_0$-action. Again the restriction functor 
\begin{equation}\label{rest univ to Dpsi}
i_{\psi_1}^*: \cD^\univ\to \cD_\psi
\end{equation} 
induces an isomorphism on the Grothendieck groups. 

The level group $\Jinf$ is normalized by $N_G(T)$, inducing an action of $N_G(T)$ on $\Bun_G(\Jinf)$ and on $\cZ$ that covers the $W$-action on $\frt^{*,\rs}$. It induces an action of $W$ on $K(\cD_{\bG}^\univ)$, which can then be transported to an action of $W$ on $K(\cD_\psi)$ via the isomorphism induced by \eqref{rest univ to Dpsi}.

With the above construction, it can be checked that the map $CC$ is equivariant under the left $\tilW$-actions.

\sss{}
On the other hand, Theorem \ref{th1} induces an isomorphism on the level of Grothendieck groups
\begin{equation*}
\Th: K(\Coh^{\dT}_{\cB^\v}(\wt\cN^\v))\cong K(\cD_\psi).
\end{equation*}
The composition with $CC$ gives a map
\begin{equation*}
    CC\c\Th: K(\Coh^{\dT}_{\cB^\v}(\wt\cN^\v))\to \homog{\tp}{\Fl_\psi,\ZZ}.
\end{equation*}

\sss{Left $\tilW$-action on $K(\Coh^{\dT}_{\cB^\v}(\wt\cN^\v))$} For a translation element $\l\in \xcoch(T)$, it acts on $\Coh^{\dT}_{\cB^\v}(\wt\cN^\v)$ by tensoring with $E(\l)\in \Rep(\dT)$. On the other hand,  $W=N_{\dG}(\dT)/\dT$ acts on the stack $\dT\bs \cB^\v$, inducing an action of $K(\Coh^{\dT}_{\cB^\v}(\wt\cN^\v))\cong K(\Coh^{\dT}(\cB^\v))$. Together they give a left action of $\tilW$ on $K(\Coh^{\dT}_{\cB^\v}(\wt\cN^\v))$.

\sss{Right $\tilW$-action on $K(\Coh^{\dT}_{\cB^\v}(\wt\cN^\v))$} The action of the coherent version of the affine Hecke category $D^b\Coh^{\dG}(\St^\v)$, where $\St^\v=\wt\cN^\v\times_{\frg^\v}\wt\cN^\v$,  on $D^b\Coh^{\dT}_{\cB^\v}(\wt\cN^\v)$ induces a right action of $K(\Coh^{\dG}(\St^\v))$ on  $K(\Coh^{\dT}_{\cB^\v}(\wt\cN^\v))$. It follows from \cite[]{KL-DL} (specializing the $\Gm$-equivariant parameter to $1$) that we have an isomorphism of rings $K(\Coh^{\dG}(\St^\v))\cong \ZZ[\tilW]$, hence a right action of $\tilW$ on $K(\Coh^{\dT}_{\cB^\v}(\wt\cN^\v))$.

The following theorem is used in the work of Feng-Le Hung \cite{FLH} on the Breuil-M\'ezard Conjecture.

\begin{theorem}\label{th:conseq K} 
\begin{enumerate}
    \item The maps $CC$ and $CC\c\Th$ are equivariant under the right $\tilW$-actions.
    \item The map $CC$ (hence $CC\circ \Th$) is an isomorphism.
    \item The isomorphism $CC\c\Th$ is equivariant under the left $\tilW$-actions.
\end{enumerate} 
\end{theorem}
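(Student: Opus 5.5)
The plan is to prove the three parts in the order (1), (2), (3), in each case using compatibility of the relevant functors with the actions recorded just before the statement.

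For (1), equivariance of $CC$ under the right $\tilW$-action is Proposition \ref{p:CC Springer}, up to the sign twist. The theorem's convention therefore needs to absorb this twist. Concretely, I would fix the identification $K(\cH_0)\cong\ZZ[\tilW]$ as in \eqref{KH0}, composed with the sign twist $w\mapsto(-1)^{\ell(w)}w$. Alternatively, the right action on $\homog{\tp}{\Fl_\psi,\ZZ}$ can be taken to be Lusztig's action tensored with the sign, as in the introduction.

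For $CC\circ\Th$, Theorem \ref{th1}(1) says that $\Th$ intertwines the constructible and coherent affine Hecke actions via \eqref{two Hk}. On $K_0$, the equivalence of \cite{B} induces an isomorphism $K(\cH_0)\cong K(\Coh^{\dG}(\St^\v))$. I would check that this isomorphism is compatible with the two identifications with $\ZZ[\tilW]$, namely \eqref{KH0} and Kazhdan–Lusztig's description at $q=1$. This reduces to checking generators: Wakimoto objects $J_\l$ correspond to line bundles $\cO(\l)$ on the diagonal, and $\IC_s$ corresponds to the structure sheaf of the $\PP^1\times\PP^1$-type component. Both are standard properties of \cite{B}.

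For (2), I would compare ranks and exhibit a triangularity. The group $K(\Coh^{\dT}_{\cB^\v}(\wt\cN^\v))\cong K^{\dT}(\cB^\v)$ is free over $K(\Rep\dT)=\ZZ[\xcoch(T)]$ of rank $|W|$, with basis given by $\dT$-fixed points. It is therefore free over $\ZZ$ on $\tilW$. Likewise, $\homog{\tp}{\Fl_\psi,\ZZ}$ is free on the irreducible components, which are indexed by $\tilW$. Via $\Th$, $K(\cD_\psi)$ is free on the classes of simple perverse objects. To prove $CC$ is an isomorphism, the plan is to show that characteristic cycles of simples are unitriangular with respect to the components. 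Each simple in $\cP_\psi$ has an open support stratum in some Hessenberg piece $\cH_\psi(\l)$. Its characteristic cycle then contains the conormal of that stratum with multiplicity $\pm1$, since it is an IC sheaf of a rank one local system in the Kirillov sense. All other components lie in the closure-smaller part. The bijection between supports of simples and components of $\Fl_\psi$ would follow from the proof of Proposition \ref{p:finite ss char Dpsi}, together with the count from Corollary \ref{main_cor}(2): irreducibles of $u_q^0$ with grading are indexed by $\tilW$. I expect this step, matching each simple to a distinct leading component, to be the main obstacle, because some simples (for example $\cF_5$ in the $\SL_4$ example) carry nontrivial local systems along components. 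If triangularity fails to be clean, I would fall back on a weaker argument. Right $\tilW$-equivariance from (1) makes both sides cyclic $\ZZ[\tilW]$-modules generated by $[\cF_0]$ and $[(G/B)_0]$, with $CC([\cF_0])=\pm[(G/B)_0]$ (computed in Lemma \ref{l:SS finite}). Comparing ranks over the free $\ZZ[\xcoch(T)]$-module structure would then give surjectivity. Injectivity would follow since both sides are free of rank $|W|$ over $\ZZ[\xcoch(T)]$ and surjectivity is realized by an isomorphism after base change to a fraction field.

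For (3), the lattice part is handled by $\Th$ intertwining $\Rep(\dT)$ with $\wh\D_\l\star_\infty$ (Theorem \ref{th1}(1)) and by the fact that $\wh\D_\l$ acts geometrically by the translation $t^{\l}$ on $\Fl_\psi$. The characteristic cycle of $\wh\D_\l\star_\infty\cF$ is the translate of the characteristic cycle of $\cF$, by Lemma \ref{micro_action}(2) and the realization of $\wh\D_\l$ as pull–push along a smooth correspondence. The $W$-part uses the universal family $\cD^\univ$ together with \eqref{rest top homology}, so that $CC$ is monodromy-equivariant. On the coherent side, the $N_{\dG}(\dT)$-action corresponds to the same monodromy under the universal form of Theorem \ref{th1}. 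Since both actions are determined on generators, it suffices to check them on the class $[\cF_0]$ and its translates.
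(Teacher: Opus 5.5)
Part (1) is fine, with the sign convention of the introduction, and your injectivity step is essentially the paper's. The surjectivity in (2), however, has a genuine gap.

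\textbf{The unitriangularity route fails.} It fails exactly where you feared. In the $\SL_4$ example of \S\ref{Eg_SL_n}, $\cF_0=\IC(G/B)$ and $\cF_5=\IC(\pi_2^*\cL)$ are both IC extensions of rank-one local systems from the same open dense subset of $G/B$. So both characteristic cycles contain $(G/B)_0$ with multiplicity $\pm1$. The assignment ``simple $\mapsto$ component of its generic support'' is therefore not injective, and nothing in Proposition \ref{p:finite ss char Dpsi} yields the bijection you want.

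\textbf{The fallback does not repair this.} Equivariance does not make a module cyclic, and the coherent side is not cyclic as a right $\ZZ[\tilW]$-module. Convolution with $\dG$-equivariant kernels preserves the subgroup of classes coming from $K^{\dG}$. That subgroup contains $[\cO_{\cB^\v}]$, but it is proper when $W\neq 1$. So you must use the left $\xcoch(T)$-action as well. Even then, the statement that $\homog{\tp}{\Fl_\psi,\ZZ}$ is generated by $[(G/B)_0]$ under both actions is precisely the content of surjectivity, and comparing ranks cannot substitute for it.

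The paper proves this generation geometrically, in four steps:
\begin{enumerate}
\item Reduce to $G$ with connected center. Then $\Fl_G$ is a union of connected components of $\Fl_{\wt G}$, and $CC$ splits along them.
\item By \cite{GKM}, the closure of $\Fl_\psi\cap(\bG t^\l\bI/\bI)$ is a union of components. The Springer action applied to $CC(\cF_0)$ produces the sum of all components lying over each $\cH_\psi(\l)$.
\item When the minimal element $w_\l$ of $Wt^\l$ lies in the fundamental box, $\cH_\psi(\l)$ is connected. These sums are then single components, $|W|/|Z(G^{sc})|$ of them.
\item These components, together with left lattice translates and right translates by length-zero elements, generate $\homog{\tp}{\Fl_\psi,\ZZ}$. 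There are enough length-zero elements because the center is connected.
\end{enumerate}
None of this is in your plan.

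\textbf{Left lattice-linearity must come before (3).} Your injectivity argument (a surjection between free $\ZZ[\xcoch(T)]$-modules of rank $|W|$) needs $CC\c\Th$ to be left $\xcoch(T)$-linear already in (2). For $\Th$ this is Theorem \ref{th1}(1). For $CC$, do not argue that $CC(\wh\D_\l\star_\infty\cF)$ is the lattice translate of $CC(\cF)$ for arbitrary $\cF$. Lemma \ref{micro_action}(2) only concerns microstalks along the components $(G/B)_\l$, and the lattice does not act on microsheaves (\S\ref{sss:latticeaction}). Instead, check it on the generators $[\cF_\l]$, where $\wh\D_\mu\star_\infty\cF_\l\cong\cF_{\l-\mu}$ and $CC(\cF_\l)=\pm[(G/B)_\l]$. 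Then propagate it using right equivariance and the commutation of the actions at $0$ and $\infty$.

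\textbf{The $W$-part of (3).} No ``universal form'' of Theorem \ref{th1} is needed. Once both sides are generated by $[\cO_{\cB^\v}]$ and $CC(\cF_0)$ under left $\xcoch(T)$ and right $\tilW$ (after reducing to connected center), it suffices that both generators are $W$-invariant. That reduction, however, rests on the same generation statement that is missing from your (2).
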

\begin{proof}
(1) For $CC$, it follows from Proposition \ref{p:CC Springer}. For $\Th$, it follows from Theorem \ref{th1} by the compatibility with the affine Hecke category actions.

(2) Surjectivity: We can assume that the center of $G$ is connected. If not consider a central isogeny $G\to \wt G$ where $\wt G$ has connected center. 
The flag variety $\Fl_{\widetilde{G}}$ of $\widetilde{G}$ contains $\Fl_G$ as a union of connected components, and as such, $\cD_{\psi}$ is the restriction of $\cD_{\psi, \wt G}$ (the counterpart of $\cD_{\psi}$ for $\wt G$) to $\Fl_G$. The map $CC$ splits as a sum along connected components. It is thus enough to show the surjectivity of $CC$ for $\widetilde{G}$. 

We use the compatibility of the map $CC$ with the right $\tilW$ action by (1).

Recall from Section \ref{Hess_var}, the Hessenberg varieties $\cH_\psi(\lambda)$. By \cite{GKM} the subvarieties $\Fl_\psi\cap(\bG t^\lambda\bI/\bI)$ are successive affine bundles over $\cH_\psi(\lambda)$ of maximal dimension. It follows that its closure is a union of components.

First note that $CC(\cF_{\l})=(d\pi_\bG)_*(\pi_{\bG,\na})^*CC(\d_{\l})$. The submodule of  $\homog{\tp}{\Fl_\psi,\ZZ}$ generated by $CC(\cF_{0})$ under the Springer action contains the sum of all components over each $\cH_\psi(\lambda)$. If $w_\lambda$ is the minimal length element in $Wt^\lambda\subset \tilW$, the Hessenberg variety $\cH_\psi(\lambda)$ is connected if and only if $w_\lambda$ lies in the fundamental box. These give $\frac{|W|}{|Z(G^{sc})|}$ many components which generate $\homog{\tp}{\Fl_\psi,\ZZ}$ up to the left $\xcoch(T)$-action together with the action of the length $0$ elements in the right $\tilW$-action, as $G$ has connected center.

As the map $CC$ is compatible with the right $\tilW$-actions, and the left $\xcoch(T)$-action sends $CC(\cF_{0})$ to $CC(\cF_{\l})$ the surjectivity follows as these generate under the right $\tilW$-action. Further as $CC$ surjective and compatible with the right $\tilW$-action and the left $\xcoch(T)$-action on generators, it follows that it is compatible with the left $\xcoch(T)$-action on all elements.

To show injectivity of $CC$, it suffices to show that $CC\c\Th$ is injective. By construction, the action of $\cH_\infty$ and the $\Rep(T^\vee)$ action are compatible under the equivalence of categories in Theorem \ref{th1}. It follows that $\Th$ is compatible with the left $\xcoch(T)$-action. By the compatibility of $CC$ with the actions, it then follows that $CC\c\Th$ is compatible with the right $\tilW$-action and the left $\xcoch(T)$-action. Both $K(\Coh^{\dT}_{\cB^\v}(\wt\cN^\v))$ and $ \homog{\tp}{\Fl_\psi}$ are free $\ZZ[\xcoch(T)]\cong K(\Rep(\dT))$-modules of rank equal to $|W|$. Since $CC\c\Th$ is surjective, it also has to be injective.

(3) By (2) $\Th$ and $CC\c\Th$ are compatible with the left $\xcoch(T)$-action.

It remains to show the compatibility of the left $W$-action. To show this, we again reduce to the case of $G$ having connected center. In this case by (1) both sides are generated under the left $\xcoch(T)$- and right $\tilW$-actions by a single element $[\cO_{\cB^{\vee}}]\in K(\Coh^{\dT}_{\cB^\v}(\wt\cN^\v))$ and $CC(\cF_0)=CC\c\Th([\cO_{\cB^{\vee}}])\in \homog{\tp}{\Fl_\psi,\ZZ}$. By compatibility of $CC\c\Th$ with the left $\xcoch(T)$- and right $\tilW$-actions, it is enough to check the compatibility of the $W$ action on $[\cO_{\cB^{\vee}}]$ and $CC(\cF_0)$. Both these elements are invariant under the left $W$-action and the compatibility follows.
\end{proof}

\sss{Other blocks of small quantum group} Let $W_P\subset W\subset\tilW$ a parabolic subgroup and denote by $\cB_P^\v$ the corresponding partial flag variety for $G^\v$ and $\Fl^P$ the partial affine flag variety for $G$. Then using the right $\tilW$ action, we have $K(\Coh^{\dT}_{\cB^\v}(\wt\cN^\v))^{W_P}\cong K(\Coh^{\dT}(\cB^\v))^{W_P}\cong K(\Coh^{\dT}(\cB_P^\v))$ and $\homog{\tp}{\Fl_\psi}^{W_P}\cong \homog{\tp}{\Fl^P_\psi}$, where $\Fl^P_\psi$ denotes the Spaltenstein fiber of $\psi$ in the partial affine flag variety $\Fl^P$.

\quash{

\subsection{Characterization of the image of microlocalization}\label{ss:im micro}

\Yun{It is likely that this argument can be completed. We will revisit it later.}

The goal of this subsection is to show that the microlocalization functor \eqref{micro Dpsi} is an equivalence.

We begin with some technical lemmas on representability. We shall use the following notation: if $X$ is a scheme over $E$ with an action of the torus $\dT$, for any $\cH\in D\QCoh(X)$ and $\l\in \xch(\dT)$, let $\cH(\l)=\cH\ot E(\l)$ denote the same underlying quasi-coherent sheaf with the $\dT$-equivariant structure twisted by the character $\l$. Also let
\begin{equation*}
    \cH^{\sh}:=\bigoplus_{\l\in \xch(\dT)}\cH(\l)\in D\QCoh^{\dT}(X).
\end{equation*}
Note that $\cH^{\sh}$ carries an action of the algebra $\cO(X)$ compatible with the $\dT$-equivariant structures.

\begin{lemma}\label{l:repbly}
	Let $F: D^{b}\Coh^{\dT}_{\cB^{\vee}}(\wt\dN)^{op}\rightarrow \Vect$ be a functor, such that the following two conditions hold:
    \begin{itemize}
        \item For all $\cH\in D^{b}\Coh^{\dT}_{\cB^{\vee}}(\wt\dN)$, $\oplus_{i\in\ZZ,\lambda\in\xch(T^{\vee})} F(\cH(\l)[i])$ is a finite dimensional $E$-vector space.
        \item There exists $n\in\NN$ such that for any $\cH\in D^{b}\Coh^{\dT}_{\cB^{\vee}}(\wt\dN)$, the action of $\frakm_0^n\subset \cO(\dN)$ ($\frakm_0$ is the maximal ideal at $0$) on $F(\cH^{\sh})$ is zero.
    \end{itemize}  
    Then $F$ is a representable functor.
\end{lemma}
\begin{proof}
	We follow the ideas of \cite[Appendix A]{BVdB}. The category $D\QCoh^{\dT}_{\cB^{\vee}}(\wt\dN)$ is a cocomplete triangulated category with the category of compact objects given by $D^{b}\Coh^{\dT}_{\cB^{\vee}}(\wt\dN)$. By \cite[Lemma 2.14]{CKN}, $F$ is representable by an object $\cF\in D\QCoh^{\dT}_{\cB^{\vee}}(\wt\dN)$. It remains to show that $\cF$ lies in $D^{b}\Coh^{\dT}_{\cB^{\vee}}(\wt\dN)$.	
    
	The second condition implies that $\cF=i_{n*}(\cF')$ for $\cF'\in D\QCoh^{\dT}(Y)$, where $i_n: Y\incl \wt\dN$ is the $n$th infinitesimal neighborhood of $\cB^\vee$ in $\wt\dN$. 

    For $\cH\in D\QCoh^{\dT}_{\cB^\vee}(\dN)$, let $\un\cH\in D\QCoh_{\cB^\vee}(\dN)$ be its underlying quasi-coherent object. It suffices to show that $\un\cF\in D^b\Coh_{\cB^\vee}(\dN)$. 
    
    For any $\cH\in D^{b}\Coh^{\dT}_{\cB^{\vee}}(\wt\dN)$, we have
        \begin{equation*}
            \Hom(\un\cH, \un\cF)=\op_{\l\in \xch(\dT)}\Hom(\cH, \cF(\l))=\op_{\l\in \xch(\dT)}F(\cH(\l)).
        \end{equation*}
    Therefore, by the first condition, we have that $\op_{i\in \ZZ}\Hom(\un\cH, \un\cF[i])$ is finite-dimensional. Since the image of the forgetful functor $D^{b}\Coh^{\dT}_{\cB^{\vee}}(\wt\dN)\to D^{b}\Coh_{\cB^{\vee}}(\wt\dN)$ generates the latter, we conclude that
        \begin{equation}\label{Hom G F supp}
            \dim_E(\op_{i\in \ZZ}\Hom(\cG, \un\cF[i]))<\infty, \quad \forall\cG\in D^{b}\Coh_{\cB^{\vee}}(\wt\dN).
        \end{equation}

    Next we claim that 
        \begin{equation}\label{Hom G F fin}
            \dim_E(\op_{i\in \ZZ}\Hom(\cG, \un\cF[i]))<\infty, \quad \forall\cG\in D^{b}\Coh(\wt\dN).
        \end{equation}
    Note there is no support condition for $\cG\in D^{b}\Coh(\wt\dN)$.
    For this we choose linear coordinates $x_1,x_2,\cdots, x_N$ for $\dg$, let $V_j\subset \dg$ be the linear subspace defined by $x_{j+1}=\cdots=x_N=0$, and $X_j\subset \wt\dN$ be the preimage of $V_j$ under the moment map $\wt\dN\to \dg$. We prove by induction that \eqref{Hom G F fin} holds for $\cG\in D^{b}\Coh_{X_j}(\wt\dN)$ for $j=0,1,\cdots, N$. The case $j=0$ is precisely the statement \eqref{Hom G F supp} because $\cB^\vee=V_0$. Suppose \eqref{Hom G F fin} holds for all objects in $D^{b}\Coh_{X_{j-1}}(\wt\dN)$. Now consider $\cG\in D^{b}\Coh_{X_{j}}(\wt\dN)$. Let $\cH$ be the cone of $\cG\xr{x_j^n} \cG$, then $\cH\in D^{b}\Coh_{X_{j-1}}(\wt\dN)$. We have a long exact sequence
        \begin{equation*}
            \cdots \to\Hom(\cG, \cF[i-1])\xr{x_j^n}\Hom(\cG, \cF[i-1])\to \Hom(\cH, \cF[i])\to \Hom(\cG, \cF[i])\xr{x_j^n}\Hom(\cG, \cF[i])\to\cdots
        \end{equation*}
    The two maps labeled by $x_j^n$ are zero since $\cF$ is scheme-theoretically supported on the locus of $\wt\dN$ where $x_j^n=0$. Therefore  the above long exact sequence splits into short exact sequences, giving dimension equalities
    \begin{equation*}
        \dim \Hom(\cG, \un\cF[i-1])+\dim \Hom(\cG, \un\cF[i])=\dim \Hom(\cH, \un\cF[i]), \quad \forall i\in \ZZ.
    \end{equation*}
    Since $\op_{i\in \ZZ}\Hom(\cH, \un\cF[i])$ is finite-dimensional by induction hypothesis, the same is true for $\op_{i\in \ZZ}\Hom(\cG, \un\cF[i])$. This proves  \eqref{Hom G F fin} for all $\cG\in D^b\Coh(\wt\dN)$.

    Consider the derived direct image functor under the affine projection map $\pi: \dN\to \cB^\vee$:
    \begin{equation*}
        \pi_*: D\QCoh_{\cB^\vee}(\wt\dN)\to D\QCoh(\cB^\vee).
    \end{equation*}
    To show $\un\cF\in D^b\Coh_{\cB^\vee}(\wt\dN)$, it suffices to show $\pi_*\un\cF\in D^b\Coh(\cB^\vee)$.
    Now for any $\cG\in D^b\Coh(\cB^\vee)$ we have
    \begin{equation*}
        \op_{j\in \ZZ}\Hom(\cG, \pi_*\un\cF[j])=\op_{j\in \ZZ}\Hom(\pi^*\cG, \un\cF[j])
    \end{equation*}
    which is finite-dimensional by \eqref{Hom G F fin}. We then invoke 
    \cite[Theorem A.1]{BVdB} to conclude that $\pi_*\un\cF\in D^b\Coh(\cB^\vee)$. This finishes the proof.	
    
\end{proof}

\begin{lemma}\label{l:maxideal}
	For $\cF\in \muSh_{\L_{\psi},fs}(\cM_{\psi})$, Denote by $F: D^{b}\Coh^{\dT}_{\cB^{\vee}}(\wt\dN)^{op}\rightarrow \Vect$ the induced functor. $F$ satisfies that a large power of the maximal ideal of $\cO(\dN)$ acts by $0$.
\end{lemma}

\begin{proof}

    \Yun{revise...}
    
	Consider the action of the Hecke category at $\mathbb{G}_m=\mathbb{A}^1\setminus\{0\}$. The microlocal correspondence on $\cM_{\psi}$ induced by this action, decomposes into $\xch(T^{\vee})$ components locally around $\L_{\psi}$. This holds as locally around a non-zero point the corresponding Higgs field is regular semisimple.
	
	It follows by the construction of microlocal kernels \cite[\S 7.1]{KS}, that the functors of convolution with objects in the Hecke category acting on $\muSh_{\L_{\psi},fs}(\cM_{\psi})$, decomposes into summands along $\xch(T^{\vee})$. 
	
	The action of the summands shifts singular support via the action of $\xch(T^{\vee})$ on $\L_{\psi}$.
	
	Further by reducing to the action on $\cD_{\psi, \bG}$, we see that the decomposition on $\cD_{\psi}$ agrees with the decomposition in Proposition \ref{p:rho ms}. 
	
	Let $\cS(\mathfrak{g})\in \Hs$, the object corresponding to the regular representation of the Lie algebra. Taking logarithm of the monodromy endomorphism we get a natural morphism $\cK\rightarrow \cK\star_x\cS(\mathfrak{g})$ for $\cK\in \muSh_{\L_{\psi},fs}(\cM_{\psi})$. 
	
	Using the decomposition of the functor $\star_x\cS(\mathfrak{g})$, we get maps for each weight space.
	
	Further the generators of the action of the maximal ideal of $\cO(\cN)$ on $\cD_{\psi}$, is determined by the map $\cK\rightarrow \cK\star_x\cS(\mathfrak{g})$ and in particular the generators of the root space correspond to the summands for these root spaces.
	
	It follows that the action of the maximal ideal on the induced functor $F$ comes from the map $\cF\rightarrow \cF\star_x\cS(\mathfrak{g})$.
	In particular the power of the root space generator is induced by some summand of the map $\cK\rightarrow \cK\star_x\cS(\mathfrak{g})^{\star_x n}$. The summand has singular support translated by some large $\lambda\in \xch(T^{\vee})$. For large enough $\lambda$, The singular support of $\cF$ and the translated object, is disjoint. It follows that a large power of the generators acts by $0$ on $\cF$ and hence by $0$ on the functor $F$.
	
	This is enough to show that a large power of the maximal ideal of $\cO(\cN)$ acts by $0$ on $F$.

	\Yun{Discussion 3/23/2026: the above idea should still work.  Need to set up microlocal spherical action over $\Gm$, and microlocal affine Hecke action at $0$ using microkernels. This is doable because it only involves proper push (non-smooth pull is not a problem?). Then need to show that nearby cycles takes micro-spherical action to the micro-central sheaf action.}
\end{proof}

\begin{lemma}\label{l:right orthogonal vanish}
We have $M(\cD_\psi)^\bot=0$, i.e., if $\cF\in \muSh_{\L_{\psi},fs}(\cM_{\psi})$ is such that $\Hom(M(\cH), \cF)=0$  for all $\cH\in \cD_\psi$, then $\cF=0$.
\end{lemma}
\begin{proof}
    Let $\cF\in M(\cD_\psi)^\bot\subset \muSh_{\L_{\psi},fs}(\cM_{\psi})$ be a non-zero object. The category $\muSh_{\L_{\psi},fs}(\cM_{\psi})$ has a bounded $t$-structure whose heart is the category of perverse microsheaves \cite{CKNS}, and its \Yun{restriction to $\cD_\psi$ is the heart $\cD^\hs_\psi$ of the usual perverse $t$-structure}. Thus each perverse cohomology of $\pH^i\cF$ is right orthogonal to $M(\cD^\hs_\psi)$, and hence also lies is $M(\cD_\psi)^\bot$. We can thus assume that $\cF$ is a perverse microsheaves. To show $\cF=0$, it thus suffices to show that its characteristic cycle $CC(\cF)\in \hBM{top}{\L_{\psi}}=\hBM{top}{\Fl_\psi}$ is $0$.

	Assume that $\cF$ is non-zero. We now just need to find an object in $\cH\in\cD_{\psi}$ such that $\Hom(M(\cH),\cF)\neq 0$.

    \Yun{revise...}

    To do this, we will act by the affine Hecke category $D^{b}(\bI\bs G\lr{t}/\bI)$ on $\muSh_{\L_{\psi},fs}(\cM_{\psi})$. Note that convolution is given by a composition of a smooth pullback and proper pushforward. By \cite[Proposition 6.3.3 and Proposition 6.71]{KS} there is an action on $\muSh_{\L_{\psi},fs}(\cM_{\psi})$ compatible with the microlocalization functor $M$.
	
	Further the singular support is determined by the Lagrangian correspondence obtained from the composition of a smooth pullback and a proper pushforward.
	It follows that the action of $D^{b}(\bI\bs G\lr{t}/\bI)$ on the $CC(\cF)$, is given by the affine Springer action on $H_{top}(\L_{\psi})$. 
	
	By assumption $CC(\cF)\neq 0$. Acting by the Springer action, we can construct $w$ such that $\Pi(w(CC(\cF))\neq 0$, where $\Pi:H_{top}(\L_{\psi})\rightarrow H_{top}(\L^{e}_{\psi})$ is the map induced from the correspondence induced by the map $\pi:\Gr\rightarrow\Fl$. 
	
	\Yun{Invoke Lemma \ref{l:transverse Lag}?} It follows that $\RHom(\D_{\l},\pi_*(\D_w\star_{0}\cF))\neq 0$ and thus $\RHom(\nabla_{w^{-1}}\star_{0}\pi^*(\D_{\l}),\cF)\neq 0$ and thus $\cF$ does not induce the $0$ functor.
\end{proof}

\begin{theorem}\label{th:Dpsi eq muSh}
	The functor $M: \cD_{\psi}\to \muSh_{\L_{\psi},fs}(\cM_{\psi})$ is an equivalence.
\end{theorem}
\begin{proof}
    The full faithfulness of $M$ is proved in Theorem \ref{th:fullfaith}. It remains to show that $M$ is essentially surjective.
	
    Let $\cF\in \muSh_{\L_{\psi},fs}(\cM_{\psi})$. Using the equivalence $\Th$ in Theorem \ref{th1}, we get a functor
    \begin{equation*}
        F: D^{b}\Coh^{\dT}_{\cB^{\vee}}(\wt\dN)^{op}\xr{\Th^{-1}}\cD_\psi^{op}\xrightarrow{\Hom(M(-), \cF)} \Vect
    \end{equation*} 
	
    We claim that $F$ is representable by checking the two conditions in Lemma \ref{l:repbly}. For any $\cG\in \cD_\psi$ with $\cH=\Th(\cG)\in D^{b}\Coh^{\dT}_{\cB^{\vee}}(\wt\dN)$, $\op_{i\in \ZZ}F(\cH[i])=0$ if $SS(\cG)\cap SS(\cF)=\vn$. For any $\l\in \xcoch(T)=\xch(\dT)$, $\cH\ot E(\l)\cong \Th(\wh\D_\l\star_\infty\cG)$ by the equivariance statement in Theorem \ref{th1}(1). \Yun{Note that $SS(\wh\D_\l\star_\infty\cG)\subset \L_\psi$ is the shift of $SS(\cG)$ by $\l$.} Since $SS(\cF)$ consists of finitely many components of $\L_\psi$, 
    there are only finitely many $\lambda\in\xch(\dT)$ such that $SS(\wh\D_\l\star_\infty\cG)\cap SS(\cF)\ne\vn$, therefore there are only finitely many $\lambda\in\xch(\dT)$ such that 
    $\op_{i\in \ZZ}F(\cH\ot E(\l)[i])\ne0$. It follows that $\oplus_{i\in\ZZ,\lambda\in\xch(T^{\vee})} F(\cH\ot E(\l)[i])$ is a finite dimensional vector space.
    
	
	
	The fact that a large power of the maximal ideal of $\cO(\cN)$ acts by $0$ on $F$ is proven in Lemma \ref{l:maxideal}.
	
	We get by Lemma \ref{l:repbly} $F$ is representable by some object $\cF'\in \cD_{\psi}$. The cone of the canonical map $M(\cF')\rightarrow\cF$ then lies in the right orthogonal of $M(\cD_\psi)$, which has to be zero by Lemma \ref{l:right orthogonal vanish}. This shows that $\cF\cong M(\cF')$ and therefore $M$ is essentially surjective.
    
	
\end{proof}

%
%
%
Let $\widetilde{\mathcal{D}}_{\psi,fs}\subset \widetilde{\mathcal{D}}_\psi$ be the full subcategory of objects whose singular support is contained in $\L_{\psi}$ and has finitely many components.

\begin{cor} The inclusion $\cD_{\psi}\subset \wt\cD_{\psi,fs}$ is an equality; i.e., $\cD_{\psi}=\wt\cD_{\psi,fs}$ as full subcategories of $\wt\cD_{\psi}$.
\end{cor}
\begin{proof}
	The microlocalization functor factors as	$$M:\cD_{\psi}\subset \wt \cD_{\psi,fs}\rightarrow \muSh_{\L_{\psi},fs}(\cM_{\psi}).$$
	
	The first functor is fully faithful, the composition is an equivalence and the second functor is conservative. The result follows.
\end{proof}

}

\appendix

\section{Specialization and microlocalization}\label{as:mon}
In this appendix we review the Kashiwara-Schapira microlocalization construction in the algebro-geometric context over an algebraically closed field of any characteristic.  We also prove formulas for computing the specialization and microlocalization in the situation with a contracting $\Gm$-action.

\subsection{Specialization}

\sss{Monodromic sheaves}
Let $X$ be a stack with $\Gm$-action. Following Verdier \cite[Def.3.1 and \S5]{Ver}, an object  $\cF\in D(X)$ is called {\em monodromic} if for any $x\in X$ with the action map $a_x: \Gm\to X$ (sending $t\in \Gm$ to $t\cdot x$), the pullback $a_x^*\cF$ has tame locally constant cohomology sheaves. Let $D^{\mon}(X)\subset D(X)$ or $D^{\Gm\lmon}(X)\subset D(X)$ denote the full subcategory of monodromic objects.

\sss{Specialization} Suppose $Z\subset X$ is a closed subscheme, Verdier \cite[\S8]{Ver} has defined the specialization functor
\begin{equation*}
\Sp_{Z/X}: D(X)\to D^{\mon}(N_{Z}X)
\end{equation*}
where $N_{Z}X$ is the normal cone of $Z$ in $X$ with the natural $\Gm$-action fiberwise over $Z$. We may extend this definition where $Z$ is locally closed in $X$. In this case, let $j: U\incl X$ be an open neighborhood of $Z$ in which $Z$ is closed. Define the normal cone $N_{Z}X$ to be $N_{Z}U$ (which is clearly independent of the choice of $U$). Then define $\Sp_{Z/X}$ to be the composition
\begin{equation*}
\Sp_{Z/X}: D(X)\xr{j^{*}}D(U)\xr{\Sp_{Z/U}} D^{\mon}(N_{Z}U)=D^{\mon}(N_{Z}X).
\end{equation*}

Below we assume $Z\subset X$ is locally closed.

Recall one key property of $\Sp_{Z/X}$ \cite[\S8, (SP5)]{Ver}: let $i:Z\incl X$ be the inclusion and $i_{0}: Z\incl N_{Z}X$ be the inclusion of the zero section, then the natural transformations of functors $D(X)\to D(Z)$
\begin{equation}\label{Sp res vertex}
i_{0}^{*}\Sp_{Z/X}\to i^{*}, \quad i^{!}\to i_{0}^{!}\Sp_{Z/X}
\end{equation}
are isomorphisms.

Now suppose $Z\subset Y\subset X$, where $Y$ is another closed subscheme of $X$. Suppose there is a $\Gm$-action on a Zariski neighborhood $U$ of $Y$ that contracts $U$ to $Y$ (and in particular fixes $Y$ pointwise).  We denote this $\Gm$ by $\Gm^{v}$ (for vertical). Let $k: Y\incl U$ and $k_{0}: N_{Z}Y\incl N_{Z}U$ be the inclusions.

\begin{lemma}\label{l:sp contr} The natural transformation of functors $D^{\Gm^{v}\lmon}(U)\to D^{\Gm^{v}\times\Gm\lmon}(N_{Z}Y)$ (monodromic both under the $\Gm^{v}$-action and the scaling action in the fiber direction)
\begin{equation*}
k_{0}^{*}\c \Sp_{Z/U}\to \Sp_{Z/Y}\c k^{*}
\end{equation*}
is an isomorphism.
\end{lemma}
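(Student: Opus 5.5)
We argue through the deformation to the normal cone, which carries both $\Gm$-actions at once. Let $\wt M=M_Z U\to \AA^1$ be the deformation to the normal cone of $Z$ in $U$, with generic fiber $U\times\Gm$ and special fiber $N_ZU$. Similarly let $M_ZY\subset M_ZU$ be the one for $Z\subset Y$, with special fiber $N_ZY$. By Verdier, $\Sp_{Z/U}(\cF)=\Psi(p^*\cF)$ is a nearby-cycles type functor: restriction to the special fiber of $\wt j_*$ of the pullback of $\cF$ along the projection $p: U\times\Gm\to U$. The base change map is the canonical restriction map $k_0^*\,i_0^*\wt j_*\to i_0^*\,\wt j_*'\,k^*$, where $\wt j'$ is the analogous open inclusion for $M_ZY$. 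Since $Z\subset Y$ is fixed by $\Gm^v$, the action of $\Gm^v$ on $U$ lifts to an action on $M_ZU$ preserving every fiber over $\AA^1$. The fixed locus is contained in $M_ZY$, and $\Gm^v$ contracts $M_ZU$ onto $M_ZY$: in local coordinates the ideal of $Y$ has positive weights, so the rescaled coordinates $f/t$ also have positive weights.

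The key step is the contraction principle (Braden / Springer's lemma) in the following form. Let $\pi: V\to S$ be equipped with a $\Gm$-action contracting $V$ onto a closed subspace $S'$ with inclusion $k$, and let $\cG$ be $\Gm$-monodromic. Then $k^*\cG\cong r_*\cG$, where $r: V\to S'$ is the limit retraction, which exists since the action is contracting. We will apply this to the relevant sheaves on $M_ZU$ and $M_ZY$, fiberwise over $\AA^1$.

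First, for $\cF$ that is $\Gm^v$-monodromic, the sheaf $\wt j_*p^*\cF$ on $M_ZU$ is $\Gm^v$-monodromic, since $\wt j$ and $p$ are $\Gm^v$-equivariant. The contraction principle identifies its restriction to $M_ZY$ with $r_*\wt j_*p^*\cF$. Since $r\circ\wt j=\wt j'\circ(r|_{U\times\Gm})$, and $r|_{U\times\Gm}$ is the retraction $U\times\Gm\to Y\times\Gm$, we get
$$k_{M}^*\,\wt j_*p^*\cF\cong \wt j'_*\,(r_U\times\id)_*p^*\cF\cong \wt j'_*\,p'^*\,k^*\cF,$$
using the contraction principle again on $U$ (which gives $r_{U*}\cF\cong k^*\cF$) together with smooth base change along $\Gm$. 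Restricting to the special fiber and commuting the two restrictions $i_0^*$ and $k^*$ yields $k_0^*\Sp_{Z/U}(\cF)\cong\Sp_{Z/Y}(k^*\cF)$. Monodromicity under $\Gm^v\times\Gm$ holds because both actions commute on the deformation space.

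The main obstacle is to check that this abstract isomorphism coincides with the natural base change map, and that the contraction principle applies. The latter requires that the retraction $r$ on the total deformation space $M_ZU$ (not only on the fibers) be well defined, i.e.\ that the $\Gm^v$-action extends to $\AA^1$ with limits. For that we would check locally, with coordinates adapted to the flag $Z\subset Y\subset U$, that the contraction on $M_ZU$ extends. For the compatibility of maps, both sides are adjunction morphisms built from restrictions, and the contraction isomorphism $k^*\cong r_*$ is itself induced by the unit $\cG\to k_*k^*\cG$. So we would verify the compatibility by a diagram chase with these units.
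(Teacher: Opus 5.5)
Your route is correct, apart from one definitional point below, and it differs from the paper's. The paper never leaves $U$ and $N_ZU$. With $\pi\colon U\to Y$ and $\pi_0\colon N_ZU\to N_ZY$ the limit retractions, it applies the contraction principle to each space separately ($\pi_*\cong k^*$ and $\pi_{0*}\cong k_0^*$ on monodromic objects). It then relates the two through the natural map $\Sp_{Z/Y}\circ\pi_*\to\pi_{0*}\circ\Sp_{Z/U}$, in a commutative square whose vertical arrows are these contraction isomorphisms.

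You instead apply the contraction principle on the total space of the deformation to the normal cone, relative to the retraction $M_ZU\to M_ZY$. You combine this with $r_{U*}\cong k^*$ on $U$ and smooth base change along the generic part. The isomorphism $k_0^*\Sp_{Z/U}\cong\Sp_{Z/Y}k^*$ then comes out directly as a composite of canonical isomorphisms, and you never commute specialization past the non-proper pushforward $\pi_*$.

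The price is checking that the lifted $\Gm^v$-action contracts $M_ZU$ onto $M_ZY$. This needs neither adapted coordinates nor smoothness of $U$. For affine $U$ with $\cO(U)=\bigoplus_{n\ge0}\cO(U)_n$ and $\cO(Y)=\cO(U)_0$, the ideal of $Z$ is $I=I_0\oplus\cO(U)_{>0}$. Hence $\bigoplus_m I^m t^{-m}$ is non-negatively graded, with degree-zero part $\bigoplus_m I_0^m t^{-m}=\cO(M_ZY)$. The final identification with the base-change map is the same kind of unit/counit chase that the paper also leaves to the reader.

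The correction concerns the definition of $\Sp$. In the \'etale setting of the paper, $\Sp_{Z/U}(\cF)$ is Verdier's nearby cycles of $p^*\cF$ along $t$; the paper later writes $\Sp(\cF)=\Psi_p a^*\cF$. It is not $i_0^*\wt j_*p^*\cF$ with $\wt j$ the inclusion of $U\times\Gm$. The two differ: for $U=\AA^1$, $Z=\{0\}$ and $\cF=E$, the latter is $E\oplus E[-1](-1)$ on $N_ZU$, while $\Sp_{Z/U}(E)=E$.

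Your argument goes through unchanged once $U\times\Gm$ is replaced by the geometric generic fiber over the henselization of $\AA^1$ at $0$. The $\Gm^v$-action is fiberwise over $\AA^1$, so the contraction principle still applies to the pushforward of the monodromic generic sheaf. Moreover, $r_{U*}$ commutes with base change to the geometric generic point.
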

\begin{proof}
Let $\pi: U\to Y$ be the contraction map under the $\Gm^{v}$-action, and let $\pi_{0}: N_{Z}U\to N_{Z}Y$ be the induced contraction on the normal cones. Since $k$ is a section of $\pi$, we have a natural transformation
\begin{equation*}
c: \pi_{*}\to \pi_{*}k_{*}k^{*}=k^{*}.
\end{equation*}
By the contraction principle,  the above map is an isomorphism when applied to $\Gm^{v}$-monodromic sheaves on $U$. Similarly, we have a natural  transformation
\begin{equation*}
c_{0}: \pi_{0*}\to \pi_{0*}k_{0*}k^{*}_{0}=k_{0}^{*}
\end{equation*}
which is an isomorphism when applied to $\Gm^{v}$-monodromic sheaves on $N_{Z}U$. 
A diagram chase gives a commutative diagram
\begin{equation*}
\xymatrix{ \Sp_{Z/Y}\c \pi_{*}\ar[r]\ar[d]^{c} & \pi_{0*}\c\Sp_{Z/U}\ar[d]^{c_{0}}\\
\Sp_{Z/Y}\c k^{*} & \ar[l] k_{0}^{*}\c \Sp_{Z/U}
}
\end{equation*} 
When applied to objects from $D^{\Gm^{v}\lmon}(U)$, the vertical maps are isomorphisms, therefore the horizontal maps are inverse isomorphisms. 
\end{proof}

\subsection{Monodromic vanishing cycles}\label{ss:mon van}
Let $Y$ be a stack over $k$ and $X=Y\times \AA^1$, equipped with an $\Gm$-action only on the $\AA^1$-factor with nonzero weight. Let $p: X\to \AA^1$ and $\pi: X\to Y$ be the projections.

For $t\in \AA^{1}(k)$, let  $i_{t}: Y\cong p^{-1}(t)\incl X$ be the inclusion map. The unit of the adjunction $\id\to i_{t*}i_{t}^{*}$ and the counit $i_{t!}i_{t}^{!}\to \id$ induce natural transformations
\begin{equation*}
\a_{t}: \pi_{*}\to \pi_{*}i_{t*}i_{t}^{*}=i_{t}^{*}, \quad \b_{t}: i_{t}^{!}=\pi_{!}i_{t!}i_{t}^{!}\to \pi_{!}.
\end{equation*}


For $\cF\in D^{\Gm\lmon}(X)$, by the contraction principle, $\a_{0}$ and $\b_{0}$ applied to $\cF$ are isomorphisms
\begin{equation}\label{contraction}
\a_{0}: \pi_{*}\cF\isom i_{0}^{*}\cF, \quad \b_{0}: i_{0}^{!}\cF\isom \pi_{!}\cF.
\end{equation}
We {\em define} the functor $$\phi:D^{\Gm\lmon}(X)\to D(Y)$$ 
to fit into the distinguished triangle (using dg enhancements of the relevant triangulated categories)
\begin{equation}\label{star tri phi}
\phi(\cF)\to i^*_0\cF\xr{\a_{1}\c\a_0^{-1}} i^{*}_{1}\cF\to.
\end{equation}
We call $\phi$ the {\em monodromic vanishing cycles} functor.


\sss{Relation with usual vanishing cycles}
We now relate $\phi$ with the usual vanishing cycles functor for the projection  $p: X\to \AA^{1}$. Choose a geometric generic point $\ov\y$ of the henselization of $\AA^{1}$ at $0$, and a specialzation $\ov\y \rightsquigarrow 0$. The nearby and vanishing cycles
functors are defined
$$\Psi_{p}, \Phi_{p}: D(X)\to D(Y).$$
Recall that they fit into distinguished triangles that are related to one another by Verdier duality
\begin{eqnarray*}
\Psi_{p}(\cF)[-1]\xr{\can}\Phi_{p}(\cF)\to i_{0}^{*}\cF\to \\
i_{0}^{!}\cF\to \Phi_{p}(\cF)\xr{\textup{var}} \Psi_{p}(\cF)[1]\to
\end{eqnarray*}

Now choose  an \'etale path between $\ov\y$ and $1$ on $\AA^1$. For $\cF\in D^{\Gm\lmon}(X)$, the chosen path give isomorphisms
\begin{equation}\label{Psi fiber 1}
\Psi_{p}(\cF)\cong i_{1}^{*}\cF.
\end{equation}
Using the isomorphism \eqref{Psi fiber 1}, the rotations of the triangles above become
\begin{eqnarray*}
\Phi_{p}(\cF)\to i_0^*\cF\to i^{*}_{1}\cF\to,\\
i^{!}_{1}\cF\to i_0^!\cF\to \Phi_{p}(\cF)\to.
\end{eqnarray*}
Comparing the first isomorphism above with the definition of $\phi(\cF)$, we get a functorial isomorphism 
\begin{equation*}
\Phi_{p}(\cF)\cong \phi(\cF), \quad \mbox{ for }\cF\in D^{\Gm\lmon}(X)
\end{equation*}
which depends on the choice of the \'etale path between $1$ and $\ov\y$. We also get a functorial distinguished triangle
\begin{equation}\label{shriek tri phi}
i^{!}_{1}\cF\xr{\b_0^{-1}\c\b_{1}}i^!_{0}\cF\to \phi(\cF)\to .
\end{equation}

\subsection{Monodromic Fourier transform}\label{ss:FT}
Let $V$ be a vector bundle of rank $r$ over a stack $S$ and let $V^{\vee}$ be the dual vector bundle. Consider the diagram
\begin{equation}\label{FT diagram}
\xymatrix{ & V\times_{S}V^{\vee}\ar[dl]_{p}\ar[dr]^{\wt p^{\vee}=(p^{\vee}, \ev)}\\
V & & V^{\vee}\times\AA^{1}
}
\end{equation}
where $\ev: V\times_{S}V^{\vee}\to \AA^{1}$ is the pairing between the two vector bundles. We define the Fourier transform functor
\begin{eqnarray*}
\FT: D^{\mon}(V)&\to& D^{\mon}(V^{\vee})\\
\cF&\mt& \phi(\wt p^{\vee}_{!}p^{*}\cF)[r].
\end{eqnarray*}
Here $\phi: D^{\Gm^{2}\lmon}(V^{\vee}\times\AA^{1})\to D^{\mon}(V^{\vee})$ is the monodromic vanishing cycles functor defined in \S\ref{ss:mon van} (the source categoery consists of sheaf monodromic with respect to the two dilation $\Gm$-actions  on individual factors  of $V^{\vee}\times\AA^{1}$).

\sss{Wang's Fourier transform} In \cite{Wang} J.Wang defines a Fourier transform using Fourier kernel $u_{*}E$, where $u:\AA^{1}-\{1\}\incl \AA^{1}$. More precisely, Wang's Fourier transform is the functor
\begin{eqnarray*}
\FT_{\Wang}: D(V)&\to& D(V^{\vee})\\
\cF&\mt&  p^{\vee}_{!}(p^{*}\cF\ot \ev^{*}u_{*}E)[r].
\end{eqnarray*}
Below we explain the relation between $\FT$ and $\FT_{\Wang}$.

\begin{lemma}
There is a canonical isomorphism between $\FT$ and the restriction of $\FT_{\Wang}$  to $D^{\mon}(V)$.
\end{lemma}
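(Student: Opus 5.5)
We will compare the two functors by rewriting both as a single pushforward along $\wt p^{\vee}$ followed by an operation on the $\AA^1$-factor. Since $p^{\vee}=\pr_{V^\vee}\c\wt p^{\vee}$, projection formula gives
\begin{equation*}
\FT_{\Wang}(\cF)\cong \pr_{V^{\vee}!}\bigl(\wt p^{\vee}_{!}p^{*}\cF\ot \pr_{\AA^1}^{*}u_{*}E\bigr)[r].
\end{equation*}
Set $\cG=\wt p^{\vee}_!p^*\cF\in D(V^\vee\times\AA^1)$. For monodromic $\cF$, $\cG$ is monodromic for both dilations (the diagonal scaling of $V$ and $\ev$ gives monodromicity along $\AA^1$, the scaling of $V^\vee$ the other). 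So the claim reduces to a natural isomorphism $\pr_{V^\vee!}(\cG\ot\pr_{\AA^1}^*u_*E)\cong\phi(\cG)$ for $\Gm$-monodromic $\cG$ on $V^\vee\times\AA^1$ (monodromic along the $\AA^1$-factor), i.e.\ a statement on $Y\times\AA^1$ with $Y=V^\vee$.

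For this, apply $\pr_!(\cG\ot -)$ to the triangle $j_{0!}E\to u_*E\to i_{1*}E[?]$-type sequence on $\AA^1$: precisely, use the triangle $u_!E\to u_*E\to i_{1*}i_1^*u_*E$ together with $u_!E\to E\to i_{1*}E$. Concretely, I would use the triangle $u_*E[-1]\to i_{1*}E[-1]\to\ldots$ obtained from $E_{\AA^1}\to u_*E\to i_{1*}E(-1)[-1]$ (purity/Gysin for the point $1$). Tensoring with $\cG$ and applying $\pr_!$ gives a triangle
\begin{equation*}
\pr_!\cG\to \pr_!(\cG\ot \pr_{\AA^1}^*u_*E)\to i_1^*\cG(-1)[-1]\to .
\end{equation*}
By the contraction principle \eqref{contraction}, $\pr_!\cG\cong i_0^!\cG$ via $\b_0$. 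Comparing with the triangle \eqref{shriek tri phi}, $i_1^!\cG\to i_0^!\cG\to\phi(\cG)$, and using $i_1^!\cG\cong i_1^*\cG(-1)[-2]$ (since $\cG$ is lisse along $\AA^1-\{0\}$ near $1$ by monodromicity), we identify the rotated triangles, provided the connecting maps agree: the connecting map $i_1^*\cG(-1)[-2]\to \pr_!\cG$ must equal $\b_1$ composed with this purity isomorphism. That compatibility is the key check, and the third terms then give the canonical isomorphism; canonicity follows because the relevant Hom between the third terms is rigid (no nonzero maps from the cone ambiguity, by the contraction principle applied again).

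The main obstacle is exactly this identification of connecting morphisms and keeping track of Tate twists/shifts so that the result is independent of choices (no choice of étale path is needed, unlike for $\Phi_p$). I would verify it by reducing to $Y=\pt$ with $\cG$ one of the generators $E_{\AA^1}$, $i_{0*}E$, $j_!\cL$ for tame unipotent $\cL$ on $\Gm$, where both sides are computed directly, then use functoriality in $\cG$.
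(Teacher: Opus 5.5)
Your argument is the paper's up to rotating a triangle. Both apply $\pr_!(\cG\ot\pr_{\AA^1}^*(-))$ to the localization triangle for $\{1\}\subset\AA^1$, identify $\pr_!\cG\cong i_0^!\cG$ by the contraction principle and $i_1^*\cG\ot\d_1^!E\cong i_1^!\cG$ by monodromicity, and compare with \eqref{shriek tri phi}. The paper matches the first maps $\g$ and $\b_1$; you match the connecting maps.

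However, both steps you single out are argued incorrectly. First, the compatibility of maps should not be checked by reducing to $Y=\pt$ and a list of generators: two natural transformations between exact functors that agree on generators need not agree everywhere. The compatibility is in fact tautological. By the projection formula, $\cG\ot\pr_{\AA^1}^*\d_{1*}\d_1^!E\cong i_{1*}(i_1^*\cG\ot\d_1^!E)$. The natural map $i_1^*\cG\ot\d_1^!E\to i_1^!\cG$ is by definition adjoint to $\cG\ot\pr_{\AA^1}^*(\d_{1*}\d_1^!E\to E)$. So the latter equals $i_{1*}$ of that map followed by the counit $i_{1*}i_1^!\cG\to\cG$, and $\pr_!$ of this counit is $\b_1$ by definition. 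Thus $\g$ literally becomes $\b_1$ under these identifications.

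Second, your canonicity argument is false as stated. Matching the outer terms and connecting maps of two triangles determines the isomorphism between $\pr_!(\cG\ot\pr_{\AA^1}^*u_*E)$ and $\phi(\cG)$ only up to adding composites that pass through some $e\in\Hom(i_1^*\cG(-1)[-1],i_0^!\cG)$. This ambiguity is nonzero in general. Already for $Y=\pt$ and $\cG=j_!\cL$, with $j:\Gm\hookrightarrow\AA^1$ and $\cL$ the rank-two unipotent Jordan block, one can choose $e$ so that the resulting composite is nonzero. Canonicity instead comes from the commutative square between $\g$ and $\b_1$ with vertical isomorphisms, passing to cones in a dg enhancement, as the paper does (its definition of $\phi$ already uses such enhancements). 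With these two repairs your proof is the paper's.
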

\begin{proof} Letting $\d_{1}: \{1\}\incl \AA^{1}$ be the inclusion, and using the distinguished triangle $\d_{1*}\d^{!}_{1}E\to \un E\to u_{*}E\to $, $\FT_{\Wang}[-r]$ fits into a distinguished triangle
\begin{equation}\label{tri FTW}
p^{\vee}_{!}(p^{*}\cF\ot\ev^{*}\d_{1*}\d^{!}_{1}E)\xr{\g} p^{\vee}_{!}p^{*}\cF\to \FT_{\Wang}(\cF)[-r]\to .
\end{equation}
On the other hand, letting  $\pi: V^{\vee}\times\AA^{1}\to V^{\vee}$ be the projection, and $i_{1}: V^{\vee}\times\{1\}\incl V^{\vee}\times\AA^{1}$ be the inclusion,  using \eqref{shriek tri phi}, $\FT(\cF)[-r]=\phi(\wt p^{\vee}_{!}p^{*}\cF)$ fits into a distinguished triangle
\begin{equation}\label{tri FT}
i_{1}^{!}\wt p^{\vee}_{!}p^{*}\cF\xr{\b_{1}} \pi_{!}\wt p^{\vee}_{!}p^{*}\cF\to \FT(\cF)[-r]\to. 
\end{equation}
We have a canonical identification
\begin{equation}\label{FTterm2}
\pi_{!}\wt p^{\vee}_{!}p^{*}\cF=p^{\vee}_{!}p^{*}\cF
\end{equation}
We also have isomorphisms by the projection formula
\begin{equation}\label{FTterm11}
p^{\vee}_{!}(p^{*}\cF\ot \ev^{*}\d_{1*}\d^{!}_{1}E)=\pi_{!}(\wt p^{\vee}_{!}p^{*}\cF\ot \pr_{\AA^{1}}^{*}\d_{1*}\d^{!}_{1}E)\cong i_{1}^{*}\wt p^{\vee}_{!}p^{*}\cF\ot \d^{!}_{1}E
\end{equation}
Now we have a natural map 
\begin{equation}\label{FTterm12}
i_{1}^{*}(\wt p^{\vee}_{!}p^{*}\cF)\ot \d^{!}_{1}E\to i_{1}^{!}(\wt p^{\vee}_{!}p^{*}\cF)
\end{equation}
that is an isomorphism since $\wt p^{\vee}_{!}p^{*}\cF\in D(V^{\vee}\times \AA^{1})$ is monodromic with respect to the scaling action on $\AA^{1}$. The isomorphisms \eqref{FTterm11} an \eqref{FTterm12} give
\begin{equation}\label{FTterm1}
p^{\vee}_{!}(p^{*}\cF\ot\ev^{*}\d_{1*}\d^{!}_{1}E)\isom i_{1}^{!}(\wt p^{\vee}_{!}p^{*}\cF).
\end{equation}
The isomorphisms \eqref{FTterm1} and \eqref{FTterm2} intertwine the maps $\g$ and $\b_{1}$ in \eqref{tri FTW} and \eqref{tri FT}. We conclude with a functorial isomorphism $\FT(\cF)\cong \FT_{\Wang}(\cF)$.
\end{proof}

Because $\FT$ coincides with $\FT_{\Wang}$ on monodromic sheaves, Wang's theorem \cite[Theorem 3.8]{Wang} implies:
\begin{cor}The functor $\FT$ is an equivalence.
\end{cor}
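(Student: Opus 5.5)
The statement is the Corollary that $\FT$ is an equivalence, and the plan is simply to transport Wang's theorem along the identification in the preceding lemma. First, by the preceding lemma, on the full subcategory $D^{\mon}(V)$ the functor $\FT$ is canonically isomorphic to $\FT_{\Wang}|_{D^{\mon}(V)}$. So it suffices to show that $\FT_{\Wang}$ restricts to an equivalence $D^{\mon}(V)\isom D^{\mon}(V^{\vee})$. The one issue to watch is the target. A priori $\FT_{\Wang}$ lands in $D(V^{\vee})$, whereas $\FT$ was defined into $D^{\mon}(V^{\vee})$. The definition of $\FT$ via $\phi$ already produces monodromic objects, so the lemma shows that $\FT_{\Wang}$ does preserve monodromicity.

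Next, I invoke \cite[Theorem 3.8]{Wang}. It asserts that $\FT_{\Wang}$ restricts to an equivalence between (tame) monodromic complexes on $V$ and on $V^{\vee}$, with quasi-inverse given by the Fourier transform for the dual pair $(V^{\vee},V)$, up to the usual sign/shift twist. The quasi-inverse is built from the same kernel $u_{*}E$ on the pairing $V^{\vee}\times_S V\to\AA^1$, possibly composed with $[-1]$ on $V$. Applying the preceding lemma to the dual bundle identifies this quasi-inverse with the monodromic $\FT$ for $V^{\vee}$ on $D^{\mon}(V^{\vee})$. Hence $\FT$ has a quasi-inverse and is an equivalence.

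The main point requiring care is matching conventions. Wang works over a base and with a specific notion of monodromic (tame, possibly unipotent) sheaves. I would check that Verdier's monodromic category $D^{\mon}$ used here, namely tame locally constant cohomology along $\Gm$-orbits, agrees with the hypothesis in \cite{Wang}. I would also check that his theorem is stated relative to a stack base $S$. If it is stated only for schemes, I would reduce by smooth descent: both $\FT$ and $\FT_{\Wang}$ commute with smooth base change on $S$, by proper base change and the compatibility of $\phi$ with smooth pullback. Being an equivalence can then be checked on a smooth atlas, since full faithfulness is local via the Hom-sheaves and essential surjectivity follows by gluing. This also covers the claim that Hom-sheaves are preserved.
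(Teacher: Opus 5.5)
Your proposal is correct and follows the paper's own argument: the paper deduces the corollary directly from the preceding lemma, which identifies $\FT$ with $\FT_{\Wang}$ on $D^{\mon}(V)$, combined with \cite[Theorem 3.8]{Wang}. Your further remarks are sound bookkeeping that the paper leaves implicit: that the lemma forces $\FT_{\Wang}$ to preserve monodromicity, how the quasi-inverse is identified, and the smooth-descent reduction when $S$ is a stack.
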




\begin{lemma}\label{l:FT stalk}
Let $\xi: S\to V^{\vee}$ be a section. Then we have a canonical isomorphism for $\cF\in D^{\mon}(V)$
\begin{equation*}
i^{*}_{\xi}\FT(\cF)\cong \phi(\wt\xi_{!}\cF)[r]
\end{equation*}
where $\wt\xi: V\to S\times \AA^{1}$ is the map given by fiberwise pairing with $\xi$.
\end{lemma}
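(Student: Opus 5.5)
The plan is to unwind the definition of $\FT$ and use proper base change along the section $\xi$. By definition $\FT(\cF)=\phi(\wt p^{\vee}_{!}p^{*}\cF)[r]$, where $\phi$ is the monodromic vanishing cycles along the $\AA^1$-factor of $V^\vee\times\AA^1$. Let $\xi\times\id: S\times\AA^1\incl V^\vee\times\AA^1$ be the inclusion induced by $\xi$. I would first check that $i_\xi^*$ commutes with $\phi$, in the form $i_\xi^*\phi(\cG)\cong\phi((\xi\times\id)^*\cG)$ for $\cG\in D^{\Gm^2\lmon}(V^\vee\times\AA^1)$. Since $\phi$ is defined by the triangle \eqref{star tri phi} built from $i_0^*$, $i_1^*$ and $\a_1\c\a_0^{-1}$, the $*$-restrictions to $\{0\}$ and $\{1\}$ commute with $(\xi\times\id)^*$ on the nose. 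The only issue is the contraction isomorphism $\a_0:\pi_*\isom i_0^*$. It is compatible with base change along $\xi$ because monodromicity along the $\AA^1$-factor is preserved by $(\xi\times\id)^*$. The map $\pi_*$ also commutes with base change here: by the contraction principle $\pi_*\cong i_0^*$ on monodromic objects on both sides, which is a stalk-type functor and so commutes with pullback. So the composite $\a_1\c\a_0^{-1}=(\text{restriction } i_0^*\to i_1^*)$ is natural in base change, and we get $i_\xi^*\FT(\cF)\cong\phi((\xi\times\id)^*\wt p^\vee_!p^*\cF)[r]$.

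Next I would apply proper base change to the Cartesian square formed by $\wt p^\vee=(p^\vee,\ev): V\times_S V^\vee\to V^\vee\times\AA^1$ and $\xi\times\id$. The fiber product is $V\times_S S\cong V$, embedded by $v\mapsto(v,\xi(\pi_V(v)))$. The induced map to $S\times\AA^1$ is $v\mapsto(\pi_V(v),\j{v,\xi})$, which is exactly $\wt\xi$, and the composite $V\to V\times_S V^\vee\xr{p}V$ is the identity. This gives $(\xi\times\id)^*\wt p^\vee_!p^*\cF\cong\wt\xi_!\cF$. Combining with the previous step yields $i_\xi^*\FT(\cF)\cong\phi(\wt\xi_!\cF)[r]$.

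The main point to verify is that $\wt\xi_!\cF$ lies in the domain of $\phi$, i.e.\ is monodromic for the scaling on the $\AA^1$-factor; otherwise the right-hand side is not defined. This holds because $\wt\xi$ is $\Gm$-equivariant for fiberwise dilation on $V$ and scaling on $\AA^1$, and $\cF$ is monodromic, so $\wt\xi_!\cF$ is monodromic by Verdier's results. I expect the genuinely delicate part to be the base-change compatibility of the contraction isomorphism $\a_0$ in the first step. I would handle it by rewriting $\phi$ via the $!$-triangle \eqref{shriek tri phi} if needed, or by noting that both $\a_0$'s are characterized as the unique maps restricting to the identity on $\{0\}$.
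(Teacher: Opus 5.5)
Your proposal is correct and follows essentially the same route as the paper. The paper also reduces to showing that the natural map $i_\xi^*\phi\to\phi\,(\xi\times\id)^*$ is an isomorphism by comparing the defining triangles \eqref{star tri phi}, where the restrictions to the $0$- and $1$-fibers commute with $\xi\times\id$. It then identifies $(\xi\times\id)^*\wt p^{\vee}_!p^*\cF$ with $\wt\xi_!\cF$ by proper base change, exactly as you do.
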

\begin{proof}
Let $\wt i_{\xi}: S\times\AA^{1}\xr{\xi\times\id}V^{\vee}\times\AA^{1}$. We use $i_{t}$ to denote both the inclusion $V^{\vee}=V^{\vee}\times\{t\}\incl V^{\vee}\times\AA^{1}$ and the inclusion $S=S\times\{t\}\incl S\times\AA^{1}$, for $t=0,1$.  Note that $\phi(\wt\xi_{!}\cF)[r]\cong \phi(\wt i_{\xi}^{*}\wt p^{\vee}_{!}p^{*}\cF)[r]$. The natural transformation
\begin{equation}\label{van i}
    u: i_{\xi}^{*}\phi\to \phi\wt i^{*}_{\xi}
\end{equation}
then induces a map $i^{*}_{\xi}\FT\to \phi\wt\xi_{!}[r]$. It suffices to show that $u$ is an isomorphism when applied to any object in $D^{\Gm^{2}\lmon}(V^{\vee}\times \AA^{1})$. Using the defining triangle \eqref{star tri phi} for $\phi$ (with $\pi_{*}$ replaced by $i_{0}^{*}$ using the isomorphism $\a_{0}$ in \eqref{contraction}), the map $u$ fits into a  commutative diagram of distinguished triangles
\begin{equation*}
\xymatrix{i_{\xi}^{*}\phi\ar[d]^{u} \ar[r] & i^{*}_{\xi}i^{*}_{0}\ar[d]^{u_{0}}\ar[r]^-{\a_{1}\a_{0}^{-1}} & i_{\xi}^{*}i_{1}^{*}\ar[d]^{u_{1}}\ar[r] & \\
\phi(\wt i^{*}_{\xi}) \ar[r]& i_{0}^{*} \wt i_{\xi}^{*} \ar[r]^-{\a_{1}\a_{0}^{-1}} & i_{1}^{*} \wt i_{\xi}^{*}\ar[r] &
}
\end{equation*}
The maps $u_{0}$ and $u_{1}$ are the isomorphisms given by the equality of maps $i_{t}i_{\xi}=\wt i_{\xi} i_{t}: S\times\{t\}\xr{\xi\times i_{t}} V^{\vee}\times\AA^{1}$ for $t=0,1$. Therefore $u$ is an isomorphism.
\end{proof}

\begin{lemma}\label{l:FT push}
Let $V$ and $U$ be vector bundles over $S$ of constant ranks $r_V$ and $r_U$ respectively. Let $t: V\to U$ be a linear map (but not necessarily of the same rank at every geometric point). Let $t^\v: U^\v\to V^\v$ be the dual map. There is a natural isomorphism of functors
\begin{equation*}
    (t^\v)^*\c \FT_V\cong \FT_U \c t_![r_V-r_U]: D^{\mon}(V)\to D^{\mon}(U^\v).
\end{equation*}
\end{lemma}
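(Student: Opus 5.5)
The natural approach is to unwind the definition of $\FT$ as $\phi(\wt p^\v_!p^*(-))[r]$ and compare the two sides by base change, reducing everything to an identity of integral kernels before applying $\phi$. Write $\ev_V: V\times_S V^\v\to\AA^1$ and $\ev_U: U\times_S U^\v\to \AA^1$ for the pairings, and $\wt p^\v_V=(p^\v_V,\ev_V)$, $\wt p^\v_U=(p^\v_U,\ev_U)$. The key geometric input is the identity $\ev_V(v,t^\v(\eta))=\ev_U(t(v),\eta)$ for $v\in V$, $\eta\in U^\v$.

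For the left side, consider the fiber product $V\times_S U^\v$, mapping to $V\times_S V^\v$ via $\id\times t^\v$. The square with $V^\v\times\AA^1$ and $U^\v\times\AA^1$ (via $t^\v\times\id$) is Cartesian. Proper base change then gives
\[ (t^\v\times\id)^*\wt p^\v_{V!}p_V^*\cF\cong q_!\,p'^*\cF, \]
where $q=(\pr_{U^\v},\ev_U\circ(t\times\id)): V\times_S U^\v\to U^\v\times\AA^1$ and $p'$ is the projection to $V$. Next I would commute $(t^\v)^*$ with $\phi$. This goes exactly as in the proof of Lemma \ref{l:FT stalk}: the natural map $(t^\v)^*\phi\to\phi\,(t^\v\times\id)^*$ is compared termwise on the defining triangles \eqref{star tri phi}, whose outer terms $i_0^*,i_1^*$ commute with pullback. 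So
\[ (t^\v)^*\FT_V(\cF)\cong\phi(q_!p'^*\cF)[r_V]. \]
One should note that monodromicity for both dilations is preserved, since $t$ is linear.

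For the right side, $\FT_U(t_!\cF)=\phi(\wt p^\v_{U!}p_U^*t_!\cF)[r_U]$. Base change along $t\times\id: V\times_S U^\v\to U\times_S U^\v$ gives $p_U^*t_!\cF\cong(t\times\id)_!p'^*\cF$. Since $\wt p^\v_U\circ(t\times\id)=q$ by the pairing identity, we get
\[ \wt p^\v_{U!}p_U^*t_!\cF\cong q_!p'^*\cF. \]
Comparing the two sides yields $(t^\v)^*\FT_V(\cF)\cong\FT_U(t_!\cF)[r_V-r_U]$, with shifts $[r_V]$ versus $[r_U]$ as stated.

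The main obstacle is bookkeeping rather than substance. First, one must check that the isomorphism is natural and independent of the choices. Second, one must confirm that both identifications pass through the same object $q_!p'^*\cF$ compatibly: the two base-change isomorphisms should come from one commutative cube. Third, $t$ need not have constant rank, so nothing here may rely on smoothness of $t$. The argument above only uses proper base change and pullback along $t^\v$, both valid for arbitrary maps.
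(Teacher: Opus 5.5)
Your proposal is correct and follows the paper's proof: both sides are reduced by proper base change, using the pairing identity on $V\times_S U^\v$, to the same kernel on $U^\v\times\AA^1$. What remains is the commutation $(t^\v)^*\circ\phi\cong\phi\circ(t^\v\times\id)^*$, which the paper also handles as in Lemma~\ref{l:FT stalk} by comparing the defining triangles through the restrictions to the $0$ and $1$ fibers.
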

\begin{proof}
Let $\wt t^\v=t^\v\times \id_{\AA^1}: U^\v\times \AA^1\to V^\v\times \AA^1$. We use the notations $p, \wt p^\v$ as in the diagram \eqref{FT diagram}. By definition we have
\begin{equation*}
    (t^\v)^*\c \FT_V=(t^\v)^*\c\phi(\wt p^\v_!p^*)[r_V]
\end{equation*}
A diagram chase using the pairing $V\times_S U^\v\to \AA^1$ together with proper base change shows that
    \begin{equation*}
        \FT_U\c t_!\cong \phi((\wt t^\v)^*\wt p^\v_!p^*)[r_U].
    \end{equation*}
    Here $\phi$ denotes the monodromic vanishing cycles with respect to $U^\v\times \AA^1\to \AA^1$. Comparing the two formulas it suffices to show that the  natural transformation
    \begin{equation*}
        (t^\v)^*\c \phi\to\phi\c (\wt t^\v)^*: D^{\Gm^2\lmon}(V^\v\times \AA^1)\to D^{\mon}(U^\v)
    \end{equation*}
    is an isomorphism. 
    The proof of this last fact is the same as the proof that \eqref{van i} is an isomorphism, using the definition that $\phi$ is the cone of the cospecialization maps between restrictions to the $0$ and $1$ fibers. 
\end{proof}

\sss{Alternative definition} Here is an alternative definition of $\FT$: for $\cF\in D(V)$, consider the vanishing cycles of $p^{*}\cF$ with respect to the function $\ev: V\times_{S}V^{\vee}\to \AA^{1}$.

\begin{lemma}\label{l:Phi ev}
    For any $\cF\in D(V)$, $\Phi_{\ev}(p^{*}\cF)$ is concentrated on $0_S\times_S V^{\vee}$, where $0_S\subset V$ denotes the zero section. Moreover, $\Phi_{\ev}(p^{*}\cF)$ is monodromic with respect to the $\Gm$-scaling action on $V^\vee$.
\end{lemma}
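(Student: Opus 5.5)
The plan is to compute the support of $\Phi_{\ev}(p^{*}\cF)$ one point at a time, using the fact that vanishing cycles are nonzero only where the function has critical points relative to the singular support. For the second assertion, the idea is to use the $\Gm$-action on $V\times_S V^{\vee}$ given by $s\cdot(v,\xi)=(v,s\xi)$. This action scales $\ev$ with weight one.

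\textbf{Support.} Take a point $(v,\xi)$ with $v\neq 0$. We want to show that $\Phi_{\ev}(p^{*}\cF)$ vanishes there. Since $v\neq0$, we can pick a local fiber coordinate on $V^{\vee}$ near $\xi$ along which $\ev$ is linear with nonzero slope; for instance, move $\xi$ in the direction $\xi+c\eta$ with $\langle v,\eta\rangle=1$. Étale locally near $(v,\xi)$, this lets us write
\[
V\times_S V^{\vee}\cong W\times\AA^{1},
\]
where the last coordinate $c$ is a coordinate on the $V^{\vee}$-fiber and $\ev$ is a function of the form $f(w)+c$. The sheaf $p^{*}\cF$ is pulled back from $V$, so it is constant along the $V^{\vee}$ directions. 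In particular, it is locally the pullback of a sheaf $\cG$ on $W$.

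After the change of coordinates $c'=f(w)+c$, the function $\ev$ becomes the projection to the $\AA^1$-factor. The sheaf remains a pullback $\pr_W^{*}\cG$, because the coordinate change is an automorphism over $W$. Vanishing cycles of a pullback $\pr_W^*\cG$ along the smooth projection $W\times\AA^1\to\AA^1$ vanish: the map $\ev$ is locally acyclic relative to such a sheaf, by smooth base change and the generic local acyclicity of a product with the trivial factor. Hence $\Phi_{\ev}(p^*\cF)$ is supported on $0_S\times_S V^\vee$.

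\textbf{Main obstacle.} The step I expect to be delicate is making the local product decomposition rigorous. One has to check that it respects the sheaf, that is, that $p^{*}\cF$ really is a pullback from the complement of the $c'$-direction. To handle this I would use the automorphism
\[
(v,\xi)\mapsto(v,\xi+c\eta)
\]
over $V$. It preserves $p^{*}\cF$ and translates $\ev$ by $c$. This shows that $(p^{*}\cF,\ev)$ is locally isomorphic to an external product of a sheaf with $(\un E_{\AA^1},\mathrm{id})$. Local acyclicity of $\mathrm{id}_{\AA^1}$ relative to the constant sheaf then gives $\Phi=0$ by the Künneth formula for vanishing cycles.

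\textbf{Monodromicity.} The $\Gm$-action above preserves $p^{*}\cF$ equivariantly, and $\ev(v,s\xi)=s\,\ev(v,\xi)$. The vanishing cycles functor commutes with pullback along this action, after composing with the scaling of the base $\AA^{1}$. Scaling $\AA^1$ acts on the nearby fiber through a tame character of the inertia, so $\Phi_{\ev}(p^*\cF)$ acquires a $\Gm$-equivariance up to tame local systems. Restricted to $0_S\times_S V^\vee$, this makes the orbit pullbacks $a_x^{*}$ have tame locally constant cohomology, which is exactly the monodromicity condition.
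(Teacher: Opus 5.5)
Your proposal follows the paper's proof. Your translation $(v,\xi)\mapsto(v,\xi+c\eta)$ is a hands-on local splitting of the smooth map $(p,\ev)\colon (V-0_S)\times_S V^{\vee}\to (V-0_S)\times\AA^1$, so the support statement reduces, exactly as in the paper's smooth-base-change argument, to $\Phi_{\pr_2}(\cF|_{V-0_S}\bt E)=0$; and monodromicity rests, as in the paper, on the $\Gm$-equivariance of $p^{*}\cF$ and the weight-one homogeneity of $\ev$. The paper closes this last step by citing \cite[Prop.~7.1]{Ver}, and you should do the same rather than rely on your sketch: in the \'etale setting in characteristic $p$, the claim that rescaling the base acts through a tame character silently assumes that wild inertia acts trivially on $\Phi_{\ev}(p^{*}\cF)$, which is the nontrivial input of Verdier's result, and without it the identifications you get for individual $s\in\Gm$ need not glue to a tame local system along the orbits.
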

\begin{proof}
The map $\wt p'=(p, \ev): (V-0_S)\times_S V^\vee\to (V-0_S)\times \AA^1$ is smooth. Therefore we have an isomorphism $\wt p'^*_0 \Phi_{\pr_2}(\cK)\cong \Phi_{\ev}(\wt p'^*\cK)$ for any $\cK\in D((V-0_S)\times \AA^1)$ (here $\wt p'_0$ is the restriction of $\wt p'$ to $\ev^{-1}(0)$, and $\pr_2: (V-0_S)\times \AA^1\to \AA^1$ is the projection). For $\cK=\cF|_{V-0_S}\bt E$, we have $\Phi_{\pr_2}(\cK)=0$, hence $\Phi_{\ev}(\wt p'^*\cK)=\Phi_\ev(\cF|_{V-0_S}\bt E)=0$, i.e., $\Phi_{\ev}(p^{*}\cF)$ is zero when restricted to $(V-0_S)\times_S V^\vee$.

Now the map $\ev: V\times_S V^\vee\to \AA^1$ is $\Gm$-equivariant with respect to the scaling actions on $V^\vee$ and $\Gm$, and $p^*\cF$ is $\Gm$-equivariant on $V\times_S V^\vee$. This implies the vanishing cycle $\Phi_\ev(p^*\cF)$ is $\Gm$-monodromic by \cite[Prop. 7.1]{Ver}
\end{proof}

We may then define
\begin{eqnarray*}
    \FT': D(V)&\to& D^{\mon}(V^\vee)\\
    \cF &\mt&\Phi_{\ev}(p^{*}\cF)[r],
\end{eqnarray*}
where we view the right side as an object in $D^{\mon}(V^\vee)$ by Lemma \ref{l:Phi ev}.

\begin{prop}
    For a choice of an \'etale path between $\ov \y$ and $1$ on $\AA^1$, there is a canonical isomorphism of functors $\FT'|_{D^{\mon}(V)}\isom \FT: D(V)\to D^{\mon}(V^\vee)$.
\end{prop}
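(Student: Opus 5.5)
The plan is to compare the two constructions through the factorization $\ev=\pr_{\AA^1}\c\wt p^{\vee}$, using the diagram \eqref{FT diagram}. By construction $\FT(\cF)[-r]=\phi(\wt p^{\vee}_{!}p^{*}\cF)$. By the comparison of $\phi$ with usual vanishing cycles in \S\ref{ss:mon van}, the chosen \'etale path gives $\phi\cong\Phi_{\pr}$ on $\Gm^2$-monodromic objects of $V^\vee\times\AA^1$. Here $\pr: V^\vee\times\AA^1\to\AA^1$ is the projection, and $\Phi_{\pr}$ is followed by restriction to the zero fiber $V^\vee\times\{0\}$. So it suffices to construct a natural isomorphism
\begin{equation*}
\Phi_{\pr}(\wt p^{\vee}_{!}p^{*}\cF)\cong (p^\vee_0)_{!}\Phi_{\ev}(p^*\cF),
\end{equation*}
where $p^\vee_0: \ev^{-1}(0)\to V^\vee$ is the projection. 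By Lemma \ref{l:Phi ev} the right side is supported on $0_S\times_S V^\vee$, where $p^\vee_0$ is an isomorphism onto $V^\vee$. The right side is therefore just $\Phi_\ev(p^*\cF)$ viewed on $V^\vee$, i.e.\ $\FT'(\cF)[-r]$.

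The main step is the base change map $\Phi_{\pr}\c\wt p^{\vee}_{!}\to (\wt p^\vee_0)_!\c\Phi_{\ev}$ for vanishing cycles. It is an isomorphism when $\wt p^\vee$ is proper, but $\wt p^\vee$ is not proper, and this is where I expect the difficulty. To handle it, I would factor $\wt p^\vee$ as an open embedding $j: V\times_S V^\vee\hookrightarrow \ov{V}\times_S V^\vee$, where $\ov V=\PP(V\oplus\cO)$ is the fiberwise projective completion, followed by a proper map. Proper base change handles the proper part. For the open part, I would show that $\Phi$ of $j_!p^*\cF$ has no contribution along the boundary $\PP(V)\times_S V^\vee$, where $\ev$ extends with poles. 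The cleanest route is to exploit monodromicity. Both sides are $\Gm$-monodromic in the $V$-direction, because $\cF$ is monodromic, and the $\Gm$ action scaling $V$ and $\AA^1$ simultaneously contracts towards the zero section. For monodromic objects I would instead use the explicit descriptions of both sides as cones. For $\Phi_{\pr}$ this is \eqref{star tri phi}, with $i_0^*$ and $i_1^*$ computed by base change: they equal $p^\vee_!$ of $p^*\cF$ restricted to $\ev^{-1}(0)$ and $\ev^{-1}(1)$. For the right side I would compute $(p^\vee_0)_!\Phi_\ev$ via the triangle $(p^\vee_0)_!\,i^*_{\ev=0}p^*\cF\to (p^\vee_0)_!\Phi_\ev\to\cdots$. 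The key input is that $(p^\vee)_!$ of the nearby cycles $\Psi_\ev(p^*\cF)$ agrees with $p^\vee_!$ restricted to $\ev^{-1}(1)$. This holds because, in the scaling-equivariant family, $\ev$ restricted to the fibers of $p^\vee$ is a linear form, and by homogeneity the family $\ev^{-1}(s)$ for $s\neq0$ is trivialized by the $\Gm$-action. This gives $\Psi_\ev$ compatibility with $!$-pushforward after choosing the path, since the $\Gm$-equivariant trivialization of $\ev^{-1}(\Gm)$ makes $p^\vee_!$ commute with nearby cycles, as in the contraction argument of Lemma \ref{l:sp contr}.

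Finally, I would check that the resulting maps $i_0^*\to i_1^*$ on both sides agree, namely $\a_1\c\a_0^{-1}$ and the cospecialization map. Both are induced by the same \'etale path, so the cones are canonically identified. Naturality in $\cF$ is clear, and the shift $[r]$ matches on both sides.
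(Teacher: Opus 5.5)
You follow the paper's reduction: use the path to replace $\phi$ by $\Phi_{\pr}$, reduce to comparing $(p^\vee_0)_!\Phi_{\ev}(p^*\cF)$ with $\Phi_{\pr}(\wt p^{\vee}_!p^*\cF)$, and read the former as $\FT'(\cF)[-r]$ via Lemma \ref{l:Phi ev}. You diverge in how you show that base change of vanishing cycles along the non-proper $\wt p^\vee$ is an isomorphism.

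\textbf{The compactification step.} The paper compactifies. With $P=\PP(\cO_S\oplus V)$, it takes $W\subset P\times_S V^\vee\times\AA^1$ cut out by $\langle x,y\rangle=ax_0$, the closure of the graph of $\ev$. On $W$, $\ev$ extends to a regular function $e$ and $(q^\vee,e)$ is proper. Proper base change handles the pushforward. The argument of Lemma \ref{l:Phi ev}, applied to $j_!p^*\cF\cong q^*\io_!\cF$, then shows $\Phi_e$ lives on the non-smooth locus of $(q,e)\colon W\to P\times\AA^1$, which is still the zero section, so the boundary contributes nothing. Your first attempt, factoring through $\ov V\times_S V^\vee$, does not work as written. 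There $\ev$ has poles, and indeterminacy where $\langle x,y\rangle=0$ on $\PP(V)\times_S V^\vee$. So no proper map to $V^\vee\times\AA^1$ extends $\wt p^\vee$, and there is no function along which to take $\Phi$ at the boundary. The graph closure $W$ is exactly what repairs this.

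\textbf{The monodromic route.} This is a valid alternative, but the key input must come from contraction, not from the trivialization of $\ev^{-1}(\Gm)$. The trivialization alone is not enough. Take $f\colon\Gm\incl\AA^1$ with pushforward to a point: the fibers over $\Gm$ are trivialized by scaling, yet $\Psi_f(E)=0$ while $\Psi(f_!E)=E$. What makes your argument work is that scaling $V$ contracts $V\times_S V^\vee$ onto $0_S\times_S V^\vee\subset\ev^{-1}(0)$, with $p^\vee$ invariant and $p^*\cF$ monodromic. This is precisely Lemma \ref{l:global section nearby} for $M=V\times_S V^\vee$, $p=\ev$, $\pi=p^\vee$, and it gives $(p^\vee_0)_!\Psi_{\ev}(p^*\cF)\cong(p^\vee_1)_!(p^*\cF|_{\ev^{-1}(1)})$.

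\textbf{The last step.} Do not argue that $\a_1\c\a_0^{-1}$ and the specialization map are ``induced by the same path'': $\a_1\c\a_0^{-1}$ involves no path. Instead use the natural transformations $(p^\vee_0)_!\Psi_{\ev}\to\Psi_{\pr}\wt p^\vee_!$ and $(p^\vee_0)_!\Phi_{\ev}\to\Phi_{\pr}\wt p^\vee_!$, which form a morphism of triangles. It then suffices to check that the first is the isomorphism of Lemma \ref{l:global section nearby}. After that, \S\ref{ss:mon van} identifies $\Phi_{\pr}$ with $\phi$.

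\textbf{What each route buys.} The paper's argument gives $(p^\vee_0)_!\Phi_{\ev}(p^*\cF)\cong\Phi_{\pr}(\wt p^\vee_!p^*\cF)$ for every $\cF\in D(V)$; monodromicity enters only to pass from $\Phi_{\pr}$ to $\phi$. The price is a compactification and a smoothness check at infinity. Your route uses monodromicity of $\cF$ essentially, which the statement permits, and avoids any geometry at infinity by reusing the appendix's contraction lemma.
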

\begin{proof}
    With the choice of \'etale path on $\AA^1$, we can identify the monodromic vanishing cycles on $V^\vee\times \AA^1$ in the definition of $\FT$ with the usual vanishing cycles $\Phi_{\pr}$, where $\pr: V^\vee\times\AA^1\to \AA^1$ is the projection. Functoriality of vanishing cycles with respect to pushforward along $\wt p^\vee: V\times_S V^\vee\to V^\vee\times \AA^1$ gives a natural transformation
    \begin{equation}\label{q!Phi}
        (p_0^\vee)_!\Phi_\ev\to \Phi_{\pr}\wt p^\vee_!.
    \end{equation}
    Here $p_0^\vee: \ev^{-1}(0)\to V^\vee$ is the projection. We claim that \eqref{q!Phi} is an isomorphism when evaluated on $p^*\cF$, for any $\cF\in D(V)$.

    Let $P=\PP(\cO_S\op V)$, considered as compactifications of $V$ over $S$ in the obvious way. Let $W\subset P\times_S V^\vee \times \AA^1$ be the subscheme defined in homogeneous coordinates $[x_0,x]\in P$ (where $x_+\in V$), $y\in V^\vee$ and $a\in \AA^1$ by
    \begin{equation*}
        \j{x,y}=ax_0.
    \end{equation*}
    Let $q:W\to P$ (which extends $p$), $q^\vee: W\to V^\vee$ (extending $p^\vee$) and $e: W\to \AA^1$ (extending $\ev$) be the projections.
    We have a commutative diagram
    \begin{equation*}
        \xymatrix{V\times_S V^\vee \ar[dr]_{\wt p^\vee}\ar@{^{(}->}[r]^-{j} & W'\ar[d]^{\wt q^\vee=(q^\vee, e)}\\
        &V^\vee\times\AA^1}
    \end{equation*}
    Let $\cK=j_!(p^*\cF)\cong q^*(\io_!\cF)\in D(W')$,  where $\io:V\incl P$ is the inclusion. Let $q^\vee_0: e^{-1}(0)\subset W\to V^\vee$ be the restriction of $\wt q^\vee$ to the zero fiber of $e$. Since $\wt q^\vee$ is proper, we have 
    \begin{equation*}
        (q^\vee_0)!\Phi_{e}(\cK)\cong \Phi_{\pr}(\wt q^\vee_!\cK)\cong \Phi_{\pr}(\wt p^\vee_!p^*\cF).
    \end{equation*}
    It remains to show that the natural map
    \begin{equation}\label{jPhi}
        (p_0^\vee)_!\Phi_\ev(p^*\cF)\to (q^\vee_0)_!\Phi_{e}(j_!p^*\cF)=(q^\vee_0)_!\Phi_{\ev'}(\cK)
    \end{equation}
    is an isomorphism. Let $j_0: \ev^{-1}(0)\incl e^{-1}(0)$ be the restriction of the $j$ over the zero locus of $e$. The map \eqref{jPhi} is induced from the natural map
    \begin{equation}\label{jPhi2}
    (j_0)_!\Phi_\ev(p^*\cF)\to\Phi_{e}(j_!p^*\cF)
    \end{equation}
    by applying $(\wt q^\vee_0)_!$. Thus it suffices to show that \eqref{jPhi2} is an isomorphism. Now the same argument of Lemma \ref{l:Phi ev} applied to $e: W'\to \AA^1$ and the sheaf $j_!p^*\cF\cong q^*\io_!\cF$ shows that $\Phi_{e}(j_!p^*\cF)$ is supported on the non-smooth locus of the map $\wt q=(q,e): W'\to P\times \AA^1$, which is still $0_S\times_S V^\vee\times\{0\}\subset W$. In particular, the support of $\Phi_{e}(j_!p^*\cF)$ is contained in $\ev^{-1}(0)\subset V\times_S V^\vee$. This implies \eqref{jPhi2} is an isomorphism and finishes the proof.
\end{proof}

\subsection{Microlocalization}\label{ss:mic}

In the case $X$ is a smooth scheme and $Z\subset X$ a smooth locally closed subscheme, $N_{Z}X$ is a vector bundle over $Z$ and its dual bundle is the conormal bundle $T^{*}_{Z}X$. The microlocalization functor along $Z$
\begin{equation*}
\mu_{Z/X}: D(X)\to D^{\mon}(T^{*}_{Z}X)
\end{equation*}
is defined as  the composition
\begin{equation*}
\mu_{Z/X}: D(X)\xr{\Sp_{Z/X}} D^{\mon}(N_{Z}X)\xr{\FT} D^{\mon}(T^{*}_{Z}X)
\end{equation*}
where $\FT: D^{\mon}(N_{Z}X)\to D^{\mon}(T^{*}_{Z}X)$ is the monodromic Fourier transform with base $Z$ (see \S\ref{ss:FT}) for sheaves monodromic with respect to the dilation $\Gm$-action on the fibers.

\begin{lemma}\label{l:res slice}
    Let $X$ be a scheme smooth over $k$ and $i: Y\incl X$ be a closed subscheme. Suppose $Y$ carries an action of an algebraic group $H$ and $Z$ is an $H$-orbit through $z\in Z$. Let $s: S\incl X$ be a transversal slice to $Z$ at $z$. In particular, $T_{z}S$ (resp. $T^{*}_{z}S$) is canonically identified with the fiber $(N_{Z}X)_{z}$ (resp. $(T^{*}_{Z}X)_{z}$). 
    Then for any $\cF\in D_H(Y)$ we have a canonical isomorphism
    \begin{equation}\label{mu res slice}
        \mu_{Z/X}(i_!\cF)|_{T^*_zS}\cong \mu_{\{z\}/S}(s^*i_!\cF)\in D(T^*_zS).
    \end{equation}
\end{lemma}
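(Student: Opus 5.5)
Equation \eqref{mu res slice} splits into two steps, matching the two parts of the definition of microlocalization: first compare specializations, then apply Fourier transform. The geometric input is that near $z$ the pair $(X,Z)$ is, up to a smooth map, a product of the slice $S$ with the orbit. Concretely, pick a smooth neighborhood $U\subset H$ of the identity and use the action map $a: U\times S\to X$, $(h,s)\mapsto h\cdot s$. Here $a$ is not globally defined on $X$, but it is defined on $Y$. So I first replace $X$ by a neighborhood of $z$ and use that $i_!\cF$ is supported on $Y$. Transversality says $da$ is an isomorphism at $(1,z)$, so after shrinking, $a$ is étale onto a neighborhood of $z$ with $a^{-1}(Z)\supset U\times\{z\}$ near $(1,z)$. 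Since $\mu$ and $\Sp$ commute with étale base change, we may compute on $U\times S$ with $Z$ replaced by $U\times\{z\}$. I expect the $X$-versus-$Y$ issue to need care: the action is defined only on $Y$, so one must produce an étale chart of $X$ compatible with the $H$-action on $Y$. I would handle this by working in the étale local (or henselian) model in which $X\cong$ (orbit) $\times S$ near $z$, with $Y$ identified accordingly via the action.

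Next, $H$-equivariance of $\cF$ gives $a^*i_!\cF\cong \un E_U\bt s^*i_!\cF$, up to the identification of $a^{-1}(Y)$ with $U\times(S\cap Y)$. This identification is precisely where the equivariance is used, and it is the main obstacle: one must check that the étale chart can be chosen so that the pullback of the sheaf is genuinely external with a constant factor along $U$, not merely locally constant. I would try the chart $a$ itself, which handles this directly because $a$ restricted to $U\times (S\cap Y)$ factors through the action map on $Y$. For the submanifold $U\times\{z\}\subset U\times S$, the normal cone is $U\times T_zS$. Verdier's specialization is compatible with external products with a smooth factor on which one specializes trivially, so
\[\Sp_{U\times\{z\}/U\times S}(\un E_U\bt \cG)\cong \un E_U\bt \Sp_{\{z\}/S}(\cG).\]
Restricting to the fiber over $1\in U$ gives $\Sp_{Z/X}(i_!\cF)|_{(N_ZX)_z}\cong \Sp_{\{z\}/S}(s^*i_!\cF)$.

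Finally, I apply the monodromic Fourier transform fiberwise. Lemma \ref{l:FT stalk}, or more precisely its proof, which shows $\phi$ commutes with restriction to a base point, implies that restriction to the fiber over $z$ commutes with $\FT$ over the base $Z$. One way to see this is to apply the base-change compatibility of $\FT$ to the inclusion $\{z\}\incl Z$; this uses proper base change for $\wt p^\vee_!$ and the natural transformation $i^*\phi\to\phi\, i^*$, shown to be an isomorphism exactly as for \eqref{van i}. Combining with the identifications $(N_ZX)_z=T_zS$ and $(T^*_ZX)_z=T^*_zS$ yields \eqref{mu res slice}, and the shifts $[r]$ agree because both Fourier transforms are taken on the same vector space $T_zS$, of rank $\codim_X Z$.
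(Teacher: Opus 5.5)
Your Fourier-transform step is fine and is what the paper does. The specialization step, however, has a genuine gap at exactly the point you flagged.

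The map $a(h,s)=h\cdot s$ exists only on $U\times(S\cap Y)$. Even granting an étale-local extension $\tilde a\colon U\times S\to X$ with $\tilde a|_{\{1\}\times S}=s$, you need $\tilde a^{-1}(Y)=U\times(S\cap Y)$ near $(1,z)$, at least set-theoretically. Without it, $\tilde a^{*}i_!\cF\cong\underline{E}_U\boxtimes s^{*}i_!\cF$ cannot hold, so the issue is the support, not constancy along $U$. Your remark that $\tilde a$ restricted to $U\times(S\cap Y)$ factors through the action only gives the inclusion $U\times(S\cap Y)\subseteq\tilde a^{-1}(Y)$. Nothing in your construction controls points $(u,s)$ with $s\notin Y$ but $\tilde a(u,s)\in Y$. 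The ``étale local model in which $Y$ is identified accordingly'' is a restatement of what has to be proved.

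Separately, $da$ is not an isomorphism at $(1,z)$ unless $\mathrm{Stab}_H(z)$ is finite: compare $\dim H+\codim_X Z$ with $\dim X$. In the application ($H$ a quotient of $G[\![t]\!]$, $z=t^\lambda$) the stabilizer is positive-dimensional. So you must take $U$ transversal to $\mathrm{Lie}\,\mathrm{Stab}_H(z)$, or use smooth rather than étale base change.

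The missing input is the paper's: transversality implies that the action map $H\times(S\cap Y)\to Y$ is smooth at $(1,z)$, i.e.\ $S\cap Y\to[H\backslash Y]$ is smooth. This can be proved on $Y$ alone, as follows.
\begin{itemize}
\item Let $g_1,\dots,g_{\dim Z}$ be local equations of $S$ in $X$. Via $(h,s)\mapsto(h,hs)$, $H\times(S\cap Y)$ is the closed subscheme of $H\times Y$ cut out by $g_j(h^{-1}y)=0$.
\item The projection $H\times Y\to Y$ is smooth.
\item By transversality, the differentials at $h=1$ of the functions $h\mapsto g_j(h^{-1}z)$ are linearly independent.
\item By the Jacobian criterion relative to $Y$, this subscheme is therefore smooth over $Y$ at $(1,z)$.
\end{itemize}

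With this, the paper needs no chart on $X$. The induced map of deformation spaces $M_z(S\cap Y)\to[H\backslash M_ZY]$ is smooth over $\AA^1$, and nearby cycles commute with smooth pullback. This gives $\Sp_{Z/Y}(\cF)|_{N_z(S\cap Y)}\cong\Sp_{\{z\}/S\cap Y}(\cF|_{S\cap Y})$. One then pushes forward along the closed embeddings $N_ZY\hookrightarrow N_ZX$ and $N_z(S\cap Y)\hookrightarrow T_zS$, using that specialization commutes with closed embeddings.

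Alternatively, the same smoothness repairs your chart. The inclusion $U\times(S\cap Y)\hookrightarrow\tilde a^{-1}(Y)$ is a closed immersion of $Y$-schemes that are both smooth of the same relative dimension at $(1,z)$. Hence it is an isomorphism near $(1,z)$.
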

\begin{proof}
    Let $S^Y=\Sig\cap Y$ with inclusion $s_Y: S^Y\incl Y$. Let $M_{Z}S^Y$ be the total space of the deformation to the normal for the pair $Z\incl S^Y$. Similarly we have $M_ZY$. 
    We have a diagram of inclusions of normal cones
    \begin{equation*}
         \xymatrix{N_z(S^Y)\ari[d]^-{\io_S}\ari[r]^-{\s_Y} & N_ZY \ari[d]^-{\io} \\
         T_zS=N_zS\ari[r]^-{\s} & N_ZX=T_ZX}
    \end{equation*}
    The fact that $S$ is transversal to $Z$ implies that the map $S^Y\to [H\bs Y]$ is smooth. Therefore it induces a smooth map over $\AA^1$
    \begin{equation*}
        M_Z(S^Y)\to [H\bs M_ZY].
    \end{equation*}
    Since nearby cycles commute with smooth pullback, we conclude that canonical map
    \begin{equation*}
        \s_Y^*\Sp_{Z/Y}(\cF)\to \Sp_{\{z\}/S^Y}(s^*_Y\cF)\in D_H(N_z(S^Y))
    \end{equation*}
    is an isomorphism. Applying $\io_{S*}$ to both sides, and using that specialization commutes with closed embeddings, we get a canonical isomorphism
    \begin{eqnarray*}
        \s^*\Sp_{Z/X}(i_!\cF)\to \Sp_{\{z\}/S}(s^*i_!\cF)\in D(T_zS).
    \end{eqnarray*}
    Finally we apply Fourier transform on both sides, and using the fact that Fourier transform commutes with arbitrary base change to conclude \eqref{mu res slice}.
\end{proof}


\sss{}
Consider the following situation. Let $X$ be a smooth scheme with a $\Gm$-action contracting to its fixed point locus $Z=X^{\Gm}$. Let $\cF\in D^{\Gm\lmon}(X)$ be monodromic with respect to the action of $\Gm$. Let $f:X\to \AA^{1}$ be a regular function of weight $w\in \ZZ_{>0}$ under the $\Gm$-action, i.e., $f(t\cdot x)=t^{w}f(x)$ or $t\in \Gm$ and $x\in X$. Let $c: X\to Z$ be the contraction map $x\mt \lim_{s\to 0}s\cdot x$ and $\wt f=(c,f): X\to Z\times \AA^{1}$.  On the other hand, $\xi:=df|_{Z}$ gives a section of the conormal bundle $T^{*}_{Z}X\to Z$.

\begin{prop}\label{p:aff space van cycle} Suppose the weight $w$ of $f$ is the smallest among the weights of $\Gm$ on the fibers of the conormal bundle $T^{*}_{Z}X$. Then there is a functorial isomorphism in $\cF\in D^{\Gm\lmon}(X)$

\begin{equation*}
\xi^{*}\mu_{Z/X}(\cF)\isom\phi(\wt f_{!}\cF)[\dim X-\dim Z].
\end{equation*}
Here $\phi: D^{{\Gm\lmon}}(Z\times \AA^{1})\to D(Z)$ is the monodromic vanishing cycles functor defined in \S\ref{ss:mon van}.
\end{prop}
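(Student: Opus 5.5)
The plan is to unwind $\mu_{Z/X}=\FT\circ\Sp_{Z/X}$ and use the contracting $\Gm$-action twice: once to replace specialization by $\cF$ itself, and once to replace $f$ by its linear part.

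\emph{Step 1: $\Sp_{Z/X}$ is $c^{*}$ in disguise.} Since $X$ is smooth and $\Gm$ contracts it onto $Z=X^{\Gm}$, the Bia{\l}ynicki-Birula theory (at least Zariski locally over $Z$, $\Gm$-equivariantly) identifies $X$ with $N_ZX$, with $\Gm$ acting linearly with positive weights on the fibers. Under this identification the deformation to the normal cone $M_ZX\to\AA^1$ is trivialized by the $\Gm$-action, as in the $\Grot$-argument in the proof of Theorem~\ref{th:MG equiv}. Because $\cF$ is $\Gm$-monodromic, this gives a canonical isomorphism
\[\Sp_{Z/X}(\cF)\cong\cF\in D^{\mon}(N_ZX),\]
which is monodromic for the (possibly non-dilation) $\Gm$-action. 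If only a local identification is available, I would glue the isomorphisms using canonicity of $\Sp$. Dilation-monodromicity, which is needed for $\FT$, should follow from the contraction principle.

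\emph{Step 2: Lemma~\ref{l:FT stalk}.} Apply it to the section $\xi=df|_Z$ of $T^*_ZX=N_ZX^\vee$. This gives
\[\xi^*\mu_{Z/X}(\cF)\cong\phi(\wt\xi_!\cF)[\dim X-\dim Z],\]
where $\wt\xi\colon N_ZX\to Z\times\AA^1$ is $(\pi,\langle-,\xi\rangle)$. It remains to compare $\wt\xi_!\cF$ with $\wt f_!\cF$ after applying $\phi$.

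\emph{Step 3: comparing $f$ with its linear part (main obstacle).} Under the identification of Step 1, decompose $N_ZX=\bigoplus_{k\ge1}N_k$ by weight. Since $f$ has weight $w$, it is a sum of monomials of total weight $w$. The linear part of $f$ is the coordinate pairing with $\xi$, which only sees $N_w$, with $\xi\in N_w^\vee$. Because $w$ is the smallest weight, the nonlinear terms involve only coordinates of weight less than $w$. I would deform linearly via $f_s=\langle-,\xi\rangle+s(f-\langle-,\xi\rangle)$, $s\in\AA^1$, so that every member of the family is $\Gm$-homogeneous of weight $w$. Then I would show $\phi((c,f_s)_!\cF)$ is locally constant in $s$: by monodromicity, the monodromic vanishing cycles are computed as the cone of $i_0^*\to i_1^*$, i.e. by fibers of $c\times f_s$ over $0$ and $1$ with $c_!$.

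The key difficulty is to control these compactly supported pushforwards uniformly in $s$. My first attempt is to use the weight hypothesis differently. If $\xi$ vanishes at a point, then there both $\phi$-terms vanish by Step 2. Where $\xi\ne0$, I would choose a weight-$w$ coordinate $y$ with $\langle y,\xi\rangle=1$ and make the $\Gm$-equivariant change of variables $y\mapsto y-(\text{nonlinear part})$. This is legitimate precisely because the nonlinear part depends only on coordinates of weight less than $w$ and has weight $w$. It turns $f$ into the linear function, preserves $c$, and transports $\cF$ to a monodromic sheaf, so the comparison becomes an isomorphism over that open locus. Gluing these local isomorphisms canonically (they agree via the canonical deformation above) gives $\phi(\wt f_!\cF)\cong\phi(\wt\xi_!\cF)$, and hence the proposition.
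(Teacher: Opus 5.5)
There is a genuine gap, and it sits in Step~1, not in Step~3. The claim $\Sp_{Z/X}(\cF)\cong\cF$ fails as soon as the $\Gm$-weights on $N_ZX$ are not all equal. The deformation to the normal cone is built from the uniform $I_Z$-adic scaling. Your $\cF$ is only monodromic for the weighted action, and the contraction principle does not turn weighted monodromicity into dilation monodromicity.

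Concretely, take $X=\AA^2$, $Z=\{0\}$, $s\cdot(x,y)=(sx,s^2y)$ and $f=x$, so $w=1$ is minimal. Let $\cF$ be the constant sheaf on the invariant curve $C=\{y=x^2\}$. Then $\Sp_{Z/X}(\cF)$ is the constant sheaf on the tangent line $\{y=0\}$, not $\cF$. Moreover $\cF$ is not dilation-monodromic, so $\FT(\cF)$ is undefined and Step~2 cannot be applied to $\cF$.

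What Lemma~\ref{l:FT stalk} legitimately gives is
\[\xi^*\mu_{Z/X}(\cF)\cong\phi(\wt\xi_!\Sp_{Z/X}(\cF))[\dim X-\dim Z].\]
The entire content of the proposition is the comparison $\phi(\wt\xi_!\Sp_{Z/X}(\cF))\cong\phi(\wt f_!\cF)$ between two genuinely different sheaves. Your argument only works in the equal-weight case, where the statement is essentially trivial, whereas the paper applies it, e.g.\ to $\Sig_{\l,N}$, where the weights vary.

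Your Step~3 addresses a non-issue and reads the hypothesis backwards. Since $w$ is the \emph{smallest} conormal weight, there are no fiber coordinates of weight $<w$. So in graded coordinates $x_1=f,x_2,\dots,x_n$ over $Z$ with weights $w_i\ge w$, $f$ is already linear. Minimality of $w$ is needed elsewhere, and this is the missing idea.

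The paper first produces a map $u\colon\wt\xi_!\Sp(\cF)\to\wt f_!\cF$ from functoriality of nearby cycles along $F=(p,\wt f_M)$ on the deformation space $M_X$. Via the triangle \eqref{shriek tri phi}, it then reduces the claim to two comparisons:
\begin{itemize}
\item $u_\pi\colon q_!\Sp(\cF)\to c_!\cF$, which is the isomorphism $i^!_{Z,X}\cF\cong i^!_{Z,V}\Sp(\cF)$ (Verdier's (SP5)) combined with the contraction principle;
\item $u_1\colon q_{1!}(\Sp(\cF)|_{V_1})\to c_{1!}(\cF|_{X_1})$ on the fibers over $f=1$.
\end{itemize}
The second comparison uses the combined action $s\star(t,z,\xi)=(s^wt,z,\xi_1,s^{w_2-w}\xi_2,\dots,s^{w_n-w}\xi_n)$ on $M_1=\{\xi_1=1\}$. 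This action contracts precisely because $w_i-w\ge 0$, and Lemma~\ref{l:global section nearby} then applies. Some argument of this kind, relating $\Sp(\cF)$ and $\cF$ after pushing forward, is what your proposal needs in place of Step~1.
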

\begin{proof}

Let $p: M_{X}\to \AA^{1}_{t}$ be the total space of the deformation from $X$ to $V:=N_{Z}X$, where $\AA^{1}_{t}$ is a copy of $\AA^{1}$ with coordinate $t$. By construction,  $M_{X}$ is equipped with a map $a: M_{X}\to X$ such that over $\Gm$, we have a canonical isomorphism $(p,a): M^{\c}_{X}:=p^{-1}(\Gm)\cong \Gm\times X$. 

We have a $\Gm$-action on $M_{X}$ such that $p$ has $\Gm$-weight $1$ and $a$ is $\Gm$-invariant. To distinguish various $\Gm$-actions, we denote the above $\Gm$ by $\Gm^{\dil}$, and denote the original $\Gm$ that acts on $X$ by $\Gm^{X}$. The $\Gm^{X}$-action on $X$ induces a $\Gm^{X}$-action on $M_{X}$ such that $p$ is $\Gm^{X}$-invariant.

We denote the target of the function $f$ by $\AA^{1}_{f}$. The function $M^{\c}_{X}\to X\xr{f} \AA^{1}_{f}$ extends to a function $f_{M}: M_{X}\to \AA^{1}_{f}$ whose restriction to the zero fiber is $\xi: V\to \AA^{1}_{f}$. We also have a projection $c\c a: M_{X}\to Z$ whose restriction to $V$ is the projection $q: V=N_{Z}X\to Z$.  Let $\wt f_{M}=(c\c a, f_{M}): M_{X}\to Z\times\AA^{1}_{f}$ whose restriction to $V$ is $\wt\xi=(q,\xi): V=N_{Z}X\to Z\times \AA^{1}_{f}$. Together with $p$ we get a morphism over $\AA^{1}_{t}$
\begin{equation*}
\xymatrix{M_{X}\ar[rr]^-{F=(p, \wt f_{M})}\ar[dr]_{p} && \AA^{1}_{t}\times Z\times \AA^{1}_{f}\ar[dl]^{\pr_{1}}\\
& \AA^{1}_{t}}
\end{equation*}
We have a natural transformation
\begin{equation}\label{nearby push}
\wt\xi_{!}\Psi_{p}\to \Psi_{\pr_{1}} F_{!}.
\end{equation}
Apply both sides of the above  to $a^{*}\cF\in D^{\Gm^{\dil}}(M_{X})$. Using that $F_{!}a^{*}\cF$ is $\Gm$-equivariant, we may canonically identify $\Psi_{\pr_{1}} F_{!}a^{*}\cF$ with the restriction $(F_{!}a^{*}\cF)|_{\{1\}\times Z\times \AA^{1}_{f}}\cong \wt f_{!}\cF$. On the other hand we have $\Psi_{p}a^{*}\cF=\Sp(\cF)\in D(N_{Z}X)$ by definition. Therefore \eqref{nearby push} gives a functorial map
\begin{equation*}
u:  \wt\xi_{!}\Sp(\cF) \to \wt f_{!}\cF\in D^{\Gm^{X}\lmon}(Z\times \AA^{1}_{f}).
\end{equation*}
Applying the monodromic vanishing cycles $\phi$ from \S\ref{ss:mon van} we get a functorial map
\begin{equation*}
u_{\phi}: \phi(\wt \xi_{!}\Sp(\cF))\to \phi(\wt f_{!}\cF)
\end{equation*}
By Lemma \ref{l:FT stalk} we have a canonical isomorphism
\begin{equation*}
\xi^{*}\mu_{Z/X}(\cF)=\xi^{*}\FT(\Sp(\cF))\cong \phi(\wt \xi_{!}\Sp(\cF))[n]
\end{equation*}
where $n=\dim X-\dim Z$. It remains to show that $u_{\phi}$ is an isomorphism for all $\cF\in D^{\Gm^{X}\lmon}(X)$. 

For $s\in \AA^{1}_{f}(k)$, let $i_{s}: Z\times \{s\}\incl Z\times\AA^{1}_{f}$ be the inclusion. Let $\pi: Z\times \AA^{1}_{f}\to Z$ be the projection. By the functorial distinguished triangle \eqref{shriek tri phi}, the map $u_{\phi}$ fits into a map of distinguished triangles in $D(Z)$
\begin{equation*}
\xymatrix{ i_{1}^{!}\wt \xi_{!}\Sp(\cF)\ar[d]^{u_{1}}\ar[r] &  \pi_{!}\wt \xi_{!}\Sp(\cF)\ar[d]^{u_{\pi}}\ar[r] &  \phi(\wt \xi_{!}\Sp(\cF))\ar[d]^{u_{\phi}}\\
i_{1}^{!}\wt f_{!}\cF \ar[r] & \pi_{!}\wt f_{!}\cF\ar[r] & \phi(\wt f_{!}\cF)  
}
\end{equation*}
Here $u_{1}$ and $u_{\pi}$ are obtained from $u$ by applying $i_{1}^{!}$ and $\pi_{!}$. To show $u_{\phi}$ is an isomorphism, it suffices to show $u_{1}$ and $u_{\pi}$ are.  

First we show $u_{\pi}$ is an isomorphism. We have $\pi\c\wt\xi=q: V\to Z$ and $\pi\c\wt f=c: X\to Z$. Therefore $u_{\pi}: q_{!}\Sp(\cF)\to c_{!}\cF$. Let $i_{Z,V}: Z\incl V$ and $i_{Z,X}: Z\incl X$ be the inclusions. Using the contraction principle for the $\Gm^{X}$-actions on $X$ and on $V$, we have $q_{!}\Sp(\cF)\cong i_{Z,V}^{!}\Sp(\cF)$ and $c_{!}\cF\cong i_{Z,X}^{!}\cF$. The map $u_{\pi}$ then gets identified with the inverse of the isomorphism $i_{Z,X}^{!}\cF\cong i_{Z,V}^{!}\Sp(\cF)$ in \eqref{Sp res vertex}.  This shows $u_{\pi}$ is an isomorphism.

Next we show $u_{1}$ is an isomorphism. Let $X_{1}=f^{-1}(1)$ and $V_{1}=\xi^{-1}(1)$. Let $q_{1}: V_{1}\to Z$ be the restriction of the projection $q: V\to Z$, and $c_{1}: X_{1}\to Z$ be the restrictions of the contraction map $c:X\to Z$. Using that $\wt\xi_{!}\Sp(\cF)$ and $\wt f_{!}\cF$ are both $\Gm$-monodromic with respect to the weight $w>0$ action on the $\AA^{1}_{f}$-factor of $Z\times \AA^{1}_{f}$, we may rewrite $i_{1}^{!}$ as $i^{*}_{1}[-2](-1)$, and hence it suffices to show that 
\begin{equation*}
u'_{1}: i_{1}^{*}\wt \xi_{!}\Sp(\cF)\to i_{1}^{*}\wt f_{!}\cF
\end{equation*}
is an isomorphism. Using proper base change, $u'_{1}$ gets identified with the map
\begin{equation*}
u'_{1}: q_{1!}(\Sp(\cF)|_{V_{1}})\to c_{1!}(\cF|_{X_{1}}).
\end{equation*}
The conormal space  $T^{*}_{Z}X=V^{*}$ is decomposed into a direct sum of weight subbundles under $\Gm^{X}$ with positive weights by the contracting assumption. Since our question is Zariski local over $Z$, we may assume each $\Gm^{X}$-weight subbundle of $T^{*}_{Z}X$ is a trivial vector bundle over $Z$.  Fix a trivialization $N_{Z}X\cong \AA^{n}_{Z}$ where the coordinates $\xi_{1},\cdots, \xi_{n}$ are weight vectors with  $\Gm^{X}$-weights $w_{1},w_{2},\cdots, w_{n}>0$. We may assume $\xi=\xi_{1}$ so that $w_{1}=w$. Using the positivity of the weights of the $\Gm^{X}$-action on $\cO(X)$, each $\xi_{i}$ lifts uniquely to a regular function $x_{i}\in \cO(X)$ of weight $w_{i}$. In particular, $x_{1}=f$. Using $x_{i}$ as coordinates we get an isomorphism $X\cong \AA^{n}_{Z}$. We may identify $M_{X}$ with $\AA^{1}_{t}\times \AA^{n}_{Z}$, so that $p$ becomes the first projection. The map $a: M_{X}\to X$ takes the form 
\begin{eqnarray*}
a: \AA^{1}_{t}\times \AA^{n}_{Z}\cong M_{X}&\to& X\cong  \AA^{n}_{Z}\\
(t,z, \xi_{1},\cdots, \xi_{n})&\mapsto& (z, t\xi_{1},\cdots, t\xi_{n}).
\end{eqnarray*}
The map $a$ is invariant under the $\Gm^{\dil}$-action on $M_{X}$ which takes the form
\begin{equation*}
s\cdot (t, z, \xi_{1},\cdots, \xi_{n})=(st, z, s^{-1}\xi_{1},\cdots, s^{-1}\xi_{n}), \quad z\in Z.
\end{equation*}
The $\Gm^{X}$-action on $M_{X}$  takes the form
\begin{equation*}
s\cdot (t, z,\xi_{1},\cdots, \xi_{n})=(t,z, s^{w_{1}}\xi_{1}, s^{w_{2}}\xi_{2},\cdots, s^{w_{n}}\xi_{n}).
\end{equation*}
Combining the two actions, we consider another $\Gm$-action on $M_{X}$ given by
\begin{equation}\label{star action}
s\star (t, z,\xi_{1},\cdots, \xi_{n})=(s^{w}t, z, \xi_{1}, s^{w_{2}-w}\xi_{2},\cdots, s^{w_{n}-w}\xi_{n}).
\end{equation}
The monodromicity of $a^{*}\cF$ under both the $\Gm^{\dil}$-action and $\Gm^{X}$-action implies that $a^{*} \cF$ is also monodromic under the $\Gm$-action \eqref{star action}. 

Let $M_{1}\subset M_{X}$ be the subscheme defined by the equation $\xi_{1}=1$. Let $p_{1}: M_{1}\to \AA^{1}_{t}$ be the restriction of $p$. Then $p_{1}^{-1}(1)=X_{1}$ and $p_{1}^{-1}(0)=V_{1}$. The $\Gm$-action \eqref{star action} restricts to a $\Gm$-action on $M_{1}$. Since $w_{i}\ge w>0$ for all $i$, this action contracts to a subspace of the $0$-fiber $V_{1}$, and the function $p_{1}$ has positive weight $w$. Applying Lemma \ref{l:global section nearby} to the scheme $M_{1}$ with the $\Gm$-action given in \eqref{star action}, the function $p_{1}:M_{1}\to \AA^{1}_{t}$, the $\Gm$-invariant map $\Pi_{1}: M_{1}\to Z$ and the $\Gm$-monodromic sheaf $(a^{*}\cF)|_{M_{1}}$, we get an isomorphism
\begin{equation*}
u''_{1}: q_{1!}\Psi_{p_{1}}((a^{*}\cF)|_{M_{1}})\isom c_{1!}(\cF|_{X_{1}}).
\end{equation*}
By examining the construction of the isomorphisms in Lemma \ref{l:global section nearby}, $u'_{1}$ is the composition
\begin{equation*}
q_{1!}(\Sp(\cF)|_{V_{1}})\xr{v_{1}} q_{1!}\Psi_{p_{s}}((a^{*}\cF)|_{M_{1}})\xr{u''_{1}}c_{1!}(\cF|_{X_{1}}).\end{equation*}
Here the first map is induced from the map
\begin{equation*}
v_{1}: \Sp(\cF)|_{V_{1}}=(\Psi_{p}(a^{*}\cF))|_{V_{1}}\to \Psi_{p_{1}}((a^{*}\cF)|_{M_{1}})
\end{equation*}
which is an isomorphism because $a^{*}\cF$ is $\Gm^{X}$-monodromic, and the $\Gm^{X}$-action on $M_{X}$ scales the map to $\AA^{1}_{f}$ by weight $w>0$ and leaves the map $p$ invariant. Therefore $u'_{1}=u''_{1}\c v_{1}$ is also an isomorphism. This completes the proof.
\end{proof}

\begin{lemma}\label{l:global section nearby}
Let $M$ be a scheme with a contracting $\Gm$-action. Let $p: M\to \AA^{1}$ be a $\Gm$-equivariant map where $\Gm$ acts on $\AA^{1}$ with positive weight. Let $\pi: M\to N$ be a $\Gm$-invariant map. Let $\pi_{t}: M_{t}=p^{-1}(t)\to N$ and $\pi_{\ov\y}: M_{\ov\y}=p^{-1}(\ov\y)\to N_{\ov\y}:=N\times\ov\y$ be the restrictions of $\pi$, where $\ov\y$ is a geometric generic point of the hensalization of $\AA^{1}$ at $0$. Then there are functorial isomorphisms for $\cF\in D^{\Gm\lmon}(p^{-1}(\Gm))$ 
\begin{equation}\label{global section Mgen}
\pi_{\ov\y*}\cF_{\ov\y}\isom \pi_{0*}\Psi_{p}(\cF), \quad \pi_{0!} \Psi_{p}(\cF)\isom \pi_{\ov\y!}\cF_{\ov\y}.
\end{equation}
Here $\cF_{\ov\y}=\cF|_{M_{\ov\y}}$ and $\Psi_{p}$ denotes the nearby cycles towards the zero fiber $X_{0}$  (defined using the same geometric generic point $\ov\y$). The two sides of the isomorphisms take values in $D(N_{\ov\y})$ and $D(N)$ respectively, which are identified. 

In particular, if we choose an \'etale path between $1$ and $\ov\y$, we get  functorial isomorphisms
\begin{equation*}
\pi_{1*} (\cF|_{M_{1}})\isom \pi_{0*}\Psi_{p}(\cF), \quad \pi_{0!} \Psi_{p}(\cF)\isom \pi_{1!}(\cF|_{M_{1}}).
\end{equation*}
\end{lemma}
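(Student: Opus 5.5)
The plan is to compare global sections on the generic fiber and on the special fiber through the total space $M$, with the contracting $\Gm$-action doing the work. I treat the $*$-version; the $!$-version is Verdier dual, or follows from the same argument with $!$-pushforwards.

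First, set up the standard diagram $M_0\xr{i}M\xleftarrow{j}M_{\Gm}=p^{-1}(\Gm)$ together with the geometric generic fiber $M_{\ov\y}\to M$. The nearby cycles are $\Psi_p(\cF)=i^*\bar j_*\cF_{\ov\y}$, where $\bar j$ is the map from the base change of $M$ to the strict henselization minus the special fiber. There is a canonical base change morphism
\begin{equation*}
\pi_{0*}\Psi_p(\cF)=\pi_{0*}i^*\bar j_*\cF_{\ov\y}\leftarrow (\pi_*\bar j_*\cF_{\ov\y})|_{N}=\Psi_{\pr}(\pi_{\ov\y *}\cF_{\ov\y}),
\end{equation*}
where $\pr: N\times\AA^1\to\AA^1$ and $\pi$ is extended to $(\pi,p): M\to N\times\AA^1$. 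Since $\pi_{\ov\y*}\cF_{\ov\y}$ comes from a $\Gm$-monodromic object on $N\times\Gm$, and $\Gm$ acts on $\AA^1$ with positive weight and trivially on $N$, the sheaf $(\pi,p)_*j_*\cF$ restricted to $N\times\Gm$ has locally constant cohomology along the $\Gm$ direction with tame monodromy. Its nearby cycles are therefore just the generic stalk, which gives $\Psi_{\pr}(\pi_{\ov\y*}\cF_{\ov\y})\cong \pi_{\ov\y*}\cF_{\ov\y}$ canonically. It then suffices to show that the base change arrow $\pi_{0*}i^*\to i_N^*(\pi,p)_*$ is an isomorphism on $\bar j_*\cF_{\ov\y}$.

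For that step I use the contraction principle, as in \eqref{contraction}. Both $\bar j_*\cF$ (or $j_*\cF$ on $M$) and its pushforward are $\Gm$-monodromic. The $\Gm$-action contracts $M$ onto the fixed locus $M^{\Gm}$, which lies in $M_0$ because $p$ has positive weight. Hence $M$ retracts $\Gm$-equivariantly onto a neighborhood system of $M_0$ over $N\times\{0\}$. The standard statement (Braden, Springer-type hyperbolic localization) says that for a monodromic sheaf $\cG$ on $M$ and a $\Gm$-equivariant map to $N\times\AA^1$ with positive weight on $\AA^1$, the restriction of $(\pi,p)_*\cG$ to $N\times\{0\}$ equals $\pi_{0*}$ of the restriction of $\cG$ to $M_0$. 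Both sides are computed by global sections over the attracting locus, which is all of $M$ over $N\times 0$, or all of $M_0$. I would prove this by reducing, via the contraction, to $(\pi,p)_*\cG|_{N\times0}\cong \pi^{\Gm}_*(\cG|_{M^{\Gm}})\cong \pi_{0*}(\cG|_{M_0})$, using \eqref{contraction} twice. The main obstacle is making this rigorous when $\cG=\bar j_*\cF_{\ov\y}$ lives on the henselized or generic-fiber base rather than on $M$. To handle it I would work on all of $M$ with $\cG=j_*\cF$, noting that nearby cycles of a tamely monodromic sheaf under a positive-weight $\Gm$ agree with $i^*j_*$ up to invariants of the monodromy. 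Taking monodromy coinvariants, or using the unipotent nearby-cycles formalism, on both sides then gives the statement for $\Psi_p$.

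Finally, the second displayed form follows by choosing an étale path from $\ov\y$ to $1$. The $\Gm$-equivariance identifies $\cF_{\ov\y}$ with $\cF|_{M_1}$ pulled back, and $\pi_{\ov\y*}$ with $\pi_{1*}$. Functoriality in $\cF$ holds because every arrow above is a natural base change map.
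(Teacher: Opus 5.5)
Your proposal is correct in outline and uses the same mechanism as the paper. In both, the contraction principle identifies each side with $\pi_{Z*}$ of a restriction to the fixed locus $Z=M^{\Gm}\subset M_0$. The organization differs, however.

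\textbf{How the two routes compare.} The paper writes $\pi_{\ov\y*}\cF_{\ov\y}=\pi_{D*}\ov j_*\cF_{\ov\y}$ on the henselized space $M_D$ and contracts $M_D$ onto $Z$. It then contracts $M_0$ onto $Z$ for $\Psi_p(\cF)=i^*\ov j_*\cF_{\ov\y}$. It therefore needs neither your base change to $N\times\AA^1$ nor your identification of $\Psi_{\pr}$ of a monodromic complex on $N\times\Gm$ with its generic fiber (the standard fact behind \eqref{Psi fiber 1}). Its cost is that it treats $\ov j_*\cF_{\ov\y}$ as monodromic on $M_D$, which has no honest $\Gm$-action in the \'etale setting. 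In the Betti setting this is harmless, since $p^{-1}$ of a small disc is stable under $s\in(0,1]$.

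\textbf{What your detour buys.} Working with $j_*\cF$ on $M$ addresses exactly that point. Your base-change claim for an honestly monodromic $\cG$ on $M$ is fine: contract $N\times\AA^1$, $M$ and $M_0$ onto $N\times 0$, $Z$ and $Z$.

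\textbf{The upgrade step.} The detour only gives the statement for $i^*j_*\cF\cong R\Gamma(I,\Psi_p(\cF))$, that is, for inertia invariants. Your first proposed upgrade does not work: monodromy coinvariants cannot recover $\Psi_p(\cF)$ from its inertia invariants. Your second option does work once made explicit. The base change map is $I$-equivariant. Apply the $j_*$-statement to all twists $\cF\otimes p^*\cL$ by tame local systems on $\Gm$ (Kummer characters tensored with Beilinson's unipotent $\cL_a$); these twists remain $\Gm$-monodromic. This shows the cone $C$ of the base change map satisfies $R\Gamma(I,C\otimes\cL_{\ov\y})=0$ for all such $\cL$. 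That forces $C=0$ once inertia acts tamely and quasi-unipotently on $\Psi_p(\cF)$, which you assume rather than justify.

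\textbf{Last step.} $\cF$ is only monodromic, not equivariant. So the identification of $\pi_{\ov\y*}\cF_{\ov\y}$ with $\pi_{1*}(\cF|_{M_1})$ should come from parallel transport, along the chosen path, for the monodromic complex $(\pi,p)_*\cF$ on $N\times\Gm$.
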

\begin{proof}
By the fact that $\Psi_{p}$ commutes with Verdier duality, it suffices to prove the first isomorphism in \eqref{global section Mgen}. By the contracting assumption, $Z=M^{\Gm}\subset M_{0}$. Let $D$ be the henselization of $\AA^{1}$ at $0$. 
Let $\ov j: M_{\ov \y} \to M_{D}=M\times_{\AA^{1}}D$. Let $i:M_{0}\incl M_{D}$ be the inclusion. By definition, $\Psi_{p}(\cF)=i^{*}\ov j_{*}\cF_{\ov\y}$.  Let $\pi_{Z}: Z\to N$ be the restriction of $\pi$. Using the contracting principle for the $\Gm$-monodromic sheaf $\Psi_{p}\cF$ on $M_{0}$, we see
\begin{equation}\label{res M0}
\pi_{0*}\Psi_{p}(\cF)\isom \pi_{Z*} i_{Z}^{*}\Psi_{p}(\cF)\cong \pi_{Z*} ((\ov j_{*}\cF_{\ov\y})|_{Z}).
\end{equation}
Let $\pi_{D}: M_{D}\to N$ be induced from $\pi$. Apply the contraction principle to the $\Gm$-monodromic sheaf $\ov j_{*}\cF_{\ov\y}$ on $M_{D}$, we have a canonical isomorphism
\begin{equation}\label{res Mgen}
\pi_{\ov\y*} \cF_{\ov \y}=\pi_{D*}\ov j_{*}\cF_{\ov \y}\isom \pi_{Z*}((\ov j_{*}\cF_{\ov\y})|_{Z})
\end{equation}
under the identification $D(N)\cong D(N_{\ov\y})$.
Combining \eqref{res M0} and \eqref{res Mgen} we get the desired isomorphism \eqref{global section Mgen}.
\end{proof}


\begin{thebibliography}{99}

\bibitem{AB}
S. Arkhipov,  R.Bezrukavnikov, 
Perverse sheaves on affine flags and Langlands dual group.
Israel J. Math. 170 (2009), 135--183.

\bibitem{ABG}
S. Arkhipov,  R.Bezrukavnikov, V. Ginzburg, 
Quantum groups, the loop Grassmannian, and the Springer resolution. 
J. Amer. Math. Soc. 17 (2004), no. 3, 595--678.

\bibitem{AG} 
S. Arkhipov, D. Gaitsgory, 
Another realization of the category of modules over the small quantum group.
Advances in Mathematics, Volume 173, Issue 1, 2003, Pages 114--143.

\bibitem{Bei}
A. Beilinson, 
On the derived category of perverse sheaves.
In: Manin, Y.I. (eds) K-Theory, Arithmetic and Geometry. Lecture Notes in Mathematics, vol 1289. Springer, Berlin, Heidelberg.

\bibitem{BZN} D. Ben-Zvi, D. Nadler, Loops spaces and Langlands parameters,
arXiv:0706.0322 

\bibitem{B-ASF}
R. Bezrukavnikov,
The dimension of the fixed point set on affine flag manifolds. 
Math. Res. Lett. 3 (1996), no. 2, 185–189, DOI 10.4310/MRL.1996.v3.n2.a5.

\bibitem{Be}
R. Bezrukavnikov,
On tensor categories attached to cells in affine Weyl groups, with an
appendix by D. Gaitsgory, 
in {\em Representation theory of algebraic groups and quantum groups}, 69--100, 
Adv. Stud. Pure Math. 40, Math. Soc. Japan, 2004.

\bibitem{B}
R. Bezrukavnikov,
On two geometric realizations of an affine Hecke algebra. 
Publ. Math. Inst. Hautes \'Etudes Sci. 123 (2016), 1--67.

\bibitem{BBAMY}
R.Bezrukavnikov, P.Boixeda Alvarez, M. McBreen, Z.Yun.
Non-abelian Hodge moduli spaces and homogeneous affine Springer fibers.
Pure Appl. Math. Q. 21 (2025), no. 1, 61--130. 

\bibitem{BBSV}
R.Bezrukavnikov, P.Boixeda Alvarez, P.Shan, E.Vasserot,
A geometric realization of the center of the small quantum group.
arXiv:2205.05951.


\bibitem{BF}
R. Bezrukavnikov, M. Finkelberg,
Equivariant Satake category and Kostant-Whittaker reduction. 
Mosc. Math. J. {\bf 8} (2008), no. 1, 39--72, 183.

\bibitem{BFM}
R. Bezrukavnikov, M. Finkelberg, I. Mirkovi\' c,
 Equivariant 
($K$)-homology of affine Grassmannian and Toda 
lattice, 
 Compos. Math.  {\bf 141}  (2005),  no. 3, 746--768. 


\bibitem{BFO}
R. Bezrukavnikov, M. Finkelberg, V. Ostrik, 
Character D-modules via Drinfeld center of Harsih-Chandra bimodules.
Inventiones Math. {\bf 188} (2012) no 3, 589--620.

\bibitem{BeLa} R. Bezrukavnikov, A. Lachowska, 
The small quantum group and the Springer resolution, in:
{\em Quantum groups,}  89--101, Contemp. Math., 433, Amer. Math. Soc., 
Providence, RI, 2007.



\bibitem{BeLo} 
R. Bezrukavnikov, I. Losev, 
Dimensions of modular representations of semisimple Lie algebras.
J. Amer. Math. Soc. 36 (2023), no. 4, 1235--1304.

\bibitem{BeMi} 
R. Bezrukavnikov, I. Mirkovic, 
Representations of semisimple Lie algebras in positive characteristic and noncommutative Springer resolution.
Annals of Mathematics 178 (2013), 835--919.

\bibitem{BRR}
R. Bezrukavnikov, S. Riche, L. Rider,
Modular affine Hecke category and regular unipotent centralizer, I.
arXiv:2005.05583.

\bibitem{BeRi}
R. Bezrukavnikov, S. Riche,
On two modular geometric realizations of an affine Hecke algebra.
arXiv:2402.08281.

\bibitem{BY}
R.Bezrukavnikov, Z.Yun,
On Koszul duality for Kac-Moody groups.
Represent. Theory 17, no. 1 (January 2, 2013): 1--98.


\bibitem{CKN}
J. D. Christensen, B. Keller, A, Neeman,
Failure of Brown representability in derived categories.
Topology, 40(6). p. 1339-1361, 2001.

\bibitem{CKNS}
L. C\^{o}t\'{e}, C. Kuo, D. Nadler, V. Shende,
Perverse Microsheaves.
arXiv:2209.12998.


\bibitem{EM}
S. Evans, I. Mirkovi\'c,
Characteristic cycles for the loop Grassmannian and nilpotent orbits, 
Duke Math. Journal (97)1999, pp.109--126.


\bibitem{FLH}
T.Feng, B.Le Hung,
Mirror symmetry and the Breuil-M\'ezard Conjecture.
arXiv:2310.07006.

\bibitem{FK} M. Finkelberg, D. Kubrak, Vanishing cycles on Poisson varieties, arXiv:1212.3051.

\bibitem{Ga}
D. Gaitsgory, 
Construction of central elements in the affine Hecke algebra via nearby cycles.
Invent. Math. 144 (2001), 253--280.

\bibitem{Ga-Wh}
D. Gaitsgory,
The local and global versions of the Whittaker category.
arxiv:1811.02468.

\bibitem{GKM}
M. Goresky, R. Kottwitz, and R. MacPherson, 
Purity of equivalued affine Springer fibers.
Representation Theory 10.6 (2006), 130--146.


\bibitem{GMW}
Gammage, B., McBreen, M., Webster, B. (2025). Homological mirror symmetry for hypertoric varieties, II. Geometry \& Topology, 29(8), 3921-3993.

\bibitem{ganatra2024microlocal}
S. Ganatra, J. Pardon, and V. Shende, 
Microlocal Morse theory of wrapped Fukaya categories.
Annals of Mathematics vol 199 no 3 (2024), 943--1042.



\bibitem{KDC} 
V. Kac, De Concini, 
Representations of quantum groups at roots of 1. 
In {\em Operator algebras, unitary representations, enveloping algebras, and invariant theory}, 471--506, 
Progr. Math., 92, Birkh\"auser Boston.

\bibitem{KamSch}
M. Kamgarpour, T. Schedler,
Geometrization of principal series representations of reductive groups.
Annales de l'Institut Fourier, Volume 65 (2015) no. 5, pp. 2273--2330.


\bibitem{KS}
M.Kashiwara, P.Schapira,
{\em Sheaves on manifolds}.
Springer-Verlag, Berlin, 1990, Fundamental Principles of Mathematical Sciences, 292, x+512.

\bibitem{KL}
D. Kazhdan, G. Lusztig,
Fixed point varieties on affine flag manifolds. 
Israel J. Math. 62 (1988), no. 2, 129--168.

\bibitem{KL-DL}
D. Kazhdan, G. Lusztig,
Proof of the Deligne-Langlands conjecture for Hecke algebras.
Invent. Math. 87 (1987), no. 1, 153--215.

\bibitem{LQ}
A.Lachowska,  Y. Qi, 
The center of small quantum groups I: The principal block in type A.
Int. Math. Res. Not. IMRN 2018, no. 20, 6349–-6405. 
II: singular blocks. Proc. Lond. Math. Soc. (3) 118 (2019), no. 3, 513--544.


\bibitem{Laumon}
G. Laumon, 
Transformation de Fourier homog\`ene.
Publications Math\'ematiques de l'IH\'ES, Volume 65 (1987), pp. 131--210.

\bibitem{LO} 
I. Losev, V. Ostrik,
Classification of finite-dimensional irreducible modules over $W$-algebras,
Compos. Math. {\bf 150} (2014), no. 6, 1024--1076.

\bibitem{L-Aff}
G. Lusztig, 
Affine Weyl groups and conjugacy classes in Weyl groups. 
Transform. Groups 1(1-2), 83--97 (1996).

\bibitem{MR} 
I. Mirkovi\' c, S. Riche, 
Linear Koszul duality.
Compos. Math. 146 (2010), no. 1, 233–258.
II: coherent sheaves on perfect sheaves.
J. Lond. Math. Soc. (2) 93 (2016), no. 1, 1--24.


\bibitem{Trinh}
M-T. Trinh, 
From the Hecke category to the unipotent locus.
arXiv: 2106.07444.

\bibitem{MV}
I.Mirkovi\'c, K.Vilonen,
Geometric Langlands duality and representations of algebraic groups over commutative rings.  
Ann. of Math. (2)  166  (2007),  no. 1, 95--143.

\bibitem{NS}
D. Nadler, V. Shende,
Sheaf quantization in Weinstein symplectic manifolds. arXiv: 2007.10154.

\bibitem{NS2}
D. Nadler, V. Shende,
Invariance of microsheaves on stable Higgs bundles. arXiv: 2301.01342.

\bibitem{Ngo}
B-C. Ng\^o,
Le lemme fondamental pour les alg\`ebres de Lie.
Publ. Math. Inst. Hautes \'Etudes Sci. No. 111 (2010), 1--169. 

\bibitem{OY}
A.Oblomkov, Z. Yun, 
Geometric representations of graded and rational Cherednik algebras.
Adv. Math. 292 (2016), 601--706.

\bibitem{Ri} 
S.Riche, 
Koszul duality and modular representations of semi-simple Lie algebras.
Duke Math. J. 154 (2010), no. 1, 31--134.

\bibitem{S}
V. Shende, 
Microlocal Category for Weinstein Manifolds via the h-Principle.
Publ. Res. Inst. Math. Sci. 57 (2021), no. 3/4, pp. 1041--1048.

\bibitem{Tanisaki}
T. Tanisaki, Quantized flag manifolds and non-restricted modules over quantum groups at roots of unity.
arXiv:2109.03319 

\bibitem{VV} M. Varagnolo, E. Vasserot, Double affine Hecke algebras and affine flag manifolds, I,
in: Affine flag manifolds and principal bundles, 233--289.
Trends Math.
Birkh\" auser/Springer Basel AG, Basel, 2010

\bibitem{Ver}
J.L.Verdier,
Sp\'ecialisation de faisceaux et monodromie mod\'er\'ee.
Ast\'erisque, 101-102 (1983), p. 332--364.

\bibitem{Wang}
J. Wang,
A new Fourier transform.
Math. Res. Lett. 22 (2015), no. 5, 1541--1562.

\bibitem{Xia}
J. Xia,
Harvard Ph.D. Thesis, 2024.

\bibitem{Yun}
Z.Yun,
The spherical part of local and global Springer actions. 
Math. Ann. 359 (2014), no. 3-4, 557--594.



\end{thebibliography}
\end{document}